\pretocmd{\section}{%
  }{}{}
\numberwithin{table}{section}
\newtheorem{theorem}{Theorem}[subsection]
\newtheorem*{theorem*}{Theorem}
\newtheorem{proposition}[theorem]{Proposition}
\newtheorem*{proposition*}{Proposition}
\newtheorem{corollary}[theorem]{Corollary}
\newtheorem{lemma}[theorem]{Lemma}
\newtheorem*{lemma*}{Lemma}
\newtheorem{conjecture}[theorem]{Conjecture}
\newtheorem{introthm}{Theorem}[section]
\theoremstyle{definition}
\newtheorem{definition}[theorem]{Definition}
\newtheorem*{exercise*}{Exercise}
\newtheorem{remark}[theorem]{Remark}
\newtheorem{assumption}[theorem]{Assumption}
\numberwithin{equation}{subsection}
\numberwithin{figure}{section}
\let\hom\relax
\let\det\relax
\let\L\relax
\newcommand{\mb}{\mathbb}
\newcommand{\mbf}{\mathbf}
\newcommand{\mc}{\mathcal}
\newcommand{\mf}{\mathfrak}
\newcommand{\mr}{\mathrm}
\newcommand\sm[1]{{\tiny\arraycolsep=0.3\arraycolsep\ensuremath{\begin{pmatrix}#1\end{pmatrix}}}}
\newcommand\pmat[1]{\begin{pmatrix}#1\end{pmatrix}}
\newcommand\L[1]{\prescript{L}{}{#1}}
\newcommand{\Z}{\mathbb{Z}}
\newcommand{\Q}{\mathbb{Q}}
\newcommand{\R}{\mathbb{R}}
\newcommand{\C}{\mathbb{C}}
\newcommand{\F}{\mathbb{F}}
\newcommand{\A}{\mathbb{A}}
\renewcommand{\sl}{\mathfrak{sl}}
\newcommand{\GL}{\mathrm{GL}}
\newcommand{\SL}{\mathrm{SL}}
\newcommand{\Sset}[2]{\left\lbrace{#1}\,\,\middle|\,\,{#2}\right\rbrace}
\newcommand{\sset}[2]{\lbrace{#1}\,\,|\,\,{#2}\rbrace}
\DeclareMathOperator{\ad}{ad}
\DeclareMathOperator{\Ad}{Ad}
\DeclareMathOperator{\chars}{char}
\DeclareMathOperator{\cl}{cl}
\DeclareMathOperator{\crys}{crys}
\DeclareMathOperator{\cusp}{cusp}
\DeclareMathOperator{\cyc}{cyc}
\DeclareMathOperator{\det}{det}
\DeclareMathOperator{\diag}{diag}
\DeclareMathOperator{\disc}{disc}
\DeclareMathOperator{\dR}{dR}
\DeclareMathOperator{\Eis}{Eis}
\DeclareMathOperator{\ext}{Ext}
\DeclareMathOperator{\Fil}{Fil}
\DeclareMathOperator{\Frac}{Frac}
\DeclareMathOperator{\Frob}{Frob}
\DeclareMathOperator{\hom}{Hom}
\newcommand{\id}{\mathrm{id}}
\DeclareMathOperator{\Ind}{Ind}
\DeclareMathOperator{\Lie}{Lie}
\DeclareMathOperator{\MF}{MF}
\newcommand{\modulo}[1]{\,\,(\mathrm{mod}\,\,{#1})}
\DeclareMathOperator{\ord}{ord}
\DeclareMathOperator{\re}{Re}
\DeclareMathOperator{\sss}{ss}
\DeclareMathOperator{\st}{st}
\DeclareMathOperator{\std}{Std}
\DeclareMathOperator{\Sym}{Sym}
\DeclareMathOperator{\tr}{Tr}
\DeclareMathOperator{\vol}{Vol}
\begin{document}
\title{Eisenstein series for $G_2$ and the symmetric cube Bloch--Kato conjecture}
\author{Sam Mundy}
\date{}
\maketitle
\begin{abstract}
Let $F$ be a cuspidal eigenform of even weight and trivial nebentypus, let $p$ be a prime not dividing the level of $F$, and let $\rho_F$ be the $p$-adic Galois representation attached to $F$. Assume that the $L$-function attached to $\Sym^3(\rho_F)$ vanishes to odd order at its central point. Then under some mild hypotheses, and conditional on certain consequences of Arthur's conjectures, we construct a nontrivial element in the Bloch--Kato Selmer group of an appropriate twist of $\Sym^3(\rho_F)$, in accordance with the Bloch--Kato conjectures.

Our technique is based on the method of Skinner and Urban. We construct a class in the appropriate Selmer group by $p$-adically deforming Eisenstein series for the exceptional group $G_2$ in a generically cuspidal family and then studying a lattice in the corresponding family of $G_2$-Galois representations. We also make a detailed study of the specific conjectures used and explain how one might try to prove them.
\end{abstract}
\tableofcontents

\section*{Introduction}
Let $F$ be a cuspidal eigenform with weight $k$, level $N$, and trivial nebentypus. Fix a prime $p$. Then attached to $F$ is its $p$-adic Galois representation $\rho_F$, which for simplicity, we view as a representation $G_\Q\to GL_2(\overline\Q_p)$; here $G_\Q$ denotes the absolute Galois group of $\Q$, and $\rho_F$ is normalized so that for $\ell\nmid N$, the trace of $\rho_F$ on an arithmetic Frobenius element is the $\ell$th Fourier coefficient of $F$. In this paper, we study the symmetric cube $\Sym^3(\rho_F)$ of $\rho_F$; in particular, we will be interested in the relationship, predicted by the famous conjectures of Bloch and Kato, between the central value of the $L$-function $L(s,\Sym^3(\rho_F))$ and the Bloch--Kato Selmer group of (the appropriate twist of) $\Sym^3(\rho_F)$.

In fact, let us consider the Tate twist $\Sym^3(\rho_F)(\tfrac{4-3k}{2})$. (Note that $k$ must be even since there are no modular forms of odd weight and trivial nebentypus; hence $\tfrac{4-3k}{2}$ is an integer.) The significance of this is that, writing $R$ for this twisted representation, we have $R\cong R^\vee(1)$. The Bloch--Kato conjecture then predicts that
\[\ord_{s=s_0}L(s,\Sym^3(\rho_F))=\dim_{\overline\Q_p}H_f^1(\Q,\Sym^3(\rho_F)(-s_0)),\]
where $s_0=\tfrac{3k-4}{2}$ is the central value of the $L$-function on the left hand side, the $L$-function itself is defined using geometric Frobenius elements, and $H_f^1$ denotes as usual the Bloch--Kato Selmer group.

Except in certain circumstances, this equality seems out of reach at the moment. One could then try instead to prove the implication
\[L(s_0,\Sym^3(\rho_F))=0\quad\Longrightarrow\quad H_f^1(\Q,\Sym^3(\rho_F)(-s_0))\ne 0,\]
which would be implied by the equality above. This turns out to be more reasonable, and in fact, the purpose of this paper is to prove this implication under some mild hypotheses, assuming the order of vanishing of the $L$-function on the left hand side is odd, under Arthur's conjectures. (Actually, only some very specific consequences of Arthur's conjectures are needed, and we comment on this below.) Here is a precise statement.

\begin{introthm}
\label{intthmmain}
Assume that the weight $k$ of $F$ is at least $4$ and that the level $N$ is not divisible by $p$. In addition to this, assume:
\begin{itemize}
\item $L(s,\Sym^3(\rho_F))$ vanishes to odd order at the central value $s=\tfrac{3k-4}{2}$
\item $F$ is not CM;
\item $4\nmid N$ and $9\nmid N$;
\item The Hecke polynomial of $F$ at $p$ has simple roots.
\end{itemize}
Then under Conjectures \ref{conjmult} (a) and \ref{conjliftings} below, the Bloch--Kato Selmer group
\[H_f^1(\Q,\Sym^3(\rho_F)(\tfrac{4-3k}{2}))\]
is nontrivial.
\end{introthm}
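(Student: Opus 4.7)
The plan is to adapt the Skinner--Urban lattice method to the exceptional group $G_2$, exploiting the fact that, by Langlands--Shahidi theory, the $L$-function governing the constant term of a $G_2$-Eisenstein series induced from $F$ via the Heisenberg parabolic is essentially $L(s,\Sym^3(\rho_F))$.

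First, I construct an Eisenstein series $E_F$ on $G_2(\A)$ by inducing a chosen $p$-refinement of $F$ from the Heisenberg parabolic $P=MN$ with Levi $M\cong\GL_2$. The odd-order vanishing of $L(s,\Sym^3(\rho_F))$ at its central point yields, via a sign analysis in the functional equation, a classical point $x_0$ on the $G_2$-eigenvariety corresponding to $E_F$ which lies in the closure of the cuspidal locus. That there really is a cuspidal automorphic representation of $G_2$ to account for the vanishing is the content of Conjecture \ref{conjliftings}; it is expected to follow from an Arthur-packet analysis or a theta-lifting construction, and the hypotheses $4\nmid N$, $9\nmid N$ and non-CM are imposed so that the lifting is available.

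Next, I interpolate $E_F$ in a generically cuspidal one-parameter $p$-adic family $\mathcal T$ through $x_0$ on the eigenvariety; the simple-roots hypothesis on the Hecke polynomial at $p$ gives a unique ordinary $p$-stabilization and controls the local geometry of the eigenvariety near $x_0$. By Conjecture \ref{conjmult}(a), every cuspidal specialization of $\mathcal T$ is associated with a Galois representation into the Langlands dual group of $G_2$ (again $G_2$), and these interpolate into a $G_2$-valued family $\rho_{\mathcal T}:G_\Q\to G_2(\mathcal T)$. Composing with the $7$-dimensional standard representation yields a $\GL_7$-valued family whose specialization at $x_0$ is reducible, with $\Sym^3(\rho_F)(\tfrac{4-3k}{2})$ appearing as one irreducible constituent alongside simpler pieces coming from the action of the Levi on the Lie algebra of the Heisenberg unipotent.

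Finally, I run the lattice argument: choose a $\mathcal T$-stable lattice in $\std\circ\rho_{\mathcal T}$ and reduce at $x_0$. The generic irreducibility of the family forces a nonsplit extension among the constituents, and projecting produces a class in $H^1(\Q,\Sym^3(\rho_F)(\tfrac{4-3k}{2}))$. Local conditions at primes $\ell\nmid Np$ come from unramifiedness of the family; at $p$, the ordinary $p$-stabilization gives a Panchishkin-type filtration matching the crystalline condition defining $H^1_f$; and the non-CM hypothesis together with the conditions on $N$ prevent the extension from degenerating or lying in the wrong summand. The principal obstacle, beyond the conjectural inputs themselves, is to show that the nonsplit extension really lives in the $\Sym^3(\rho_F)$-summand of $\std\circ\rho_{E_F}$ and is nontrivial there; for this I expect to use the group-theoretic structure of $G_2$, exploiting that the extensions visible inside $G_2$ (rather than merely inside $\GL_7$) are forced by the shape of the Heisenberg parabolic to point in the direction of its unipotent radical, which is exactly where $\Sym^3$ enters.
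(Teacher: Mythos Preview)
Your outline captures the broad Skinner--Urban strategy but contains several substantive errors that would prevent the argument from going through. First, the parabolic: the symmetric cube $L$-function arises via Langlands--Shahidi from the \emph{long root} parabolic $P_\alpha$ of $G_2$, not the Heisenberg (short root) parabolic. More importantly, you have the roles of the two conjectures reversed: Conjecture~\ref{conjmult} concerns the occurrence of the Langlands quotient $\mc{L}_\alpha(\pi_F,1/10)$ in the \emph{cuspidal} spectrum of $G_2$ (this is what lets you compute the relevant overconvergent multiplicity and deform into a cuspidal family), while Conjecture~\ref{conjliftings} provides the functorial lift to $\GL_7$ needed to attach Galois representations to the cuspidal members of that family. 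You also assume an ordinary $p$-stabilization, but the paper uses a \emph{critical} one; this is essential for the slope numerology on the Galois side, and it means there is no Panchishkin filtration available---crystallinity at $p$ is obtained instead via Kisin's interpolation of crystalline periods together with a trick of switching the root $\alpha_p$, which is where the simple-roots hypothesis enters. The conditions $4\nmid N$ and $9\nmid N$ are not about liftings but about the availability of types for $G_2(\Q_\ell)$ (Fintzen's results require $\ell$ coprime to $|W(G_2)|=12$).

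The most serious gap is your description of the specialization at $x_0$. The representation $\Sym^3(\rho_F)(\tfrac{4-3k}{2})$ is \emph{not} a constituent of $R_7\circ\rho_{\mc{T}}$ at the Eisenstein point; the semisimplification there is
\[
\rho_F(-\tfrac{k-2}{2})\oplus\Ad(\rho_F)\oplus\rho_F(-\tfrac{k}{2}),
\]
a $2+3+2$ decomposition. The desired class lives in $\ext^1(\rho_F(-\tfrac{k}{2}),\Ad(\rho_F))$, which splits as $H^1(\Q,\Sym^3(\rho_F)(\tfrac{4-3k}{2}))\oplus H^1(\Q,\rho_F(-\tfrac{k-2}{2}))$, so a nonsplit lattice alone does not suffice---you must show the extension falls into the $\Sym^3$ summand. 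The paper achieves this by exploiting the generic alternating trilinear form that $G_2$ preserves on the $7$-dimensional space: one shows first that the form remains nondegenerate on the specialized lattice (otherwise one would produce a nontrivial class in $H^1_f(\Q,\overline\Q_p(1))=0$), and then the explicit relations among matrix entries forced by the $G_2$-structure (computed via the parabolic $P_{\beta,a}$, see Lemma~\ref{lemmatcoeffreln}) pin the extension to the $\Sym^3$ piece. Your final paragraph gestures at ``group-theoretic structure of $G_2$'' but locates it in the wrong place (the unipotent radical rather than the trilinear form), and without this mechanism the argument cannot conclude.
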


The overarching technique used to prove this theorem is the Skinner--Urban method, and below we will outline the proof and explain what parts of it are responsible for each of the hypotheses of the theorem. But first we would like to briefly explain this method in general as well as give some of its history.

The Skinner--Urban method is a method by which one can try to construct nontrivial elements in the Bloch--Kato Selmer group of an automorphic Galois representation $\rho$, assuming the $L$-function of $\rho$ vanishes at its central point and that this point is critical. It has three main steps. If $\rho$ is attached in some way via the Langlands correspondence to an automorphic representation $\pi$ of some reductive group $M$ defined over $\Q$, then the first step is to embed $M$ as the Levi of a parabolic subgroup $P=MN$ in a reductive group $G$ over $\Q$ and study a functorial lift $\Pi$ of $\pi$ from $M$ to $G$. This is usually done via a construction involving Eisenstein series. One uses the vanishing hypothesis on the $L$-function of $\rho$ here in the course of studying this functorial lift.

The second step is to use the information obtained from the first step to deform (a critical $p$-stabilization of) $\Pi$ in a $p$-adic family $\mc{E}$ of automorphic representations of $G$ which is generically cuspidal. Assuming there is a suitable Langlands correspondence for $G$ as well, one should then construct a corresponding family $\rho_{\mc{E}}$ of Galois representations into the Langlands dual group $G^\vee$ of $G$.

The third step is to construct a certain lattice $\mc{L}$ in the family $\rho_{\mc{E}}$ and specialize it at the point in the family corresponding to $\Pi$. If this is done correctly, one should obtain a $p$-adic Galois representation $\rho_{\overline{\mc{L}}}$ which is related to $\rho$ up to semisimplification, but which is not semisimple. The difference between $\rho_{\overline{\mc{L}}}$ and $\rho$ will be in some way measured by a nontrivial cocycle $\sigma$ which lives in the Bloch--Kato Selmer group for $\rho$. We summarize this situation in a metaphorical diagram below.

\begin{figure}[h]
\centering
\includegraphics[scale=.115]{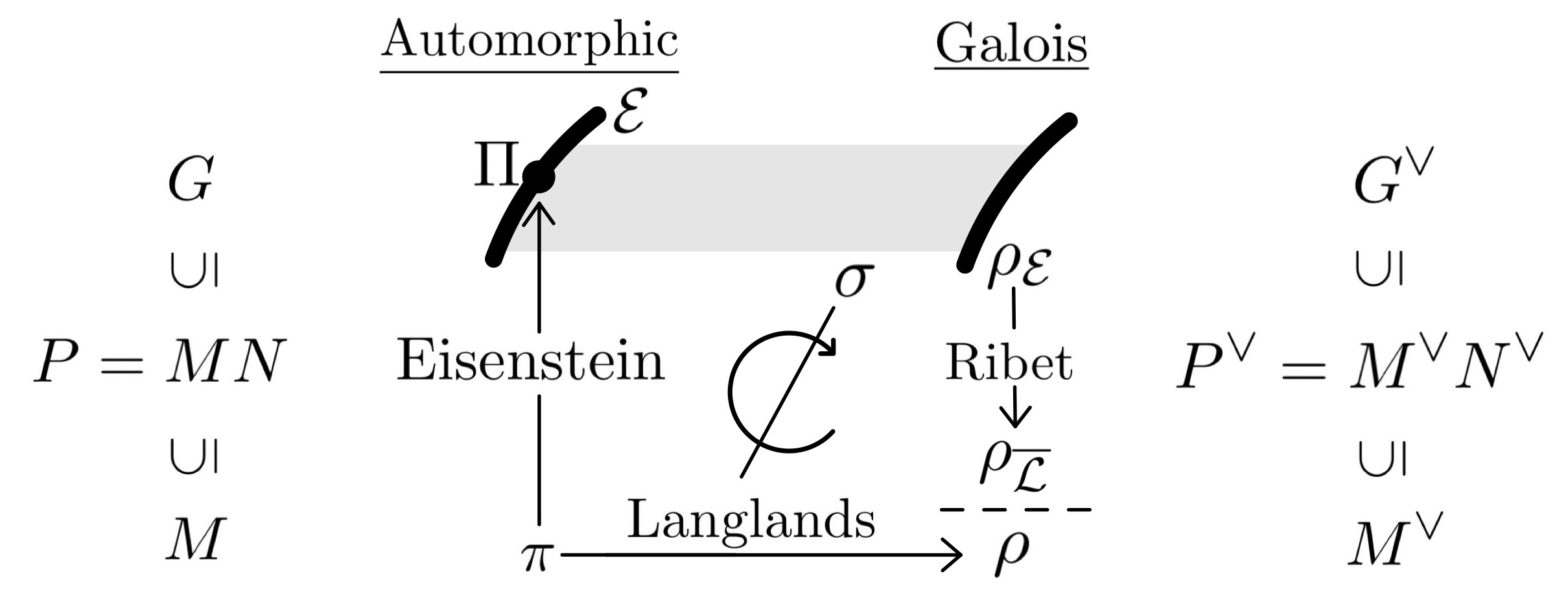}
\captionsetup{labelformat=empty}
\caption{The Skinner--Urban method}
\label{diagram}
\end{figure}

There are certain pieces of numerology concerning all the objects at play here which must hold in order for this method to have a chance of succeeding. Some of these concern the parabolic subgroup $P^\vee$ in $G^\vee$ corresponding to $P$, as well as its unipotent radical $N^\vee$, and we will return to this in our case in a moment after some historical remarks.

The arrow labelled ``Ribet'' in the diagram signifies the formation of the lattice $\mc{L}$. The first time the idea to use a lattice in this way to construct nice Galois cohomology classes appears in Ribet's paper \cite{ribet} where he proves the converse to Herbrand's theorem. This idea was then taken quite far and upgraded by Mazur--Wiles \cite{MWMC} and Wiles \cite{wilesmc} to prove the cyclotomic Iwasawa Main Conjecture for $GL_1$ over $\Q$ and then over totally real fields, respectively.

The Skinner--Urban method is a different upgrade of Ribet's general argument. It was first used by Skinner and Urban in \cite{SUgsp4} to prove that
\[H_f^1(\Q,\rho_F(\tfrac{2-k}{2}))\ne 0,\]
under the hypothesis that the eigenform $F$ is ordinary at $p$, that $k\geq 4$, and that the $L$-function $L(s,F)$ vanishes to odd order at the central point $\tfrac{k}{2}$. In \textit{loc. cit.}, the group $G$ of the method is $GSp_4$ and $P=MN$ is the Siegel parabolic.

The method also appears in \cite{SUunitary} where Skinner and Urban explain how to use this method to construct nontrivial elements in the Selmer groups over imaginary quadratic fields $K$ for Galois representations attached to certain automorphic representations of unitary groups of mixed signature $(a,b)$ defined with respect to $K$. Then in this case the big group $G$ becomes a unitary group of signature $(a+1,b+1)$.

As mentioned above, this paper is concerned with using the Skinner--Urban method to construct Selmer classes for the symmetric cube $\Sym^3(\rho_F)$. To do this, we pass from $M=GL_2$ to $G=G_2$, the split exceptional of rank $2$ over $\Q$. This group is defined by its root system, which has two simple roots, one long and one short, and we write $\alpha$ for the long root and $\beta$ for the short root. Then we let $P=P_\alpha$ be the maximal parabolic with Levi $M=M_\alpha$ containing the unipotent group corresponding to $\alpha$. Then indeed $M_\alpha\cong GL_2$.

We now describe the contents of the sections of this paper by explaining how they each contribute to the Skinner--Urban method for the symmetric cube. There are four sections and an appendix. Section \ref{secg2} sets up the necessary background on the group $G_2$ itself.

In Section \ref{seceiscoh} we begin the first step of the method. Let $\pi_F$ be the unitary automorphic representation of $GL_2(\A)$ attached to the eigenform $F$. The functorial lift we are interested in will be the Langlands quotient, denoted $\mc{L}_\alpha(\pi_F,1/10)$, of the unitary parabolic induction
\[\iota_{P_\alpha(\A)}^{G_2(\A)}(\pi_F\otimes\vert\det\vert^{1/2}).\]
The number $1/10$ is in this notation because, under the identification $M_\alpha\cong GL_2$, we have that $\vert\det\vert^{1/2}=\delta_{P_\alpha(\A)}^{1/10}|_{M_\alpha(\A)}$, where $\delta_{P_\alpha(\A)}$ is the modulus character of $P_\alpha(\A)$.

Let $\A_f$ be the ring of finite adeles. As we will explain below, we will need to locate every instance of the finite part $\mc{L}_\alpha(\pi_F,1/10)_f$ of our Langlands quotient, as a $G_2(\A_f)$-module, in the cohomology of the locally symmetric spaces attached to $G_2$ with respect to the local system defined by a particular irreducible algebraic representation of $G_2(\C)$. This task can be separated into two parts, namely we can separately locate $\mc{L}_\alpha(\pi_F,1/10)_f$ in cuspidal cohomology and in Eisenstein cohomology. Much of the appendix is devoted to doing the former, and we will return to this momentarily; Section \ref{seceiscoh} does the latter.

Let us state the main result on the location of $\mc{L}_\alpha(\pi_F,1/10)_f$ in Eisenstein cohomology. Let $\mf{g}_2$ be the complexified Lie algebra of $G_2$ and $K_\infty$ a maximal compact subgroup of $G_2(\R)$. If $E$ is an irreducible algebraic representation of $G_2(\C)$, then we can define the Eisenstein cohomology
\[H_{\Eis}^*(\mf{g}_2,K_\infty;E).\]
(See Definition \ref{defeiscoh} below.) Let $\lambda_0$ be the dominant weight
\[\lambda_0=\frac{k-4}{2}(2\alpha+3\beta),\]
and write $E_{\lambda_0}$ for the irreducible, finite dimensional representation of $G_2(\C)$ of highest weight $\lambda_0$. Finally, recall that one says two irreducible admissible $G_2(\A_f)$-modules are nearly equivalent if their local components at places $v$ are isomorphic for all but finitely many $v$. Then we have the following theorem, which is Theorem \ref{thmlqineiscoh} below.

\begin{introthm}
Assume that $k\geq 4$ and that
\[L(1/2,\pi_F,\Sym^3)=0.\]
Then there is a unique direct summand of the Eisenstein cohomology $H_{\Eis}^*(\mf{g}_2,K_\infty;E)$ which is isomorphic to the unitary induction
\[\iota_{P_\alpha(\A_f)}^{G_2(\A_f)}(\pi_{F,f}\otimes\vert\det\vert^{1/2}),\]
where $\pi_{F,f}$ denotes the finite part of $\pi_F$. It appears in middle degree $4$. Moreover, any subquotient of $H_{\Eis}^*(\mf{g}_2,K_\infty;E)$ nearly equivalent to $\mc{L}_\alpha(\pi_F,1/10)_f$ can only appear as a subquotient of this summand.
\end{introthm}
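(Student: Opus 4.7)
The plan is to exploit Franke's direct sum decomposition of Eisenstein cohomology by associate classes of cuspidal data, together with the Langlands--Shahidi description of the constant term of Eisenstein series on $G_2$ along the Heisenberg parabolic $P_\alpha$. By Franke, $H_{\Eis}^*(\mf{g}_2,K_\infty;E)$ splits as a direct sum of summands indexed by associate classes of pairs $(P,\sigma)$ consisting of a parabolic $P$ of $G_2$ and a cuspidal automorphic representation $\sigma$ of its Levi. Strong multiplicity one for $GL_2$ then forces any irreducible constituent nearly equivalent to $\mc{L}_\alpha(\pi_F,1/10)_f$ to lie in the summand attached to the class of $(P_\alpha,\pi_F\otimes|\det|^s)$: a quick check rules out the associate class of the Borel and of the other maximal parabolic $P_\beta$, whose Levi is also isomorphic to $GL_2$ but whose analytic behavior is controlled by different $L$-functions of $\pi_F$. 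This yields the uniqueness assertion.

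The heart of the proof is then the analysis of the $(P_\alpha,\pi_F\otimes|\det|^s)$-summand. Shahidi's method applied to $G_2$ with Levi $M_\alpha\cong GL_2$ identifies the standard intertwining operator as a product of $\Sym^3$ $L$-factors together with simpler terms; in the normalization of the excerpt, the value $s=1/10$ corresponding to $|\det|^{1/2}$ is the analytic counterpart of the central value of the symmetric cube $L$-function. The hypothesis $L(1/2,\pi_F,\Sym^3)=0$ therefore forces a zero of the intertwining operator at $s=1/10$, so that $E(\pi_F,s)$ is holomorphic there and its value enters cohomology as the full unitary induction $\iota_{P_\alpha(\A)}^{G_2(\A)}(\pi_F\otimes|\det|^{1/2})$ rather than merely as its Langlands quotient. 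To pin down the degree, I would perform a Kostant calculation: among the representatives in $W^{P_\alpha}$ one singles out those contributing to the infinitesimal character of $\pi_{F,\infty}\otimes|\det|^{1/2}$ in $H^*(\mf{n}_\alpha,E_{\lambda_0})$, and the associated $(\mf{g}_2,K_\infty)$-cohomology of the induction concentrates in total degree $4$, matching the middle dimension of the $G_2$-locally symmetric space.

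The main obstacle is upgrading the presence of the unitary induction inside the Franke summand to an actual direct-summand statement for the full Eisenstein cohomology. \emph{A priori} it could appear only as a subquotient of a larger extension containing further near-equivalents of $\mc{L}_\alpha(\pi_F,1/10)_f$ as composition factors; the potential nontrivial extensions are governed by residues and derivatives of truncated Eisenstein series at $s=1/10$, and their vanishing must be extracted from the combination of the regularity of $E(\pi_F,s)$ at this point and the zero of the intertwining operator produced by $L(1/2,\pi_F,\Sym^3)=0$. Tracking these cancellations precisely, while simultaneously excluding any exotic residual contribution coming from other parabolic data, will be the most delicate step of the argument.
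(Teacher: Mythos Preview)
Your overall strategy matches the paper's: Franke--Schwermer decomposition by associate classes, holomorphy of the Eisenstein series at $s=1/10$ via Langlands--Shahidi and the vanishing of $L(1/2,\pi_F,\Sym^3)$, and a Kostant computation pinning the cohomology in degree $4$. But you have misidentified where the work actually lies, and this matters.

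Your ``main obstacle'' paragraph is not where the difficulty is. Once the Eisenstein series $E(\phi,\lambda)$ for $\phi\in W_{P_\alpha,\tilde\pi_F}$ is holomorphic at the relevant point, a theorem of Franke (used in the paper as Proposition~\ref{propEh0}) gives that the map from $W_{P_\alpha,\tilde\pi_F}\otimes\Sym(\mf{a}_{P_\alpha,0})_{d\chi+\rho_{P_\alpha}}$ to the Franke--Schwermer piece $\mc{A}_{E,[P_\alpha],\varphi_F}(G_2)$ is an \emph{isomorphism}, not merely a surjection. The Franke filtration on this piece collapses to a single step; there are no extensions, no derivatives of truncated Eisenstein series, no ``cancellations'' to track. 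Combined with the Kostant/Delorme computation (the paper's Theorem~\ref{thmcohind}), this piece contributes exactly $\iota_{P_\alpha(\A_f)}^{G_2(\A_f)}(\pi_{F,f},1/10)$ in degree $4$, and since the Franke--Schwermer decomposition is already a direct sum, the direct-summand claim is immediate.

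Conversely, the steps you dismiss as ``a quick check'' are where the genuine content is. Ruling out the Borel is not quick: Eisenstein series induced from $B$ can and do have poles, so one cannot simply invoke holomorphy. The paper handles this via the full Franke filtration (Theorem~\ref{thmfrfil}), whose graded pieces are inductions from various $Q\supset B$ of residual-spectrum data; each such piece is shown, via induction in stages, to be a subquotient of a principal series from $B$, and then Proposition~\ref{propdistneareq} (an $L$-function argument using $L^S(s,R_7(\Pi)\times\chi)$ and $L^S(s,\Pi\times\pi_\alpha,R_7\otimes\std)$, with input from Jacquet--Shalika and Shahidi prime-number theorems) separates near-equivalence classes coming from $B$, $P_\alpha$, and $P_\beta$. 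Ruling out other associate classes $\varphi\ne\varphi_F$ on $P_\alpha$ itself also requires more than strong multiplicity one: one must first argue that the relevant $\pi'$ is cohomological (hence attached to an eigenform, hence tempered), and the paper needs a separate lemma (Lemma~\ref{lemnontempcoh}) to exclude nontempered archimedean components without invoking Selberg's conjecture. Your sketch does not account for any of this.

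In short: reallocate your effort. The $\varphi_F$ piece is clean once you have holomorphy; the exclusion of all other associate classes---especially those supported on $B$---is what requires the structural machinery.
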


In order to show that one has located every instance of $\mc{L}_\alpha(\pi_F,1/10)_f$ in Eisenstein cohomology, one must not only produce every subquotient of Eisenstein cohomology isomorphic to $\mc{L}_\alpha(\pi_F,1/10)_f$, but also show that all other irreducible subquotients of Eisenstein cohomology are not isomorphic to $\mc{L}_\alpha(\pi_F,1/10)_f$. Both of these tasks will be carried out in Section \ref{subseclqincoh}. Before this, however, we will require a substantial amount of background on cohomology and automorphic forms.

The cohomological methods in this paper are entirely automorphic; the locally symmetric spaces themselves never play any direct role here, and we interpret all the cohomology spaces that occur in terms of $(\mf{g},K)$-cohomology. We can do this because results of Franke \cite{franke} have shown that the cohomology of such locally symmetric spaces can be described entirely in terms of the $(\mf{g},K)$-cohomology of the space of automorphic forms. Furthermore, the decomposition theorem of Franke--Schwermer \cite{FS} make it feasible to study the cohomology of the space of automorphic forms rather explicitly. Therefore in section \ref{subsecfsdecomp} we review the structural results of Franke and Franke--Schwermer, and in Section \ref{subsecindcoh} we carefully describe the cohomology of the pieces of the Franke--Schwermer decomposition. These two subsections are set up to work for any reductive group.

Let us assume from now on that $k\geq 4$ and $p\nmid N$. Fix a root $\alpha_p$ of the Hecke polynomial of $F$ at $p$. Having located $\mc{L}_\alpha(\pi_F,1/10)_f$ in Eisenstein cohomology (and, in the Appendix, in the cuspidal spectrum) we continue to the second step of the Skinner--Urban method in our case and, in Section \ref{secdeform}, $p$-adically deform a critical $p$-stabilization $\Pi_F^{(p)}$ of $\mc{L}_\alpha(\pi_F,1/10)_f$ in a generically cuspidal family of $p$-stabilizations of cohomological automorphic representations of $G_2(\A)$ of varying weight. In the absence of a $G_2$-Shimura variety, our options for doing this are limited to the methods present in the paper \cite{urbanev} of Urban on eigenvarieties for reductive groups with discrete series, of which $G_2$ is one.

Making a $p$-adic deformation of a noncritical $p$-stabilization of an automorphic representation is not too difficult with Urban's methods, but when the $p$-stabilization is instead critical, this becomes significantly harder. And we do need to use a critical $p$-stabilization of $\mc{L}_\alpha(\pi_F,1/10)_f$ in order to have in place certain pieces of numerology involving slopes on the Galois side later.

Now the techniques in Urban's paper which allow us to make this $p$-adic deformation go through his theory of multiplicities. Urban defines certain local systems on the locally symmetric spaces of a reductive group $G$ with discrete series whose cohomology contains a subspace which can be considered a space whose constituents are overconvergent $p$-adic automorphic representations of $G$. If a $p$-stabilization of an automorphic representation of $G(\A)$ appears in this space with a nonzero multiplicity, then it can be $p$-adically deformed.

There is furthermore a variant of this notion of multiplicity which allows us to detect when a $p$-stabilization of an automorphic representation of $G(\A)$ deforms in a generically cuspidal family. On the one hand, this cuspidal overconvergent multiplicity, as we call it, can be expressed as a difference between the overconvergent multiplicity just described and other overconvergent ``Eisenstein'' multiplicities coming from smaller Levi subgroups. On the other hand, Urban also relates the ``classical'' multiplicity of an automorphic representation $\pi$, namely that of $\pi$ in the cohomology of the locally symmetric spaces attached to $G$, to the (noncuspidal) overconvergent multiplicity of a $p$-stabilization of $\pi$ and various ``Weyl twists'' of this $p$-stabilization.

The results of Section \ref{seceiscoh}, along with the location of $\mc{L}_\alpha(\pi_F,1/10)_f$ in the cuspidal spectrum of $G_2$ as in the appendix, give the classical multiplicity of $\Pi_F^{(p)}$, which we relate to the overconvergent multiplicities just mentioned. Then we will relate these overconvergent multiplicities to certain cuspidal overconvergent multiplicities of $\Pi_F^{(p)}$ by computing explicitly the Eisenstein multiplicities. Compiling these computations gives that the cuspidal overconvergent multiplicity of $\Pi_F^{(p)}$ is at least $3$, as long as the sign $\epsilon(1/2,\pi_F,\Sym^3)$ of the symmetric cube functional equation equals $-1$, and as long as the weight $k$ is sufficiently large with respect to the $p$-adic valuation $v_p(\alpha_p)$ of the number $\alpha_p$.

Let us comment on these two hypotheses which appear at the end of this last statement. The first hypothesis, which is equivalent to for the hypothesis of odd order vanishing in Theorem \ref{intthmmain}, is there since if $\epsilon(1/2,\pi_F,\Sym^3)=-1$, then Conjecture \ref{conjmult} (b) shows that $\mc{L}_\alpha(\pi_F,1/10)_f$ appears as the finite part of a cuspidal automorphic representation exactly once in the cuspidal spectrum of $G_2$, and that this representation is in the discrete series at infinity. Thus it contributes to this multiplicity. Otherwise (see Conjecture \ref{conjmult} (a)) if $\epsilon(1/2,\pi_F,\Sym^3)=+1$, then $\mc{L}_\alpha(\pi_F,1/10)_f$ appears in the cuspidal spectrum still exactly once, but as the finite part of a cuspidal automorphic representation which is nontempered at infinity, and appears in cohomology in degrees $3$ and $5$. Thus the classical multiplicity, and hence also the overconvergent cuspidal multiplicity, of $\Pi_F^{(p)}$ drops by $3$, and we can no longer conclude that it is positive. We expect that our methods are in general insufficient for $p$-adically deforming $\Pi_F^{(p)}$ when the symmetric cube $L$-function of $\pi_F$ vanishes to even order.

The second hypothesis, that $k$ is sufficiently large with respect to $\alpha_p$, is there to ensure that the slope of $\Pi_F^{(p)}$, although critical, is minimally so, and in such a way that the computation of multiplicities is not impossible. Fortunately we do have methods to overcome this hypothesis. We do this by putting $F$ in a Coleman family, hence also deforming $\Pi_F^{(p)}$ along the axis of weights which are multiples of the weight $\lambda_0$ fixed above. We show that the sign of the symmetric cube functional equation is locally constant in this family, and hence that the higher weight members of this family give rise to Langlands quotients with $p$-stabilizations analogous to our representation $\Pi_F^{(p)}$, but with cuspidal overconvergent multiplicity at least $3$. This implies (by Lemma \ref{lemintsummand} below) that the cuspidal overconvergent multiplicity of $\Pi_F^{(p)}$ is also positive. Thus we get the following; See Theorem \ref{thmpadicdef} for the precise meaning of this.

\begin{introthm}
\label{intthmpadicdef}
Assume $k\geq 4$, $p\nmid N$, and $\epsilon(1/2,\pi_F,\Sym^3)=-1$. Then under Conjecture \ref{conjmult} (b), the $p$-stabilization $\Pi_F^{(p)}$ admits a $p$-adic deformation in a generically cuspidal family of cohomological automorphic representations of $G_2(\A)$.
\end{introthm}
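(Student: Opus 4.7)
The plan is to execute Urban's multiplicity formalism: show that the cuspidal overconvergent multiplicity of $\Pi_F^{(p)}$ is positive, since any $p$-stabilization with positive cuspidal overconvergent multiplicity admits a $p$-adic deformation in a generically cuspidal family of cohomological automorphic representations of $G_2(\A)$. As outlined in the introduction, cuspidal overconvergent multiplicity is expressible as the (noncuspidal) overconvergent multiplicity minus Eisenstein overconvergent multiplicities coming from proper Levi subgroups, while the classical multiplicity of $\mc{L}_\alpha(\pi_F,1/10)_f$ in the cohomology of the locally symmetric spaces of $G_2$ can be packaged with those of its Weyl twists into an expression for the noncuspidal overconvergent multiplicity of $\Pi_F^{(p)}$.

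First I would assemble the classical multiplicity input. From the results of Section \ref{seceiscoh} (in particular Theorem \ref{thmlqineiscoh}) one reads off the contribution of $\mc{L}_\alpha(\pi_F,1/10)_f$ in Eisenstein cohomology, appearing in middle degree $4$, while the appendix, together with Conjecture \ref{conjmult} (b), provides that under the hypothesis $\epsilon(1/2,\pi_F,\Sym^3)=-1$ the finite part $\mc{L}_\alpha(\pi_F,1/10)_f$ occurs exactly once in the cuspidal spectrum of $G_2$ as the finite part of a representation in the discrete series at infinity, contributing an additional $3$ to the Euler characteristic on cohomology. Combining these with the Franke--Franke--Schwermer description of $(\mf{g}_2,K_\infty)$-cohomology (Sections \ref{subsecfsdecomp} and \ref{subsecindcoh}) gives a clean formula for the classical multiplicity of $\Pi_F^{(p)}$. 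Translating this into Urban's overconvergent language (his comparison between classical multiplicity and the sum of overconvergent multiplicities of the relevant Weyl twists of the $p$-stabilization) yields the noncuspidal overconvergent multiplicity.

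Next I would compute, case by case, the Eisenstein overconvergent multiplicities coming from each proper standard Levi of $G_2$ that could contribute a representation nearly equivalent to $\Pi_F^{(p)}$. These computations are local at $p$ in nature, requiring one to analyze which $p$-refinements of Eisenstein inductions from the $M_\alpha$-Levi (and possibly the $M_\beta$-Levi, though these should not contribute in our situation by a consideration of infinitesimal characters and near equivalence) match the Hecke data of $\Pi_F^{(p)}$. Subtracting these Eisenstein overconvergent multiplicities from the noncuspidal overconvergent multiplicity produces the cuspidal overconvergent multiplicity, and the bookkeeping should yield the lower bound of $3$ advertised in the introduction, provided that the critical slope situation is minimal, i.e.\ $k$ is large compared to $v_p(\alpha_p)$.

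The main obstacle is that, without the weight being large relative to $v_p(\alpha_p)$, the direct multiplicity computation fails because the relevant overconvergent cohomology groups pick up unwanted contributions from higher Weyl twists that one cannot a priori rule out. To overcome this, I would embed $F$ in a Coleman family along the weight axis spanned by $\lambda_0=\tfrac{k-4}{2}(2\alpha+3\beta)$, producing for each classical member of the family a corresponding Langlands quotient and its critical $p$-stabilization, analogous to $\Pi_F^{(p)}$. The key auxiliary point to establish is that $\epsilon(1/2,\pi_{F'},\Sym^3)$ is locally constant along this Coleman family; this should follow from the formula for the symmetric cube epsilon factor in terms of local epsilon factors, all of which are unramified or depend only on local data stable in the family since $p\nmid N$ and the level is fixed. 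Once this local constancy is in hand, the members of the family at sufficiently high weight satisfy the ``minimal critical slope'' hypothesis, so the direct computation above gives positive cuspidal overconvergent multiplicity there. Applying Lemma \ref{lemintsummand} to transport positivity of the cuspidal overconvergent multiplicity from the dense set of high-weight classical points back to the original weight point then produces the desired $p$-adic deformation of $\Pi_F^{(p)}$ in a generically cuspidal family.
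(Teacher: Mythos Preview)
Your proposal is correct and follows essentially the same route as the paper: compute the classical multiplicity via Theorem~\ref{thmlqineiscoh} and Conjecture~\ref{conjmult}~(b), relate it to overconvergent multiplicities, subtract the Eisenstein contributions from proper Levis, obtain positivity when $k$ is large relative to $v_p(\alpha_p)$, and then propagate via a Coleman family, local constancy of the symmetric cube sign, and Lemma~\ref{lemintsummand}. Two small corrections: the cuspidal discrete series contributes $1$ (not $3$) to the classical Euler characteristic in middle degree, so the classical multiplicity is $\geq 2$ rather than $\geq 4$; and the $M_\beta$-Eisenstein terms do require explicit computation (via slope and near-equivalence arguments) rather than vanishing outright, though they ultimately cancel or vanish as you anticipate.
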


We then use the theory of types to study the local properties of this deformation. This study is essential to showing that, on the Galois side, the family of Galois representations we obtain is no more ramified at places $\ell|N$ than $\rho_F$ itself. This is where the assumptions that $4\nmid N$ and $9\nmid N$ come in to play; by results of Fintzen \cite{fintzen}, we have a satisfactory theory of types for $G_2(\Q_\ell)$, but only at primes $\ell$ not dividing the order of the Weyl group of $G_2$, which is $12$. However, if $N$ is divisible exactly once by a prime $\ell$, then $\pi_F$ is an unramified twist of Steinberg at $\ell$, and we have a way to circumvent having to use the general theory of types in this case.

We take this opportunity to point out that a similar calculation of multiplicities to this one was sketched in \cite[\S 5.5]{urbanev} for a situation analogous to this one, but for $GSp_4$ in place of $G_2$. There are two mistakes that calculation, however, and they cancel each other. The first is that the multiplicities of representations for $GSp_4$ which Urban was working with should be defined with respect to the connected component of the identity of the maximal compact subgroup for $GSp_4(\R)$, and the Siegel Eisenstein multiplicities should be defined with respect to the full maximal compact subgroup of $GL_2(\R)$. The second is that there is no Eisenstein class in degree $2$, as is claimed in \textit{loc. cit.} In order to make these calculations rigorous, one also needs to make a computation analogous to those in Section \ref{thmlqineiscoh} of this paper. We wish to assure the reader that this is all doable.

We also point out that there is an error in \cite{urbanev} with the general definition of the cuspidal overconvergent multiplicities. We explain this error in Section \ref{subsecev}, and we explain how it affects (or, more accurately, does not affect) the case of $G_2$, and why the main results of \textit{loc. cit.} are still valid. Forthcoming work of Urban and the author of the present article also corrects this error in general, building off work of Gulotta \cite{gulotta}.

Next, in Section \ref{secgalreps}, we proceed to the third step of the Skinner--Urban method in our case and use Theorem \ref{intthmpadicdef} to construct a family of Galois representations. The group $G_2$ is self dual and has a $7$-dimensional representation $R_7$, so we expect Langlands functoriality to allow us to lift certain automorphic representations from $G_2$ to $GL_7$. So here we assume Conjecture \ref{conjliftings}, which will give us these functorial lifts to $GL_7$ of the cuspidal members of our $p$-adic family with regular weight. We show using results of Chenevier \cite{chen} that the Galois representations attached to these cuspidal automorphic representations factor through $G_2(\overline\Q_p)$, and we glue these together using the theory of pseudocharacters of V. Lafforgue \cite{laff}.

The result is that we get the following objects:
\begin{itemize}
\item A $G_2$-pseudocharacter $\Theta$ over the ring $\mc{O}(\mf{Z})$ of analytic functions on an affinoid rigid analytic curve $\mf{Z}$;
\item A Galois representation $\rho_{\Theta}$ into $G_2(\overline{F(\mf{Z})})$, where $F(\mf{Z})=\Frac(\mc{O}(\mf{Z}))$;
\item The composition
\[R_7\circ\rho_{\Theta}:G_\Q\to GL_7(\mc{O}(\mf{Z})),\]
which is a continuous Galois representation.
\end{itemize}

The curve $\mf{Z}$ sits over a curve in the $p$-adic family coming from Theorem \ref{intthmpadicdef} and contains a point $y_0$ corresponding to $\Pi_F^{(p)}$. The semisimplification of the specialization of $R_7\circ\rho_{\Theta}$ at $y_0$ is given by
\[(R_7\circ\rho_{\Theta})^{\sss}\cong\rho_F(-\tfrac{k-2}{2})\oplus\Ad(\rho_F)\oplus\rho_F(-\tfrac{k}{2}),\]

We then construct a lattice $\mc{L}$ in the Galois representation $R_7\circ\rho_{\Theta}$. This construction differs is an essential way from the lattice constructions in \cite{SUgsp4} and \cite{SUunitary}; if we were to follow these works, we would construct $\mc{L}$ so that its specialization $\overline{\mc{L}}$ at the point $y_0$ has unique irreducible quotient $\Ad(\rho_F)$. There are two issues with this. The first is that we don't know at this point that $R_7\circ\rho_{\Theta}$ is generically irreducible. The second is that, even if we did, a successful construction of $\overline{\mc{L}}$ could only give us a nontrivial extension of $\Ad(\rho_F)$ by $\rho_F(-\tfrac{k-2}{2})$, and we have
\[\ext_{G_\Q}^1(\rho_F(-\tfrac{k-2}{2}),\Ad(\rho_F))=H^1(\Q,\Sym^3(\rho_F)(\tfrac{4-3k}{2}))\oplus H^1(\Q,\rho_F(-\tfrac{k-2}{2})).\]
Thus it may be the case that this construction only gives us a class in the Selmer group of $\rho_F(-\tfrac{k-2}{2})$.

So instead we construct the lattice $\mc{L}$ so that $\overline{\mc{L}}$ has unique irreducible quotient $\rho_F(-\tfrac{k}{2})$ if $R_7\circ\rho_{\Theta}$ is generically irreducible, and otherwise so that it has irreducible quotients $\rho_F(-\tfrac{k}{2})$ and $\Ad(\rho_F)$. We show that the only possibilities for the matricial shape of $\overline{\mc{L}}$ are:
\begin{equation*}
\overline{\mc{L}}\sim\pmat{\rho_F(-\tfrac{k-2}{2}) & *_3 & *_2\\
0 & \Ad(\rho_F) & *_1\\
0 & 0 & \rho_F(-\tfrac{k}{2})},
\end{equation*}
with $*_1$ and $*_2$ nontrivial, or
\begin{equation*}
\overline{\mc{L}}\sim\pmat{\Ad(\rho_F) & *_3 & *_2\\
0 & \rho_F(-\tfrac{k-2}{2}) & *_1\\
0 & 0 & \rho_F(-\tfrac{k}{2})},
\end{equation*}
again with $*_1$ and $*_2$ nontrivial, or
\begin{equation*}
\overline{\mc{L}}\sim\pmat{\rho_F(-\tfrac{k-2}{2}) & 0 & *_2\\
0 & \Ad(\rho_F) & 0\\
0 & 0 & \rho_F(-\tfrac{k}{2})},
\end{equation*}
with $*_2$ nontrivial. The first two possibilities can occur only when $R_7\circ\rho_{\Theta}$ is generically irreducible, and the last can only occur when it is not. Here we use that $F$ is not CM so that $\rho_F$ has big image.

At this point we need two things to happen, namely that the first of these three possibilities occurs, and that $\overline{\mc{L}}$ factors through $G_2(\overline\Q_p)$. Then $*_1$ (or equivalently, $*_3$) would give the desired extension, and the factorization through $G_2(\overline\Q_p)$ would further rule out the possibility that $*_1$ (or $*_3$) could contribute to the Selmer group of $\rho_F(-\tfrac{k-2}{2})$. We show both of these two things at the same time, and we do this by studying an alternating trilinear form $\langle\cdot,\cdot,\cdot\rangle$ on $\overline{\mc{L}}$ which comes from the fact that $\rho_\Theta$ is a representation into $G_2$, and that the $G_2$-subgroups of $GL_7$ are those preserving sufficiently nondegenerate alternating trilinear forms. The key point here is that, if $\langle\cdot,\cdot,\cdot\rangle$ were degenerate then we could use $*_2$ to construct a nontrivial extension of $\overline\Q_p$ by $\overline\Q_p(1)$. This extension is unramified at $\ell\ne p$ by the local study of the $p$-adic family we make at the end of Section \ref{secdeform}, and it is crystalline at $p$ by a lemma of Kisin on the interpolation of crystalline periods. This would therefore give a nontrivial Selmer element in
\[H_f(\Q,\overline\Q_p(1)),\]
which is the trivial group, contradiction. 

Thus we can use $*_3$ to construct the desired Selmer group element in $H_f^1(\Q,\Sym^3(\rho_F)(\tfrac{4-3k}{2}))$ To show that this element satisfies the Selmer conditions, we use the same results as cited above, and we also prove the following result, which we wish to highlight here:

\begin{introthm}
Assuming $k\geq 4$, $p\nmid N$, and that the Hecke polynomial of $F$ at $p$ has simple roots, we have that any extension $E$ of Galois representations,
\[0\to \rho_F(1)\to E\to \rho_F\to 0\]
is semistable at $p$.
\end{introthm}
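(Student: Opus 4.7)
The plan is to restrict $E$ to $G_{\Q_p}$ and use $p$-adic Hodge theory, since semistability is a condition on $E|_{G_{\Q_p}}$. Because $p\nmid N$ and the Hecke polynomial of $F$ at $p$ has simple roots, $\rho_F|_{G_{\Q_p}}$ is crystalline with two distinct crystalline Frobenius eigenvalues $\alpha,\beta$, and consequently $\rho_F(1)|_{G_{\Q_p}}$ is crystalline with Frobenius eigenvalues $\alpha/p,\beta/p$. Using $k\geq 4$, the four Hodge--Tate weights of $E|_{G_{\Q_p}}$ are pairwise distinct integers.

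Equivalence classes of extensions are parametrized by $H^1(G_{\Q_p},\End(\rho_F)(1))$, inside which the semistable classes cut out a subspace $H^1_{\mr{st}}$; the goal is to show $H^1=H^1_{\mr{st}}$ by a dimension count. First I would compute $\dim H^1$ using the local Euler characteristic formula and local Tate duality, together with the fact that the disjointness of Frobenius eigenvalues of $\rho_F$ and $\rho_F(1)$ makes $\hom_{G_{\Q_p}}(\rho_F,\rho_F(1))$ vanish in the generic case. Then, using the Colmez--Fontaine equivalence between semistable representations and weakly admissible filtered $(\varphi,N)$-modules, I would identify $H^1_{\mr{st}}$ with $\ext^1$ in the latter category from $D_{\mr{cris}}(\rho_F)$ to $D_{\mr{cris}}(\rho_F(1))$, and compute this explicitly: the simple-roots hypothesis splits $D_{\mr{cris}}(\rho_F)$ as a direct sum of two one-dimensional Frobenius eigenlines, and one then enumerates the possible Hodge filtrations and monodromy operators $N$ giving weakly admissible extensions, verifying admissibility via the distinctness of Hodge--Tate weights and of Frobenius eigenvalues.

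The main obstacle is carrying out this dimension count carefully, in particular the near-degenerate cases where $\alpha=p\beta$ or $\beta=p\alpha$ occur compatibly with the simple-roots hypothesis; in those cases $\hom_{G_{\Q_p}}(\rho_F,\rho_F(1))$ becomes nontrivial and $\dim H^1$ grows, but simultaneously a nontrivial monodromy operator $N$ is forced on the semistable side, producing exactly enough additional semistable (but not crystalline) extensions to preserve the equality. A cleaner reformulation of the argument, avoiding case analysis, applies Bloch--Kato's orthogonality under local Tate duality to reduce the claim to the vanishing of $H^1_e(G_{\Q_p},\End(\rho_F))$, which can be read off from the explicit structure of $\End(D_{\mr{cris}}(\rho_F))$ as a filtered $\varphi$-module (whose Frobenius eigenvalues are $1,1,\alpha/\beta,\beta/\alpha$ with $\alpha/\beta\ne 1$, and whose Hodge--Tate weights are determined by those of $\rho_F$).
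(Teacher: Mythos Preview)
Your approach is essentially the same as the paper's: a dimension count comparing $\ext^1_{G_{\Q_p}}(\rho_F,\rho_F(1))$ with $\ext^1$ in the category of filtered $(\varphi,N)$-modules, using Colmez--Fontaine. The paper computes both sides to equal $5$ (splitting $\End(\rho_F)(1)$ as $\Ad^2\rho_F(1)\oplus\overline{\Q}_p(1)$ on the Galois side, and explicitly parametrizing extensions on the filtered-module side).

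Your ``main obstacle'' is not an obstacle: the near-degenerate cases $\alpha=p\beta$ or $\beta=p\alpha$ never occur. Since $F$ is a holomorphic cuspidal eigenform, the Ramanujan bound (Deligne) gives $|\alpha|=|\beta|=p^{(k-1)/2}$, so $|\alpha/\beta|=1\ne p$. Combined with the simple-roots hypothesis $\alpha\ne\beta$, this forces the four Frobenius eigenvalues $\alpha,\beta,\alpha/p,\beta/p$ on $D_{\crys}(\rho_F\oplus\rho_F(1))$ to be pairwise distinct; the paper records this explicitly and uses it to diagonalize $\varphi$ on $D_{\st}(E)$. So there is no case analysis to do, and $\hom_{G_{\Q_p}}(\rho_F,\rho_F(1))=0$ always, not just generically. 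Your alternative reformulation via $H^1_e(G_{\Q_p},\End(\rho_F))=0$ and Bloch--Kato duality is a valid shortcut and is cleaner than the explicit $5$-dimensional parametrization, though it requires the same inputs (distinctness of Frobenius eigenvalues and the Hodge filtration structure on $\End(D_{\crys}(\rho_F))$).
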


See Section \ref{subsecpadichodge} for the proof of this theorem. This gets us started in showing the crystallinity of $*_3$

Showing the crystallinity of $*_3$ also requires a trick of switching the root $\alpha_p$ of the Hecke polynomial of $F$ at $p$, and this is another place where we assume this polynomial has simple roots. We remark that this assumption is conjectured to always be true. See, for example, \cite{CE}, where this assumption is discussed and shown to follow from the Tate conjecture.

This completes our sketch of the proof of Theorem \ref{intthmmain}. Now we briefly discuss the Conjectures \ref{conjmult} and \ref{conjliftings}. The appendix of this paper is devoted to justifying our believe in these conjectures, and in particular showing how they follow from Arthur's conjectures. 

As mentioned above, Conjecture \ref{conjmult} is concerned with the $\mc{L}_\alpha(\pi_F,1/10)_f$-isotypic component of the cuspidal spectrum of $G_2$. As explained in \cite{gangurl} and recalled in Section \ref{seccuspmult}, there is a global Arthur parameter $\psi_F$ for $G_2$ whose associated local packets at any finite place $v$ contain the $v$-component $\mc{L}_\alpha(\pi_F,1/10)_v$ of $\mc{L}_\alpha(\pi_F,1/10)$. If $\pi_F$ is unramified at $v$, the local packet at $v$ is a singleton, and otherwise it has two elements.

As we will explain in Section \ref{seccuspmult}, Arthur's multiplicity formula then implies that $\mc{L}_\alpha(\pi_F,1/10)_f$ occurs as the finite component of exactly one automorphic representation $\Pi$ in the discrete spectrum of $G_2$, and that the archimedean component $\Pi_\infty$ of $\Pi$ is an element of the local Arthur packet at $\infty$ associated with the component $\psi_{F,\infty}$ of $\psi_F$ at $\infty$. Moreover, if $L(1/2,\pi_F,\Sym^3)=0$, then $\Pi$ is not residual and is thus cuspidal.

Now the local packet associated with $\psi_{F,\infty}$ should contain two elements, call them $\Pi_\infty^+$ and $\Pi_\infty^-$, and the multiplicity formula tells us in this case that
\[\Pi_\infty=\Pi_\infty^{\epsilon(1/2,\pi_F,\Sym^3)}.\]
What is left to do then is to compute this packet and show that $\Pi_\infty^+$ is nontempered and cohomological of weight $\lambda_0$ in degrees $3$ and $5$, and that $\Pi_\infty^-$ is discrete series of weight $\lambda_0$.

The work of Adams--Johnson \cite{adjo} explains how to compute such packets using cohomological induction. We review this in Section \ref{subsecadjo} and in Section \ref{subsecajpacket} we prove the following theorem.

\begin{introthm}
The Arthur packet associated with $\psi_{F,\infty}$, as defined via cohomological induction by Adams--Johnson \cite{adjo}, consists of $\Pi_\infty^+=\mc{L}_\alpha(\pi_F,1/10)_\infty$ and another representation $\Pi_\infty^-$ which is the quaternionic discrete series of weight $k/2$ (see \cite{ggs}).
\end{introthm}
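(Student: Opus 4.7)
The strategy is to apply the Adams--Johnson construction recalled in Section \ref{subsecadjo} directly to $\psi_{F,\infty}$ and verify that the two resulting representations are as claimed. First I would pin down $\psi_{F,\infty}\colon W_\R \times SL_2(\C) \to G_2(\C)$ explicitly. Composing with the seven-dimensional representation of $G_2(\C)$ produces, up to Tate twists, $\phi_{F,\infty}\oplus\Ad(\phi_{F,\infty})\oplus\phi_{F,\infty}$, where $\phi_{F,\infty}$ is the discrete series parameter of $\pi_{F,\infty}$. For this to be the parameter attached to a CAP representation at $P_\alpha$, the $SL_2(\C)$-factor must act trivially on the $\Ad(\phi_{F,\infty})$ summand and link the two copies of $\phi_{F,\infty}$ via $\Sym^1$. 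This forces $SL_2(\C)$ to map onto a principal $SL_2$ inside the dual Levi $M_\alpha^\vee \subset G_2(\C)$, whose centralizer is the central one-dimensional torus of $M_\alpha^\vee$; combined with the action of $W_\R$, the component group of the full centralizer of $\psi_{F,\infty}$ in $G_2(\C)$ has order two, so the Adams--Johnson packet has exactly two elements.

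Next I would identify the two $\theta$-stable parabolic subalgebras $\mf{q}_i=\mf{l}_i\oplus\mf{u}_i$ of $\mf{g}_{2,\C}$ producing the packet. Each character of the component group picks out a real form of the Levi and hence a Cartan involution: one choice gives $L_1\cong M_\alpha(\R)\cong GL_2(\R)$, a non-compact Levi, and the other gives $L_2$ an anisotropic inner form. The shifted infinitesimal character $\lambda_i$ is determined by the $W_\R$ part of $\psi_{F,\infty}$ and the central twist $\vert\det\vert^{1/2}$; it lies in the weakly fair range, so the cohomologically induced modules $A_{\mf{q}_i}(\lambda_i)$ are nonzero and unitary.

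For $i=1$, cohomological induction from the non-compact Levi $L_1$ is compatible with ordinary parabolic induction from $P_\alpha(\R)$; working out the weights shows that $A_{\mf{q}_1}(\lambda_1)$ is the unique irreducible subquotient of $\iota_{P_\alpha(\R)}^{G_2(\R)}(\pi_{F,\infty}\otimes\vert\det\vert^{1/2})$ with the prescribed infinitesimal character, which is by definition $\mc{L}_\alpha(\pi_F,1/10)_\infty=\Pi_\infty^+$. For $i=2$, cohomological induction from the compact-type Levi $L_2$ yields a discrete series of $G_2(\R)$. To match it with the quaternionic discrete series of weight $k/2$ of \cite{ggs}, I would compute the minimal $K_\infty$-type of $A_{\mf{q}_2}(\lambda_2)$ via the Vogan--Zuckerman formula
\[\mu = \lambda_2\vert_{\mf{t}} + 2\rho(\mf{u}_2\cap\mf{p}),\]
and compare it with the description in \cite{ggs} of the minimal $K_\infty$-type of the quaternionic discrete series.

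The main obstacle is this last matching step. Since the maximal compact is $K_\infty=(SU(2)_L\times SU(2)_S)/\{\pm 1\}$ and the quaternionic discrete series is characterized by its minimal $K$-type being concentrated on the short-root factor $SU(2)_S$, the computation reduces to precise root-theoretic bookkeeping in $G_2$: which long and short roots of $\mf{u}_2$ lie in $\mf{p}$ versus $\mf{k}$, and which linear combination of fundamental weights of $SU(2)_L\times SU(2)_S$ is realized by the right-hand side above. Once the Cartan involution corresponding to the second character of the component group is chosen correctly, this combinatorial computation should pin down the weight $k/2$ uniquely.
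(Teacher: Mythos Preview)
There are two genuine gaps in your setup that must be fixed before the strategy can succeed.

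First, the $\theta$-stable Levis are not what you claim. A $\theta$-stable Levi in the Adams--Johnson construction must contain the fixed compact maximal torus $T_c$, so it can never be isomorphic to $GL_2(\R)$, which has no compact Cartan subgroup. In fact the two $\theta$-stable Levis here are $L\cong U(2)$ (its roots $\pm\beta$ are compact) and the Weyl twist $L_w\cong U(1,1)$ (its roots $\pm(\alpha+2\beta)$ are noncompact); neither is $M_\alpha(\R)$ nor an anisotropic inner form of it. Relatedly, your identification of the image of the Arthur $SL_2(\C)$ is backwards: under the self-duality of $G_2$ (which swaps long and short roots), $M_\alpha^\vee$ sits in $G_2(\C)$ as a \emph{short}-root Levi, whereas $\psi_{F,\infty}|_{SL_2(\C)}$ has image the long-root subgroup $SL_{2,2\alpha+3\beta}(\C)$. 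The correct dual Levi is the long-root one, i.e.\ $L^\vee$ with $L$ the short-root $\theta$-stable Levi.

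Second, and more importantly, the sentence ``cohomological induction from the non-compact Levi $L_1$ is compatible with ordinary parabolic induction from $P_\alpha(\R)$'' is precisely the hard content of the identification $A_{\mf{q}_w}(w\lambda_k)\cong\mc{L}_\alpha(\pi_k,1/10)$, and it is not a formality. The $\theta$-stable Levi $U(1,1)$ and the real parabolic Levi $M_\alpha(\R)\cong GL_2(\R)$ are different subgroups of $G_2(\R)$; passing from cohomological induction from the former to real parabolic induction from the latter requires Vogan's machinery relating $\theta$-stable data, character data, and cuspidal data (\cite[Theorem~6.6.15]{voganbook}). The paper does this explicitly: it builds a $\theta$-stable torus $H=T_0A$ inside $L_w$, produces from it cuspidal data for a long-root real parabolic $P=MAN$, and then invokes Vogan's theorem to identify the two standard modules. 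You have identified the Langlands quotient as the ``easy'' member and the discrete series as the obstacle, but it is the other way around.

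For the discrete series member, your proposed minimal $K$-type computation via the Vogan--Zuckerman formula is a legitimate alternative to the paper's method. The paper instead uses the spectral sequence of \cite[Theorem~11.77]{knvo} for cohomological induction in stages (through the Borel versus through $\mf{q}$) together with the realization of discrete series as cohomological inductions from the compact Cartan (\cite[Theorem~11.178(a)]{knvo}); once one has the correct $\mf{q}$ with $L\cong U(2)$, either route pins down the Harish-Chandra parameter $\tfrac{k-4}{2}(2\alpha+3\beta)+\rho$.
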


We remark that this result has already found use in work of R. Dalal on counting quaternionic $G_2$-automorphic forms; see \cite{dalal}.

Now the other conjecture on which Theorem \ref{intthmmain} depends, namely Conjecture \ref{conjliftings}, predicts the existence of functorial lifts of cohomological, cuspidal automorphic representations $\Pi$ of $G_2(\A)$ of regular weight to cohomological, discrete automorphic representations of $GL_7(\A)$ of regular weight. In section \ref{subsecliftings}, we explain how the existence of such lifts follow from Arthur's conjectures.

The main point is that, if $\psi$ is an Arthur parameter for such a $\Pi$, then the fact that $\Pi$ is cohomological of regular weight forces the restriction of $\psi$ to $SL_2(\C)$ to be trivial. We show this by classifying the Arthur parameters for $G_2(\R)$ whose packets can contain cohomological representations in Section \ref{subseccohparams}. Then we consider $R_7\circ\psi$, which therefore just comes from a tempered Langlands parameter for $GL_7$, and we take our lifting to be the one defined by this Langlands parameter.

We remark that Conjecture \ref{conjmult} seems to be within reach, and should follow (nontrivially) from the forthcoming work of Baki\'c and Gan on theta correspondence for $U_3\times G_2$. It is also plausible that Conjecture \ref{conjliftings} is within reach; as suggested by M. Harris, one could try to show that the theta lift of such a $\Pi$ as above to $PGSp_6$ is nonvanishing directly by computing a particular Fourier coefficient of it in terms of $L$-functions. But this is certainly more speculative at this point.

One other remark about Theorem \ref{intthmmain} that we would like to make is that, if the level of $F$ is $1$, then actually most of the hypotheses on $F$ in that theorem are vacuous; indeed, in this case we have $k\geq 12$, $\epsilon(1/2,\pi_F,\Sym^3)=-1$ always, $F$ is not CM, and the Hecke polynomial of $F$ at $p$ has simple roots (because $a_p$ is a rational integer in this case). Thus, for $F$ a cuspidal eigenform with trivial nebentypus, we get nonvanishing of the symmetric cube Selmer group assuming only Conjectures \ref{conjmult} (b) and \ref{conjliftings} (a)-(c). (We do not need (d) and (e) of the latter conjecture in this case since those points are used to study liftings at bad places.)

Finally, we note that there has been a lot of interest lately in symmetric cube Selmer groups. For example, there is the recent work of Haining Wang \cite{wang} and the work of Loeffler--Zerbes \cite{LZSym3}. Both of these papers work in the Euler system direction, establishing upper bounds on the ranks of the symmetric cube Selmer groups that they study, as opposed to this paper which works in the opposite direction.

\subsection*{Acknowledgements}
The present article grew out of, and expands upon, the work done in my Ph.D. thesis \cite{thesis}. My thesis advisor was Eric Urban, and a great debt of gratitude is owed to him for his advice and suggestions.

Many thanks are also due to Jeff Adams, who originally suggested that the archimedean Arthur packet studied in Section \ref{subsecajpacket} could be constructed via cohomological induction.

In addition to these two people, I would also like to thank Rapha\"el Beuzart-Plessis, Johan de Jong, Wee Teck Gan, Dan Gulotta, Michael Harris, Herv\'e Jacquet, Chao Li, Shizhang Li, Stephen D. Miller, Aaron Pollack, Chris Skinner, and Shuai Wang for helpful conversations.

This material is based upon work supported by the National Science Foundation under Award DMS-2102473.

\section*{Notation and conventions}
The following conventions will be used throughout this entire paper.

\subsubsection*{Groups and Lie algebras}
In general, our convention is to use uppercase roman letters to denote groups over $\Q$, such as $G$, and to use the corresponding lowercase fraktur letters to denote complex Lie algebras. So for example, $\mf{g}$ will always denote the complexified Lie algebra of the $\Q$-group $G$. The only exceptions to this convention occur in the appendix, specifically in Section \ref{subsecadjo}; see that section for the notation used there.

When working with a group $G$, we will often fix a parabolic subgroup $P$ of $G$ along with a Levi decomposition $P=MN$. In this decomposition, $M$ will always denote the Levi factor and $N$ the unipotent radical. If we have another parabolic subgroup with fixed Levi decomposition, then we use subscripts on the notation for its fixed Levi factor and its unipotent radical to distinguish them from those of $P$; so if $Q$ is another parabolic subgroup, we will write $Q=M_Q N_Q$ for its Levi decomposition.

For any parabolic $Q$ as above, the notation $A_Q$ will denote the maximal $\Q$-split torus in the center of the Levi $M_Q$ of $Q$. This applies in particular to $P$ and $G$; we use $A_G$ to denote the maximal $\Q$-split torus in the center of $G$, and $A_P$ that of $M$.

Now we have the complexified Lie algebras $\mf{g}$, $\mf{p}$, $\mf{q}$, $\mf{m}$, $\mf{m}_Q$, $\mf{n}$, $\mf{n}_Q$, $\mf{a}_P$, and $\mf{a}_Q$ of, respectively, $G$, $P$, $Q$, $M$, $M_Q$, $N$, $N_Q$, $A_P$, and $A_Q$. We let $\mf{g}_0=[\mf{g},\mf{g}]$, the self-commutator of $\mf{g}$, and more generally, we write $\mf{m}_{Q,0}=[\mf{m}_Q,\mf{m}_Q]$, or $\mf{m}_0=[\mf{m},\mf{m}]$. We also write $\mf{q}_0=\mf{q}\cap\mf{g}_0$ and $\mf{a}_{Q,0}=\mf{a}_Q\cap\mf{g}_0$, and similarly for $\mf{p}_0$ and $\mf{a}_{P,0}$. Then there are decompositions
\[\mf{q}=\mf{m}_{Q,0}\oplus\mf{a}_Q\oplus\mf{n}_Q,\]
and
\[\mf{q}_0=\mf{m}_{Q,0}\oplus\mf{a}_{Q,0}\oplus\mf{n}_Q.\]

We will always write $\rho_Q$ for the character $\rho_Q:\mf{a}_{Q,0}\to\C$ given by
\[\rho_Q(X)=\tr(\ad(X)|\mf{n}_Q),\qquad X\in\mf{a}_{Q,0},\]
and similarly for $\rho_P$.

Most of the time we will be working with the group $G_2$, which we introduce in Section \ref{subsecg2str}. The objects associated with this group have various pieces of notation attached to them as well, and we refer to that section for those notations.

\subsubsection*{Points of groups}
When $v$ is a place of $\Q$, we write $\Q_v$ for the completion of $\Q$ at $v$. Then $\R=\Q_\infty$. The group of $\Q_v$-points of any affine algebraic group over $\Q$ is always given the usual topology induced from $\Q_v$.

We write $\A$ for the adeles of $\Q$ and $\A_f$ for the finite adeles. If $v$ is a fixed finite place of $\Q$, then $\A_f^v$ will denote the finite adeles away from $v$. The groups of $\A$-points, $\A_f$-points, or $\A_f^v$-points of any affine algebraic group over $\Q$ are given their standard topologies.

When a parabolic $P=MN$ in a group $G$ is fixed as above, we will often consider the associated height function $H_P$. This is a function
\[H_P:G(\A)\to\mf{a}_{P,0}.\]
To define it, we must fix a maximal compact subgroup $K\subset G(\A)$. We assume $K=K_{f,\mr{max}} K_\infty$ where $K_\infty$ is a fixed maximal compact subgroup of $G(\R)$, where $K_{f,\mr{max}}=\prod_{v<\infty} K_v$, and where the groups $K_v$ are maximal compact subgroups of $G(\Q_v)$. We moreover assume $K$ to be in good position with respect to a fixed minimal parabolic inside $P$. In particular, the Iwasawa decomposition holds for $P(\A)$ and $K$.

Write $\langle\cdot,\cdot\rangle$ for the natural pairing
\[\langle\cdot,\cdot\rangle:\mf{a}_{P,0}\times\mf{a}_{P,0}^\vee\to\C\]
given by evaluation, where $\mf{a}_{P,0}^\vee=\hom_{\C}(\mf{a}_{P,0},\C)$. Write $X^*(M)$ for the group of algebraic characters of $M$. Then $H_P$ is defined first on the subgroup $M(\A)$ by requiring
\[e^{\langle H_P(m),d\Lambda\rangle}=\vert\Lambda(m)\vert,\qquad m\in M(\A),\,\,\Lambda\in X^*(M),\]
where $d\Lambda$ denotes the restriction to $\mf{a}_{P,0}$ of the differential at the identity of the restriction of $\Lambda$ to $A_P(\R)$, and $\vert\cdot\vert$ is usual the adelic absolute value. Then $H_P$ is defined in general by declaring it to be left invariant with respect to $N(\A)$ and right invariant with respect to $K$.

If $R$ is one of the rings $\Q_v$, $\A$, or $\A_f$, we use the notation $\delta_{P(R)}$ to denote the modulus character of $P(R)$, and similarly for other parabolics.

\subsubsection*{Automorphic representations}
When $G$ is a reductive $\Q$-group, we take the point of view that an ``automorphic representation" of $G(\A)$ is (among other things) an irreducible object in the category of admissible $G(\A_f)\times(\mf{g},K_\infty)$-modules, where $K_\infty$ is, like above, a maximal compact subgroup in $G(\R)$. We often even view automorphic representations as $G(\A_f)\times(\mf{g}_0,K_\infty)$-modules by restriction. We let $\mc{A}(G)$ denote the space of all automorphic forms on $G(\A)$.

If $\Pi$ is an automorphic representation of $G(\A)$ and $v$ is a place of $\Q$, we will denote by $\Pi_v$ the local component of $\Pi$ at $v$. If $v$ is finite, then this is an irreducible admissible representation of $G(\Q_v)$, and if $v=\infty$, then this is an irreducible admissible $(\mf{g},K_\infty)$-module. We also let $\Pi_f$ denote the associated representation of $G(\A_f)$, so that $\Pi\cong\Pi_f\otimes\Pi_\infty$.

If $P-MN$ is a parabolic subgroup of $G$ and $\pi$ an automorphic representation of $M(\A)$, then we denote the nonunitary parabolic induction of $\pi$ to $G$ along $P$ by $\Ind_{P(\A)}^{G(\A)}(\pi)$, and the unitary parabolic induction by $\iota_{P(\A)}^{G(\A)}(\pi)$. So
\[\iota_{P(\A)}^{G(\A)}(\pi)=\Ind_{P(\A)}^{G(\A)}(\pi\otimes\delta_{P(\A)}^{1/2}).\]
More generally, if $\lambda\in\mf{a}_P^\vee$, we write
\[\iota_{P(\A)}^{G(\A)}(\pi,\lambda)=\iota_{P(\A)}^{G(\A)}(\pi\otimes e^{\langle H_P(\cdot),\lambda\rangle})=\Ind_{P(\A)}^{G(\A)}(\pi\otimes e^{\langle H_P(\cdot),\lambda+\rho_P\rangle})\]
We similarly write $\Ind_{P(\A_f)}^{G(\A_f)}$ and $\iota_{P(\A_f)}^{G(\A_f)}$ for the corresponding functors on smooth admissible representations of $M(\A_f)$, and $\Ind_{P(\Q_v)}^{G(\Q_v)}$ and $\iota_{P(\Q_v)}^{G(\Q_v)}$ for their local analogues.

\subsubsection*{Galois theory}
We fix a prime $p$ throughout this paper, and an isomorphism $\overline\Q_p\cong\C$. Though we note that $p$ will only begin to play a role in Section \ref{secdeform}.

The $p$-adic absolute value on $\overline{\Q}_p$ will always be denoted by $\vert\cdot\vert$ and normalized so that $\vert p\vert=p^{-1}$.

We will write $G_\Q$ for the absolute Galois group of $\Q$, and for any place $v$ of $\Q$, we will similarly write $G_{\Q_v}$ for the absolute Galois group of $\Q_v$. If $v$ is finite, we always view $G_{\Q_v}$ as a subgroup of $G_\Q$ via by fixing a decomposition group at $v$.

For us, $p$-adic Galois representation will always be into the $\overline\Q_p$-points of a fixed algebraic group. Moreover, they will always be continuous. If a fixed Galois representation, like the one attached to an eigenform, has, a priori, values over a finite extension of $\Q_p$, then it will be our convention to change the base to $\overline\Q_p$.

We will consider Fontaine's functors $D_{\dR}$, $D_{\st}$, and $D_{\crys}$ of, respectively, de Rham, semistable, and crystalline periods. Correspondingly to the above convention about Galois representations, all $p$-adic Hodge theoretic constructions we consider in this paper will be considered as $\overline\Q_p$-linear objects. Therefore, given a Galois representation $V$ of $G_{\Q_p}$ over $\overline\Q_p$, the spaces $D_{\dR}(V)$, $D_{\st}(V)$, and $D_{\crys}(V)$ will be considered as $\overline\Q_p$-vector spaces with extra structure.

The conventions we use in this paper are geometric. So for a prime $\ell$, $\Frob_\ell$ denotes a geometric Frobenius element of the Galois group $G_\Q$. The Hodge--Tate weight of the cyclotomic character is $-1$. Given a semistable representation $V$ of $G_{\Q_p}$ over $\overline\Q_p$, the crystalline Frobenius $\phi$ on $D_{\st}(V)$ will also be geometric. The filtrations on $D_{\dR}(V)$, $D_{\st}(V)$, and $D_{\crys}(V)$ do not change.

For example, Let $V$ be the $2$-dimensional Galois representation attached to a modular eigenform of weight $k$ and level $N$; for a prime $\ell\nmid N$, the trace of $\Frob_\ell^{-1}$ on $V$ is the $\ell$th Fourier coefficient of the eigenform. If $p\nmid N$, then $V$ is crystalline at $p$ and $D_{\crys}(V)$ has Hodge--Tate weights $0$ and $-(k-1)$. The filtration $\Fil^i$ for $D_{\dR}(V)$ falls at $i=0$ and $i=k-1$. Both the Newton and Hodge polygons lie on or below the horizontal axis.

\subsubsection*{Rings of analytic functions}
Given a rigid analytic space $\mf{V}$ over $\Q_p$ or a finite extension thereof, we let $\mc{O}(\mf{V})$ denote the ring of analytic functions on $\mf{V}$. If $\mf{V}$ is affinoid, we let $\mc{O}(\mf{V})^\circ$ denote the subring of $\mc{O}(\mf{V})$ of analytic functions whose evaluations at all points in $\mf{V}$ are bounded above in absolute value by $1$. If $\mf{V}$ is affinoid and reduced, we view $\mc{O}(\mf{V})$ with its usual $\Q_p$-Banach space topology. Then $\mc{O}(\mf{V})^\circ$ is an open ball in $\mc{O}(\mf{V})$.

\subsubsection*{Duals}
We use the symbol $(\cdot)^\vee$ in various ways to denote duality. If $\mf{a}$ is an abelian Lie algebra, we write $\mf{a}^\vee=\hom_\C(\mf{a},\C)$. If $R$ is a complex representation of a group, then $R^\vee$ is the usual dual representation over $\C$. Similarly, if $\rho$ is an $\ell$-adic Galois representation, then $\rho^\vee$ is the usual dual representation over $\overline\Q_p$. If $G$ is our reductive $\Q$-group, then $G^\vee(\C)$ or $G^\vee(\overline\Q_p)$ will denote the dual group over either of the algebraically closed fields $\C$ or $\overline\Q_p$, respectively.

\section{The group $G_2$}
\label{secg2}
We begin by collecting various facts about the group $G_2$ itself and consolidating them here for the convenience of the reader. Section \ref{subsecg2str} explains various structural aspects of $G_2$ involving its root system, its parabolic subgroups, and its real points. Section \ref{subsecalt3forms} briefly studies the connection between $G_2$ and generic alternating $3$-forms; the material of that section will only play a role in Sections \ref{subseclattice} and \ref{subsecmainthm}.

\subsection{Structure of the group $G_2$}
\label{subsecg2str}
We define $G_2$ to be the split simple group over $\Q$ with Dynkin diagram as in Figure \ref{figg2dynkin}. Fixing a maximal $\Q$-split torus $T$ in $G_2$, we choose a long simple root $\alpha$ and a short simple root $\beta$, as notated in the Dynkin diagram. The group $G_2$ has trivial center.
\begin{figure}[h]
\centering
\includegraphics[scale=.2]{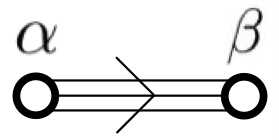}
\caption{The Dynkin diagram of $G_2$}
\label{figg2dynkin}
\end{figure}

It is worth noting that $G_2$ does not have a very nice matricial definition, at least not one that is as nice as for, say, the group $Sp_4$. There is, however, a faithful representation of $G_2$ into $GL_7$ that we will make use of, and it is possible to characterize the image of that representation, up to conjugation, in terms of the preservation of certain alternating $3$-forms, as we will do in Section \ref{subsecalt3forms}. But it is hard to make that characterization explicit in terms of matrices. Consequently, we will mostly study $G_2$ from the point of view of its root system, which we discuss now.

\subsubsection*{The root lattice}
The root lattice of $G_2$ looks as in Figure \ref{figg2chamber}. There, the dominant chamber is shaded.
\begin{figure}[h]
\centering
\includegraphics[scale=.25]{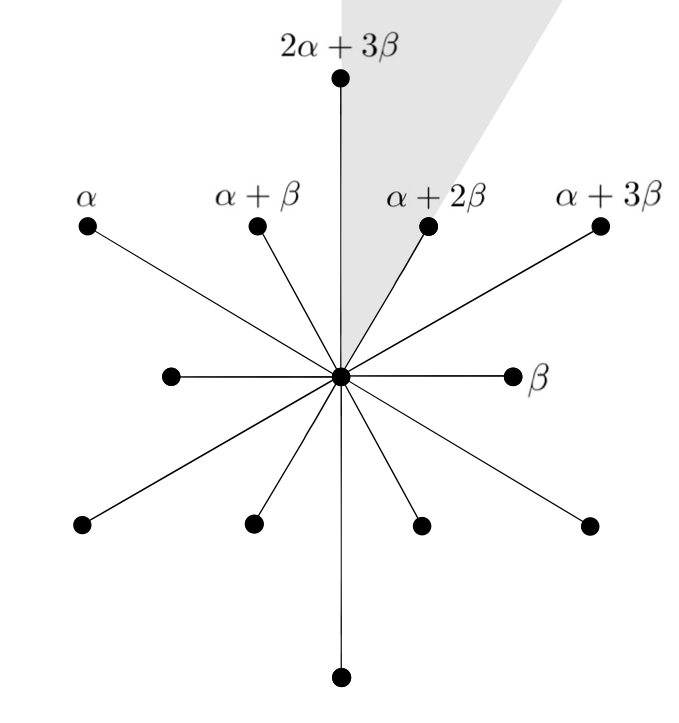}
\caption{The root lattice of $G_2$}
\label{figg2chamber}
\end{figure}

Write $\Delta$ for the set of roots of $T$ in $G_2$, and write $\Delta^+$ for the subset of positive roots. So we have
\[\Delta^+=\{\alpha,\beta,\alpha+\beta,\alpha+2\beta,\alpha+3\beta,2\alpha+3\beta\}.\]

One nice feature of $G_2$ is that the $\Z$-span of the root lattice equals the character group of $T$:
\[X^*(T)=\Z\alpha\oplus\Z\beta.\]
Since the Cartan matrix of $G_2$ has determinant $1$, an analogous fact holds for the cocharacter group.

\subsubsection*{Parabolic subgroups}
Let $B$ denote the standard Borel subgroup of $G_2$, defined with respect to $\Delta^+$. We write $B=TU$ for its Levi decomposition. Besides $B$, there are two other proper standard parabolic subgroups, and they are maximal. Let $P_\alpha$ denote the standard parabolic subgroup whose Levi contains $\alpha$, and write $P_\alpha=M_\alpha N_\alpha$ for its Levi decomposition. Similarly define $P_\beta=M_\beta N_\beta$.

For $\gamma\in\Delta$ a root, write
\[\mathbf{x}_\gamma:\mb{G}_a\to G_2\]
for the corresponding root group homomorphism, where $\mb{G}_a$ is the additive group scheme. The Levis $M_\alpha$ and $M_\beta$ are both isomorphic to $GL_2$. We write
\[i_\alpha: GL_2\to M_\alpha\quad\textrm{and}\quad i_\beta: GL_2\to M_\beta\]
for the isomorphisms which send the upper triangular matrix $\sm{1&a\\ 0&1}$ in $GL_2$ to the element $\mathbf{x}_\alpha(a)$ and $\mathbf{x}_\beta(a)$, respectively. We also often write
\[\pmat{a&b \\ c&d}_\gamma=i_\gamma\left(\pmat{a&b \\ c&d}\right),\qquad\gamma\in\{\alpha,\beta\}\]
for elements in the image of these maps. We also write
\[\det_\gamma=\det\circ i_\gamma^{-1},\qquad\gamma\in\{\alpha,\beta\}.\]

\subsubsection*{The standard representation}
The smallest fundamental weight of $G_2$ is $\alpha+2\beta$, and the representation attached to it is seven dimensional. We denote it by $R_7$ and call it the \textit{standard representation} of $G_2$; it is the representation one naturally gets when defining $G_2$ through its action on traceless split octonions.

Let $V_7$ be the space of $R_7$. This representation contains weight vectors for the seven weights given by the six short roots together with the zero weight; see Figure \ref{figr7weights}.
\begin{figure}[h]
\centering
\includegraphics[scale=.25]{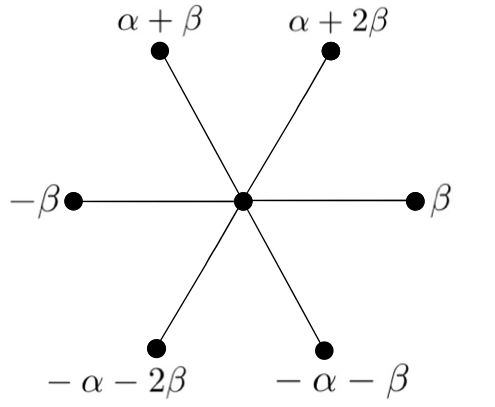}
\caption{The weights of $R_7$}
\label{figr7weights}
\end{figure}

For such a weight $\lambda$, choose a nonzero vector $v_\lambda\in V_7$ corresponding to that weight, and order these seven vectors as follows:
\[v_{-\alpha-2\beta},v_{-\alpha-\beta},v_{-\beta},v_0,v_\beta,v_{\alpha+\beta},v_{\alpha+2\beta}.\]
Then using the above list as an ordered basis represents $G_2$ as $7\times 7$ matrices acting on the linear span of these seven weight vectors. We then have the following matrix representations of the standard maximal Levi subgroups of $G_2$:
\begin{equation}
\label{eqr7alpha}
R_7\circ i_\alpha=\pmat{\det^{-1}&&&&\\ &\std^\vee &&&\\ &&1&&\\ \ &&&\std &\\ &&&&\det},
\end{equation}
where $\std$ is the standard representation of $GL_2$, and
\begin{equation}
\label{eqr7beta}
R_7\circ i_\beta=\pmat{\std^\vee &&\\ &\Ad &\\ &&\std},
\end{equation}
where $\Ad=\Sym^2(\std)\otimes\det^{-1}$ is the (three dimensional) adjoint representation of $\GL_2$. These can be seen by looking at strings in the directions of $\alpha$ and $\beta$ in the weight diagram as in Figure \ref{figr7levis}.
\begin{figure}[h]
\centering
\includegraphics[scale=.1666]{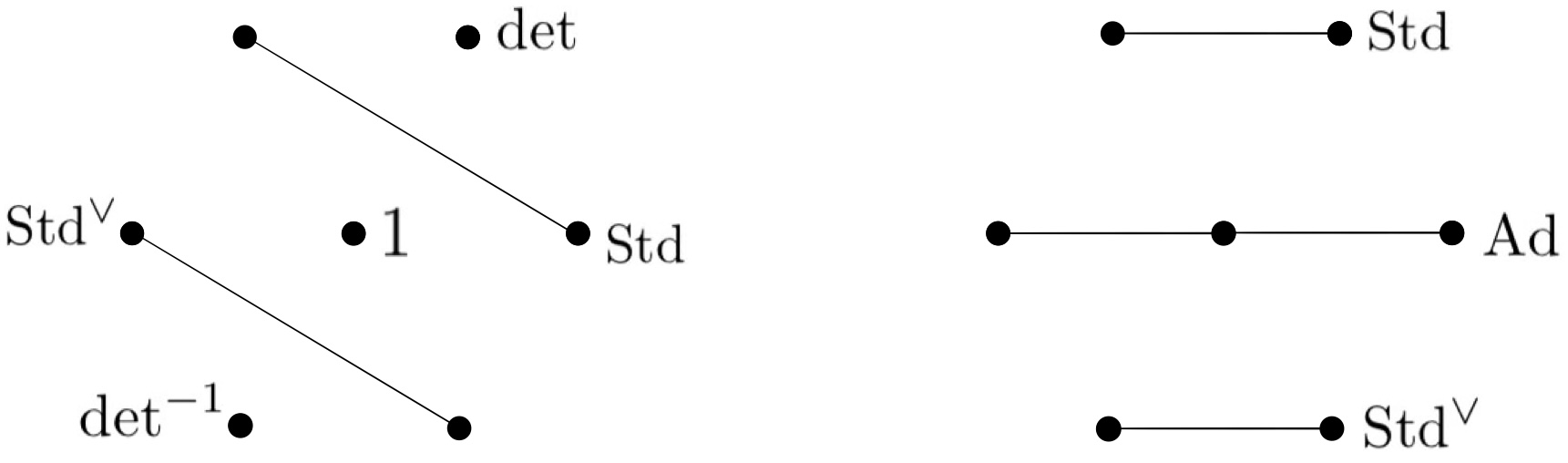}
\caption{The standard maximal Levis of $G_2$ under $R_7$}
\label{figr7levis}
\end{figure}

\subsubsection*{Duality}
The group $G_2$ is self dual, and identifying $G_2$ with its dual group switches the long and short simple roots. More explicitly, fix identifications $GL_2^\vee\cong GL_2$ and $G_2\cong G_2^\vee$ so that positive coroots correspond on the dual side to positive roots. Identify $M_\alpha$ and $M_\beta$ with $GL_2$ via the maps $i_\alpha$ and $i_\beta$ introduced above. Then $M_\alpha^\vee$ and $M_\beta^\vee$ are identified with $GL_2^\vee$, and we have commuting diagrams
\begin{equation}
\label{eqphialphag2}
\xymatrix{
GL_2^\vee\ar[r]^-\sim \ar[d]^\sim & M_\alpha^\vee \ar@{^{(}->}[r] \ar[d]^\sim & G_2^\vee\ar[d]^\sim \\
GL_2 \ar[r]^-{i_\beta} & M_\beta \ar@{^{(}->}[r] & G_2,}
\end{equation}
and
\begin{equation}
\label{eqphibetag2}
\xymatrix{
GL_2^\vee\ar[r]^-\sim \ar[d]^\sim & M_\beta^\vee \ar@{^{(}->}[r] \ar[d]^\sim & G_2^\vee\ar[d]^\sim \\
GL_2 \ar[r]^-{i_\alpha} & M_\alpha \ar@{^{(}->}[r] & G_2.}
\end{equation}

\subsubsection*{The Weyl group}
Let $W=W(T,G_2)$ be the Weyl group of $G_2$. The group $W$ is isomorphic to the dihedral group $D_6$ with $12$ elements acting naturally on the root lattice.

For $\gamma\in\Delta$, let $w_\gamma$ be the reflection about the line perpendicular to $\gamma$. Then $W$ is generated by the simple reflections $w_\alpha$ and $w_\beta$. We use the following notation for amalgamations of such elements: Write $w_{\alpha\beta}=w_\alpha w_\beta$, $w_{\alpha\beta\alpha}=w_\alpha w_\beta w_\alpha$, and so on. Then
\[W=\{1,w_\alpha,w_\beta,w_{\alpha\beta},w_{\beta\alpha}, w_{\alpha\beta\alpha},w_{\beta\alpha\beta},w_{\alpha\beta\alpha\beta},w_{\beta\alpha\beta\alpha},w_{\alpha\beta\alpha\beta\alpha},w_{\beta\alpha\beta\alpha\beta},w_{-1}\}.\]
The elements above are written minimally in terms of products of the simple reflections $w_\alpha$ and $w_\beta$, except for the final element $w_{-1}$. This is the element that acts by negation on the root lattice, and it of length $6$, equal to both $w_{\alpha\beta\alpha\beta\alpha\beta}$ and $w_{\beta\alpha\beta\alpha\beta\alpha}$.

For $P=MN$ one of the standard parabolic subgroups of $G_2$, we write as usual
\[W^P=\sset{w\in W}{w^{-1}\gamma>0\textrm{ for all positive roots }\gamma\textrm{ in }M}\]
for the set of representatives for the quotient $W_M\backslash W$ of minimal length, where $W_M=W(T,M)$ is the Weyl group of $T$ in $M$. Then
\[W^{P_\alpha}=\{1,w_\beta,w_{\beta\alpha},w_{\beta\alpha\beta},w_{\beta\alpha\beta\alpha},w_{\beta\alpha\beta\alpha\beta}\},\qquad W^{P_\beta}=\{1,w_\alpha,w_{\alpha\beta},w_{\alpha\beta\alpha},w_{\alpha\beta\alpha\beta},w_{\alpha\beta\alpha\beta\alpha}\},\]
and $W^B=W$.

\subsubsection*{The group $G_2(\R)$}
The real Lie group $G_2(\R)$ is connected and has discrete series. Fix a maximal compact torus $T_c$ in $G_2(\R)$. Then $T_c$ is $2$-dimensional and lies in a maximal compact subgroup of $G_2(\R)$, which we denote by $K_\infty$. Then $K_\infty$ is connected and $6$-dimensional. In fact
\[K_\infty\cong(SU(2)\times SU(2))/\mu_2,\]
where $\mu_2=\{\pm 1\}$ is diagonally embedded in $SU(2)\times SU(2)$.

Let $\mf{t}_c$ be the complexified Lie algebra of $T_c$, and $\mf{k}$ that of $K_\infty$. We abuse notation and write $\Delta=\Delta(\mf{t}_c,\mf{g}_2)$ for the roots of $\mf{t}_c$ in $\mf{g}_2$. Let $\Delta_c=\Delta(\mf{t}_c,\mf{k})$ denote the set of compact roots. There are four roots in $\Delta_c$ consisting of a pair of short roots and a pair of long roots. The short compact roots are orthogonal to the long ones.

Again, abusing notation, choose two simple roots $\alpha,\beta$ of $\mf{t}_c$ in $\mf{g}_2$ with $\alpha$ long and $\beta$ short, and choose them so that $\beta$ is compact. Then
\[\Delta_c=\{\pm\beta,\pm(2\alpha+3\beta)\}.\]

The compact Weyl group $W_c=W(\mf{t}_c,\mf{k})$ has four elements and is isomorphic to $(\Z/2\Z)\oplus(\Z/2\Z)$. In fact, we have
\[W_c=\{1,w_\beta,w_{\alpha\beta\alpha\beta\alpha},w_{-1}\},\]
and $w_{\alpha\beta\alpha\beta\alpha}$ equals the reflection across the line perpendicular to $2\alpha+3\beta$. It follows from the theory of Harish-Chandra parameters that the discrete series representations of $G_2(\R)$ are parameterized by integral weights in the union of the three chambers between $\beta$ and $2\alpha+3\beta$ which are far enough from the walls of those chambers.

\subsection{Alternating trilinear forms and $G_2$}
\label{subsecalt3forms}
Let $k$ be an algebraically closed field, and say for simplicity that $k$ is of characteristic zero. Let $V$ be the space of the standard representation of $GL_7$ over $k$. It is a classical fact that the group $G_2$ over $k$ is the stabilizer of any alternating trilinear form which is \textit{generic}, meaning that the orbit of this form is Zariski open in the space of alternating $3$-forms on $V$. Let $e_1,\dotsc,e_7$ be a basis for $V$, and $e_1^\vee,\dotsc,e_7^\vee$ the dual basis for $V^\vee$. Then a standard example of such a trilinear form is given by
\begin{equation}
\label{eqfirstgen3form}
e_1^\vee\wedge e_2^\vee\wedge e_3^\vee+e_4^\vee\wedge e_5^\vee\wedge e_6^\vee+e_1^\vee\wedge e_4^\vee\wedge e_7^\vee+e_2^\vee\wedge e_5^\vee\wedge e_7^\vee+e_3^\vee\wedge e_6^\vee\wedge e_7^\vee.
\end{equation}
(See, for example, \cite{CH}.)

\begin{lemma}
\label{lem3form}
For any $a\in k^\times$, the alternating $3$-form
\[e_1^\vee\wedge e_4^\vee\wedge e_7^\vee+e_1^\vee\wedge e_5^\vee\wedge e_6^\vee+e_2^\vee\wedge e_3^\vee\wedge e_7^\vee-e_2^\vee\wedge e_4^\vee\wedge e_5^\vee+ae_3^\vee\wedge e_4^\vee\wedge e_5^\vee\]
is generic.
\end{lemma}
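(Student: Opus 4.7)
My plan is to show that, for every $a \in k^\times$, the form in the lemma statement (call it $\omega_a$) lies in the $\GL_7(k)$-orbit of the generic form in (\ref{eqfirstgen3form}).

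First I would reduce to a single value of the parameter by means of a diagonal rescaling. Under $e_i \mapsto \lambda_i e_i$ the form $\omega_a$ transforms into one whose coefficients are the originals multiplied by the reciprocals of the corresponding products $\prod_{i \in I}\lambda_i$. Requiring the first four coefficients to be preserved imposes the system $\lambda_1\lambda_4\lambda_7 = \lambda_1\lambda_5\lambda_6 = \lambda_2\lambda_3\lambda_7 = \lambda_2\lambda_4\lambda_5 = 1$, which has a three-parameter family of solutions in the free variables $\lambda_4, \lambda_5, \lambda_7$ (namely $\lambda_1 = (\lambda_4\lambda_7)^{-1}$, $\lambda_2 = (\lambda_4\lambda_5)^{-1}$, $\lambda_3 = \lambda_4\lambda_5/\lambda_7$, $\lambda_6 = \lambda_4\lambda_7/\lambda_5$). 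The coefficient of $e_3^\vee \wedge e_4^\vee \wedge e_5^\vee$ is then rescaled by $\lambda_7/(\lambda_4\lambda_5)^2$, which ranges over all of $k^\times$. So $\omega_a$ and $\omega_b$ are conjugate under the diagonal torus of $\GL_7(k)$ for any $a, b \in k^\times$, reducing the task to verifying genericity for a single value of $a$, say $a = 1$.

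Second, to check that $\omega_1$ is generic, I would construct an explicit $g \in \GL_7(k)$ taking $\omega_1$ to the form of (\ref{eqfirstgen3form}) (up to scalar). A natural first move is the substitution suggested by the identity $-e_2^\vee\wedge e_4^\vee\wedge e_5^\vee + e_3^\vee\wedge e_4^\vee\wedge e_5^\vee = (e_3^\vee - e_2^\vee)\wedge e_4^\vee\wedge e_5^\vee$, followed by further changes of basis mixing the outer pairs $(e_1,e_2)$ and $(e_6,e_7)$ with the middle triple $(e_3,e_4,e_5)$ until the index pattern of (\ref{eqfirstgen3form}) emerges. A more invariant-theoretic alternative is to compute the Hitchin symmetric bilinear form
\[B_\omega(u,v) = (\iota_u\omega) \wedge (\iota_v\omega) \wedge \omega \in \wedge^7 V^\vee\]
and show it is nondegenerate; over an algebraically closed field of characteristic zero, nondegeneracy of $B_\omega$ characterizes the generic orbit in $\wedge^3 k^7$.

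The main obstacle is the second step. Because the index pattern of $\omega_1$ differs from that of (\ref{eqfirstgen3form})---the monomials $e_2^\vee\wedge e_4^\vee\wedge e_5^\vee$ and $e_3^\vee\wedge e_4^\vee\wedge e_5^\vee$ share two indices, whereas in (\ref{eqfirstgen3form}) any two monomials share at most one---the required transformation must genuinely mix basis vectors rather than be a permutation. Pinpointing the correct mixing is the principal calculation; once found, verifying that it produces the target form is routine bookkeeping with wedge products.
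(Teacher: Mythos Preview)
Your structural observation in step~2 is sharper than you realize: it actually diagnoses a typo in the lemma. The fourth monomial should be $e_2^\vee\wedge e_4^\vee\wedge e_6^\vee$, not $e_2^\vee\wedge e_4^\vee\wedge e_5^\vee$; this is confirmed both by the paper's own proof of the lemma (a permutation plus a diagonal scaling, which produces the index set $\{2,4,6\}$, not $\{2,4,5\}$) and by the later use of the form in Proposition~\ref{propmcLbar}(c) and in \eqref{eqasarepm1}, where the coefficient is written $a_{246}$. With the corrected index set, any two monomials share at most one index, and your obstruction vanishes.

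As stated (with $\{2,4,5\}$), the form is in fact \emph{not} generic, so your step~2 cannot succeed. Your Hitchin criterion would reveal this: $e_6^\vee$ occurs in only the single monomial $e_1^\vee\wedge e_5^\vee\wedge e_6^\vee$, so $\iota_{e_6}\omega=\pm\,e_1^\vee\wedge e_5^\vee$, and one checks that $(\iota_{e_6}\omega)\wedge(\iota_v\omega)\wedge\omega=0$ for every $v$ (no combination of a monomial of $\omega$ and a monomial of $\iota_v\omega$ can supply the missing indices $\{2,3,4,6,7\}$). Thus $B_\omega$ is degenerate.

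With the corrected statement, the paper's argument is shorter than yours: it gives a single explicit element of $\GL_7(k)$, valid for every $a$, namely the index permutation $(2635)(47)$ followed by a diagonal scaling depending on $a$, carrying the standard generic form \eqref{eqfirstgen3form} to the form of the lemma. So no genuine ``mixing'' of basis vectors is needed, and there is no need to separate the reduction to $a=1$ from the comparison with \eqref{eqfirstgen3form}. Your step~1 is correct and in the same spirit as the diagonal part of the paper's argument, but the paper folds both steps into one explicit change of basis.
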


\begin{proof}
From \eqref{eqfirstgen3form}, make the permutation $(2635)(47)$ on the indices, then compose with
\[(e_1,e_2,e_3,e_4,e_5,e_6,e_7)\mapsto(e_1,e_2,ae_3,-a^{-1}e_4,-ae_5,a^{-1}e_6,a^{-1}e_7)\]
to obtain the form in the lemma.
\end{proof}

For $a\in k^\times$, let $G_{2,a}$ be the subgroup of $GL_7$ preserving the form in the lemma. Then $G_{2,a}\cong G_2$, and $G_{2,a}$ is conjugate in $GL_7$ to the image of $G_2$ under the standard representation $R_7$ discussed in the previous subsection.

\begin{lemma}
The subgroup $T_a$ of $G_{2,a}$, given on $k$-algebras $A$ by
\[T_a(A)=\sset{\diag(t_1,t_2,t_1t_2^{-1},1,t_1^{-1}t_2,t_2^{-1},t_1^{-1})}{t_1,t_2\in\mb{G}_m(A)}\]
is a maximal torus in $G_a$.
\end{lemma}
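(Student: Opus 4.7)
The plan is elementary: verify $T_a\subseteq G_{2,a}$ by a direct character computation, observe that $T_a$ is manifestly a $2$-dimensional split torus, and then conclude maximality from the fact that $G_{2,a}$ has rank $2$.

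For the containment, an element $t=\diag(t_1,t_2,t_1t_2^{-1},1,t_1^{-1}t_2,t_2^{-1},t_1^{-1})$ of $T_a$ acts diagonally on $V$ in the basis $e_1,\dotsc,e_7$, and consequently scales each monomial $e_i^\vee\wedge e_j^\vee\wedge e_k^\vee$ appearing in a $3$-form by the product of the three eigencharacters of $t$ on $e_i,e_j,e_k$. Thus $T_a$ preserves the alternating $3$-form of Lemma \ref{lem3form} if and only if, for each of its five monomials, the three associated characters of $T_a$ sum to zero in $X^*(T_a)\cong\Z^2$. This is a short monomial-by-monomial check.

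For the dimension, the assignment $(t_1,t_2)\mapsto\diag(t_1,t_2,t_1t_2^{-1},1,t_1^{-1}t_2,t_2^{-1},t_1^{-1})$ is a closed immersion $\mb{G}_m^2\hookrightarrow GL_7$, since one recovers $t_1$ and $t_2$ as the first two diagonal entries; hence $T_a$ is a split torus of dimension $2$. For maximality, Lemma \ref{lem3form} implies that $G_{2,a}$ is $GL_7$-conjugate to the image of the standard representation $R_7$, and is therefore a reductive group of rank $2$; any $2$-dimensional subtorus of such a group is automatically maximal. The only substantive step is the character bookkeeping, and I anticipate no obstacle there---it amounts to checking that the seven diagonal entries of $T_a$ realize the weights of $R_7$ from Figure \ref{figr7weights}, paired correctly against the monomials of the $3$-form.
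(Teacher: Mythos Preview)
Your proposal is correct and follows essentially the same approach as the paper: verify that $T_a$ preserves the trilinear form, note that $T_a$ is a rank-$2$ torus, and conclude maximality from the fact that $G_{2,a}$ has rank $2$. The paper's proof is a two-line sketch of exactly this argument, and your version simply fills in the character-bookkeeping details.
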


\begin{proof}
The group $T_a$ is clearly a torus of rank $2$, and it is easy to check that it preserves the form of Lemma \ref{lem3form}. Since $G_{2,a}$ is of rank $2$, the lemma follows.
\end{proof}

We now study the root system of $G_{2,a}$ in the basis $e_1,\dotsc,e_7$. Write
\[[t_1,t_2]=\diag(t_1,t_2,t_1t_2^{-1},1,t_1^{-1}t_2,t_2^{-1},t_1^{-1})\in T_a.\]
Abusing notation (at least a priori) we write
\[\alpha([t_1,t_2])=t_1^{-1}t_2^2,\qquad\beta([t_1,t_2])=t_1t_2^{-1}.\]
We define various $1$-parameter subgroups of $G_{2,a}$ by defining them on $A$-points for $k$-algebras $A$ as follows. For $x\in A$, let
\[g(\alpha,x)=\pmat{
1&&&&&&\\
&1&x&&&&\\
&&1&&&&\\
&&&1&&&\\
&&&&1&a^{-1}x&\\
&&&&&1&\\
&&&&&&1},\qquad
g(\beta,x)=\pmat{
1&x&&&&&\\
&1&&&&&\\
&&1&-2x&-x^2&&\\
&&&1&x&&\\
&&&&1&&\\
&&&&&1&-x\\
&&&&&&1},\]
\[g(\alpha+\beta,x)=\pmat{
1&&x&&&&\\
&1&&2x&&a^{-1}x^2&\\
&&1&&&&\\
&&&1&&a^{-1}x&\\
&&&&1&&a^{-1}x\\
&&&&&1&\\
&&&&&&1},\]
\[g(\alpha+2\beta,x)=\pmat{
1&&&2x&&&a^{-1}x^2\\
&1&&&-x&&\\
&&1&&&-a^{-1}x&\\
&&&1&&&a^{-1}x\\
&&&&1&&\\
&&&&&1&\\
&&&&&&1},\]
\[g(\alpha+3\beta,x)=\pmat{
1&&&&x&&\\
&1&&&&&\\
&&1&&&&a^{-1}x\\
&&&1&&&\\
&&&&1&&\\
&&&&&1&\\
&&&&&&1},\qquad
g(2\alpha+3\beta,x)=\pmat{
1&&&&&x&\\
&1&&&&&-x\\
&&1&&&&\\
&&&1&&&\\
&&&&1&&\\
&&&&&1&\\
&&&&&&1}.\]
Then for $\gamma\in\{\alpha,\beta,\alpha+\beta,\alpha+2\beta,\alpha+3\beta,2\alpha+3\beta\}$, one checks easily the relations given by
\[[t_1,t_2]g(\gamma,x)[t_1,t_2]^{-1}=g(\gamma,\gamma([t_1,t_2])x).\]
One also checks that $g(\gamma,\cdot)\subset G_{2,a}$ by checking that these elements preserve the given generic alternating $3$-form, and it follows that
\[\{\alpha,\beta,\alpha+\beta,\alpha+2\beta,\alpha+3\beta,2\alpha+3\beta\}\]
forms a system of positive roots for $T_a$ in $G_{2,a}$.

Now we denote by $P_{\beta,a}$ the parabolic subgroup of $G_{2,a}$ containing $T_a$ along with all the positive roots for $T_a$ in $G_{2,a}$ and $-\beta$. One checks easily that if we let
\[g(-\beta,x)=\pmat{
1&&&&&&\\
x&1&&&&&\\
&&1&&&&\\
&&-x&1&&&\\
&&-x^2&2x&1&&\\
&&&&&1&\\
&&&&&-x&1},\]
then $g(-\beta,x)\in G_a$ and
\[[t_1,t_2]g(-\beta,x)[t_1,t_2]^{-1}=g(-\beta,\beta([t_1,t_2])^{-1}x).\]
Therefore for $\gamma$ any positive root or $\gamma=-\beta$, the root subgroups corresponding to $\gamma$ are the one-parameter subgroups $g(\gamma,\cdot)$ given above.

\begin{proposition}
\label{propPabeta}
Let $P_{232}$ be the standard parabolic subgroup of $GL_7$ of the form
\[P_{232}=\pmat{
*&*&*&*&*&*&*\\
*&*&*&*&*&*&*\\
&&*&*&*&*&*\\
&&*&*&*&*&*\\
&&*&*&*&*&*\\
&&&&&*&*\\
&&&&&*&*}.\]
Then $P_{232}\cap G_{2,a}=P_{\beta,a}$.
\end{proposition}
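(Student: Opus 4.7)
The plan is to prove the two inclusions separately.

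For $P_{\beta,a} \subseteq P_{232}$, I would verify containment on a generating set of $P_{\beta,a}$. The torus $T_a$ is diagonal and each positive-root subgroup $g(\gamma,\cdot)$ with $\gamma \in \Delta^+$ is upper triangular by direct inspection of the matrices listed above, so all of these lie in $P_{232}$. The only nontrivial check is for $g(-\beta,\cdot)$: reading the displayed matrix for $g(-\beta,x)$, its nonzero below-diagonal entries occur at positions $(2,1)$, $(4,3)$, $(5,3)$, $(5,4)$, and $(7,6)$, all of which lie inside the $(2,3,2)$-block-diagonal region. Hence $g(-\beta,x) \in P_{232}$, giving the inclusion.

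For the reverse inclusion, I would observe that $P_{232} \cap G_{2,a}$ is a closed subgroup of $G_{2,a}$ which, by the previous step, contains the Borel subgroup of $G_{2,a}$ generated by $T_a$ and the subgroups $g(\gamma,\cdot)$ for $\gamma \in \Delta^+$. A closed subgroup of a reductive group containing a Borel is automatically parabolic, so $P_{232} \cap G_{2,a}$ is a standard parabolic of $G_{2,a}$; since it also contains $P_{\beta,a}$, it must be either $P_{\beta,a}$ itself or $G_{2,a}$. To rule out the latter, it suffices to exhibit a single element of $G_{2,a}$ that does not lie in $P_{232}$, and for this I would appeal to the $-\alpha$ root space in $\mf{g}_{2,a}$.

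A direct weight computation on matrix units $E_{ij} \in \mf{gl}_7$, using the diagonal entries $(t_1, t_2, t_1 t_2^{-1}, 1, t_1^{-1}t_2, t_2^{-1}, t_1^{-1})$ of $[t_1,t_2] \in T_a$, shows that the only matrix units transforming by the character $-\alpha$, i.e.\ satisfying $[t_1,t_2] E_{ij} [t_1,t_2]^{-1} = t_1 t_2^{-2} E_{ij}$, are $E_{3,2}$ and $E_{6,5}$. Hence the one-dimensional root space $\mf{g}_{-\alpha} \subset \mf{g}_{2,a}$ is a nonzero linear combination of these two matrix units, both of whose nonzero entries lie strictly below the $(2,3,2)$ block diagonal of $P_{232}$ (in the lower $(2,1)$- and $(3,2)$-blocks respectively). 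Thus exponentiating any nonzero element of $\mf{g}_{-\alpha}$ gives an element of $G_{2,a}$ not lying in $P_{232}$, finishing the proof. The only step with any real content is this weight computation; the rest is direct inspection of the explicit matrices together with the standard fact on subgroups containing a Borel.
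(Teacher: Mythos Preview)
Your proof is correct and takes a genuinely different route from the paper's. Both arguments establish $P_{\beta,a}\subseteq P_{232}\cap G_{2,a}$ the same way, by inspecting the explicit matrices for $T_a$ and the root subgroups. For the reverse inclusion, however, the paper proceeds by an explicit Bruhat-decomposition argument: it writes down representatives $\tilde w_\alpha,\tilde w_\beta\in G_{2,a}$ for the simple reflections, computes their images as permutations in $S_7$, checks that the induced map $W_{\beta,a}\backslash W_a\hookrightarrow W_{232}\backslash S_7$ is injective, and then compares the two Bruhat decompositions to conclude that $P_{\beta,a}sP_{\beta,a}\cap P_{232}=\emptyset$ for $s\neq 1$.

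Your approach is more structural: invoking the standard fact that a closed subgroup containing a Borel is parabolic immediately reduces the question to deciding between $P_{\beta,a}$ and $G_{2,a}$, and a single weight computation on matrix units rules out the latter. This is cleaner and avoids the somewhat lengthy permutation bookkeeping. On the other hand, the paper's explicit computation of the Weyl representatives and their images in $S_7$ is not wasted effort: the permutation $s_{\alpha\beta\alpha\beta\alpha\beta}=(17)(26)(35)$ computed there is invoked again later in the paper (in the discussion of archimedean parameters in the appendix). So the paper's longer proof doubles as a reference computation, whereas yours proves exactly what is needed and no more.
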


\begin{proof}
Clearly $T_a\subset P_{232}$ and $g(\gamma,\cdot)\subset P_{232}$ for any positive root $\gamma$ or $\gamma=-\beta$. Therefore $P_{\beta,a}\subset P_{232}\cap G_{2,a}$.

To show the opposite inclusion, we use the Bruhat decomposition. Let
\[\tilde{w}_\alpha=\pmat{
1&&&&&&\\
&&-1&&&&\\
&1&&&&&\\
&&&1&&&\\
&&&&&-a^{-1}&\\
&&&&a&&\\
&&&&&&1},\qquad\tilde{w}_\beta=\pmat{
&1&&&&&\\
1&&&&&&\\
&&&&1&&\\
&&&-1&&&\\
&&1&&&&\\
&&&&&&1\\
&&&&&1&}.\]
Then one checks easily that $\tilde{w}_\alpha,\tilde{w}_\beta\in G_{2,a}$. Also, $\tilde{w}_\alpha,\tilde{w}_\beta$ normalize the torus $T_a$, and they normalize the standard diagonal maximal torus in $GL_7$, which we denote $T_7$, and thus these elements are representatives for the Weyl groups of both $G_{2,a}$ and $GL_7$.

Like in the previous section, we use amalgamated notation and let, for example, $\tilde{w}_{\alpha\beta}=\tilde{w}_\alpha\tilde{w}_\beta$. Let $s_\alpha=(23)(56)\in S_7$ be the permutation corresponding to $\tilde{w}_\alpha$ when viewing the Weyl group of $GL_7$ as the symmetric group on $7$ elements. Similarly define $s_\beta=(12)(35)(67)\in S_7$, as well as $s_{\alpha\beta}$, and so on. Then one checks
\begin{gather*}
s_{\alpha\beta}=(125763),\qquad s_{\beta\alpha}=(367521),\qquad s_{\alpha\beta\alpha}=(31)(26)(57),\qquad s_{\beta\alpha\beta}=(15)(37),\\
s_{\alpha\beta\alpha\beta}=(156)(273),\qquad s_{\beta\alpha\beta\alpha}=(165)(237),\qquad s_{\alpha\beta\alpha\beta\alpha}=(16)(27)(35),\\
s_{\beta\alpha\beta\alpha\beta}=(17)(25)(36),\qquad s_{\alpha\beta\alpha\beta\alpha\beta}=(17)(26)(35),
\end{gather*}
and this defines a homomorphism from the Weyl group of $G_{2,a}$ to the Weyl group $S_7$ of $GL_7$ which is visibly injective. The Weyl group $W_{232}$ of the Levi of $P_{232}$ is the subgroup of $S_7$ which acts separately on the sets $\{1,2\}$, $\{3,4,5\}$, $\{6,7\}$. One sees from the list given above that the only elements of the Weyl group of $G_{2,a}$ which are in $W_{232}$ are $1$ and $s_{\beta}$.

Let $M_{\beta,a}$ be the Levi of $P_{\beta,a}$. Writing $W_a$ for the Weyl group of $G_{2,a}$ and $W_{\beta,a}$ for that of $M_\beta$, we thus get an injective map
\[W_{\beta,a}\backslash W_a\hookrightarrow W_{232}\backslash S_7.\]

Let us identify $W_{\beta,a}\backslash W_a$ with the set $W^{P_{\beta,a}}$ of minimal length representatives of this quotient, so
\[W_{\beta,a}\backslash W_a\cong W^{P_{\beta,a}}=\{1,s_\alpha,s_{\alpha\beta},s_{\alpha\beta\alpha},s_{\alpha\beta\alpha\beta},s_{\alpha\beta\alpha\beta\alpha}\}.\]
Write $W^{P_{232}}$ for the set of minimal length representatives for the quotient $W_{232}\backslash S_7$. Then we have an inclusion
\[W^{P_{a,\beta}}\hookrightarrow W^{P_{232}}.\]

Now the Bruhat decomposition gives a decomposition into disjoint sets,
\[GL_7=\coprod_{s\in W^{P_{232}}}P_{232}sP_{232}.\]
Similarly,
\[G_{2,a}=\coprod_{s\in W^{P_{\beta,a}}}P_{\beta,a}sP_{\beta,a}.\]
Because $W^{P_{\beta,a}}$ injects into $W^{P_{232}}$, a subdecomposition of the first decomposition above is given by
\[GL_7\supset\coprod_{s\in W^{P_{\beta,a}}}P_{232}sP_{232}.\]
Since $P_{\beta,a}\subset P_{232}$, we have
\[P_{\beta,a}sP_{\beta,a}\subset P_{232}s P_{232},\]
for any $s\in W^{P_{\beta,a}}$. Since for $s\ne 1$, $P_{232}s P_{232}$ is disjoint from $P_{232}$, this proves that $P_{\beta,a}sP_{\beta,a}\cap P_{232}=\emptyset$. Therefore we must have $P_{a,\beta}=P_{232}\cap G_{2,a}$, as desired.
\end{proof}

The following lemma will be a key step for us in checking that the cocycle we construct later on will lie in the correct Bloch--Kato Selmer group.

\begin{lemma}
\label{lemmatcoeffreln}
Let $h\in P_{\beta,a}$, and write $h_{ij}$ for the $(i,j)$-entry of the matrix $h$. Then we have the relations
\[2h_{22}h_{13}-2h_{12}h_{23}+h_{21}h_{14}-h_{11}h_{24}=0\]
and
\[h_{22}h_{14}-h_{12}h_{24}-2h_{21}h_{15}+2h_{11}h_{25}=0.\]
\end{lemma}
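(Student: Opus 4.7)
The plan is to exploit the Levi decomposition $P_{\beta,a}=M_{\beta,a}N_{\beta,a}$. Write an arbitrary $h\in P_{\beta,a}$ as $h=mn$ with $m\in M_{\beta,a}$ and $n\in N_{\beta,a}$, and denote by $A$ the top-left $2\times 2$ block of $h$ (rows and columns $1,2$) and by $B$ its top-middle $2\times 3$ block (rows $1,2$, columns $3,4,5$). By the $2$-$3$-$2$ block description of $P_{\beta,a}$ obtained in Proposition~\ref{propPabeta}, $m$ is block-diagonal under $R_7\circ i_\beta=\std^\vee\oplus\Ad\oplus\std$ while $n$ is block upper-unitriangular; block matrix multiplication then gives $A=m_A$ and $B=A\cdot B_n$, where $B_n$ denotes the top-middle block of $n$. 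Equivalently, $\operatorname{adj}(A)\cdot B=\det(A)\cdot B_n$.

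Expanding $\operatorname{adj}(A)=\sm{h_{22}&-h_{12}\\-h_{21}&h_{11}}$ and multiplying out, the two degree-two relations in the lemma become, after dividing by the nonzero scalar $\det A$, the two linear identities
\[2(B_n)_{11}-(B_n)_{22}=0\quad\text{and}\quad(B_n)_{12}+2(B_n)_{23}=0\]
on the $2\times 3$ matrix $B_n$. The problem thus reduces to verifying these two linear identities for every $n\in N_{\beta,a}$.

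Here the key observation is that each of the five one-parameter unipotent generators $g(\gamma,\cdot)$ of $N_{\beta,a}$, for $\gamma\in\{\alpha,\alpha+\beta,\alpha+2\beta,\alpha+3\beta,2\alpha+3\beta\}$, has middle $3\times 3$ block equal to the identity, as is immediate from the explicit matrices displayed earlier in Section~\ref{subsecalt3forms}. Consequently $B_{n_1n_2}=B_{n_1}+B_{n_2}$ for any $n_1,n_2\in N_{\beta,a}$, so $B_n$ is a linear combination of the top-middle blocks of the generators. One then reads those off from the same explicit matrices---for instance $g(\alpha+\beta,t)$ contributes $\sm{t&0&0\\0&2t&0}$ and $g(\alpha+2\beta,t)$ contributes $\sm{0&2t&0\\0&0&-t}$, while $g(\alpha,t)$, $g(\alpha+3\beta,t)$, and $g(2\alpha+3\beta,t)$ contribute matrices in which both identities hold trivially---and verifies both identities on each by inspection. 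Linearity then finishes the job. There is no serious obstacle in this plan; the only step requiring care is the algebraic reduction of the stated quadratic relations to the linear identities on $B_n$.
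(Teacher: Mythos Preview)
Your argument is correct and slightly more streamlined than the paper's. Both proofs split the work between the unipotent radical $N_{\beta,a}$ and the Levi $M_{\beta,a}$, but they handle the Levi differently. The paper first verifies the relations for $h=g(\alpha,x_1)\cdots g(2\alpha+3\beta,x_5)\in N_{\beta,a}$ by writing out the top two rows explicitly, and then checks by hand that the relations are preserved under left multiplication by each of the Levi generators $g(\beta,x)$, $g(-\beta,x)$, $[t_1,t_2]$. You instead observe that the two quadratic expressions are precisely the entries of $\operatorname{adj}(A)\cdot B$ in the combinations $2(\cdot)_{11}-(\cdot)_{22}$ and $(\cdot)_{12}+2(\cdot)_{23}$; since $\operatorname{adj}(A)\cdot B=\det(A)\,B_n$ with $\det(A)\ne 0$, the Levi dependence drops out automatically and one is left with purely linear conditions on $B_n$. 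This is a genuine simplification: it replaces three separate Levi-stability checks by a single block-matrix observation. The additivity step $B_{n_1n_2}=B_{n_1}+B_{n_2}$ (valid because each generator has trivial middle $3\times3$ block, as you note) then reduces the unipotent verification to the five generators individually, which is equivalent to the paper's direct computation of the product but organized more transparently.
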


\begin{proof}
First we check this for $h$ in the unipotent radical of $P_{\beta,a}$. Any such $h$ can be written as
\[h=g(\alpha,x_1)g(\alpha+\beta,x_2)g(\alpha+2\beta,x_3)g(\alpha+3\beta,x_4)g(2\alpha+3\beta,x_5).\]
Then one can compute using the expressions for the $1$-parameter subgroups given above that
\[h=\pmat{
1&&x_2&2x_3&x_4&*&*\\
&1&x_1&2x_2&-x_3&*&*\\
&&1&&&*&*\\
&&&1&&*&*\\
&&&&1&*&*\\
&&&&&1&\\
&&&&&&1}\]
where the asterisks are certain polynomial combinations of $x_1,\dotsc,x_5$. The matrix entries of this element clearly satisfy the relations listed in the statement of the lemma.

Now let $h$ be any element of $P_{a,\beta}$ satisfying the relations given in the lemma. Then one computes easily that the entries of the matrices
\[g(\beta,x)h,\qquad g(-\beta,x)h,\qquad[t_1,t_2]h\]
also satisfy the relations stated in the lemma. Since the Levi subgroup of $P_{\beta,a}$ is generated by elements of the form $g(\beta,x)$, $g(-\beta,x)$ and $[t_1,t_2]$, the lemma follows.
\end{proof}

\section{Eisenstein cohomology}
\label{seceiscoh}
After introducing some general background on automorphic forms and cohomology, in this section we will define the automorphic representation $\mc{L}_\alpha(\pi_F,1/10)$ we will be interested in throughout this paper. We will locate it up to near equivalence in cohomology.

\subsection{Background on the Franke--Schwermer decomposition}
\label{subsecfsdecomp}
Fix throughout this subsection a reductive group $G$ over $\Q$. We start with a cuspidal automorphic representation $\pi$ of $M(\A)$, where $M$ is a Levi of a parabolic $\Q$-subgroup $P$ in $G$. Let $\chi$ be the central character of $\pi$, and assume $\chi$ is trivial on $A_G(\R)^\circ$, where $A_G$ is the split center of $G$. So if 
\[L^2(M(\Q)A_G(\R)^\circ\backslash M(\A),\chi)\]
denotes the space of functions on $M(\Q)A_G(\R)^\circ\backslash M(\A)$ which are square integrable modulo center and which transform under the center with respect to $\chi$, then $\pi$ occurs in the cuspidal spectrum
\[L_{\cusp}^2(M(\Q)A_G(\R)^\circ\backslash M(\A),\chi)\subset L^2(M(\Q)A_G(\R)^\circ\backslash M(\A),\chi).\]

Write $d\chi\in\mf{a}_{P,0}^\vee$ for the differential of the restriction of $\chi$ to $A_P(\R)^\circ/A_G(\R)^\circ$. Then we consider the unitary automorphic representation
\[\tilde{\pi}=\pi\otimes e^{-\langle H_P(\cdot), d\chi\rangle}.\]
(See the introduction for notation.) If $\pi$ is realized on a space of functions 
\[V_\pi\subset L_{\cusp}^2(M(\Q)A_G(\R)^\circ\backslash M(\A),\chi),\]
then $\tilde{\pi}$ is realized on the space
\[V_{\tilde{\pi}}=\{e^{-\langle H_P(\cdot), d\chi_\pi\rangle}f\,\,|\,\,f\in V_\pi\},\]
which is a subspace of $L_{\cusp}^2(M(\Q)A_P(\R)^\circ\backslash M(\A))$.

Let $W_{P,\tilde\pi}$ be the space of smooth, $K$-finite, $\C$-valued functions $\phi$ on
\[M(\Q)N(\A)A_P(\R)^\circ\backslash G(\A)\]
such that, for all $g\in G(\A)$, the function
\[m\mapsto \phi(mg),\qquad m\in M(\A),\]
lies in the $\tilde\pi$-isotypic subspace
\[L_{\cusp}^2(M(\Q)A_P(\R)^\circ\backslash M(\A))[\tilde\pi].\]
The space $W_{P,\tilde\pi}$ lets us build Eisenstein series. In fact, let $\phi\in W_{P,\tilde\pi}$. We define, for $\lambda\in\mf{a}_{P,0}^\vee$ and $g\in G(\A)$, the Eisenstein series $E(\phi,\lambda)$ by
\[E(\phi,\lambda)(g)=\sum_{\gamma\in P(\Q)\backslash G(\Q)}\phi(\gamma g) e^{\langle H_P(g),d\chi_\pi+\rho_P\rangle}.\]
This series only converges for $\lambda$ sufficiently far inside a positive Weyl chamber, but it defines a holomorphic function there in the variable $\lambda$ which continues meromorphically to all of $\mf{a}_{P,0}^\vee$; see \cite{Langlands}, \cite{MW}, or alternatively \cite{BL}, where a different and much simpler proof is given.

Now let $E$ be a complex, irreducible, finite dimensional representation of $G(\C)$. Then the annihilator of $E$ in the center of the universal enveloping algebra of $\mf{g}$ is an ideal, and we denote it by $\mc{J}_E$. Denote by $\mc{A}_E(G)$ the space of automorphic forms on $G(\A)$ which are annihilated by a power of $\mc{J}_E$, and which transform trivially under $A_G(\R)^\circ$. The forms in $\mc{A}_E(G)$ are the ones that can possibly contribute to the cohomology of $E$, as we will discuss later.

Given two parabolic subgroups of $G$ defined over $\Q$, we say that they are \textit{associate} if their Levis are conjugate by an element of $G(\Q)$. Let $\mc{C}$ be the finite set of equivalence classes for this relation. Let $[P]$ denote the equivalence class in $\mc{C}$.

Now we say a function $f\in\mc{A}_E(G)$ is \textit{negligible along} $P$ if for any $g\in G(\A)$, the function given by
\[m\mapsto f(mg),\qquad m\in M(\Q)A_G(\R)^\circ\backslash M(\A),\]
is orthogonal to the space of cuspidal functions on $M(\Q)A_G(\R)^\circ\backslash M(\A)$. Let $\mc{A}_{E,[P]}(G)$ be the subspace of all functions in $\mc{A}_E(G)$ which are negligible along any parabolic subgroup $Q\notin[P]$. It is a theorem of Langlands that
\begin{equation}
\label{eqnparadecomp}
\mc{A}_E(G)=\bigoplus_{C\in\mc{C}}\mc{A}_{E,C}(G)
\end{equation}
as $G(\A_f)\times(\mf{g}_0,K_\infty)$-modules. The summand $\mc{A}_{E,[G]}(G)$ is the space of cusp forms in $\mc{A}_E(G)$.

The Franke--Schwermer decomposition refines this even further using cuspidal automorphic representations of the Levis of the parabolics in each class $C\in\mc{C}$. We briefly recall how.

Let $\varphi$ be an \textit{associate class of cuspidal automorphic representations of} $M$. We do not recall here the exact definition of this notion, referring instead to \cite[\S 1.2]{FS} or \cite[\S 1.3]{LS}. Each $\varphi$ is a collection of automorphic representations of the groups $M_{P'}(\A)$ for each $P'\in[P]$ with Levi decomposition $P'=M_{P'}N_{P'}$, finitely many for each such $P'$, and each such representation $\pi'$ must occur in $L_{\cusp}^2(M_{P'}(\Q)\backslash M_{P'}(\A),\chi')$, where $\chi'$ is the central character of $\pi'$. Conversely, any automorphic representation $\pi$ of $M(\A)$ with central character $\chi$ occurring in $L_{\cusp}^2(M(\Q)\backslash M(\A),\chi)$ determines a unique $\varphi$. We let $\Phi_{E,[P]}$ denote the set of all associate classes of cuspidal automorphic representations of $M$.

Now given a $\varphi\in\Phi_{E,[P]}$, let $\pi'$ be one of the representations comprising $\varphi$; say $\pi'$ is an automorphic representation of $M_{P'}(\A)$, where $M_{P'}$ is a Levi of a parabolic subgroup $P'$ associate to $P$. Form the space $W_{P',\tilde\pi'}$ and let $d\chi'$ be the differential of the central character $\chi'$ of $\pi'$ at the archimedean place. Then for any $\phi\in W_{P',\tilde\pi'}$ we can form the Eisenstein series $E(\phi,\lambda)$, $\lambda\in\mf{a}_{P',0}^\vee$.

Depending on the choice of $\phi$, the Eisenstein series $E(\phi,\lambda)$ may have a pole at $\lambda=d\chi'$. Nevertheless, one can still take residues of $E(\phi,\lambda)$ at $\lambda=d\chi'$ to obtain residual Eisenstein series. We let $\mc{A}_{E,[P],\varphi}(G)$ be the collection of all possible Eisenstein series, residual Eisenstein series, and partial derivatives of such with respect to $\lambda$, evaluated at $\lambda=d\chi'$, built from any $\phi\in W_{P',\tilde\pi'}$. (For a more precise description of this space, see \cite[\S 1.3]{FS} or \cite[\S 1.4]{LS}. There is also a more intrinsic definition of this space, defined without reference to Eisenstein series, in \cite[\S 1.2]{FS} or \cite[\S 1.4]{LS}, which is proved to be equivalent to this description in \cite{FS}.) One can use the functional equation of Eisenstein series to show that the space $\mc{A}_{E,[P],\varphi}(G)$ is independent of the $\pi'$ in $\varphi$ used to define it.

We can now state the Franke--Schwermer decomposition of $\mc{A}_E(G)$.

\begin{theorem}[Franke--Schwermer \cite{FS}]
\label{thmfsdecomp}
There is a direct sum decomposition of $G(\A_f)\times(\mf{g}_0,K_\infty)$-modules
\[\mc{A}_E(G)=\bigoplus_{C\in\mc{C}}\bigoplus_{\varphi\in\Phi_{E,C}}\mc{A}_{E,C,\varphi}(G).\]
\end{theorem}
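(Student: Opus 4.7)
The plan is to take the decomposition \eqref{eqnparadecomp} as our starting point, which already reduces the problem to proving, for each fixed associate class $C=[P]\in\mc{C}$, the finer decomposition
\[\mc{A}_{E,C}(G)=\bigoplus_{\varphi\in\Phi_{E,C}}\mc{A}_{E,C,\varphi}(G).\]
So the proof really splits into three tasks: (i) show that the sum on the right hand side is direct, (ii) show that only countably (and in fact, relevantly, finitely) many $\varphi$ contribute, and (iii) show that the sum exhausts $\mc{A}_{E,C}(G)$.

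For task (i), the main tool is the constant term map. Given $f\in\mc{A}_{E,C,\varphi}(G)$, for any $P'\in[P]$ with Levi $M_{P'}$, one can compute the constant term $f_{P'}$ along $P'$; the cuspidal projection of the resulting function on $M_{P'}(\Q)A_{P'}(\R)^\circ\backslash M_{P'}(\A)$ picks out, for each $g\in G(\A)$, a section whose irreducible constituents lie in the finite collection of cuspidal representations of $M_{P'}(\A)$ comprising $\varphi$. For two distinct $\varphi,\varphi'\in\Phi_{E,C}$, no cuspidal representation of $M_{P'}$ lies simultaneously in both, so a standard orthogonality of matrix coefficients argument on $M_{P'}$ separates the two summands after taking constant terms. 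Since an automorphic form in $\mc{A}_{E,[P]}(G)$ is not negligible along $P$, its cuspidal projection along $P$ determines it up to adjusting by forms negligible along every $P'\in[P]$ — and such forms vanish in $\mc{A}_{E,[P]}(G)$ by construction, giving directness.

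For tasks (ii) and (iii) I would argue simultaneously via the theory of Eisenstein series and the infinitesimal character. First, because every $f\in\mc{A}_E(G)$ is annihilated by a power of $\mc{J}_E$, the action of $\mc{Z}(\mf{g})$ on the cuspidal projection of $f_{P'}$ is constrained: the infinitesimal characters of the cuspidal representations of $M_{P'}(\A)$ appearing in $f_{P'}$ must be related, via Harish-Chandra's homomorphism, to the infinitesimal character of $E$. This imposes only finitely many possibilities for the archimedean component, and together with the condition that the central character be trivial on $A_G(\R)^\circ$, this leaves only finitely many associate classes $\varphi$ that can contribute nontrivially. Next, for exhaustion, given $f\in\mc{A}_{E,[P]}(G)$, one uses the theory of pseudo-Eisenstein series, i.e.\ the Langlands spectral decomposition of $L^2(G(\Q)A_G(\R)^\circ\backslash G(\A))$ along $[P]$: the cuspidal data along $[P]$, encoded by the $\tilde\pi'$ in various $\varphi$, together with a Fourier-type decomposition in the parameter $\lambda\in\mf{a}_{P',0}^\vee$, recover $f$. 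Passing from the $L^2$ decomposition to the level of automorphic forms requires that one replace honest integrals over the imaginary axis with residues and derivatives at the specific point $\lambda=d\chi'$ forced by the $\mc{J}_E$-finiteness hypothesis; this is precisely the content of the spaces $\mc{A}_{E,[P],\varphi}(G)$ as described in \cite[\S 1.3]{FS}. Combining with the meromorphic continuation and functional equations of Eisenstein series then recovers $f$ from its cuspidal constant terms.

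The main obstacle, and the deepest input, is task (iii): translating the spectral $L^2$ decomposition into a decomposition of the space of $K$-finite, $\mc{Z}(\mf{g})$-finite automorphic forms, and controlling the residues and derivatives of Eisenstein series at $\lambda=d\chi'$. This requires the full strength of the analytic theory of Eisenstein series (Langlands \cite{Langlands}, M{\oe}glin--Waldspurger \cite{MW}), because a priori $d\chi'$ can be a singular point of $E(\phi,\lambda)$, and the meromorphic behavior there governs what the space $\mc{A}_{E,[P],\varphi}(G)$ actually looks like. Franke's contribution is precisely the clean statement that, after including residues and derivatives up to appropriate orders dictated by the $\mc{J}_E$-nilpotent structure, one gets all of $\mc{A}_{E,[P],\varphi}(G)$. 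Once this is in hand, independence of the chosen $\pi'\in\varphi$ follows from the functional equations of Eisenstein series, which interchange the contributions of different $P'\in[P]$.
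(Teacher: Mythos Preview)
The paper does not give a proof of this theorem at all: it is stated with attribution to Franke--Schwermer \cite{FS} and used as a black box. So there is no ``paper's own proof'' to compare against; your proposal is an attempt to sketch the argument behind a cited result.

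As a sketch of the Franke--Schwermer argument your outline is broadly on the right track --- constant terms and cuspidal orthogonality for directness, infinitesimal-character constraints for finiteness, and the spectral/Eisenstein theory for exhaustion --- but it remains a roadmap rather than a proof. The genuinely hard step, as you acknowledge, is (iii): showing that $\mc{J}_E$-finite automorphic forms in $\mc{A}_{E,[P]}(G)$ are captured by Eisenstein series, their residues, and their $\lambda$-derivatives at the finitely many relevant points. Your appeal to ``passing from the $L^2$ decomposition to the level of automorphic forms'' glosses over exactly the analytic work that makes \cite{FS} (building on \cite{franke}) nontrivial; the $L^2$ spectral decomposition does not directly see the derivative terms, and one needs Franke's filtration and his analysis of the space of automorphic forms (not just $L^2$) to get there. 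For the purposes of this paper, though, the correct move is simply to cite \cite{FS}, as the author does.
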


We now introduce certain explicit $G(\A_f)\times(\mf{g}_0,K_\infty)$-modules and explain how they can be related to the pieces of the Franke--Schwermer decomposition. Almost everything in the rest of this section is done in Franke's paper \cite[218, 234]{franke}, but without taking into consideration the associate classes $\varphi$.

With $\pi$ as above, for brevity, let us write $V[\tilde\pi]$ for the smooth, $K$-finite vectors in the $\tilde\pi$-isotypic component of $L_{\cusp}^2(M(\A)A_P(\R)^\circ\backslash M(\A))$. Then $V[\tilde\pi]$ is a $M(\A_f)\times(\mf{m}_0,K_\infty\cap P(\R))$-module, and we extend this structure to one of a $P(\A_f)\times(\mf{p}_0,K_\infty\cap P(\R))$-module by letting $\mf{a}_{P,0}$ and $\mf{n}$ act trivially, as well as $A_P(\A_f)$ and $N(\A_f)$. Here $N$ is the unipotent radical of $P$.

Fix for the rest of this subsection a point $\mu\in\mf{a}_{P,0}^\vee$. Let $\Sym(\mf{a}_{P,0})_\mu$ be the symmetric algebra on the vector space $\mf{a}_{P,0}$; we view this space as the space of differential operators on $\mf{a}_{P,0}^\vee$ at the point $\mu$. So if $H(\lambda)$ is a holomorphic function on $\mf{a}_{P,0}^\vee$, then $D\in\Sym(\mf{a}_{P,0})_\mu$ acts on $H$ by taking a sum of iterated partial derivatives of $H$ and evaluating the result at the point $\mu$. In this way, every $D\in\Sym(\mf{a}_{P,0})_\mu$ can be viewed as a distribution on holomorphic functions on $\mf{a}_{P,0}^\vee$ supported at the point $\mu$.

With this point of view, these distributions can be multiplied by holomorphic functions on $\mf{a}_{P,0}^\vee$; just multiply the test function by the given holomorphic function before evaluating the distribution. With this in mind, we can define an action of $\mf{a}_{P,0}^\vee$ on $\Sym(\mf{a}_{P,0})_\mu$ by
\[(XD)(f)=D(\langle X,\cdot\rangle f),\qquad X\in\mf{a}_{P,0},\,\, D\in\Sym(\mf{a}_{P,0})_\mu.\]
We also let $\mf{m}_0$ and $\mf{n}$ act trivially on $\Sym(\mf{a}_{P,0})_\mu$, which gives us an action of $\mf{p}_0$ on $\Sym(\mf{a}_{P,0})_\mu$. In addition, let $K_\infty\cap P(\R)$ act trivially on $\Sym(\mf{a}_{P,0})_\mu$. Since the Lie algebra of $K_\infty\cap P(\R)$ lies in $\mf{m}_0$, this is consistent with the $\mf{p}_0$ action just defined and makes $\Sym(\mf{a}_{P,0})_\mu$ a $(\mf{p}_0,K_\infty\cap P(\R))$-module. Finally, let $P(\A_f)$ act on $\Sym(\mf{a}_{P,0})_\mu$ by the formula
\[(pD)(f)=D(e^{\langle H_P(p),\cdot\rangle}f),\qquad p\in P(\A_f),\,\, D\in\Sym(\mf{a}_{P,0})_\mu.\]
Then with the actions just defined, $\Sym(\mf{a}_{P,0})_\mu$ gets the structure of a $P(\A_f)\times(\mf{p}_0,K_\infty\cap P(\R))$-module.

Now we form the tensor product $V[\tilde\pi]\otimes\Sym(\mf{a}_{P,0})_\mu$, which carries a natural $P(\A_f)\times(\mf{p},K_\infty\cap P(\R))$-module structure coming from those on the two factors. We will consider in what follows the induced $G(\A_f)\times(\mf{g}_0,K_\infty)$-module
\[\Ind_{P(\A)}^{G(\A)}(V[\tilde\pi]\otimes\Sym(\mf{a}_{P,0})_\mu).\]
This space turns out to be isomorphic to the tensor product
\[W_{P,\tilde\pi}\otimes\Sym(\mf{a}_{P,0})_\mu.\]
While the first factor in this tensor product is a $G(\A_f)\times(\mf{g}_0,K_\infty)$-module, the second is only a $P(\A_f)\times(\mf{p}_0,K_\infty\cap P(\R))$-module, and so we do not immediately get a $G(\A_f)\times(\mf{g}_0,K_\infty)$-module structure on the tensor product. However, one can endow this space with a $G(\A_f)\times(\mf{g}_0,K_\infty)$-module structure by viewing it as a space of distributions as described in \cite[p. 218]{franke}. The point is the following proposition, whose proof we omit for sake of space.

\begin{proposition}
\label{propwpind}
There is an isomorphism of $G(\A_f)\times(\mf{g}_0,K_\infty)$-modules
\[W_{P,\tilde\pi}\otimes\Sym(\mf{a}_{P,0})_\mu\cong\Ind_{P(\A)}^{G(\A)}(V[\tilde\pi]\otimes\Sym(\mf{a}_{P,0})_\mu).\]
More generally, if $E$ is a finite dimensional representation of $G(\C)$, then we also have an isomorphism
\[W_{P,\tilde\pi}\otimes\Sym(\mf{a}_{P,0})_\mu\otimes E\cong\Ind_{P(\A)}^{G(\A)}(V[\tilde\pi]\otimes\Sym(\mf{a}_{P,0})_\mu\otimes E),\]
where on the left hand side, $E$ is being viewed as a $(\mf{g}_0,K_\infty)$-module, and on the right, it is viewed as a $(\mf{p}_0,K_\infty\cap P(\R))$-module by restriction.
\end{proposition}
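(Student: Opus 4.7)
The plan is to construct an explicit natural bijection $\Phi$ realizing the first claimed isomorphism and verify that it intertwines the $G(\A_f)\times(\mf{g}_0,K_\infty)$-actions on both sides; the more general statement involving $E$ will then follow from the first by the projection formula for parabolic induction.

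The underlying idea is the following. For each $X\in\mf{a}_{P,0}$ there is a shift operator $\tau_X$ on $\Sym(\mf{a}_{P,0})_\mu$ defined by $(\tau_X D)(H)=D(e^{\langle X,\cdot\rangle}H)$; one checks $\tau_{X_1}\tau_{X_2}=\tau_{X_1+X_2}$, and that this recovers the $P(\A_f)$- and $\mf{a}_{P,0}$-actions on $\Sym(\mf{a}_{P,0})_\mu$ from the text as $p\cdot D=\tau_{H_P(p)}D$ together with its infinitesimal version. Using this, I would define
\[\Phi(\phi\otimes D)(g)=\phi_g\otimes\tau_{H_P(g)}D,\qquad \phi_g(m)=\phi(mg).\]
Since $\phi_g\in V[\tilde\pi]$ by the very definition of $W_{P,\tilde\pi}$, each value $\Phi(\phi\otimes D)(g)$ lies in $V[\tilde\pi]\otimes\Sym(\mf{a}_{P,0})_\mu$. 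Using the identity $H_P(pg)=H_P(p)+H_P(g)$ for $p\in P(\A)$ (which follows from left $N(\A)$-invariance and right $K$-invariance of $H_P$ together with the Iwasawa decomposition), the $P(\A)$-equivariance of $\Phi(\phi\otimes D)$ for $p=m'n$ reduces to two computations: $\phi_{pg}=R(m')\phi_g$ from left $N(\A)$-invariance of $\phi$, and $\tau_{H_P(pg)}D=\tau_{H_P(p)}\tau_{H_P(g)}D=p\cdot(\tau_{H_P(g)}D)$, which matches the $P$-action on the target. Bijectivity is then immediate from the Iwasawa decomposition: both sides are freely determined by their restrictions to the maximal compact $K$, and since $H_P$ vanishes on $K$, the map $\Phi|_K$ collapses to the evident tautological identification $(k\mapsto\phi_k\otimes D)$.

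The main obstacle will be verifying equivariance for the $G(\A_f)\times(\mf{g}_0,K_\infty)$-actions. On the induced side this is simply right translation, but on the left it is the indirect distributional structure from \cite[p.~218]{franke}, where $\phi\otimes D$ is regarded as the distribution $D$ in $\lambda\in\mf{a}_{P,0}^\vee$ supported at $\mu$ applied to the holomorphic family $\lambda\mapsto(g\mapsto\phi(g)e^{\langle H_P(g),\lambda\rangle})$, and the group acts by right translation on the family before $D$ is applied. The compatibility with $\Phi$ comes down to the observation that right translation by $g'$ produces the family $\lambda\mapsto(g\mapsto\phi(gg')e^{\langle H_P(gg'),\lambda\rangle})$, and using the factorization $\tau_{H_P(gg')}=\tau_{H_P(gg')-H_P(g)}\tau_{H_P(g)}$ followed by Taylor expansion at $\mu$ of the extra factor $e^{\langle H_P(gg')-H_P(g),\lambda\rangle}$ reproduces precisely the $\tau$-shift appearing in the definition of $\Phi$. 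This is a routine chain-rule calculation; the delicate point is only the indirectness of the definition on the left side.

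Finally, for the general statement involving a finite-dimensional representation $E$ of $G(\C)$, I would invoke the projection formula
\[\Ind_{P(\A)}^{G(\A)}(W\otimes E|_{P(\A)})\cong\Ind_{P(\A)}^{G(\A)}(W)\otimes E,\]
valid for any $P(\A)$-module $W$ and any $G(\A)$-module $E$ via the $G$-equivariant map $f\otimes v\mapsto(g\mapsto f(g)\otimes gv)$; applying it with $W=V[\tilde\pi]\otimes\Sym(\mf{a}_{P,0})_\mu$ and composing with the first isomorphism gives the second.
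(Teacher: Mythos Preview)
The paper actually \emph{omits} the proof of this proposition entirely (``whose proof we omit for sake of space''), so there is no argument to compare against. Your approach is the natural one and is essentially correct: the explicit map $\Phi(\phi\otimes D)(g)=\phi_g\otimes\tau_{H_P(g)}D$ is the standard way to realize this isomorphism, and your verification of $P$-equivariance, bijectivity via restriction to $K$, and the projection-formula reduction for the second statement are all sound. The only point I would flag is that you should be careful about what category the induction lives in: the $\Sym(\mf{a}_{P,0})_\mu$ factor carries only a $P(\A_f)\times(\mf{p}_0,K_\infty\cap P(\R))$-module structure, not a full $P(\A)$-action, so the archimedean part of the check is really about the $(\mf{g}_0,K_\infty)$-action rather than $P(\R)$-equivariance. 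Your distributional/Taylor-expansion argument handles this correctly, but the phrasing ``$P(\A)$-equivariance'' slightly obscures that the archimedean side is infinitesimal.
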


Now we come back to Eisenstein series. Assume $\pi$ is such that there is an irreducible finite dimensional representation $E$ of $G(\C)$ such that the associate class $\varphi$ containing $\pi$ is in $\Phi_{E,[P]}$. Then we can construct elements of the piece $\mc{A}_{E,[P],\varphi}(G)$ of the Franke--Schwermer decomposition from elements of $W_{P,\tilde\pi}\otimes\Sym(\mf{a}_{P,0})_\mu$ using Eisenstein series as follows.

Write
\[\iota_{P(\A)}^{G(\A)}(V[\tilde\pi],\lambda)=\Ind_{P(\A)}^{G(\A)}(V[\tilde\pi]\otimes e^{H_P(\cdot),\lambda+\rho_P}),\qquad \lambda\in\mf{a}_{P,0}^\vee,\]
for the unitary induction of $V[\tilde\pi]$, so that we have
\[W_{P,\tilde\pi}\cong\iota_{P(\A)}^{G(\A)}(V[\tilde\pi],-\rho_P).\]
Elements $\phi\in\iota_{P(\A)}^{G(\A)}(V[\tilde\pi],-\rho_P)$ fit into flat sections $\phi_\lambda\in\iota_{P(\A)}^{G(\A)}(V[\tilde\pi],\lambda)$ where $\lambda$ varies in $\mf{a}_{P,0}^\vee$. Then for such $\phi$ we have $\phi=\phi_{-\rho_P}$. In what follows, we will identify elements of $W_{P,\tilde\pi}$ with elements of $\iota_{P(\A)}^{G(\A)}(V[\tilde\pi],-\rho_P)$, and then use this notation to vary them in flat sections.

With $d\chi$ as above, let $h_0$ be a holomorphic function on $\mf{a}_{P,0}^\vee$ such that, for any $\phi\in W_{P,\tilde\pi}$, the product $h_0(\lambda)E(\phi,\lambda)$ is holomorphic near $\lambda=d\chi+\rho_P$. Then we define a map
\[\mc{E}_{h_0}:W_{P,\tilde\pi}\otimes\Sym(\mf{a}_{P,0})_{d\chi+\rho_P}\to\mc{A}_{E,[P],\varphi}(G)\]
by
\[\phi\otimes D\mapsto D(h_0(\lambda)E(\phi,\lambda)).\]
The map $\mc{E}_{h_0}$ is surjective by our definition of $\mc{A}_{E,[P],\varphi}(G)$. If all the Eisenstein series $E(\phi,\lambda)$, for $\phi\in W_{P,\tilde\pi}$, are holomorphic at $\lambda=d\chi$, then we write $\mc{E}=\mc{E}_1$ for the map just defined with $h_0(\lambda)=1$.

\begin{proposition}
\label{propEh0}
The map $\mc{E}_{h_0}:W_{P,\tilde\pi}\otimes\Sym(\mf{a}_{P,0})_{d\chi+\rho_P}\to\mc{A}_{E,[P],\varphi}(G)$ defined just above is a surjective map of $G(\A_f)\times(\mf{g}_0,K_\infty)$-modules. Furthermore, if all the Eisenstein series $E(\phi,\lambda)$ arising from $\phi\in W_{P,\tilde\pi}$ are holomorphic at $\lambda=d\chi+\rho_P$, then the map $\mc{E}$ is an isomorphism.
\end{proposition}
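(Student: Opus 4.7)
The argument splits into well-definedness, equivariance, surjectivity, and injectivity, and only the last requires real work.

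Well-definedness and equivariance are essentially formal. By the choice of $h_0$, the function $h_0(\lambda)E(\phi,\lambda)$ is holomorphic near $\lambda_0:=d\chi+\rho_P$, so the distribution $D$ may be applied to it to produce an automorphic form. That form is annihilated by a power of $\mc{J}_E$ by a standard argument: $\mc{J}_E$ acts polynomially in $\lambda$ on $E(\phi,\lambda)$ (through the infinitesimal character of the induced representation), and a finite-order distribution $D$ can then only introduce a power of $\mc{J}_E$. Negligibility along any parabolic $Q\notin[P]$ follows from the usual constant-term computation for Eisenstein series, so the image lies in $\mc{A}_{E,[P],\varphi}(G)$. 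For equivariance, I would use Proposition \ref{propwpind} to identify the source with $\Ind_{P(\A)}^{G(\A)}\bigl(V[\tilde\pi]\otimes\Sym(\mf{a}_{P,0})_{\lambda_0}\bigr)$; the prescribed actions on $V[\tilde\pi]$ and on $\Sym(\mf{a}_{P,0})_{\lambda_0}$ are designed exactly so that their combined induced action matches right translation on Eisenstein series through the identity
\[
R_g\,E(\phi_\lambda,\lambda)=E(g\cdot_\lambda\phi_\lambda,\lambda),
\]
where $g\cdot_\lambda$ is the induced representation action at parameter $\lambda$. The $\lambda$-dependence of $g\cdot_\lambda$ is absorbed into the prescribed $P(\A_f)$-action $f\mapsto e^{\langle H_P(p),\cdot\rangle}f$ on $\Sym(\mf{a}_{P,0})_{\lambda_0}$ via Leibniz once the distribution $D$ is applied.

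Surjectivity is immediate from the definition of $\mc{A}_{E,[P],\varphi}(G)$ recalled just before the proposition: that space is spanned by iterated $\lambda$-derivatives at $\lambda_0$ of products $h_0(\lambda)E(\phi',\lambda)$, as $\phi'$ varies over the cuspidal data of each $\pi'\in\varphi$ and each $P'\in[P]$. The functional equation of Eisenstein series identifies the span coming from any $\pi'\in\varphi$ with the span coming from our fixed representative $\pi$, and this span is precisely the image of $\mc{E}_{h_0}$.

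The main content is the injectivity of $\mc{E}$ under the holomorphy hypothesis, where we may take $h_0\equiv 1$. After the identification of Proposition \ref{propwpind}, I would reinterpret $\mc{E}$ as an Eisenstein jet map: fixing a monomial basis $\{X^I\}$ of $\Sym(\mf{a}_{P,0})$, an element $\sum_I\psi_I\otimes X^I$ maps to the Taylor jet $\sum_I\partial^I E(\psi_I,\lambda)|_{\lambda_0}$. The key input is that, when every $E(\phi,\lambda)$ is holomorphic at $\lambda_0$, the flat-section Eisenstein map, viewed as sending holomorphic germs of sections of $\iota_{P(\A)}^{G(\A)}(V[\tilde\pi],\lambda-\rho_P)$ at $\lambda=\lambda_0$ into holomorphic germs of automorphic forms, is itself injective; one sees this by computing the constant term along $P$ and using that the distinct exponents along $A_P(\R)^\circ$ appearing in $\phi$ and in each intertwined contribution separate these pieces on an open neighborhood of $\lambda_0$. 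An injective map of holomorphic germs induces an injection on jets of any fixed order, yielding injectivity of $\mc{E}$. The main obstacle is precisely what the holomorphy hypothesis is designed to rule out: in general, poles of the intertwining operators governing the constant term can cancel against zeros of the normalizing factors in the Eisenstein series, producing hidden linear relations among the Taylor coefficients; such relations constitute the kernel of $\mc{E}_{h_0}$ in the non-holomorphic case and are responsible for the residual contributions in the Franke--Schwermer decomposition.
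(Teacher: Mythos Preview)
Your equivariance and surjectivity arguments are fine and match the paper's brief treatment; the gap is in the injectivity of $\mc{E}$. The claim that ``an injective map of holomorphic germs induces an injection on jets of any fixed order'' is false for holomorphic families of linear maps. Take $V=W=\C^2$ and $F_\lambda(v_1,v_2)=(v_1+\lambda v_2,\,v_2)$: every $F_\lambda$ is an isomorphism, so the induced map on germs of holomorphic sections is injective, yet the distribution map $\psi_0\otimes 1+\psi_1\otimes X\mapsto F_0(\psi_0)+(\partial_\lambda F_\lambda)|_{0}(\psi_1)$ kills the nonzero element with $\psi_0=(1,0)$ and $\psi_1=(0,-1)$. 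The map $\mc{E}$ has exactly this form---it pairs a $W_{P,\tilde\pi}$-valued distribution supported at $\lambda_0$ against the family $\lambda\mapsto E(\,\cdot\,,\lambda)$---and germ-level injectivity of that family does not by itself control this pairing.

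The paper does not argue directly: it identifies $\mc{E}$, under the holomorphy hypothesis, with the restriction of Franke's mean value map $\mathbf{MW}$ to the summand $W_{P,\tilde\pi}\otimes\Sym(\mf{a}_{P,0})_{d\chi+\rho_P}$, and cites \cite[Theorem~14]{franke} for the injectivity of $\mathbf{MW}$. Your constant-term idea is in fact what underlies Franke's argument, but it must be run directly rather than through the false germ-to-jet implication: take the constant term along $P$, isolate the identity-Weyl contribution (exponent $\lambda_0+\rho_P$, polynomial part $\sum_I\psi_I(m)\,H_P(a)^I$), and use linear independence of the monomials in $H_P(a)$ to force all $\psi_I=0$. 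Even then, separating this exponent from the intertwined ones $w\lambda_0+\rho_P$ is straightforward only when $\lambda_0$ is not fixed by a nontrivial Weyl element acting on $\mf{a}_{P,0}^\vee$, a regularity condition not implied by holomorphy of the Eisenstein series; handling the singular case is part of what makes Franke's theorem substantive.
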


\begin{proof}
To check that $\mc{E}_{h_0}$ is a map of $G(\A_f)\times(\mf{g}_0,K_\infty)$-modules, one just needs to use the formulas defining the $G(\A_f)\times(\mf{g}_0,K_\infty)$-module structure on $W_{P,\tilde\pi}\otimes\Sym(\mf{a}_{P,0})_\lambda$ and show they are preserved when forming Eisenstein series and taking derivatives; this can be checked when $\lambda$ is in the region of convergence for the Eisenstein series, and then this extends to all $\lambda$ by analytic continuation. We omit the precise details of this check.

For the second claim in the proposition, that $\mc{E}$ is an isomorphism, this follows from \cite[Theorem 14]{franke}; this theorem implies that $\mc{E}$ injective, since it equals the restriction of Franke's mean value map $\mathbf{MW}$ to $W_{P,\tilde\pi}\otimes\Sym(\mf{a}_{P,0})_{d\chi+\rho_P}$. Whence by surjectivity and the first part of the proposition, we are done.
\end{proof}

The space $\mc{A}_{E,[P],\varphi}(G)$ carries a filtration by $G(\A_f)\times(\mf{g}_0,K_\infty)$-modules which is due to Franke. For our purposes, we will not need the precise definition of this filtration, but just a rough description of its graded pieces. This is described in the following theorem.

\begin{theorem}
\label{thmfrfil}
Let $C\in\mc{C}$. There is a decreasing filtration
\[\dotsb\supset\Fil^i\mc{A}_{E,C,\varphi}(G)\supset\Fil^{i+1}\mc{A}_{E,C,\varphi}(G)\supset\dotsb\]
of $G(\A_f)\times(\mf{g}_0,K_\infty)$-modules on $\mc{A}_{E,C,\varphi}(G)$, for which we have
\[\Fil^0\mc{A}_{E,C,\varphi}(G)=\mc{A}_{E,C,\varphi}(G)\]
and
\[\Fil^m\mc{A}_{E,C,\varphi}(G)=0\]
for some $m>0$ (depending on $\varphi$) and whose graded pieces have the property described below.

Fix $\pi$ in $\varphi$, and say $\pi$ is an automorphic representation of $M(\A)$ with $M$ a Levi of a parabolic $P$ in $C$. Let $d\chi$ be the differential of the archimedean component of the central character $\chi$ of $\pi$. Let $\mc{M}$ be the set of quadruples $(Q,\nu,\Pi,\mu)$ where:
\begin{itemize}
\item $Q$ is a parabolic subgroup of $G$ which contains $P$;
\item $\nu$ is an element of $(\mf{a}_P\cap\mf{m}_{Q,0})^\vee$;
\item $\Pi$ is an automorphic representation of $M(\A)$ occurring in
\[L_{\disc}^2(M_Q(\Q)A_Q(\R)^\circ\backslash M_Q(\A))\]
and which is spanned by values at, or residues at, the point $\nu$ of Eisenstein series parabolically induced from $(P\cap M_Q)(\A)$ to $M_Q(\A)$ by representations in $\varphi$; and
\item $\mu$ is an element of $\mf{a}_{Q,0}^\vee$ whose real part in $\Lie(A_{G}(\R)\backslash A_{M_Q}(\R))^\vee$ is in the closure of the positive chamber, and such that the following relation between $\mu$, $\nu$ and $\pi$ holds: Let $\lambda_{\tilde\pi}$ be the infinitesimal character of the archimedean component of $\tilde\pi$. Then
\[\lambda_{\tilde\pi}+\nu+\mu\]
may be viewed as a collection of weights of a Cartan subalgebra of $\mf{g}_0$, and the condition we impose is that these weights are in the support of the infinitesimal character of $E$.
\end{itemize}
For such a quadruple $(Q,\nu,\Pi,\mu)\in\mc{M}$, let $V_d[\Pi]$ denote the $\Pi$-isotypic component of the space
\[L_{\disc}^2(M_Q(\Q)A_Q(\R)^\circ\backslash M_Q(\A))\cap\mc{A}_{E,[P\cap M_Q],\varphi|_{M_Q}}(M_P).\]
Then the property of the graded pieces of the filtration above is that, for every $i$ with $0\leq i<m$, there is a subset $\mc{M}_\varphi^i\subset\mc{M}$ and an isomorphism of $G(\A_f)\times(\mf{g}_0,K_\infty)$-modules
\[\Fil^i\mc{A}_{E,C,\varphi}(G)/\Fil^{i+1}\mc{A}_{E,C,\varphi}(G)\cong\bigoplus_{(Q,\nu,\Pi,\mu)\in\mc{M}_\varphi^i}\Ind_{Q(\A)}^{G(\A)}(V_d[\Pi]\otimes\Sym(\mf{a}_{Q,0})_{\mu+\rho_Q}).\]
\end{theorem}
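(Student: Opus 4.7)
The plan is to follow Franke's construction in \cite{franke}, refined by the associate class data $\varphi$ as in Franke--Schwermer \cite{FS}. The starting point is the surjection $\mc{E}_{h_0}$ from Proposition \ref{propEh0}, which presents $\mc{A}_{E,C,\varphi}(G)$ as a quotient of $W_{P,\tilde\pi}\otimes\Sym(\mf{a}_{P,0})_{d\chi+\rho_P}$, with the quotient structure encoded by the singular behavior of the Eisenstein series $E(\phi,\lambda)$. Via Proposition \ref{propwpind}, this source is itself $\Ind_{P(\A)}^{G(\A)}(V[\tilde\pi]\otimes\Sym(\mf{a}_{P,0})_{d\chi+\rho_P})$; so the task is to describe the kernel of $\mc{E}_{h_0}$ in a way that produces a filtration whose graded pieces are again induced modules from larger parabolics.

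First I would define the filtration. The singular support of $\phi\mapsto E(\phi,\lambda)$ as a meromorphic function on $\mf{a}_{P,0}^\vee$ is a locally finite union of affine hyperplanes, and together with the point $d\chi+\rho_P$ it produces a stratification. Following Franke's use of Euler-type vector fields on $\Sym(\mf{a}_{P,0})_{d\chi+\rho_P}$, I would set $\Fil^i\mc{A}_{E,C,\varphi}(G)$ to be the image under $\mc{E}_{h_0}$ of distributions supported on the closed union of strata of codimension at least $i$. The finiteness claim (that $\Fil^m=0$ for some $m$) follows because only finitely many singular hyperplanes pass through $d\chi+\rho_P$ and the order of distributions required to produce any element of $\mc{A}_{E,C,\varphi}(G)$ is bounded.

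Next I would identify the graded pieces. An element of $\Fil^i/\Fil^{i+1}$ is represented by iterated residues of $E(\phi,\lambda)$ along some affine subspace transverse to a codimension-$i$ stratum, followed by evaluation (and further derivatives) at a point. The key analytic input is the residue theory of Langlands, refined by Moeglin--Waldspurger \cite{MW}: such iterated residues along an affine subspace parallel to $\mf{a}_{Q,0}^\vee$ for some parabolic $Q\supset P$ produce Eisenstein series for $G$ parabolically induced from $Q$, whose inducing data is a discrete automorphic representation $\Pi$ of $M_Q(\A)$, itself built as a residue at some point $\nu\in(\mf{a}_P\cap\mf{m}_{Q,0})^\vee$ of Eisenstein series from $(P\cap M_Q)$ in $M_Q$ starting from the class $\varphi|_{M_Q}$. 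The remaining free variable in $\mf{a}_{Q,0}^\vee$ is evaluated at the point $\mu$, and residual derivatives transverse to the stratum become elements of $\Sym(\mf{a}_{Q,0})_{\mu+\rho_Q}$. Applying the analogue of Proposition \ref{propwpind} for $(Q,\Pi)$ in place of $(P,\tilde\pi)$ then exhibits the graded piece as $\bigoplus_{(Q,\nu,\Pi,\mu)}\Ind_{Q(\A)}^{G(\A)}(V_d[\Pi]\otimes\Sym(\mf{a}_{Q,0})_{\mu+\rho_Q})$. The positivity constraint on $\re(\mu)$ records the condition that the outer Eisenstein series for $(G,Q)$ evaluated at $\mu$ lies in the region where no further residues need be taken, while the infinitesimal character constraint $\lambda_{\tilde\pi}+\nu+\mu\in\supp(\chi_E)$ is forced by the requirement that the resulting automorphic form be annihilated by a power of $\mc{J}_E$, since the archimedean infinitesimal character of the induced representation is computed from exactly these data.

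The main obstacle is the bookkeeping needed to ensure that the map from the sum of induced modules onto a given graded piece is an isomorphism, not merely a surjection. Surjectivity comes from $\mc{E}_{h_0}$ together with standard residue calculus. Injectivity requires Franke's main technical result, the injectivity of the mean-value map $\mathbf{MW}$ invoked in Proposition \ref{propEh0}, which ensures that distributions supported on distinct strata, and induced data from genuinely different quadruples $(Q,\nu,\Pi,\mu)$, give rise to linearly independent automorphic forms modulo $\Fil^{i+1}$. Along the way, one must check that the filtration is preserved under the $G(\A_f)\times(\mf{g}_0,K_\infty)$-action, which follows from the fact that both this action and residue formation commute, being respectively left- and right-sided operations in the parameter $\lambda$.
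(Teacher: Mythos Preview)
Your outline is consistent with the approach the paper takes: the paper does not give an independent argument but simply cites Franke \cite{franke} for the filtration and \cite[Theorem~4]{grobner} for the version refined by the associate class $\varphi$. Your sketch of Franke's construction (singular hyperplane stratification, iterated residues producing discrete data on larger Levis, injectivity via the mean-value map $\mathbf{MW}$) is the right shape; the only difference is that for the $\varphi$-compatibility you point to Franke--Schwermer \cite{FS}, whereas the paper points to Grobner's later reformulation, which packages exactly the statement needed here.
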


\begin{proof}
While this essentially follows again from the work of Franke \cite{franke}, in this form, this theorem is a consequence of \cite[Theorem 4]{grobner}; the latter takes into account the presence of the class $\varphi$ while the former does not.
\end{proof}

\begin{remark}
In the context of Proposition \ref{propEh0} and Theorem \ref{thmfrfil}, when all the Eisenstein series $E(\phi,\lambda)$ arising from $\phi\in W_{P,\tilde\pi}$ are holomorphic at $\lambda=d\chi_\pi$, what happens is that the filtration of Theorem \ref{thmfrfil} collapses to a single step. The nontrivial piece of this filtration is then given by $\Ind_{P(\A)}^{G(\A)}(V[\tilde\pi]\otimes\Sym(\mf{a}_{P,0})_{d\chi+\rho_P})$ through the map $\mc{E}$ along with the isomorphism of Proposition \ref{propwpind}.
\end{remark}

When $P$ is a maximal parabolic, the filtration of Theorem \ref{thmfrfil} becomes particularly simple. To describe it, we set some notation.

Assuming $P$ is maximal, if $\tilde\pi$ is a unitary cuspidal automorphic representation of $M(\A)$ and $s\in\C$ with $\re(s)>0$, let us write
\[\mc{L}_{P(\A)}^{G(\A)}(\tilde\pi,s)\]
for the Langlands quotient of
\[\iota_{P(\A)}^{G(\A)}(\tilde\pi,2s\rho_P).\]
Then we have

\begin{theorem}[Grbac \cite{grbac}]
\label{thmgrbac}
In the setting above, with $P$ maximal and $\re(s)>0$, assume $\tilde\pi$ defines an associate class $\varphi\in\Phi_{E,[P]}$. If any of the Eisenstein series $E(\phi,\lambda)$ coming from $\phi\in W_{\tilde\pi}$ have a pole at $\lambda=2s\rho_P$, then there is an exact sequence of $G(\A_f)\times(\mf{g}_0,K_\infty)$-modules as follows:
\[0\to\mc{L}_{P(\A)}^{G(\A)}(\tilde\pi,s)\to\mc{A}_{E,[P],\varphi}(G)\to\Ind_{P(\A)}^{G(\A)}(V[\tilde\pi]\otimes\Sym(\mf{a}_{P,0})_{(2s+1)\rho_P})\to 0.\]
\end{theorem}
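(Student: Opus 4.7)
The plan is to specialize the Franke--Schwermer filtration of Theorem~\ref{thmfrfil} to this setting and show that, under the maximality of $P$ together with the pole hypothesis, the filtration collapses to two steps that realize the claimed exact sequence.

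The first task is to catalogue the quadruples $(Q,\nu,\Pi,\mu)\in\mc{M}$ that can contribute graded pieces. Since $P$ is maximal, the only parabolic subgroups $Q\supseteq P$ are $Q=P$ and $Q=G$, so there are exactly two cases. For $Q=G$, one has $\mu=0$ and $\nu\in\mf{a}_{P,0}^\vee$, while $\Pi$ must lie in the discrete spectrum of $G$ and be spanned by residues or values at $\lambda=\nu$ of Eisenstein series built from $\varphi$. The hypothesis that $E(\phi,\lambda)$ has a pole at $\lambda=2s\rho_P$ is precisely what produces such a $\Pi$, and Langlands's theory of the residual spectrum (made explicit for maximal parabolics by Moeglin--Waldspurger) identifies the span of these residues with the Langlands quotient $\mc{L}_{P(\A)}^{G(\A)}(\tilde\pi,s)$; as the pole is isolated inside the positive chamber, this is the unique $\nu$ contributing. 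For $Q=P$, one has $\nu\in(\mf{a}_P\cap\mf{m}_{P,0})^\vee=0$, so $\nu=0$, and $\Pi=\tilde\pi$ because the inducing step $P\cap M_P\to M_P$ is trivial. The parameter $\mu\in\mf{a}_{P,0}^\vee$ is then pinned down by combining the positivity requirement with the infinitesimal character support condition on $\lambda_{\tilde\pi}+\mu$; this forces $\mu=2s\rho_P$, so that $\mu+\rho_P=(2s+1)\rho_P$ and the corresponding graded piece is $\Ind_{P(\A)}^{G(\A)}(V[\tilde\pi]\otimes\Sym(\mf{a}_{P,0})_{(2s+1)\rho_P})$.

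Having produced exactly these two graded pieces, I would verify that they assemble into a filtration of the correct shape, with the residual contribution from $Q=G$ sitting as the subrepresentation $\Fil^1$ and the fully induced piece from $Q=P$ appearing as the quotient $\Fil^0/\Fil^1$. This orientation is intrinsic to Franke's construction: the more discrete contributions (larger $Q$) occupy the deeper steps of the filtration.

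The main obstacle I anticipate is making the identification of the $Q=G$ piece with $\mc{L}_{P(\A)}^{G(\A)}(\tilde\pi,s)$ fully rigorous as a $G(\A_f)\times(\mf{g}_0,K_\infty)$-module. This rests on the fact that, for a unitary cuspidal Eisenstein series on a maximal parabolic with $\re(s)>0$, the residue map at a pole in the positive chamber yields exactly the Langlands quotient of $\iota_{P(\A)}^{G(\A)}(\tilde\pi,2s\rho_P)$, established by analyzing the long intertwining operator and its poles. Once this identification is in hand, one could alternatively obtain the exact sequence directly from Proposition~\ref{propEh0} by choosing $h_0$ to be a linear form vanishing to first order at $\lambda=2s\rho_P$, since then $\mc{E}_{h_0}$ is surjective, carries $\phi\otimes 1$ to the residue $\mr{Res}_{\lambda=2s\rho_P}E(\phi,\lambda)$, and its kernel is identified, via Proposition~\ref{propwpind}, with the image of multiplication by $h_0$ inside $\Ind_{P(\A)}^{G(\A)}(V[\tilde\pi]\otimes\Sym(\mf{a}_{P,0})_{(2s+1)\rho_P})$.
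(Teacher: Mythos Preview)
The paper does not give an independent proof of this theorem; it simply cites \cite[Theorem 3.1]{grbac}. Your proposal is therefore not so much a different route as an attempted reconstruction of what Grbac's argument presumably contains, and the outline you give---specializing the Franke filtration of Theorem~\ref{thmfrfil} to the two parabolics $Q=P$ and $Q=G$, and identifying the $Q=G$ piece with the span of residues---is indeed the natural shape of such an argument.

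Two points are worth flagging. First, your identification of the residual piece with the Langlands quotient $\mc{L}_{P(\A)}^{G(\A)}(\tilde\pi,s)$ as a $G(\A_f)\times(\mf{g}_0,K_\infty)$-module is, as you note, the substantive step; this is exactly what Grbac establishes, and it requires more than the general Langlands--Moeglin--Waldspurger theory of residues (one needs the simplicity of the pole and control over the full $(\mf{g}_0,K_\infty)$-module structure, not just the automorphic realization). Second, your closing alternative via Proposition~\ref{propEh0} with $h_0$ vanishing to first order is suggestive but would require care: the map $\mc{E}_{h_0}$ is surjective onto $\mc{A}_{E,[P],\varphi}(G)$, but identifying its kernel precisely with the claimed induced module (rather than merely a submodule of it) is not immediate from what is stated, and one would again need Grbac's analysis to close this. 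In short, your sketch is on the right track but does not bypass the need to invoke \cite{grbac}.
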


\begin{proof}
This follows from \cite[Theorem 3.1]{grbac}.
\end{proof}

\subsection{Cohomology of induced representations}
\label{subsecindcoh}
We now calculate the cohomology of representations of $G$ that are parabolically induced from automorphic representations of Levi subgroups, and hence give a tool for computing the cohomology of the graded pieces of the Franke filtration described in Theorem \ref{thmfrfil}. The computations done in this section were essentially carried out by Franke in \cite[\S 7.4]{franke}, but not in so much detail. We fill in just a few of the details and give a version of Franke's result which focuses on one representation of a Levi subgroup at a time. The method is essentially that of the proof of \cite[Theorem III.3.3]{BW}. This method also appears in the computations of Grbac--Grobner \cite{GG} and Grbac--Schwermer \cite{GS}.

Let the notation be as in the previous section. Then we have our group $G$ and $P\subset G$ a parabolic subgroup defined over $\Q$ with Levi decomposition $P=MN$. Fix an automorphic representation (not necessarily cuspidal) $\pi$ of $M(\A)$ with central character $\chi$, occurring in the discrete spectrum
\[L_{\disc}^2(M(\Q)\backslash M(\A),\chi).\]
Then the unitarization $\tilde\pi$ occurs in
\[L_{\disc}^2(M(\Q)A_P(\R)^\circ\backslash M(\A)).\]
Let $d\chi$ denote the differential of the archimedean component of $\chi$. Fix also an irreducible finite dimensional representation $E$ of $G(\C)$.

Fix a compact subgroup $K_\infty'$ of $G(\R)$ such that $K_\infty^\circ\subset K_\infty'\subset K_\infty$. We will compute the $(\mf{g}_0,K_\infty')$-cohomology space
\[H^i(\mf{g}_0,K_\infty';\Ind_{P(\A)}^{G(\A)}(\tilde\pi\otimes\Sym(\mf{a}_{P,0})_{d\chi+\rho_P})\otimes E)\]
in terms of $(\mf{m}_0,K_\infty'\cap P(\R))$-cohomology spaces attached to $\pi$. We will require the following lemma.

\begin{lemma}
\label{lemcohsym}
Let $\mu,\mu'\in\mf{a}_{P,0}^\vee$. Let $\C_{\mu'}$ denote the one dimensional $\mf{a}_{P,0}$-module on which $X\in\mf{a}_{P,0}$ acts through multiplication by $\langle X,\mu'\rangle$. Then there is an isomorphism of $P(\A_f)$-modules
\[H^i(\mf{a}_{P,0},\Sym(\mf{a}_{P,0})_\mu\otimes\C_{\mu'})\cong\begin{cases}
\C(e^{\langle H_P(\cdot),\mu\rangle}) & \textrm{if }\mu'=-\mu\textrm{ and }i=0;\\
0 & \textrm{if }\mu'\ne-\mu\textrm{ or }i>0.
\end{cases}\]
Here, $\C(e^{\langle H_P(\cdot),\mu\rangle})$ is just the one dimensional representation of $P(\A_f)$ on which $p\in P(\A_f)$ acts via $e^{\langle H_P(p),\mu\rangle}$.
\end{lemma}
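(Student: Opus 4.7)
The plan is to compute the cohomology via the Chevalley--Eilenberg complex, which simplifies greatly because $\mf{a}_{P,0}$ is abelian. The main preparatory step is to unpack the action of $\xi\in\mf{a}_{P,0}$ on $\Sym(\mf{a}_{P,0})_\mu$. Writing $\langle\xi,\lambda\rangle=\langle\xi,\mu\rangle+\langle\xi,\lambda-\mu\rangle$ in the defining formula $(\xi D)(f)=D(\langle\xi,\cdot\rangle f)$, one sees
\[\xi\cdot D=\langle\xi,\mu\rangle\,D+N_\xi D,\]
where $N_\xi$ is the ``contraction by $\xi$'' derivation, obtained by multiplying the test function by the linear functional $\langle\xi,\cdot-\mu\rangle$ which vanishes at $\mu$. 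Concretely, identifying $\Sym(\mf{a}_{P,0})_\mu$ with $\Sym(\mf{a}_{P,0})$ by choosing a basis of $\mf{a}_{P,0}$ and viewing elements of the latter as differential operators at $\mu$, the operator $N_\xi$ decreases polynomial degree by one and so is locally nilpotent. On $\Sym(\mf{a}_{P,0})_\mu\otimes\C_{\mu'}$ the action of $\xi$ then becomes $\langle\xi,\mu+\mu'\rangle\cdot\id+N_\xi$.

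If $\mu'\ne-\mu$, I would pick $\xi_0\in\mf{a}_{P,0}$ with $c:=\langle\xi_0,\mu+\mu'\rangle\ne 0$. Since $N_{\xi_0}$ is locally nilpotent, $\xi_0=c+N_{\xi_0}$ acts invertibly on $\Sym(\mf{a}_{P,0})_\mu\otimes\C_{\mu'}$ via the Neumann series $c^{-1}\sum_{k\ge 0}(-N_{\xi_0}/c)^k$. Decompose $\mf{a}_{P,0}=\mf{a}'\oplus\C\xi_0$ and apply the Hochschild--Serre spectral sequence; it suffices to show $H^*(\C\xi_0,V)=0$ for any module on which $\xi_0$ acts invertibly, but this is immediate in degree $0$ (kernel of $\xi_0$ is zero) and in degree $1$ (cokernel is zero), the only degrees that appear for a $1$-dimensional abelian Lie algebra. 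Hence $H^*(\mf{a}_{P,0},V)=0$.

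If $\mu'=-\mu$, the scalar part disappears and every $\xi$ acts purely as the derivation $N_\xi$. Under the identification above, the Chevalley--Eilenberg complex $\Lambda^\bullet\mf{a}_{P,0}^\vee\otimes\Sym(\mf{a}_{P,0})$ becomes precisely the algebraic de Rham complex of the affine space $\mf{a}_{P,0}^\vee$ (whose ring of polynomial functions is $\Sym(\mf{a}_{P,0})$), with differential $\omega\otimes P\mapsto\sum_i X_i^*\wedge\omega\otimes\partial_{X_i}P$ for a basis $X_i$ of $\mf{a}_{P,0}$ and dual basis $X_i^*$. By the algebraic Poincar\'e lemma the cohomology is concentrated in degree $0$, where it equals the constants, i.e.\ the line spanned by the distribution $\delta_\mu\colon f\mapsto f(\mu)$.

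It remains to identify the $P(\A_f)$-action on this line. Directly from the defining formula $(pD)(f)=D(e^{\langle H_P(p),\cdot\rangle}f)$, applied to $D=\delta_\mu$, one gets $(p\delta_\mu)(f)=e^{\langle H_P(p),\mu\rangle}f(\mu)=e^{\langle H_P(p),\mu\rangle}\delta_\mu(f)$, so $P(\A_f)$ acts by $e^{\langle H_P(\cdot),\mu\rangle}$, as asserted. (One should also briefly check that the $P(\A_f)$-action commutes with the $\mf{a}_{P,0}$-action, which is a one-line computation from the two defining formulas, so it indeed descends to cohomology.) The only real subtlety is keeping straight the translation-by-$\mu$ identification between $\Sym(\mf{a}_{P,0})_\mu$ and $\Sym(\mf{a}_{P,0})$; once that is fixed, both the vanishing argument and the Poincar\'e lemma step are routine.
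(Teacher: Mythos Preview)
Your proof is correct, but the route differs from the paper's. The paper fixes coordinates $\lambda_1,\dotsc,\lambda_r$, computes directly that $X\in\mf{a}_{P,0}$ acts on $\Sym(\mf{a}_{P,0})_\mu\otimes\C_{\mu'}$ by $\langle X,\mu+\mu'\rangle+\partial_X$, and then observes that this formula makes the module an external tensor product of one-variable pieces $\Sym(\C X_i)_{\mu_i}\otimes\C_{\mu_i'}$; the K\"unneth formula reduces everything to a one-dimensional Lie algebra calculation done by hand. You instead split into cases: for $\mu'\ne-\mu$ you pick $\xi_0$ with $\langle\xi_0,\mu+\mu'\rangle\ne 0$, note $\xi_0$ acts as (nonzero scalar) $+$ (locally nilpotent) hence invertibly, and kill the cohomology via Hochschild--Serre for $\C\xi_0\subset\mf{a}_{P,0}$; for $\mu'=-\mu$ you identify the Chevalley--Eilenberg complex with a Koszul/de Rham complex and invoke the algebraic Poincar\'e lemma. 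Both approaches are short and standard; yours is slightly more conceptual, the paper's slightly more hands-on, and the identification of the $P(\A_f)$-action on the surviving line is the same in both.

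One harmless slip: the Chevalley--Eilenberg complex is $\Lambda^\bullet\mf{a}_{P,0}^\vee\otimes\Sym(\mf{a}_{P,0})$, whereas the de Rham complex of the affine space $\mf{a}_{P,0}^\vee$ is $\Lambda^\bullet\mf{a}_{P,0}\otimes\Sym(\mf{a}_{P,0})$; the exterior factors sit in dual spaces. This does not affect your argument, since after choosing a basis both are the same Koszul complex $\Lambda^\bullet(\C^r)\otimes\C[\lambda_1,\dotsc,\lambda_r]$ with differential $\sum e_i\wedge(-)\otimes\partial_i$, whose acyclicity in positive degrees is exactly what you use.
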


\begin{proof}
It will be convenient to work in coordinates. So let $\lambda=(\lambda_1,\dotsc,\lambda_r)$ be coordinates on $\mf{a}_{P,0}^\vee$; this is the same as fixing a basis of $\mf{a}_{P,0}$. Then the elements of $\Sym(\mf{a}_{P,0})_\mu$ may be viewed as polynomials in the variables $\lambda_1,\dotsc,\lambda_r$.

Let $\alpha=(\alpha_1,\dotsc,\alpha_r)$ be a multi-index. By definition, the monomial $\lambda^\alpha=\lambda_1^{\alpha_1}\dotsb\lambda_r^{\alpha_r}$ acts as a distribution on holomorphic functions $f$ on $\mf{a}_{P,0}^\vee$ via the formula
\[\lambda^\alpha f=\frac{\partial^\alpha}{\partial\lambda^\alpha}f(\lambda)|_{\lambda=\mu}.\]
Also by definition, if $X\in\mf{a}_{P,0}$, then $X\lambda^\alpha$ acts as
\[(X\lambda^\alpha)f=\frac{\partial^\alpha}{\partial\lambda^\alpha}(\langle X,\lambda\rangle f(\lambda))|_{\lambda=\mu}.\]

Let $P(\lambda)$ be a polynomial in $\lambda$. Then a quick induction using the above formulas shows that $X\in\mf{a}_{P,0}$ acts on $P(\lambda)$ as
\[X(P(\lambda))=\langle X,\mu\rangle P(\lambda)+\sum_{i=1}^r\frac{\partial}{\partial\lambda_i}P(\lambda).\]
Hence $X$ acts on the element $P(\lambda)\otimes 1$ in $\Sym(\mf{a}_{P,0})_\mu\otimes\C_{\mu'}$ by
\[X(P(\lambda)\otimes 1)=\langle X,\mu+\mu'\rangle (P(\lambda)\otimes 1)+\sum_{i=1}^r\left(\frac{\partial}{\partial\lambda_i}P(\lambda)\otimes 1\right).\]
It follows from this that if $X_1,\dotsc,X_r$ is the basis of $\mf{a}_{P,0}$ corresponding to the coordinates $\lambda_1,\dotsc,\lambda_r$, then the decomposition
\[\mf{a}_{P,0}=\C X_1\oplus\dotsb\oplus\C X_r\]
realizes $\Sym(\mf{a}_{P,0})_\mu\otimes\C_{\mu'}$ as an exterior tensor product of analogously defined single-variable symmetric powers:
\[\Sym(\mf{a}_{P,0})_\mu\otimes\C_{\mu'}\cong(\Sym(\C X_1)_{\mu_1}\otimes\C_{\mu_1'})\otimes\dotsb\otimes(\Sym(\C X_r)_{\mu_r}\otimes\C_{\mu_r'}),\]
where $\mu_i,\mu_i'\in(\C X_i)^\vee$ are the $i$th components of $\mu,\mu'$ in the dual basis of $\mf{a}_{P,0}^\vee$ to $X_1,\dotsc,X_r$. By the K\"unneth formula, if we ignore for now the $P(\A_f)$-action, we then reduce to checking the one-dimensional analog of the lemma, that
\[H^i(\C X_i,\Sym(\C X_i)_{\mu_i}\otimes\C_{\mu_i'})\cong\begin{cases}
\C & \textrm{if }\mu'=-\mu\textrm{ and }i=0;\\
0 & \textrm{if }\mu'\ne-\mu\textrm{ or }i>0.
\end{cases}\]
This can be checked just by writing down the complex that computes this cohomology. Furthermore, $H^0(\mf{a}_{P,0},\Sym(\mf{a}_{P,0})_\mu\otimes\C_{-\mu})$ can be identified with subspace of $\Sym(\mf{a}_{P,0})_\mu$ consisting of constants. By definition, this space has an action of $P(\A_f)$ given by the character $e^{\langle H_P(\cdot),\mu\rangle}$, which proves our lemma.
\end{proof}

Let $\mf{h}\subset\mf{g}$ be a Cartan subalgebra, and assume $\mf{h}\subset\mf{m}$. Fix an ordering on the roots of $\mf{h}$ in $\mf{g}$ which makes $\mf{p}$ standard. If $W(\mf{h},\mf{g})$ denotes the Weyl group of $\mf{h}$ in $\mf{g}$, then write
\[W^P=\sset{w\in W(\mf{h},\mf{g})}{w^{-1}\alpha>0\textrm{ for all positive roots }\alpha\textrm{ in }\mf{m}}.\]
Then $W^P$ is the set of representatives of minimal length for $W(\mf{h},\mf{g})$ modulo the Weyl group $W(\mf{h}\cap\mf{m}_0,\mf{m}_0)$ of $\mf{h}\cap\mf{m}_0$ in $\mf{m}_0$. Write $\rho$ for half the sum of the positive roots of $\mf{h}$ in $\mf{g}$.

If $\Lambda\in\mf{h}^\vee$ is a dominant weight, write $E_\Lambda$ for the representation of $\mf{g}$ of highest weight $\Lambda$. If $\nu\in\mf{h}^\vee$ is a weight which is dominant for $\mf{m}$ we denote by $F_\nu$ the representation of $\mf{m}$ of highest weight $\nu$. Then we have the Kostant decomposition:
\[H^i(\mf{n},E_\Lambda)\cong\bigoplus_{\substack{w\in W^P\\ \ell(w)=i}}F_{w(\Lambda+\rho)-\rho},\]
where $\ell(w)$ denotes the length of the Weyl group element $w$.

Now we are ready to state the main theorem of this subsection.

\begin{theorem}
\label{thmcohind}
Notation as above, let $\Lambda\in\mf{h}^\vee$ be a dominant weight such that $E=E_\Lambda$. Assume that the cohomology space
\begin{equation}
\label{eqcohspace}
H^i(\mf{g}_0,K_\infty';\Ind_{P(\A)}^{G(\A)}(\tilde\pi\otimes\Sym(\mf{a}_{P,0})_{d\chi+\rho_P})\otimes E)
\end{equation}
is nontrivial for some $i$. Then there is a unique $w\in W^P$ such that
\[-w(\Lambda+\rho)|_{\mf{a}_{P,0}}=d\chi\]
and such that the infinitesimal character of the archimedean component of $\tilde\pi$ contains $-w(\Lambda+\rho)|_{\mf{h}\cap\mf{m_0}}$. Furthermore, if $\ell(w)$ is the length of such an element $w$, then for any $i$ we have
\begin{multline*}
H^i(\mf{g}_0,K_\infty';\Ind_{P(\A)}^{G(\A)}(\tilde\pi\otimes\Sym(\mf{a}_{P,0})_{d\chi+\rho_P})\otimes E)\\
\cong\iota_{P(\A_f)}^{G(\A_f)}(\pi_f)\otimes H^{i-\ell(w)}(\mf{m}_0,K_\infty'\cap P(\R);\tilde\pi_\infty\otimes F_{w(\Lambda+\rho)-\rho,0}),
\end{multline*}
where $\iota$ denotes a normalized parabolic induction functor, and $F_{w(\Lambda+\rho)-\rho,0}$ denotes the restriction to $\mf{m}_0$ of the representation of $\mf{m}$ of highest weight $w(\Lambda+\rho)-\rho$.
\end{theorem}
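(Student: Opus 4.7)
The plan is to follow the strategy of Borel--Wallach III.3.3, adapted as in Franke \cite[\S 7.4]{franke} (see also the analogous computations of Grbac--Grobner and Grbac--Schwermer) to accommodate the extra twist by $\Sym(\mf{a}_{P,0})_{d\chi+\rho_P}$. The proof breaks into three essentially independent steps: a Shapiro-style descent from $\mf{g}_0$ to $\mf{p}_0$, a Hochschild--Serre/Kostant decomposition on the parabolic side, and a pair of vanishing arguments (one from Lemma \ref{lemcohsym}, one from Wigner's lemma) which together single out a unique Weyl element $w$ and collapse the spectral sequence.

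First I would use that the finite-adelic part $\iota_{P(\A_f)}^{G(\A_f)}(\pi_f)$ factors cleanly out of the induction --- the $(\mf{g}_0,K_\infty')$-cohomology only touches the archimedean factor --- and apply Frobenius reciprocity at infinity to rewrite \eqref{eqcohspace} as
\[\iota_{P(\A_f)}^{G(\A_f)}(\pi_f) \otimes H^{*}(\mf{p}_0,K_\infty'\cap P(\R);\, \tilde\pi_\infty \otimes \Sym(\mf{a}_{P,0})_{d\chi+\rho_P} \otimes E).\]
Next I would run the Hochschild--Serre spectral sequence for the ideal $\mf{n}\subset\mf{p}_0$, whose quotient is $\mf{m}_0\oplus\mf{a}_{P,0}$. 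Since $\mf{n}$ acts trivially on $\tilde\pi_\infty$ and on $\Sym(\mf{a}_{P,0})_{d\chi+\rho_P}$, the $\mf{n}$-cohomology reduces to $H^*(\mf{n},E)$, which Kostant's theorem decomposes as $\bigoplus_{w\in W^P,\,\ell(w)=q} F_{w(\Lambda+\rho)-\rho}$. A K\"unneth decomposition for the abelian factor $\mf{a}_{P,0}$ then isolates an $\mf{a}_{P,0}$-cohomology of the shape appearing in Lemma \ref{lemcohsym}, with $\mu = d\chi+\rho_P$ and $\mu'=(w(\Lambda+\rho)-\rho)|_{\mf{a}_{P,0}}$. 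After matching up $\rho|_{\mf{a}_{P,0}}$ against $\rho_P$, the vanishing criterion of that lemma reads precisely $-w(\Lambda+\rho)|_{\mf{a}_{P,0}} = d\chi$, and when it holds the $\mf{a}_{P,0}$-factor is a single copy of $\C$ in degree $0$.

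The remaining $(\mf{m}_0,K_\infty'\cap P(\R))$-factor $H^*(\mf{m}_0,K_\infty'\cap P(\R);\tilde\pi_\infty\otimes F_{w(\Lambda+\rho)-\rho,0})$ vanishes by Wigner's lemma unless the infinitesimal character of $\tilde\pi_\infty$ contains $-w(\Lambda+\rho)|_{\mf{h}\cap\mf{m}_0}$, which is the second condition in the theorem. The two conditions together determine $w(\Lambda+\rho)|_{\mf{h}}$ up to the action of $W(\mf{h}\cap\mf{m}_0,\mf{m}_0)$, and since $W^P$ is a system of coset representatives for $W(\mf{h}\cap\mf{m}_0,\mf{m}_0)\backslash W(\mf{h},\mf{g})$ this pins $w$ down uniquely. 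Consequently only a single value $q=\ell(w)$ contributes to the $E_2$ page, the spectral sequence collapses, and reindexing produces the stated formula.

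The delicate part --- really the only nontrivial step --- is the bookkeeping of normalizations in the selection rule: aligning the $\rho_P$-shift built into $\Sym(\mf{a}_{P,0})_{d\chi+\rho_P}$, the restriction of $\rho$ from $\mf{h}$ to $\mf{a}_{P,0}$, the central character of the Kostant summand $F_{w(\Lambda+\rho)-\rho}$ on $\mf{a}_{P,0}$, and the passage between $\iota$ and $\Ind$, so that the output of Lemma \ref{lemcohsym} is exactly the clean condition $-w(\Lambda+\rho)|_{\mf{a}_{P,0}}=d\chi$ recorded in the theorem. Once this alignment is verified in one place, the uniqueness of $w$ and the degeneration of the spectral sequence are formal.
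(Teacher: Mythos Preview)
Your proposal is correct and follows essentially the same approach as the paper: Frobenius reciprocity, Kostant's theorem, the K\"unneth formula to separate off the $\mf{a}_{P,0}$-factor, and then Lemma~\ref{lemcohsym} to handle that factor, exactly as sketched there. The paper's own proof is only a brief pointer to \cite[Theorem~III.3.3]{BW} with the remark that Lemma~\ref{lemcohsym} replaces the usual one-dimensional $\mf{a}_{P,0}$-computation, so your write-up is in fact more detailed than what appears in the paper.
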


\begin{proof}
We only give a brief sketch of the proof, as it is almost exactly the same as the proof of \cite[Theorem III.3.3]{BW}. Just as in that proof, one uses Frobenius reciprocity, the Kostant decomposition and the K\"unneth formula to write \eqref{eqcohspace} as
\[\Ind_{P(\A_f)}^{G(\A_f)}(H^{j-\ell(w)}(\mf{m}_0,K_\infty'\cap P(\R);\tilde\pi\otimes F_{w(\Lambda+\rho)-\rho,0})\otimes H^k(\mf{a}_{P,0},\Sym(\mf{a}_{P,0})_{d\chi+\rho_P}\otimes\C_{\nu(w)})),\]
where $i=j+k$, $w$ is as in the statement of the theorem and
\[\nu(w)=(w(\Lambda+\rho)-\rho)|_{\mf{a}_{P,0}}.\]
The only real difference is that now we use Lemma \ref{lemcohsym} to compute the $\mf{a}_{P,0}$-cohomology in this induction and obtain the theorem.
\end{proof}

\subsection{Location of a Langlands quotient in Eisenstein cohomology}
\label{subseclqincoh}
We now return to the $G_2$ setting. In this subsection we will study the occurrence of a particular Langlands quotient in the Eisenstein cohomology of $G_2$. This Langlands quotient will be the automorphic representation of $G_2$ which we will $p$-adically deform later. In order to precisely locate the it in Eisenstein cohomology, we will need to be able to distinguish between different representations in that cohomology. When these representations are coming from different parabolic subgroups, we can use $L$-functions to do this, as in the proposition below. To state it, we require some set up.

Recall that two automorphic representations $\pi$ and $\pi'$ of a reductive group $G$ are \textit{nearly equivalent} if for all but finitely many places $v$, the local components $\pi_v$ and $\pi_v'$ are isomorphic.

Identify the long root Levi $M_\alpha$ of $P_\alpha$ and the short root Levi $M_\beta$ of $P_\beta$ with $GL_2$ via the maps $i_\alpha$ and $i_\beta$ of Section \ref{subsecg2str}. Then the modulus characters are given by
\[\delta_{M_\alpha(\A)}(A)=\vert\det(A)\vert^5,\qquad\delta_{M_\beta(\A)}(A)=\vert\det(A)\vert^3,\]
for $A\in GL_2(\A)$. We will consider in what follows the unitary parabolic induction functors $\iota_{P(\A)}^{G_2(\A)}$ for $P\in\{B,P_\alpha,P_\beta\}$; see the section on notation in the introduction.

\begin{proposition}
\label{propdistneareq}
Let $\pi_\alpha$ and $\pi_\beta$ be unitary, tempered, cuspidal automorphic representations of $GL_2(\A)$, viewed respectively as representations of $M_\alpha(\A)$ and $M_\beta(\A)$. Let $\psi$ be a quasicharacter of $T(\Q)\backslash T(\A)$, and let $s_\alpha,s_\beta\in\C$. Then given any irreducible subquotients
\[\Pi_\alpha\quad\textrm{of}\quad\iota_{P_\alpha(\A)}^{G_2(\A)}(\pi_\alpha,s_\alpha)\]
and
\[\Pi_\beta\quad\textrm{of}\quad\iota_{P_\beta(\A)}^{G_2(\A)}(\pi_\beta,s_\beta)\]
and
\[\Pi_0\quad\textrm{of}\quad\iota_{B(\A)}^{G_2(\A)}(\psi),\]
we have that no two of $\Pi_\alpha$, $\Pi_\beta$ and $\Pi_0$ are nearly equivalent.
\end{proposition}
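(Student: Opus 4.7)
The plan is to reduce the problem to an incompatibility of Euler factors on $GL_7$ by pushing everything through the standard $7$-dimensional representation $R_7$ of $G_2^\vee$. Fix a finite set $S$ of places of $\Q$ containing the archimedean place and every place where any of the data $\pi_\alpha,\pi_\beta,\psi,\chi_\alpha,\chi_\beta$ is ramified. At any unramified $v\notin S$, the self-dual identifications \eqref{eqphialphag2} and \eqref{eqphibetag2}, combined with the branching formulas \eqref{eqr7alpha} and \eqref{eqr7beta}, translate the Satake data into the explicit factorizations
\begin{align*}
L_v(s,\Pi_\alpha,R_7) &= L_v(s,\pi_\alpha\otimes\chi_\alpha)\,L_v(s,\tilde\pi_\alpha\otimes\chi_\alpha^{-1})\,L_v(s,\pi_\alpha,\Ad),\\
L_v(s,\Pi_\beta,R_7) &= L_v(s,\omega_\beta\chi_\beta^2)\,L_v(s,\tilde\pi_\beta\otimes\chi_\beta^{-1})\,\zeta_v(s)\,L_v(s,\pi_\beta\otimes\chi_\beta)\,L_v(s,\omega_\beta^{-1}\chi_\beta^{-2}),\\
L_v(s,\Pi_0,R_7) &= \prod_{\lambda}L_v(s,\psi^\lambda),
\end{align*}
where $\omega_\beta$ is the central character of $\pi_\beta$, the idele class characters $\chi_\alpha,\chi_\beta$ of $\A^\times$ encode the twists by $s_\alpha,s_\beta$, and $\lambda$ runs over the seven weights of $R_7$ (including the zero weight). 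Near equivalence of any two of the $\Pi_*$'s would force the corresponding partial $L$-functions $L^S(s,\cdot,R_7)$ to agree.

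First I would rule out $\Pi_\alpha\sim\Pi_\beta$ and $\Pi_\alpha\sim\Pi_0$ by comparing the order of pole at $s=1$. Every factor of $L^S(s,\Pi_\alpha,R_7)$ is regular at $s=1$: the two twisted standard $GL_2$-$L$-functions are entire by cuspidality of $\pi_\alpha$, and the adjoint factor $L^S(s,\pi_\alpha,\Ad)=L^S(s,\pi_\alpha\times\tilde\pi_\alpha)/\zeta^S(s)$ is regular at $s=1$ because the simple Rankin--Selberg pole in the numerator cancels the simple pole of $\zeta^S(s)$. By contrast $L^S(s,\Pi_\beta,R_7)$ is manifestly divisible by $\zeta^S(s)$, and $L^S(s,\Pi_0,R_7)$ contains $L^S(s,\psi^0)=\zeta^S(s)$ coming from the zero weight of $R_7$; both therefore carry a pole at $s=1$. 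The temperedness hypothesis guarantees that the omitted local $L$-factors at $v\in S$ are finite and nonzero at $s=1$, so this pole-order discrepancy survives to the partial $L$-function.

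The remaining pair $\Pi_\beta$ versus $\Pi_0$ shares this pole invariant, so I would appeal instead to Jacquet--Shalika strong multiplicity one for isobaric automorphic representations of $GL_7(\A)$. The factorizations above identify the partial $L$-functions with those of the isobaric sums
\[
\omega_\beta\chi_\beta^2 \boxplus (\tilde\pi_\beta\otimes\chi_\beta^{-1}) \boxplus \mathbf{1} \boxplus (\pi_\beta\otimes\chi_\beta) \boxplus \omega_\beta^{-1}\chi_\beta^{-2}
\quad\text{and}\quad
\boxplus_\lambda\psi^\lambda,
\]
respectively. The former contains the cuspidal $GL_2$-constituent $\pi_\beta\otimes\chi_\beta$ (cuspidal because $\pi_\beta$ is), whereas the latter consists entirely of Hecke characters. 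By unicity of the isobaric decomposition these sums cannot coincide.

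The main obstacle will be keeping the pole orders and isobaric multiplicities correct in degenerate cases, such as when $\omega_\beta\chi_\beta^2=\mathbf{1}$ (which enhances the pole of $L^S(s,\Pi_\beta,R_7)$ at $s=1$ to order three and merges two constituents of the $\Pi_\beta$-transfer), or when the quasicharacter $\psi$ is chosen so that several $\psi^\lambda$ become trivial. In every such scenario the two robust invariants — the regularity of $L^S(s,\Pi_\alpha,R_7)$ at $s=1$, and the presence of a cuspidal $GL_2$-constituent in the $\Pi_\beta$-transfer but not in the $\Pi_0$-transfer — continue to distinguish the three cases.
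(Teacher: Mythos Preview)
Your isobaric argument for $\Pi_\beta$ versus $\Pi_0$ is correct and is in fact more conceptual than what the paper does; the paper instead uses the degree-$14$ $L$-function $L^S(s,\Pi\times\pi_\gamma,R_7\otimes\std)$ for $\gamma\in\{\alpha,\beta\}$ (entire for $\Pi=\Pi_0$, but carrying a Rankin--Selberg pole for $\Pi=\Pi_\gamma$), together with prime-number-theorem nonvanishing results of Jacquet--Shalika and Shahidi to keep that pole from being cancelled by the remaining factors.

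Your pole-order argument for $\Pi_\alpha$ versus $\Pi_\beta$ and $\Pi_\alpha$ versus $\Pi_0$, however, has a gap. You correctly show that $L^S(s,\Pi_\alpha,R_7)$ is regular at $s=1$, but you have not shown that $L^S(s,\Pi_\beta,R_7)$ or $L^S(s,\Pi_0,R_7)$ actually has a pole there: the $\zeta^S(s)$-factor can be cancelled by a zero of another factor. For instance, $L^S(s,\tilde\pi_\beta\otimes\chi_\beta^{-1})$ at $s=1$ equals $L^S(1-s_\beta,\tilde\pi_\beta)$, which for generic $s_\beta\in\C$ lies in (or to the left of) the critical strip and may well vanish; likewise the Hecke $L$-functions $L^S(s,\psi^\lambda)$ for the nonzero weights $\lambda$ can vanish at $s=1$ since $\psi$ is an arbitrary quasicharacter. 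Your closing paragraph notwithstanding, regularity at $s=1$ cannot distinguish $\Pi_\alpha$ from a $\Pi_\beta$ whose $L$-function happens also to be regular there by such a cancellation. The paper avoids this for $\Pi_\alpha$ versus $\Pi_\beta$ by first reducing to $\re(s_\beta)\geq 0$, twisting by the central character $\omega_{\pi_\beta}$, and evaluating at $s=1+s_\beta$ rather than $s=1$, where Jacquet--Shalika nonvanishing guarantees the pole survives. The cleanest fix in your own framework is simply to use the isobaric argument uniformly: the $R_7$-transfers of $\Pi_\beta$ and of $\Pi_0$ each contain the trivial character $\mathbf{1}$ as a constituent, whereas that of $\Pi_\alpha$ does not --- the two $GL_2$-blocks are cuspidal, and your own computation that $L^S(s,\pi_\alpha,\Ad)$ is regular at $s=1$ says precisely (via Gelbart--Jacquet and Jacquet--Shalika) that $\mathbf{1}$ is not an isobaric constituent of $\Ad(\pi_\alpha)$.
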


\begin{proof}
We first note that we may assume $\re(s_\alpha),\re(s_\beta)\geq 0$; indeed, if $\gamma\in\{\alpha,\beta\}$, then $\pi_\gamma$ is tempered. So if $\re(s)<0$, and if $v$ is finite place which is unramified for $\pi_\gamma$, then there is a nonvanishing intertwining operator
\[\iota_{P_\gamma(\Q_v)}^{G_2(\Q_v)}(\pi_{\gamma,v}^\vee,-s)\to\iota_{P_\gamma(\Q_v)}^{G_2(\Q_v)}(\pi_{\gamma,v},s),\]
whose image is isomorphic to the unique unramified quotient of the source representation. It follows that
\[\iota_{P_\gamma(\A_f)}^{G_2(\A_f)}(\pi_\gamma^\vee,-s)\qquad\textrm{and}\qquad\iota_{P_\gamma(\A_f)}^{G_2(\A_f)}(\pi_\gamma,s)\]
are nearly equivalent.

Thus assume $\re(s)\geq 0$. For $\Pi$ an automorphic representation of $G_2(\A)$, $\chi$ a finite order character of $\A^\times$ and $S$ a finite set of places of $\Q$ including the archimedean place and the ramified places for $\Pi$ and $\chi$, we will consider the partial $L$-function $L^S(s,R_7(\Pi)\times\chi)$, where $R_7$ is the standard $7$-dimensional representation of $G_2$ (see Section \ref{subsecg2str}). The definition of this $L$-function is as follows. If $v\notin S$ is a place, $\chi_v:\Q_v^\times\to\C^\times$ is the local component of $\chi$ at $v$, and $s_v\in G_2(\C)$ is the Satake parameter of $\Pi_v$, then the corresponding local $L$-factor is defined to be
\[L_v(s,R_7(\Pi)\times\chi)=\det(1-\chi_v(p_v)\ell_v^{-s}R_7(s_v)),\]
where $\ell_v$ is the prime corresponding to $v$. Then the global $L$-function is defined as
\[L^S(s,R_7(\Pi)\times\chi)=\prod_{v\notin S}L_v(s,R_7(\Pi)\times\chi).\]

Now let $\Pi_\alpha$ and $\Pi_\beta$ be as in the statement of the proposition. Then it follows from \eqref{eqr7beta} that for $S$ sufficiently large, we have
\[L^S(s,R_7(\Pi_\alpha)\times\chi)=L^S(s+s_\alpha,\pi_\alpha\otimes\chi)L^S(s-s_\alpha,\pi_\alpha^\vee\otimes\chi)L^S(s,\Ad(\pi_\alpha)\times\chi),\]
where the partial adjoint $L$-function for $GL_2$ is defined in a way analogous to the $L$-function defined above. If we write $\omega_{\pi_\beta}$ for the central character of $\pi_\beta$, the we also have, by \eqref{eqr7alpha}, that
\[L^S(s,R_7(\Pi_\beta)\times\chi)=L^S(s+s_\beta,\pi_\beta\otimes\chi)L^S(s-s_\beta,\pi_\beta^\vee\otimes\chi)L(s+s_\beta,\omega_{\pi_\beta}\chi)L(s-s_\beta,\omega_{\pi_\beta}^{-1}\chi)\zeta(s).\]

Now on the one hand, since $\pi_\alpha$ is assumed to be cuspidal and unitary, by the work of Gelbart--Jacquet \cite[Theorem 9.3.1, Remark 9.9]{GJ}, the $L$-function $L^S(s,\Ad^2(\pi_\alpha)\times\chi)$ has at worst a simple pole at $s=1$. Therefore the same is true for $L^S(s,R_7(\Pi_\alpha)\times\chi)$. On the other hand, we have:
\begin{itemize}
\item The $L$-function $L^S(s+s_\beta,\pi_\beta\otimes\chi)$ does not vanish at $s=1+s_\beta$: if $\re(s_\beta)>0$, this follows from the temperedness of $\pi_\beta$, and otherwise, it follows from the ``prime number theorem'' of Jacquet--Shalika \cite{JS};
\item The $L$-function $L^S(s-s_\beta,\pi_\beta^\vee\otimes\chi)$ does not vanish when $s=1+s_\beta$, again by Jacquet--Shalika;
\item The $L$-function $L(s+s_\beta,\omega_{\pi_\beta}\chi)$ and the zeta function $\zeta(s)$ do not vanish at $1+s_\beta$ for similar but more elementary reasons, and $\zeta(s)$ has a pole at $s=1$;
\item The $L$-function $L(s-s_\beta,\omega_{\pi_\beta}^{-1}\chi)$ has a pole when $\chi=\omega_{\pi_\beta}$ and $s=1+s_\beta$.
\end{itemize}
Therefore, the $L$-function $L^S(s,R_7(\Pi_\beta)\times\omega_{\pi_\beta})$ has a pole at $s=1+s_\beta$, which is simple if $s_\beta\ne 0$, and is at least double otherwise. Since $L^S(s,\Ad^2(\pi_\alpha)\times\chi)$ does not have this property, we cannot have that $\Pi_\alpha$ and $\Pi_\beta$ are nearly equivalent.

Now for $\Pi$ again an automorphic representation of $G_2(\A)$, and $\pi$ an automorphic representation of $GL_2(\A)$, we consider the degree $14$ $L$-function
\[L^S(s,\Pi\times\pi,R_7\otimes\std),\]
defined in the obvious way. Then on the one hand, we have,
\[L^S(s,\Pi_\alpha\times\pi_\alpha,R_7\otimes\std)=L^S(s+s_\alpha,\pi_\alpha\times\pi_\alpha)L^S(s-s_\alpha,\pi_\alpha^\vee\times\pi_\alpha)L^S(s,\pi_\alpha,\Ad^3)L^S(s,\pi_\alpha),\]
where the first two factors are Rankin--Selberg $L$-function, and where the third factor is the $\Ad^3$ $L$-function, $\Ad^3=\Sym^3\otimes\det^{-1}$. If $\re(s)>0$, then since $\pi_\alpha$ is tempered, all of the $L$-functions in the product on the right hand side are holomorphic and nonvanishing at $s=1-s_\alpha$ except for the second one, $L^S(s,\pi_\alpha\times\pi_\alpha^\vee)$, which has a pole at $s=1-s_\alpha$.

Otherwise, if $\re(s)=0$, then none of these $L$-functions vanish by another prime number theorem, this time due to Shahidi, \cite[Theorem 5.1]{shahidiLfcns}; this theorem says that $L$-functions appearing in the constant terms of Eisenstein series satisfy a prime number theorem under certain conditions. Since
\[L(s,\pi_\alpha,\Ad^3)L(2s,\omega_{\pi_\alpha})\]
appears in such a way via the long root parabolic of $G_2$ (see, for example, \cite{shahidisym3}) we see that the $\Ad^3$ $L$-function $L^S(s,\pi_\alpha,\Ad^3)$ does not vanish along $\re(s)=1$, nor do the Rankin--Selberg $L$-functions $L^S(s,\pi_\alpha\times\pi_\alpha)$ or $L^S(s,\pi_\alpha^\vee\times\pi_\alpha)$ (since these latter $L$-functions can be seen via the Langlands--Shahidi method for $GL_2\times GL_2\subset GL_4$).

Thus, in any case, $L^S(s,\Pi_\alpha\times\pi_\alpha,R_7\otimes\std)$ has a pole at $s=1-s_\alpha$. On the other hand, $L^S(s,\Pi_0\times\pi_\alpha,R_7\otimes\std)$ is a product of seven $L$-functions of various character twists of $\pi_\alpha$. Since $\pi_\alpha$ is cuspidal, these $L$-functions are entire, whence $\Pi_0$ is not nearly equivalent to $\Pi_\alpha$.

A completely analogous argument to this, using twists by $\pi_\beta$ instead of $\pi_\alpha$, distinguishes $\Pi_\beta$ from $\Pi_0$ as well; here the Prime Number Theorem for $\Ad^3$ is not needed; instead, one only needs the Prime Number Theorem of Jacquet--Shalika, or that for Rankin--Selberg $L$-functions for $GL_2\times GL_2$.
\end{proof}

\begin{remark}
An earlier version of this paper proved only a weaker version of the above result, and used Galois representations to do it. We are grateful to Sug Woo Shin for suggesting there should be a purely automorphic proof of this result along these lines.
\end{remark}

We will also need to distinguish between representations occurring in the Eisenstein cohomology of $G_2$ which come from the same maximal parabolic of $G_2$. To do this, we will appeal to strong multiplicity one for the Levi, as in the following proposition.

\begin{proposition}
\label{propneareqalpha}
Let $\pi$, $\pi'$ be tempered, unitary automorphic representations of $GL_2(\A)$, viewed as representations of $M_\alpha(\A)$. Let $s,s'>0$. If there are irreducible subquotients
\[\Pi\textrm{ of }\iota_{P_\alpha(\A)}^{G_2(\A)}(\pi,s)\]
and
\[\Pi'\textrm{ of }\iota_{P_\alpha(\A)}^{G_2(\A)}(\pi',s)\]
such that $\Pi$ and $\Pi'$ are nearly equivalent, then $s=s'$ and $\pi\cong\pi'$.
\end{proposition}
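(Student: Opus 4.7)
The plan is to use Satake parameters directly: near equivalence of $\Pi$ and $\Pi'$ forces equality of their $R_7$-Satake parameter multisets at almost every unramified place, and from these multisets one reads off both $s$ and the Satake parameters of $\pi$ using only that $\pi$ and $\pi'$ are tempered.

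At any place $v$ where $\pi$ is unramified, the local component $\Pi_v$ is the spherical irreducible subquotient of $\iota_{P_\alpha(\Q_v)}^{G_2(\Q_v)}(\pi_v, s)$, and similarly for $\Pi_v'$. Writing $(a_v, b_v)$ for the Satake parameters of $\pi_v$ and applying \eqref{eqr7alpha}, the seven eigenvalues of the Satake parameter of $R_7(\Pi_v)$ form the multiset
\[
\{q_v^{-2cs}(a_v b_v)^{-1},\ q_v^{-cs} a_v^{-1},\ q_v^{-cs} b_v^{-1},\ 1,\ q_v^{cs} a_v,\ q_v^{cs} b_v,\ q_v^{2cs} a_v b_v\},
\]
for an explicit positive constant $c$ determined by the normalization of the induction parameter. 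Near equivalence of $\Pi$ and $\Pi'$ means that at almost all $v$ the Satake parameters of $\Pi_v$ and $\Pi_v'$ are conjugate in $G_2^\vee(\C)$, hence a fortiori conjugate in $\GL_7(\C)$ under the faithful embedding $R_7$; so the two multisets coincide.

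Temperedness of $\pi$ and $\pi'$ yields $|a_v| = |b_v| = |a_v'| = |b_v'| = 1$, so the absolute values of the seven entries of the multiset for $\Pi_v$ reduce to $\{q_v^{-2cs}, q_v^{-cs}, q_v^{-cs}, 1, q_v^{cs}, q_v^{cs}, q_v^{2cs}\}$, and similarly on the $\Pi'$-side with $s'$ in place of $s$. Comparing these at any single $v$ (noting $q_v \geq 2$) forces $cs = cs'$, hence $s = s'$. Knowing $s = s'$, the two entries of absolute value $q_v^{cs}$ on both sides coincide as a multiset, so $\{a_v, b_v\} = \{a_v', b_v'\}$ at almost all $v$, whence strong multiplicity one for $\GL_2$ gives $\pi \cong \pi'$.

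No substantial obstacle arises once this multiset identity is in place: temperedness is used only to ensure that the Satake parameters of $\pi_v$ lie on the unit circle, cleanly separating the scaling factors $q_v^{\pm cs}$ from the $\GL_2$ data; the positivity of $s$ and $s'$ ensures these scaling factors are nontrivial so that $s$ is detectable from absolute values; and strong multiplicity one upgrades the almost-everywhere local isomorphism to a global one. An alternative approach, closer in spirit to Proposition \ref{propdistneareq}, would compare poles of Rankin--Selberg $L$-functions of $\Pi$ and $\Pi'$ twisted by $\pi^\vee$ and invoke Jacquet--Shalika; but the Satake-parameter route above is noticeably cleaner.
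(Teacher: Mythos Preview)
Your overall strategy is sound and essentially parallel to the paper's: both proofs compare Satake parameters at good places, use unitarity/temperedness to separate the twist $s$ from the $GL_2$ data via absolute values, and finish with strong multiplicity one. The paper works intrinsically with the Weyl group of $G_2$ (finding $w\in W$ with $w(\chi\delta_{M_\alpha}^s)=\chi'\delta_{M_\alpha}^{s'}$ and then pinning down $w\in\{1,w_\alpha\}$), whereas you pass through $R_7$ and read off the data from the $7$-element eigenvalue multiset. Both routes are short; yours avoids the explicit Weyl-group bookkeeping at the cost of the embedding into $GL_7$.

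There is, however, a concrete error in your computation. The Satake parameter of $\Pi_v$ lives in $G_2^\vee(\C)$, and under the self-duality of $G_2$ the dual of $M_\alpha$ sits inside $G_2^\vee\cong G_2$ as $M_\beta$, not $M_\alpha$ (see the commutative diagrams \eqref{eqphialphag2}--\eqref{eqphibetag2}). So the relevant decomposition of $R_7$ is \eqref{eqr7beta}, not \eqref{eqr7alpha}; this is exactly how the paper computes $L^S(s,R_7(\Pi_\alpha)\times\chi)$ in the proof of Proposition~\ref{propdistneareq}. With the correct formula the multiset is
\[
\{\,q_v^{-cs}a_v^{-1},\ q_v^{-cs}b_v^{-1},\ a_vb_v^{-1},\ 1,\ a_v^{-1}b_v,\ q_v^{cs}a_v,\ q_v^{cs}b_v\,\},
\]
whose absolute-value multiset is $\{q_v^{-cs},q_v^{-cs},1,1,1,q_v^{cs},q_v^{cs}\}$. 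Your argument then goes through unchanged: comparing absolute values forces $s=s'$, and the two entries of absolute value $q_v^{cs}$ recover $\{a_v,b_v\}$. So the fix is purely a matter of citing \eqref{eqr7beta} and writing the correct multiset; the logical skeleton is fine.
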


\begin{proof}
Let $v$ be a finite place which is unramified for both $\Pi$ and $\Pi'$, and where $\Pi_v\cong\Pi_v'$. Then $\pi_v$ and $\pi_v'$ are unramified. Since $\pi$ and $\pi'$ are tempered and unitary, there are unramified unitary characters $\chi_v,\chi_v'$ of $T(\Q_v)$ such that $\pi_v$ is the unique unramified subquotient of
\[\iota_{(B\cap M_\alpha)(\Q_v)}^{M_\alpha(\Q_v)}(\chi)\]
and $\pi_v'$ is the unique unramified subquotient of
\[\iota_{(B\cap M_\alpha)(\Q_v)}^{M_\alpha(\Q_v)}(\chi')\]
By induction in stages and the fact that $\Pi_v\cong\Pi_v'$, we have that the unique unramified subquotients of
\[\iota_{B(\Q_v)}^{G_2(\Q_v)}(\chi\delta_{M_\alpha(\Q_v)}^s)\quad\textrm{and}\quad\iota_{B(\Q_v)}^{G_2(\Q_v)}(\chi'\delta_{M_\alpha(\Q_v)}^{s'})\]
coincide. By the theory of Satake parameters, this implies that there is an element $w$ in the Weyl group $W(G_2,T)$ such that
\[w(\chi\delta_{M_\alpha(\Q_v)}^s)=\chi'\delta_{M_\alpha(\Q_v)}^{s'}.\]

Now since $\chi$ and $\chi'$ are unitary, taking absolute values gives
\[(w\delta_{M_\alpha(\A)})^s(t)=\delta_{M_\alpha(\A)}^{s'}(t)\]
for any $t\in T(\Q_p)$. Let $\gamma$ be a root and consider the equation above with $t=\gamma^\vee(\ell_v^{-1})$ where $\ell_v$ is the prime corresponding to the place $v$; since
\[\delta_{M_\alpha(\Q_v)}(\gamma^\vee(\ell_v^{-1}))=\ell_v^{5\langle\alpha+2\beta,\gamma^\vee\rangle},\]
this gives
\[\ell_v^{5s\langle w(\alpha+2\beta),\gamma^\vee\rangle}=\ell_v^{5s'\langle \alpha+2\beta,\gamma^\vee\rangle}.\]
Since $\gamma$ was arbitrary, since $s,s'>0$, and since $W(G_2,T)$ acts on the short roots of $G_2$ with the stabilizer of $\alpha+2\beta$ being $\{1,w_\alpha\}$, this forces $s=s'$ and also $w=1$ or $w=w_\alpha$. But the induced representations
\[\iota_{(B\cap M_\alpha)(\Q_v)}^{M_\alpha(\Q_v)}(\chi\delta_{M_\alpha(\Q_v)}^s)\quad\textrm{and}\quad\iota_{(B\cap M_\alpha)(\Q_v)}^{M_\alpha(\Q_v)}(w_\alpha(\chi\delta_{M_\alpha(\Q_v)}^s))\]
have the same unramified subquotients. Thus $\pi_v\cong\pi_v'$. Since this holds for almost all $v$, $\pi\cong\pi'$ by strong multiplicity one for $GL_2$.
\end{proof}

\begin{remark}
An analogous result as the above proposition holds with $P_\beta$ or $B$ in place of $P_\alpha$, and the proof is also completely analogous in either case. We only need this proposition for $P_\alpha$ in this paper, however.
\end{remark}

We now begin to examine cohomology spaces for $G_2$, starting with the following proposition.

\begin{proposition}
\label{propcohindalpha}
Let $E$ be an irreducible, finite dimensional representation of $G_2(\C)$, and say that $E$ has highest weight $\Lambda$. Write
\[\Lambda=c_1(2\alpha+3\beta)+c_2(\alpha+2\beta)\]
with $c_1,c_2\in\Z_{\geq 0}$. Let $F$ be a cuspidal eigenform of weight $k$ and trivial nebentypus and $\pi_F$ its associated automorphic representation, and let $s\in\C$ with $\re(s)\geq 0$. Assume
\[H^i(\mf{g}_2,K_\infty;\Ind_{P_\alpha(\A)}^{G_2(\A)}(\pi_F\otimes\Sym(\mf{a}_{P_\alpha,0})_{(2s+1)\rho_{P_\alpha}})\otimes E)\ne 0.\]
Then either:
\begin{enumerate}[label=(\roman*)]
\item We have
\[i=4,\qquad k=2c_1+c_2+4,\qquad s=\frac{c_2+1}{10},\]
and
\[H^4(\mf{g}_2,K_\infty;\Ind_{P_\alpha(\A)}^{G_2(\A)}(\pi_F\otimes\Sym(\mf{a}_{P_\alpha,0})_{(2s+1)\rho_{P_\alpha}})\otimes E)\cong\iota_{P_\alpha(\A_f)}^{G_2(\A_f)}(\pi_{F,f},(c_2+1)/10),\]
\item We have
\[i=5,\qquad k=c_1+c_2+3,\qquad s=\frac{3c_1+c_2+4}{10},\]
and
\[H^5(\mf{g}_2,K_\infty;\Ind_{P_\alpha(\A)}^{G_2(\A)}(\pi_F\otimes\Sym(\mf{a}_{P_\alpha,0})_{(2s+1)\rho_{P_\alpha}})\otimes E)\cong\iota_{P_\alpha(\A_f)}^{G_2(\A_f)}(\pi_{F,f},(3c_1+c_2+4)/10),\]
\item We have
\[i=6,\qquad k=c_1+2,\qquad s=\frac{3c_1+2c_2+5}{10},\]
and
\[H^6(\mf{g}_2,K_\infty;\Ind_{P_\alpha(\A)}^{G_2(\A)}(\pi_F\otimes\Sym(\mf{a}_{P_\alpha,0})_{(2s+1)\rho_{P_\alpha}})\otimes E)\cong\iota_{P_\alpha(\A_f)}^{G_2(\A_f)}(\pi_{F,f},(3c_1+2c_2+5)/10).\]
\end{enumerate}
\end{proposition}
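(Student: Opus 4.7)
The plan is to invoke Theorem \ref{thmcohind} with $P = P_\alpha$, taking $\tilde\pi = \pi_F$ and $d\chi = 2s\rho_{P_\alpha}$, so that $d\chi + \rho_{P_\alpha} = (2s+1)\rho_{P_\alpha}$ matches the symmetric algebra parameter in the induction. The theorem asserts that if the cohomology is nonzero, there is a unique $w \in W^{P_\alpha}$ satisfying $-w(\Lambda+\rho)|_{\mf{a}_{P_\alpha,0}} = 2s\rho_{P_\alpha}$ and such that the infinitesimal character of $\pi_{F,\infty}|_{\mf{m}_{\alpha,0}}$ contains $-w(\Lambda+\rho)|_{\mf{h}\cap\mf{m}_{\alpha,0}}$; the cohomology in degree $i$ is then $\iota_{P_\alpha(\A_f)}^{G_2(\A_f)}(\pi_{F,f})$ (appropriately twisted) tensored with the $(\mf{m}_{\alpha,0}, K_\infty\cap P_\alpha(\R))$-cohomology of $\pi_{F,\infty}\otimes F_{w(\Lambda+\rho)-\rho,0}$ in degree $i - \ell(w)$.

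To set up coordinates, $\mf{a}_{P_\alpha,0}^\vee = \C\omega_\beta$ where $\omega_\beta = \alpha+2\beta$, the restriction $\mf{t}^\vee \to \mf{a}_{P_\alpha,0}^\vee$ sends $c_\alpha\alpha+c_\beta\beta$ to $(c_\beta/2)\omega_\beta$, and $\rho_{P_\alpha} = (5/2)\omega_\beta$. Writing $a = c_1+1$ and $b = c_2+1$, one has $\Lambda+\rho = (2a+b)\alpha + (3a+2b)\beta$. Using the reflection formulas $w_\alpha(\beta) = \alpha+\beta$ and $w_\beta(\alpha) = \alpha+3\beta$, I will compute $w(\Lambda+\rho)$ for each of the six elements of $W^{P_\alpha}$ listed in Section \ref{subsecg2str}. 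The first condition then translates to: the $\beta$-coefficient of $-w(\Lambda+\rho)$ equals $10s$. Since $a, b \geq 1$ and $s \geq 0$, the elements $1, w_\beta, w_{\beta\alpha}$ are immediately ruled out, leaving the three survivors $w_{\beta\alpha\beta}, w_{\beta\alpha\beta\alpha}, w_{\beta\alpha\beta\alpha\beta}$, which give $s = b/10$, $s = (3a+b)/10$, and $s = (3a+2b)/10$ respectively.

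For the second condition, the infinitesimal character of the weight $k$ discrete series $\pi_{F,\infty}$, restricted to $\mf{sl}_2 \cong \mf{m}_{\alpha,0}$, takes the values $\pm(k-1)$ on $\alpha^\vee$. Pairing $-w(\Lambda+\rho)$ with $\alpha^\vee$ via $\alpha(\alpha^\vee) = 2$ and $\beta(\alpha^\vee) = -1$ and equating to $-(k-1)$ yields $k = 2a+b+1$, $k = a+b+1$, and $k = a+1$ in the three cases, which in terms of $c_1, c_2$ matches the values stated in the proposition. The weight $k$ discrete series of $GL_2(\R)$ has $(\mf{sl}_2, K_\infty\cap P_\alpha(\R))$-cohomology against the finite-dimensional $\mf{sl}_2$-module $F_{w(\Lambda+\rho)-\rho,0}$ of matching infinitesimal character concentrated (one-dimensionally) in degree $1$, so $i = \ell(w) + 1 \in \{4, 5, 6\}$. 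After tracking the twist $e^{\langle H_{P_\alpha}(\cdot), 2s\rho_{P_\alpha}\rangle}$ built into $\pi_f$, the right-hand side of the formula from Theorem \ref{thmcohind} collapses to $\iota_{P_\alpha(\A_f)}^{G_2(\A_f)}(\pi_{F,f}, s)$ in each case, with $s$ as above.

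The main obstacle will be bookkeeping with the normalization conventions: the identification of $\mf{a}_{P_\alpha,0}^\vee$ with $\C\omega_\beta$, the relation between $\pi$ and its unitarization $\tilde\pi = \pi_F$, and the interpretation of the scalar parameter $s$ in $\iota(\pi_{F,f}, s)$ as $\lambda = 2s\rho_{P_\alpha}$. Once these are pinned down, all three cases follow by applying Theorem \ref{thmcohind} to each of the three surviving Weyl elements and computing the lengths.
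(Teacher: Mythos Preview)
Your proposal is correct and follows essentially the same route as the paper's proof: both apply Theorem \ref{thmcohind}, use the orthogonal decomposition $\mf{t}=(\mf{m}_{\alpha,0}\cap\mf{t})\oplus\mf{a}_{P_\alpha,0}$ with $\alpha$ vanishing on the second factor and $\alpha+2\beta$ on the first, compute $w(\Lambda+\rho)$ for $w\in W^{P_\alpha}$, use $\re(s)\geq 0$ to eliminate $1,w_\beta,w_{\beta\alpha}$, and then read off $s$, $k$, and $i=\ell(w)+1$ for the three survivors. Your write-up is in fact more explicit than the paper's, which carries out only the case $w=w_{\beta\alpha\beta}$ and declares the others similar.
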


\begin{proof}
Note that we have a decomposition
\[\mf{t}=(\mf{m}_{\alpha,0}\cap\mf{t})\oplus\mf{a}_{P_\alpha,0},\]
and $(\alpha+2\beta)$ acts by zero on the first component while $\alpha$ acts by zero on the second. By Theorem \ref{thmcohind}, in order for our cohomology space to be nontrivial, there needs to be a $w\in W^{P_\alpha}$ with
\[-w(\Lambda+\rho)|_{\mf{a}_{P_\alpha,0}}=2s\rho_{P_\alpha}=10s\frac{\alpha+2\beta}{2},\]
and
\[-w(\Lambda+\rho)|_{\mf{m}_{\alpha,0}}=\pm(k-1)\frac{\alpha}{2}.\]
One computes that, since $\re(s)\geq 0$, the first of these conditions is possible only when $w=w_{\beta\alpha\beta},w_{\beta\alpha\beta\alpha},w_{\beta\alpha\beta\alpha\beta}$. In case $w=w_{\beta\alpha\beta}$, we have
\[-w_{\beta\alpha\beta}(\Lambda+\rho)=-(2c_1+c_2+3)\frac{\alpha}{2}+(c_2+1)\frac{\alpha+2\beta}{2}.\]
Then Theorem \ref{thmcohind} implies (i). The other two cases are similar.
\end{proof}

The following lemma is key; it is one place where we use the vanishing hypothesis for the symmetric cube $L$-function in the course of proving our main theorem.

\begin{lemma}
\label{lemholoESg2}
Let $F$ be a cuspidal eigenform of weight $k\geq 2$ and trivial nebentypus, and $\pi_F$ its associated unitary automorphic representation. If
\[L(1/2,\pi_F,\Sym^3)=0,\]
then for any flat section $\phi_s\in\iota_{P_\alpha(\A)}^{G_2(\A)}(\pi_F,s)$, the Eisenstein series $E(\phi,2s\rho_{P_\alpha})$ is holomorphic at $s=1/10$.
\end{lemma}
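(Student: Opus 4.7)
The plan is to apply the Langlands--Shahidi method to this maximal-parabolic Eisenstein series. Since $P_\alpha$ is a maximal parabolic of $G_2$ and $\pi_F$ is cuspidal on $M_\alpha$, by a standard argument the poles of $E(\phi_s,2s\rho_{P_\alpha})$ as $s$ varies coincide with those of the standard intertwining operator $M(s)\colon\iota_{P_\alpha(\A)}^{G_2(\A)}(\tilde\pi_F,s)\to\iota_{P_\alpha(\A)}^{G_2(\A)}(w\tilde\pi_F,-s)$ attached to the unique nontrivial Weyl element $w\in W^{P_\alpha}$ stabilizing $M_\alpha$. It will therefore suffice to show that $M(s)$ is holomorphic at $s=1/10$.

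For this, I would invoke Shahidi's factorization $M(s)=r(s)^{-1}R(s)$, where $R(s)$ is the normalized intertwining operator (holomorphic in a neighborhood of the unitary axis) and $r(s)$ is a product of global $L$-factors arising from the decomposition of the adjoint action of $M_\alpha^\vee$ on $\Lie(N_\alpha^\vee)$. A $\beta$-string analysis of this representation --- carried out on the $G_2$-side after dualization, using that $G_2^\vee\cong G_2$ with long and short roots swapped, so that $P_\alpha^\vee$ corresponds to $P_\beta$ --- shows that it splits as a $4$-dimensional piece isomorphic to $\Sym^3$ of the standard representation of $GL_2$ (up to a determinant twist) and a $1$-dimensional piece (another power of the determinant). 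This is essentially Shahidi's original computation in his work on the third symmetric power $L$-function for $GL(2)$, and it yields, up to $\varepsilon$-factors and the identification $\delta_{P_\alpha}^{1/10}|_{M_\alpha(\A)}=|\det|^{1/2}$, a normalization factor of the form
\[
r(s) \;\propto\; \frac{L(5s,\pi_F,\Sym^3)\,\zeta(10s)}{L(5s+1,\pi_F,\Sym^3)\,\zeta(10s+1)},
\]
where the two zeta factors appear because $\omega_{\pi_F}$ is trivial (as $F$ has trivial nebentypus).

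At $s=1/10$ the denominator equals $L(3/2,\pi_F,\Sym^3)\,\zeta(2)$, which is finite and nonvanishing --- the first factor by standard nonvanishing on $\re(s)=1$ for the Kim--Shahidi $\Sym^3$-lift (via Jacquet--Shalika applied to $GL_4$), the second trivially. The numerator equals $L(1/2,\pi_F,\Sym^3)\,\zeta(1)$, in which the only candidate pole is the simple pole of $\zeta$ at $1$; by hypothesis, however, $L(1/2,\pi_F,\Sym^3)=0$, and the zero of order at least one cancels this simple pole. Hence $r(s)^{-1}$, and therefore $M(s)$ and the Eisenstein series $E(\phi_s,2s\rho_{P_\alpha})$, will be holomorphic at $s=1/10$. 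The main technical point I anticipate having to handle carefully is the precise matching of normalizations --- in particular verifying that no other global $L$-factor slips into $r(s)$ and that $R(s)$ is genuinely holomorphic at $s=1/10$ (which is not on the unitary axis but is inside the region of convergence of the normalization) --- but once this bookkeeping is set up the pole cancellation above will give the lemma.
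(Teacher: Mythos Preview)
Your proposal is correct and takes essentially the same approach as the paper, namely the Langlands--Shahidi method: the paper's proof consists of a single sentence citing \cite{shahidisym3} and \cite{zampera} for the fact that the constant term is holomorphic at $s=1/10$ under the vanishing hypothesis. Your write-up in fact supplies more detail than the paper itself --- the identification of the two adjoint pieces (giving $\Sym^3$ and the central character, hence $\zeta$ since the nebentypus is trivial) and the explicit pole-cancellation $L(1/2,\pi_F,\Sym^3)\cdot\zeta(1)$ --- and the residual concern you flag about holomorphy of the normalized operator off the unitary axis is exactly what is handled in the cited references.
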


\begin{proof}
It follows from the Langlands--Shahidi method and the vanishing hypothesis on the $L$-function that the constant term of such an Eisenstein series as in the proposition is holomorphic at $s=1/10$; see, for example, \cite{shahidisym3} or \cite{zampera}. 
\end{proof}

For $\pi$ a unitary cuspidal automorphic representation of $GL_2(\A)$ and $s\in\C$ with $\re(s)>0$, let us write
\[\mc{L}_\alpha(\pi,s)=\textrm{Langlands quotient of }\iota_{P_\alpha(\A)}^{G_2(\A)}(\pi,s),\]
where we view $\pi$ as an automorphic representation of $M_\alpha(\A)$, as usual.
Let $F$ be a cuspidal eigenform with trivial nebentypus and $\pi_F$ the automorphic representation attached to $F$. We will now study the appearance of $\mc{L}_\alpha(\pi_F,1/10)$ in the Eisenstein cohomology of $G_2$. To be precise about what this means, we make the following definition.

\begin{definition}
\label{defeiscoh}
Let $G$ be a reductive group over $\Q$ with complex Lie algebra $\mf{g}$ and let $K_\infty'$ be an open subgroup of the maximal compact subgroup of $G(\R)$. Let $E$ be a finite dimensional representation of $G(\C)$. Then we define the \textit{Eisenstein cohomology} of $G$ by
\[H_{\Eis}^*(\mf{g},K_\infty';E)=H^*(\mf{g},K_\infty';\bigoplus_{\substack{[P]\in\mc{C}\\ [P]\ne[G]}}\mc{A}_{E,[P]}(G)\otimes E),\]
where the notation in the sum is as in \eqref{eqnparadecomp}.
\end{definition}

Let $k$ be the weight of $F$ and assume $k\geq 4$. Let $\lambda_0$ be the weight
\[\lambda_0=\frac{k-4}{2}(2\alpha+3\beta).\]
Let $E_{\lambda_0}$ be the representation of $G_2(\C)$ of highest weight $\lambda_0$. Our goal is to locate $\mc{L}_\alpha(\pi_F,1/10)$ in $H_{\Eis}^*(\mf{g}_2,K_\infty;E_\Lambda)$. In doing so, we will require the following lemma.

\begin{lemma}
\label{lemnontempcoh}
Let $\sigma$ be an irreducible, admissible, nontempered representation of $GL_2(\R)$ with infinitesimal character $s_0\gamma_0$ with $s_0\geq 1/2$, where $\gamma_0$ is the positive root for $GL_2$. Then for any $s_1\geq 0$ and $\gamma\in\{\alpha,\beta\}$, the Langlands quotient of the induced representation
\begin{equation}
\label{eqindnontemp}
\iota_{P_\gamma(\R)}^{G_2(\R)}(\sigma,s_1)
\end{equation}
is not cohomological, i.e., for any finite dimensional representation $E$ of $G_2(\C)$, the $(\mf{g}_2,K_\infty)$-cohomology of this Langlands quotient twisted by $E$ vanishes.
\end{lemma}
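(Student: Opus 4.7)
The strategy is to use Wigner's lemma to constrain when cohomology could possibly be nonzero, and then to rule out the surviving cases via the Vogan--Zuckerman classification of cohomological irreducibles.

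First, I would apply Wigner's lemma: writing $J$ for the Langlands quotient in question, $H^*(\mathfrak{g}_2, K_\infty; J \otimes E_\Lambda)$ vanishes unless $J$ and $E_\Lambda^\vee$ have matching infinitesimal characters. Since $J$ shares its infinitesimal character with the induced representation $I = \iota_{P_\gamma(\R)}^{G_2(\R)}(\sigma, s_1)$, a direct computation in $\mathfrak{t}^\vee$ shows that this character lies in the Weyl orbit of $s_0 \gamma + s_1 \omega_{\gamma'}$, where $\omega_{\gamma'}$ is the fundamental weight perpendicular to $\gamma$. Matching this with $\Lambda + \rho_{G_2}$ for some dominant integral $\Lambda$, and using the constraints $s_0 \geq 1/2$ and $s_1 \geq 0$, yields a finite list of necessary conditions on $(s_0, s_1, \Lambda)$.

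Second, I would reduce the form of $\sigma$. The nontemperedness of $\sigma$ combined with $s_0 \geq 1/2$ forces $\sigma$ to be either an irreducible non-unitary principal series of $GL_2(\R)$ (at a point away from reducibility) or a finite-dimensional irreducible representation (at a reducibility point). In the former case, $\sigma$ has vanishing $(\mathfrak{m}_{\gamma, 0}, K_\infty \cap M_\gamma(\R))$-cohomology with any finite-dimensional coefficient system, so by the real-group analogue of Theorem \ref{thmcohind} the full induced representation $I \otimes E_\Lambda$ has vanishing $(\mathfrak{g}_2, K_\infty)$-cohomology. Since the composition factors of the kernel $N = \ker(I \to J)$ are of a similar induced form with nontempered $M_\gamma$-component, the same vanishing applies to $N$, and the long exact sequence associated to $0 \to N \to I \to J \to 0$ forces $H^*(\mathfrak{g}_2, K_\infty; J \otimes E_\Lambda) = 0$.

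The remaining and more delicate case is when $\sigma$ is finite-dimensional. Here I would invoke the Vogan--Zuckerman classification (in its Salamanca-Riba formulation): any cohomological irreducible $(\mathfrak{g}_2, K_\infty)$-module must be unitary and isomorphic to some $A_\mathfrak{q}(\lambda)$-module for a $\theta$-stable parabolic $\mathfrak{q}$ of $\mathfrak{g}_2$. One describes the Langlands data of $J$ by induction in stages: realizing $\sigma$ as the Langlands quotient of a reducible principal series of $M_\gamma(\R) \cong GL_2(\R)$, one expresses $J$ as the Langlands quotient of a principal series $\iota_{B(\R)}^{G_2(\R)}(\tau, \mu + s_1)$ for a tempered $\tau$ on $T(\R)$ and parameter $\mu + s_1 \in \mathfrak{a}_B^\vee$ in the open positive Weyl chamber of $G_2$. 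One then verifies that, under the constraints $s_0 \geq 1/2$ and $s_1 \geq 0$, the Langlands parameter $\mu + s_1$ cannot coincide with the Langlands parameter of any $A_\mathfrak{q}(\lambda)$-module of $G_2(\R)$; this last step involves enumerating the $\theta$-stable parabolics of $\mathfrak{g}_2$ and is where the bulk of the work lies. The main obstacle is precisely this comparison of Langlands data, which is feasible because $G_2$ has a small number of Levi subgroups and the $A_\mathfrak{q}(\lambda)$-modules have been explicitly classified (cf.\ the Adams--Johnson construction invoked elsewhere in this paper).
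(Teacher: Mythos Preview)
Your approach diverges from the paper's, and while your strategy via the Vogan--Zuckerman classification is reasonable in principle, it leaves the decisive step as an exercise. The paper's argument is considerably shorter: after reducing to $\sigma$ finite-dimensional (which the paper does in one line from integrality of the infinitesimal character and the parity constraint $2s_0\equiv 2s_1\pmod 2$, making your separate treatment of the irreducible principal-series case unnecessary), it invokes \cite[Theorem~VI.1.7(iii)]{BW}. That theorem, applied under the hypothesis that $\tau$ is cohomological, yields a short exact sequence
\[
0\to U\to \iota_{B(\R)}^{G_2(\R)}\bigl(\chi\,\delta_{B_2(\R)}^{\,s_0-1/2}\delta_{P_\gamma(\R)}^{\,s_1}\bigr)\to\tau\to 0
\]
with $U$ irreducible, and $U$ is then forced to be the induction from $P_\gamma$ of the discrete-series subrepresentation of the $GL_2$ principal series. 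By \cite[Theorem~III.3.3]{BW} such a $U$ has nonzero $(\mf{g}_2,K_\infty)$-cohomology in two \emph{consecutive} degrees, which contradicts Poincar\'e duality on the $8$-dimensional symmetric space of $G_2(\R)$.

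Two specific issues with your outline deserve mention. First, your claim that the composition factors of $N=\ker(I\to J)$ are ``of a similar induced form with nontempered $M_\gamma$-component'' is not justified and is generally false: composition factors of standard modules for $G_2(\R)$ can include discrete series and other Langlands quotients whose inducing data are not of that shape, so the long-exact-sequence step does not go through as stated. Second, the comparison of Langlands data against all $A_{\mf{q}}(\lambda)$'s is not a formality: for $G_2$ some $A_{\mf q}(\lambda)$'s genuinely \emph{are} Langlands quotients from $P_\gamma$ (for example $\mathcal{L}_\alpha(\pi_k,1/10)$, as computed later in the appendix, albeit with tempered inducing data), and the trivial representation is a Langlands quotient from $B$. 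Your argument must therefore explain precisely how the nontemperedness of $\sigma$ obstructs any such coincidence, and that is exactly what the paper's Poincar\'e-duality trick sidesteps.
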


\begin{proof}
Assume on the contrary that the Langlands quotient, call it $\tau$, of \eqref{eqindnontemp} were cohomological. Then the infinitesimal character of $\tau$ must be integral, forcing $2s_0,2s_1\in\Z$ with the same parity. It then follows from the classification of irreducible admissible representations for $GL_2(\R)$ that $\sigma$ is finite dimensional. Therefore $\sigma$ occurs as the unique irreducible quotient of
\begin{equation}
\label{eqindrepgl2}
\iota_{B_2(\R)}^{GL_2(\R)}(\chi,s_0-\tfrac{1}{2})
\end{equation}
for some finite order character $\chi$ of $T_2(\R)$, where $B_2$ denotes the upper triangular Borel in $GL_2$ and $T_2$ the standard maximal torus.

But then we can invoke (the twisted version of) \cite[Theorem VI.1.7 (iii)]{BW}, which in this case says that since representation $\tau$ is assumed to be cohomological, there is an irreducible representation $U$ of $G_2(\R)$ and an exact sequence
\[0\to U\to\iota_{T(\R)}^{G_2(\R)}(\chi\delta_{B_2(\R)}^{s_0-1/2}\delta_{P_\gamma(\R)}^{s_1})\to\tau\to 0.\]
Then $U$ must be induced from the discrete series subrepresentation of \eqref{eqindrepgl2}. But then by \cite[Theorem III.3.3]{BW}, the representation $U$ has cohomology in two consecutive degrees, and this is impossible by Poincar\'e duality. This is the contradiction sought.
\end{proof}

\begin{theorem}
\label{thmlqineiscoh}
Let $F$ be a cuspidal eigenform of weight $k\geq 4$ and trivial nebentypus, and let $\pi_F$ be the automorphic representation of $GL_2(\A)$ attached to it. Let
\[\lambda_0=\frac{k-4}{2}(2\alpha+3\beta)\]
and let $E_{\lambda_0}$ be the representation of $G_2(\C)$ of highest weight $\lambda_0$. Assume
\[L(1/2,\pi_F,\Sym^3)=0,\]
Then there is a unique summand isomorphic to
\[\iota_{P_\alpha(\A_f)}^{G_2(\A_f)}(\pi_{F,f},1/10)\]
in the Eisenstein cohomology
\[H_{\Eis}^*(\mf{g}_2,K_\infty;E_{\lambda_0}),\]
and all irreducible subquotients of this cohomology nearly isomorphic to $\mc{L}_\alpha(\pi_F,1/10)_f^p$ appear in this summand. Moreover, this summand appears in middle degree $4$.
\end{theorem}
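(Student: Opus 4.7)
The plan is to combine the Franke--Schwermer decomposition with the explicit cohomology computation of Proposition \ref{propcohindalpha} and the near-equivalence arguments from Propositions \ref{propdistneareq} and \ref{propneareqalpha}. By Theorem \ref{thmfsdecomp},
\[H_{\Eis}^*(\mf{g}_2,K_\infty;E_{\lambda_0})=\bigoplus_{[P]\ne[G_2]}\bigoplus_{\varphi\in\Phi_{E_{\lambda_0},[P]}}H^*(\mf{g}_2,K_\infty;\mc{A}_{E_{\lambda_0},[P],\varphi}(G_2)\otimes E_{\lambda_0}),\]
so it suffices to examine each summand on the right.

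First, I would isolate the contribution from the associate class $\varphi_{\pi_F}\in\Phi_{E_{\lambda_0},[P_\alpha]}$ whose unitarization equals $\pi_F$ and whose shift parameter is $s=1/10$. The hypothesis $L(1/2,\pi_F,\Sym^3)=0$ together with Lemma \ref{lemholoESg2} guarantees that all the relevant Eisenstein series are holomorphic at $\lambda=(1/5)\rho_{P_\alpha}$; by Propositions \ref{propEh0} and \ref{propwpind} (equivalently, the no-pole regime of Theorem \ref{thmgrbac}),
\[\mc{A}_{E_{\lambda_0},[P_\alpha],\varphi_{\pi_F}}(G_2)\cong\Ind_{P_\alpha(\A)}^{G_2(\A)}\bigl(\pi_F\otimes\Sym(\mf{a}_{P_\alpha,0})_{(6/5)\rho_{P_\alpha}}\bigr).\]
With $c_1=(k-4)/2$ and $c_2=0$, cases (ii) and (iii) of Proposition \ref{propcohindalpha} would force $k=2$ and $k=0$ respectively, both excluded by $k\ge 4$. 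Only case (i) survives, producing cohomology concentrated in degree $4$ and isomorphic as a $G_2(\A_f)$-module to $\iota_{P_\alpha(\A_f)}^{G_2(\A_f)}(\pi_{F,f},1/10)$. This yields the claimed summand in middle degree.

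For all remaining Franke--Schwermer pieces, I would show that no irreducible subquotient is nearly equivalent to $\mc{L}_\alpha(\pi_F,1/10)_f$, which simultaneously gives uniqueness of the summand and the last assertion of the theorem. For another associate class $\varphi'\ne\varphi_{\pi_F}$ attached to $[P_\alpha]$, the analogous computation (via Theorems \ref{thmfrfil} and \ref{thmcohind}) shows that every irreducible subquotient occurring in cohomology is of the form $\iota_{P_\alpha(\A_f)}^{G_2(\A_f)}(\pi'_f,s')$, and Proposition \ref{propneareqalpha} then forces $\pi'\cong\pi_F$ and $s'=1/10$, contradicting $\varphi'\ne\varphi_{\pi_F}$. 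For the classes $[P_\beta]$ and $[B]$, Theorem \ref{thmfrfil} realizes the graded pieces as inductions $\Ind_{Q(\A)}^{G_2(\A)}(V_d[\Pi]\otimes\Sym(\mf{a}_{Q,0})_{\mu+\rho_Q})$ with $Q$ a proper parabolic containing a representative of the class, so any irreducible subquotient is nearly equivalent either to a subquotient of $\iota_{P_\beta(\A)}^{G_2(\A)}(\pi_\beta,s_\beta)$ (when the class is $[P_\beta]$) or to a subquotient of $\iota_{B(\A)}^{G_2(\A)}(\psi)$ (when the class is $[B]$, including the subcase $Q\in\{P_\alpha,P_\beta\}$ with $\Pi$ a one-dimensional residual character of the Levi, where induction in stages reduces to an unramified principal series). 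Proposition \ref{propdistneareq} then rules out near equivalence with $\mc{L}_\alpha(\pi_F,1/10)_f$ in each case.

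The main technical obstacle lies in the $[B]$ case: one must check carefully that every graded piece of the Franke filtration, even those induced from $P_\alpha$ or $P_\beta$ with residual cuspidal data on the Levi, produces only subquotients nearly equivalent to subquotients of a Borel principal series, so that Proposition \ref{propdistneareq} can be applied uniformly. With that bookkeeping completed, the directness of the Franke--Schwermer decomposition delivers both the uniqueness of the summand and its location in middle degree $4$.
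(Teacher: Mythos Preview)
Your approach is essentially the paper's: isolate the $\varphi_{\pi_F}$ piece via holomorphy of the Eisenstein series and Proposition \ref{propcohindalpha}, then eliminate every other Franke--Schwermer summand by near-equivalence arguments. The organization differs slightly---the paper handles $[P_\alpha]$ and $[P_\beta]$ together via Grbac's two-step filtration (Theorem \ref{thmgrbac}) rather than the general Franke filtration, reserving Theorem \ref{thmfrfil} for $[B]$---but this is cosmetic.

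There is, however, a genuine step you have skipped. Propositions \ref{propdistneareq} and \ref{propneareqalpha} both require the inducing cuspidal representations to be \emph{tempered}, and a priori a cuspidal $\pi'$ on $M_\gamma(\A)\cong GL_2(\A)$ contributing to cohomology need not have tempered archimedean component (Selberg's conjecture is open). The paper closes this gap with Lemma \ref{lemnontempcoh}: if the Langlands quotient $\mc{L}_\gamma(\pi',s')_\infty$ is cohomological then $\pi'_\infty$ must be tempered; combined with the observation that the infinitesimal character matching $\lambda_0+\rho$ forces $\pi'_\infty$ to have regular infinitesimal character, one concludes $\pi'_\infty$ is discrete series and hence $\pi'$ comes from a holomorphic eigenform, so is tempered everywhere. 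You also need to check $s'>0$ (again from regularity of $\lambda_0+\rho$) before invoking Proposition \ref{propneareqalpha}. Without these verifications the near-equivalence propositions do not apply as stated, so this is a real omission rather than just bookkeeping.
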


\begin{proof}
Let $\varphi_F$ be the associate class of automorphic representations of $M_\alpha(\A)$ containing $\pi_F\otimes\delta_{P_\alpha(\A)}^{1/10}$. Then by Proposition \ref{propEh0} and Lemma \ref{lemholoESg2}, we have
\[\mc{A}_{E_{\lambda_0},[P_{\alpha}],\varphi_F}\cong\iota_{P_\alpha(\A)}^{G_2(\A)}(\pi_F\otimes\Sym(\mf{a}_{P_\alpha,0})_{(6/5)\rho_{P_\alpha}}).\]
By Proposition \ref{propcohindalpha}, we therefore have
\[H^4(\mf{g}_2,K_\infty;\mc{A}_{E_{\lambda_0},[P_{\alpha}],\varphi_F}(G_2)\otimes E_{\lambda_0})\cong\iota_{P_\alpha(\A_f)}^{G_2(\A_f)}(\pi_{F,f},1/10),\]
and that the this cohomology vanishes in all other degrees.

By the Franke--Schwermer decomposition, Theorem \ref{thmfsdecomp}, in order to prove our theorem, it now suffices to show that
\[H^*(\mf{g}_2,K_\infty;\mc{A}_{E_{\lambda_0},[P],\varphi}(G_2)\otimes E_{\lambda_0})\]
contains no irreducible subquotients nearly equivalent to $\mc{L}_\alpha(\pi_F,1/10)_f$ for any proper parabolic $P\subset G_2$ and any associate class $\varphi$, except for $P=P_\alpha$ and $\varphi=\varphi_F$. We do this for the maximal parabolics $P_\alpha$ and $P_\beta$ and for $B$ separately; thus the following two lemmas complete the proof of the theorem.
\end{proof}

\begin{lemma}
Let $\gamma\in\{\alpha,\beta\}$, and let $\varphi$ be an associate class for $P_\gamma$. Assume that for some $i$,
\[H^i(\mf{g}_2,K_\infty;\mc{A}_{E_{\lambda_0},[P_\gamma],\varphi}(G_2)\otimes E_{\lambda_0})\]
contains a subquotient nearly equivalent to $\mc{L}_\alpha(\pi_F,1/10)_f$. Then $\gamma=\alpha$ and $\varphi=\varphi_F$.
\end{lemma}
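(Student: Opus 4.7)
The plan is to apply the Franke filtration of Theorem \ref{thmfrfil} to $\mc{A}_{E_{\lambda_0},[P_\gamma],\varphi}(G_2)$ and analyze the graded pieces. Since $P_\gamma$ is maximal in $G_2$, the only parabolics $Q\supseteq P_\gamma$ are $P_\gamma$ itself and $G_2$, so the graded pieces take one of two forms. In the first case ($Q=P_\gamma$), the graded piece is
\[\Ind_{P_\gamma(\A)}^{G_2(\A)}(V[\tilde\pi]\otimes\Sym(\mf{a}_{P_\gamma,0})_{\mu+\rho_{P_\gamma}})\]
for some cuspidal $\pi$ of $M_\gamma(\A)\cong GL_2(\A)$ in $\varphi$ and some $\mu$. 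In the second case ($Q=G_2$), since $\mf{a}_{G_2,0}=0$, the graded piece is just a residual representation $\mc{L}_{P_\gamma(\A)}^{G_2(\A)}(\pi,s)$ arising from $\pi$ in $\varphi$ and some $s>0$ at which the corresponding Eisenstein series has a pole. In either case, the finite part is a subquotient of $\iota_{P_\gamma(\A_f)}^{G_2(\A_f)}(\pi_f,s)$ for some $s\geq 0$.

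The key step is to verify that any such $\pi$ producing a subquotient nearly equivalent to $\mc{L}_\alpha(\pi_F,1/10)_f$ is tempered, which will allow application of Propositions \ref{propdistneareq} and \ref{propneareqalpha}. In the inducing case, Theorem \ref{thmcohind} reduces nonvanishing of the $(\mf{g}_2,K_\infty)$-cohomology to nonvanishing of an $(\mf{m}_{\gamma,0},K_\infty\cap P_\gamma(\R))$-cohomology space attached to $\tilde\pi_\infty$; since $\mf{m}_{\gamma,0}\cong\mf{sl}_2$ and $\pi$ is cuspidal, this forces $\tilde\pi_\infty$ to be a discrete series, so $\pi$ corresponds to a holomorphic cuspidal eigenform and is tempered everywhere by Deligne. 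In the residual case, near equivalence to $\mc{L}_\alpha(\pi_F,1/10)_f$ forces the Satake parameters of $\pi$ at almost all unramified places to match those coming from the tempered $\pi_F$, which again yields temperedness.

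With $\pi$ tempered, the argument concludes as follows. If $\gamma=\beta$, Proposition \ref{propdistneareq} rules out any near equivalence between a subquotient of $\iota_{P_\beta(\A_f)}^{G_2(\A_f)}(\pi_f,s)$ and the subquotient $\mc{L}_\alpha(\pi_F,1/10)_f$ of $\iota_{P_\alpha(\A_f)}^{G_2(\A_f)}(\pi_{F,f},1/10)$, contradicting the hypothesis; hence $\gamma=\alpha$. Then Proposition \ref{propneareqalpha} forces $\pi\cong\pi_F$ and $s=1/10$, so that $\varphi$ is the associate class of $\pi_F\otimes\delta_{P_\alpha(\A)}^{1/10}$, namely $\varphi_F$. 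The only remaining case to exclude is the residual case with $(\gamma,\pi,s)=(\alpha,\pi_F,1/10)$, but Lemma \ref{lemholoESg2} shows that under $L(1/2,\pi_F,\Sym^3)=0$, all Eisenstein series built from flat sections of $\iota_{P_\alpha(\A)}^{G_2(\A)}(\pi_F,\cdot)$ are holomorphic at $s=1/10$, so no such residual representation exists. I expect the main obstacle of this argument to be verifying temperedness in the residual case, which I would handle by a direct analysis of Satake parameters propagating from the known temperedness of $\pi_F$.
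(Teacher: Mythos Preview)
Your overall strategy matches the paper's: use the Franke filtration (the paper invokes the equivalent Theorem~\ref{thmgrbac} together with Proposition~\ref{propEh0}) to reduce to an induced piece and a Langlands-quotient piece, establish that the inducing cuspidal $\pi$ on $GL_2$ is tempered, and then apply Propositions~\ref{propdistneareq} and~\ref{propneareqalpha}. The induced case is handled exactly as you say, via Theorem~\ref{thmcohind}.

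The genuine gap is in your residual (Langlands-quotient) case. You propose to deduce temperedness of $\pi$ from near equivalence with $\mc{L}_\alpha(\pi_F,1/10)_f$ by ``propagating Satake parameters,'' but this is essentially circular: near equivalence only gives equality of \emph{$G_2$}-Satake parameters, and extracting from this the $GL_2$-Satake parameters of $\pi$ (up to the relevant Weyl ambiguity) is precisely the content of Proposition~\ref{propneareqalpha}, which already requires $\pi$ to be tempered. A direct analysis of the seven absolute values of $R_7$ does not rule out, for instance, $|\alpha_{\pi,v}/\beta_{\pi,v}|=\ell_v^{\pm 1/2}$ at a given place. The paper avoids this entirely by using the \emph{archimedean} constraint you have available but did not use: the Langlands quotient $\mc{L}_\gamma(\pi,s)_\infty$ must have nonzero $(\mf{g}_2,K_\infty)$-cohomology, and Lemma~\ref{lemnontempcoh} shows this forces $\tilde\pi_\infty$ to be tempered, hence discrete series (given its regular infinitesimal character). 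Then $\pi$ comes from a holomorphic cusp form of weight $\geq 2$ and is tempered everywhere by Deligne.

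Two smaller points. First, Proposition~\ref{propneareqalpha} requires $s,s'>0$, not just $s\geq 0$; the paper obtains $s>0$ from an infinitesimal character argument (matching $\lambda_\pi+2s\rho_{P_\gamma}$ with $\lambda_0+\rho$, which is regular), which you should include. Second, your final step ``excluding the residual case at $(\alpha,\pi_F,1/10)$'' is unnecessary: once you have $\gamma=\alpha$, $\pi\cong\pi_F$, and $s=1/10$, you have $\varphi=\varphi_F$, which is exactly the desired conclusion, regardless of whether the contribution is residual or not.
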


\begin{proof}
The class $\varphi$ contains a cuspidal automorphic representation of $M_\gamma(\A)\cong GL_2(\A)$, and which therefore must be of the form
\[\pi\otimes\delta_{M_\gamma(\A)}^{s},\]
where $\pi$ is a unitary cuspidal automorphic representation of $GL_2(\A)$ and $s\in\C$. After possibly conjugating by the longest element in the Weyl set $W^{P_\gamma}$, we may even assume $\re(s)\geq 0$.

Next we note that the infinitesimal character of $\mc{A}_{E_{\lambda_0},[P_\gamma],\varphi}(G_2)$ as a $(\mf{g}_2,K_\infty)$-module must match that of $E_{\lambda_0}$, i.e.,
\[\lambda_\pi+2s\rho_{P_\gamma}=\lambda_0+\rho,\]
where $\lambda_{\pi}$ is the infinitesimal character of $\pi$. But $\lambda_0+\rho$ is regular and real, and so since $\lambda_{\pi}$ is a multiple of the root $\gamma$ and $\rho_{P_\gamma}$ is a multiple of the positive root orthogonal to $\gamma$, it follows that $\lambda_{\pi}$ and $s$ are real and nonzero. In particular, $s>0$ since we assumed $\re(s)\geq 0$.

Now we apply Theorem \ref{thmgrbac} and Proposition \ref{propEh0} to find that the cohomology space
\[H^*(\mf{g}_2,K_\infty;\mc{A}_{E_{\lambda_0},[P_\gamma],\varphi}(G_2)\otimes E_{\lambda_0}),\]
if nontrivial, is made up of subquotients of the cohomology spaces
\begin{equation}
\label{eqcohlalphag2}
H^*(\mf{g}_2,K_\infty;\mc{L}_\gamma(\pi,s)\otimes E_{\lambda_0})
\end{equation}
and
\begin{equation}
\label{eqcohindalphag2}
H^*(\mf{g}_2,K_\infty;\Ind_{P_\gamma(\A)}^{G_2(\A)}(\pi\otimes\Sym(\mf{a}_{P_\gamma,0})_{(2s+1)\rho_{P_\gamma}})\otimes E_{\lambda_0}).
\end{equation}
We note that if \eqref{eqcohlalphag2} is nonzero, then $\pi$ is cohomological. Indeed, in this case, by Lemma \ref{lemnontempcoh}, the archimedean component $\pi_\infty$ of $\pi$ is tempered. (Of course, $\pi_\infty$ should be tempered by Selberg's conjecture, but obviously we would like to avoid a dependency on this conjecture, whence the appeal to Lemma \ref{lemnontempcoh}.) Since it has regular infinitesimal character, it is discrete series and therefore cohomological.

Next we have that if \eqref{eqcohindalphag2} is nonzero, then $\pi$ is cohomological; indeed, the cohomology in \eqref{eqcohindalphag2} is computed in terms of that of $\pi$ by Theorem \ref{thmcohind}. Therefore $\pi$ is cohomological. Thus $\pi$ is attached to a cuspidal eigenform of weight at least $2$ and is therefore tempered, and so by Proposition \ref{propdistneareq}, $\gamma=\alpha$, and then by Proposition \ref{propneareqalpha}, $\pi=\pi_F$ and $s=s'$. Whence also $\varphi=\varphi'$, as desired.
\end{proof}

\begin{lemma}
Let $\varphi$ be an associate class for $B$. Then the cohomology
\[H^*(\mf{g}_2,K_\infty;\mc{A}_{E_\Lambda,[B],\varphi}(G_2)\otimes E_{\Lambda})\]
does not contain any subquotient nearly equivalent to $\mc{L}_\alpha(\pi_F,1/10)_f$.
\end{lemma}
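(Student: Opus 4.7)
The plan is to apply the filtration of Theorem \ref{thmfrfil} to $\mc{A}_{E_{\lambda_0},[B],\varphi}(G_2)$ in order to reduce the question to one about irreducible subquotients of $\iota_{B(\A)}^{G_2(\A)}(\psi)$ for characters $\psi$ of $T(\A)$, at which point Proposition \ref{propdistneareq} will apply directly.

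By Theorem \ref{thmfrfil}, every irreducible subquotient of $\mc{A}_{E_{\lambda_0},[B],\varphi}(G_2)$ occurs as an irreducible subquotient of some
\[\Ind_{Q(\A)}^{G_2(\A)}\bigl(V_d[\Pi]\otimes\Sym(\mf{a}_{Q,0})_{\mu+\rho_Q}\bigr),\]
where $Q$ is a standard parabolic subgroup of $G_2$, hence one of $B$, $P_\alpha$, $P_\beta$, $G_2$, and $\Pi$ is a member of the discrete spectrum of $M_Q(\A)$ obtained from values or residues at some $\nu$ of Eisenstein series on $M_Q$ induced from characters of $T(\A)$ belonging to $\varphi$. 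I would classify the possible $\Pi$ case by case: it is a character of $T(\A)$ in $\varphi$ when $Q=B$; a one-dimensional discrete residue, necessarily of the form $\chi\circ\det_\gamma$, when $Q=P_\gamma$ (so $M_\gamma\cong GL_2$); and a residual automorphic representation of $G_2(\A)$ built from Borel Eisenstein series when $Q=G_2$. In every case, induction in stages together with the exactness of parabolic induction and the explicit structure of the relevant residual spectra imply that the local components at each finite place $v$ of any irreducible subquotient of the displayed induced module appear as subquotients of $\iota_{B(\Q_v)}^{G_2(\Q_v)}(\psi_v)$ for a character $\psi_v$ of $T(\Q_v)$ determined by $\varphi$ together with the shift by $\mu$ and any inducing character defining $\Pi$. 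The symmetric algebra factor $\Sym(\mf{a}_{Q,0})_{\mu+\rho_Q}$ can be ignored for this purpose, since it does not alter the set of irreducible subquotients up to isomorphism.

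Finally I would apply Proposition \ref{propdistneareq} with $\pi_\alpha=\pi_F$ and $s_\alpha=1/10$, together with $\Pi_0$ as above: since $\pi_F$ is cuspidal and unitary by hypothesis and tempered by Deligne's theorem (as $k\geq 4\geq 2$), the proposition yields that $\mc{L}_\alpha(\pi_F,1/10)_f$ is not nearly equivalent to any subquotient of $\iota_{B(\A_f)}^{G_2(\A_f)}(\psi_f)$, and hence not to any irreducible subquotient of $\mc{A}_{E_{\lambda_0},[B],\varphi}(G_2)$. The main obstacle in this strategy is the $Q=G_2$ case, in which $\Pi$ is a genuine residual automorphic representation rather than one directly induced from a proper parabolic; but since such residues are built from characters of $T(\A)$ in $\varphi$, their unramified local components are still subquotients of unramified principal series of $G_2$, which is exactly what the $L$-function argument used in the proof of Proposition \ref{propdistneareq} requires.
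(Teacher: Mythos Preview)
Your approach is essentially the same as the paper's: apply the Franke filtration of Theorem \ref{thmfrfil}, run through the four cases $Q\in\{B,P_\alpha,P_\beta,G_2\}$, observe that in each case the relevant subquotients are (via induction in stages) subquotients of Borel-induced principal series, and conclude by Proposition \ref{propdistneareq}. The paper passes through cohomology explicitly using Theorem \ref{thmcohind}, whereas you argue directly at the level of $G_2(\A_f)$-subquotients of the module; both routes are valid since subquotients of the cohomology are among the $G_2(\A_f)$-subquotients of the underlying module.

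One small imprecision: your claim that the symmetric algebra factor ``does not alter the set of irreducible subquotients up to isomorphism'' is not literally correct---it twists everything by the character $e^{\langle H_Q(\cdot),\mu+\rho_Q\rangle}$ (the associated graded of the degree filtration on $\Sym(\mf{a}_{Q,0})_{\mu+\rho_Q}$ has $Q(\A_f)$ acting by this single character on each piece). What is true, and what you need, is that after this twist the subquotients are still constituents of representations induced from characters of $T(\A_f)$, so Proposition \ref{propdistneareq} applies.
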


\begin{proof}
Note that the class $\varphi$ must contains a character of $T(\Q)\backslash T(\A)$ of the form
\[\psi e^{\langle H_B(\cdot),s_1\alpha+s_2\beta\rangle},\]
where $\psi$ is of finite order and $s_1,s_2\in\C$. We will study the piece $\mc{A}_{E_{\lambda_0},[B],\varphi}(G_2)$ of the Franke--Schwermer decomposition using the Franke filtration of Theorem \ref{thmfrfil}. By that theorem, there is a filtration on the space $\mc{A}_{E_{\lambda_0},[B],\varphi}(G_2)$ whose graded pieces are parametrized by certain quadruples $(Q,\nu,\Pi,\mu)$. For the convenience of the reader, we recall what these quadruples consist of now:
\begin{itemize}
\item $Q$ is a standard parabolic subgroup of $G_2$;
\item $\nu$ is an element of $(\mf{t}\cap\mf{m}_{Q,0})^\vee$;
\item $\Pi$ is an automorphic representation of $M_Q(\A)$ occurring in
\[L_{\disc}^2(M_Q(\Q)A_Q(\R)^\circ\backslash M_Q(\A))\]
and which is spanned by values at, or residues at, the point $\nu$ of Eisenstein series parabolically induced from $(B\cap M_Q)(\A)$ to $M_Q(\A)$ by representations in $\varphi$; and
\item $\mu$ is an element of $\mf{a}_{Q,0}^\vee$ whose real part in $\Lie(A_{M_Q}(\R))$ is in the closure of the positive cone, and such that $\nu+\mu$ lies in the Weyl orbit of $\lambda_0+\rho$.
\end{itemize}
Then the graded pieces of $\mc{A}_{E,[B],\varphi}(G_2)$ are isomorphic to direct sums of $G_2(\A_f)\times(\mf{g}_2,K_\infty)$-modules of the form
\[\Ind_{Q(\A)}^{G_2(\A)}(\Pi\otimes\Sym(\mf{a}_{Q,0})_{\mu+\rho_Q})\]
for certain quadruples $(Q,\nu,\Pi,\mu)$ of the form just described.

For each of the four possible parabolic subgroups $Q$ and any corresponding quadruple $(Q,\nu,\Pi,\mu)$ as above, we will show using Proposition \ref{propdistneareq} that the cohomology
\begin{equation}
\label{eqcohborelg2}
H^*(\mf{g}_2,K_\infty;\Ind_{Q(\A)}^{G_2(\A)}(\Pi\otimes\Sym(\mf{a}_{Q,0})_{\mu+\rho_Q}))
\end{equation}
cannot have $\mc{L}_\alpha(\pi_{F,f},1/10)$ as a subquotient, which will finish the proof.

So first assume we have a quadruple $(Q,\nu,\Pi,\mu)$ as above where $Q=B$. Then $\mf{m}_{Q,0}=0$, forcing $\nu=0$. The entry $\Pi$ is the unitarization of a representation in $\varphi$, and thus must be a character $\psi'$ of $T(\A)$ conjugate by $G_2(\A)$ to $\psi$. Finally, we have $\mu$ is Weyl conjugate to $\lambda_0+\rho$. Therefore the cohomology \eqref{eqcohborelg2} is isomorphic, by Theorem \ref{thmcohind}, to a finite sum of copies of
\[\iota_{B(\A_f)}^{G_2(\A_f)}(\psi_f',\mu).\]
By Proposition \ref{propdistneareq}, $\mc{L}_\alpha(\pi_F,1/10)_f$ cannot be nearly equivalent to a subquotient of this space, and we conclude in the case when $Q=B$.

If now we have a quadruple $(Q,\nu,\Pi,\mu)$ where $Q=P_\alpha$, then we find that $\Pi$ is a representation generated by residual Eisenstein series at the point $\nu$ and is therefore a subquotient of the normalized induction
\[\iota_{(B\cap M_\alpha)(\A)}^{M_\alpha(\A)}(\psi',\nu),\]
where $\psi'$ is as above. Then by Theorem \ref{thmcohind} and induction in stages, \eqref{eqcohborelg2} is isomorphic to a subquotient of a finite sum of copies of
\[\iota_{B(\A_f)}^{G_2(\A_f)}(\psi_f',\nu+\mu).\]
We then conclude in this case as well using Proposition \ref{propdistneareq}.

The case when $Q=P_\beta$ is completely similar, and we omit the details. When $Q=G_2$, it is once again similar, but we do not need to use induction in stages. So we are done.
\end{proof}

\section{The $p$-adic deformation}
\label{secdeform}
We now $p$-adically deform the representation $\mc{L}_\alpha(\pi_F,1/10)$ of the previous section, at least with the help of Conjecture \ref{conjmult} (b). This section proceeds as follows. In Section \ref{subsecev} we review some of the theory of Urban's eigenvariety in general. Then we return to the $G_2$ setting in Section \ref{subsecpstabns} and prove various preliminary results on principal series representations. In Section \ref{subsecchardistg2} we define the cuspidal overconvergent character distributions for $G_2$ and its Levi subgroups, and then we define in Section \ref{subsecpif} the $p$-stabilization $\Pi_F^{(p)}$ of $\mc{L}_\alpha(\pi_F,1/10)$ whose multiplicity in these distributions we would like to compute. We compute this multiplicity to be nonzero in \ref{subsecocmult} (under Conjecture \ref{conjmult} (b)) and consequently we get the desired $p$-adic deformation of $\Pi_F^{(p)}$ which we then study in Section \ref{subsecpadicfam}.

\subsection{Background on Urban's eigenvariety}
\label{subsecev}
We now recall the theory of Urban's eigenvariety as it is described in \cite{urbanev}. Besides recalling the general theory, we will also prove a general lemma (Lemma \ref{lemintsummand} below) which will be useful to us later. Since this lemma is general, we choose to work in this subsection in the setting of a general group $G$, and then specialize back to $G=G_2$ afterward.

So let $G$ be a reductive group over $\Q$ which is quasi-split over $\Q_p$, splitting over an unramified extension of $\Q_p$. Fix a Borel subgroup $B$ defined over $\Q_p$ and $T\subset B$ a maximal torus. Fix a maximal compact subgroup $K_{f,\mr{max}}\subset G(\A_f)$ which is hyperspecial at all places, and let $K_{f,\mr{max}}^p\subset G(\A_f^p)$ be the component away from $p$. We assume the component of $G$ at $p$ is given by $G(\Z_p)$ after fixing a model of $G$ over $\Z_p$.

We assume that $T$ and $B$ admit models over $\Z_p$ compatible with each other and that of $G$. We consider the Iwahori subgroup
\[I=\Sset{g\in G(\Z_p)}{(g\modulo{p})\in B(\F_p)}.\]
Writing $N$ for the unipotent radical of $B$, let 
\[T^{-}=\Sset{t\in T(\Z_p)}{tN(\Z_p)t^{-1}\subset N(\Z_p)}.\]
Also let
\[T^{--}=\Sset{t\in T(\Z_p)}{\bigcap_{n>0}t^nN(\Z_p)t^{-n}=0}.\]
Then $T^-\subset T^{--}$. Let $\mc{U}_p$ be the $\Z_p$-subalgebra of $C_c^\infty(I\backslash G(\Z_p)/I,\Z_p)$ generated by the characteristic functions of double cosets of the form $ItI$ with $t\in T^-$. This is a commutative algebra under convolution. In fact, writing
\[u_t=\frac{1}{\vol(I)}\chars(ItI)\]
for $t\in T^-$, then for any other $t'\in T^{-}$, we have
\[u_tu_{t'}=u_{tt'}=u_{t'}u_t.\]

As in \cite[\S 4.1]{urbanev}, we define the $\Q_p$-Hecke algebras
\[\mc{H}_p=C_c^\infty(G(\A_f^p),\Q_p)\otimes_{\Z_p}\mc{U}_p,\]
and
\[\mc{H}_p(K_f^p)=C_c^\infty(K_f^p\backslash G(\A_f^p)/K_f^p,\Q_p)\otimes_{\Z_p}\mc{U}_p.\]
We consider the ideals $\mc{H}_p'$ (respectively $\mc{H}_p'(K_f^p)$) in $\mc{H}_p$ (respectively $\mc{H}_p(K_f^p)$) generated by elements $f^p\otimes u_t$ where $f^p\in C_c^\infty(G(\A_f^p),\Q_p)$ (respectively, $f^p\in C_c^\infty(K_f^p\backslash G(\A_f^p)/K_f^p,\Q_p)$) and $t\in T^{--}$.

A representation $V$ of $\mc{H}_p$ (respectively, $\mc{H}_p'$) with coefficients in some finite extension $L$ of $\Q_p$ is called \textit{admissible} if every $f$ in $\mc{H}_p$ (respectively, $\mc{H}_p'$) acts by endomorphisms of finite rank. For example, if instead $V$ is a smooth admissible (in the usual sense) representation of $G(\A_f)$, then the algebra $\mc{H}_p$ acts by convolution on $V$ and makes $V$ admissible for this action. If this action is nontrivial (which is the case if $V$ has fixed vectors by $K_f^pI$ for some open compact subgroup $K_f^p\subset K_{f,\mr{max}}^p$) then this action determines the representation $V$ of $G(\A_f)$ up to semisimplification.

Now if $V$ is again an admissible $\mc{H}_p$-module (respectively, $\mc{H}_p'$-module) then for an open compact subgroup $K_f^p\subset K_{f,\mr{max}}^p$, we say $V$ is \textit{of level} $K_f^p$ if $\mc{H}_p(K_f^p)$ (respectively, $\mc{H}_p'(K_f^p)$) acts nontrivially on $V$. Then the space $V^{K_f^p}$ defined by
\[V^{K_f^p}=\mc{H}_p(K_f^p)V\]
becomes a finite rank $\mc{H}_p(K_f^p)$-module (respectively, $\mc{H}_p'(K_f^p)$-module). Then $V$ is determined by $V^{K_f^p}$. In any case, for any $f$ in $\mc{H}_p$ (respectively, $\mc{H}_p'$) the trace $\tr(f|V)$ is well defined.

If $V$ is an irreducible admissible $\mc{H}_p'$-module, then $V$ determines a character $\theta$ of $\mc{U}_p$ defined by
\[\tr(uf|V)=\theta(u)\tr(f|V),\]
for any $f\in\mc{H}_p'$ and $u\in\mc{U}_p$. We say $\theta$ is of \textit{finite slope} if for some (equivalently, every) $t\in T^{--}$ we have $\theta(u_t)\ne 0$. We say $V$ is of \textit{finite slope} if $\theta$ is and if there is a $K_f^p$ such that $V$ is of level $K_f^p$ and $V^{K_f^p}$ contains an $\mc{O}_L$-lattice which is stable under the subalgebra
\[C_c^\infty(K_f^p\backslash G(\A_f^p)/K_f^p,\Z_p)\otimes\mc{U}_p\]
of $\mc{H}_p(K_f^p)$. Moreover, we define the \textit{slope} of $V$ or of $\theta$ to be the character $\mu\in X^*(T)\otimes\Q$ such that
\[v_p(\theta(u_{\lambda^\vee(p)}))=\langle\mu,\lambda^\vee\rangle\]
for any rational cocharacter $\lambda^\vee$ of $T$ such that $\lambda^\vee(p)\in T^{--}$.

When $V$ is an irreducible admissible representation of $G(\A_f)$ having $I$-fixed vectors, then in general the $V^{I}$ is not an irreducible $\mc{H}_p$-module because at $p$, only the action of $\mc{U}_p$ is considered instead of that of the full Hecke albegra at $p$. An irreducible constituent of $V$ for the action of $\mc{H}_p$ is called a $p$-\textit{stabilization} of $V$.

We remark that in general, the notion of $p$-stabilization should involve fixed vectors by possibly deeper Iwahori subgroups $I_n$, $n\geq 1$; see \cite[\S 4.1.9]{urbanev} for the general definition. However, a standard argument involving the Iwahori decomposition shows that if a given vector $v\in V^{I_n}$ is in a finite slope $p$-stabilization of $V$, then actually $v\in V^I$; see for example the argument in \cite[Lemma 4.3.6]{urbanev}. Since all the $p$-stabilized representations we consider in this paper will be of finite slope, we will be content with this definition.

Next, we call a linear map $J:\mc{H}_p'\to L$ a \textit{finite slope character distribution} if there is a countable set $\{\sigma_i\}$ consisting finite slope absolutely irreducible $\mc{H}_p'$-representations such that:
\begin{itemize}
\item For any open compact subgroup $K_f^p\subset K_{f,\mr{max}}^p$, any $h\in\Q$, and any $t\in T^{--}$, there are only finitely many indices $i$ such that $\sigma_i^{K_f^p}\ne 0$ and $v_p(\tr(\vol(K_f^p)^{-1}1_{K_f^p}\otimes u_t|\sigma_i))\leq h$;
\item There are, for each $i$, integers $m_i$ such that for any $f\in\mc{H}_p'$, we have
\[J(f)=\sum_im_i\tr(f|\sigma_i).\]
\end{itemize}
For such a $J$, and for $\sigma$ an absolutely irreducible admissible $\mc{H}_p$-representation, we write $m_J(\sigma)$ for the integer $m_i$ if there is an $i$ such that $\sigma_i=\sigma$, and otherwise we set $m_J(\sigma)=0$. We call $m_J(\sigma)$ the \textit{multiplicity} of $\sigma$ in $J$. We say $J$ is \textit{effective} if $m_J(\sigma)\geq 0$ for any $\sigma$.

Finally, if $\mf{Z}$ is an affinoid rigid analytic space over $\Q_p$ and $\mc{O}(\mf{Z})$ is its global ring of functions, then we call a $\Q_p$-linear map $\mc{H}_p'\to\mc{O}(\mf{Z})$ a $\mf{Z}$-\textit{family of effective finite slope character distributions} if for any $\overline{\Q}_p$-point $\lambda\in\mf{Z}(\overline{\Q}_p)$, the composite $\lambda\circ J:\mc{H}_p'\to\lambda(\mc{O}(\mf{Z}))$ is an effective finite slope character distribution.

We now prove the following lemma which will be used later in this paper.

\begin{lemma}
\label{lemintsummand}
Let $\mf{Z}$ be an affinoid rigid analytic space over $\Q_p$ and let $J$ and $\tau$ be two $\mf{Z}$-families of effective finite slope character distributions. For $\lambda\in\mf{Z}(\overline\Q_p)$, write $J_\lambda=\lambda\circ J$ and $\tau_\lambda=\lambda\circ\tau$. Let $S\subset\mf{Z}(\overline\Q_p)$ be a Zariski dense subset. Assume that the following holds:
\begin{enumerate}[label=(\arabic*)]
\item For all $\lambda\in\mf{Z}(\overline\Q_p)$, the distribution $\tau_\lambda$ is the trace of an irreducible admissible $\mc{H}_p$-representation $V_{\tau_\lambda}$.
\item There is an open compact subgroup $K_f^p$ in $K_{f,\mr{max}}^p$ such that for all $\lambda\in S$, we have $V_{\tau_\lambda}^{K_f^p}\ne 0$.
\item For all $\lambda\in S$, the distribution $\tau_\lambda$ occurs as a summand of $J_\lambda$ (that is, the finite slope character distribution $J_{\lambda}-\tau_{\lambda}$ is effective).
\end{enumerate}
Then given any $\lambda_0\in\mf{Z}(\overline\Q_p)$, the distribution $\tau_{\lambda_0}$ is also a summand of $J_{\lambda_0}$.
\end{lemma}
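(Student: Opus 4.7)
The plan is to combine Urban's slope decomposition applied in families with a rigidity argument extending the effectiveness from the dense subset $S$ to all of $\mf{Z}$. Fix $\lambda_0\in\mf{Z}(\overline\Q_p)$, choose a rational number $h$ strictly larger than the slope of $V_{\tau_{\lambda_0}}$, and choose an open compact subgroup $K_f^{\prime p}\subset K_f^p$ small enough that $V_{\tau_{\lambda_0}}^{K_f^{\prime p}}\ne 0$. Since $K_f^{\prime p}\subset K_f^p$, hypothesis (2) still ensures $V_{\tau_\lambda}^{K_f^{\prime p}}\ne 0$ for every $\lambda\in S$. After shrinking $\mf{Z}$ to a sufficiently small admissible affinoid neighborhood $\mf{V}$ of $\lambda_0$ (in which $S\cap\mf{V}$ remains Zariski dense), the analytic variation of the $u_t$-eigenvalue on $V_{\tau_\lambda}$, recorded by $\tau(u_t)\in\mc{O}(\mf{Z})$, lets us assume that $V_{\tau_\lambda}$ has slope $\leq h$ throughout $\mf{V}$.

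Next I would apply Urban's construction to realize the slope $\leq h$ part at level $K_f^{\prime p}$ of the $\mf{V}$-family $J$ as the trace on a locally free $\mc{O}(\mf{V})$-module of finite rank $\mathbf{M}$ equipped with a commuting action of $\mc{H}_p(K_f^{\prime p})$; similarly, the family $\tau$ yields a locally free module $\mathbf{N}$ specializing to $V_{\tau_\lambda}^{K_f^{\prime p}}$ at each $\lambda\in\mf{V}$. For a fixed $t\in T^{--}$ let $P_{\mathbf{N}}(X),P_{\mathbf{M}}(X)\in\mc{O}(\mf{V})[X]$ denote the characteristic polynomials of $u_t$ on $\mathbf{N}$ and $\mathbf{M}$, both of which are monic. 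By hypothesis (3), $P_{\mathbf{N}}(X)|_\lambda$ divides $P_{\mathbf{M}}(X)|_\lambda$ for every $\lambda\in S\cap\mf{V}$. Writing $P_{\mathbf{M}}=P_{\mathbf{N}}R+S$ with $R,S\in\mc{O}(\mf{V})[X]$ and $\deg S<\deg P_{\mathbf{N}}$, the coefficients of $S$ are analytic functions on $\mf{V}$ vanishing on $S\cap\mf{V}$; by Zariski density and reducedness of $\mf{V}$ we get $S=0$, so divisibility holds on all of $\mf{V}$, in particular at $\lambda_0$. Running this argument for enough choices of $t$ and of prime-to-$p$ Hecke operators separates the irreducible constituents of the slope $\leq h$ part of $J_{\lambda_0}$ at level $K_f^{\prime p}$, and thereby exhibits $V_{\tau_{\lambda_0}}^{K_f^{\prime p}}$ as a sub-$\mc{H}_p(K_f^{\prime p})$-module, hence $V_{\tau_{\lambda_0}}$ as an irreducible constituent of $J_{\lambda_0}$ with multiplicity at least one. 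Combined with the fact that every other irreducible constituent of $J_{\lambda_0}$ contributes nonnegatively to $J_{\lambda_0}-\tau_{\lambda_0}$ automatically, this gives effectiveness.

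The main obstacle is the construction of the modules $\mathbf{M}$ and $\mathbf{N}$, since $J$ and $\tau$ are character distributions rather than literal representations. Realizing their slope-$\leq h$ truncations at a fixed tame level as locally free Hecke modules over $\mc{O}(\mf{V})$ requires the full machinery of Urban's eigenvariety construction, together with the caveat concerning the correction to \cite{urbanev} alluded to in Section \ref{subsecev}. One must verify that Urban's Fredholm slope decomposition is compatible with the action of $\mc{H}_p(K_f^{\prime p})$ and that the resulting modules behave well under specialization at $\lambda_0$. A secondary difficulty is upgrading pointwise divisibility of characteristic polynomials to the categorical statement that $V_{\tau_{\lambda_0}}$ literally occurs as a constituent of $J_{\lambda_0}$; this requires running the divisibility argument for a sufficient collection of Hecke operators to separate the finitely many irreducible constituents appearing in the finite-dimensional slope-truncated module $\mathbf{M}_{\lambda_0}$, using that an irreducible admissible $\mc{H}_p$-module is determined by its trace character.
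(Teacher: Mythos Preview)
Your outline has the right shape but two of the steps you flag as ``obstacles'' are in fact the heart of the matter, and the paper handles them differently from what you sketch.

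First, the slope-$\leq h$ truncation of $J$ need not give a locally free $\mc{O}(\mf{V})$-module: the rank can jump. Urban's machinery does not produce a module $\mathbf{M}$ of this kind for an abstract effective family $J$; what it produces is a Fredholm series $P_J(f_0,X)\in\mc{O}(\mf{Z})[\![X]\!]$ for each $f_0\in\mc{H}_p'(K_f^p)$. The paper works with an \emph{admissible factorization} $P_J=Q\cdot S$ where $Q$ is a polynomial of fixed degree divisible by the characteristic polynomial $Q_0$ of $f_0$ on $\tau$. The Riesz decomposition attached to $Q$ (Urban's Theorem 2.3.8) then yields, away from a proper Zariski closed set, a finite-dimensional summand $N_\lambda(Q)$ of constant dimension $\deg Q$, together with an analytic projector $R_Q(f_0)$. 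This replaces your $\mathbf{M}$.

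Second, your plan to pass from divisibility of characteristic polynomials of $u_t$ (and ``enough other operators'') to the occurrence of $V_{\tau_{\lambda_0}}$ as a constituent is not made precise, and it is not clear how to make it so without essentially reinventing the pseudorepresentation argument the paper uses. The paper's key step is to observe that for $\lambda$ in the dense set $S'$, the function
\[
f\longmapsto \tr(f\cdot R_Q(f_0)\,|\,V_{J_\lambda})-\tr(f\,|\,V_{\tau_\lambda})
\]
is the trace of an honest representation of dimension $\deg Q-\deg Q_0$ (namely the complement of $\tau_\lambda$ inside $N_\lambda(Q)$), hence a pseudorepresentation of that dimension. Since both terms vary analytically in $\lambda$, the difference is a pseudorepresentation of dimension $\deg Q-\deg Q_0$ over all of $\mf{Z}$. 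Specializing at $\lambda_0$ then exhibits $\tau_{\lambda_0}$ as a summand of the trace on $N_{\lambda_0}(Q)$, hence of $J_{\lambda_0}$. This single analytic-continuation-of-pseudorepresentations step replaces your proposed operator-by-operator separation, and it is what actually makes the argument go through.
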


\begin{proof}
We must invoke the theory of Fredholm series used in \cite{urbanev}. Let $K_f^p$ be as in the point (2). Choose $t_0\in T^{--}$ and let $f_0=1_{K_f^p}\otimes u_{t_0}$. For any $\lambda\in\mf{Z}(\overline\Q_p)$, write
\[J_\lambda=\sum_{\sigma}m_{J_\lambda}(\sigma)\sigma\]
with each $\sigma$ irreducible and admissible and each $m_{J_\lambda}(\sigma)>0$. Then write
\[P_{J_\lambda}(f_0,X)=\prod_{\sigma}\det(1-X\cdot f_0|\sigma)^{m_{J_\lambda}(\sigma)}\]
for the Fredholm series attached to $f_0$ and $J_\lambda$ (\cite[\S 4.1.11]{urbanev}). As $\lambda$ varies, these define a Fredholm series $P_J(f_0,X)\in\mc{O}(\mf{Z})[\![X]\!]$ whose specialization at any $\lambda$ is given by $P_{J_\lambda}(f_0,X)$. For any $\lambda$, let $V_{J_\lambda}$ be the $p$-adic Banach space completion of the representation $\bigoplus_\sigma\sigma^{m_{J_\lambda}(\sigma)}$ (\cite[\S 4.1.10]{urbanev}).

For any $\lambda$, we also let
\[Q_{0,\lambda}(X)=\det(1-X\cdot f_0|V_{\tau_\lambda})\]
be the characteristic polynomial of $f_0$ acting on $\tau_\lambda$. By (1), this is well defined of degree
\[\tr(\vol(K_f^p)^{-1}1_{K_f^p}\otimes u_1|V_{\tau_\lambda}).\]
Since this expression is an integer, by continuity it is locally constant in $\lambda$. Therefore by taking connected components, we can and will assume it is constant, and we let $d$ denote this degree.

Then the coefficients of the characteristic polynomials $Q_{0,\lambda}$ for $\lambda\in S$ are polynomials in $\tau_\lambda(f^n)$ for $0\leq n\leq d$, and so it follows that there is a polynomial $Q_0(X)\in\mc{O}(\mf{Z})[X]$ whose specializations at $\lambda\in\mf{Z}(\overline\Q_p)$ are given by $Q_{0,\lambda}(X)$. Let us factor $P_{J}(f_0,X)=Q(X)S(X)$ in such a way that $Q_{0}|Q$, and such that this factorization is admissible; this means that $Q$ is a polynomial, $Q$ and $S$ are coprime, $Q(0)=1$, and $Q^*(0)\in\mc{O}(\mf{Z})^\times$, where $Q^*(X)=X^{\deg(Q)}Q(X^{-1})$; that fact that we can find such a $Q$ follows from the hypothesis (3).

Let $\mf{Y}$ be the proper Zariski closed subset of $\mf{Z}$ consisting of points $\lambda$ where the specializations $Q_\lambda$ and $S_\lambda$ of $Q$ and $S$ at $\lambda$ become non-coprime. Let $S'$ be the points of $S$ in the complement of $\mf{Y}$, which is still Zariski dense in $\mf{Z}$. Then for any $\lambda\in S'$, the factorization $P_{J_\lambda}(f_0,X)=Q_\lambda(X)S_\lambda(X)$ is still admissible. We then invoke \cite[Theorem 2.3.8]{urbanev} (see also \cite[4.1.13, 5.3.1]{urbanev}) which tells us that there is an entire power series $R_Q(X)\in\mc{O}(\mf{Z})[\![X]\!]$ such that for any $\lambda\in S'$, there is an $\mc{H}_p'(K^p)$-equivariant decomposition
\[V_{J_\lambda}=N_{\lambda}(Q)\oplus F_{\lambda}(Q)\]
with the properties that $N_\lambda(Q)$ is finite dimensional of dimension $\deg(Q)$, that $\lambda(R_Q)(f_0)$ acts as the projection onto $N_\lambda(Q)$, and that $Q_\lambda(X)$ is the characteristic polynomial of $f_0$ acting on $N_\lambda(Q)$. Because of this last property, and the facts that $Q_0|Q$ and that $Q_\lambda$ and $S_\lambda$ are coprime for $\lambda\in S'$, it follows that $N_\lambda(Q)$ contains the $\tau_\lambda$-isotypic component $V_{J_\lambda}[\tau_\lambda]$ for $\lambda\in S'$. From this it follows that
\begin{equation}
\label{eqnrqf0proj}
\tr(f\cdot \lambda(R_Q)(f_0)|\tau_\lambda)=\tr(f|\tau_{\lambda})
\end{equation}
for any $f\in\mc{H}_p(K^p)$.

Now as in \cite[5.3.1]{urbanev}, for each $\lambda\in S$, the function $T_\lambda:\mc{H}_p(K_f^p)\to L$ defined by
\[T_\lambda(f)=\tr(f\cdot\lambda(R_Q)(f_0)|V_{J_\lambda})\]
is a pseudorepresentation of $\mc{H}_p(K_f^p)$ of dimension $\deg(Q)$. By analyticity, there is a pseudorepresentation $T$ of $\mc{H}_p(K_f^p)$ over $\mc{O}(\mf{Z})$, also of dimension $\deg(Q)$, such that $\lambda(T)=T_\lambda$ for any $\lambda\in S'$.

Now, for any $f\in\mc{H}_p(K_f^p)$, we have by \eqref{eqnrqf0proj} that
\[T_\lambda(f)-\tr(f|\tau_\lambda)=\tr(f\cdot\lambda(R_Q)(f_0)|V_{J_\lambda})-\tr(f\cdot \lambda(R_Q)(f_0)|\tau_\lambda)=\tr(f\cdot\lambda(R_Q)(f_0)|V_{J_\lambda-\tau_\lambda}),\]
where $V_{J_\lambda-\tau_\lambda}$ is the space attached to the effective (by (3)) finite slope character distribution $J_\lambda-\tau_\lambda$. It then follows as above that $T_\lambda-\tr(\tau_\lambda)$ is a pseudorepresentation of dimension $\deg(Q)-\deg(Q_0)$. By analyticity, so is $T-\tau$.

Now let $\lambda_0\in\mf{Z}(\overline\Q_p)$. By continuity, $\lambda_0(R_Q)(f_0)$ is idempotent, and therefore it defines a $\mc{H}_p'(K_f^p)$-equivariant projector from $V_{J_{\lambda_0}}$ onto some subspace $N_0$. The trace $\tr(N_0)$ of the representation of $\mc{H}_p(K_f^p)N_0$ is thus a summand of $J_{\lambda_0}$. But the specialization $T_{\lambda_0}$ of $T$ at $\lambda_0$ is itself a summand of $\tr(N_0)$ by construction. Thus it follows from above that $\tau_{\lambda_0}$ is a summand of $\tr(N_0)$, and hence of $J_{\lambda_0}$, which completes the proof.
\end{proof}

The main examples of effective finite slope character distributions come from the cohomology of certain local systems on the locally symmetric spaces attached to $G$. In \cite{urbanev}, Urban defines an affinoid rigid analytic space $\mf{X}$, called weight space, depending on $G$, and for each $\lambda\in\mf{X}(\overline{\Q}_p)$, a finite slope character distribution $I_G^\dagger(\cdot,\lambda)$. This distribution has the following classicity property: When $\lambda$ is given by a dominant regular weight of $T$, then an irreducible representation of $\mc{H}_p'$ appears with the same multiplicity in $I_G^\dagger(\cdot,\lambda)$ with which it appears in the cohomology with coefficients in $E_\lambda^\vee$ of the locally symmetric spaces attached to $G$, as long as this character has slope which is \textit{noncritical} with respect to $\lambda$. In this way, the noncritical constituents of $I_G^\dagger(\cdot,\lambda)$ can be related to classical cohomological automorphic representations.

The distributions $I_G^\dagger(\cdot,\lambda)$ are not in general effective. But when $G$ has discrete series, one can combine in a certain way the distributions $I_M^\dagger(\cdot,\mu)$, and particular twists of these, for certain Levi subgroups $M$ in $G$, and certain weights $\mu$ depending on $\lambda$, to produce a new distribution $I_{G,0}^\dagger(\cdot,\lambda)$. This distribution has the classicity mentioned above but with respect to \textit{cuspidal} cohomological automorphic representations for $G$. It is also effective, and as $\lambda$ varies, the distributions $I_{G,0}^\dagger(\cdot,\lambda)$ form an $\mf{X}$-family of effective finite slope character distributions. It is in this way that finite slope, cuspidal, cohomological automorphic representations for $G$ may be $p$-adically interpolated.

At this point, there is a problem in Urban's paper, however. These facts about the cuspidal distributions $I_{G,0}^\dagger(\cdot,\lambda)$ depend on a formula of Franke from \cite{franke}, and this formula is interpreted erroneously in \cite{urbanev}. But, after rewriting Franke's formula in a particular way, the classicity and effectivity properties of $I_{G,0}^\dagger(\cdot,\lambda)$ mentioned above can be still proved, at the expense of altering the combination of the distributions $I_M^\dagger(\cdot,\mu)$ used to define it. This is done, following work of Gulotta \cite{gulotta}, in forthcoming work of Urban and the author of the present article. The consequence of this is that the main results of \cite{urbanev} still hold, but the definition of $I_{G,0}^\dagger(\cdot,\lambda)$ given there needs to be modified in general.

Fortunately for us, when $G$ is $G_2$ or any of its Levi subgroups, the interpretation of Franke's formula in \cite{urbanev} is actually still correct. Because of this, in the case of $G_2$ and its Levis, the definition of $I_{G,0}^\dagger(\cdot,\lambda)$ given in \textit{loc. cit.} is correct and coincides with the corrected definition from the aforementioned forthcoming work. We will write down this definition in Section \ref{subsecchardistg2} explicitly in terms of the overconvergent character distributions attached to the Levi subgroups of $G_2$.

\subsection{Principal series and their $p$-stabilizations}
\label{subsecpstabns}
We now specialize back to the case when $G=G_2$. Since we are interested in this paper in automorphic representations of $G_2(\A)$ coming via parabolic induction from particular representations of the long root Levi which are unramified at $p$, we must now, for the purposes of $p$-adic deformation, study such induced representations at $p$ and their $p$-stabilizations. A first step toward this is the proposition below.

For what follows, we recall from Section \ref{subsecg2str} that for $\gamma\in\{\alpha,\beta\}$, we write $\det_\gamma$ for the corresponding determinant homomorphism of $M_\gamma\cong GL_2$.

\begin{proposition}
\label{proplqofPi}
Let $\chi$ be a unitary (possibly ramified) character of $\Q_p^\times$. Then the principal series representation
\begin{equation}
\label{eqredps}
\iota_{B(\Q_p)}^{G_2(\Q_p)}((\chi\circ\alpha)\cdot\vert\det_\alpha\vert^{1/2})
\end{equation}
is reducible, and its Langlands quotient is isomorphic to
\[\iota_{P_\beta(\Q_p)}^{G_2(\Q_p)}(\chi\circ\det_\beta),\]
which is irreducible.
\end{proposition}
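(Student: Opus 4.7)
The plan is to rewrite the left-hand induction as a standard Langlands module induced from the long-root parabolic and then identify its Langlands quotient by producing a nonzero intertwining operator into the right-hand side. First, induction in stages along $B\subset P_\alpha$, together with the identification $i_\alpha:GL_2\xrightarrow{\sim}M_\alpha$ from Section \ref{subsecg2str}, gives
\[\iota_{B(\Q_p)}^{G_2(\Q_p)}((\chi\circ\alpha)|\det_\alpha|^{1/2})\cong\iota_{P_\alpha(\Q_p)}^{G_2(\Q_p)}(\sigma\otimes|\det_\alpha|^{1/2}),\]
where $\sigma:=\iota_{B_2(\Q_p)}^{GL_2(\Q_p)}(\chi\boxtimes\chi^{-1})$; this follows by factoring $|\det|^{1/2}$ out of the inner $GL_2$-principal series $\iota_{B_2}^{GL_2}(\chi|\cdot|^{1/2}\boxtimes\chi^{-1}|\cdot|^{1/2})$. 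Because $\chi$ is unitary, the quotient $\chi/\chi^{-1}=\chi^2$ is unitary and therefore not equal to $|\cdot|^{\pm 1}$, so $\sigma$ is irreducible and tempered by the standard reducibility criterion for $GL_2$. Using $|\det_\alpha|^{1/2}=\delta_{P_\alpha}^{1/10}$ on $M_\alpha$, our representation is the standard Langlands module $\iota_{P_\alpha(\Q_p)}^{G_2(\Q_p)}(\sigma,1/10)$ with tempered inducing datum and strictly positive exponent, which by the Langlands classification admits a unique irreducible quotient $Q$.

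Next, I will produce a nonzero $G_2(\Q_p)$-equivariant map
\[A:\iota_{P_\alpha(\Q_p)}^{G_2(\Q_p)}(\sigma,1/10)\longrightarrow\iota_{P_\beta(\Q_p)}^{G_2(\Q_p)}(\chi\circ\det_\beta).\]
By Frobenius reciprocity this is the same as a nonzero $M_\beta$-equivariant map from the Jacquet module $J_{P_\beta}(\iota_{P_\alpha(\Q_p)}^{G_2(\Q_p)}(\sigma,1/10))$ to the one-dimensional character $\chi\circ\det_\beta$. The geometric lemma of Bernstein--Zelevinsky expresses this Jacquet module as an iterated extension with graded pieces indexed by double cosets $W_{M_\beta}\backslash W/W_{M_\alpha}$; using the enumeration of $W^{P_\alpha}$ and $W^{P_\beta}$ from Section \ref{subsecg2str}, one checks directly that the representative $w_{\alpha\beta}$ sends the inducing character $(\chi\circ\alpha)|\det_\alpha|^{1/2}$ on $T$ to $(\chi\circ\det_\beta)|\beta|^{1/2}$, so the corresponding graded piece projects onto $\chi\circ\det_\beta$, giving the desired $A$.

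Finally, I will show $\iota_{P_\beta(\Q_p)}^{G_2(\Q_p)}(\chi\circ\det_\beta)$ is irreducible. Since $\chi\circ\det_\beta$ is a unitary character of $M_\beta(\Q_p)$, the induction is unitary and completely reducible, so it suffices to show that $\hom_{M_\beta}(J_{P_\beta}(\iota_{P_\beta(\Q_p)}^{G_2(\Q_p)}(\chi\circ\det_\beta)),\chi\circ\det_\beta)$ is one-dimensional. Computing this Jacquet module by the geometric lemma produces a filtration indexed by the small set $W_{M_\beta}\backslash W/W_{M_\beta}$, and the unitarity of $\chi$ guarantees that the only graded piece on which $M_\beta$ can act through $\chi\circ\det_\beta$ is the identity double coset, since every nontrivial Weyl translate introduces a nontrivial unramified exponent $|\cdot|^\lambda$ with $\lambda\ne 0$ on the center of $M_\beta$. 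Therefore $A$ is nonzero with irreducible target, so $\mathrm{image}(A)$ exhausts the target, and by uniqueness of the Langlands quotient this forces $Q\cong\iota_{P_\beta(\Q_p)}^{G_2(\Q_p)}(\chi\circ\det_\beta)$; the existence of this proper nontrivial quotient simultaneously proves reducibility of the original induction. The main obstacle in this argument is the irreducibility step: one must carefully track the real parts of the Weyl-translated exponents appearing in the Jacquet module of the degenerate principal series, and it is precisely the unitarity hypothesis on $\chi$ that prevents any accidental coincidence of these exponents that would otherwise introduce further reducibility.
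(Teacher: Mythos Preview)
Your overall strategy---realize the principal series as the standard module $\iota_{P_\alpha}^{G_2}(\sigma,1/10)$, produce an intertwining map into $\iota_{P_\beta}^{G_2}(\chi\circ\det_\beta)$, and conclude by irreducibility of the target---matches the paper's. Your first two paragraphs are correct; the paper carries out the same identification via the Weyl element $w_\beta w_\alpha$ and induction in stages through $P_\beta$ rather than via Frobenius reciprocity and the geometric lemma, which is a bit cleaner because it produces the intertwining map directly as a standard long intertwining operator rather than by locating a particular graded piece of a Jacquet module (the latter requires you to know that the relevant piece is a \emph{quotient} of the filtration, which you do not verify).

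The genuine gap is in your irreducibility argument for $\iota_{P_\beta}^{G_2}(\chi\circ\det_\beta)$. The claim that ``every nontrivial Weyl translate introduces a nontrivial unramified exponent $|\cdot|^\lambda$ with $\lambda\ne 0$ on the center of $M_\beta$'' is false. The double coset space $W_{M_\beta}\backslash W/W_{M_\beta}$ has four elements, and the longest one is represented by $w_{\alpha\beta\alpha\beta\alpha}=w_{2\alpha+3\beta}$, which \emph{normalizes} $M_\beta$. The corresponding graded piece of the normalized Jacquet module is the character $\chi^{-1}\circ\det_\beta$; this is unitary, so its real exponent on $Z(M_\beta)$ is zero, and when $\chi^2=1$ it literally equals $\chi\circ\det_\beta$. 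Your exponent criterion therefore fails to separate it from the identity piece. One could try to rescue the argument by observing that the longest coset contributes a subrepresentation rather than a quotient of the Jacquet module, so that $\hom_{M_\beta}(J_{P_\beta}(\pi),\chi\circ\det_\beta)$ may still be one-dimensional; but you do not make this distinction, and for the two intermediate double cosets (whose graded pieces are full $GL_2$-principal series) the bookkeeping is more delicate still. The paper does not attempt any such computation: it simply cites Mui\'c \cite[Theorem 3.1 (i)]{muic}, whose proof of irreducibility of this unitary degenerate principal series requires substantially more than a central-character count. You should either invoke Mui\'c as the paper does, or give a complete analysis of all four graded pieces together with the filtration structure.
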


\begin{proof}
Let $w=w_\beta w_\alpha$, so that $w\alpha=2\alpha+3\beta$ and $w(\alpha+2\beta)=\beta$. Rewrite our principal series representation \eqref{eqredps} as
\[\iota_{B(\Q_p)}^{G_2(\Q_p)}((\chi\circ\alpha)\cdot\vert\det_\alpha\vert^{1/2})=\iota_{B(\Q_p)}^{G_2(\Q_p)}((\chi\circ\alpha)\cdot(\vert\cdot\vert^{1/2}\circ(\alpha+2\beta))).\]
Because $\alpha+2\beta$ is dominant, the Langlands quotient of this is the unique irreducible subrepresentation of the twist by $w_\alpha$,
\[\iota_{B(\Q_p)}^{G_2(\Q_p)}(w_\alpha((\chi\circ\alpha)\cdot(\vert\cdot\vert^{1/2}\circ(\alpha+2\beta)))),\]
because these are intertwined by the $w_\alpha$-intertwining operator. Similarly, the Langlands quotient of the twist by $w$ of \eqref{eqredps} is intertwined with the same subrepresentation via the $w_\beta$-intertwining operator. Thus these two Langlands quotients are the same.

Now the twist by $w$ of \eqref{eqredps} is
\begin{align*}
\iota_{B(\Q_p)}^{G_2(\Q_p)}((\chi\circ(w\alpha))\cdot(\vert\cdot\vert^{1/2}\circ(w(\alpha+2\beta))))&=\iota_{B(\Q_p)}^{G_2(\Q_p)}((\chi\circ(2\alpha+3\beta))\cdot(\vert\cdot\vert^{1/2}\circ\beta))\\
&=\iota_{B(\Q_p)}^{G_2(\Q_p)}(\delta_{(B\cap M_\beta)(\Q_p)}^{1/2}\cdot(\chi\circ\det_\beta)).
\end{align*}
By induction in stages, this is
\[\iota_{P_\beta(\Q_p)}^{G_2(\Q_p)}(\iota_{(B\cap M_\beta)(\Q_p)}^{M_\beta(\Q_p)}(\delta_{(B\cap M_\beta)(\Q_p)}^{1/2}\cdot(\chi\circ\det_\beta))).\]
The inner parabolic induction is reducible, hence so is the whole one, and so is \eqref{eqredps}.

Now the unique irreducible quotient of the inner induction
\[\iota_{(B\cap M_\beta)(\Q_p)}^{M_\beta(\Q_p)}(\delta_{(B\cap M_\beta)(\Q_p)}^{1/2}\cdot(\chi\circ\det_\beta))\]
is the character $\chi\circ\det_\beta$ of $M_\beta(\Q_p)$. Therefore the Langlands quotient we are interested in is the unique irreducible quotient of
\[\iota_{P_\beta(\Q_p)}^{G_2(\Q_p)}(\chi\circ\det_\beta).\]
So to prove the proposition, it suffices to prove this is irreducible. But this was already done by Mui\'c \cite[Theorem 3.1 (i)]{muic}. So we are done.
\end{proof}

We record the following corollary of the proof of this proposition for later use.

\begin{corollary}
\label{corredstps}
In the setting of the above proposition, there is an exact sequence
\[0\to\iota_{P_\beta(\Q_p)}^{G_2(\Q_p)}(\mr{St}\otimes(\chi\circ\det_\beta))\to\iota_{B(\Q_p)}^{G_2(\Q_p)}((\chi\circ\alpha)\cdot\vert\det_\alpha\vert^{1/2})\to\iota_{P_\beta(\Q_p)}^{G_2(\Q_p)}(\chi\circ\det_\beta)\to 0,\]
where $\mr{St}$ is the Steinberg representation of $M_\beta(\Q_p)\cong GL_2(\Q_p)$.
\end{corollary}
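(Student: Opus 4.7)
The plan is to deduce the exact sequence from the induction-in-stages computation already carried out in the proof of the proposition, combined with the classical reducibility for the $GL_2$-principal series. That proof shows that, after conjugating by $w=w_\beta w_\alpha$, the induced representation has the same Jordan--H\"older content as
\[\iota_{P_\beta(\Q_p)}^{G_2(\Q_p)}\bigl(\iota_{(B\cap M_\beta)(\Q_p)}^{M_\beta(\Q_p)}(\delta_{(B\cap M_\beta)(\Q_p)}^{1/2}\cdot(\chi\circ\det_\beta))\bigr).\]
For $GL_2$, with $\chi$ unitary, one has the standard short exact sequence
\[0\to\mr{St}\otimes(\chi\circ\det_\beta)\to\iota_{(B\cap M_\beta)(\Q_p)}^{M_\beta(\Q_p)}(\delta_{(B\cap M_\beta)(\Q_p)}^{1/2}\cdot(\chi\circ\det_\beta))\to\chi\circ\det_\beta\to 0.\]
Applying the exact functor $\iota_{P_\beta(\Q_p)}^{G_2(\Q_p)}$ then exhibits the two Jordan--H\"older constituents of our induction as $\iota_{P_\beta(\Q_p)}^{G_2(\Q_p)}(\mr{St}\otimes(\chi\circ\det_\beta))$ and $\iota_{P_\beta(\Q_p)}^{G_2(\Q_p)}(\chi\circ\det_\beta)$, both of which are irreducible: the second by the proposition, the first by the tempered unitary induction being accounted for in Mui\'c's classification of reducibility points for $G_2$.

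To determine the orientation of the two-step filtration, I would rewrite the original induction via induction in stages as
\[\iota_{P_\alpha(\Q_p)}^{G_2(\Q_p)}\bigl(\iota_{(B\cap M_\alpha)(\Q_p)}^{M_\alpha(\Q_p)}(\chi\circ\alpha)\otimes|\det_\alpha|^{1/2}\bigr).\]
Since $\chi$ is unitary, the inner $GL_2$-induction is irreducible and tempered; and since $|\det_\alpha|^{1/2}$ corresponds to a strictly positive element of $\mf{a}_{P_\alpha,0}^\vee$, this presentation is in Langlands position. In particular the induced representation has a unique irreducible quotient, namely its Langlands quotient, which the proposition identifies as $\iota_{P_\beta(\Q_p)}^{G_2(\Q_p)}(\chi\circ\det_\beta)$. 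The remaining constituent $\iota_{P_\beta(\Q_p)}^{G_2(\Q_p)}(\mr{St}\otimes(\chi\circ\det_\beta))$ must then be the subrepresentation, yielding the exact sequence.

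The only real point requiring care is the irreducibility of $\iota_{P_\beta(\Q_p)}^{G_2(\Q_p)}(\mr{St}\otimes(\chi\circ\det_\beta))$, which makes the ``other'' constituent a single irreducible piece rather than a sum. This is a tempered unitarily induced representation and can be extracted from Mui\'c's classification; if desired one can even avoid invoking this directly by combining the $GL_2$ sequence above (which gives length at most $2$ for the $w$-twist, and hence for the original induction) with the Langlands-quotient identification to force the subrepresentation to be irreducible of the stated form. Granting this, the argument is essentially a routine consequence of the proposition.
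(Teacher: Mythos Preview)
Your main argument is correct and in the spirit of what the paper leaves implicit: the $w_\beta w_\alpha$-twist has the claimed two-step filtration by induction in stages and exactness of parabolic induction, both graded pieces are irreducible (the quotient by the proposition, the Steinberg piece via Mui\'c), and since the original induction sits in Langlands position from $P_\alpha$ it has $\iota_{P_\beta}^{G_2}(\chi\circ\det_\beta)$ as its unique irreducible quotient, forcing the other constituent into the kernel.

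One caveat: your proposed alternative to citing Mui\'c for the irreducibility of the Steinberg piece does not work as stated. The $GL_2$ exact sequence gives only a two-step \emph{filtration} on the $w$-twist; it yields length $\le 2$ only after one already knows both graded pieces are irreducible, which is precisely what you were trying to avoid. If you want to bypass that irreducibility entirely, a cleaner route is to observe that the intertwining operator $M(w_\beta w_\alpha)$ is an isomorphism here: with $\psi=(\chi\circ\alpha)\,\lvert\alpha+2\beta\rvert^{1/2}$ one computes $\psi\circ\alpha^\vee=\chi^2$ and $(w_\alpha\psi)\circ\beta^\vee=\chi^3\lvert\cdot\rvert^{1/2}$, neither of which equals $\lvert\cdot\rvert^{\pm 1}$ since $\chi$ is unitary. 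Hence the original principal series is isomorphic to its $w$-twist, and the exact sequence transfers directly from the induction-in-stages presentation with no further input needed.
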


\begin{proposition}
\label{propvppsirr}
Let $\chi$ be a character of $T(\Q_p)$. Assume that for all roots $\gamma$ for $G_2$ we have
\[\chi\circ\gamma^\vee\ne\vert\cdot\vert.\]
If $\chi$ is not unitary, then the principal series representation
\[\iota_{B(\Q_p)}^{G_2(\Q_p)}(\chi)\]
is irreducible. Otherwise, if $\chi$ is unitary, then this principal series representation can only be reducible if $\chi$ is a product of distinct characters of order $2$.
\end{proposition}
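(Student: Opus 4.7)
The plan is to use the standard machinery for principal series of split reductive $p$-adic groups: rank-one intertwining operators in the non-unitary case and the Knapp-Stein-Keys $R$-group analysis in the unitary case. First I would observe that since the set of roots is closed under negation and $(-\gamma)^\vee=-\gamma^\vee$, the hypothesis $\chi\circ\gamma^\vee\ne|\cdot|$ for every root $\gamma$ is equivalent to $\chi\circ\gamma^\vee\ne|\cdot|^{\pm 1}$ for every $\gamma$. This is precisely the generic hypothesis ensuring that every normalized rank-one intertwining operator $A(w_\gamma,w\chi)$, for $w\in W$ and any root $\gamma$, is an isomorphism; composing along a reduced expression for the longest Weyl element $w_0$ therefore makes the long intertwining operator $A(w_0,\chi):\iota_{B(\Q_p)}^{G_2(\Q_p)}(\chi)\to\iota_{B(\Q_p)}^{G_2(\Q_p)}(w_0\chi)$ an isomorphism.

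For the non-unitary case, I would apply the Weyl group action to replace $\chi$ by a translate whose real part lies in the interior of the positive Weyl chamber. By the Langlands classification, $\iota_{B(\Q_p)}^{G_2(\Q_p)}(\chi)$ then has a unique irreducible quotient realized as the image of the long intertwining operator from $\iota_{B(\Q_p)}^{G_2(\Q_p)}(w_0\chi)$; by the first step this image equals the whole induced representation, giving irreducibility.

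For the unitary case, $\iota_{B(\Q_p)}^{G_2(\Q_p)}(\chi)$ is tempered and completely reducible, and its decomposition is governed by the Knapp-Stein-Keys $R$-group $R_\chi$. Under the rank-one hypothesis all Harish-Chandra $\mu$-functions are nonzero, so $R_\chi$ is the quotient of the stabilizer $W_\chi=\{w\in W:w\chi=\chi\}$ by the subgroup generated by those reflections $w_\gamma\in W_\chi$ whose self-intertwiner acts by a scalar; reducibility is equivalent to $R_\chi\ne 1$. A reflection $w_\gamma$ fixes $\chi$ iff $(\chi\circ\gamma^\vee)^2=1$. Using the explicit structure of the Weyl group of $G_2$ (the dihedral group of order 12) and its six positive roots, I would enumerate the possible stabilizer subgroups and show: if $W_\chi$ is generated by a single reflection, the analogous $GL_2$-type computation forces the corresponding self-intertwiner to be scalar, so $R_\chi$ is trivial and the induction is irreducible; hence reducibility requires $W_\chi$ to contain two non-conjugate reflections (necessarily one of long-root type and one of short-root type). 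A direct root-system computation shows this happens exactly when both $\chi\circ\alpha^\vee$ and $\chi\circ\beta^\vee$ are characters of order $2$, and the running hypothesis forces these two quadratic characters to be distinct (else evaluating $\chi$ on some other coroot would produce $|\cdot|$ or conflict with the rank-one condition).

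The hard part will be the $R$-group analysis in the unitary case, specifically verifying that a singly-generated stabilizer cannot produce an extra constituent; I expect to handle this either by explicit intertwining-operator computations on $G_2$ principal series, reducing the single-reflection case to the $GL_2$ situation where the $R$-group is known to be trivial, or by invoking Keys' general formula for $R$-groups of split $p$-adic groups.
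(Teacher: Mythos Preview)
The paper's own proof is a two-line citation: the non-unitary assertion is \cite[Proposition 3.1]{muic} and the unitary assertion is \cite[Theorem $G_2$]{keys}. Your proposal is essentially a sketch of how one would reprove those cited results, using exactly the machinery those authors use (factorization of intertwining operators plus Langlands classification in the first case, the Knapp--Stein--Keys $R$-group in the second). So the overall route is sound and not genuinely different from the literature the paper invokes; it is just longer.

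Two of your intermediate claims are incorrect, however, and would derail the argument if carried out as written. First, in the non-unitary case you assert that a Weyl translate of $\chi$ can be chosen with real part in the \emph{interior} of the dominant chamber. This fails whenever $\re(\chi)$ is nonzero but lies on a wall; you must instead induce in stages through the Levi $M_\gamma$ of that wall (the rank-one induction there is irreducible by the hypothesis $\chi\circ\gamma^\vee\ne|\cdot|^{\pm 1}$), and only then invoke the Langlands classification from $P_\gamma$. Second, and more seriously, your claim that ``$w_\gamma$ fixes $\chi$ iff $(\chi\circ\gamma^\vee)^2=1$'' is false for $G_2$. Because $G_2$ is adjoint, the fundamental coweight $\omega_\gamma^\vee$ lies in $X_*(T)$ and satisfies $\langle\gamma,\omega_\gamma^\vee\rangle=1$; hence the image of $1-w_\gamma$ on $X_*(T)$ is all of $\Z\gamma^\vee$, and the correct condition is $\chi\circ\gamma^\vee=1$, not merely of order $2$. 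This changes the structure of $W_\chi$ and of the subgroup generated by its reflections, so your subsequent enumeration of stabilizers and your derivation of the ``two distinct quadratic characters'' condition do not go through as stated. Keys' actual analysis is more delicate (the nontrivial $R$-group arises when $W_\chi$ properly contains the reflection subgroup $W_\chi'$), and reproducing it correctly for $G_2$ is exactly the content of his Theorem~$G_2$. Citing it, as the paper does, is both shorter and safer.
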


\begin{proof}
The assertion when $\chi$ is not unitary follows from \cite[Proposition 3.1]{muic}. When $\chi$ is unitary, this is \cite[Theorem $G_2$]{keys}.
\end{proof}

In the following we consider the Iwahori subgroup $I$ for $G_2(\Q_p)$ defined with respect to the standard Borel $B$ as in the previous subsection.

\begin{proposition}
\label{proppstabnofPi}
Let $\chi$ be an unramified character of $\Q_p^\times$. Then the space
\[\iota_{P_\beta(\Q_p)}^{G_2(\Q_p)}(\chi\circ\det_\beta)^I\]
of $I$-fixed vectors, which is naturally a module for $T(\Q_p)$, admits the character
\[(\chi\circ \det_\beta)\cdot\vert 3\alpha+\tfrac{11}{2}\beta\vert^{-1}\]
as a subquotient.
\end{proposition}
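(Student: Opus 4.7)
The plan is to prove this by computing the target character as a subquotient of the normalized Jacquet module of $V := \iota_{P_\beta(\Q_p)}^{G_2(\Q_p)}(\chi\circ\det_\beta)$ along $B$, and then transferring this to $V^I$ via Casselman's theorem. The two main ingredients are the Bernstein--Zelevinsky geometric lemma and the standard normalization comparing the $\mc{U}_p$-action on $V^I$ with the $T(\Q_p)$-action on the Jacquet module.

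For the first step, applying the geometric lemma to the composition of Jacquet module along $B$ with parabolic induction from $P_\beta$ endows the Jacquet module of $V$ with a $T(\Q_p)$-stable filtration indexed by $W^{P_\beta}$, whose graded piece at $w$ is $w\cdot r_{B_\beta^w}^{M_\beta}(\chi\circ\det_\beta)$, where $B_\beta^w=w^{-1}Bw\cap M_\beta$ is the Borel of $M_\beta$ cut out by $w$. Because every $w\in W^{P_\beta}$ satisfies $w^{-1}\beta>0$, the Borel $B_\beta^w$ is always the standard Borel $B_\beta$; since $\chi\circ\det_\beta$ is a character, each graded piece simplifies to $w\cdot((\chi\circ\det_\beta)|_T\cdot\delta_{B_\beta}^{-1/2})=w\cdot((\chi\circ\det_\beta)|_T\cdot|\beta|^{-1/2})$. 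In particular, the piece at $w=1$ is the character $(\chi\circ\det_\beta)|_T\cdot|\beta|^{-1/2}$.

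For the second step, Casselman's theorem identifies $V^I$ (via the canonical lift) with the $T(\Z_p)$-invariants of the Jacquet module, with the convention that the $\mc{U}_p$-action on $V^I$ corresponds to the $T(\Q_p)$-action on the Jacquet module twisted by $\delta_B^{-1/2}$. Since $\chi$ is unramified, $T(\Z_p)$ acts trivially on the entire Jacquet module, so the isomorphism is onto everything, and the $w=1$ graded piece of $V^I$ therefore carries the character $(\chi\circ\det_\beta)|_T\cdot|\beta|^{-1/2}\cdot\delta_B^{-1/2}$. Using $2\rho=6\alpha+10\beta$ for $G_2$, so that $\delta_B^{-1/2}=|\alpha|^{-3}|\beta|^{-5}$, one computes $|\beta|^{-1/2}\cdot\delta_B^{-1/2}=|\alpha|^{-3}|\beta|^{-11/2}=|3\alpha+\tfrac{11}{2}\beta|^{-1}$, producing the required subquotient.

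The main obstacle will be the delta-factor bookkeeping, specifically verifying that Casselman's comparison introduces $\delta_B^{-1/2}$ rather than its inverse. This is ultimately a normalization check, most cleanly carried out by computing directly on the standard $I$-fixed vector of an unramified principal series $\iota_B^{G_2}(\mu)$ supported on the open cell $B\cdot I$, via an Iwahori-decomposition analysis of $ItI$ for $t\in T^-$. Getting this sign right is exactly what produces the $-\tfrac{11}{2}$ exponent on $|\beta|$ appearing in the statement.
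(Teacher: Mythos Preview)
Your proposal is correct and follows essentially the same approach as the paper. The only cosmetic difference is that you work with the normalized Jacquet module and twist by $\delta_B^{-1/2}$, whereas the paper uses the unnormalized Jacquet module (via Casselman's Propositions 6.3.1--3, which is the same filtration you call the geometric lemma) and twists by $\delta_B^{-1}$; your $(\chi\circ\det_\beta)|_T\cdot|\beta|^{-1/2}$ for the $w=1$ piece matches the paper's $(\chi\circ\det_\beta)|_T\cdot\delta_{P_\beta}^{1/2}$ after undoing the $\delta_B^{1/2}$ normalization, and both land on $|3\alpha+\tfrac{11}{2}\beta|^{-1}$.
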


\begin{proof}
We combine the results in \cite[\S 2]{MY} and \cite[Propositions 6.3.1-3]{casselman}. While these results are completely general, in our case the former expresses the space of Iwahori invariants we are interested in as the twist by $\delta_{B(\Q_p)}^{-1}$ of the Jacquet module
\[\iota_{P_{\beta}(\Q_p)}^{G_2(\Q_p)}(\chi\circ\det_\beta)_{N(\Q_p)},\]
while the latter expresses this Jacquet module as a $T(\Q_p)$-module with a filtration indexed by the set $W_{M_\beta}\backslash W$, where $W_{M_\beta}$ is the Weyl group of $T$ in $M_\beta$. The graded piece of this filtration corresponding to the identity coset in $W_{M_\beta}\backslash W$ is given by the character $\chi$ times a particular modulus character (see \cite[Proposition 6.3.3]{casselman} for the explicit formula) which is easily computed in this case to be given by $\delta_{P_\beta(\Q_p)}^{1/2}$. Putting this together yields a one-dimensional subquotient of
\[\iota_{P_\beta(\Q_p)}^{G_2(\Q_p)}(\chi\circ\det_\beta)^I\]
with $T(\Q_p)$-action given by
\[(\chi\circ\det_\beta)\delta_{B(\Q_p)}^{-1}\delta_{P_\beta(\Q_p)}^{1/2}.\]
Since $\delta_{B(\Q_p)}=\vert 6\alpha+10\beta\vert$ and $\delta_{P_\beta(\Q_p)}|_{T(\Q_p)}=\vert 6\alpha+9\beta\vert$ our proposition follows.
\end{proof}

\subsection{Character distributions for $G_2$}
\label{subsecchardistg2}

In this section we describe the overconvergent cuspidal character distribution $I_{G_2,0}^\dagger$ mentioned at the end of Section \ref{subsecev}. It will be defined in terms of certain character distributions $I_{G_2}^\dagger$ for which we refer to \cite{urbanev} for the definitions. We will, however, be able to relate these distributions to classical objects, and this is how we will compute with them.

We specialize the notation of Section \ref{subsecev} now to the case of $G=G_2$. In particular, we have the compact subgroups $K_{f,\mr{max}}$ and $K_{f,\mr{max}}^p$, the Iwahori subgroup $I\subset G_2(\Z_p)$, the semigroups $T^-,T^{--}\subset T(\Q_p)$, and the Hecke algebras $\mc{U}_p$, $\mc{H}_p$ and $\mc{H}_p'$. Let $\mf{X}$ be the \textit{weight space} for $G_2$, which is the affinoid rigid analytic space over $\Q_p$ defined by the functor
\[\mf{X}(L)=\hom_{\mr{cont}}(T(\Z_p),L^\times),\]
for finite extensions $L$ of $\Q_p$. Then for $\lambda\in\mf{X}(\overline{\Q}_p)$ and any standard Levi subgroup $M$ of $G_2$, we have the overconvergent character distributions $I_M^\dagger(\cdot,\lambda)$ considered in \cite{urbanev}.

Let $W=W(G_2,T)$ be the Weyl group of $G_2$. Then for any standard parabolic subgroup $P=MN$ of $G_2$, we have the set $W^M$ of minimal length representatives for $W$ modulo multiplication of the left by elements of the Weyl group $W_M$ of $M$. We consider the subsets
\[W_{\Eis}^{M_\alpha}=\{1,w_\beta,w_{\beta\alpha}\}\subset W^{M_\alpha},\qquad W_{\Eis}^{M_\beta}=\{1,w_\alpha,w_{\alpha\beta}\}\subset W^{M_\beta},\qquad W_{\Eis}^{T}=\{1\}\subset W^T,\]
which will be used below.

Let us consider the Hecke algebra $\mc{H}_{p,M}$ defined in the same way as $\mc{H}_p$, but for $M$ in place of $G_2$. For any $w\in W^M$, we define a homomorphism $\mc{H}_p\to\mc{H}_{p,M}$, $f\mapsto f_{M,w}$ as follows. First, for $f^p\in C_c^\infty(G_2(\A_f^p),\Q_p)$, let $f_M^p\in C_c^\infty(M(\A_f^p),\Q_p)$ be given by
\[f_M^p(m)=\int_{K_{f,\mr{max}}^p}\int_{N(\A_f^p)}f^p(k^{-1}mnk)\,dn\,dk,\]
where Haar measures are normalized as follows: The Haar measure on $G_2(\A_f^p)$ is such that $K_{f,\mr{max}}^p$ has volume $1$, that on $M(\A_f^p)$ is such that $K_{f,\mr{max}}^p\cap M(\A_f^p)$ has volume $1$ as well, and the Haar measure on $N(\A_f^p)$ is normalized by the Iwasawa decomposition so that
\[\int_{G_2(\A_f^p)}f^p(g)\,dg=\int_{M(\A_f^p)}\int_{N(\A_f^p)}\int_{K_{f,\mr{max}}^p}f^p(mnk)\,dk\,dn\,dm,\]
for all $f^p\in C_c^\infty(G_2(\A_f^p),\Q_p)$. We note the relation
\[\tr(f_M^p|\sigma^p)=\tr(f|\Ind_{M(\A_f^p)}^{G_2(\A_f^p)}(\sigma^p))\]
for any smooth admissible representation $\sigma^p$ of $M(\A_f^p)$, where $\Ind$ denotes a nonunitary induction.

Then for $t\in T^-$ and $f=f^p\otimes u_t\in\mc{H}_p$ we let
\[f_{M,w}=f_M^p\otimes u_{wtw^{-1},M},\]
where
\[u_{wtw^{-1},M}=\frac{1}{\vol(I_M)}\chars(I_Mwtw^{-1}I_M),\qquad I_M=I\cap M(\Z_p).\]
Note that this is well defined because $w\in W^M$. The Hecke operator $f_{M,w}$ coincides with the operator $f_{M,w}^{reg}$ from \cite{urbanev} because $G_2$ is split.

We can now define the distribution $I_{G_2,0}^\dagger$. In the general case of a group $G$, this would be done inductively on the rank, but since $G_2$ has rank $2$, it is easy to do this explicitly. Let $\lambda\in\mf{X}(\overline{\Q}_p)$. We first define
\[I_{T,0}^\dagger(f,\lambda)=I_T^\dagger(f,\lambda),\qquad f\in\mc{H}_{p,T}.\]
Then if $M\subset G_2$ is the Levi of a maximal parabolic $P$, we define
\[I_{M,0}^\dagger(f,\lambda)=I_M^\dagger(f,\lambda)+I_T
^\dagger(f_{T,1},\lambda+2\rho_M),\qquad f\in\mc{H}_{p,M},\]
where $f_{T,1}$ is defined analogously to the operator $f_{M,w}$ defined above, and $2\rho_M$ is the positive root in $M$ (so $\alpha$ if $M=M_\alpha$ and $\beta$ otherwise). Finally, we define for $f\in\mc{H}_p$,
\begin{equation}
\label{eqoccuspdef}
I_{G_2,0}^\dagger(f,\lambda)=I_{G_2}^\dagger(f,\lambda)-\sum_{P\in\{P_\alpha,P_\beta,B\}}\sum_{w\in W_{\Eis}^{M_P}}(-1)^{\dim(N_P)-\ell(w)}I_{M_P,0}^\dagger(f_{M_P,w},w*\lambda+2\rho_P).
\end{equation}
Here $\rho_P$ is half the sum of the roots of $T$ in the unipotent radical $N_P$ of $P$, and $w*\lambda$ is defined by
\[w*\lambda=w(\lambda+\rho)-\rho.\]
Then \cite[Corollary 4.7.4]{urbanev} (see also the remarks at the end of Section \ref{subsecev} of this paper) says in this case that $I_{G_2,0}^\dagger$ is an $\mf{X}$-family of effective finite slope character distributions.

We now relate the distributions $I_M^\dagger$ more classical objects. For $K_{f,M}\subset M(\A_f)$ an open compact subgroup, we let
\[X_{K_{f,M}}=M(\Q)\backslash M(\A)/K_{f,M}K_{\infty,M}A_M(\R)^\circ,\]
where $K_{\infty,M}=K_\infty\cap M(\R)$. For $\lambda\in X^*(T)$ a weight of $T$ which is dominant for $M$, let $E_\lambda(\Q_p)$ be the algebraic representation of $M$ of highest weight $\lambda$, viewed with $\Q_p$-coefficients, and let $E_\lambda^\vee(\Q_p)$ be its dual. Then $E_\lambda(\Q_p)$ and $E_\lambda^\vee(\Q_p)$ define local systems on $X_{K_{f,M}}$ in the standard way, which we denote respectively by $\widetilde{E}_\lambda(\Q_p)$ and $\widetilde{E}_\lambda^\vee(\Q_p)$. These local systems are compatible under pullback between the spaces $X_{K_{f,M}}$ as $K_{f,M}$ vary. So we define, for any $i\geq 0$,
\[H^i(X_M,E_\lambda^\vee(\Q_p))=\varinjlim_{K_{f,M}}H^i(X_{K_{f,M}},\widetilde{E}_\lambda^\vee(\Q_p)),\]
where the direct limit is over all open compact subgroups of $M(\A_f)$. This is an $M(\A_f)$-module in the standard way. In fact, there is an isomorphism of $M(\A_f)$-modules
\[H^i(X_M,E_\lambda^\vee(\Q_p))\otimes_{\Q_p}\C\cong H^i(\mf{m}_0,K_{\infty,M};\mc{A}_{E_\lambda^\vee}(M)\otimes E_\lambda^\vee)(e^{\langle H_P(\cdot),\lambda|_{A_P}\rangle}),\]
where we have viewed $\Q_p\subset\C$ via our fixed isomorphism $\overline{\Q}_p\cong\C$, and on the right hand side, we view $E_\lambda^\vee$ with $\C$-coefficients.

We then define, for $f\in\mc{H}_{p,M}$,
\begin{equation}
\label{eqocandcl}
I_M^{\cl}(f,\lambda)=\vert\lambda(t)\vert^{-1}\sum_{i\geq 0}(-1)^i\tr(f|H^i(X_M,E_\lambda^\vee(\Q_p))).
\end{equation}
For $w\in W_M$, $\lambda\in X^*(T)$, and $f\in\mc{H}_{p,M}$ with $f=f^p\otimes u_t$ as usual, define
\[f^{w,\lambda}=\vert(\lambda-(w*\lambda))(t)\vert^{-1} f^p\otimes u_t,\]
and extend $f\mapsto f^{w,\lambda}$ to all of $\mc{H}_{p,M}$ by linearity. This defines an automorphism of $\mc{H}_{p,M}$. Then if $\lambda$ is $M$-dominant, we have, by \cite[Theorem 4.5.4]{urbanev},
\begin{equation}
\label{eqoccl}
I_M^{\cl}(f,\lambda)=\sum_{w\in W_M}(-1)^{\ell(w)}I_M^\dagger(f^{w,\lambda},w*\lambda).
\end{equation}

Let $\sigma$ be an irreducible admissible $\mc{H}_{p,M}$-representation with slope $\mu_\sigma$. Then we say the slope $\mu_\sigma$ of $\sigma$ is \textit{critical} with respect to an $M$-dominant weight $\lambda\in X^*(T)$ if for all $w\in W_M$ with $w\ne 1$, we have that
\[\mu_\sigma-\lambda+w*\lambda\]
is a nonnegative rational combination of positive roots. Otherwise we say it is \textit{noncritical}. This weight is the slope of the representation $\sigma^{w,\lambda}$ given by
\begin{equation}
\label{eqsigmawlambda}
\sigma^{w,\lambda}(f^{w,\lambda})=\sigma(f),\qquad f\in\mc{H}_{p,M}.
\end{equation}
We note that this is slightly different than the definition of $\sigma^{w,\lambda}$ that appears in \cite{urbanev}; it seems that the definition given in \textit{loc. cit.} is not quite right and that this is the one that was intended. This is because we want the appearance of $\sigma$ in $I_M^{\cl}(\cdot,\lambda)$ to be equivalent to the appearance of $\sigma^{w,\lambda}$ in $I_M^\dagger(\cdot,w*\lambda)$ for some $w$.

The local systems whose cohomology is used to define $I_M^\dagger(\cdot,\lambda)$ have an integral structure and therefore have integral $\mc{U}_p$-eigenvalues. Thus, if the slope of $\sigma$ is noncritical with respect to $\lambda$, then the multiplicity of $\sigma$ in $I_M^{\cl}(\cdot,\lambda)$ is the same as that in $I_M^{\dagger}(\cdot,\lambda)$.

A similar fact to this is true for cuspidal distributions. Let $H_{\cusp}^i(X_M,E_\lambda^\vee(\Q_p))$ be the cuspidal subspace of $H^i(X_M,E_\lambda^\vee(\Q_p))$; over $\C$, this is the summand of the corresponding $(\mf{m}_0,K_{\infty,M})$-cohomology space coming from the space $\mc{A}_{E_\lambda^\vee,[M]}(M)$ of cusp forms in $\mc{A}_{E_\lambda^\vee}(M)$. Then we define, for $f\in\mc{H}_{p,M}$,
\[I_{M,0}^{\cl}(f,\lambda)=\vert\lambda(t)\vert^{-1}\sum_{i\geq 0}(-1)^i\tr(f|H_{\cusp}^i(X_M,E_\lambda^\vee(\Q_p))).\]
If $\lambda$ is moreover regular as well as dominant, then for any noncritical $\sigma$, the multiplicity of $\sigma$ in $I_{M,0}^{\dagger}(\cdot,\lambda)$ is the same as that in $I_{M,0}^{\cl}(\cdot,\lambda)$; this is \cite[Corollary 4.6.5]{urbanev}. We will use this fact in what follows.

Let us give a name to all the relevant multiplicities. Let, respectively,
\[m_M^{\cl}(\sigma,\lambda),\qquad m_{M,0}^{\cl}(\sigma,\lambda),\qquad m_M^\dagger(\sigma,\lambda),\qquad m_{M,0}^\dagger(\sigma,\lambda)\]
be the multiplicities of an irreducible admissible $\mc{H}_{p,M}$-representation $\sigma$ in
\[I_M^{\cl}(\cdot,\lambda),\qquad I_{M,0}^{\cl}(\cdot,\lambda),\qquad I_M^\dagger(\cdot,\lambda),\qquad I_{M,0}^\dagger(\cdot,\lambda).\]
Thus if $m_{M,0}^\dagger(\sigma,\lambda)\ne 0$, then $\sigma$ has a $p$-adic deformation in a generically cuspidal family of $p$-stabilizations of cohomological automorphic representations for $M$ by \cite{urbanev}.

\subsection{The $\mc{H}_p$-representation $\Pi_F^{(p)}$}
\label{subsecpif}

Let us now introduce the $\mc{H}_p$-module which we would like to $p$-adically deform. Let $F$ be a cuspidal eigenform with trivial nebentypus and weight $k\geq 4$ and level $N$, and assume $p\nmid N$. Let $\pi_F$ be the unitary cuspidal automorphic representation attached to $F$, viewed as an automorphic representation of $M_\alpha(\A)$. We also assume
\[\epsilon(1/2,\pi_F,\Sym^3)=-1.\]
In particular we have $L(1/2,\pi_F,\Sym^3)=0$.

Now since $\pi_F$ is unramified at $p$, there is an unramified unitary character $\chi:\Q_p^\times\to\C^\times\cong\overline\Q_p^\times$ such that
\[\pi_{F,p}\cong\iota_{(B\cap M_\alpha)(\Q_p)}^{M_\alpha(\Q_p)}(\chi\circ\alpha).\]
Assume $F$ is normalized and let $a_p$ be the $p$th Fourier coefficient of $F$, so that
\[X^2-a_pX+p^{k-1}\]
is the Hecke polynomial of $F$ at $p$. We choose a root $\alpha_p$ of this polynomial. All our future considerations will depend on this choice, but we note that either choice of $\alpha_p$ will work until the proof of the main theorem at the end, where we will use a trick that shows that if the constructions we make based on one choice of $\alpha_p$ do not work for that proof, then the same constructions based on the other choice will work instead.

In any case, the character $\chi$ is then determined by
\[\chi(p)=p^{-(k-1)/2}\alpha_p.\]
(Note that the other choice of root would give the character $\chi^{-1}$, but
\[\iota_{(B\cap M_\alpha)(\Q_p)}^{M_\alpha(\Q_p)}(\chi\circ\alpha)\qquad\textrm{and}\qquad\iota_{(B\cap M_\alpha)(\Q_p)}^{M_\alpha(\Q_p)}(\chi^{-1}\circ\alpha)\]
are isomorphic.)

Now let $\mc{L}_{\alpha}(\pi_F,1/10)$ be the Langlands quotient of the induced representation
\[\iota_{M_\alpha(\A)}^{G_2(\A)}(\pi_F\otimes\delta_{P_\alpha(\A)}^{1/10}),\]
and let $\mc{L}_{\alpha}(\pi_F,1/10)_f$ be its finite part. Then the component $\mc{L}_{\alpha}(\pi_F,1/10)_p$ at $p$ is the Langlands quotient of
\[\iota_{B(\Q_p)}^{G_2(\Q_p)}((\chi\circ\alpha)\cdot\vert\det_\alpha\vert^{1/2}),\]
and therefore is isomorphic to
\[\iota_{P_\beta(\Q_p)}^{G_2(\Q_p)}(\chi\circ\det_\beta),\]
by Proposition \ref{proplqofPi}. Then by Proposition \ref{proppstabnofPi}, the representation $\mc{L}_{\alpha}(\pi_F,1/10)_f$ has a $p$-stabilization, which we call $\widetilde{\Pi}_F^{(p)}$, whose $\mc{U}_p$-eigenvalues are given by the character
\[u_t\mapsto\chi(\det_\beta(t))\vert(3\alpha+\tfrac{11}{2}\beta)(t)\vert_p^{-1},\qquad t\in T^-.\]

From now on, we let
\[\lambda_0=\frac{k-4}{2}(2\alpha-3\beta).\]
Let $\Pi_F^{(p)}$ be the twist of $\widetilde{\Pi}_F^{(p)}$ by the character of $\mc{U}_p$ given by
\[u_t\mapsto\vert\lambda_0(t)\vert^{-1},\qquad t\in T^-.\]
This is the $\mc{H}_p$-representation which we would like to $p$-adically deform.

Let $s_p=v_p(\alpha_p)$, which, in the classical sense, is the slope of the $p$-stabilization of $F$ corresponding to $\alpha_p$. By its construction, the representation $\Pi_F^{(p)}$ has slope
\[v_p(\chi(p))(2\alpha+3\beta)+(3\alpha+\tfrac{11}{2}\beta)+\lambda_0=s_p(2\alpha+3\beta)+\beta.\]
We record the following fact for later use.

\begin{proposition}
\label{propcritslope}
The slope $s_p(2\alpha+3\beta)+\beta$ is critical with respect to $\lambda_0$. More precisely, we have
\[s_p(2\alpha+3\beta)+\beta-\lambda_0+w_\beta*\lambda_0=s_p(2\alpha+3\beta),\]
which is a nonnegative rational combination of positive roots. Furthermore, if $k>4s_p+4$, then for any $w\in W$, $w\ne w_\beta,1$, the weight
\[s_p(2\alpha+3\beta)+\beta-\lambda_0+w*\lambda_0\]
is not a nonnegative rational combination of positive roots.
\end{proposition}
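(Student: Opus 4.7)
The plan is a direct computation in the root lattice of $G_2$, relying on one structural observation: $2\alpha+3\beta$ is the fundamental weight dual to $\alpha^\vee$, since $\langle 2\alpha+3\beta,\alpha^\vee\rangle=1$ and $\langle 2\alpha+3\beta,\beta^\vee\rangle=0$. Its stabilizer in $W$ is therefore $\{1,w_\beta\}$, and its $W$-orbit consists of the six long roots $\pm\alpha$, $\pm(\alpha+3\beta)$, $\pm(2\alpha+3\beta)$.

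For the first identity I would compute $\rho=3\alpha+5\beta$ (the sum of the two fundamental weights) and deduce from the reflection formula that $w_\beta(\rho)=\rho-\beta$. Since $\lambda_0$ is a scalar multiple of the $w_\beta$-fixed vector $2\alpha+3\beta$, we have $w_\beta(\lambda_0)=\lambda_0$, and thus $w_\beta*\lambda_0=\lambda_0-\beta$. Substituting into the left-hand side gives $s_p(2\alpha+3\beta)$ immediately. This is a nonnegative rational combination of positive roots because $s_p=v_p(\alpha_p)\ge 0$; indeed $\alpha_p$ is an algebraic integer, being a root of the Hecke polynomial of $F$ at $p$.

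For the second part, fix $w\in W\setminus\{1,w_\beta\}$ and expand
\[
s_p(2\alpha+3\beta)+\beta-\lambda_0+w*\lambda_0=s_p(2\alpha+3\beta)+\beta+\tfrac{k-4}{2}\bigl(w(2\alpha+3\beta)-(2\alpha+3\beta)\bigr)+\bigl(w(\rho)-\rho\bigr).
\]
I would then track the coefficient of $\alpha$ when the right-hand side is written in the simple-root basis $\{\alpha,\beta\}$. The first two summands contribute $2s_p$. Since $\rho$ is dominant, $\rho-w(\rho)$ is a nonnegative combination of positive roots for every $w\in W$, so the last summand contributes $\le 0$ to the $\alpha$-coefficient. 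For the middle summand, the hypothesis $w\notin\{1,w_\beta\}$ forces $w(2\alpha+3\beta)$ to be one of the five long roots distinct from $2\alpha+3\beta$; the five possible differences $w(2\alpha+3\beta)-(2\alpha+3\beta)$ are $-\alpha$, $-\alpha-3\beta$, $-3\alpha-3\beta$, $-3\alpha-6\beta$, $-4\alpha-6\beta$, each having $\alpha$-coefficient at most $-1$. Hence the total $\alpha$-coefficient is bounded above by $2s_p-(k-4)/2$, which is strictly negative under the hypothesis $k>4s_p+4$, and the expression therefore cannot be a nonnegative combination of positive roots. The computation is essentially bookkeeping with the root system, so I do not expect a real obstacle; the one simplifying move is to identify the $W$-orbit of $2\alpha+3\beta$ with the set of long roots, which reduces what would otherwise have been a case analysis over ten cosets to inspection of only five possibilities.
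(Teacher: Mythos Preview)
Your proof is correct and follows essentially the same approach as the paper: both expand the expression as $s_p(2\alpha+3\beta)+\beta+(w\rho-\rho)+\tfrac{k-4}{2}(w(2\alpha+3\beta)-(2\alpha+3\beta))$ and argue that for $w\ne 1,w_\beta$ the $\alpha$-coefficient is negative, using that the stabilizer of $2\alpha+3\beta$ in $W$ is $\{1,w_\beta\}$ and that $w\rho-\rho$ has nonpositive $\alpha$-coefficient. Your explicit enumeration of the five possible differences $w(2\alpha+3\beta)-(2\alpha+3\beta)$ is more detailed than the paper's version, which simply asserts the $\alpha$-component is negative, but the underlying argument is identical.
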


\begin{proof}
We only need to prove the last statement. First note that
\begin{equation}
\label{eqwslope}
s_p(2\alpha+3\beta)+\beta-\lambda_0+w*\lambda_0=s_p(2\alpha+3\beta)+\beta+(w\rho-\rho)+\frac{k-4}{2}(w(2\alpha+3\beta)-(2\alpha+3\beta)).
\end{equation}
We note that the only elements $w\in W$ which fix $2\alpha+3\beta$ are $w=1,w_\beta$. Thus for $w\ne 1,w_\beta$, we have that the $\alpha$-component of
\[w(2\alpha+3\beta)-(2\alpha+3\beta)\]
in the basis $\alpha,\beta$, is negative. The $\alpha$-component of $w\rho-\rho$ is also nonpositive for $w\ne 1$. Since we assumed $k>4s_p+4$, it follows that \eqref{eqwslope} has negative $\alpha$-component for $w\ne 1,w_\beta$.
\end{proof}

\subsection{The overconvergent multiplicity}
\label{subsecocmult}

We would like to show now that $m_{G_2,0}^\dagger(\Pi_F^{(p)},\lambda_0)$ is nonzero. We will be able to do this under Conjecture \ref{conjmult} (b), which we show in the appendix is a consequence of general conjectures of Arthur. (We note that we do not actually need the conclusion given by part (a) of this conjecture for the main body of this paper, although this is a consequence of the same conjectures of Arthur.) The strategy will be to decompose the multiplicity $m_{G_2,0}^\dagger(\Pi_F^{(p)},\lambda_0)$ into a sum of several multiplicities, each weighted with different signs. We will be able to compute that most of these signed multiplicities are zero, at least if the weight $k$ of $F$ is sufficiently large with respect to the slope $s_p$, and the remaining signed multiplicities will be shown to be nonnegative, with at least one positive. We will then be able to deduce the positivity of $m_{G_2,0}^\dagger(\Pi_F^{(p)},\lambda_0)$ no matter the slope $s_p$ by deforming $F$ in a Coleman family.

We use freely the notations from the last two subsections. As a first step, the following proposition computes the classical multiplicity of $\Pi_F^{(p)}$ assuming Conjecture \ref{conjmult} (b), using the results of Section \ref{seceiscoh}.

\begin{proposition}
\label{propclmult}
Assume Conjecture \ref{conjmult} (b). Then we have
\[m_{G_2}^{\cl}(\Pi_F^{(p)},\lambda_0)\geq 2.\]
\end{proposition}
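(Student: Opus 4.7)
The plan is to compute $m_{G_2}^{\cl}(\Pi_F^{(p)},\lambda_0)$ by decomposing $H^*(X_{G_2},E_{\lambda_0}^\vee(\Q_p))$ via Franke--Schwermer into its cuspidal summand and its Eisenstein summands, and to exhibit two independent contributions to the multiplicity, each of magnitude $\geq 1$, both occurring in degree $4$ and therefore adding without cancellation in the alternating sum that defines $I_{G_2}^{\cl}(\cdot,\lambda_0)$.

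The first contribution will be cuspidal. Under the standing hypothesis $\epsilon(1/2,\pi_F,\Sym^3)=-1$, Conjecture \ref{conjmult}(b) provides a unique cuspidal automorphic representation $\Pi$ of $G_2(\A)$ with $\Pi_f\cong\mc{L}_\alpha(\pi_F,1/10)_f$ and with archimedean component the quaternionic discrete series of weight $k/2$. Since $\Pi_\infty$ is a discrete series, its $(\mf{g}_2,K_\infty)$-cohomology with coefficients in $E_{\lambda_0}^\vee$ is concentrated in the middle degree $4=\tfrac{1}{2}\dim(G_2(\R)/K_\infty)$ and is one-dimensional there. Consequently $\mc{L}_\alpha(\pi_F,1/10)_f$ appears as a summand of $H_{\cusp}^4(X_{G_2},E_{\lambda_0}^\vee(\Q_p))$ with multiplicity one. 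Passing to Iwahori-fixed vectors at $p$ and extracting the $\mc{U}_p$-eigencharacter corresponding to $\Pi_F^{(p)}$ yields a contribution of $+1$, because by Proposition \ref{proppstabnofPi} this eigencharacter does occur in the Iwahori-fixed vectors of $\mc{L}_\alpha(\pi_F,1/10)_p\cong\iota_{P_\beta(\Q_p)}^{G_2(\Q_p)}(\chi\circ\det_\beta)$.

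The second contribution will be Eisenstein. By Theorem \ref{thmlqineiscoh}, the unique direct summand of $H_{\Eis}^*(\mf{g}_2,K_\infty;E_{\lambda_0})$ that is near-equivalent to $\mc{L}_\alpha(\pi_F,1/10)_f$ is a copy of $\iota_{P_\alpha(\A_f)}^{G_2(\A_f)}(\pi_{F,f},1/10)$ and sits in degree $4$. Its $p$-component is the reducible principal series $\iota_{B(\Q_p)}^{G_2(\Q_p)}((\chi\circ\alpha)\vert\det_\alpha\vert^{1/2})$ of Proposition \ref{proplqofPi}, which has $\mc{L}_\alpha(\pi_F,1/10)_p$ as its Langlands quotient. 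The $\mc{U}_p$-eigencharacter of $\Pi_F^{(p)}$ factors through this quotient, so again by Proposition \ref{proppstabnofPi} it appears in the Iwahori-fixed vectors of the full principal series; this gives a further $+1$ in degree $4$.

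Adding the two contributions gives $m_{G_2}^{\cl}(\Pi_F^{(p)},\lambda_0)\geq 1+1=2$. The main point to guard against is hidden cancellation: one must verify that $\Pi_F^{(p)}$ does not also occur in any other Franke--Schwermer piece in a degree of opposite parity. For the Eisenstein spectrum this is precisely the content of the uniqueness clause in Theorem \ref{thmlqineiscoh}, and for the cuspidal spectrum it follows from the uniqueness clause in Conjecture \ref{conjmult}(b). Since both surviving contributions live in the same middle degree, the alternating sum is bounded below by $2$, as required.
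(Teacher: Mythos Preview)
Your proof is correct and follows essentially the same approach as the paper: split $m_{G_2}^{\cl}(\Pi_F^{(p)},\lambda_0)$ into its cuspidal part $m_{G_2,0}^{\cl}(\Pi_F^{(p)},\lambda_0)$ and its Eisenstein part, show each is $\geq 1$ using Conjecture \ref{conjmult}(b) and Theorem \ref{thmlqineiscoh} respectively, and note that both contributions live in degree $4$ so there is no cancellation in the alternating sum. The paper's version is terser but relies on exactly the same two inputs; your added remark that the uniqueness clauses in Theorem \ref{thmlqineiscoh} and Conjecture \ref{conjmult}(b) (with $v=p$) prevent stray contributions in other degrees is implicit in the paper's argument.
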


\begin{proof}
Consider the Eisenstein multiplicity
\[m_{G_2}^{\cl}(\Pi_F^{(p)},\lambda_0)-m_{G_2,0}^{\cl}(\Pi_F^{(p)},\lambda_0).\]
After unraveling the definitions and passing through normalizations, we see that, by Theorem \ref{thmlqineiscoh}, this number is the number of $p$-stabilizations of
\[\mc{L}_\alpha(\pi_F,1/10)_f^p\otimes\iota_{P_\alpha(\Q_p)}^{G_2(\Q_p)}(\pi_{F,p},1/10)\]
whose components at $p$ are given by the $p$-component of the $\mc{H}_p$-representation $\widetilde{\Pi}_F^{(p)}$ defined above. Since there is at least one such $p$-stabilization, namely $\widetilde{\Pi}_F^{(p)}$ itself, this number is at least $1$.

Thus we must show $m_{G_2,0}^{\cl}(\Pi_F^{(p)},\lambda_0)\geq 1$. But this follows from Conjecture \ref{conjmult} (b), applied with $v=p$, since the discrete series representation there is cohomological of weight $\lambda_0$.
\end{proof}

By Proposition \ref{propcritslope}, Formula \eqref{eqocandcl}, and the integrality of eigenvalues of operators in $\mc{U}_p$, we have
\begin{equation}
\label{eqcuspmult0}
m_{G_2}^{\cl}(\Pi_F^{(p)},\lambda_0)=m_{G_2}^{\dagger}(\Pi_F^{(p)},\lambda_0)-m_{G_2}^{\dagger}(\Pi_F^{(p),w_\beta,\lambda_0},w_\beta*\lambda_0),\qquad\textrm{if }k>4s_p+4.
\end{equation}
(Here the notation $\Pi_F^{(p),w_\beta,\lambda_0}$ is as in \eqref{eqsigmawlambda}.) For $P\subset G_2$ a proper parabolic subgroup with Levi $M$, $w\in W_{\Eis}^M$, and $\lambda\in\mf{X}(\overline{\Q}_p)$, let us write $m_{G_2,M,w}^\dagger(\sigma,\lambda)$ for the multiplicity of an irreducible admissible $\mc{H}_p$-representation $\sigma$ in the character distribution
\[f\mapsto I_{M,0}^\dagger(f_{M,w},w*\lambda+2\rho_P)\]
Then by \eqref{eqoccuspdef}, we have
\begin{equation}
\label{eqcuspmult1}
m_{G_2}^\dagger(\Pi_F^{(p)},\lambda_0)=m_{G_2,0}^\dagger(\Pi_F^{(p)},\lambda_0)+\sum_{P\in\{P_\alpha,P_\beta,B\}}\sum_{w\in W_{\Eis}^{M_P}}(-1)^{\dim(N_P)-\ell(w)}m_{G_2,M_P,w}^\dagger(\Pi_F^{(p)},\lambda_0),
\end{equation}
and similarly
\begin{multline}
\label{eqcuspmult2}
m_{G_2}^\dagger(\Pi_F^{(p),w_\beta,\lambda_0},\lambda_0-\beta)=m_{G_2,0}^\dagger(\Pi_F^{(p),w_\beta,\lambda_0},\lambda_0-\beta)\\
+\sum_{P\in\{P_\alpha,P_\beta,B\}}\sum_{w\in W_{\Eis}^{M_P}}(-1)^{\dim(N_P)-\ell(w)}m_{G_2,M_P,w}^\dagger(\Pi_F^{(p),w_\beta,\lambda_0},\lambda_0-\beta).
\end{multline}
Note here that
\[w_\beta*\lambda_0=\lambda_0-\beta.\]

We must now compute the cuspidal multiplicities appearing in the formulas above. The following two lemmas will aid us in this endeavor.

\begin{lemma}
\label{lemmultslope}
Let $\gamma\in\{\alpha,\beta\}$, and let $\lambda\in X^*(T)$. Let $w\in W^{M_\gamma}$. Then any constituent of the character distribution
\begin{equation}
\label{eqlmsdist}
f\mapsto I_{M_\gamma,0}^\dagger(f_{M_\gamma,w},\lambda)
\end{equation}
has slope a rational multiple of the root $w^{-1}\gamma$.
\end{lemma}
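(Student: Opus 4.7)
The plan proceeds in three steps. First I would observe that the assignment $f \mapsto f_{M_\gamma,w}$ defines a $\Q_p$-algebra homomorphism $\phi_w \colon \mc{H}_p \to \mc{H}_{p,M_\gamma}$: on the away-from-$p$ component, $f^p \mapsto f_{M_\gamma}^p$ is the standard constant-term homomorphism, while on the $\mc{U}_p$ component, the map $u_t \mapsto u_{wtw^{-1},M_\gamma}$ is well-defined (since $w \in W^{M_\gamma}$ forces $wtw^{-1}$ to contract the unipotent radical of $B\cap M_\gamma$) and multiplicative by the identity $u_t u_{t'} = u_{tt'}$ together with the fact that $w$-conjugation is a group automorphism of $T$. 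Consequently, every irreducible $\mc{H}_p$-constituent of the distribution in \eqref{eqlmsdist} arises as a constituent of the pullback $\phi_w^* \sigma$ for some irreducible constituent $\sigma$ of $I^\dagger_{M_\gamma,0}(\cdot,\lambda)$.

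Second, I would transport the slope across this pullback. For a rational cocharacter $\lambda^\vee$ of $T$ with $\lambda^\vee(p) \in T^{--}$, the condition $w \in W^{M_\gamma}$ guarantees that $w\lambda^\vee$ is dominant for $M_\gamma$, so $(w\lambda^\vee)(p)$ lies in the $M_\gamma$-version of $T^{--}$ and the slope formula for $\sigma$ applies:
\[
v_p\bigl(\phi_w^*\sigma(u_{\lambda^\vee(p)})\bigr) = v_p\bigl(\sigma(u_{w\lambda^\vee(p)w^{-1},M_\gamma})\bigr) = \langle \mu_\sigma, w\lambda^\vee\rangle = \langle w^{-1}\mu_\sigma, \lambda^\vee\rangle,
\]
so the slope of $\phi_w^*\sigma$ is $w^{-1}\mu_\sigma$. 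The lemma therefore reduces to showing that the slope $\mu_\sigma \in X^*(T)\otimes \Q$ of any constituent $\sigma$ of $I^\dagger_{M_\gamma,0}(\cdot,\lambda)$ itself lies in $\Q\cdot\gamma$.

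The main obstacle is this last reduction, and it uses the semisimple-rank-one structure of $M_\gamma \cong GL_2$. For classical $\sigma$, that is for $\sigma$ coming from a $p$-unramified cohomological cuspidal automorphic representation $\pi$ of $M_\gamma(\A)$ at weight $\lambda$, cohomologicality forces the central character of $\pi$ at $p$ to be determined by $\lambda|_{A_{M_\gamma}}$ up to a finite-order (in our setting, unramified) character; the normalization factor $|\lambda(t)|^{-1}$ built into the classical-overconvergent comparison \eqref{eqoccl} then exactly cancels this central-character contribution, leaving $\mu_\sigma$ supported on the axis $\Q\cdot \gamma$ transverse to the center $A_{M_\gamma}$. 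For general overconvergent, non-classical constituents, this property persists by Zariski density of classical noncritical points in the eigenvariety for $M_\gamma$ together with the continuity of the slope map in $p$-adic families. Combining with the second step yields $w^{-1}\mu_\sigma \in \Q\cdot w^{-1}\gamma$, as required.
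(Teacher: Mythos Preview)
Your proposal is correct and follows essentially the same strategy as the paper: both arguments rest on the fact that for $M_\gamma\cong GL_2$ the central direction is $\gamma^\perp$, so the weight-normalization $\vert\lambda(t)\vert^{-1}$ kills the central contribution to the slope for classical cuspidal constituents, and then Zariski density of noncritical classical points propagates this to all overconvergent constituents. The only difference is organizational: you first transport the slope through $w$ (obtaining $\mu_{\phi_w^*\sigma}=w^{-1}\mu_\sigma$) and then argue $\mu_\sigma\in\Q\gamma$, whereas the paper runs the central-character computation directly on the $w$-twisted distribution; the content is the same.
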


\begin{proof}
Let $\sigma$ be a constituent of $I_{M_\gamma,0}^\dagger(\cdot,\lambda)$. Then by the main results of \cite{urbanev}, there are weights $\lambda_n\in X^*(T)$ which are dominant regular for $M_\gamma$, converging $p$-adically to $\lambda$, and there are constituents $\sigma_n$ of $I_{M_\gamma,0}^\dagger(\cdot,\lambda_n)$ whose trace functions converge $p$-adically to the trace function of $\sigma$. Therefore, to prove the lemma, it suffices to assume $\lambda$ is sufficiently dominant so that the slope of $\sigma$ is noncritical with respect to $\lambda$.

Under this assumption, we then have that $\sigma$ is classical and comes from a cuspidal eigenform for $M_\gamma\cong GL_2$. Let $\gamma^\perp$ be the positive root orthogonal to $\gamma$, so $\gamma^\perp=\alpha+2\beta$ if $\gamma=\alpha$, or $\gamma^\perp=2\alpha+3\beta$ if $\gamma=\beta$. Then $\gamma^\perp$ acts on $T$ as $\det_\gamma$. Let $t\in T^{-}$ be given by $t=(\gamma^\perp)^\vee(p)$. Then $t$ is in the center of $M_\gamma(\Q_p)$. Write $\lambda=a\gamma+b\gamma^\perp$, for some $a$ and $b$. Then as $M_\gamma(\A_f)$-modules, we have
\begin{equation}
\label{eqlmshcusp}
H_{\cusp}^*(\widetilde{X}_{M_\gamma},E_{\lambda}^\vee(\Q_p))\cong H_{\cusp}^*(\widetilde{X}_{M_\gamma},E_{a\gamma}^\vee(\Q_p))\otimes\vert\det\vert^{b},
\end{equation}
and therefore $t$ acts on any constituent of this space by $p^{-2b}$.

Now let $f^p\in C_c^\infty(G_2(\A_f))$ and $t'\in T^-$, and let $f=f^p\otimes u_{t'}$. Then
\begin{align*}
\tr((fu_{w^{-1}tw})_{M_\gamma,w}|\sigma)&=\tr(f_{M_\gamma}^p\otimes(u_{t,M}\cdot u_{wt'w^{-1},M})|\sigma)\\
&=p^{-2b}\vert\lambda(t)\vert^{-1}\tr(f_{M_\gamma}^p\otimes u_{wt'w^{-1},M}|\sigma)\\
&=\tr(f_{M_\gamma,w}|\sigma),
\end{align*}
where the second equality follows from \eqref{eqlmshcusp} and the definition of $I_{M_\gamma,0}^{\cl}$. This shows exactly that the slope of $\sigma$ is orthogonal to $w\gamma^\perp$, which proves the lemma.
\end{proof}

\begin{lemma}
\label{lemmultclass}
Let $M\in\{M_\beta,T\}$, $w\in W^M$, and $\lambda\in X^*(T)$. Assume that if $M=M_\beta$, then $\lambda$ is $M_\beta$-dominant. Then the multiplicities of $\Pi_F^{(p)}$ and $\Pi_F^{(p),w_\beta,\lambda_0}$ in any of the character distributions
\begin{equation}
\label{eqlmcdist}
f\mapsto I_{M,0}^{\cl}(f_{M,w},\lambda),\qquad f\mapsto I_M^{\cl}(f_{M,w},\lambda),\qquad f\in\mc{H}_p,
\end{equation}
is zero.
\end{lemma}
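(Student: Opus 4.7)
The plan is to proceed by contradiction: suppose $\Pi_F^{(p)}$ (or its twist $\Pi_F^{(p),w_\beta,\lambda_0}$, which shares the same $\mc{H}_p^p$-structure) appears with nonzero multiplicity in one of the four distributions. I would first unwind the classical description \eqref{eqocandcl}: the cohomology $H^*(X_M, E_\lambda^\vee(\Q_p))$ (or its cuspidal part) decomposes into Hecke-eigensystems $\sigma_M$ attached to automorphic representations $\pi_M$ of $M(\A)$, so each distribution becomes a finite signed sum of trace distributions $f \mapsto \tr(f_{M,w}|\sigma_M)$. By the definition of $f_M^p$ and the trace relation $\tr(f_M^p|\sigma^p) = \tr(f^p|\Ind_{M(\A_f^p)}^{G_2(\A_f^p)}(\sigma^p))$ of Section \ref{subsecev},
\[\tr((f^p \otimes u_t)_{M,w}|\sigma_M) = \tr(f^p|\Ind_{M(\A_f^p)}^{G_2(\A_f^p)}(\sigma_M^p)) \cdot \tr(u_{wtw^{-1},M}|\sigma_{M,p}^{I_M}),\]
so at primes $\ell \ne p$ the character of the distribution recovers the character of the parabolic induction from $P$ to $G_2$, with $P = P_\beta$ if $M = M_\beta$ and $P = B$ if $M = T$; the choice of $w$ only affects the $\mc{U}_p$-part.

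If $\Pi_F^{(p)}$ is a constituent of this sum, then comparing Satake parameters at almost all primes $\ell \ne p$ would force $\mc{L}_\alpha(\pi_F,1/10)$ (which supplies the $\mc{H}_p^p$-structure of $\Pi_F^{(p)}$, unaffected by the twists that distinguish it from $\Pi_F^{(p),w_\beta,\lambda_0}$) to be nearly equivalent to a subquotient of $\iota_{P(\A)}^{G_2(\A)}(\pi_M)$ for some global automorphic $\pi_M$ underlying $\sigma_M$. To apply Proposition \ref{propdistneareq}, I would check that $\pi_M$ is of the required form. When $M = M_\beta$ and $\pi_M$ is cuspidal (the only case for $I_{M_\beta,0}^{\cl}$), $\pi_M$ is a cohomological cuspidal representation of $GL_2(\A)$, hence arises from a holomorphic cusp form of weight $\geq 2$ and is (after central normalization) unitary and tempered by the Ramanujan bound for $GL_2$. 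The Eisenstein cohomology of $X_{M_\beta}$, entering only in $I_{M_\beta}^{\cl}$, is built from characters of $T(\A)$ via $(B \cap M_\beta)$-induction on $GL_2$; by induction in stages these produce $B$-inductions on $G_2$. For $M = T$, $\pi_M$ is already a character of $T(\A)$, again giving $B$-inductions in $G_2$.

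In every case, the conclusion would contradict Proposition \ref{propdistneareq}, which prohibits near equivalence of $\mc{L}_\alpha(\pi_F,1/10)$ with any subquotient of $\iota_{P_\beta(\A)}^{G_2(\A)}(\pi_\beta, s_\beta)$ for unitary tempered cuspidal $\pi_\beta$, or of $\iota_{B(\A)}^{G_2(\A)}(\psi)$ for any quasicharacter $\psi$. The main care in executing this plan lies in correctly identifying the Eisenstein part of $H^*(X_{M_\beta}, E_\lambda^\vee(\Q_p))$ and tracking its passage to $G_2$ via induction in stages so that Proposition \ref{propdistneareq} applies uniformly; no serious obstacle is anticipated.
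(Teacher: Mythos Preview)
Your proposal is correct and follows essentially the same approach as the paper: identify the constituents of the distributions as arising (away from $p$) from parabolic inductions $\Ind_{P_\beta(\A_f^p)}^{G_2(\A_f^p)}(\pi)$ with $\pi$ cuspidal cohomological on $GL_2$, or from $\Ind_{B(\A_f^p)}^{G_2(\A_f^p)}(\psi)$ with $\psi$ a quasicharacter of $T$, and then invoke Proposition~\ref{propdistneareq}. Your treatment of the Eisenstein part of $H^*(X_{M_\beta},E_\lambda^\vee)$ via induction in stages is exactly right and matches how the paper's terse argument implicitly handles the distinction between $I_{M_\beta,0}^{\cl}$ and $I_{M_\beta}^{\cl}$.
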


\begin{proof}
By definition, any constituent of one of the distributions in \eqref{eqlmcdist} is of the form
\[f^p\otimes u_t\mapsto\theta_p(u_t)\tr(f^p|\sigma^p),\qquad f^p\otimes u_t\in\mc{H}_p,\]
where $\theta_p$ is a character of $\mc{U}_p$, and where $\sigma^p$ is a constituent of either
\[\Ind_{P_\beta(\A_f^p)}^{G_2(\A_f^p)}(\pi)\qquad\textrm{or}\qquad\Ind_{B(\A_f^p)}^{G_2(\A_f^p)}(\psi)\]
with $\pi$ some cuspidal, cohomological automorphic representation of $GL_2(\A)$ or $\psi$ some quasicharacter of $T(\Q)\backslash T(\A)$. By Proposition \ref{propdistneareq}, such a constituent must therefore be different from $\Pi_F^{(p)}$.
\end{proof}

We now start computing the various cuspidal multiplicities from the right hand sides of \eqref{eqcuspmult1} and \eqref{eqcuspmult2}. The $T$-multiplicities are immediately handled by the previous lemma as follows.

\begin{lemma}
\label{lemTmults}
We have
\[m_{G_2,T,1}^\dagger(\Pi_F^{(p)},\lambda_0)=m_{G_2,T,1}^\dagger(\Pi_F^{(p),w_\beta,\lambda_0},\lambda_0-\beta)=0.\]
\end{lemma}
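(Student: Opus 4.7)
The plan is to reduce both overconvergent multiplicities to classical multiplicities, and then invoke Lemma \ref{lemmultclass} directly. The key observation is that for the torus $M = T$, the Weyl group $W_T$ is trivial, so the interpolation formula \eqref{eqocandcl} collapses to the single term $I_T^{\cl}(f,\lambda) = I_T^\dagger(f,\lambda)$, with no dominance condition required since $T$ has no positive roots in itself. Combined with the definition $I_{T,0}^\dagger = I_T^\dagger$ from Section \ref{subsecchardistg2}, this identifies the overconvergent torus distribution with a purely classical object at every integral weight.

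The concrete steps I would carry out are as follows. First, unwind the definition of $m_{G_2,T,1}^\dagger$: it is the multiplicity of the given $\mc{H}_p$-representation in the distribution $f \mapsto I_{T,0}^\dagger(f_{T,1}, w*\lambda + 2\rho_B)$ where $w = 1$, hence the relevant parameter is $\lambda_0 + 2\rho_B$ in the first case and $(w_\beta * \lambda_0) + 2\rho_B = \lambda_0 - \beta + 2\rho_B$ in the second. Second, verify that both of these parameters lie in $X^*(T)$; this is immediate because $\lambda_0 = \tfrac{k-4}{2}(2\alpha+3\beta)$ is integral for $k$ even, and $2\rho_B$ and $\beta$ are roots. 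Third, apply the torus case of \eqref{eqocandcl} to identify
\[
m_{G_2,T,1}^\dagger(\Pi_F^{(p)},\lambda_0) = \text{(multiplicity of $\Pi_F^{(p)}$ in $f \mapsto I_T^{\cl}(f_{T,1},\lambda_0+2\rho_B)$)},
\]
and analogously for the $\Pi_F^{(p),w_\beta,\lambda_0}$ version. Fourth, invoke Lemma \ref{lemmultclass} with $M = T$ and $w = 1$ (the only element of $W^T$), which asserts precisely that both of these classical multiplicities vanish.

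There is essentially no obstacle here: the whole argument is a matter of unpacking the definitions, and the nontrivial content has already been packaged into Lemma \ref{lemmultclass}, whose proof ultimately rests on Proposition \ref{propdistneareq} (which forbids $\Pi_F^{(p)}$ from being nearly equivalent to constituents of Borel-induced representations). The only thing worth double-checking is that the torus case of the classicity formula really does hold for all $\lambda \in X^*(T)$, not just for sufficiently dominant ones; this is visible from \eqref{eqocandcl} because the sum over $W_T = \{1\}$ reduces to a single term and no nontrivial twist $f^{w,\lambda}$ appears.
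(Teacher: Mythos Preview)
Your proposal is correct and follows essentially the same approach as the paper's proof: reduce $I_{T,0}^\dagger$ to $I_T^\dagger$ by definition, identify $I_T^\dagger(\cdot,\lambda)=I_T^{\cl}(\cdot,\lambda)$ for $\lambda\in X^*(T)$ via the trivial Weyl group $W_T$, and then apply Lemma~\ref{lemmultclass}. One small slip: your parenthetical ``the only element of $W^T$'' is false, since $W^T=W$; what you mean is that $W_{\Eis}^T=\{1\}$, so $w=1$ is the only element appearing in the Eisenstein sum, but Lemma~\ref{lemmultclass} applies for any $w\in W^M$ anyway, so this does not affect the argument.
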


\begin{proof}
By definition, we have $I_{T,0}^\dagger=I_T^\dagger$, and for $\lambda\in X^*(T)$, we have $I_T^\dagger(\cdot,\lambda)=I_T^{\cl}(\cdot,\lambda)$. Therefore the lemma at hand follows from Lemma \ref{lemmultclass}.
\end{proof}

The next two lemmas examine the $M_\beta$-multiplicities.

\begin{lemma}
\label{lembetamults1}
We have
\[m_{G_2,M_\beta,1}^\dagger(\Pi_F^{(p)},\lambda_0)-m_{G_2,M_\beta,1}^\dagger(\Pi_F^{(p),w_\beta,\lambda_0},\lambda_0-\beta)=0.\]
\end{lemma}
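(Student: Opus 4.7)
The plan is to apply the classicity identity \eqref{eqoccl} for $M_\beta$ at the single weight $\mu_1:=\lambda_0+2\rho_{P_\beta}$, kill the resulting classical term using Lemma \ref{lemmultclass}, and then match what remains to the right-hand side of the lemma via an elementary manipulation of Hecke algebras.

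First I would verify the key computation $w_\beta*\mu_1=\lambda_0-\beta+2\rho_{P_\beta}=:\mu_2$. Using $w_\beta\alpha=\alpha+3\beta$, $w_\beta\beta=-\beta$, $\rho=3\alpha+5\beta$, and $2\rho_{P_\beta}=6\alpha+9\beta$, one finds $\mu_1+\rho=(k+5)\alpha+(\tfrac{3k}{2}+8)\beta$ and hence
\[
w_\beta*\mu_1=w_\beta(\mu_1+\rho)-\rho=(k+5)(\alpha+3\beta)-(\tfrac{3k}{2}+8)\beta-(3\alpha+5\beta)=(k+2)\alpha+(\tfrac{3k}{2}+2)\beta,
\]
which is exactly $\mu_2$. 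Both $\mu_1$ and $\mu_2$ are $M_\beta$-dominant and regular (one checks $\langle\mu_1,\beta^\vee\rangle=2k+4>0$ and $\langle\mu_2,\beta^\vee\rangle=2k+2>0$). Since $W_{M_\beta}=\{1,w_\beta\}$, the cuspidal classicity identity \eqref{eqoccl} at $\mu_1$ with $g=f_{M_\beta,1}$ reads
\[
I^{\cl}_{M_\beta,0}(f_{M_\beta,1},\mu_1)=I^\dagger_{M_\beta,0}(f_{M_\beta,1},\mu_1)-I^\dagger_{M_\beta,0}\big((f_{M_\beta,1})^{w_\beta,\mu_1},\mu_2\big).
\]
By Lemma \ref{lemmultclass} applied with $M=M_\beta$, $w=1$, and $\lambda=\mu_1$, the multiplicity of $\Pi_F^{(p)}$ in the left-hand side, viewed as a distribution on $\mc{H}_p$, is zero. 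Hence
\[
m^\dagger_{G_2,M_\beta,1}(\Pi_F^{(p)},\lambda_0)=m\Big(\Pi_F^{(p)};\;f\mapsto I^\dagger_{M_\beta,0}\big((f_{M_\beta,1})^{w_\beta,\mu_1},\mu_2\big)\Big).
\]

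Next I would observe the identity of Hecke-algebra maps $(f_{M_\beta,1})^{w_\beta,\mu_1}=(f^{w_\beta,\lambda_0})_{M_\beta,1}$ for every $f\in\mc{H}_p$. Indeed, on $f=f^p\otimes u_t$ both sides equal $|\beta(t)|^{-1}\,f^p_{M_\beta}\otimes u_{t,M_\beta}$, since both scalar factors $|(\mu_1-\mu_2)(t)|^{-1}$ and $|(\lambda_0-w_\beta*\lambda_0)(t)|^{-1}$ equal $|\beta(t)|^{-1}$. Consequently the distribution in the preceding display is $f\mapsto D(f^{w_\beta,\lambda_0})$, where I set $D(f):=I^\dagger_{M_\beta,0}(f_{M_\beta,1},\mu_2)$.

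Finally I would use \eqref{eqsigmawlambda} to transfer the twist from the test function to the representation: if $D$ decomposes as a sum of traces over distinct irreducible $\mc{H}_p$-constituents $\tau$ with multiplicities $m_\tau$, then $f\mapsto D(f^{w_\beta,\lambda_0})$ decomposes analogously over the representations $\tilde\tau$ whose $\mc{U}_p$-eigenvalues satisfy $\tilde\tau(u_t)=|\beta(t)|^{-1}\tau(u_t)$, and the bijection $\tau\leftrightarrow\tilde\tau$ has $\tilde\tau\cong\Pi_F^{(p)}$ if and only if $\tau\cong\Pi_F^{(p),w_\beta,\lambda_0}$, directly from the definition of $\Pi_F^{(p),w_\beta,\lambda_0}$. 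Thus the multiplicity of $\Pi_F^{(p)}$ in $f\mapsto D(f^{w_\beta,\lambda_0})$ equals $m^\dagger_{G_2,M_\beta,1}(\Pi_F^{(p),w_\beta,\lambda_0},\lambda_0-\beta)$, and combining with the previous displays yields the lemma. The only substantive input is the identity $w_\beta*\mu_1=\mu_2$ that makes the two sides of \eqref{eqoccl} land at the two weights of interest; everything else is formal.
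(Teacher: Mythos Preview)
Your approach is the same one the paper takes---apply classicity at the weight $\mu_1=\lambda_0+2\rho_{P_\beta}$, use $w_\beta*\mu_1=\mu_2$, match the Hecke twist $(\cdot)^{w_\beta,\mu_1}$ with $(\cdot)^{w_\beta,\lambda_0}$, and kill the classical term with Lemma~\ref{lemmultclass}---but there is a genuine gap in how you invoke \eqref{eqoccl}. That identity is stated for $I_{M_\beta}^{\cl}$ and $I_{M_\beta}^\dagger$, \emph{not} for the cuspidal distributions $I_{M_\beta,0}^{\cl}$ and $I_{M_\beta,0}^\dagger$; there is no ``cuspidal classicity identity \eqref{eqoccl}'' in the paper. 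Since by definition $I_{M_\beta,0}^\dagger(g,\lambda)=I_{M_\beta}^\dagger(g,\lambda)+I_T^\dagger(g_{T,1},\lambda+\beta)$, the difference you want to control decomposes as
\[
I_{M_\beta,0}^\dagger(g,\mu_1)-I_{M_\beta,0}^\dagger(g^{w_\beta,\mu_1},\mu_2)
=I_{M_\beta}^{\cl}(g,\mu_1)+I_T^\dagger(g_{T,1},\mu_1+\beta)-I_T^\dagger((g^{w_\beta,\mu_1})_{T,1},\mu_2+\beta),
\]
and you must separately argue that the two extra $I_T^\dagger=I_T^{\cl}$ terms carry no $\Pi_F^{(p)}$-multiplicity (again via Lemma~\ref{lemmultclass}, now with $M=T$). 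The paper's proof does exactly this unfolding and then kills the three resulting classical/$T$-type terms one by one; your write-up skips the $I_T$ correction entirely.

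There is also a computational slip in your dominance check. With $\alpha$ long one has $\langle\alpha,\beta^\vee\rangle=-3$ (since $w_\beta\alpha=\alpha+3\beta$), so $\langle 2\alpha+3\beta,\beta^\vee\rangle=0$; as both $\lambda_0$ and $2\rho_{P_\beta}$ are multiples of $2\alpha+3\beta$, one gets $\langle\mu_1,\beta^\vee\rangle=0$ and $\langle\mu_2,\beta^\vee\rangle=-2$, not $2k+4$ and $2k+2$. Fortunately this does not damage the argument: $\mu_1$ is still (weakly) $M_\beta$-dominant so \eqref{eqoccl} applies at $\mu_1$, and you never actually need $\mu_2$ to be dominant.
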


\begin{proof}
We have by definition,
\[I_{M_\beta,0}^\dagger(\cdot,\lambda_0-\beta+2\rho_{P_\beta})=I_{M_\beta}^\dagger(\cdot,\lambda_0-\beta+2\rho_{P_\beta})+I_{T}^\dagger(\cdot,\lambda_0+2\rho_{P_\beta}),\]
and by \eqref{eqoccl},
\[I_{M_\beta}^\dagger(\cdot,\lambda_0-\beta+2\rho_{P_\beta})=I_{M_\beta}^\dagger((\cdot)^{w_\beta,\lambda_0-\beta+2\rho_{P_\beta}},\lambda_0+2\rho_{P_\beta})+I_{M_\beta}^{\cl}((\cdot)^{w_\beta,\lambda_0-\beta+2\rho_{P_\beta}},\lambda_0+2\rho_{P_\beta}).\]
Here we have used that
\[w_\beta(\lambda_0-\beta+2\rho_{P_\beta}+\rho_{M_\beta})-\rho_{M_\beta}=\lambda_0+2\rho_{P_\beta},\]
and that $f\mapsto f^{w_\beta,\lambda_0-\beta+2\rho_{P_\beta}}$ is an involution. Since
\[\lambda_0-(w_\beta*\lambda_0)=\beta=\lambda_0-\beta+2\rho_{P_\beta}-
(w_\beta*(\lambda_0-\beta+2\rho_{P_\beta})),\]
we have
\[f^{w_\beta,\lambda_0}=f^{w_\beta,\lambda_0-\beta+2\rho_{P_\beta}}.\]
Thus,
\begin{align*}
I_{M_\beta,0}^\dagger(\cdot,\lambda_0-\beta+2\rho_{P_\beta})=&I_{T}^\dagger(\cdot,\lambda_0+2\rho_{P_\beta})+I_{M_\beta}^\dagger((\cdot)^{w_\beta,\lambda_0},\lambda_0+2\rho_{P_\beta})\\
&+I_{M_\beta}^{\cl}((\cdot)^{w_\beta,\lambda_0-\beta+2\rho_{P_\beta}},\lambda_0+2\rho_{P_\beta})\\
=&I_{T}^\dagger(\cdot,\lambda_0+2\rho_{P_\beta})+I_{M_\beta,0}^\dagger((\cdot)^{w_\beta,\lambda_0},\lambda_0+2\rho_{P_\beta})\\
&-I_{T}^\dagger((\cdot)^{w_\beta,\lambda_0},\lambda_0+2\rho)+I_{M_\beta}^{\cl}((\cdot)^{w_\beta,\lambda_0},\lambda_0-\beta+2\rho_{P_\beta})
\end{align*}
Therefore, we have
\begin{multline*}
m_{G_2,M_\beta,1}^\dagger(\Pi_F^{(p),w_\beta,\lambda_0},\lambda_0-\beta)-m_{G_2,M_\beta,1}^\dagger(\Pi_F^{(p)},\lambda_0)=\\
m_{G_2,T,1}^\dagger(\Pi_F^{(p)},\lambda_0-\beta)-m_{G_2,T,1}^\dagger(\Pi_F^{(p),w_\beta,\lambda_0},\lambda_0)-m_{G_2,M_\beta,1}^{\cl}(\Pi_F^{(p),w_\beta,\lambda_0},\lambda_0).
\end{multline*}
Since $I_{T}^\dagger(\cdot,\lambda_0)=I_{T}^{\cl}(\cdot,\lambda_0)$ for $\lambda\in X^*(T)$, all three multiplicities on the right hand side are zero by Lemma \ref{lemmultclass}. This proves the lemma.
\end{proof}

\begin{lemma}
\label{lembetamults2}
Let $w\in\{w_\alpha,w_{\alpha\beta}\}$. Then
\[m_{G_2,M_\beta,w}^\dagger(\Pi_F^{(p)},\lambda_0)=m_{G_2,M_\beta,w}^\dagger(\Pi_F^{(p),w_\beta,\lambda_0},\lambda_0-\beta)=0.\]
\end{lemma}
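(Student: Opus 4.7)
The main tool is Lemma \ref{lemmultslope}, applied with $\gamma=\beta$: any constituent of the character distribution $f\mapsto I_{M_\beta,0}^\dagger(f_{M_\beta,w},\mu)$ has slope a rational multiple of the root $w^{-1}\beta$. One computes $w_\alpha^{-1}\beta = w_\alpha\beta = \alpha+\beta$ and $w_{\alpha\beta}^{-1}\beta = w_{\beta\alpha}\beta = \alpha+2\beta$. By Section \ref{subsecpif}, the slope of $\Pi_F^{(p)}$ is $2s_p\alpha+(3s_p+1)\beta$; since $\lambda_0 - w_\beta*\lambda_0 = \beta$, the definition of $\sigma^{w,\lambda}$ in \eqref{eqsigmawlambda} yields that $\Pi_F^{(p),w_\beta,\lambda_0}$ has slope $2s_p\alpha+3s_p\beta$. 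Matching $\alpha$- and $\beta$-components, the first slope is a rational multiple of $\alpha+\beta$ only when $s_p=-1$ (impossible since $s_p\geq 0$) and of $\alpha+2\beta$ only when $s_p=1$; the second is a rational multiple of either root only when $s_p=0$. Thus in all non-degenerate cases, Lemma \ref{lemmultslope} alone forces the multiplicities to vanish.

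It remains to treat the degenerate values $s_p\in\{0,1\}$. In these cases I will argue using the Hecke eigensystem away from $p$. Via the defining property $\tr(f_{M_\beta}^p|\pi^p)=\tr(f^p|\Ind_{P_\beta(\A_f^p)}^{G_2(\A_f^p)}(\pi^p))$ of the constant-term map, every constituent of $f\mapsto I_{M_\beta,0}^\dagger(f_{M_\beta,w},\mu)$ arises from a (possibly overconvergent) cuspidal automorphic representation $\pi$ of $M_\beta(\A)\cong GL_2(\A)$ parabolically induced via the short root parabolic $P_\beta$. By contrast, by Theorem \ref{thmlqineiscoh} (and since $\Pi_F^{(p),w_\beta,\lambda_0}$ differs from $\Pi_F^{(p)}$ only in its $\mc{U}_p$-action), the away-from-$p$ part of either of our representations is a subquotient of $\iota_{P_\alpha(\A_f^p)}^{G_2(\A_f^p)}(\pi_{F,f}^p,1/10)$, coming from the long root parabolic $P_\alpha$. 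Proposition \ref{propdistneareq} then precludes near equivalence in the classical case; the overconvergent case follows by approximating the offending constituent by classical noncritical-slope points in the eigenvariety for $GL_2$, exactly as in the proof of Lemma \ref{lemmultslope}. The main obstacle will be making this density argument rigorous in the degenerate slope regime, but this is a standard technique given the density of classical points in Urban's eigenvariety.
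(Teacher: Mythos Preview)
Your reduction to the degenerate slopes $s_p\in\{0,1\}$ via Lemma~\ref{lemmultslope} is correct and matches the paper exactly. The divergence is in how you handle those degenerate cases.

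The paper's approach is direct: it computes the relevant weight $w*\lambda_0+2\rho_{P_\beta}$ explicitly and verifies that the slope of $\Pi_F^{(p)}$ (or its $w_\beta$-twist) is \emph{noncritical for $M_\beta$} with respect to that weight. For instance, when $s_p=1$ and $w=w_{\alpha\beta}$, one finds
\[
w_{\alpha\beta}*\lambda_0+2\rho_{P_\beta}=\tfrac{3k-4}{4}\beta+\tfrac{k+4}{4}(2\alpha+3\beta),
\]
and the $M_\beta$-noncriticality condition reduces to checking that $2(\alpha+2\beta)-\tfrac{3k-4}{2}\beta$ lies in the negative $M_\beta$-chamber, which holds since $k\geq 4$. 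Noncriticality then gives $m^\dagger=m^{\cl}$ immediately, and Lemma~\ref{lemmultclass} finishes. The $s_p=0$ cases are handled the same way.

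Your density argument, by contrast, has a genuine gap. Approximating the offending constituent by classical $\sigma_n$ (as in the proof of Lemma~\ref{lemmultslope}) tells you that the away-from-$p$ Hecke eigensystem of $\Pi_F^{(p)}$ is a $p$-adic limit of eigensystems arising from $P_\beta$-inductions of \emph{classical cuspidal} $GL_2$-forms $\pi_n$. But Proposition~\ref{propdistneareq} requires the $GL_2$-representation on the $P_\beta$ side to be unitary, tempered, and cuspidal---its proof uses the pole structure of global $L$-functions, which depends essentially on cuspidality. The $p$-adic limit of the $\pi_n$ is only an overconvergent eigensystem, not a priori a classical cusp form, so the proposition does not apply to it. Locally at each unramified place, every Satake parameter of $G_2$ factors through both $M_\alpha$ and $M_\beta$ (both contain $T$), so the $P_\alpha$-versus-$P_\beta$ distinction is genuinely global and is not obviously closed under $p$-adic limits. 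This is not the ``standard technique'' you describe; the paper sidesteps the issue entirely via the noncriticality computation, which is the missing idea in your proposal.
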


\begin{proof}
The slope of $\Pi_F^{(p)}$ is $s_p(2\alpha+3\beta)+\beta$ and that of $\Pi_F^{(p),w_\beta,\lambda_0}$ is $s_p(2\alpha+3\beta)$. Since $s_p\geq 0$, the slope $s_p(2\alpha+3\beta)+\beta$ can never be a rational multiple of $w_{\alpha}^{-1}\beta=\alpha+\beta$, nor could it be a rational multiple of $w_{\alpha\beta}^{-1}\beta=\alpha+2\beta$ unless $s_p=1$, in which case $s_p(2\alpha+3\beta)+\beta=2(\alpha+2\beta)$. Also, the slope $s_p(2\alpha+3\beta)$ can never be a rational multiple of a short root unless $s_p=0$. Therefore by Lemma \ref{lemmultslope}, we reduce to showing
\begin{equation}
\label{eqbetamultexcep1}
m_{G_2,M_\beta,w_{\alpha\beta}}^\dagger(\Pi_F^{(p)},\lambda_0)=0,\qquad\textrm{if }s_p=1,
\end{equation}
and
\begin{equation}
\label{eqbetamultexcep2}
m_{G_2,M_\beta,w_\alpha}^\dagger(\Pi_F^{(p),w_\beta,\lambda_0},\lambda_0-\beta)=m_{G_2,M_\beta,w_{\alpha\beta}}^\dagger(\Pi_F^{(p),w_\beta,\lambda_0},\lambda_0-\beta)=0\qquad\textrm{if }s_p=0.
\end{equation}

To show \eqref{eqbetamultexcep1}, we note that by definition, the multiplicity in question is the multiplicity of $\Pi_F^{(p)}$ in
\[f\mapsto I_{M_\beta,0}^\dagger(f_{M_\beta,w_{\alpha\beta}},w_{\alpha\beta}*\lambda_0+2\rho_{P_\beta}).\]
We have
\[w_{\alpha\beta}*\lambda_0+2\rho_{P_\beta}=\frac{k-4}{2}(\alpha+3\beta)-2\alpha-\beta+3(2\alpha+3\beta)=\frac{3k-4}{4}\beta+\frac{k+4}{4}(2\alpha+3\beta),\]
and so
\[2(\alpha+2\beta)-(w_{\alpha\beta}*\lambda_0+2\rho_{P_\beta})+w_\beta(w_{\alpha\beta}*\lambda_0+2\rho_\beta+\rho_{M_\beta})-\rho_{M_\beta}\\
=2\alpha+3\beta-\frac{3k-4}{2}\beta,\]
which is in the negative chamber for $M_\beta$. Thus the slope of $\Pi_F^{(p)}$ is noncritical (for $M_\beta$) with respect to $w_{\alpha\beta}*\lambda_0+2\rho_{P_\beta}$, which shows that the multiplicity in \eqref{eqbetamultexcep1} is the same as the multiplicity of $\Pi_F^{(p)}$ in
\[f\mapsto I_{M_\beta,0}^{\cl}(f_{M_\beta,w_{\alpha\beta}},w_{\alpha\beta}*\lambda_0+2\rho_{P_\beta}).\]
But this multiplicity is zero by Lemma \ref{lemmultclass}.

The multiplicities in \eqref{eqbetamultexcep2} are handled completely similarly, using noncriticality to reduce to a classical multiplicity and appealing to Lemma \ref{lemmultclass}. We omit the details.
\end{proof}

The next two lemmas examine the $M_\alpha$-multiplicities.

\begin{lemma}
\label{lemalphamults1}
We have
\[m_{G_2,M_\alpha,1}^\dagger(\Pi_F^{(p)},\lambda_0)=m_{G_2,M_\alpha,w_\beta}^\dagger(\Pi_F^{(p)},\lambda_0)=m_{G_2,M_\alpha,w_{\beta\alpha}}^\dagger(\Pi_F^{(p)},\lambda_0)=0,\]
Furthermore, if $s_p\ne 0$, then
\[m_{G_2,M_\alpha,1}^\dagger(\Pi_F^{(p),w_\beta,\lambda_0},\lambda_0-\beta)=m_{G_2,M_\alpha,w_\beta}^\dagger(\Pi_F^{(p),w_\beta,\lambda_0},\lambda_0-\beta)=0.\]
\end{lemma}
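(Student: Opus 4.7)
The plan is to follow the template of Lemmas \ref{lembetamults1} and \ref{lembetamults2}: first apply Lemma \ref{lemmultslope} to constrain slopes, and for the single surviving exceptional case reduce to a classical multiplicity by noncriticality and rule it out by near-equivalence.

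The slope of $\Pi_F^{(p)}$ is $2s_p\alpha+(3s_p+1)\beta$, while that of $\Pi_F^{(p),w_\beta,\lambda_0}$ is $2s_p\alpha+3s_p\beta$. By Lemma \ref{lemmultslope}, $m_{G_2,M_\alpha,w}^\dagger(\sigma,\cdot)$ can be nonzero only when the slope of $\sigma$ is a rational multiple of $w^{-1}\alpha$. As $w$ ranges over $W_{\Eis}^{M_\alpha}=\{1,w_\beta,w_{\beta\alpha}\}$, the directions $w^{-1}\alpha$ are $\alpha$, $w_\beta\alpha=\alpha+3\beta$, and $w_{\alpha\beta}\alpha=2\alpha+3\beta$. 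Testing the slope of $\Pi_F^{(p)}$: $w=1$ would force $3s_p+1=0$ and $w=w_{\beta\alpha}$ would force $6s_p=2(3s_p+1)$, both impossible, while $w=w_\beta$ forces $s_p=\tfrac{1}{3}$. Testing the slope of $\Pi_F^{(p),w_\beta,\lambda_0}$: both $w=1$ and $w=w_\beta$ force $s_p=0$, contrary to the hypothesis $s_p\ne 0$. This disposes of the second pair of vanishings entirely and reduces the first claim to the single exceptional case $w=w_\beta$ with $s_p=\tfrac{1}{3}$.

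To handle this exceptional case, I would set $\mu=w_\beta*\lambda_0+2\rho_{P_\alpha}=(k+1)\alpha+\tfrac{3(k+2)}{2}\beta$ and verify noncriticality. A direct computation gives $\mu-w_\alpha*\mu=\tfrac{k}{2}\alpha$, so the discrepancy (slope)$-\tfrac{k}{2}\alpha=(\tfrac{2}{3}-\tfrac{k}{2})\alpha+2\beta$ has negative $\alpha$-coefficient for $k\geq 4$ and hence is not a nonnegative combination of positive roots. Thus the slope $\tfrac{2}{3}(\alpha+3\beta)$ is noncritical with respect to $\mu$, and by \cite[Corollary 4.6.5]{urbanev} the overconvergent multiplicity equals the classical multiplicity $m_{G_2,M_\alpha,w_\beta}^{\cl}(\Pi_F^{(p)},\lambda_0)$.

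Finally I would show this classical multiplicity vanishes. A classical constituent comes from a cuspidal cohomological automorphic representation $\pi'$ of $M_\alpha(\A)\cong GL_2(\A)$ whose archimedean parameters are determined by $\mu$; from $\langle\mu,\alpha^\vee\rangle=\tfrac{k-2}{2}$ one reads off that $\pi'$ has weight $k'=\tfrac{k+2}{2}$. For such a classical constituent to match $\Pi_F^{(p)}$, the global $G_2$-automorphic representation built from $\pi'$ by parabolic induction at the appropriate parameter must be nearly equivalent, away from $p$, to $\mc{L}_\alpha(\pi_F,1/10)_f$. Both $\pi_F$ and $\pi'$ are tempered unitary cuspidal representations of $GL_2(\A)$, so Proposition \ref{propneareqalpha} then forces $\pi'\cong\pi_F$ and hence $k'=k$. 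But this contradicts $k'=\tfrac{k+2}{2}\ne k$ for $k\geq 4$, completing the proof. The main obstacle is this last step, where one must carefully translate from the cohomological multiplicity setup to the global automorphic-representation-theoretic setting in order to invoke Proposition \ref{propneareqalpha}.
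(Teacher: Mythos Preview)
Your argument is correct, but the exceptional case $s_p=\tfrac{1}{3}$ you spend most of the proof on is vacuous: the paper's proof notes in one line that $s_p\in\tfrac{1}{2}\Z$, so the slope $s_p(2\alpha+3\beta)+\beta$ is never a rational multiple of a long root, and Lemma~\ref{lemmultslope} alone kills all three $M_\alpha$-multiplicities for $\Pi_F^{(p)}$. The half-integrality follows from the Newton polygon of $X^2-a_pX+p^{k-1}$: since $a_p$ is an algebraic integer, the two root valuations are either $v_p(a_p)$ and $k-1-v_p(a_p)$ (both integers), or both $(k-1)/2$ (a genuine half-integer because $k$ is even). Your noncriticality-plus-Proposition~\ref{propneareqalpha} argument for the phantom case $s_p=\tfrac{1}{3}$ is internally sound and exactly parallels Lemma~\ref{lemalphamults2}, but it is not needed here; the paper reserves that style of argument for Lemma~\ref{lemalphamults2}, where the corresponding exceptional value $s_p=0$ genuinely can occur.
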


\begin{proof}
The slope $s_p(2\alpha+3\beta)+\beta$ is never a rational multiple of a long root since $s_p\in\tfrac{1}{2}\Z$. Also, the slope $s_p(2\alpha+3\beta)$ is not a rational multiple of $\alpha$ or $w_\beta^{-1}\alpha=\alpha+3\beta$ unless $s_p=0$. Thus an appeal to Lemma \ref{lemmultslope} finishes the proof.
\end{proof}

If $s_p=0$, then the last two multiplicities in the lemma above must be handled differently. We do this in the next lemma.

\begin{lemma}
\label{lemalphamults2}
If $s_p=0$, we have
\begin{equation}
\label{eqalphamultszero}
m_{G_2,M_\alpha,1}^\dagger(\Pi_F^{(p),w_\beta,\lambda_0},\lambda_0-\beta)=m_{G_2,M_\alpha,w_\beta}^\dagger(\Pi_F^{(p),w_\beta,\lambda_0},\lambda_0-\beta)=0.
\end{equation}
\end{lemma}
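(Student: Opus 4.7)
The strategy mirrors Lemma~\ref{lembetamults2}: reduce to a classical multiplicity via a noncriticality check and rule it out, though here the obstruction comes from archimedean infinitesimal characters rather than from a slope comparison. The slope of $\Pi_F^{(p),w_\beta,\lambda_0}$ equals $s_p(2\alpha+3\beta) = 0$. I first verify that $0$ is noncritical with respect to the two weights that appear, namely $\lambda = \lambda_0-\beta+2\rho_{P_\alpha}$ (for $w=1$) and $\lambda' = w_\beta*(\lambda_0-\beta)+2\rho_{P_\alpha} = \lambda_0+2\rho_{P_\alpha}$ (for $w=w_\beta$). Since $W_{M_\alpha}=\{1,w_\alpha\}$, this reduces to checking that $-\mu+w_\alpha*\mu$ is not a nonnegative rational combination of positive roots for $\mu \in \{\lambda,\lambda'\}$. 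Using $w_\alpha(\beta)=\alpha+\beta$ (from $\langle\beta,\alpha^\vee\rangle=-1$) and $w_\alpha(\rho)-\rho=-\alpha$, a direct computation gives $-\tfrac{k}{2}\alpha$ and $-\tfrac{k-2}{2}\alpha$ respectively, which are negative multiples of $\alpha$ for $k \geq 4$.

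Under the hypothesis $k > 4s_p+4 = 4$ (implicit in this setting from Proposition~\ref{propcritslope} and the use of \eqref{eqcuspmult0}), both $\lambda$ and $\lambda'$ are $M_\alpha$-dominant regular, so \cite[Corollary 4.6.5]{urbanev} reduces the overconvergent multiplicities to classical ones. A nonzero contribution would then require a classical cuspidal cohomological automorphic representation $\pi'$ of $M_\alpha(\A)\cong GL_2(\A)$ of archimedean weight $\mu \in \{\lambda,\lambda'\}$, with a $p$-stabilization whose pushforward $\mc{H}_p^p$-trace equals that of $\mc{L}_\alpha(\pi_F,1/10)_f^p$. Since $\mc{L}_\alpha(\pi_F,1/10)_f^p$ is a subquotient of $\iota_{P_\alpha(\A_f^p)}^{G_2(\A_f^p)}(\pi_{F,f}^p, 1/10) = \Ind_{P_\alpha(\A_f^p)}^{G_2(\A_f^p)}(\pi_{F,f}^p \otimes \vert\det\vert^3)$ (using $\delta_{P_\alpha}^{1/2}|_{M_\alpha}=\vert\det\vert^{5/2}$), matching Satake parameters at almost all places forces $\pi'^{(p,\infty)} \cong \pi_F^{(p,\infty)} \otimes \vert\det\vert^3$, and strong multiplicity one for $GL_2$ then yields $\pi' \cong \pi_F \otimes \vert\det\vert^3$ globally.

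The contradiction then comes from infinitesimal characters at infinity. The representation $\pi'_\infty = \pi_{F,\infty} \otimes \vert\det\vert^3_\infty$ has infinitesimal character $\{\tfrac{k+5}{2},\tfrac{7-k}{2}\}$, a shift of $\pi_{F,\infty}$'s $\{\tfrac{k-1}{2},-\tfrac{k-1}{2}\}$ by $(3,3)$. Under the identifications $\epsilon_1 = \alpha+\beta$, $\epsilon_2=\beta$ coming from $i_\alpha$, the weight $\lambda$ corresponds to the $GL_2$ highest weight $(k+1,\tfrac{k+4}{2})$, so cohomology demands $\pi'_\infty$ have infinitesimal character $\{\tfrac{2k+3}{2},\tfrac{k+3}{2}\}$ (adding $\rho_{GL_2}=(\tfrac{1}{2},-\tfrac{1}{2})$); analogously $\lambda'$ demands $\{\tfrac{2k+3}{2},\tfrac{k+5}{2}\}$. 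Comparing either multiset with $\{\tfrac{k+5}{2},\tfrac{7-k}{2}\}$ yields equations with no solution for $k \geq 4$, the contradiction sought.

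The main delicacy is the noncriticality check, where a Weyl convention slip would flip the sign of $-\mu + w_\alpha * \mu$ and spoil the reduction; once noncriticality is in hand, the remaining argument is a clean arithmetic incompatibility of infinitesimal characters.
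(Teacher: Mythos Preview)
Your proof follows the same overall strategy as the paper: reduce to classical multiplicities by noncriticality, then derive a contradiction from the near-equivalence with $\mc{L}_\alpha(\pi_F,1/10)_f^p$. The paper's proof is more direct in the contradiction step: after noncriticality gives a classical eigenform $F'$ of weight $\tfrac{k+2}{2}$ or $\tfrac{k}{2}$ (both $<k$), it invokes Proposition~\ref{propneareqalpha} to force $\pi_F\cong\pi_{F'}$, an immediate contradiction since their weights differ. Your infinitesimal-character comparison is the same contradiction unwound in coordinates.

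The one step you gloss over is precisely where the paper cites Proposition~\ref{propneareqalpha}: the assertion that ``matching Satake parameters at almost all places forces $\pi'^{(p,\infty)}\cong\pi_F^{(p,\infty)}\otimes\vert\det\vert^3$'' is not immediate, since $G_2$-Satake matching only gives $W(G_2)$-conjugacy of the inducing data, not $W(M_\alpha)$-conjugacy. Proposition~\ref{propneareqalpha} is exactly what bridges this gap (via the unitarity and $s>0$ hypotheses). You should cite it rather than appeal to ``matching Satake parameters'' directly. Also, the lemma as stated only assumes $s_p=0$, not $k>4$; the paper's argument covers $k\geq 4$, and your noncriticality computation actually does too (for $k=4$ you get $-\alpha$, still negative), so the parenthetical restriction to $k>4$ is unnecessary.
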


\begin{proof}
We have
\[\lambda_0-\beta+2\rho_{P_\alpha}=\frac{k-2}{4}\alpha+\frac{3k+16}{4}(\alpha+2\beta),\]
and
\[w_\beta*(\lambda_0-\beta)+2\rho_{P_\alpha}=\frac{k-4}{4}\alpha+\frac{3k+18}{4}(\alpha+2\beta).\]
By noncriticality, any slope zero constituent of
\[f\mapsto I_{M_\alpha,0}^{\dagger}(f,\lambda_0+2\rho_{P_\alpha})\qquad\textrm{or}\qquad f\mapsto I_{M_\alpha,0}^{\dagger}(f,w_{\beta}*\lambda_0+2\rho_{P_\alpha}).\]
comes from a classical eigenform of weight $2+\tfrac{k-2}{2}$ or $2+\tfrac{k-4}{2}$, respectively. Since $k\geq 4$, both of these weights are smaller than $k$.

Now since $s_p=0$, the slope of $\Pi_F^{(p),w_\beta,\lambda_0}$ is zero. So if either multiplicity in \eqref{eqalphamultszero} were nonzero, we would have then that $\mc{L}_\alpha(\pi_F,1/10)_f^p$ would be isomorphic to a constituent of a representation induced from $M_\alpha(\A_f^p)$ from a classical eigenform $F'$ of weight smaller than $k$; more precisely, the representation $\mc{L}_\alpha(\pi_F,1/10)_f^p$ would occur as a constituent of either
\[\Ind_{P_\alpha(\A_f^p)}^{G_2(\A_f^p)}(\pi_{F',f}^p\otimes\vert\det\vert^{\tfrac{3k+16}{4}})=\iota_{P_\alpha(\A_f^p)}^{G_2(\A_f^p)}(\pi_{F',f}^p,\tfrac{3k+6}{4}),\]
or
\[\Ind_{P_\alpha(\A_f^p)}^{G_2(\A_f^p)}(\pi_{F',f}^p\otimes\vert\det\vert^{\tfrac{3k+18}{4}})=\iota_{P_\alpha(\A_f^p)}^{G_2(\A_f^p)}(\pi_{F',f}^p,\tfrac{3k+8}{4})\]
where $\pi_{F'}$ is the unitary automorphic representation of $GL_2(\A)\cong M_\alpha(\A)$ attached to $F'$ and $\pi_{F',f}^p$ is its component away from $p$ and $\infty$. But Proposition \ref{propneareqalpha} then implies that $F=F'$ (and even that $\tfrac{3k+6}{4}$ or $\tfrac{3k+8}{4}$ equals $1/10$). This is a contradiction, and the proof is complete.
\end{proof}

Now we combine these results and put a lower bound on $m_{G_2,0}^\dagger(\Pi_F^{(p)},\lambda_0)$, with an assumption on the weight of $F$ relative to the slope.

\begin{lemma}
\label{lemmultnonzero}
Assume Conjecture \ref{conjmult} (b), and also assume that $k>4s_p+4$. Then $m_{G_2,0}^\dagger(\Pi_F^{(p)},\lambda_0)>0$.
\end{lemma}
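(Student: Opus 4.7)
The plan is to combine the three identities \eqref{eqcuspmult0}--\eqref{eqcuspmult2} with the vanishing lemmas (Lemmas \ref{lemTmults}--\ref{lemalphamults2}), and then apply Proposition \ref{propclmult} together with the effectivity of the $\mf{X}$-family $I_{G_2,0}^\dagger$.

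First I would apply \eqref{eqcuspmult1} to $\Pi_F^{(p)}$ at $\lambda_0$: the lemmas \ref{lemTmults}, \ref{lembetamults2}, and \ref{lemalphamults1} annihilate every Eisenstein multiplicity in that signed sum except the $(P_\beta,1)$-contribution (which enters with sign $-1$, since $\dim N_{P_\beta}=5$). I would then apply \eqref{eqcuspmult2} similarly to the twist $\Pi_F^{(p),w_\beta,\lambda_0}$ at weight $\lambda_0-\beta$, using the same lemmas and additionally Lemma \ref{lemalphamults2} to cover the $s_p=0$ case for $(P_\alpha,1)$ and $(P_\alpha,w_\beta)$. This time every Eisenstein term vanishes except the $(P_\beta,1)$-contribution and the $(P_\alpha, w_{\beta\alpha})$-contribution. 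Lemma \ref{lembetamults1} says the two surviving $(P_\beta,1)$-multiplicities coincide, so when I subtract the two collapsed identities the $(P_\beta,1)$-terms cancel.

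Next, since $k>4s_p+4$, identity \eqref{eqcuspmult0} identifies the left-hand side of the resulting subtraction with $m_{G_2}^{\cl}(\Pi_F^{(p)},\lambda_0)$. Rearranging yields
\[
m_{G_2,0}^\dagger(\Pi_F^{(p)},\lambda_0) = m_{G_2}^{\cl}(\Pi_F^{(p)},\lambda_0) + m_{G_2,0}^\dagger(\Pi_F^{(p),w_\beta,\lambda_0},\lambda_0-\beta) - m_{G_2,M_\alpha,w_{\beta\alpha}}^\dagger(\Pi_F^{(p),w_\beta,\lambda_0},\lambda_0-\beta).
\]
Under Conjecture \ref{conjmult}~(b), Proposition \ref{propclmult} gives that the first term on the right is $\geq 2$, and effectivity of $I_{G_2,0}^\dagger$ makes the second term nonnegative. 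It therefore suffices to show that the last residual multiplicity is $0$.

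The main obstacle is exactly this vanishing of $m_{G_2,M_\alpha,w_{\beta\alpha}}^\dagger(\Pi_F^{(p),w_\beta,\lambda_0},\lambda_0-\beta)$. The slope of $\Pi_F^{(p),w_\beta,\lambda_0}$ is $s_p(2\alpha+3\beta)$, which is parallel to $w_{\beta\alpha}^{-1}\alpha=2\alpha+3\beta$, so Lemma \ref{lemmultslope} yields no information; this is why this case is not covered by Lemma \ref{lemalphamults1}. I would handle it by adapting the argument of Lemma \ref{lemalphamults2}: a direct computation of $w_{\beta\alpha}*(\lambda_0-\beta)+2\rho_{P_\alpha}$ shows that the assumption $k>4s_p+4$ forces the slope $s_p(2\alpha+3\beta)$ to be noncritical for $M_\alpha$ at this weight, so the multiplicity reduces to a classical one. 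Any nonzero contribution would then come from a cuspidal eigenform $F'$ of weight $(k+2)/2$ giving a near-equivalence $\iota_{P_\alpha(\A_f^p)}^{G_2(\A_f^p)}(\pi_{F'}^p,s')\sim\mc{L}_\alpha(\pi_F,1/10)_f^p$; Proposition \ref{propneareqalpha} forces $F'=F$, contradicting $k\neq(k+2)/2$ (which holds since $k\geq 4$). With this residual vanishing granted, the displayed identity gives $m_{G_2,0}^\dagger(\Pi_F^{(p)},\lambda_0)\geq 2>0$.
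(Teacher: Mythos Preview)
Your overall strategy---collapse \eqref{eqcuspmult1} and \eqref{eqcuspmult2} using Lemmas \ref{lemTmults}--\ref{lemalphamults2}, cancel the $(P_\beta,1)$ contributions via Lemma \ref{lembetamults1}, and feed the result into \eqref{eqcuspmult0}---is exactly the route the paper takes. But the paper's collapsed identity is
\[
m_{G_2,0}^{\dagger}(\Pi_F^{(p)},\lambda_0)=m_{G_2}^{\cl}(\Pi_F^{(p)},\lambda_0)+m_{G_2,0}^{\dagger}(\Pi_F^{(p),w_\beta,\lambda_0},w_\beta*\lambda_0)+m_{G_2,M_\alpha,w_{\beta\alpha}}^{\dagger}(\Pi_F^{(p),w_\beta,\lambda_0},w_\beta*\lambda_0),
\]
with a \emph{plus} on the residual term. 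Since $m_{G_2,M_\alpha,w_{\beta\alpha}}^{\dagger}$ is a multiplicity in the effective cuspidal distribution $I_{M_\alpha,0}^\dagger$, it is nonnegative, and the proof finishes immediately from Proposition \ref{propclmult}: no vanishing argument is needed at all.

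Your attempt to show this residual term is zero has a genuine gap. The relevant $M_\alpha$-weight is $w_{\beta\alpha}*(\lambda_0-\beta)+2\rho_{P_\alpha}$, and a direct computation (using $w_{\beta\alpha}(\alpha)=-\alpha-3\beta$, $w_{\beta\alpha}(\beta)=\alpha+2\beta$) gives
\[
w_{\beta\alpha}*(\lambda_0-\beta)+2\rho_{P_\alpha}=\tfrac{k-2}{2}\alpha+2(\alpha+2\beta).
\]
By the same recipe used in the proof of Lemma \ref{lemalphamults2}, the $\alpha$-coefficient $\tfrac{k-2}{2}$ corresponds to classical eigenforms of weight $k$, not $(k+2)/2$. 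So Proposition \ref{propneareqalpha} does not produce a contradiction: the obvious candidate $F'=F$ is allowed, and in fact $F$ itself furnishes a contribution. This is precisely what the paper's Remark after Lemma \ref{lemmultnonzero} records, namely $m_{G_2,M_\alpha,w_{\beta\alpha}}^{\dagger}(\Pi_F^{(p),w_\beta,\lambda_0},w_\beta*\lambda_0)\geq 1$. So your vanishing claim is false, and the argument cannot be completed along those lines; you should recheck your sign bookkeeping and then simply invoke nonnegativity, as the paper does.
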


\begin{proof}
Combining Lemmas \ref{lemTmults}, \ref{lembetamults1}, \ref{lembetamults2}, \ref{lemalphamults1} and \ref{lemalphamults2}, the formulas \eqref{eqcuspmult0}, \eqref{eqcuspmult1} and \eqref{eqcuspmult2} become
\[m_{G_2}^{\cl}(\Pi_F^{(p)},\lambda_0)+m_{G_2,0}^{\dagger}(\Pi_F^{(p),w_\beta,\lambda_0},w_\beta*\lambda_0)+m_{G_2,M_\alpha,w_{\beta\alpha}}^{\dagger}(\Pi_F^{(p),w_\beta,\lambda_0},w_\beta*\lambda_0)=m_{G_2,0}^{\dagger}(\Pi_F^{(p)},\lambda_0).\]
By Proposition \ref{propclmult} and the nonnegativity of cuspidal multiplicities, the left hand side of this formula is at least $2$, which suffices.
\end{proof}

\begin{remark}
Although we do not need this for our applications, one can show without much difficulty that we have
\[m_{G_2,M_\alpha,w_{\beta\alpha}}^{\dagger}(\Pi_F^{(p),w_\beta,\lambda_0},w_\beta*\lambda_0)\geq 1,\]
and so $m_{G_2,0}^{\dagger}(\Pi_F^{(p)},\lambda_0)\geq 3$, assuming Conjecture \ref{conjmult} (b). We should expect that, at least generically, this multiplicity is exactly $3$. This is because the generically cuspidal $p$-adic family into which $\Pi_F^{(p)}$ deforms as a consequence of this multiplicity being nonzero, specializes at sufficiently regular weights to cuspidal automorphic representations of $G_2(\A)$ which are discrete series at the archimedean place, and the discrete series $L$-packets for $G_2(\R)$ all have three elements.

Now if we assume instead that $\epsilon(1/2,\pi_F,\Sym^3)=1$, but still that $L(1/2,\pi_F,\Sym^3)=0$, then Conjecture \ref{conjmult} (a) implies that $\mc{L}_\alpha(\pi_F,1/10)_f$ appears in cohomology in degrees $3$ and $5$, instead of in degree $4$, giving in general that the classical cuspidal multiplicity $m_{G_2,0}^{\cl}(\Pi_F^{(p)},\lambda_0)$ equals $-2$. Therefore we expect the multiplicity $m_{G_2,0}^{\dagger}(\Pi_F^{(p)},\lambda_0)$ to be zero in this case. Thus, even when $L(1/2,\pi_F,\Sym^3)=0$, as long as $\epsilon(1/2,\pi_F,\Sym^3)=1$, a different approach is needed to construct a $p$-adic deformation of $\Pi_F^{(p)}$.
\end{remark}

Our goal now is to remove the hypothesis that $k>4s_p+4$ in Lemma \ref{lemmultnonzero}. We will do this by deforming $F$ in a Coleman family. Given a cuspidal eigenform $F'$ of weight $k'$, a Coleman family containing $F'$ is, in particular, a $\mf{Z}$-family of effective finite slope character distributions $\mc{H}_{p,GL_2}\to\mc{O}(\mf{Z})$, where $\mf{Z}$ is a neighborhood of the weight $k'$ in the weight space for $GL_2$. Here, an integer weight $l$ is viewed as the weight $\tfrac{l-2}{2}\gamma$, where $\gamma$ is the simple positive root for $GL_2$.

We will require the following lemma.

\begin{lemma}
\label{lemmcfvariation}
Let $F'$ be a cuspidal eigenform of weight $k'$ and trivial nebentypus, and let $\mc{F}'$ be a Coleman family deforming $F'$. For any integer weight $l$ in the weight space for $\mc{F}'$, let $\mc{F}_l'$ be the specialization of $\mc{F}'$ at $l$. Let $\pi_l$ be the unitary automorphic representation of $GL_2(\A)$ with $p$-stabilization $\mc{F}_l'$.
\begin{enumerate}[label=(\alph*)]
\item Let $v\ne p$ be a finite place where $\pi_{k',v}$ is supercuspidal or a twist of the Steinberg representation. Then there is a neighborhood $\mf{U}$ of $k'$ in $\mf{Z}$ such that for all classical integer weights $l$ in $\mf{U}(\Q_p)$, we have
\[\pi_{l,v}\cong\pi_{k',v}.\]
\item Let $v\ne p$ be a finite place where $\pi_{k',v}$ is a ramified principal series representation. Write
\[\pi_{k',v}\cong\iota_{B_2(\Q_v)}^{GL_2(\Q_v)}(\chi_{v}\boxtimes\chi_{v}^{-1}),\]
where $B_2$ is the standard Borel of $GL_2$, $\chi_v$ is a ramified quasicharacter of $\Q_v^\times$, and by definition $\chi_{v}\boxtimes\chi_{v}^{-1}$ is the character of the diagonal torus given by
\[(\chi_{v}\boxtimes\chi_{v}^{-1})(\diag(t_1,t_2))=\chi_v(t_1)\chi_v(t_2)^{-1}.\]
(The representation $\pi_{k',v}$ necessarily has this form if it is a principal series representation because the nebentypus of $F'$ is trivial.) Then there is a neighborhood $\mf{U}$ of $k'$ in $\mf{Z}$, a finite extension $L$ of $\Q_p$ and an unramified character $\psi:\Q_v^\times\to(\mc{O}(\mf{U})\otimes L)^\times$ such that for all classical integer weights $l$ in $\mf{U}(\Q_p)$, we have
\[\pi_{l,v}\cong\iota_{B_2(\Q_v)}^{GL_2(\Q_v)}((\chi_{v}\psi_l)\boxtimes(\chi_{v}\psi_l)^{-1}),\]
where $\psi_l$ is the specialization of $\psi$ at the weight $l$.
\end{enumerate}
\end{lemma}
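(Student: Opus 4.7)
The plan is to exploit local constancy of the Bernstein component of $\pi_{l,v}$ in the Coleman family $\mc{F}'$, combined with the rigidity imposed by a fixed (trivial) nebentypus. First I would shrink $\mf{Z}$ to a connected affinoid neighborhood $\mf{U}$ of $k'$ such that $\mc{F}'|_{\mf{U}}$ has constant tame level $N^{(p)}$ and such that every classical integer weight specialization has trivial nebentypus. In particular, the conductor of $\pi_{l,v}$ is uniformly bounded on classical $l \in \mf{U}(\Q_p)$, so only finitely many Bernstein blocks of $GL_2(\Q_v)$ can arise.

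Next I would show the Bernstein block is in fact constant. The Coleman family carries an associated two-dimensional Galois pseudocharacter $T: G_\Q \to \mc{O}(\mf{U})$ specializing at classical $l$ to $\tr\rho_{\mc{F}_l'}$. Since $v \ne p$, the restriction $T|_{I_v}$ is continuous on the compact group $I_v$, and on each classical specialization $I_v$ acts through a finite quotient of order uniformly bounded by the fixed tame level. A continuity-and-compactness argument, after further shrinking $\mf{U}$, yields that the Weil--Deligne inertial type of $\rho_{\mc{F}_l'}|_{G_{\Q_v}}$ is independent of classical $l$, as is the presence or absence of monodromy. Via local Langlands for $GL_2(\Q_v)$, this means that $\pi_{l,v}$ lies in the Bernstein block $\mf{s}_v$ of $\pi_{k',v}$ for every classical $l \in \mf{U}(\Q_p)$.

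The two cases then reduce to a direct analysis of $\mf{s}_v$. In (a), the block of a supercuspidal or an unramified twist of Steinberg consists of the unramified twists $\pi_{k',v}\otimes\eta$; trivial central character forces $\eta^2 = 1$, and the locus where $\eta = 1$ is both open and contains $k'$, so on a connected $\mf{U}$ one concludes $\pi_{l,v} \cong \pi_{k',v}$ throughout. In (b), the block of $\iota(\chi_v\boxtimes\chi_v^{-1})$ consists of $\iota(\chi_v\eta_1\boxtimes\chi_v^{-1}\eta_2)$ with $\eta_1,\eta_2$ unramified, and triviality of the central character forces $\eta_2 = \eta_1^{-1}$; hence $\pi_{l,v} \cong \iota((\chi_v\psi_l)\boxtimes(\chi_v\psi_l)^{-1})$ for a unique unramified character $\psi_l$ of $\Q_v^\times$ (after fixing a choice of ordering at $l = k'$ and propagating by continuity). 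Analyticity of $\psi$ follows from the fact that the unordered pair $\{\chi_v(\Frob_v)\psi_l(v),\,\chi_v(\Frob_v)^{-1}\psi_l(v)^{-1}\}$ is the pair of eigenvalues of $\rho_{\mc{F}_l'}(\Frob_v)$, which is recorded analytically by $T$; combined with the ordering this identifies $\psi$ as an analytic unramified character valued in $(\mc{O}(\mf{U})\otimes L)^\times$, where $L$ is a finite extension of $\Q_p$ containing the values of $\chi_v$.

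The main obstacle will be to make the local-constancy statement in the second paragraph rigorous. The Galois-theoretic route I sketched ultimately requires some local Langlands in families input at $v \ne p$, while an alternative is to argue automorphically via Bushnell--Kutzko types for $GL_2(\Q_v)$ and fixed-type-vectors in the overconvergent cohomology underlying the Coleman family. Either approach is standard but requires care in tracking how the inertial action at $v$ behaves across $\mf{U}$; once that is in hand, the case analysis of $\mf{s}_v$ and the analytic identification of $\psi$ are essentially formal.
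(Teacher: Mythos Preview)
Your overall strategy matches the paper's: establish that the Bernstein component of $\pi_{l,v}$ is locally constant in the family, then do a case analysis using the trivial-central-character constraint. The paper carries this out entirely on the automorphic side via Bushnell--Kutzko types (the route you mention as an alternative), whereas you lead with the Galois-theoretic argument. For part (b) your identification of $\psi_l$ via Frobenius eigenvalues is essentially the same as the paper's construction from the Hecke polynomial at $\ell_v$.

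There is, however, a genuine gap in your case (a) when $\pi_{k',v}$ is a twist of Steinberg. You assert that the Bernstein block ``consists of the unramified twists $\pi_{k',v}\otimes\eta$,'' but this is false: the block of a twist of Steinberg is the full block with inertial support $(T,\chi\boxtimes\chi)$, and it contains both the Steinberg twists \emph{and} the irreducible principal series (in the unramified-twist case, the spherical representations). Knowing the block is constant therefore does not, by itself, rule out $\pi_{l,v}$ degenerating to a principal series. Your earlier claim that ``the presence or absence of monodromy'' is locally constant would fill this gap, but that is exactly the nontrivial point, and a pseudocharacter argument on $I_v$ alone cannot see it (monodromy for an unramified Steinberg twist is invisible on inertia). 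The paper handles this by an additional idempotent trick: after using the Iwahori idempotent to pin down the block, it computes the trace of the spherical idempotent $e'=\vol(GL_2(\Z_v))^{-1}1_{GL_2(\Z_v)}$, which is $0$ on Steinberg and $1$ on spherical representations, and invokes continuity to keep it at $0$ nearby. You need some analogous device to separate Steinberg from the rest of its block.
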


\begin{proof}
We use the theory of Bushnell--Kutzko types \cite{BKtypes} for the group $GL_2(\Q_v)$. Let $v\ne p$ be a finite place and assume first that $\pi_{k',v}$ is supercuspidal. Then by the theory of types there is an idempotent $e$ in the Hecke algebra of $GL_2(\Q_v)$ which acts as the identity on unramified twists of $\pi_{k',v}$, and as zero on any other irreducible admissible representation of $GL_2(\Q_v)$. Hence $\tr(e|\pi_{k',v})\ne 0$, which implies that $\tr(\rho|\pi_{l,v})\ne 0$ for classical integer weights $l$ in a neighborhood of $k$, which then implies that $\pi_{l,v}$ is isomorphic to $\pi_{k',v}$ up to unramified twist. Since $\pi_{l,v}$ has trivial central character for such $l$, we have that $\pi_{l,v}$ is an unramified quadratic twist of $\pi_{k',v}$. Since there are only finitely many quadratic characters of $\Q_p^\times$, there are only finitely many options for $\pi_{l,v}$, and by continuity we conclude that $\pi_{l,v}\cong\pi_{k',v}$ for such $l$.

If $\pi_{k',v}$ is instead a twist of the Steinberg representation, say by a character $\eta_v$ of $\Q_v^\times$ (which is necessarily quadratic) then we first choose a Dirichlet character $\eta$ which is unramified at $p$ and whose component at $v$ is $\eta_v$. We consider the twisted family $\mc{F}'\otimes\eta^{-1}$, whose $v$-component at a classical integer weight $l$ is $\pi_{l,v}\otimes\eta_v^{-1}$.

Then the theory of types again gives an idempotent $e$ in the Hecke algebra of $GL_2(\Q_v)$, which this time acts nontrivially on any constituent of an unramified principal series, and as zero on any other irreducible admissible representation. (In fact, we have
\[\rho=\vol(I_v(GL_2))^{-1}1_{I_v(GL_2)},\]
where $I_v(GL_2)$ is the standard Iwahori in $GL_2(\Z_v)$.) By analogous logic as in the supercuspidal case, there is a neighborhood $\mf{U}$ of $k'$ such that for any classical integer weight $l$ in that neighborhood, the representation $\pi_{l,v}\otimes\eta_v^{-1}$ is a constituent of an unramified principal series.

Now let
\[e'=\vol(GL_2(\Z_v))^{-1}1_{GL_2(\Z_v)}.\]
Then the trace of $e'$ on any unramified representation of $GL_2(\Q_v)$ is $1$, but the trace of $e'$ on any Steinberg representation is $0$. Thus by continuity, after possibly shrinking $\mf{U}$, for any classical integer weight $l$ in $\mf{U}$, we have that $\pi_{l,v}\otimes\eta_v^{-1}$ is an unramified twist of the Steinberg representation, and moreover that the twist must be by a quadratic character. Therefore there are only finitely many possibilities for such $\pi_{l,v}$, and again by continuity, we conclude that $\pi_{l,v}\cong\pi_{k',v}$. This proves (a).

In the setting of (b), let $\eta$ now be a Dirichlet character which is unramified at $p$ and for which $\chi_v\eta_v^{-1}$ is unramified. We again consider the twisted family $\mc{F}'\otimes\eta^{-1}$. The $v$-component of its specialization at $k'$ is given by
\[\pi_{k',v}\otimes\eta_v^{-1}\cong\iota_{B_2(\Q_v)}^{GL_2(\Q_v)}((\chi_v\eta_v)\boxtimes(\chi_v^{-1}\eta_v^{-1})).\]
Like in the Steinberg case, we now invoke the theory of types, the consequence of which this time is that there is a neighborhood $\mf{U}$ of $k'$ such that for all classical integer weights $l$ in $\mc{U}$, we have
\[\pi_{l,v}\otimes\eta_v^{-1}\textrm{ is a constituent of }\iota_{B_2(\Q_v)}^{GL_2(\Q_v)}(\phi_{l,v}\boxtimes(\phi_{l,v}^{-1}\eta_v^{-2}))\]
for some unramified character $\phi_{l,v}$ of $\Q_v^\times$. Moreover, if for some $l$ we had that $\pi_{l,v}$ were a twist of Steinberg, then by (a) the same would be true for $\pi_{l',v}$ for $l'$ in a neighborhood of $l$. Thus by shrinking $\mf{U}$ if necessary, we may assume that
\[\pi_{l,v}\otimes\eta_v^{-1}\cong\iota_{B_2(\Q_v)}^{GL_2(\Q_v)}(\phi_{l,v}\boxtimes(\phi_{l,v}^{-1}\eta_v^{-2})),\]
and that this is irreducible.

There are then two cases: Either $\eta_v^{-2}$ is ramified, or not. Let $\ell_v$ be the rational prime associated with $v$, and let $\mbf{a}_{\ell_v}\in\mc{O}(\mf{U})\otimes L$, for $L$ a large enough finite extension of $\Q_p$, be the $\ell_v$th coefficient of the $q$-expansion of $\mc{F}'\otimes\eta^{-1}$. Let $s\in\mc{O}(\mf{U})^\times$ be the analytic function such that
\[s(l)=\ell_v^{(l-1)/2}.\]
If $\eta_v^{-2}$ is ramified, let $U_v=U_{\ell_v}$ be the usual operator. Then in this case,
\[\phi_{l,v}(\ell_v)=\tr(U_v|\pi_{l,v}\otimes\eta_v^{-1})=\mbf{a}_{\ell_v}(l)s(l)^{-1};\]
see for example \cite[\S 2.2]{LW}. Then we may take the character $\psi_v$ in the statement of the lemma to be given by
\[\psi_v(\ell_v)=\mbf{a}_{\ell_v}s^{-1}\cdot(\chi_v\eta_v^{-1})(\ell_v)^{-1}.\]
(Note that since $\pi_{k',v}$ is not supercuspidal, we have $\mbf{a}_{\ell_v}(k')\ne 0$, so after possibly shrinking $\mf{U}$, we have indeed that $\psi_v(\ell_v)\in(\mc{O}(\mf{U})\otimes L)^\times$.)

If instead $\eta_v^{-2}$ is unramified, then we consider the polynomial
\[P(X)=X^2-\mbf{a}_{\ell_v}X+s^2.\]
If $L$ is large enough, this has a root, say $\mbf{b}\in(\mc{O}\mf{U})\otimes L$, at least after possibly shrinking $\mf{U}$. Moreover, for any classical integer weight $l$, we have that $\mbf{b}(l)$ is a $p$-adic unit. Thus actually $\mbf{b}\in(\mc{O}(\mf{U})\otimes L)^\times$. It follows that we may now take $\psi_v$ to be the unramified character given by
\[\psi_v(\ell_v)=\mbf{b}s^{-1}\cdot(\chi_v\eta_v^{-1})(\ell_v)^{-1}.\]

Finally, if $v$ does not divide the tame level of $\mc{F}'$, then $\pi_{l,v}$ is unramified for all classical integer weights $l$ in $\mf{Z}$. Then the construction of $\mbf{b}$ and $\psi_v$ just made is defined over all of $\mf{Z}$. This proves the lemma.
\end{proof}

As a corollary, we deduce that the sign of the symmetric cube functional equation is locally constant in Coleman families.

\begin{corollary}
\label{corepsilonlc}
With the setting as in the lemma, there is a neighborhood $\mf{U}$ of $k'$ in $\mf{Z}$ such that for all classical integer weights $l$ in $\mf{U}(\Q_p)$, we have
\[\epsilon(1/2,\pi_{k'},\Sym^3)=\epsilon(1/2,\pi_l,\Sym^3).\]
\end{corollary}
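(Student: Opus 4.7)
The plan is to decompose the global epsilon factor as a product of local factors
\[
\epsilon(1/2,\pi_l,\Sym^3)=\prod_v\epsilon(1/2,\pi_{l,v},\Sym^3),
\]
and show that each individual local epsilon factor is constant for classical integer weights $l$ in a sufficiently small neighborhood $\mf{U}$ of $k'$. Since $\pi_l$ has trivial central character and $\Sym^3$ is self-dual of odd-dimensional highest weight, each local epsilon factor is a complex number on the unit circle, and the product of local factors will specialize to the global sign in $\{\pm 1\}$, so local constancy of each factor will immediately give the conclusion.

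I would handle the places of $\Q$ as follows. At $v=\infty$, the archimedean component $\pi_{l,\infty}$ is the weight-$l$ discrete series with Langlands parameter $\mr{Ind}_{W_\C}^{W_\R}((z/\bar z)^{(l-1)/2})$, and so $\Sym^3$ of this Weil parameter decomposes as $\mr{Ind}((z/\bar z)^{(l-1)/2})\oplus\mr{Ind}((z/\bar z)^{3(l-1)/2})$. The standard formula for archimedean epsilon factors gives $\epsilon(1/2,\pi_{l,\infty},\Sym^3)=i^{l}\cdot i^{3l-2}=i^{4l-2}$, which equals $-1$ for every even $l\geq 2$; so it is literally independent of $l$ in the family. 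At $v=p$, the hypothesis $p\nmid N$ together with the fact that a Coleman family preserves tame level shows $\pi_{l,p}$ is unramified for all classical $l$, so $\epsilon(1/2,\pi_{l,p},\Sym^3)=1$ with the standard choice of additive character. Finally, at any finite place $v\neq p$ where $\pi_{k',v}$ is unramified, the same holds for $\pi_{l,v}$ for $l$ in a sufficiently small neighborhood of $k'$ (arguing as in Lemma \ref{lemmcfvariation}(b) in the unramified case, or by the fact that $v$ does not divide the tame level), and $\epsilon=1$ again.

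At the remaining finite places $v$, I invoke Lemma \ref{lemmcfvariation}. In case (a), $\pi_{l,v}\cong\pi_{k',v}$ for all $l\in\mf{U}$, so the local epsilon factor is literally equal. In case (b), we have $\pi_{l,v}\cong\iota_{B_2(\Q_v)}^{GL_2(\Q_v)}((\chi_v\psi_l)\boxtimes(\chi_v\psi_l)^{-1})$ with $\psi_l$ unramified. Via the local Langlands correspondence, the Langlands parameter of $\Sym^3\pi_{l,v}$ is
\[
(\chi_v\psi_l)^3\oplus(\chi_v\psi_l)\oplus(\chi_v\psi_l)^{-1}\oplus(\chi_v\psi_l)^{-3},
\]
and so $\epsilon(1/2,\Sym^3\pi_{l,v})$ factors as a product of four character epsilon factors. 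Using the standard identity $\epsilon(1/2,\chi\eta)=\eta(\varpi_v)^{a(\chi)}\epsilon(1/2,\chi)$ for an unramified character $\eta$, the $\psi_l$-contribution to the product picks up a net power of $\psi_l(\varpi_v)$ equal to $3a(\chi_v^3)+a(\chi_v)-a(\chi_v)-3a(\chi_v^3)=0$, so the $\psi_l$-dependence cancels completely and $\epsilon(1/2,\Sym^3\pi_{l,v})=\epsilon(1/2,\Sym^3\pi_{k',v})$. This cancellation is the only real calculation in the argument and is the main technical step; the rest is bookkeeping, and intersecting the finitely many neighborhoods arising at ramified places with the unramified-twist neighborhood yields the desired $\mf{U}$.
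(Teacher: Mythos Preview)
Your proof is correct and follows essentially the same approach as the paper: decompose the global $\epsilon$-factor into local factors and use Lemma~\ref{lemmcfvariation} to show each one is constant near $k'$. The only cosmetic differences are that at $\infty$ you compute $\epsilon_\infty$ directly from the Langlands parameter (the paper instead deduces it from the triple-product sign in the balanced range), and in case~(b) you write out the cancellation of the $\psi_l$-contribution explicitly via $\epsilon(1/2,\chi\eta)=\eta(\varpi_v)^{a(\chi)}\epsilon(1/2,\chi)$, which is exactly what underlies the paper's one-line remark that the local sign is unchanged by the unramified twist in this self-dual situation.
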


\begin{proof}
By part (a) of the lemma, the local signs $\epsilon_v(1/2,\pi_{l,v},\Sym^3)$ are constant in a neighborhood of $k'$ if $\pi_{k',v}$ is a twist of Steinberg or supercuspidal. Moreover the same is true if $\pi_{k',v}$ is a ramified principal series; one way to see this is to note that under the local Langlands correspondence, we have that $\Sym^3\pi_{l,v}$ corresponds to a sum of four characters, and these characters are unramified twists of the four characters to which $\Sym^3\pi_{k',v}$ corresponds by part (b) of the lemma. But the local signs are preserved under unramified twists.

Thus we just need to show that the local sign at $\infty$ is locally constant. One way to see this is to note that
\[\epsilon_\infty(1/2,\pi_{l,\infty}\times\pi_{l,\infty}\times\pi_{l,\infty})=\epsilon_\infty(1/2,\pi_{l,\infty},\Sym^3)\epsilon_\infty(1/2,\pi_{l,\infty})^2,\]
and the triple product sign $\epsilon_\infty(1/2,\pi_{l,\infty}\times\pi_{l,\infty}\times\pi_{l,\infty})$ is always $-1$ as the triple of weights $(l,l,l)$ is in the so-called balanced range. This completes the proof.
\end{proof}

Now let $\mc{F}$ be a Coleman family deforming the $p$-stabilization of $F$ corresponding to the fixed root $\alpha_p$ of the Hecke polynomial of $F$ at $p$. Let $\mf{Z}$ be its weight space, and let $\mf{U}\subset\mf{Z}$ be a small enough neighborhood of the weight $k$ so that the conclusions of Lemma \ref{lemmcfvariation} are satisfied for all places finite $v\ne p$.

The space $\mf{U}$ has a linear embedding into $\mf{X}$; on classical integer weights $l\geq 4$ in $\mf{U}$, this embedding sends $l$ to $\tfrac{l-4}{2}(2\alpha+3\beta)$. We view $\mf{U}$ as a subset of $\mf{X}$ via this embedding.

We now build a $\mf{U}$-family of effective finite slope character distributions for $G_2$, which we will call $\Pi_{\mf{U},\mc{F}}$, using the family $\mc{F}$. We do this by specifying the distribution on $C_c^\infty(G_2(\Q_v),\Q_p)$ for each $v\ne p$, and on the algebra $\mc{U}_p$, as follows.

\begin{itemize}
\item If $v\ne p$ is a finite place where $\pi_{F,v}$ is Steinberg or supercuspidal, then we let $\Pi_{\mf{U},\mc{F}}$ be given at $v$ by the distribution, which is constant over $\mf{U}$,
\[f\mapsto\tr(f|\mc{L}_{\alpha}(\pi_{F,v}\otimes\delta_{P_\alpha(\Q_v)}^{1/10})),\qquad f\in C_c^\infty(G_2(\Q_v),\Q_p),\]
where, as usual, $\mc{L}_{\alpha}$ denotes the Langlands quotient of the corresponding unitary parabolic induction.
\item If $v\ne p$ is a finite place where $\pi_{F,v}$ is a ramified principal series, say
\[\pi_{F,v}\cong\iota_{B_2(\Q_v)}^{GL_2(\Q_v)}(\chi_{v}\boxtimes\chi_{v}^{-1}),\]
then let $\psi:\Q_v^\times\to(\mc{O}(\mf{U})\otimes L)^\times$ be an unramified character as in the conclusion of Lemma \ref{lemmcfvariation} (b). Then we let $\Pi_{\mf{U},\mc{F}}$ be given at $v$ by the distribution
\[f\mapsto\tr(f|\iota_{P_\beta(\Q_v)}^{G_2(\Q_v)}(\psi\circ\det_\beta)),\qquad f\in C_c^\infty(G_2(\Q_v),\Q_p).\]
\item If $v\ne p$ is a place where $\pi_F$ is unramified, then we do the following. Let $\ell_v$ be the rational prime corresponding to $v$, and let $\mbf{a}_{\ell_v}\in\mc{O}(\mf{U})$ be the $\ell_v$th coefficient in the $q$-expansion of $\mc{F}$. Let $s\in\mc{O}(\mf{U})^\times$ be the function interpolating $l\mapsto\ell_v^{(l-1)/2}$ for integer weights $l$ in $\mf{U}$. Then let
\[\varphi_\alpha:C_c^\infty(M_\alpha(\Z_v)\backslash M_\alpha(\Q_v)/M_\alpha(\Z_v),\Q_p)\to\mc{O}(\mf{U})\]
be given by
\[\varphi_\alpha(T_v)=\mbf{a}_{\ell_v}s^{-1}\ell_v^{-1/2},\qquad\varphi_{\alpha}(R_v)=\ell_v^{-1},\]
where $T_v$ and $R_v$ are the characteristic functions of, respectively,
\[M_\alpha(\Z_v)\pmat{\ell_v&0\\ 0&1}M_\alpha(\Z_v)\qquad\textrm{and}\qquad \pmat{\ell_v&0\\ 0&\ell_v}M_\alpha(\Z_v).\]
Now let
\[\mc{S}_\alpha:C_c^\infty(G_2(\Z_v)\backslash G_2(\Q_v)/G_2(\Z_v),\Q_p)\to C_c^\infty(M_\alpha(\Z_v)\backslash M_\alpha(\Q_v)/M_\alpha(\Z_v),\Q_p)\]
be the relative Satake homomorphism:
\[\mc{S}_\alpha(f)(m)=\delta_{P_\alpha(\Q_v)}^{1/2}(m)\int_{N_\alpha(\Q_v)}f(mn)\,dn.\]
Then we let $\Pi_{\mf{U},\mc{F}}$ be given at $v$ by the spherical representation over $\mc{O}(\mf{U})$ whose trace on the spherical Hecke algebra $C_c^\infty(G_2(\Z_v)\backslash G_2(\Q_v)/G_2(\Z_v))$ is given by $\varphi_\alpha\circ\mc{S}_\alpha$.
\item At $p$, we let $\Pi_{\mf{U},\mc{F}}$ be given at $p$ by the character of $\mc{U}_p$ determined by
\[u_t\mapsto\mbf{a}_p^{v_p((2\alpha+3\beta)(t))}p^{v_p(\beta(t))},\qquad t\in T^-,\]
where $\mbf{a}_p$ is the $p$th Fourier coefficient of $\mc{F}$.
\end{itemize}

By construction, the family $\Pi_{\mf{U},\mc{F}}$ is a $\mf{U}$-family of effective finite slope character distributions which interpolates the construction $F\mapsto\Pi_F^{(p)}$ over classical integer weights $l\geq k$ in $\mf{U}$; this follows from Lemma \ref{lemmcfvariation} for the construction at ramified places, and moreover from Proposition \ref{proplqofPi} at the places where $\pi_F$ is a ramified principal series. We use the family $\Pi_{\mf{U},\mc{F}}$ to prove the following proposition, which is the culmination of all the work done in this section.

\begin{proposition}
\label{propmultnonzero}
Assume Conjecture \ref{conjmult} (b). Then $m_{G_2,0}^\dagger(\Pi_F^{(p)},\lambda_0)>0$.
\end{proposition}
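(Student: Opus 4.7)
The plan is to apply Lemma \ref{lemintsummand} to transport the positivity established by Lemma \ref{lemmultnonzero} from large weights in the Coleman family down to the base weight $k$, where the hypothesis $k>4s_p+4$ may fail. Concretely, I will take $\mf{Z}=\mf{U}$, set $J$ to be the restriction of the $\mf{X}$-family $I_{G_2,0}^\dagger$ along the linear embedding $\mf{U}\hookrightarrow\mf{X}$ sending a classical weight $l$ to $\tfrac{l-4}{2}(2\alpha+3\beta)$, and set $\tau=\Pi_{\mf{U},\mc{F}}$, the $\mf{U}$-family of effective finite slope character distributions constructed just before the proposition. For the dense test set I will use
\[S=\Sset{l\in\mf{U}(\Q_p)}{l\text{ a classical integer weight, }l>\max(k,4s_p+4)},\]
which is Zariski dense in the one-dimensional affinoid $\mf{U}$ since it is infinite.

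The verification of hypothesis $(3)$ of Lemma \ref{lemintsummand} is the heart of the argument and is where Lemma \ref{lemmultnonzero} is invoked. For $l\in S$, the slope of the $p$-stabilization $F_l$ of $\mc{F}$ at $l$ equals $s_p$, since slopes are locally constant in a Coleman family; the tame level of $\mc{F}$ is fixed, so $p\nmid$ level$(F_l)$ and the nebentypus remains trivial; and, after shrinking $\mf{U}$ if necessary, Corollary \ref{corepsilonlc} guarantees $\epsilon(1/2,\pi_{F_l},\Sym^3)=-1$ for every classical integer weight in $\mf{U}$. Thus Lemma \ref{lemmultnonzero}, applied to $F_l$ under Conjecture \ref{conjmult}~(b), yields $m_{G_2,0}^\dagger(\Pi_{F_l}^{(p)},\lambda_{0,l})>0$. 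Because $\Pi_{\mf{U},\mc{F}}$ was built to interpolate $\Pi_{F_l}^{(p)}$ at such $l$, this is exactly the statement that $\tau_l$ is a summand of $J_l$ for all $l\in S$.

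Hypothesis $(2)$ is automatic: the Coleman family has fixed tame level, providing a single open compact $K_f^p\subset K_{f,\mr{max}}^p$ whose fixed vectors in $V_{\tau_l}$ are nonzero for every $l\in S$. The most delicate point, and what I expect to be the main obstacle, is hypothesis $(1)$: the distribution $\tau_\lambda$ must be the trace of an \emph{irreducible} admissible $\mc{H}_p$-representation at \emph{every} $\lambda\in\mf{U}(\overline\Q_p)$, including the non-classical points where we have no direct automorphic source. The verification proceeds place by place: at the Steinberg and supercuspidal places of $\pi_F$ the local factor is constant in the family and hence irreducible; at unramified places we take the unramified subquotient of the associated unramified principal series, which is always irreducible; at ramified principal series places the local component is $\iota_{P_\beta(\Q_v)}^{G_2(\Q_v)}(\psi_v\circ\det_\beta)$, which is irreducible by the results of Mui\'c used in the proof of Proposition \ref{proplqofPi}; and at $p$ the local component is a character of $\mc{U}_p$, trivially irreducible. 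One must also check that the resulting restricted tensor product is admissible in the required sense, which follows from the compatibility of the Satake interpolation at almost all places.

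With all three hypotheses in place, Lemma \ref{lemintsummand} applied to the distinguished point $\lambda=k\in\mf{U}$, corresponding to $\lambda_0\in\mf{X}$, produces the conclusion that $\tau_k=\tr(\Pi_F^{(p)})$ is a summand of $J_k=I_{G_2,0}^\dagger(\cdot,\lambda_0)$. This is equivalent to $m_{G_2,0}^\dagger(\Pi_F^{(p)},\lambda_0)>0$, completing the proof.
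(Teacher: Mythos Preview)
Your proof is correct and follows essentially the same route as the paper: apply Lemma \ref{lemintsummand} with $J=I_{G_2,0}^\dagger$ restricted along the embedding $\mf{U}\hookrightarrow\mf{X}$, $\tau=\Pi_{\mf{U},\mc{F}}$, and $S$ the set of sufficiently large classical weights, invoking Corollary \ref{corepsilonlc} and Lemma \ref{lemmultnonzero} to verify hypothesis (3). Your explicit verification of hypothesis (1) place by place is more careful than the paper's proof, which simply invokes Lemma \ref{lemintsummand} directly; the paper implicitly relies on the construction of $\Pi_{\mf{U},\mc{F}}$ just above the proposition to ensure each specialization is an irreducible trace.
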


\begin{proof}
Consider the $\mf{U}$-family of effective finite slope character distributions $\Pi_{\mf{U},\mc{F}}$ just constructed. For any classical integer weight $l\geq k$ in $\mf{U}$, let $\Pi_{l,\mc{F}}$ be the specialization of $\Pi_{\mf{U},\mc{F}}$ at the weight $\tfrac{l-4}{2}(2\alpha+3\beta)$. Also let $\pi_l$ be the automorphic representation of $M_\alpha(\A)$ associated with the specialization of $\mc{F}$ at $l$. By Corollary \ref{corepsilonlc}, after possibly shrinking $\mf{U}$, we have 
\[\epsilon(1/2,\pi_l,\Sym^3)=-1,\]
for such $l$. If moreover $l>4s_p+4$, then Lemma \ref{lemmultnonzero} applies and we have for such $l$ that
\[m_{G_2,0}^\dagger(\Pi_{l,\mc{F}},\tfrac{l-4}{2}(2\alpha+3\beta))>0,\]
under Conjecture \ref{conjmult} (b). We then invoke Lemma \ref{lemintsummand} with $J=I_{G_2,0}^{\dagger}$, $\tau=\Pi_{\mf{U},\mc{F}}$ and $S$ the set of classical integer weights $l$ with $l>4s_p+4$; the conclusion is exactly the conclusion of the proposition, so we are done.
\end{proof}

\subsection{The $p$-adic family}
\label{subsecpadicfam}
We keep the notation of the previous three subsections. In particular, we have our $\mc{H}_p$-representation $\Pi_F^{(p)}$ which we are now in a position to $p$-adically deform. To do this, we first set a bit of notation.

Let $S$ be a finite set of finite places of $\Q$. Let $\A_f^{S,p}$ be the ring of finite adeles away from $S$ and $p$, and let $K_{f,\mr{max}}^{S,p}\subset G_2(\A_f^{S,p})$ be the projection of the maximal compact subgroup $K_{f,\mr{max}}$ to $G_2(\A_f^{S,p})$. We let
\[R_{S,p}=C_c^\infty(K_{f,\mr{max}}^{S,p}\backslash G_2(\A_f^{S,p})/K_{f,\mr{max}}^{S,p},\Q_p)\otimes_{\Z_p}\mc{U}_p.\]
Then $R_{S,p}$ is a commutative $\Q_p$-algebra with identity.

Recall we have written $N$ for the level of $F$. Let $S(N)$ denote the set of finite places of $\Q$ dividing $N$. Then since $\Pi_F^{(p)}$ is spherical away from $S(N)$, it determines a character
\begin{equation}
\label{eqtheta0}
\theta_0:R_{S(N),p}\to L,
\end{equation}
where $L$ is some finite extension of $\Q_p$.

Let $K_f^p$ be an open compact subgroup of $K_{f,\mr{max}}^p$ such that $\Pi_F^{(p)}$ is of level $K_f^p$, and such that the projection of $K_f^p$ to $G_2(\A_f^{S,p})$ is $K_{f,\mr{max}}^{S,p}$. Then we have the following theorem.

\begin{theorem}
\label{thmpadicdef}
Assume Conjecture \ref{conjmult} (b). Then there are
\begin{itemize}
\item an open affinoid subdomain $\mf{U}\subset\mf{X}$,
\item a finite cover $\mbf{w}:\mf{V}\to\mf{U}$,
\item a point $y_0\in\mf{V}(\overline\Q_p)$ with $\mbf{w}(y_0)=\lambda_0$,
\item a Zariski dense subset $\Sigma\subset\mf{V}(\overline\Q_p)$ with $\mbf{w}(y)$ algebraic and regular for every $y\in\Sigma$,
\item for each $y\in\Sigma$, a nonempty finite set $\Pi_y$ of finite slope $p$-stabilizations of irreducible, cohomological, cuspidal automorphic representations of $G_2$ of weight $\mbf{w}(y)$ of level $K_f^p$,
\item a $\Q_p$-algebra homomorphism $\theta_{\mf{V}}:R_{S(N),p}\to\mc{O}(\mf{V})$,
\item a nontrivial $\Q_p$-linear map $I_{\mf{V}}:\mc{H}_p(K_f^p)\to\mc{O}(\mf{V})$,
\end{itemize}
satisfying the following properties:
\begin{enumerate}[label=(\arabic*)]
\item The set $\mbf{w}(\Sigma)$ contains all sufficiently regular dominant algebraic weights in $\mf{X}(\Q_p)$;
\item There is a proper Zariski closed subset of $\mf{U}$ such that for $y\in\Sigma$ with $\mbf{w}(y)$ not in this closed subset, $\Pi_y$ only contains one representation;
\item The specialization of $\theta_{\mf{V}}$ at the point $y_0$ is the character $\theta_0$ of \eqref{eqtheta0};
\item The representation $\Pi_F^{(p)}$ is an irreducible component of the specialization of $I_{\mf{V}}$ at $y_0$;
\item For each $y\in\Sigma$ and each $\sigma\in\Pi_y$, the specialization of $\theta_{\mf{V}}$ is the character of $R_{S(N),p}$ determined by $\sigma$;
\item For each $y\in\Sigma$, the specialization $I_y$ of $I_{\mf{V}}$ at $y$ satisfies
\[I_y(f)=\sum_{\sigma\in\Pi_y}m_{G_2,0}^{\cl}(\sigma,\mbf{w}(y))\tr(f|\sigma),\]
for $f\in\mc{H}_p(K_f^p)$.
\end{enumerate}
\end{theorem}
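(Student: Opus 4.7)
The plan is to deduce this theorem from Proposition \ref{propmultnonzero} by applying Urban's eigenvariety machinery to the $\mf{X}$-family of effective finite slope cuspidal character distributions $I_{G_2,0}^\dagger$ reviewed in Section \ref{subsecev}. By Proposition \ref{propmultnonzero}, we have $m_{G_2,0}^\dagger(\Pi_F^{(p)},\lambda_0)>0$ under Conjecture \ref{conjmult} (b), so the idea is simply that $\Pi_F^{(p)}$ is a classical point on the eigenvariety cut out by $I_{G_2,0}^\dagger$, and the desired $p$-adic family is an appropriate neighborhood of this point.

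Concretely, I would fix an element $t_0\in T^{--}$ and set $f_0=\vol(K_f^p)^{-1}1_{K_f^p}\otimes u_{t_0}$, and form the global Fredholm series $P(f_0,X)\in\mc{O}(\mf{X})[\![X]\!]$ attached to $I_{G_2,0}^\dagger$. The characteristic polynomial $Q_0(X)=\det(1-Xf_0\mid\Pi_F^{(p)})$ divides the specialization of $P(f_0,X)$ at $\lambda_0$, by the hypothesis on the multiplicity. Shrinking an affinoid neighborhood of $\lambda_0$ in $\mf{X}$ as necessary, one obtains an admissible factorization $P(f_0,X)=Q(X)S(X)$ over this neighborhood $\mf{U}$ such that $Q_0\mid Q_{\lambda_0}$, and one then invokes the slope decomposition theorem \cite[Thm.~2.3.8]{urbanev} as in the proof of Lemma \ref{lemintsummand} to obtain a finite flat $\mc{O}(\mf{U})$-module $N(Q)$ together with an action of $\mc{H}_p(K_f^p)$; this defines the cover $\mbf{w}:\mf{V}\to\mf{U}$ as (a connected component of) $\spec$ of the image of $\mc{H}_p(K_f^p)$ in $\End_{\mc{O}(\mf{U})}(N(Q))$, together with the point $y_0\in\mf{V}(\overline\Q_p)$ singled out by the $\mc{H}_p$-eigensystem of $\Pi_F^{(p)}$. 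The algebra homomorphism $\theta_{\mf{V}}$ is the tautological one coming from the Hecke action on $N(Q)$ restricted to $R_{S(N),p}\subset\mc{H}_p(K_f^p)$, and the $\Q_p$-linear map $I_{\mf{V}}(f)$ is defined as the trace of $f$ on the generalized eigenspace in $N(Q)$ attached to each point, giving (3), (4), (5) by construction.

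The classical set $\Sigma$ and its density property (1) come from classicity: for weights $\mbf{w}(y)$ sufficiently regular and dominant in $\mf{X}(\Q_p)$, the slope of any constituent of $N(Q)$ at $y$ is noncritical with respect to $\mbf{w}(y)$, so \cite[Cor.~4.6.5]{urbanev} identifies the multiplicities in $I_{G_2,0}^\dagger(\cdot,\mbf{w}(y))$ with those in the cuspidal classical distribution $I_{G_2,0}^{\cl}(\cdot,\mbf{w}(y))$. This yields a nonempty finite set $\Pi_y$ of finite slope $p$-stabilizations of cuspidal, cohomological automorphic representations of $G_2(\A)$ of weight $\mbf{w}(y)$ and of level $K_f^p$, and gives property (6). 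Property (2), that $\Pi_y$ is generically a singleton, follows from the fact that the rank of the projector onto any given generalized Hecke eigenspace on $N(Q)$ is upper-semicontinuous in $\mf{V}$, so that the locus where more than one representation of $G_2(\A_f)$ contributes (equivalently, where distinct Hecke eigensystems on $N(Q)$ collide or where the multiplicity in $I_{G_2,0}^{\cl}(\cdot,\mbf{w}(y))$ exceeds one) is cut out by the vanishing of suitable resultants and hence forms a proper Zariski closed subset of $\mf{U}$.

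The main obstacle in this outline is the verification that everything in Urban's framework is applicable in our setting; as noted in Section \ref{subsecev}, there is a subtle error in \cite{urbanev} in the general definition of $I_{G,0}^\dagger$ via Franke's formula, but for $G_2$ and its Levi subgroups the formula \eqref{eqoccuspdef} used here coincides with the corrected definition, so the statements \cite[Cor.~4.6.5]{urbanev} and \cite[Cor.~4.7.4]{urbanev} that we rely on go through without modification. The remaining bookkeeping---choosing $\mf{U}$ small enough to ensure admissibility of the factorization, enlarging the residue field so that $y_0$ is defined over it, and passing to the connected component of $\mf{V}$ containing $y_0$---is routine.
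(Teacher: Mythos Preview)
Your proposal is correct and follows the same approach as the paper: reduce to Proposition \ref{propmultnonzero} for the positivity of $m_{G_2,0}^\dagger(\Pi_F^{(p)},\lambda_0)$, then apply Urban's eigenvariety machinery. The paper's proof is in fact a single sentence invoking \cite[Theorem 5.4.4]{urbanev} directly as a black box, whereas you have unpacked the content of that theorem (Fredholm series, slope decomposition, classicity via \cite[Cor.~4.6.5]{urbanev}); both are valid and amount to the same argument.
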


\begin{proof}
By Proposition \ref{propmultnonzero}, under Conjecture \ref{conjmult} (b) we have $m_{G_2,0}^\dagger(\Pi_F^{(p)},\lambda_0)>0$. Then this theorem follows immediately from the main result of Urban's paper, \cite[Theorem 5.4.4]{urbanev}.
\end{proof}

We now examine the variation of the local components of the representations in the $p$-adic family $\mf{V}$. In the following, we view $\overline\Q_p$ as $\C$ via our fixed isomorphism.

\begin{proposition}
\label{propunratp}
In the context of Theorem \ref{thmpadicdef}, there is a neighborhood $\mf{U}'$ of $y_0$ in $\mf{U}$ such that if $y\in\Sigma$ is in $\mf{U}'$ is such that $\mbf{w}(y)$ is sufficiently regular (that is, greater than a sufficiently large multiple of the weight $\rho$) then any $\sigma\in\Pi_y$ is a $p$-stabilization of an irreducible, cohomological, cuspidal automorphic representation which is unramified at $p$.
\end{proposition}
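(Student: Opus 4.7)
The plan is to combine the explicit description of $\mc{L}_\alpha(\pi_F,1/10)_p$ from Proposition \ref{proplqofPi} with Proposition \ref{propvppsirr} and the rigid-analytic variation of Satake parameters over $\mf{V}$. Recall that Proposition \ref{proplqofPi} identifies $\mc{L}_\alpha(\pi_F,1/10)_p$ with the irreducible unramified principal series $\iota_{P_\beta(\Q_p)}^{G_2(\Q_p)}(\chi\circ\det_\beta)$, which visibly admits a $G_2(\Z_p)$-fixed vector; thus the automorphic representation underlying $\Pi_F^{(p)}$ is itself unramified at $p$. The goal is to propagate this property to classical $y\in\Sigma$ close to $y_0$ with sufficiently regular $\mbf{w}(y)$, by showing that the corresponding $\sigma_p$ is again a full, irreducible, unramified principal series.

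For each $y\in\mf{V}(\overline{\Q}_p)$, the specialization at $p$ of $\theta_{\mf V}$ is a character $\theta_y$ of $\mc{U}_p$. Undoing the weight twist $u_t\mapsto \vert\mbf{w}(y)(t)\vert^{-1}$ built into the normalization (see Section \ref{subsecpif}) extracts an unramified character $\chi_y\colon T(\Q_p)\to\overline{\Q}_p^\times$, varying rigid-analytically on $\mf{V}$, with the property that $\sigma_p^I$ contains a $T(\Q_p)$-eigenvector whose character matches a Weyl twist of $\chi_y\delta_{B(\Q_p)}^{1/2}$. By Casselman's description of the Jacquet module, this forces $\sigma_p$ to be a subquotient of $\iota_{B(\Q_p)}^{G_2(\Q_p)}(\chi_y)$. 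At $y=y_0$ one recovers $\chi_{y_0}=(\chi\circ\alpha)\cdot\vert\det_\alpha\vert^{1/2}$, for which $\iota_{B(\Q_p)}^{G_2(\Q_p)}(\chi_{y_0})$ is reducible with $\mc{L}_\alpha(\pi_F,1/10)_p$ as its Langlands quotient.

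Next we apply Proposition \ref{propvppsirr}: the induced representation $\iota_{B(\Q_p)}^{G_2(\Q_p)}(\chi_y)$ is irreducible provided $\chi_y\circ\gamma^\vee\ne\vert\cdot\vert$ for every root $\gamma$ and, if $\chi_y$ is unitary, that it is not a product of two distinct order-two characters. For $\mbf{w}(y)$ sufficiently regular, the valuations $v_p(\chi_y(\gamma^\vee(p)))$ for positive coroots $\gamma^\vee$ grow linearly with the distance of $\mbf{w}(y)$ from the walls of the dominant chamber (the slope of the family being locally constant near $y_0$, only the weight contribution can vary unboundedly). This forces $\chi_y$ to be far from unitary, eliminating the unitary reducibility scenario. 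For each root $\gamma$, the condition $\chi_y\circ\gamma^\vee=\vert\cdot\vert$ is an analytic equation in $y$; since the weight contribution to $v_p(\chi_y(\gamma^\vee(p)))$ depends nontrivially on $\mbf{w}(y)$, the equation is not identically satisfied on any irreducible component of $\mf{V}$ containing $y_0$, and so cuts out a proper Zariski-closed subvariety of $\mf{U}$. Restricting to a small enough neighborhood $\mf{U}'$ of $y_0$ and requiring $\mbf{w}(y)$ to be large enough (in the dominant direction) avoids all these finitely many walls simultaneously.

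With $\iota_{B(\Q_p)}^{G_2(\Q_p)}(\chi_y)$ irreducible, $\sigma_p$ must equal this entire induced representation, which admits a $G_2(\Z_p)$-fixed vector since $\chi_y$ is unramified; hence the underlying automorphic representation is unramified at $p$. The main obstacle in executing the plan is the precise normalization needed to identify $\chi_y$ from $\theta_y$ and to verify that the reducibility loci are indeed proper closed subvarieties of the component through $y_0$; but once this is pinned down — essentially by tracking the weight and slope contributions to the valuation of $\chi_y$ on coroots — the conclusion follows from Proposition \ref{propvppsirr} by a direct continuity argument.
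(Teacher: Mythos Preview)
Your proposal is correct and follows essentially the same approach as the paper: both argue that $\sigma_p$ is a constituent of an unramified principal series $\iota_{B(\Q_p)}^{G_2(\Q_p)}(\chi)$, use continuity of slopes near $y_0$ together with the regularity of $\mbf{w}(y)$ to force $v_p(\chi(\gamma^\vee(p)))$ away from $\pm 1$ for every root $\gamma$, and then invoke Proposition~\ref{propvppsirr} to conclude that the principal series is irreducible and hence equal to $\sigma_p$. The paper phrases the slope argument slightly more directly (fixing the slope as $s_p(2\alpha+3\beta)+\beta$ and reading off $\mu=\text{slope}-\lambda$), whereas you track the analytic variation of $\chi_y$, but the substance is the same.
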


\begin{proof}
The content of this proposition is the unramifiedness assertion. Let $y\in\Sigma$ with $\mbf{w}(y)$ regular and $\sigma\in\Pi_y$. Write $\lambda=\mbf{w}(y)$. Let $\pi_\sigma$ be the automorphic representation of which $\sigma$ is a $p$-stabilization. By definition, the component $\pi_{\sigma,p}$ of $\pi_\sigma$ at $p$ has $I$-fixed vectors, and the space $\pi_{\sigma,p}^I$ of such, as a $\mc{U}_p$-module, has an irreducible subquotient on which $\mc{U}_p$ acts by a character $\theta$. Let $\mu$ be the slope of this character. Then $\theta$ has slope $\mu+\lambda$ (recall that the distribution $I_{G_2}^{\cl}(\cdot,\lambda)$ is normalized by a factor of $\vert\lambda(t)\vert^{-1}$ as in \eqref{eqocandcl}). 

Now if moreover $y$ is in a sufficiently small neighborhood $\mf{U}'$ of $y_0$, then the slope of $\sigma$ is equal to that of $\Pi_F^{(p)}$ by continuity. Thus we have
\[\mu+\lambda=s_p(2\alpha+3\beta)+\beta.\]
So if $\lambda$ is sufficiently regular, then $\mu$ is far from the wall of any chamber, and so in particular we have that
\[\vert v_p(\langle\mu,\gamma^\vee\rangle)\vert\]
is sufficiently large for any root $\gamma$ for $G_2$.

Now since $\pi_{\sigma,p}^I\ne 0$, we know $\pi_{\sigma,p}$ is a constituent of a principal series representation, say
\[\iota_{B(\Q_p)}^{G_2(\Q_p)}(\chi)\]
for some unramified character $\chi$ of $T(\Q_p)$. But, by the same arguments as in Proposition \ref{proppstabnofPi}, the slopes of the characters of $\mc{U}_p$ acting on such a principal series representation are bounded in terms of the values $v_p(\chi(\gamma^\vee(p)))$ for roots $\gamma$. So if $\lambda$ sufficiently regular, this forces $\chi$ to satisfy in particular that $v_p(\chi(\gamma^\vee(p)))\ne\pm 1$. Then we may appeal to Proposition \ref{propvppsirr} to see that
\[\pi_{\sigma,p}\cong\iota_{B(\Q_p)}^{G_2(\Q_p)}(\chi),\]
since this principal series representation is irreducible. This completes the proof.
\end{proof}

\begin{proposition}
\label{propscvariation}
Assume $v$ is a place of $\Q$, different from $2$ or $3$, where $\pi_{F,v}$ is supercuspidal. Then, in the context of Theorem \ref{thmpadicdef}, there is a neighborhood $\mf{U}'$ of $y_0$ in $\mf{U}$ with the following property: If $y\in\Sigma$ with $\mbf{w}(y)$ in $\mf{U}'$ and $\Pi_y=\{\sigma\}$ is a singleton, then the component $\sigma_v$ of $\sigma$ at $v$ is either given by an irreducible principal series representation of the form
\[\sigma_v\cong\iota_{P_\alpha(\Q_v)}^{G_2(\Q_v)}(\pi_{F,v},s)\]
for some $s\in\C$ with $\re(s)\geq 0$, or it is isomorphic to the component $\Pi_{F,v}^{(p)}$ of $\Pi_F^{(p)}$ at $v$.
\end{proposition}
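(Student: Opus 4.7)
The plan is to invoke the theory of Bushnell--Kutzko types for $G_2(\Q_v)$, which by the work of Fintzen \cite{fintzen} is available whenever $v$ does not divide the order $\lvert W(G_2)\rvert=12$ of the Weyl group of $G_2$, and hence in our situation. Let $\mf{s}=[M_\alpha,\pi_{F,v}]_{G_2}$ denote the inertial equivalence class of the cuspidal support of $\Pi_{F,v}^{(p)}$; the corresponding Bernstein block consists of all irreducible subquotients of $\iota_{P_\alpha(\Q_v)}^{G_2(\Q_v)}(\pi_{F,v},s)$ as $s$ varies in $\C/\tfrac{2\pi i}{\log q_v}\Z$. Fintzen's theorem furnishes an $\mf{s}$-type, and from it an idempotent $E_v\in\mc{H}(G_2(\Q_v))$ with the property that, for any irreducible admissible representation $\tau$ of $G_2(\Q_v)$, the trace $\tr(E_v\mid\tau)$ is finite and is nonzero exactly when $\tau$ lies in $\mf{s}$. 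Choose $f_0\in\mc{H}_p(K_f^p)$ with $v$-component $E_v$ and $\tr(f_0\mid\Pi_F^{(p)})\neq 0$; then $I_{\mf{V}}(f_0)\in\mc{O}(\mf{V})$ is nonzero at $y_0$, hence on a neighborhood $\mf{U}'$ of $y_0$. For any $y\in\mf{U}'$ with $\Pi_y=\{\sigma\}$, this forces $\sigma_v\in\mf{s}$.

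The next step is to pin down the unramified twist parameter analytically. By the theory of types for supercuspidals, the idempotented algebra $E_v\,\mc{H}(G_2(\Q_v))\,E_v$ is Morita equivalent to a commutative algebra of regular functions on the variety of unramified twists of $(M_\alpha,\pi_{F,v})$ modulo the stabilizer $W_{\mf{s}}\subset N_{G_2}(M_\alpha)/M_\alpha$; since $F$ has trivial nebentypus, $\pi_{F,v}$ is self-dual and $W_{\mf{s}}\cong\Z/2\Z$ acts by $s\mapsto-s$. Choosing suitable central elements, pairing their traces against $I_{\mf{V}}$, and normalizing by $I_{\mf{V}}(f_0)$ yields analytic functions on $\mf{U}'$ whose values at $y_0$ correspond to the cuspidal support $(M_\alpha,\pi_{F,v}\otimes\lvert\det_\alpha\rvert^{1/10})$ of $\Pi_{F,v}^{(p)}$. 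Since $s\mapsto s^2$ is an analytic isomorphism near $s=1/10$, this recovers an analytic parameter $s_y$ on $\mf{U}'$ with $s_{y_0}=1/10$ and $\re(s_y)>0$ after shrinking $\mf{U}'$. Shrinking $\mf{U}'$ further, we may assume that $s_y=1/10$ is the only value attained at which $\iota_{P_\alpha(\Q_v)}^{G_2(\Q_v)}(\pi_{F,v},s_y)$ is reducible.

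For a singleton $\Pi_y=\{\sigma\}$ with $s_y\neq 1/10$, the induction is irreducible and isomorphic to $\sigma_v$, giving the first alternative. The only remaining case is $s_y=1/10$, where the induction has two Jordan--H\"older constituents, $\Pi_{F,v}^{(p)}$ and a complementary submodule $\Pi_{F,v}^{(p),\mr{sub}}$; the main obstacle will be to exclude the latter. To do so, I would choose $h\in E_v\,\mc{H}(G_2(\Q_v))\,E_v$ whose traces on these two constituents differ (such an $h$ exists because the trace characters of distinct irreducibles in $\mf{s}$ are linearly independent on $E_v\,\mc{H}(G_2(\Q_v))\,E_v$), construct $\tilde f\in\mc{H}_p(K_f^p)$ coinciding with $f_0$ outside $v$ and having $v$-component $h$, and consider the analytic function $\Phi=I_{\mf{V}}(\tilde f)/I_{\mf{V}}(f_0)$ on $\mf{U}'$. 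Its value at $y_0$ is the scalar $c_0=\tr(h\mid\Pi_{F,v}^{(p)})/\tr(E_v\mid\Pi_{F,v}^{(p)})$, and by $p$-adic continuity, after shrinking $\mf{U}'$ once more, the values of $\Phi$ are closer to $c_0$ than to the analogous scalar attached to $\Pi_{F,v}^{(p),\mr{sub}}$. At any singleton $y\in\mf{U}'$ with $s_y=1/10$, $\Phi(y)$ must equal one of these two scalars, forcing $\sigma_v\cong\Pi_{F,v}^{(p)}$ and completing the proof.
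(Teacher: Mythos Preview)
Your approach is essentially the same as the paper's, and is correct. Both arguments begin identically: invoke Fintzen's types for $G_2(\Q_v)$ (valid since $v\nmid 12$) to obtain an idempotent detecting the Bernstein component $\mf{s}=[M_\alpha,\pi_{F,v}]_{G_2}$, test it against $I_{\mf{V}}$, and use analyticity to force $\sigma_v\in\mf{s}$ on a neighborhood of $y_0$.

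The difference lies in how you handle the finitely many reducibility points. You work harder than the paper does: you interpolate the cuspidal-support parameter $s_y$ analytically via the center of the type Hecke algebra, then shrink so that $s_y=1/10$ is the only reducibility value that can occur, and finally separate the two constituents at $s=1/10$ with a distinguishing operator $h$. The paper bypasses the interpolation entirely. It simply observes (via Shahidi) that the set of \emph{all} irreducible constituents arising at \emph{all} reducibility points is finite; each such constituent other than $\Pi_{F,v}^{(p)}$ can then be excluded by continuity of a distinguishing Hecke trace, exactly as in your final step. So your last step is the paper's whole endgame, and your middle step is extra.

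One small caution about your interpolation: you assert that $\Phi(y_0)=I_{\mf{V}}(\tilde f)(y_0)/I_{\mf{V}}(f_0)(y_0)$ equals $\tr(h\mid\Pi_{F,v}^{(p)})/\tr(E_v\mid\Pi_{F,v}^{(p)})$. Theorem~\ref{thmpadicdef}(4) only guarantees that $\Pi_F^{(p)}$ is \emph{a} component of $I_{y_0}$, not the unique one, so this ratio is a priori a weighted average over components rather than the value for $\Pi_{F,v}^{(p)}$ alone. The paper's argument has the same implicit wrinkle when passing from $\tr(f\mid\Pi_F^{(p)})\ne 0$ to $I_{\mf{V}}(f)(y_0)\ne 0$, but since it never needs the exact value of any ratio, the issue is milder there. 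In practice this is harmless, but it is one more reason the paper's finiteness shortcut is cleaner than tracking $s_y$ explicitly.
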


\begin{proof}
Fintzen \cite{fintzen} has established the theory of types for reductive groups $G$ over nonarchimedean fields $F$ which split over a tamely ramified extension of $F$ and for which the order of the Weyl group of $G$ is coprime to the residue characteristic of $F$. We apply this theory for $G=G_2$ and $F=\Q_v$, which we can do as the Weyl group of $G_2$ is $12$ and we assumed $v\ne 2,3$. What this gives us is an idempotent $e$ in the Hecke algebra of $G_2(\Q_v)$ such that $e$ acts nontrivially on constituents of representations of the form
\begin{equation}
\label{eqsupercuspps}
\iota_{P_\alpha(\Q_v)}^{G_2(\Q_v)}(\pi_{F,v}\otimes\delta_{P_\alpha(\Q_v)}^s),\qquad s\in\C,
\end{equation}
and such that $e$ acts as zero on all other irreducible admissible representations of $G_2(\Q_v)$. By the self-duality of $\pi_{F,v}$, we may assume $\re(s)\geq 0$.

Now since $\Pi_{F,v}^{(p)}$ is a constituent of
\[\iota_{P_\alpha(\Q_v)}^{G_2(\Q_v)}(\pi_{F,v}\otimes\delta_{P_\alpha(\Q_v)}^{1/10}),\]
we have
\[\tr(e|\Pi_{F,v}^{(p)})\ne 0.\]
Let $K_f^{p,v}$ be the component of the tame level $K_f^p$ away from $v$, let $t\in T^{--}$, and consider the Hecke operator
\[f=e\otimes 1_{K_f^{p,v}}\otimes u_t.\]
We may assume the tame level $K_f^p$ is such that $f\in\mc{H}_p(K_f^p)$. Then
\[\tr(f|\Pi_F^{(p)})\ne 0.\]
Then since $I_{\mf{V}}(f)$ is analytic on $\mf{V}$, after possibly restricting to a neighborhood $\mf{U}'$ of $y_0$, then for any $y\in\Sigma$ with $\mbf{w}(y)$ in $\mf{U}'$ with $\Pi_y=\{\sigma\}$ a singleton, we have
\[\tr(f|\sigma)\ne 0.\]
In particular,
\[\tr(f|\sigma_v)\ne 0\]
for such $\sigma$, and therefore $\sigma_v$ is a constituent of \eqref{eqsupercuspps}. By the results of \cite[\S 8]{shahidics}, there are only finitely many values of $s$ where \eqref{eqsupercuspps} is reducible. By possibly shrinking $\mf{U}'$ again, we may exclude the possibility that $\sigma_v$ is a constituent of such a reducible principal series, besides possibly being isomorphic to $\Pi_{F,v}^{(p)}$. This completes the proof.
\end{proof}

\begin{proposition}
\label{proprampsvariation}
Assume $v$ is a place of $\Q$, different from $2$ or $3$, where $\pi_{F,v}$ is a ramified principal series representation of $GL_2(\Q_v)$, say
\[\pi_{F,v}\cong\iota_{B_2(\Q_v)}^{GL_2(\Q_v)}(\chi\boxtimes\chi^{-1}),\]
for some ramified, unitary character $\chi$ of $\Q_v^\times$; see the notation of Lemma \ref{lemmcfvariation} (b). Then, in the context of Theorem \ref{thmpadicdef}, there is a neighborhood $\mf{U}'$ of $y_0$ in $\mf{U}$ with the following property: If $y\in\Sigma$ with $\mbf{w}(y)$ in $\mf{U}'$ and $\Pi_y=\{\sigma\}$ is a singleton, then the component $\sigma_v$ of $\sigma$ at $v$ is given by an irreducible principal series representation either of the form
\[\iota_{P_\beta(\Q_v)}^{G_2(\Q_v)}((\chi\vert\cdot\vert^s)\circ\det_\beta)\qquad\textrm{or}\qquad\iota_{P_\alpha(\Q_v)}^{G_2(\Q_v)}((\chi\vert\cdot\vert^s)\circ\det_\alpha),\]
for some $s\in\C$ with $\re(s)\geq 0$. Furthermore, if $\chi|_{\Z_v^\times}$ is not of order $2$, then it is always the former possibility that occurs.
\end{proposition}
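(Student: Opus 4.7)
I would follow closely the argument used to prove Proposition \ref{propscvariation}. Since $v \ne 2, 3$, Fintzen's theorem \cite{fintzen} provides an idempotent $e$ in the Hecke algebra of $G_2(\Q_v)$ whose action on an irreducible admissible representation $\tau$ of $G_2(\Q_v)$ is nonzero precisely when $\tau$ lies in the Bernstein component $\mathfrak{s}$ of $\mc{L}_\alpha(\pi_F, 1/10)_v = \Pi_{F,v}^{(p)}$. By Proposition \ref{proplqofPi}, the latter representation equals $\iota_{P_\beta(\Q_v)}^{G_2(\Q_v)}(\chi \circ \det_\beta)$. Taking $f = e \otimes 1_{K_f^{p,v}} \otimes u_t$ for a suitable $t \in T^{--}$, and possibly enlarging the tame level $K_f^p$ so that $f \in \mc{H}_p(K_f^p)$, we have $\tr(f \mid \Pi_F^{(p)}) \ne 0$. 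By analyticity of $I_{\mf{V}}(f)$, after restricting to a neighborhood $\mf{U}'$ of $y_0$ we have $\tr(f \mid \sigma) \ne 0$ for every singleton $\Pi_y = \{\sigma\}$ with $\mbf{w}(y) \in \mf{U}'$, and hence $\sigma_v \in \mathfrak{s}$ for all such $\sigma$.

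The next step is to classify the irreducible objects of $\mathfrak{s}$ up to their cuspidal supports. Using the identities $\det_\alpha = \alpha + 2\beta$ and $\det_\beta = 2\alpha + 3\beta$ in $X^*(T)$, an induction-in-stages computation shows that the cuspidal support of $\Pi_{F,v}^{(p)}$ on $T(\Q_v)$ is, up to Weyl conjugation, $(\chi \circ \alpha) \cdot (|\cdot|^{1/2} \circ (\alpha + 2\beta))$. The cuspidal support of $\iota_{P_\beta(\Q_v)}^{G_2(\Q_v)}((\chi |\cdot|^s) \circ \det_\beta)$ is inertially equivalent to this for every $s \in \C$, differing by an unramified twist after applying the Weyl element $w_\alpha w_\beta$. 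On the other hand, the cuspidal support of $\iota_{P_\alpha(\Q_v)}^{G_2(\Q_v)}((\chi |\cdot|^s) \circ \det_\alpha)$ is inertially equivalent to that of $\Pi_{F,v}^{(p)}$ if and only if the character $\chi^2 \circ \beta$ is trivial on $T(\Z_v)$, equivalently $\chi|_{\Z_v^\times}$ is of order dividing $2$; since $\chi$ is ramified, this means of order exactly $2$. The only obstruction to equivalence under other Weyl elements comes from the fact that Weyl preserves root lengths: $\chi$ attached to the long-root direction $\alpha$ in the support of $\Pi_{F,v}^{(p)}$ can only be Weyl-matched with $\chi$ attached to a long-root direction, while $\iota_{P_\alpha}^{G_2}$-inductions place $\chi$ on the short root $\alpha + 2\beta$. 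Hence when $\chi|_{\Z_v^\times}$ is not of order $2$, only the first family contributes to $\mathfrak{s}$, yielding the final assertion.

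Since $\mathfrak{s}$ is parameterized by unramified twists of its cuspidal support and since the reducibility results for $G_2$ principal series of Mui\'c \cite{muic} and Keys \cite{keys} ensure irreducibility at all but finitely many points, further shrinking $\mf{U}'$ to avoid these reducibility loci (other than the one corresponding to $\Pi_{F,v}^{(p)}$ itself, which is already of the desired form with $s = 0$) allows us to conclude that $\sigma_v$ is an irreducible principal series of one of the two stated forms. To arrange $\re(s) \ge 0$, we use that the Weyl element $w_{-1}$ conjugates the cuspidal-support pair $(\chi, s)$ to $(\chi^{-1}, -s)$ and that $\pi_{F,v} \cong \iota_{B_2(\Q_v)}^{GL_2(\Q_v)}(\chi^{-1} \boxtimes \chi)$, so the choice of $\chi$ versus $\chi^{-1}$ in the statement is symmetric. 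The main obstacle is the inertial-equivalence analysis above and the verification that every irreducible object of $\mathfrak{s}$ on which our chosen Hecke operator acts nontrivially is a full principal series of one of the two stated forms rather than a proper subquotient; this reduces to a standard but somewhat intricate cuspidal-support computation for $G_2$.
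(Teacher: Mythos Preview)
Your approach via the Bernstein component is the right start, and it matches the paper's first step. However, there is a genuine gap in your final reduction. The Bernstein component $\mf{s}$ determined by the inertial class of $(\chi\circ\alpha)|_{T(\Z_v)}$ contains many more irreducible objects than the two families you list. Generically (i.e., for most unramified $\Lambda$), the full Borel principal series
\[
\iota_{B(\Q_v)}^{G_2(\Q_v)}\bigl((\chi\circ\alpha)\cdot e^{\langle H_B(\cdot),\Lambda\rangle}\bigr)
\]
is itself irreducible (by Proposition \ref{propvppsirr}), and it lies in $\mf{s}$. Likewise, at the reducibility points the Steinberg-twisted inductions $\iota_{P_\gamma}^{G_2}(\mr{St}\otimes((\chi|\cdot|^s)\circ\det_\gamma))$ also lie in $\mf{s}$. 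Your proposed ``shrink $\mf{U}'$ to avoid reducibility loci'' cannot exclude the full Borel inductions, since those are the \emph{generic} members of $\mf{s}$, not isolated points; and a cuspidal-support computation alone cannot distinguish an irreducible full Borel induction from a $P_\beta$-induction of a character, since both have supercuspidal support on $T$.

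The paper closes this gap with an additional invariant: the dimension of fixed vectors under the pro-unipotent subgroup $I_{m,v}'=\{g\in G_2(\Z_v):g\equiv 1\in N(\Z/\ell_v^m)\}$ for $m$ large. One computes that $\iota_{P_\gamma}^{G_2}((\chi|\cdot|^s)\circ\det_\gamma)$ has exactly $M_{2,\gamma}=\#(B\backslash G_2/I_{m,v}')$ such fixed vectors, while the Steinberg-twisted induction has $(M_1-1)M_{2,\gamma}$ and the full Borel induction has $M_1 M_{2,\gamma}$, where $M_1=\#(B_\gamma\backslash M_\gamma/I_{m,v,\gamma}')>2$. Since $\Pi_{F,v}^{(p)}=\iota_{P_\beta}^{G_2}(\chi\circ\det_\beta)$ has exactly $M_{2,\beta}$ fixed vectors, continuity of $\tr(\vol(I_{m,v}')^{-1}1_{I_{m,v}'}\mid\sigma_v)$ over $\mf{V}$ forces the same count for $\sigma_v$, ruling out the other three possibilities. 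Your proposal needs this (or an equivalent) step to be complete.
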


\begin{proof}
As in the previous proposition, we invoke the theory of types as developed in \cite{fintzen} to conclude that there is a neighborhood $\mf{U}'$ of $y_0$ in $\mf{U}$ such that, this time, for every $y\in\Sigma$ with $\mbf{w}(y)$ in $\mf{U}'$, and $\Pi_y=\{\sigma\}$ a singleton, we have that $\sigma_v$ is a constituent of a principal series representation of the form
\begin{equation}
\label{eqramps}
\iota_{B(\Q_v)}^{G_2(\Q_v)}((\chi\circ\alpha)\cdot e^{\langle H_B(\cdot),\Lambda\rangle})
\end{equation}
for some $\Lambda\in X^*(T)\otimes\C$. Then one of the following three cases holds:
\begin{enumerate}[label=(\arabic*)]
\item We have that $e^{\langle H_B(\cdot),\Lambda\rangle}$ is a character of order $2$;
\item We have that $e^{\langle H_B\circ\gamma^\vee,\Lambda\rangle}=\vert\cdot\vert$ for some root $\gamma$ of $G_2$;
\item Neither of the above two cases holds.
\end{enumerate}
By Proposition \ref{propvppsirr}, in case (3) the representation \eqref{eqramps} is irreducible. Moreover, by the same proposition, it is also irreducible in case (1) if and only if $\chi$ is not of order $2$. In case (2) it may reduce. In fact it is not too difficult to check using Proposition \ref{propvppsirr} that it reduces if and only if one of the following subcases holds:
\begin{enumerate}[label=(\roman*)]
\item We have $e^{\langle H_B\circ(\alpha+2\beta)^\vee,\Lambda\rangle}=\vert\cdot\vert^{\pm 1}$;
\item We have $(\chi\circ\alpha^\vee)e^{\langle H_B\circ\alpha^\vee,\Lambda\rangle}=\vert\cdot\vert^{\pm 1}$ and $\chi|_{\Z_v^\times}$ is of order $2$;
\item We have $e^{\langle H_B\circ(\alpha+\beta)^\vee,\Lambda\rangle}=\vert\cdot\vert$ and $\chi$ is of order $3$.
\end{enumerate}
By Corollary \ref{corredstps}, in case (i) above, up to semisimplification \eqref{eqramps} reduces into
\begin{equation}
\label{eqrednofps}
\iota_{P_\beta(\Q_p)}^{G_2(\Q_p)}(\mr{St}\otimes((\chi\vert\cdot\vert^s)\circ\det_\beta))\qquad\textrm{and}\qquad\iota_{P_\beta(\Q_p)}^{G_2(\Q_p)}((\chi\vert\cdot\vert^s)\circ\det_\beta),
\end{equation}
for some $s\in\C$ depending on $\Lambda$ (explicitly given by $2s=\langle\alpha^\vee,\Lambda\rangle$). By \cite[Theorem 3.1]{muic}, there are only finitely many values of $\Lambda$ satisfying (i) and such that either of these representations reduce.

In case (iii), we have $\chi\circ\alpha\circ(\alpha+\beta)^\vee=1$ because $\langle\alpha,(\alpha+\beta)^\vee\rangle=3$ and $\chi$ is order $3$. Then $\chi\circ\alpha=\chi^2\circ(\alpha+3\beta)$. The pair $(\alpha+3\beta,\alpha+\beta)$ is related by a Weyl group element to the pair $(\alpha,\alpha+2\beta)$, and so \eqref{eqramps} again reduces into \eqref{eqrednofps} by Corollary \ref{corredstps}. As above, there are only finitely many values of $\Lambda$ satisfying (iii) and such that either of these representations reduce.

In case (ii), first let $\chi'$ be the character such that $\chi'|_{\Z_v^\times}=\chi|_{\Z_v^\times}$ and $\chi(\ell_v)=1$, where $\ell_v$ is the rational prime associated with $v$. Then similar arguments as above give us that \eqref{eqramps} is, up to semisimplification, given by
\[\iota_{B(\Q_v)}^{G_2(\Q_v)}(((\chi'\vert\cdot\vert^s)\circ\beta)\cdot\vert\det_\beta\vert^{1/2}),\]
for some $s\in\C$. An analogous argument as in the proof of Proposition \ref{propvppsirr}, with the roles of $\alpha$ and $\beta$ switched, shows that this reduces into
\[\iota_{P_\alpha(\Q_p)}^{G_2(\Q_p)}(\mr{St}\otimes((\chi'\vert\cdot\vert^s)\circ\det_\alpha))\qquad\textrm{and}\qquad\iota_{P_\alpha(\Q_p)}^{G_2(\Q_p)}((\chi'\vert\cdot\vert^s)\circ\det_\alpha).\]
Again by \cite[Theorem 3.1]{muic}, there are only finitely many values of $\Lambda$ satisfying (ii) and such that either of these representations reduce.

Now we note that by definition, and by Proposition \ref{propvppsirr}, the component of $\Pi_v^{(p)}$ at $v$ is given by the irreducible principal series representation
\[\iota_{P_\beta(\Q_p)}^{G_2(\Q_p)}(\chi\circ\det_\beta).\]
By the arguments above and the continuity of Hecke traces on $\mf{V}$, we may shrink $\mf{U}'$ so that we can (and will) assume that for all $y$ in $\Sigma$ with $y\in\mf{U}'$ and $\Pi_y=\{\sigma\}$, we have that $\sigma_v$ is of one of the following five forms:
\begin{equation}
\label{eqsigmavposs1}
\sigma_v\cong\iota_{P_\beta(\Q_p)}^{G_2(\Q_p)}((\chi\vert\cdot\vert^s)\circ\det_\beta),\qquad\textrm{for some }s\in\C;
\end{equation}
\begin{equation}
\label{eqsigmavposs2}
\sigma_v\cong\iota_{P_\beta(\Q_p)}^{G_2(\Q_p)}(\mr{St}\otimes((\chi\vert\cdot\vert^s)\circ\det_\beta)),\qquad\textrm{for some }s\in\C;
\end{equation}
\begin{equation}
\label{eqsigmavposs3}
\sigma_v\cong\iota_{P_\alpha(\Q_p)}^{G_2(\Q_p)}((\chi\vert\cdot\vert^s)\circ\det_\alpha),\qquad\textrm{for some }s\in\C\textrm{ and }\chi^2|_{\Z_v^\times}=1;
\end{equation}
\begin{equation}
\label{eqsigmavposs4}
\sigma_v\cong\iota_{P_\alpha(\Q_p)}^{G_2(\Q_p)}(\mr{St}\otimes((\chi\vert\cdot\vert^s)\circ\det_\alpha)),\qquad\textrm{for some }s\in\C\textrm{ and }\chi^2|_{\Z_v^\times}=1;
\end{equation}
\begin{equation}
\label{eqsigmavposs5}
\sigma_v\cong\iota_{B(\Q_v)}^{G_2(\Q_v)}((\chi\circ\alpha)\cdot e^{\langle H_B(\cdot),\Lambda\rangle}),\qquad\textrm{for some }\Lambda\in X^*(T)\otimes\C.
\end{equation}
Indeed, there are only finitely many other possibilities for $\sigma_v$, and since $\Pi_v^{(p)}$ is not among them, we may make $\mf{U}'$ shrink $\mf{U}'$ to exclude them.

We now exclude the possibility that \eqref{eqsigmavposs2}, \eqref{eqsigmavposs4} or \eqref{eqsigmavposs5} can occur. To do this, let
\[I_{m,v}'=\Sset{g\in G_2(\Z_v)}{(g\modulo{\ell_v^m})\in N(\Z/\ell_v^m\Z)},\]
which is a subgroup of the Iwahori subgroup of $G_2(\Z_v)$. Also let
\[I_{m,v,\gamma}'=I_{m,v}'\cap M_\gamma(\Z_v),\qquad\gamma\in\{\alpha,\beta\},\]
which an analogously defined subgroup of $M_\gamma(\Z_v)$. If $m>1$ is sufficiently large, then
\[\chi(\det_\gamma(g))\vert\det_\gamma(g)\vert^s=1,\qquad\textrm{for all }g\in I_{m,v,\gamma}.\]
Thus for any unramified character $\psi$ of $T(\Q_v)$, the representations
\[\iota_{B_\gamma(\Q_v)}^{M_\gamma(\Q_v)}(\psi)\otimes((\chi\vert\cdot\vert^s)\circ\det_\gamma),\]
where $B_\gamma=B\cap M_\gamma$, have fixed vectors by $I_{m,v,\gamma}'$; in fact the number of independent such fixed vectors is the number $M_1$,
\[M_1=\#(B_\gamma(\Q_v)\backslash M_\gamma(\Q_v)/I_{m,v,\gamma}').\]
This follows since any function in $\iota_{B_\gamma(\Q_v)}^{M_\gamma(\Q_v)}(\psi)^{I_{m,v,\gamma}'}$ is determined by its values on a fixed set of representatives of the double coset space above, and vice-versa.

Similarly, we have
\[\dim_{\C}\iota_{P_\gamma(\Q_v)}^{G_2(\Q_v)}((\chi\vert\cdot\vert^s)\circ\det_\gamma)^{I_{m,v}'}=M_{2,\gamma},\]
where
\[M_{2,\gamma}=\#(B(\Q_v)\backslash G_2(\Q_v)/I_{m,v}'),\]
and we have also
\[\dim_{\C}\iota_{P_\gamma(\Q_v)}^{G_2(\Q_v)}(\mr{St}\otimes((\chi\vert\cdot\vert^s)\circ\det_\gamma))^{I_{m,v}'}=(M_1-1)M_{2,\gamma},\]
and
\[\dim_{\C}\iota_{B(\Q_v)}^{G_2(\Q_v)}((\chi\circ\alpha)\cdot e^{\langle H_B(\cdot),\Lambda\rangle})^{I_{m,v}'}=M_1M_{2,\gamma}.\]
(In particular $M_{2,\gamma}$ is actually independent of $\gamma$.) Note $M_1-1>1$ as $m>1$. Now since the trace of the Hecke operator $\vol(I_{m,v}')1_{I_{m,v}'}$ on any of these spaces of fixed vectors gives the dimension, the continuity of such traces in the family over $\mf{U}'$ excludes the possibility that \eqref{eqsigmavposs2}, \eqref{eqsigmavposs4} or \eqref{eqsigmavposs5} can occur. Thus one of \eqref{eqsigmavposs1} or \eqref{eqsigmavposs3} occurs, and we may assume that $\re(s)\geq 0$ in these formulas by possibly dualizing. This completes the proof.
\end{proof}

\begin{proposition}
\label{propunrstvariation}
Assume $v$ is a place of $\Q$ where $\pi_{F,v}$ is an unramified twist of the Steinberg representation $\mr{St}$ of $GL_2(\Q_v)$, say
\[\pi_{F,v}\cong\mr{St}\otimes\chi,\]
for some (necessarily quadratic) unramified character $\chi$ of $\Q_v^\times$. Then, in the context of Theorem \ref{thmpadicdef}, there is a neighborhood $\mf{U}'$ of $y_0$ in $\mf{U}$ with the following property: If $y\in\Sigma$ with $\mbf{w}(y)$ in $\mf{U}'$ and $\Pi_y=\{\sigma\}$ is a singleton, then the component $\sigma_v$ of $\sigma$ at $v$ is isomorphic to $\Pi_{F,v}^{(p)}$.
\end{proposition}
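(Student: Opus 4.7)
The strategy mirrors that of Propositions \ref{propscvariation} and \ref{proprampsvariation}, but Fintzen's theory of types is unavailable at $v\in\{2,3\}$, so a deep type idempotent cannot be invoked. The saving feature is that $\pi_{F,v}=\mr{St}\otimes\chi$ has Iwahori-fixed vectors; hence $\Pi_{F,v}^{(p)}$, being the Langlands quotient of $\iota_{P_\alpha(\Q_v)}^{G_2(\Q_v)}(\pi_{F,v},1/10)$, is itself Iwahori-spherical, and the whole argument can be confined to the Iwahori level, which is defined at every prime.

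I would first shrink $K_f^p$ so that its component at $v$ is the Iwahori subgroup $I_v$ of $G_2(\Q_v)$ and, for some $t\in T^{--}$, consider the Hecke operator $f=\vol(I_v)^{-1}1_{I_v}\otimes 1_{K_f^{p,v}}\otimes u_t\in\mc{H}_p(K_f^p)$. Its trace on $\Pi_F^{(p)}$ is nonzero, so by analyticity of $I_{\mf V}(f)$ on $\mf V$ there is a neighborhood $\mf U'$ of $y_0$ in which the trace remains nonzero on every $\sigma\in\Pi_y$ with $\Pi_y$ a singleton. Consequently $\sigma_v$ has nonzero $I_v$-fixed vectors and is an irreducible subquotient of some unramified principal series $\iota_{B(\Q_v)}^{G_2(\Q_v)}(\psi)$.

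Next, I would count fixed vectors of $\sigma_v$ under the deeper subgroups $I_{m,v}'$ of $G_2(\Z_v)$ introduced in the proof of Proposition \ref{proprampsvariation}, by exploiting continuity of the traces of the operators $\vol(I_{m,v}')^{-1}1_{I_{m,v}'}\otimes 1_{K_f^{p,v}}\otimes u_t$ over $\mf V$. The fixed-vector dimensions of $\Pi_{F,v}^{(p)}$ under these subgroups can be computed explicitly using induction in stages together with the Bruhat decomposition of $G_2(\Q_v)$, starting from the one-dimensional Iwahori-fixed subspace of $\mr{St}\otimes\chi$ in $GL_2(\Q_v)$. These dimensions distinguish $\Pi_{F,v}^{(p)}$ from the other Iwahori-spherical subquotients of the relevant reducible unramified principal series—enumerated via Corollary \ref{corredstps} together with the Langlands classification. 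After possibly shrinking $\mf U'$ to exclude the finitely many exceptional values of the unramified parameter at which two candidates coincidentally share a dimension, $\sigma_v\cong\Pi_{F,v}^{(p)}$ follows.

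The main obstacle will be this last separating step. Unlike in Proposition \ref{proprampsvariation}, where the contenders split across several Bernstein blocks and were separated via deep types, here all candidates live in the single depth-zero Iwahori block, so pinning down $\Pi_{F,v}^{(p)}$ precisely requires a careful enumeration of Iwahori-spherical subquotients of reducible unramified principal series of $G_2(\Q_v)$, along with an explicit computation of their fixed-vector dimensions under the tower $\{I_{m,v}'\}_{m\geq 1}$. The rigidity giving the stronger conclusion $\sigma_v\cong\Pi_{F,v}^{(p)}$ (rather than a one-parameter family, as in the supercuspidal and ramified-principal-series cases) should come from the discreteness, in the family, of Iwahori-level data at $v$, analogous to Lemma \ref{lemmcfvariation}(a).
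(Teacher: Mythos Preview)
Your opening moves are right and match the paper: since $\pi_{F,v}=\mr{St}\otimes\chi$ is Iwahori-spherical, so is $\Pi_{F,v}^{(p)}$, and continuity of the trace of $\vol(I_v)^{-1}1_{I_v}$ on $\mf V$ forces $\sigma_v$ to be a subquotient of an unramified principal series for $y$ near $y_0$. Where you diverge is in the separating step, which you flag as the main obstacle and propose to handle via the tower $\{I_{m,v}'\}_{m\geq 1}$.

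The paper avoids this tower entirely and works only at the Iwahori level. The observation you are missing is a single fixed-vector count. By Mui\'c \cite[\S 3]{muic}, apart from finitely many exceptional parameters, every irreducible subquotient of an unramified principal series of $G_2(\Q_v)$ is either a full principal series $\iota_{B}^{G_2}(\psi')$, with $\#(B\backslash G_2/I_v)=12$ Iwahori-fixed vectors, or one of $\iota_{P_\gamma}^{G_2}(\psi\circ\det_\gamma)$ or $\iota_{P_\gamma}^{G_2}(\mr{St}\otimes(\psi\circ\det_\gamma))$, each with $\#(P_\gamma\backslash G_2/I_v)=6$. On the other hand $\Pi_{F,v}^{(p)}$ is the Langlands quotient of $\iota_{P_\alpha}^{G_2}(\mr{St}\otimes(\chi\vert\det_\alpha\vert^{1/2}))$, and since $\chi$ is quadratic this induced representation is \emph{reducible} by \cite[Theorem 3.1]{muic}; its Langlands quotient therefore has strictly fewer than $6$ Iwahori-fixed vectors. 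So the integer $\dim\sigma_v^{I_v}$, which is locally constant on $\mf V$, already separates $\Pi_{F,v}^{(p)}$ from all of the continuous families, leaving only a finite list of candidates to exclude by shrinking $\mf U'$.

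Your route through $I_{m,v}'$ for $m\geq 2$ is not obviously wrong, but those deeper subgroups were useful in Proposition~\ref{proprampsvariation} precisely because the ramified character introduced level structure that $I_v$ alone could not see; here everything is already in the Iwahori block, and the extra depth does not buy anything clean. The rigidity you hoped to extract from an analogue of Lemma~\ref{lemmcfvariation}(a) is already sitting in the single number $\dim\Pi_{F,v}^{(p),I_v}<6$.
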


\begin{proof}
Recall that $\Pi_{F,v}^{(p)}$ is the Langlands quotient of
\begin{equation}
\label{eqsteinbergps}
\iota_{P_\alpha(\Q_v)}^{G_2(\Q_v)}(\pi_{F,v}\otimes\vert\det_\alpha\vert^{1/2})\cong\iota_{P_\alpha(\Q_v)}^{G_2(\Q_v)}(\mr{St}\otimes((\chi\vert\cdot\vert^{1/2})\circ\det_\alpha)).
\end{equation}
By induction in stages, this is a constituent of an unramified principal series induced from the Borel, and therefore has a nonzero fixed vector by the Iwahori subgroup $I_v$ of $G_2(\Z_v)$. By the continuity of the trace of the Hecke operator $\vol(I_v)^{-1}1_{I_v}$ on $\mf{V}$, there is thus a neighborhood $\mf{U}'$ of $y_0$ in $\mf{U}$ such that if $y\in\Sigma$ with $\mbf{w}(y)$ in $\mf{U}'$ and $\Pi_y=\{\sigma\}$, then $\sigma_v$ is a constituent of an unramified principal series representation induced from $T$. By the results of \cite[\S 3]{muic}, with finitely many exceptions, such constituents are of the form
\begin{equation}
\label{eqvariousps}
\iota_{P_\gamma(\Q_v)}^{G_2(\Q_v)}(\psi\circ\det_\gamma),\qquad\textrm{or}\qquad\iota_{P_\gamma(\Q_v)}^{G_2(\Q_v)}(\mr{St}\otimes(\psi\circ\det_\gamma)),\qquad\textrm{or}\qquad\iota_{T(\Q_v)}^{G_2(\Q_v)}(\psi'),
\end{equation}
for $\gamma\in\{\alpha,\beta\}$, and for some unramified characters $\psi$ of $\Q_v^\times$ and $\psi'$ of $T(\Q_v)$. Now the first two of these principal series representations have
\[\#(P_\gamma(\Q_v)\backslash G_2(\Q_v)/I_v)=6\]
independent $I_v$-fixed vectors, while the last one has
\[\#(B(\Q_v)\backslash G_2(\Q_v)/I_v)=12\]
independent $I_v$-fixed vectors.

Now assume $\chi$ is quadratic. Then by \cite[Theorem 3.1]{muic}, the principal series \eqref{eqsteinbergps} is reducible and therefore has fewer than $6$ independent $I_v$-fixed vectors. Thus it cannot be isomorphic to any of the principal series representations in \eqref{eqvariousps}, and therefore the same is true, after possibly shrinking $\mf{U}'$, for the $\sigma_v$ as above. Such $\sigma_v$ then fall into a finite list, and after possibly shrinking $\mf{U}'$ again, the proposition follows.
\end{proof}

\begin{proposition}
\label{propramstvariation}
Assume $v$ is a place of $\Q$, different from $2$ or $3$, where $\pi_{F,v}$ is a ramified twist of the Steinberg representation $\mr{St}$ of $GL_2(\Q_v)$, say
\[\pi_{F,v}\cong\mr{St}\otimes\chi,\]
for some (necessarily quadratic) ramified character $\chi$ of $\Q_v^\times$. Then, in the context of Theorem \ref{thmpadicdef}, there is a neighborhood $\mf{U}'$ of $y_0$ in $\mf{U}$ with the following property: If $y\in\Sigma$ with $\mbf{w}(y)$ in $\mf{U}'$ and $\Pi_y=\{\sigma\}$ is a singleton, then the component $\sigma_v$ of $\sigma$ at $v$ is isomorphic to $\Pi_{F,v}^{(p)}$.
\end{proposition}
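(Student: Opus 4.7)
The plan is to follow closely the strategy of Proposition \ref{proprampsvariation}, combined with the final continuity/shrinking argument from Proposition \ref{propunrstvariation}. Since $v \neq 2, 3$ and $|W(G_2)| = 12$ is coprime to the residue characteristic $\ell_v$, Fintzen's theory of types \cite{fintzen} applies to $G_2(\Q_v)$. The supercuspidal support of $\mr{St} \otimes \chi$ is the character $\chi \boxtimes \chi$ of the diagonal torus of $GL_2$, and induction in stages identifies $\Pi_{F,v}^{(p)}$ as a constituent of some induced representation $\iota_{B(\Q_v)}^{G_2(\Q_v)}((\chi \circ \alpha) \cdot e^{\langle H_B(\cdot), \Lambda_0\rangle})$; its Bernstein component is therefore the one already studied in Proposition \ref{proprampsvariation}. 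I would extract from Fintzen's theorem an idempotent $e$ in the Hecke algebra of $G_2(\Q_v)$ picking out this component and acting non-trivially on $\Pi_{F,v}^{(p)}$.

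Next, by the continuity on $\mf{V}$ of the trace functional $I_{\mf{V}}$ applied to $f = e \otimes 1_{K_f^{p,v}} \otimes u_t$ for a suitable $t \in T^{--}$, one obtains a neighborhood $\mf{U}'$ of $y_0$ in which $\sigma_v$ lies in this same Bernstein component for every $y \in \Sigma \cap \mf{U}'$ with $\Pi_y = \{\sigma\}$. Mirroring Proposition \ref{proprampsvariation}, the possibilities for such $\sigma_v$ are constituents of $\iota_{B(\Q_v)}^{G_2(\Q_v)}((\chi \circ \alpha) \cdot e^{\langle H_B(\cdot), \Lambda\rangle})$ for some $\Lambda \in X^*(T)\otimes\C$, and thus fall into one of the five forms \eqref{eqsigmavposs1}--\eqref{eqsigmavposs5}.

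The key difference in this case is that $\Pi_{F,v}^{(p)}$ is a \emph{proper} Langlands quotient of a reducible standard module of the form \eqref{eqsigmavposs4} at $s = 1/2$, so its dimension of $I_{m,v}'$-invariants is strictly smaller than that of any irreducible principal series in the same Bernstein component. Using continuity on $\mf{V}$ of the trace of the idempotent $\vol(I_{m,v}')^{-1}1_{I_{m,v}'}$ and the dimension counts of Proposition \ref{proprampsvariation}, I would exclude the irreducible principal series cases \eqref{eqsigmavposs1}, \eqref{eqsigmavposs3}, and \eqref{eqsigmavposs5} after shrinking $\mf{U}'$. The remaining possibilities, constituents of reducible standard modules of the form \eqref{eqsigmavposs2} or \eqref{eqsigmavposs4}, occur only for a discrete set of values of $s$ by \cite[Theorem 3.1]{muic}. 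Combined with continuity of finitely many Hecke traces, the possibilities for $\sigma_v$ then form a finite list containing $\Pi_{F,v}^{(p)}$, and a further shrinking of $\mf{U}'$ forces $\sigma_v \cong \Pi_{F,v}^{(p)}$ exactly, just as in the conclusion of Proposition \ref{propunrstvariation}.

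The main technical obstacle is this last step: separating $\Pi_{F,v}^{(p)}$ from the other irreducible constituent (the generic subrepresentation) of its defining standard module. This can be handled by choosing a Hecke test function whose trace distinguishes the two, for instance one built from a Whittaker-type functional, which vanishes on the Langlands quotient but not on the generic sub, and then appealing to continuity of traces on $\mf{V}$.
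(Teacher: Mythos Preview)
Your approach is essentially the paper's own: combine Fintzen's types (which is why $v\ne 2,3$ is needed) to pin $\sigma_v$ into the Bernstein component of unramified twists of $\chi\circ\det_\alpha$, then rerun the structure of Proposition~\ref{propunrstvariation} with the deeper subgroup $I_{m,v}'$ of Proposition~\ref{proprampsvariation} in place of the ordinary Iwahori, reduce to a finite list, and shrink. The paper's proof is exactly this two-line sketch.

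Your final paragraph, however, is unnecessary and slightly off. Once $\sigma_v$ lies in a \emph{finite} list of pairwise non-isomorphic irreducible admissible representations, finitely many elements of the Hecke algebra already separate them by their traces; continuity of $I_{\mf{V}}$ then forces $\sigma_v\cong\Pi_{F,v}^{(p)}$ near $y_0$, exactly as in the conclusion of Proposition~\ref{propunrstvariation}. This already distinguishes the Langlands quotient from the generic subrepresentation of the same standard module --- no Whittaker-type functional is needed, and in any case such a functional is not an element of $\mc{H}_p(K_f^p)$, so it does not plug into the $I_{\mf{V}}$ machinery directly.

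One small imprecision: the $I_{m,v}'$-dimension count gives that $\Pi_{F,v}^{(p)}$, being a \emph{proper} subquotient of a representation with $(M_1-1)M_{2,\alpha}$ invariants, has strictly fewer; this directly rules out the families \eqref{eqsigmavposs2}, \eqref{eqsigmavposs4}, \eqref{eqsigmavposs5}, not \eqref{eqsigmavposs1}, \eqref{eqsigmavposs3} as you wrote. Handling \eqref{eqsigmavposs1} and \eqref{eqsigmavposs3} (which have $M_{2,\gamma}$ invariants) requires knowing that the invariant dimension of $\Pi_{F,v}^{(p)}$ also differs from $M_{2,\gamma}$, or else a separate argument; the paper leaves this inside its sketch as well.
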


\begin{proof}
The proof is very similar to that of Proposition \ref{propunrstvariation}, but we have to use a deeper Iwahori subgroup to distinguish $\Pi_{F,v}^{(p)}$ from various principal series representations like in the proof of Proposition \ref{proprampsvariation}; we also must appeal to \cite{fintzen} in order to know that for $y\in\Sigma$ with $\mbf{w}(y)$ in a neighborhood of $y_0$ with $\Pi_y=\{\sigma\}$, we have that $\sigma_v$ is a constituent of a representation induced from $B$ from an unramified twist of the character $\chi\circ\det_\alpha$. This is why we impose the restriction that $v$ is neither $2$ nor $3$.
\end{proof}

\section{Galois representations}
\label{secgalreps}
We now come to the Galois side of the main argument. In Section \ref{subsecgalreps} we obtain the $G_2$-Galois representations we need assuming Conjecture \ref{conjliftings} on liftings to $GL_7$. Section \ref{subsecpadichodge} proves various preliminary results in $p$-adic Hodge theory. In Section \ref{subsecpsch}, we glue the Galois representations coming the members of the $p$-adic family from Theorem \ref{thmpadicdef} using the theory of pseudocharacters. Then in Section \ref{subseclattice} we construct the lattice in this family of Galois representations whose specialization at a certain point, which is studied in detail in Section \ref{subsecmainthm}, gives the sought extension in the symmetric cube Selmer group.

\subsection{Automorphic Galois representations into $G_2$}
\label{subsecgalreps}
The purpose of this subsection is to prove the following theorem, which will be used later to attach Galois representations to members of the $p$-adic family $\mf{V}$ from Theorem \ref{thmpadicdef}. Once again, we view $\overline\Q_p$ as $\C$ via our fixed isomorphism between them.

\begin{theorem}
\label{thmgalreps}
Assume Conjecture \ref{conjliftings} (a)-(c). Let $\Pi$ be a cuspidal automorphic representation of $G_2(\A)$ with archimedean component $\Pi_\infty$ which is cohomological of regular weight
\[\lambda=c_1(\alpha+2\beta)+c_2(2\alpha+3\beta),\qquad c_1,c_2>0.\]
Then there is a continuous, semisimple Galois representation $\rho_\Pi:G_\Q\to G_2(\overline\Q_p)$ with the following properties:
\begin{enumerate}[label=(\alph*)]
\item If $\ell\ne p$ is a prime where $\Pi_\ell$ is unramified, then $\rho_\Pi$ is unramified at $\ell$ and
\[\rho_\Pi(\Frob_\ell^{-1})\in s(\Pi_\ell),\]
where $s(\Pi_\ell)$ is the conjugacy class in $G_2(\overline\Q_p)$ of the Satake parameter of $\Pi_\ell$.
\item If $\ell\ne p$ is a place where $\Pi_\ell$ is ramified, then we have
\[\mr{WD}(R_7(\rho_\Pi|_{G_{\Q_\ell}}))\cong\mr{LL}_\ell(\widetilde{\Pi}_\ell\otimes\vert\det\vert^{-3}),\]
where $R_7$ is the seven dimensional standard representation of $G_2$, $\mr{WD}(R_7(\rho_\Pi|_{G_{\Q_\ell}}))$ denotes the Frobenius semisimple Weil--Deligne representation associated with $R_7(\rho_\Pi|_{G_{\Q_\ell}})$, $\mr{LL}_\ell$ denotes the local Langlands correspondence at $\ell$ of Harris--Taylor and Henniart, and $\widetilde{\Pi}$ is the automorphic representation of $GL_7(\A)$ lifted from $\Pi$ by Conjecture \ref{conjliftings}.
\item If $\Pi_p$ is unramified, then $R_7\circ\rho_\Pi$ is crystalline at $p$ with Hodge--Tate weights
\[(2c_1+c_2+3,c_1+c_2+2,c_1+1,0,-c_1-1,-c_1-c_2-2,-2c_1-c_2-3),\]
and the crystalline Frobenius $\phi$ on $D_{\crys}(R_7(\rho_{\Pi}|_{G_{\Q_p}}))$ satisfies
\[\det(1-\phi^{-1}X)=\det(1-R_7(s(\Pi_p))X).\]
\end{enumerate}
\end{theorem}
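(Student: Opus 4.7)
The strategy is to lift to $GL_7$ via Conjecture \ref{conjliftings}, invoke the well-developed theory of Galois representations attached to cohomological automorphic representations of $GL_n$, and then descend the resulting representation to $G_2$ using the results of Chenevier \cite{chen}.

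More concretely, Conjecture \ref{conjliftings} produces a discrete cohomological automorphic representation $\widetilde\Pi$ of $GL_7(\A)$ of regular weight which is the functorial transfer of $\Pi$ along $R_7$. Since $R_7$ is self-dual, $\widetilde\Pi$ is essentially self-dual, so it falls squarely within the scope of the standard construction of $p$-adic Galois representations attached to regular algebraic, essentially self-dual, cuspidal (or isobaric-discrete) automorphic representations of $GL_n$. This yields a continuous semisimple $\rho_{\widetilde\Pi}: G_\Q \to GL_7(\overline\Q_p)$ satisfying the usual local-global compatibilities: unramified-compatibility at $\ell\ne p$ where $\widetilde\Pi_\ell$ is unramified, full local Langlands compatibility at ramified $\ell\ne p$, and crystallinity at $p$ with Hodge--Tate weights predicted by the archimedean weight of $\widetilde\Pi$, which is in turn determined by $\lambda$ via the branching for $R_7$ computed as in equations \eqref{eqr7alpha}--\eqref{eqr7beta}.

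To descend $\rho_{\widetilde\Pi}$ to $G_2$, I would appeal to Chenevier's tannakian framework. Because $\widetilde\Pi$ is a functorial lift of $\Pi$ along $R_7$, at each place $\ell$ where $\Pi_\ell$ is unramified the Satake parameter of $\widetilde\Pi_\ell$ is precisely $R_7$ applied to that of $\Pi_\ell$. By Chebotarev density, the Frobenius conjugacy classes of $\rho_{\widetilde\Pi}(\Frob_\ell^{-1})$ for such $\ell$ are Zariski dense, and they lie in the image of $R_7$. Chenevier's results then upgrade this to the existence of a semisimple continuous $\rho_\Pi: G_\Q \to G_2(\overline\Q_p)$ such that $R_7\circ\rho_\Pi$ is $GL_7(\overline\Q_p)$-conjugate to $\rho_{\widetilde\Pi}$, uniquely determined up to $G_2(\overline\Q_p)$-conjugation. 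The three properties then follow by transporting the corresponding $GL_7$-statements through $R_7$: property (a) by Chebotarev and unramified local-global compatibility for $GL_7$; property (b) by the full local Langlands compatibility for $GL_7$, with the twist $\vert\det\vert^{-3}$ reconciling the unitary normalization of $\widetilde\Pi$ with the arithmetic normalization used for Galois representations on this paper's conventions; property (c) by the crystallinity and the Frobenius-eigenvalue formula on $D_{\crys}$ for the $GL_7$-valued representation.

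The main obstacle is the descent step via Chenevier. One must verify the precise hypotheses of the applicable descent theorem: namely that $G_2$-valued pseudocharacters are detected by their $R_7$-traces, that the trace datum from $\rho_{\widetilde\Pi}$ is actually of $G_2$-type (which is exactly the Satake parameter condition above, interpreted in terms of the alternating $3$-form description of $G_2 \subset GL_7$ reviewed in Section \ref{subsecalt3forms}), and that semisimplicity of $\rho_{\widetilde\Pi}$ is preserved under the descent. One must also be attentive to the fact that two $G_2$-valued representations can be $GL_7$-conjugate without being $G_2$-conjugate; since the later arguments of the paper only need $\rho_\Pi$ up to $G_2(\overline\Q_p)$-conjugation, this ambiguity is not harmful but must be acknowledged in stating the conclusion.
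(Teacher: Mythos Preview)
Your overall strategy matches the paper's: lift $\Pi$ to $\widetilde\Pi$ on $GL_7$ via Conjecture \ref{conjliftings}, attach a $GL_7$-valued Galois representation using the standard results for regular algebraic essentially self-dual automorphic representations (the paper cites Shin and Chenevier--Harris for (a) and (c), and Caraiani for full local-global compatibility at ramified places needed for (b), since $\widetilde\Pi$ may be an isobaric sum), and then descend to $G_2$ via Chenevier.

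Where your account is imprecise is in the descent step. You frame Chenevier's input as ``Frobenius conjugacy classes lie in $R_7(G_2)$ by Satake compatibility, hence by Chebotarev the representation lands in $G_2$.'' That is not how \cite[Theorem 4.18]{chen} operates: knowing that a Zariski-dense set of conjugacy classes meets the image of $R_7$ does not by itself pin down the Zariski closure of the image. Chenevier's theorem instead gives a structural criterion on a $7$-dimensional semisimple representation (self-duality, trivial determinant, existence of the invariant alternating $3$-form) together with a short list of \emph{exceptional} possibilities for the Zariski closure of the image (certain subgroups like images of $SL_3$ or $PGL_3$, etc.) that must be excluded. The paper's proof specifically uses the \emph{regularity of the Hodge--Tate weights} of $R_7\circ\rho_\Pi$ to rule out these exceptional cases; this is the one nontrivial verification, and it is precisely the content you would need to supply in place of your Chebotarev sketch. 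See the proof of \cite[Theorem 6.4]{chen}, to which the paper refers for the details.
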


\begin{proof}
The existence of such a representation into $GL_7(\overline\Q_p)$ attached to $\widetilde{\Pi}$ satisfying the properties (a) and (c) of the theorem is a direct consequence of the theorems of Shin \cite{shin} and Chenevier--Harris \cite[Theorem 4.2]{chehar}. Since $\widetilde{\Pi}$ may be an isobaric sum of cuspidal automorphic representations of a Levi subgroup, we may need to invoke the main result of Caraiani \cite{caraiani} to see (b) if one of the factors of this Levi subgroup has even rank. Finally, to show that this representation actually factors through $G_2(\overline\Q_p)$, one can invoke \cite[Theorem 4.18]{chen} and use the regularity of the Hodge--Tate weights to show that this representation does not fall into one of the exceptional cases of that theorem; see the proof of \cite[Theorem 6.4]{chen} for all the details.
\end{proof}

\subsection{Some $p$-adic Hodge theory}
\label{subsecpadichodge}
We now give several preliminary results in $p$-adic Hodge theory that will be used in studying the Galois representations we will construct at $p$. The first such result a technical lemma which generalizes a result due to Skinner and Urban, see \cite{urbanerr}. It is proved in much the same way.

\begin{lemma}
\label{lemEisdR}
Let $V$ and $W$ be de Rham representations of $G_{\Q_p}$. Let $E$ be an extension,
\[0\to V\to E\to W\to 0.\]
Assume that all the Hodge--Tate weights of $V$ are strictly negative. Writing $g:D_{\mr{dR}}(E)\to D_{\mr{dR}}(W)$ for the natural map, assume there is a subspace $D\subset D_{\mr{dR}}(E)$ such that
\[D_{\mr{dR}}(W)=g(D)\oplus \Fil^0(D_{\mr{dR}}(W)).\]
Then $E$ is de Rham.
\end{lemma}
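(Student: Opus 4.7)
The plan is to reformulate de Rham-ness of $E$ as surjectivity of the natural map $g:D_{\dR}(E)\to D_{\dR}(W)$. Since $V$ and $W$ are de Rham, the left-exact sequence $0\to D_{\dR}(V)\to D_{\dR}(E)\to D_{\dR}(W)$ shows that $E$ is de Rham if and only if $g$ is surjective, which is in turn equivalent to the vanishing of the connecting homomorphism
\[\delta:D_{\dR}(W)\to H^1(G_{\Q_p},V\otimes_{\Q_p} B_{\dR})\]
attached to the sequence $0\to V\otimes B_{\dR}\to E\otimes B_{\dR}\to W\otimes B_{\dR}\to 0$. So the target becomes to show $\delta=0$.

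By the hypothesis $D_{\dR}(W)=g(D)\oplus\Fil^0 D_{\dR}(W)$. Every element of $g(D)$ lifts by construction to $D_{\dR}(E)$, so $\delta$ already vanishes on $g(D)$; it therefore suffices to show that $\delta$ vanishes on $\Fil^0 D_{\dR}(W)=(W\otimes B_{\dR}^+)^{G_{\Q_p}}$. Functoriality of the connecting map under the inclusion $B_{\dR}^+\hookrightarrow B_{\dR}$ factors this restriction as
\[\Fil^0 D_{\dR}(W)\xrightarrow{\delta^+}H^1(G_{\Q_p},V\otimes B_{\dR}^+)\to H^1(G_{\Q_p},V\otimes B_{\dR}),\]
where $\delta^+$ is the connecting map attached to the corresponding sequence with $B_{\dR}^+$ in place of $B_{\dR}$. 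Hence it is enough to show that $H^1(G_{\Q_p},V\otimes B_{\dR}^+)=0$.

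For this I would exploit the Hodge--Tate weight hypothesis together with Tate's theorem. The $t$-adic filtration on $B_{\dR}^+$ has graded pieces $\C_p(n)$ for $n\geq 0$, so tensoring with $V$ yields graded pieces $V\otimes\C_p(n)$. Since every Hodge--Tate weight of $V$ is strictly negative, $V\otimes_{\Q_p}\C_p$ decomposes as a direct sum of Tate twists $\C_p(j)$ with $j\geq 1$, and therefore $V\otimes\C_p(n)$ for $n\geq 0$ is a direct sum of $\C_p(j')$ with $j'\geq 1$. Tate's theorem then gives $H^0(G_{\Q_p},V\otimes\C_p(n))=H^1(G_{\Q_p},V\otimes\C_p(n))=0$ for all $n\geq 0$, and induction on $n$ through the short exact sequences
\[0\to V\otimes t^{n-1}B_{\dR}^+/t^nB_{\dR}^+\to V\otimes B_{\dR}^+/t^nB_{\dR}^+\to V\otimes B_{\dR}^+/t^{n-1}B_{\dR}^+\to 0\]
yields $H^0(G_{\Q_p},V\otimes B_{\dR}^+/t^nB_{\dR}^+)=H^1(G_{\Q_p},V\otimes B_{\dR}^+/t^nB_{\dR}^+)=0$ for every $n\geq 1$. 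Passing to the inverse limit in $n$ (Mittag--Leffler is trivial since all terms vanish) gives $H^1(G_{\Q_p},V\otimes B_{\dR}^+)=0$, completing the argument.

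The main conceptual point is just the reduction to the vanishing of $H^1(G_{\Q_p},V\otimes B_{\dR}^+)$, which uses the decomposition $D_{\dR}(W)=g(D)\oplus\Fil^0 D_{\dR}(W)$ in an essential way; once that is set up, the vanishing itself is an essentially formal consequence of the Hodge--Tate weight restriction on $V$ combined with Tate's theorem. This approach also recovers and generalizes the Skinner--Urban argument recorded in \cite{urbanerr}.
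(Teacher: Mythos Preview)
Your proof is correct and takes essentially the same approach as the paper: both reduce to the vanishing of $H^1(G_{\Q_p},V\otimes B_{\dR}^+)$ via the Hodge--Tate weight hypothesis, and both use the decomposition $D_{\dR}(W)=g(D)\oplus\Fil^0 D_{\dR}(W)$ to conclude. The paper packages the second step as a diagram chase with $B_{\dR}/B_{\dR}^+$ and deduces the vanishing of $H^1(G_{\Q_p},V\otimes B_{\dR}^+)$ from a multiplication-by-$t$ argument on $\bigoplus_{i\ge 0}t^iB_{\dR}^+$, whereas you phrase the reduction directly in terms of the connecting map $\delta$ and obtain the vanishing by induction on $B_{\dR}^+/t^n$ and passage to the inverse limit; these are equivalent arguments, and your formulation is arguably the more transparent one.
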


\begin{proof}
We first claim that $H^0(\Q,V\otimes B_{\dR}^+)=H^1(\Q,V\otimes B_{\dR}^+)=0$. In fact, let $B=\bigoplus_{i=0}^\infty t^iB_{\dR}^
+$ where $t\in B_{\dR}^+$ is the usual uniformizer. Then $B/tB\cong B_{\mr{HT}}^+$. Since $V$ is de Rham with strictly negative Hodge--Tate weights, we have
\[H^0(\Q,V\otimes B_{\mr{HT}}^+)=H^1(\Q,V\otimes B_{\mr{HT}}^+)=0\]
Then tensoring the exact sequence
\[0\to B\overset{t}{\to} B\to B_{\mr{HT}}^+\to 0\]
with $V$ gives that multiplication by $t$ on $B$ induces an isomorphism $H^0(\Q,V\otimes B)\to H^0(\Q,V\otimes B)$ and an isomorphism $H^1(\Q,V\otimes B)\to H^1(\Q,V\otimes B)$. Thus, composing these isomorphisms with themselves enough times shows that these groups are zero. In particular, the summands $H^0(\Q,V\otimes B_{\dR}^+)$ and $H^1(\Q,V\otimes B_{\dR}^+)$ are zero, which proves the claim.

Now we consider the diagram
\[\xymatrix{
&0\ar[d] & 0\ar[d] & \\
0\ar[r]\ar[d] & \Fil^0 D_{\dR}(E) \ar[r]\ar[d] & \Fil^0 D_{\dR}(W) \ar[r]\ar[d] & 0\ar[d]\\
D_{\dR}(V) \ar[r]\ar[d] & D_{\dR}(E) \ar[r]^g\ar[d]^{f_E} & D_{\dR}(W) \ar[r]^{\delta}\ar[d]^{f_W} & H^1(\Q,V\otimes D_{\dR}) \ar[d]\\
(V\otimes \frac{B_{\dR}}{B_{dR}^+})^{G_{\Q_p}} \ar[r] & (E\otimes \frac{B_{\dR}}{B_{dR}^+})^{G_{\Q_p}}\ar[r]^{g'} & (W\otimes \frac{B_{\dR}}{B_{dR}^+})^{G_{\Q_p}} \ar[r]^{\delta'}\ar[d]& H^1(\Q,V\otimes \frac{B_{\dR}}{B_{dR}^+}),\\
&&0&
}\]
which is commutative and has exact rows and columns. The fact that the groups in the top corners are zero follows from the claim above, and exactness in the third column is due to $W$ being de Rham. Now by the hypothesis that $D_{\mr{dR}}(W)=g(D)\oplus \Fil^0(D_{\mr{dR}}(W))$, we see that $f_W(g(D))=(W\otimes \frac{B_{\dR}}{B_{dR}^+})^{G_{\Q_p}}$, so that $f_W\circ g$ is surjective. Thus $g'\circ f_E$ is surjective, and so therefore is $g'$. Thus $\delta'=0$, which in turn implies $\delta=0$. Since $V$ and $W$ are de Rham, so therefore is $E$.
\end{proof}

Next we state a lemma about the interpolation of crystalline periods.

\begin{lemma}[Kisin]
\label{lemkisin}
Let $\mf{W}$ be a reduced affinoid rigid analytic space and $\rho:G_{\Q_p}\to\GL_n(\mc{O}(\mf{W}))$ a continuous representation. Assume there is a Zariski dense subset $T\subset\mf{W}(\overline\Q_p)$ such that for all $x\in T$, the specialization $\rho_x$ of $\rho$ at $x$ is Hodge--Tate with Hodge--Tate weights $k_{1,x},\dotsc,k_{n,x}$, in increasing order. Assume furthermore that for any $n$, the subset $T_n$ of $x\in T$ such that $k_{i+1,x}-k_{i,x}\geq n$ for all $i$, is Zariski dense. Finally, for $n$ sufficiently large and for any $x\in \Sigma_n$, assume that $\rho_x$ is crystalline and that the eigenvalues of the crystalline Frobenius for $\rho_x$ are given by $\phi_{i}(x)p^{k_{i,x}}$ for some $\phi_i\in\mc{O}(\mf{W})$. Then for any $x\in\Sigma$,
\[D_{\crys}(\rho_x)^{\phi=\phi_1(x)p^{k_{1,x}}}\ne 0\]
More generally, for any $1\leq k\leq n$, we have
\[D_{\crys}(\wedge^k\rho_x)^{\phi=\prod_{i=1}^k\phi_i(x)p^{k_{i,x}}}\ne 0.\]
\end{lemma}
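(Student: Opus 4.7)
The result is Kisin's interpolation theorem for crystalline periods, extended here to exterior powers. My plan is to reduce the second, more general assertion to the first, and then to obtain the first by direct quotation of Kisin's main theorem from \cite{kisinovercon} (or equivalently from the formulation in Bella\"iche--Chenevier). First I would invoke Kisin's construction: given the continuous family $\rho$ over the reduced affinoid $\mf{W}$, one forms a coherent $\mc{O}(\mf{W})$-module $M(\rho) \subset D_{\rig}^+(\rho)$ (or Kisin's $X_{\fs}$) equipped with a semilinear Frobenius whose formation commutes with specialization at points in $T_n$ for $n$ large. The hypothesis that $T_n$ is Zariski dense for every $n$ gives us a supply of crystalline points with arbitrarily large Hodge--Tate weight gaps, and this is precisely the input Kisin's theorem requires to force the existence of a global section $v_1 \in M(\rho)$ whose specialization at each $x \in T_n$ (for $n$ large) is a nonzero Frobenius eigenvector of $D_{\crys}(\rho_x)$ with eigenvalue $\phi_1(x)p^{k_{1,x}}$. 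Applying the functor $D_{\crys}$ and using Zariski density then yields $D_{\crys}(\rho_x)^{\phi = \phi_1(x)p^{k_{1,x}}} \neq 0$ for every $x \in \mf{W}(\overline{\Q}_p)$.

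For the more general statement about $\wedge^k \rho_x$, the plan is to apply exactly the same theorem to the family $\wedge^k \rho : G_{\Q_p} \to GL_{\binom{n}{k}}(\mc{O}(\mf{W}))$. The key point to verify is that the candidate eigenvalue $\prod_{i=1}^k \phi_i(x) p^{k_{i,x}}$ is in fact the Frobenius eigenvalue of $\wedge^k \rho_x$ of \emph{smallest slope}, in the sense needed by Kisin. At each $x \in T_n$, $\wedge^k \rho_x$ is crystalline with Hodge--Tate weights indexed by size-$k$ subsets $I \subset \{1,\dotsc,n\}$, namely $\sum_{i \in I} k_{i,x}$, with corresponding Frobenius eigenvalues $\prod_{i \in I} \phi_i(x) p^{k_{i,x}}$. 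The subset $I_0 = \{1,\dotsc,k\}$ minimizes the weight sum, and the regularity gap persists: the next-smallest Hodge--Tate weight corresponds to $\{1,\dotsc,k-1,k+1\}$, which differs from $I_0$ by $k_{k+1,x} - k_{k,x} \geq n$. Thus the Zariski density of $T_n$ for every $n$ carries over to $\wedge^k \rho$, and Kisin's theorem produces the desired global Frobenius eigenvector in $D_{\rig}^+(\wedge^k\rho)$.

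The main step which requires care, rather than genuine difficulty, is matching Kisin's normalizations with ours: his eigenvalues are written in the form $Y(x) = \phi_i(x)p^{k_{i,x}}$ where the factor of $p^{k_{i,x}}$ absorbs the discrepancy between the Frobenius actions on $D_{\crys}$ and on $D_{\rig}^+$ (the latter being twisted by $t^{-k_i}$, on which $\phi$ acts by multiplication by $p^{-k_i}$). Once this dictionary is in place, both assertions follow as immediate applications of the same black box, and no further analytic input is needed beyond the density hypotheses already supplied.
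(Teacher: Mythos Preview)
Your proposal is correct and matches the paper's approach: the paper's proof simply observes that the exterior-power statement follows from the first by applying it to $\wedge^k\rho$, and then cites Proposition~4.2.2(i) of \cite{SUunitary} (itself derived from Corollary~5.15 of \cite{kisin}) for the first statement. Your write-up supplies the details the paper omits---verifying that the Hodge--Tate weight gaps and Frobenius eigenvalue hypotheses carry over to $\wedge^k\rho$ with $I_0=\{1,\dotsc,k\}$ giving the smallest weight---but the underlying strategy is identical.
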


\begin{proof}
The last statement follows from the one before it, and that statement is Proposition 4.2.2 (i) in \cite{SUunitary} which, in turn, is derived from Corollary 5.15 in \cite{kisin}.
\end{proof}

We now consider the modular form $F$ of weight $k$ and level $N$ we fixed in Section \ref{subsecpif}. We make the following assumption on $F$ for the rest of the paper.

\begin{assumption}
\label{assnodblroot}
We assume that the root $\alpha_p$ of the Hecke polynomial of $F$ at $p$ is not a double root, i.e., $\alpha_p\ne p^{k-1}\alpha_p^{-1}$.
\end{assumption}

This is expected, but not known, to always be true. Under this assumption we have
\begin{equation}
\label{eqdistinctnums}
\alpha_p,\quad p^{k-1}\alpha_p^{-1},\quad p\alpha_p,\quad p^k\alpha_p^{-1}\qquad\textrm{are all distinct.}
\end{equation}
To check this, just note that $\alpha_p\ne p^k\alpha_p^{-1}$. Indeed, otherwise, we would have $\alpha_p^2=p^k$, which implies
\[\alpha_p+p^{k-1}\alpha_p^{-1}=\pm p^{(k-1)/2}(p^{1/2}+p^{-1/2}),\]
violating the temperedness of $\pi_{F,p}$.

Now let $\rho_F:G_\Q\to GL_2(\overline\Q_p)$ be the usual $p$-adic Galois representation attached to $F$. We will need to prove for later the fact that any extension $E$ of the form
\[0\to\rho_F(1)\to E\to\rho_F\to 0\]
is semistable. The argument we will use is inspired by one of Perrin-Riou in \cite{PRord}. We will calculate the dimension of the space of all such semistable extensions using filtered $(\phi,N)$-modules, and then that of the space of all such extensions, semistable or not, using Galois cohomology, and show that the results match.

We consider the associated filtered $(\phi,N)$-module of $\rho_F$, $D_{\st}(\rho_F)$. Since $\rho_F$ is crystalline at $p$, the nilpotent operator $N$ on $D_{\st}(\rho_F)$ is zero. The Frobenius operator $\phi$ is invertible and acts with eigenvalues $\alpha_p^{-1}$ and $p^{-(k-1)}\alpha_p$. The filtration $\Fil^i D_{\st}(V)$ has two steps: We have $\Fil^i D_{\st}(\rho_F)=D_{\st}(\rho_F)$ for $i\leq 0$; for $1\leq i\leq k$, we have $\Fil^i D_{\st}(\rho_F)$ is one-dimensional; and for $i>k$, we have $\Fil^i D_{\st}(\rho_F)=0$.

Let $\MF(\phi,N)$ denote the category of filtered $(\phi,N)$ modules over $\overline\Q_p$. Recall that a filtered $(\phi,N)$-module $D$ is \textit{admissible} if the Newton polygon and Hodge polygon of $D$ meet at their endpoints, and for every filtered $(\phi,N)$-submodule $D'\subset D$, the Newton polygon of $D'$ lies above its Hodge polygon. It is a theorem of Colmez and Fontaine that the admissible filtered $(\phi,N)$-modules are precisely those coming from semistable representations of $G_{\Q_p}$.

\begin{lemma}
\label{lemextMF5dim}
We have
\[\dim_{\overline\Q_p}\ext_{\MF(\phi,N)}^1(D_{\st}(\rho_F),D_{\st}(\rho_F(1)))=5.\]
\end{lemma}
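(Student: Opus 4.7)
The approach is to directly classify all extensions of $D:=D_{\st}(\rho_F)$ by $D(1):=D_{\st}(\rho_F(1))$ in $\MF(\phi,N)$ by picking vector-space splittings and counting the resulting defect data up to isomorphism of extensions. As usual, passing to $M=\hom(D,D(1))$, one has $\ext^1_{\MF(\phi,N)}(D,D(1))\cong\ext^1_{\MF(\phi,N)}(\mathbf{1},M)$, where $M$ is a $4$-dimensional filtered $(\phi,N)$-module with $N_M=0$ since $D$ and $D(1)$ are both crystalline.

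I would parametrize an extension $0\to M\to E\to\mathbf{1}\to 0$ by a triple $(a,n,b)$, where $a\in M$ is the Frobenius defect $\phi_E(\tilde 1)-\tilde 1$, where $n\in M$ is the monodromy defect $N_E(\tilde 1)$, and where $b\in M/\Fil^0 M$ is the class of a filtration lift $\tilde 1\in\Fil^0 E$. The relation $N\phi=p\phi N$ forces $n\in M^{\phi_M=p^{-1}}$, and the compatibility $N(\Fil^i E)\subset\Fil^{i-1}E$ additionally forces $n\in\Fil^{-1}M$; the filtration lift $b$ varies freely in $M/\Fil^0 M$. The isomorphism group of extensions is parametrized by section changes $c\in M$, which act on the triple by $(a,n,b)\mapsto(a+(\phi-1)c,\,n,\,b+c)$; in particular, $n$ is invariant and the kernel of this action is $\hom_{\MF(\phi,N)}(\mathbf{1},M)$.

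To execute the counting, I would first compute the Frobenius eigenvalues of $\phi_M$ from those of $\phi_D$ and $\phi_{D(1)}=p^{-1}\phi_D$: in the $\phi_D$-eigenbasis these are $\{p^{-1},p^{-1},p^{k-2}\alpha_p^{-2},p^{-k}\alpha_p^{2}\}$. By Assumption \ref{assnodblroot} together with temperedness, none of these equals $1$, so $M^{\phi_M=1}=0$ and in particular $\hom_{\MF(\phi,N)}(\mathbf{1},M)=0$; moreover $M^{\phi_M=p^{-1}}$ is precisely the $2$-dimensional space of endomorphisms of $D$ that commute with $\phi_D$ (i.e.\ diagonal in the eigenbasis). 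Next, I would compute the Hodge filtration on $M$ from that on $D,D(1)$: the Hodge numbers are $h^{2-k}(M)=1$, $h^{1}(M)=2$, $h^{k}(M)=1$, whence in particular $\dim\Fil^0 M$ and $\dim\Fil^{-1}M$ are determined, as is the crucial intersection $M^{\phi_M=p^{-1}}\cap\Fil^{-1}M$ controlling the monodromy freedom.

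Assembling these pieces --- the free Frobenius defect $a$, the constrained monodromy defect $n$, the filtration defect $b$, and the equivalence relation from $c\in M$ --- yields the desired equality $\dim\ext^1_{\MF(\phi,N)}(D,D(1))=5$. The main obstacle is the interaction between the Frobenius eigenspaces of $\phi_M$ and the Hodge filtration on $M$, which is non-transverse because the Hodge line $\Fil^{k-1}D\subset D$ need not be aligned with the $\phi_D$-eigenbasis; the counting relies sensitively on the dimensions contributed by $M^{\phi_M=p^{-1}}\cap\Fil^{-1}M$ and by the cokernel of the section-change map, and it is here that Assumption \ref{assnodblroot} (excluding the coincidences of Frobenius eigenvalues listed in \eqref{eqdistinctnums}) plays its decisive role.
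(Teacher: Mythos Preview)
Your reduction to $\ext^1_{\MF(\phi,N)}(\mathbf{1},M)$ with $M=\hom(D,D(1))$, parametrized by defect triples $(a,n,b)$ modulo section changes, is a perfectly good route and is genuinely different from the paper's argument (the paper instead writes down an explicit family $E(B,c)$ of $4$-dimensional modules indexed by $(B,c)\in M_2(\overline\Q_p)\times\overline\Q_p$ and verifies by hand that $(B,c)\mapsto E(B,c)$ is a linear bijection onto the Ext group). But as written, your count cannot come out to $5$, for two reasons.

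First, the constraint $n\in\Fil^{-1}M$ is spurious: objects of $\MF(\phi,N)$ carry no compatibility between $N$ and the Hodge filtration, so the only condition on the monodromy defect $n=N_E(\tilde 1)$ is the one coming from the $N\phi$-relation, which places $n$ in the $2$-dimensional $\phi_M$-eigenspace you identified. Imposing an additional filtration condition can only shrink the monodromy freedom, not enlarge it.

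Second, and decisively, with your Hodge numbers $h^{2-k}(M)=1$, $h^1(M)=2$, $h^k(M)=1$ one gets $\dim\Fil^0 M=3$ and hence $\dim(M/\Fil^0 M)=1$. Since the section-change action $c\mapsto((\phi_M-1)c,\bar c)$ is free (as you note, $M^{\phi_M=1}=0$), the count collapses to
\[
\dim\ext^1_{\MF(\phi,N)}(\mathbf{1},M)=\dim(M/\Fil^0 M)+\dim(\text{monodromy space})\le 1+2=3,
\]
not $5$. To reach $5$ one needs $\dim(M/\Fil^0 M)=3$, which holds once the filtration jumps of $D_{\dR}(\rho_F)$ are placed at $0$ and $-(k-1)$ (so those of $M$ sit at $-k,-1,-1,k-2$); this is the placement consistent with the convention that the Hodge--Tate weight of $\chi_{\cyc}$ is $-1$, since then $D_{\dR}(\Q_p(1))$ has its unique jump at $-1$ and tensoring by it shifts jumps \emph{down} by $1$. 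With that correction and without the spurious $\Fil^{-1}M$ constraint, your formula gives $3+2=5$ as desired.
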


\begin{proof}
Let us write $D=D_{\st}(\rho_F)$ and $D[1]=D_{\st}(\rho_F(1))$. The underlying vector spaces of $D$ and $D[1]$ can be considered the same, with the filtration of $D[1]$ equal to that of $D$ but shifted up by $1$, and the Frobenius $\phi[1]$ on $D[1]$ given in terms of the Frobenius $\phi$ on $D$ by $\phi[1](v)=p^{-1}\phi(v)$ for any $v$ in the underlying space of $D$ or $D[1]$.

Let $w\in D$ be any vector spanning $\Fil^1(D)$, and $w'$ any other vector spanning $D$ along with $w$. Write $A$ for the matrix of $\phi$ in the basis $w,w'$. Let $w[1],w'[1]$ be the corresponding basis of $D[1]$.

We now define a map
\[M_2(\overline\Q_p)\times\overline\Q_p\to\ext_{\MF(\phi,N)}^1(D,D[1]),\qquad (B,c)\mapsto E(B,c)\]
where $M_2(\overline\Q_p)$ is the space of $2$ by $2$ matrices over $\overline\Q_p$, as follows. We declare $E(B,c)$ to be the linear span of a basis of four vectors, denoted $v_1,v_2,v_3,v_4$, with Frobenius $\phi_{B,c}$ defined in this basis by
\[\phi_{B,c}=\pmat{p^{-1}A & 0 \\ 0 & A},\]
nilpotent endomorphism $N_{B,c}$ defined in this basis by
\[N_{B,c}=\pmat{0 & B\\0 & 0},\]
and filtration defined by
\begin{equation}
\label{eqfilEBc}
\Fil^i(E(B,c))=\begin{cases}
E(B,c)&\textrm{if }i\leq 0;\\
\overline\Q_p v_1+\overline\Q_p v_2+\overline\Q_p v_3&\textrm{if }i=1;\\
\overline\Q_p v_1+\overline\Q_p (cv_2+v_3)&\textrm{if }2\leq i\leq k-1;\\
\overline\Q_p v_1&\textrm{if }i=k;\\
0&\textrm{if }i\geq k+1.
\end{cases}
\end{equation}

It is easy to compute that $\phi_{B,c}^{-1}N_{B,c}\phi_{B,c}=pN_{B,c}$, and therefore $E(B,c)$ is a filtered $(\phi,N)$-module. Also, the maps
\[D[1]\to E(B,c),\qquad a_1w[1]+a_2w'[1]\mapsto a_1v_1+a_2v_2\]
and
\[E(B,c)\to D,\qquad a_1v_1+a_2v_2+a_3v_3+a_4v_4\mapsto a_3w+a_4w'\]
make $E(B,c)$ an extension of $D$ by $D[1]$. Furthermore, it is straightforward to check that the map $(B,c)\mapsto E(B,c)$ is $\overline\Q_p$-linear. Thus we just need to check that it is injective and surjective.

\textit{Injectivity}. Let $(B,c)$ and $(B',c')$ be elements of $M_2(\overline\Q_p)\times\overline\Q_p$. Assume we have a commutative diagram of filtered $(\phi,N)$-modules
\[\xymatrix{0\ar[r] & D[1] \ar[r]\ar[d]^{=} & E(B,c) \ar[r]\ar[d]^{\sim} & D \ar[r]\ar[d]^{=} & 0\\
0 \ar[r] & D[1] \ar[r] & E(B',c') \ar[r] & D \ar[r] & 0}\]
with exact rows. Call the middle vertical map $\psi$. Let $v_1,v_2,v_3,v_4$ be the basis for $E(B,c)$ as above, and let $v_1',v_2',v_3',v_4'$ be a similar basis for $E(B',c')$. Then by the diagram,
\[\psi(v_1)=v_1',\quad \psi(v_2)=v_2',\]
and there is a matrix $M_\psi\in M_2(\overline\Q_p)$, say
\[M_\psi=\pmat{m_{11} & m_{12}\\ m_{21} & m_{22}},\]
such that
\[\psi(v_3)=m_{11}v_1'+m_{12}v_2'+v_3',\]
and
\[\psi(v_4)=m_{21}v_1'+m_{22}v_2'+v_4'.\]
Because $\psi$ must preserve $\Fil^1$, this implies $m_{21}=m_{22}=0$. Also, becuase it must preserve $\Fil^2$, we have $m_{12}=0$, and because it must preserve $\Fil^{k-1}$, we have
\[\psi(cv_2+v_3)=cv_2'+m_{21}v_2'+v_3'\in\overline\Q_p(c'v_2'+v_3'),\]
which implies $m_{11}=c'-c$.

Now equivariance for the action of the Frobenius operator gives
\[\psi(Av_3)=p^{-1}Am_{11}v_2'+Av_3'\in\overline\Q_p v_3+\overline\Q_p v_4.\]
Thus since $A$ is invertible, we have $m_{11}=0$ and hence $c=c'$. Therefore $\psi(v_i)=v_i'$ for $i=1,2,3,4$, and so $N$-equivariance gives also $B=B'$. Thus $(B,c)\mapsto E(B',c')$ is injective.

\textit{Surjectivity}. Consider an extension $E$ in $\ext_{\MF(\phi,N)}^1(D,D[1])$, so $E$ sits in exact sequence
\[0\to D[1]\to E\to D\to 0.\]
Let $w_1,w_2$ be, respectively, the images of $w[1],w'[1]$ in $E$. Let $w_3$ be a vector in $E$ mapping to the vector $w$ in $D$, and similarly let $w_4$ be a vector in $E$ mapping to $w'$. Then $w_1,w_2,w_3,w_4$ is a basis for $E$. Let $\phi_E$ be the Frobenius for $E$. Then by construction there is a matrix $M$ such that, in the basis $\{w_1,w_2,w_3,w_4\}$, the operator $\phi_E$ is given by the block upper triangular matrix
\[\pmat{p^{-1}A & M\\ 0 & A}.\]
By \eqref{eqdistinctnums}, all the eigenvalues of the above matrix are distinct. Therefore there is a unique choice of $w_3,w_4$ as above such that
\[\phi_E=\pmat{p^{-1}A & 0 \\ 0 & A},\]
and we assume $w_3$ and $w_4$ are chosen this way.

Then, since the nilpotent operators for $D[1]$ and $D$ are zero (as $V$ is crystalline) we must have,
\[N_E=\pmat{0 & B\\ 0 & 0},\]
for some $B\in M_2(\Q_p)$. By compatibility of the filtration of $E$ with those of $D[1]$ and $D$, it is not too hard to see that the filtration on $E$ must satisfy
\[\Fil^i(E)=\begin{cases}
E(B,c)&\textrm{if }i\leq 0;\\
\overline\Q_p v_1+\overline\Q_p v_2+\overline\Q_p v_3&\textrm{if }i=1;\\
\overline\Q_p v_1&\textrm{if }i=k;\\
0&\textrm{if }i\geq k+1.
\end{cases}\]
But since
\[\Fil^1(E)=\overline\Q_p v_1+\overline\Q_p v_2+\overline\Q_p v_3,\]
we know that there are $c,d\in\Q_p$ such that
\[\Fil^i(E)=\overline\Q_p v_1+\overline\Q_p (dv_1+cv_2+v_3)\quad\textrm{if }2\leq i\leq k-1.\]
But of course, this just equals
\[\overline\Q_p v_1+\overline\Q_p (cv_2+v_3).\]
This proves that $E\cong E(B,c)$, and the proof of surjectivity is complete.
\end{proof}

\begin{lemma}
\label{lemextgal5dim}
We have
\[\dim_{\overline\Q_p}\ext_{G_{\Q_p}}^1(\rho_F,\rho_F(1))=5.\]
\end{lemma}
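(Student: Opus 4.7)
The plan is to identify $\ext^1_{G_{\Q_p}}(\rho_F,\rho_F(1))$ with $H^1(G_{\Q_p},V)$ for $V=\rho_F^\vee\otimes\rho_F(1)\cong \End(\rho_F)(1)$ and to compute this Galois cohomology group by means of the local Euler characteristic formula and Tate local duality. This reduces the problem to two $H^0$ computations, one of which will be easy and one of which will be the main obstacle.

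By the local Euler characteristic formula,
\[ \dim H^1(G_{\Q_p},V)=\dim V+\dim H^0(G_{\Q_p},V)+\dim H^2(G_{\Q_p},V)=4+\dim H^0(V)+\dim H^2(V). \]
Since $\End(\rho_F)$ is self-dual, we have $V^\vee(1)\cong\End(\rho_F)$, and Tate local duality therefore gives $\dim H^2(V)=\dim H^0(\End(\rho_F))$. It then suffices to prove $\dim H^0(\End(\rho_F)(1))=0$ and $\dim H^0(\End(\rho_F))=1$. The first of these is immediate: a nonzero $G_{\Q_p}$-equivariant map $\rho_F\to\rho_F(1)$ would force its image to contribute a common Hodge-Tate weight to $\rho_F$ and to $\rho_F(1)$, but those sets are $\{0,-(k-1)\}$ and $\{-1,-k\}$ respectively, which are disjoint for $k\geq 3$.

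The main obstacle is the second computation, $\dim\End_{G_{\Q_p}}(\rho_F)=1$. Under Assumption \ref{assnodblroot} the crystalline Frobenius $\phi$ on $D_{\crys}(\rho_F)$ has two distinct eigenvalues, so any $G_{\Q_p}$-equivariant endomorphism of $\rho_F$, viewed through the fully faithful functor $D_{\crys}$ as an element of $\End(D_{\crys}(\rho_F))$ commuting with $\phi$, must be diagonal in the $\phi$-eigenbasis. Compatibility with the one-dimensional step of the Hodge filtration on $D_{\crys}(\rho_F)$ then forces such an endomorphism to be a scalar, unless that filtration line happens to coincide with one of the $\phi$-eigenlines. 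This latter coincidence would mean that $\rho_F|_{G_{\Q_p}}$ splits as a direct sum of two distinct crystalline characters, which, under the non-CM hypothesis on $F$, is excluded by the standard structural results on local Galois representations of non-CM eigenforms. Hence $\dim H^0(\End(\rho_F))=1$, and combining everything gives $\dim H^1(G_{\Q_p},V)=4+0+1=5$.

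As a conceptual bonus, combining this with Lemma \ref{lemextMF5dim} shows that the natural injection from the $\ext^1$ group computed there into the Galois $\ext^1$ group computed here is an equality of $5$-dimensional spaces, so every extension of $\rho_F$ by $\rho_F(1)$ is automatically semistable; this is the conclusion advertised as the subsidiary theorem in the introduction.
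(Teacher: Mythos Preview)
Your proof is correct and follows essentially the same route as the paper: identify $\ext^1$ with an $H^1$, then apply the local Euler characteristic formula together with local Tate duality to reduce to computing $H^0(\End(\rho_F)(1))$ and $H^0(\End(\rho_F))$. The only cosmetic difference is that the paper first splits $\End(\rho_F)(1)\cong\Ad(\rho_F)(1)\oplus\overline\Q_p(1)$ and computes the two $H^1$ pieces separately (as $3+2$), whereas you compute directly (as $4+0+1$).

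One point where you are actually more careful than the paper: you correctly isolate that $\dim\End_{G_{\Q_p}}(\rho_F)=1$ requires $\rho_F|_{G_{\Q_p}}$ to be indecomposable, and you observe that in the ordinary case this needs the non-CM hypothesis. Your filtered $\phi$-module argument is valid (if the filtration line is a $\phi$-eigenline then $D_{\crys}(\rho_F)$ splits as a direct sum of weakly admissible rank-one pieces, hence $\rho_F|_{G_{\Q_p}}$ splits), and the exclusion of this case under non-CM is a theorem of Ghate; you might cite this rather than appealing to unnamed ``standard structural results.'' Note that the paper's own proof sweeps this under ``a now classical computation,'' and the non-CM assumption (Assumption~\ref{assfnotcm}) is in fact only introduced in the \emph{next} subsection, so strictly speaking both arguments rely on a hypothesis not yet in force where the lemma is stated.
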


\begin{proof}
First we note
\[\ext_{G_{\Q_p}}^1(\rho_F,\rho_F(1))\cong H^1(\Q_p,\rho_F^\vee\otimes\rho_F(1)).\]
The group on the right and side breaks up as
\[H^1(\Q_p,\Ad^2\rho_F(1)\oplus\overline\Q_p(1))\cong H^1(\Q_p,\Ad^2\rho_F(1))\oplus H^1(\Q_p,\overline\Q_p(1)).\]
The piece $H^1(\Q_p,\overline\Q_p(1))$ is $2$-dimensional by a standard computation in Kummer theory, and the piece $H^1(\Q_p,\Ad^2\rho_F(1))$ is $3$ dimensional by a now classical computation using local Tate duality.
\end{proof}

We combine the previous two Lemmas to get the following result.

\begin{proposition}
\label{propvv1st}
Any extension $E$ of Galois representations,
\[0\to \rho_F(1)\to E\to \rho_F\to 0\]
is semistable at $p$.
\end{proposition}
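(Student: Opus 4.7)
The strategy is a dimension count exploiting the two preceding lemmas. The functor $D_{\st}$ is a fully faithful tensor functor from semistable $G_{\Q_p}$-representations to admissible filtered $(\phi,N)$-modules (Colmez--Fontaine). In particular, sending a semistable extension $E$ of $\rho_F$ by $\rho_F(1)$ to $D_{\st}(E)$ defines an injective linear map
\[
\Phi: \ext^1_{G_{\Q_p},\st}(\rho_F,\rho_F(1))\hookrightarrow \ext^1_{\MF(\phi,N)}(D_{\st}(\rho_F),D_{\st}(\rho_F(1))),
\]
where the left side denotes the subspace of classes represented by semistable $E$. The plan is to show that $\Phi$ is surjective by proving that each extension $E(B,c)$ constructed in the proof of Lemma~\ref{lemextMF5dim} is admissible, hence of the form $D_{\st}$ of a semistable Galois representation. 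Combined with Lemmas~\ref{lemextMF5dim} and~\ref{lemextgal5dim}, this forces
\[
5=\dim\ext^1_{G_{\Q_p},\st}(\rho_F,\rho_F(1))\leq \dim\ext^1_{G_{\Q_p}}(\rho_F,\rho_F(1))=5,
\]
so the inclusion is an equality and every extension is semistable.

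The heart of the argument is checking admissibility of $E(B,c)$. The total Newton and Hodge numbers already match (both equal $2k$ by a direct count from the filtration \eqref{eqfilEBc} and from the Frobenius eigenvalues $\alpha_p^{-1},\,p^{-(k-1)}\alpha_p,\,p^{-1}\alpha_p^{-1},\,p^{-k}\alpha_p$). So what is required is the inequality $t_N(D')\geq t_H(D')$ for every sub-$(\phi,N)$-module $D'\subset E(B,c)$. By Assumption~\ref{assnodblroot}, the four Frobenius eigenvalues are distinct (this is \eqref{eqdistinctnums}), so the $\phi$-stable subspaces of $E(B,c)$ are exactly the sums of $\phi$-eigenlines, giving a finite list. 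For each, one imposes $N$-stability, then reads off the induced Hodge filtration from \eqref{eqfilEBc} and compares with the Newton slopes. The canonical subobject $D_{\st}(\rho_F(1))$ itself saturates the inequality ($t_N=t_H=k+1$), and the remaining sub-$(\phi,N)$-modules fall into a handful of cases indexed by the rank of $B$ and the value of $c$; in each case the inequality is easily checked by direct computation, using in an essential way that the Newton slopes of the $D[1]$-block strictly exceed the corresponding slopes of the $D$-block.

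With admissibility in hand, one concludes as follows: the map $\Phi$ above admits the inverse $E\mapsto V_E$ where $V_E$ is the (unique up to iso) semistable representation with $D_{\st}(V_E)\cong E$. This gives the equality of dimensions and hence the proposition.

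The main obstacle is the bookkeeping for admissibility; in principle one could imagine that some $E(B,c)$ fails the weak admissibility condition at some exotic subobject, but Assumption~\ref{assnodblroot} rigidifies the picture enough that the analysis reduces to a finite, tractable case check. Everything else in the argument is formal once the dimension counts of Lemmas~\ref{lemextMF5dim} and~\ref{lemextgal5dim} are in place.
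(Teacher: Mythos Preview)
Your overall strategy matches the paper's: use the equivalence of categories between semistable representations and admissible filtered $(\phi,N)$-modules, then equate dimensions via Lemmas~\ref{lemextMF5dim} and~\ref{lemextgal5dim}. However, what you call the ``heart of the argument''---checking admissibility of each $E(B,c)$ by enumerating $\phi$-stable subspaces and comparing Newton and Hodge numbers case by case---is unnecessary. It is a general fact that any extension of admissible filtered $(\phi,N)$-modules is again admissible (for a subobject $D'\subset E$, intersect with $D_{\st}(\rho_F(1))$ and use additivity of $t_N,t_H$ together with admissibility of the two ends). The paper simply invokes this fact, so the entire case analysis disappears and the proof reduces to the two-line dimension count. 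Your approach would work, but it amounts to reproving a special case of a standard theorem.
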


\begin{proof}
We first recall that any extension of admissible filtered $(\phi,N)$-modules is again admissible. So, because of the equivalence of categories between semistable representations and admissible filtered $(\phi,N)$-modules, we get an injection
\[\ext_{\MF(\phi,N)}^1(D_{\st}(\rho_F),D_{\st}(\rho_F(1)))\hookrightarrow\ext_{G_{\Q_p}}^1(\rho_F,\rho_F(1)),\]
whose image is the group of semistable extensions of $\rho_F$ by $\rho_F(1)$. By Lemmas \ref{lemextMF5dim} and \ref{lemextgal5dim}, both the source and target have dimension $5$, so this injection is an isomorphism. The proposition follows.
\end{proof}

\subsection{Construction of a pseudocharacter}
\label{subsecpsch}
The combination of Theorems \ref{thmpadicdef} and \ref{thmgalreps} gives us a family of Galois representations into $G_2(\overline{\Q}_p)$ which should interpolate over an affinoid rigid space. The now classical theory of pseudorepresentations would allow us to show that the corresponding family of Galois representations into $GL_7(\overline\Q_p)$ interpolates in this way, but it will not allow us to show that the interpolated family factors through $G_2$. However, there is a tool which will allow us to do this, namely the theory of pseudocharacters, as was introduced by V. Lafforgue \cite{laff}.

We will now recall some of the theory of pseudocharacters, and then construct a $G_2$-pseudocharacter of $G_\Q$ which interpolates the aforementioned Galois representations. We will follow B\"ockle--Harris--Khare--Thorne \cite{BHKT} in our exposition. We begin with the definition of pseudocharacter.

\begin{definition}
Let $G$ be a split reductive group over $\Z$, $A$ a ring and $\Gamma$ a group. Let $G$ act on itself by conjugation, and let $\Z[G]$ be the ring of regular functions on $G$, on which $G$ therefore acts as well. Let $\Z[G]^G$ be the subring of invariants for this action. Then a $G$-\textit{pseudocharacter} of $\Gamma$ over $A$ is a collection $\Theta$ of ring maps
\[\Theta_n:\Z[G^n]^{G}\to\mr{Fun}(\Gamma^n,A),\]
where $\mr{Fun}(\Gamma,A)$ is the ring of $A$-valued functions on $\Gamma$, such that the maps $\Theta_n$ satisfy the following properties:
\begin{enumerate}[label=(\arabic*)]
\item Given any positive integers $m,n$, any function $\zeta:\{1,\dotsc,m\}\to\{1,\dotsc,n\}$, any $f\in\Z[G^m]^G$, and any $\gamma_1,\dotsc,\gamma_n\in\Gamma$, we have
\[\Theta_n(f^\zeta)(\gamma_1,\dotsc,\gamma_n)=\Theta_m(f)(\gamma_{\zeta(1)},\dotsc,\gamma_{\zeta(m)}),\]
where $f^\zeta$ is defined by
\[f^\zeta(g_1,\dotsc,g_n)=f(g_{\zeta(1)},\dotsc,g_{\zeta(m)})\]
for all $g_1,\dotsc,g_n\in\Gamma$;
\item Given any positive integer $n$, any $\gamma_1,\dotsc,\gamma_{n+1}\in\Gamma$, and any $f\in\Z[G^n]^G$, we have
\[\Theta_{n+1}(\hat{f})(\gamma_1,\dotsc,\gamma_{n+1})=\Theta_n(f)(\gamma_1,\dotsc,\gamma_{n-1},\gamma_n\gamma_{n+1}),\]
where $\hat{f}$ is defined by
\[\hat{f}(g_1,\dotsc,g_{n+1})=f(g_1,\dotsc,g_{n-1},g_n g_{n+1}),\]
for all $g_1,\dotsc g_{n+1}\in\Gamma$.
\end{enumerate}

If $\Gamma$ and $A$ have topologies, then we say $\Theta$ is \textit{continuous} if for any $f\in\Z[G]^G$ and any $n$, the map $\Theta_n(f)$ is a continuous function $\Gamma^n\to A$.
\end{definition}

As noted in \cite[Lemma 4.3]{BHKT}, a representation $\rho:\Gamma\to G(A)$ gives a $G$-pseudocharacter of $\Gamma$ over $A$, which is denoted $\tr\rho$ and is defined by
\[(\tr\rho)_n(f)=f(\rho(\gamma_1),\dotsc,\rho(\gamma_n)).\]
The pseudocharacter $\tr\rho$ only depends on $\rho$ up to conjugation in $G(A)$. We also note that when $G=\GL_n$, this recovers the notion of pseudorepresentation of Taylor \cite{taylor}; the function $\Theta_1(\tr)$, where $\tr\in\Z[\GL_n]^{\GL_n}$ denotes the usual trace, will be a pseudorepresentation in this case, and the rest of the pseudocharacter will be determined by this.

Also, as noted in \cite[Lemma 4.4]{BHKT}, is that we can change the ring. More precisely, if $\Theta$ is a $G$-pseudocharacter over a ring $A$ and $\phi:A\to B$ is a ring homomorphism, then $\phi_*\Theta$, defined by
\[(\phi_*\Theta)_n(f)=\phi\circ\Theta_n(f),\qquad f\in\Z[G]^G,\]
is a $G$-pseudocharacter of $\Gamma$ over $B$.

For such $\Theta$, we can also change the group; if $\psi:\Gamma'\to\Gamma$ is a group homomorphism, then $\psi^*\Theta$, defined by
\[(\psi^*\Theta)_n(f)=\Theta_n(f)\circ\psi,\qquad f\in\Z[G]^G,\]
is a $G$-pseudocharacter of $\Gamma'$ over $A$. The changes of groups and rings just described are also compatible with continuity as long as the maps $\phi$ and $\psi$ as above are continuous.

Finally, if $A=k$ is an algebraically closed field, then given a $G$-pseudocharacter $\Theta$ of $\Gamma$ over $k$, there is a unique \textit{completely reducible} (See \cite[Definitions 3.3, 3.5]{BHKT}) representation $\rho:\Gamma\to G(k)$ such that $\Theta=\tr\rho$. This is \cite[Theorem 4.5]{BHKT}.

We now use this setup to construct a $G_2$-pseudocharacter of $G_\Q$ over an affinoid $\Q_p$-algebra. Recall from Section \ref{subsecpif} the eigenform $F$ of weight $k$ and level $N$, and that we have the $\mc{H}_p$-representation $\Pi_F^{(p)}$ which is a $p$ stabilization of the automorphic representation $\mc{L}_\alpha(\pi_F,1/10)$. Then Theorem \ref{thmpadicdef} gives, under Conjecture \ref{conjmult} (b), a $p$-adic family of automorphic representations parametrized by an affinoid rigid space $\mf{V}$. Let $\mf{V}'$ be the nilreduction of the irreducible component of $\mf{V}$ containing $y_0$.

Let $\mc{O}(\mf{V}')^\circ$ be the subring of functions in $\mc{O}(\mf{V}')$ whose evaluations at every point have $p$-adic absolute value bounded above by $1$. This ring defines a formal scheme whose rigid generic fiber is $\mf{V}'$. It follows from Noether normalization for $\mc{O}(\mf{V}')$ that $\mc{O}(\mf{V}')^\circ$ is finite over a power series ring over $\Z_p$ in two variables. The ring $\mc{O}(\mf{V}')^\circ$ is therefore profinite. We will use this in the proof of the following proposition.

\begin{proposition}
\label{propTheta}
Assume Conjectures \ref{conjmult} (b) and \ref{conjliftings} (a)-(c). Then, in the setting of Theorem \ref{thmpadicdef}, there is a continuous $G_2$-pseudocharacter $\Theta^\circ$ of $G_\Q$ over $\mc{O}(\mf{V}')^\circ$ satisfying the following properties:

Let $\Theta$ be the pseudocharacter of $G_\Q$ obtained from $\Theta^\circ$ by changing the ring from $\mc{O}(\mf{V}')^\circ$ to $\mc{O}(\mf{V}')$. Then
\begin{enumerate}[label=(\alph*)]
\item The pseudocharacter $\Theta$ is continuous;
\item The pseudocharacters $\Theta^\circ$ and $\Theta$ are unramified at all finite primes $\ell\nmid Np$. That is, the restrictions of $\Theta^\circ$ and $\Theta$ to the inertia group $I_\ell$ at $\ell$ are the $G_2$-pseudocharacters attached to the trivial representation of $I_\ell$;
\item Let $j_\beta$ be the inclusion of $M_\beta$ into $G_2$, and view $\rho_F$ as a representation of $G_\Q$ into $M_\beta(\overline\Q_p)$. Then the pseudocharacter $\Theta_{y_0}$ obtained from $\Theta$ by changing the ring via the point $y_0\in\mf{V}(\overline\Q_p)$ is given by
\[\Theta_{y_0}=\tr(j_\beta\circ\rho_F(-\tfrac{k-2}{2}));\]
\item For any $y\in\Sigma$ such that $\Pi_y$ is a singleton, let $\sigma_y$ be the element of $\Pi_y$ and $\Pi(\sigma_y)$ be the automorphic representation of $G_2(\A)$ of which $\sigma_y$ is a $p$-stabilization. Let $\rho_y$ be the Galois representation obtained from $\Pi(\sigma_y)$ via Theorem \ref{thmgalreps}. Then the pseudocharacter $\Theta_y$ obtained from $\Theta$ by changing the ring via $y$ is given by
\[\Theta_y=\tr(\rho_y).\]
\end{enumerate}
\end{proposition}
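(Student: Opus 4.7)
The plan is to glue the pointwise Galois representations from Theorem \ref{thmgalreps} into a single family-level object using the formalism of $G_2$-pseudocharacters due to V.~Lafforgue \cite{laff}, following the axiomatization of \cite{BHKT}. Under Conjectures \ref{conjmult} (b) and \ref{conjliftings} (a)--(c), for each $y$ in the Zariski dense subset $\Sigma' \subset \Sigma$ consisting of those $y \in \Sigma$ where $\Pi_y = \{\sigma_y\}$ is a singleton, the weight $\mbf{w}(y)$ is algebraic and regular, so Theorem \ref{thmgalreps} applies to the cuspidal cohomological representation $\Pi(\sigma_y)$ and produces a continuous semisimple Galois representation $\rho_y : G_\Q \to G_2(\overline\Q_p)$, unramified outside $S(N) \cup \{p\}$. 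Write $\Gamma$ for the Galois group of the maximal extension of $\Q$ unramified outside $S(N) \cup \{p,\infty\}$; then each $\rho_y$ factors through $\Gamma$, which is profinite. Let $\Theta_y = \tr(\rho_y)$ be the attached $G_2$-pseudocharacter.

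The next and main step is to show that for every $n \geq 1$, every $f \in \Z[G_2^n]^{G_2}$, and every tuple $(\gamma_1,\dots,\gamma_n) \in \Gamma^n$, the function $y \mapsto \Theta_{y,n}(f)(\gamma_1,\dots,\gamma_n)$ on $\Sigma'$ is the restriction of an element of $\mc{O}(\mf{V}')^\circ$. I would first do this when each $\gamma_i = \Frob_{\ell_i}^{-1}$ for distinct primes $\ell_i \nmid Np$: by Theorem \ref{thmgalreps} (a), the value $f(\rho_y(\Frob_{\ell_1}^{-1}),\dots,\rho_y(\Frob_{\ell_n}^{-1}))$ is a polynomial in the coordinates of the Satake parameters of $\Pi(\sigma_y)_{\ell_i}$, and these coordinates are polynomial in the Hecke eigenvalues at $\ell_i$, which by Theorem \ref{thmpadicdef} (3) and (5) interpolate to elements of $\mc{O}(\mf{V}')$ via $\theta_{\mf{V}}$. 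Integrality (i.e.\ landing in $\mc{O}(\mf{V}')^\circ$) follows from the fact that the characteristic polynomials of Hecke operators on integral cohomology are monic with integral coefficients, and the Satake parameters of the unramified unitary local components $\Pi(\sigma_y)_{\ell_i}$ (which come from bounded $p$-adic representations) lie in bounded subsets of $G_2(\overline\Q_p)$.

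To extend from Frobenius tuples to arbitrary tuples in $\Gamma^n$, I would invoke Chebotarev density together with the profiniteness of $\mc{O}(\mf{V}')^\circ$. Concretely, $\mc{O}(\mf{V}')^\circ$ is a profinite topological ring and $\Gamma^n$ is profinite, so any continuous map from a dense subset of $\Gamma^n$ into $\mc{O}(\mf{V}')^\circ$ extends uniquely and continuously. The continuity of $y \mapsto \Theta_{y,n}(f)$ for individual $y$ is automatic from the continuity of $\rho_y$; what needs care is equicontinuity of the family, which I would handle by reducing modulo a uniformizer and noting that the mod-$\varpi$ Galois representations factor through a common finite quotient of $\Gamma$ (a version of Grothendieck's boundedness lemma). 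This yields the maps $\Theta_n^\circ : \Z[G_2^n]^{G_2} \to \mr{Fun}(\Gamma^n, \mc{O}(\mf{V}')^\circ)$; the axioms (1) and (2) of a $G_2$-pseudocharacter hold on the dense set of Frobenius tuples pointwise in $y$, and thus globally by continuity and density.

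Finally, properties (a)--(d) follow straightforwardly: (a) is immediate from base change; (b) reflects that each $\rho_y$ is unramified outside $S(N) \cup \{p\}$, so $\Theta_{y,n}(f)(\cdot)$ is trivial on the inertia subgroups at primes $\ell \nmid Np$, and this extends by density; (d) holds by construction at points of $\Sigma'$; and (c) at $y_0$ requires identifying the specialization with $\tr(j_\beta \circ \rho_F(-\tfrac{k-2}{2}))$, which I would verify by comparing Satake parameters at primes $\ell \nmid Np$ using Proposition \ref{proplqofPi} (which gives $\mc{L}_\alpha(\pi_F,1/10)_\ell \cong \iota^{G_2(\Q_\ell)}_{P_\beta(\Q_\ell)}(\chi_\ell \circ \det_\beta)$ for the unramified character $\chi_\ell$ determining $\pi_{F,\ell}$) together with the duality diagram \eqref{eqphibetag2}. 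The main technical obstacle in this program is the equicontinuity needed to pass from Frobenius tuples to all of $\Gamma^n$ while preserving integral values; every other step is fairly formal once the family of $\rho_y$ and the interpolation of Hecke eigenvalues are in hand.
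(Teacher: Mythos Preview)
Your proposal is correct and follows essentially the same approach as the paper: define $\Theta^\circ$ first on Frobenius tuples via the interpolated Hecke eigenvalues from $\theta_{\mf{V}}$, show the values lie in $\mc{O}(\mf{V}')^\circ$, and then extend to all of $\Gamma$ using Chebotarev density together with the profiniteness of $\mc{O}(\mf{V}')^\circ$. The paper's integrality step is slightly cleaner than yours: rather than tracking integrality through Hecke operators on integral cohomology, it invokes a Baire category argument to conjugate each $\rho_y$ into $G_2(\mc{O}_E)$ for some finite extension $E/\Q_p$, after which any $f \in \Z[G_2^n]^{G_2}$ automatically takes values in $\mc{O}_E$.
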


\begin{proof}
We first define $\Theta$ and $\Theta^\circ$ on Frobenius elements, and then show that $\Theta$ is defined over $\mc{O}(\mf{V}')^\circ$. Then we explain how to extend the definitions of $\Theta$ and $\Theta^\circ$ to all of $G_\Q$.

Consider the character $\theta_{\mf{V}}$ of $R_{S(N),p}$ from Theorem \ref{thmpadicdef}. Let $\theta_{\mf{V}'}$ be the composition of $\theta_{\mf{V}}$ with the natural map $\mc{O}(\mf{V})\to\mc{O}(\mf{V}')$. If $\ell\nmid Np$, then $\theta_{\mf{V}'}$ gives rise to a parameter $s_{\mf{V}',\ell}\in T(F(\mf{V}'))/W$, where $F(\mf{V}')$ is the field of fractions of $\mc{O}(\mf{V}')$ and $W$ is the Weyl group of $G_2$.

Let $n\geq 1$ be an integer, and let $\ell_1,\dotsc,\ell_n$ be primes not dividing $Np$. Let $f\in\Z[G_2]^{G_2}$. Then we define
\[\Theta_n(f)(\Frob_{\ell_1}^{-1},\dotsc,\Frob_{\ell_n}^{-1})=\Theta_n^\circ(f)(\Frob_{\ell_1}^{-1},\dotsc,\Frob_{\ell_n}^{-1})=f(s_{\mf{V}',\ell_1},\dotsc,s_{\mf{V}',\ell_n}).\]
A priori, this is an element of $F(\mf{V}')$, but we will show in fact that it lies in $\mc{O}(\mf{V}')^\circ$.

To see this, let $f_1,\dotsc,f_r\in\mc{O}(\mf{V}')$ be such that $s_{\mf{V},\ell_1},\dotsc,s_{\mf{V}',\ell_n}\in\mc{O}(\mf{V}')[f_1^{-1},\dotsc,f_r^{-1}]$. Let $\Sigma'\subset\Sigma$ be the set of all $y\in\Sigma$ such that $\Pi_y$ is a singleton and such that $y$ is not in the divisor of any of the $f_i$'s. Then the specialization of $\Theta_n(f)(\Frob_{\ell_1}^{-1},\dotsc,\Frob_{\ell_n}^{-1})$ at $y$ for $y\in\Sigma'$ makes sense, and we have
\[\Theta_n(f)(\Frob_{\ell_1}^{-1},\dotsc,\Frob_{\ell_n}^{-1})=f(\rho_y(\Frob_{\ell_1}^{-1}),\dotsc,\rho_y(\Frob_{\ell_n}^{-1})),\]
by construction.

Now a standard argument using the Baire category theorem implies that there is a finite extension $\mc{O}_E$ of $\Q_p$ such that, up to conjugation by an element of $G_2(\overline\Q_p)$, we have that $\rho_y$ takes values in $G_2(\mc{O}_E)$. Therefore, the specialization at $y$,
\[y(\Theta_n(f)(\Frob_{\ell_1}^{-1},\dotsc,\Frob_{\ell_n}^{-1})),\]
is in $\mc{O}_E$. By the density of $\Sigma$, this implies then that
\[\Theta_n(f)(\Frob_{\ell_1}^{-1},\dotsc,\Frob_{\ell_n}^{-1})\in\mc{O}(\mf{V}')^\circ,\]
as desired.

The rest of the proof is more or less standard. Let $g_1,\dotsc,g_n$ be elements of $G_\Q$, and for each $i$ let $\ell_{i,j}$, $j>0$, be primes such that $\Frob_{\ell_{i,j}}\to g_i$ as $j\to\infty$. The ring $\mc{O}(\mf{V}')^\circ$ is profinite, so we can define $\Theta_n^\circ(f)(g_1,\dotsc,g_n)$ by defining it on the finite quotients of $\mc{O}(\mf{V}')^\circ$ using the definition above for Frobenius elements and taking the limit as $j\to\infty$. By continuity, the resulting assignment $f\mapsto\Theta_n(f)$ is a pseudocharacter, as $f\mapsto y(\Theta_n(f))$ is the one attached to $\rho_y$. Then by construction, the pseudocharacters $\Theta$ and $\Theta^\circ$ must satisfy the properties claimed in the proposition. (We note for point (c) that the Galois representation $j_\beta\circ\rho_F(-\tfrac{k-2}{2})$ is the one attached to $\mc{L}_\alpha(\pi_F,1/10)$.)
\end{proof}

We now restrict the pseudocharacter $\Theta$ constructed in this proposition to a curve. Let $c=(c_1,c_2)$ be a pair of positive integers with $c_1\ne c_2$. With $\mf{U}$ as in Theorem \ref{thmpadicdef}, let $\mf{L}=\mf{L}_c$ be the Zariski closure in $\mf{U}$ of the set of weights $\lambda\in\mbf{w}(\Sigma)$ of the form
\begin{equation}
\label{eqweightsinL}
\lambda=\lambda_0+n(c_1(2\alpha+3\beta)+c_2(\alpha+2\beta)),\qquad n\in\Z_{>0},
\end{equation}
where $\lambda_0=\tfrac{k-4}{2}(2\alpha+3\beta)$ as before. We can and do choose $c$ so that the intersection $\mf{L}(\overline\Q_p)\cap\mbf{w}(\Sigma)$ is infinite, so that $c_1\ne c_2$, and so that the Zariski closed subset of Theorem \ref{thmpadicdef} (2) intersects $\mf{L}$ at only finitely many points.

Then $\mf{L}$ is a line in $\mf{U}$ containing $\lambda_0$. Let $\mf{Z}'$ be the curve in $\mf{V}'$ lying over $\mf{L}$, and $\mf{Z}$ the nilreduction of irreducible component of $\mf{Z}$ containing $y_0$. We also let $\Sigma_c=\mf{Z}(\overline\Q_p)\cap\Sigma$. Changing the ring of $\Theta$ to $\mf{Z}$ gives us a continuous pseudocharacter which we denote $\Theta_\mf{Z}$.

Let $R_7$ be the seven dimensional representation of $G_2$. Then $R_7$ induces a ring map $\Z[GL_7]^{GL_7}\to\Z[G_2]^{G_2}$. Composing $\Theta_\mf{Z}$ with this map gives us a $GL_7$-pseudocharacter of $G_\Q$ over $\mc{O}(\mf{Z})$, and hence a pseudorepresentation of $G_\Q$ into $\mc{O}(\mf{Z})$ of dimension $7$ which we denote by $T$.

We now make the following assumption which we keep for the rest of the main body of this paper.

\begin{assumption}
\label{assfnotcm}
From here on, we assume $F$ is not CM.
\end{assumption}

Under this assumption, by results of Momose \cite{momose} and Loeffler \cite{loeimage}, there is a form $G$ of $GL_{2/\Q}$ such that the image of $\rho_F$ in $GL_2(\overline\Q_p)$ can be conjugated to be an open subgroup of $G(\Q_p)$. In particular, the image of $\rho_F$ is big enough for $\Ad(\rho_F)$ to be irreducible.

\begin{lemma}
\label{lemTsumoftwo}
Let $T$ be as above. Let $\tilde{\mf{Z}}$ be any reduced, irreducible, affinoid curve which is finite over a Zariski open neighborhood of $y_0$. Then, over $\tilde{\mf{Z}}$, the pseudorepresentation $T$ is either irreducible, or it is the sum of two pseudorepresentations, say $T=T_1+T_2$. In this latter case, let $T_{1,x_0}$ and $T_{2,x_0}$ denote, respectively, the specializations of $T_1$ and $T_2$ at a point $x_0\in\tilde{\mf{Z}}(\overline\Q_p)$ above $y_0$. Then (up to swapping $T_1$ and $T_2$) we have
\[T_{1,x_0}=\tr(\Ad\rho_F),\qquad T_{2,x_0}=\tr(\rho_F(-\tfrac{k-2}{2}))+\tr(\rho_F(-\tfrac{k}{2})).\]
\end{lemma}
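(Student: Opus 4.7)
The plan is to exploit the self-duality of $T$ inherited from $R_7$ and then classify the possible decompositions of $T$ by a dimension count at $x_0$.

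First, since every irreducible representation of $G_2$ is self-dual (as $-1$ lies in its Weyl group, e.g.\ $w_{-1}=w_{\alpha\beta\alpha\beta\alpha\beta}$), one has $R_7\cong R_7^\vee$ as $G_2$-representations, and hence $\tr R_7(h)=\tr R_7(h^{-1})$ for all $h\in G_2$. Applied to the $G_2$-pseudocharacter $\Theta_{\mf{Z}}$, this yields $T(g)=T(g^{-1})$ as identities of functions $G_\Q\to\mc{O}(\tilde{\mf{Z}})$, so that $T$ is self-dual as a pseudorepresentation. After enlarging $\tilde{\mf{Z}}$ to a larger finite cover if necessary (so that each irreducible factor of $T$ is defined over the fraction field $K$ of $\mc{O}(\tilde{\mf{Z}})$), \cite[Theorem 4.5]{BHKT} produces a completely reducible representation $\rho_K\colon G_\Q\to GL_7(\overline{K})$ with $\tr\rho_K=T\otimes\overline{K}$; writing $\rho_K=\bigoplus_{i=1}^rU_i$, the self-duality of $T$ forces the multiset $\{U_i\}$ to be stable under $U\mapsto U^\vee$.

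Next, I compute $T_{x_0}$. By Proposition \ref{propTheta}(c), $\Theta_{y_0}=\tr(j_\beta\circ\rho_F(-\tfrac{k-2}{2}))$, while by \eqref{eqr7beta} the composite $R_7\circ i_\beta$ decomposes as $\std^\vee\oplus\Ad\oplus\std$. Since $\det\rho_F=\chi_{\cyc}^{k-1}$, a short calculation yields $\rho_F(-\tfrac{k-2}{2})^\vee\cong\rho_F(-\tfrac{k}{2})$, and therefore
\[T_{x_0}=\tr\bigl(\rho_F(-\tfrac{k}{2})\oplus\Ad(\rho_F)\oplus\rho_F(-\tfrac{k-2}{2})\bigr).\]
Because $F$ is not CM (Assumption \ref{assfnotcm}), $\rho_F$ has big image: $\Ad(\rho_F)$ is irreducible, and the three displayed summands are pairwise nonisomorphic (the two $2$-dimensional pieces differ since $\rho_F\not\cong\rho_F(1)$). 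Moreover $\Ad(\rho_F)$ is self-dual, while $\rho_F(-\tfrac{k-2}{2})$ and $\rho_F(-\tfrac{k}{2})$ form a dual pair in which neither summand is self-dual.

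Now I classify the $U_i$. Each specialization $U_{i,x_0}$ is a semisimple sub-sum of the three displayed irreducibles in $T_{x_0}$. If $U_i$ is not self-dual, then $U_i^\vee\ne U_i$ is a distinct summand, so $2\dim U_i\leq 7$; combined with $U_{i,x_0}$ being non-self-dual, this rules out $\dim U_i=3$ (which would force $U_{i,x_0}=\Ad(\rho_F)$) and $\dim U_i=1$ (no $1$-dimensional subsum exists), leaving $\dim U_i=2$ and $U_{i,x_0}\in\{\rho_F(-\tfrac{k-2}{2}),\rho_F(-\tfrac{k}{2})\}$. If $U_i$ is self-dual, then $U_{i,x_0}$ is a self-dual subsum, forcing $\dim U_i\in\{3,4,7\}$ corresponding to $\{\Ad(\rho_F)\}$, $\{\rho_F(-\tfrac{k-2}{2}),\rho_F(-\tfrac{k}{2})\}$, or the full $T_{x_0}$. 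Requiring $\sum\dim U_i=7$ leaves only the three multisets of dimensions $(7)$, $(3,4)$, and $(3,2,2)$; in each of the latter two cases, grouping the non-$\Ad$ pieces into a single summand produces the desired decomposition $T=T_1+T_2$ with $T_{1,x_0}=\tr\Ad(\rho_F)$ and $T_{2,x_0}=\tr\rho_F(-\tfrac{k-2}{2})+\tr\rho_F(-\tfrac{k}{2})$.

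The main obstacle is verifying the descent of the decomposition $T\otimes\overline{K}=\bigoplus U_i$ to genuine pseudorepresentations $T_1,T_2$ valued in $\mc{O}(\tilde{\mf{Z}})$ rather than only in $\overline{K}$. This is essentially standard: enlarging $\tilde{\mf{Z}}$ to a finite cover realizes each trace $\tr U_i$ over $K$ itself, and the integrality of pseudocharacter trace values at the dense set of classical points in $\Sigma$ (where the specializations are Galois representations defined over number fields) places these traces inside $\mc{O}(\tilde{\mf{Z}})$ after passing to its normalization; the latitude to replace $\tilde{\mf{Z}}$ by any finite cover is already present in the statement of the lemma.
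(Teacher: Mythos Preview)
Your approach via self-duality is correct and genuinely different from the paper's. The paper argues by $p$-adic Hodge theory: it lists the four possible partitions of $\{\Ad\rho_F,\rho_F(-\tfrac{k-2}{2}),\rho_F(-\tfrac{k}{2})\}$ that could arise from a reducible $T$, and eliminates the three ``bad'' ones by checking that at nearby classical points the Hodge--Tate weights and crystalline-Frobenius slopes of the putative factor cannot satisfy weak admissibility (the Newton and Hodge polygons fail to meet at their endpoints). Your argument instead uses only that $R_7\cong R_7^\vee$, so the multiset of irreducible constituents over $\overline{K}$ is closed under duality; combined with the fact that $T_{x_0}$ is multiplicity-free with $\Ad\rho_F$ self-dual and $\rho_F(-\tfrac{k-2}{2})\leftrightarrow\rho_F(-\tfrac{k}{2})$ a genuine dual pair, this forces the dimension pattern to be $(7)$, $(3,4)$, or $(2,2,3)$, and in the last case you regroup. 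This is more elementary and avoids any appeal to crystalline slopes. The price is that you do not actually exclude the $(2,2,3)$ configuration, whereas the paper does; but the lemma as stated only asks for \emph{some} decomposition $T_1+T_2$ with the prescribed specializations, which your regrouping supplies.

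One phrasing issue: when $U_i$ is not self-dual you assert that $U_{i,x_0}$ is non-self-dual, but this does not follow directly (a non-self-dual family can specialize to something self-dual). The correct argument is the one you implicitly use elsewhere: since $T_{x_0}$ is multiplicity-free, the specializations of distinct summands $U_i$ and $U_i^\vee$ are \emph{disjoint} subsums of $\{\Ad\rho_F,\rho_F(-\tfrac{k-2}{2}),\rho_F(-\tfrac{k}{2})\}$, so if $\Ad\rho_F$ appeared in $U_{i,x_0}$ it would also appear in $(U_i^\vee)_{x_0}$, a contradiction. This immediately forces any non-self-dual $U_i$ to be $2$-dimensional. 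Similarly, your descent/integrality paragraph is a bit breezy; the cleanest route is to note that in either surviving configuration the $3$-dimensional factor is the unique one of its dimension and is self-dual, hence $\mr{Gal}(\overline K/K)$-stable, so its trace already lies in $K$, and $T_2=T-T_1$ is then automatic.
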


\begin{remark}
We will eventually show that the second possibility in the conclusion of this lemma cannot hold. This requires studying a lattice in the Galois representation attached to $T$, as in the next section, and seems to be genuinely more difficult than the techniques used to prove this lemma alone.
\end{remark}

\begin{proof}[Proof (of Lemma \ref{lemTsumoftwo})]
Let $x_0\in\tilde{\mf{Z}}(\overline\Q_p)$ a point above $y_0$. For any $x\in\tilde{\mf{Z}}(\overline\Q_p)$, let $T_x$ be the specialization of $T$ at $x$. Then by Proposition \ref{propTheta} (c) and \eqref{eqr7beta}, we have
\[T_{x_0}=\tr(\rho_F(-\tfrac{k-2}{2}))+\tr(\Ad(\rho_F))+\tr(\rho_F(-\tfrac{k}{2})).\]
These three pseudorepresentations are irreducible by above.

Assume that $T$ is reducible over $\widetilde{\mf{Z}}$. Then we therefore have four cases, up to swapping factors. Namely:

\textit{Case 1.} Over $\tilde{\mf{Z}}$, we have $T=T_1+T_2+T_3$, where
\[T_{1,x_0}=\tr(\Ad(\rho_F)),\qquad T_{2,x_0}=\tr(\rho_F(-\tfrac{k-2}{2})),\qquad T_{3,x_0}=\tr(\rho_F(-\tfrac{k}{2}).\]

\textit{Case 2.} Over $\tilde{\mf{Z}}$, we have $T=T_1+T_2$, where
\[T_{1,x_0}=\tr(\Ad(\rho_F))+\tr(\rho_F(-\tfrac{k}{2}),\qquad T_{2,x_0}=\tr(\rho_F(-\tfrac{k-2}{2})).\]

\textit{Case 3.} Over $\tilde{\mf{Z}}$, we have $T=T_1+T_2$, where
\[T_{1,x_0}=\tr(\Ad(\rho_F))+\tr(\rho_F(-\tfrac{k-2}{2}),\qquad T_{2,x_0}=\tr(\rho_F(-\tfrac{k}{2})).\]

\textit{Case 4.} Over $\tilde{\mf{Z}}$, we have $T=T_1+T_2$, where
\[T_{1,x_0}=\tr(\Ad(\rho_F)),\qquad T_{2,x_0}=\tr(\rho_F(-\tfrac{k-2}{2})+\tr(\rho_F(-\tfrac{k}{2})).\]
We have to show Cases 1-3 cannot hold.

But first, let $\widetilde{\Sigma}_c$ be the preimage of $\Sigma_c$ in $\tilde{\mf{Z}}$. Let $x\in\widetilde{\Sigma}_c$ be a point, and $y\in\Sigma_c$ the point over which $x$ lies. There there is a positive integer $n$ so that $\mbf{w}(y)$ is given by \eqref{eqweightsinL}. So by Theorem \ref{thmgalreps} (c) we have that (the Galois representation attached to) $T_x$ has Hodge--Tate weights
\[\pm (k-1+2nc_1+nc_2),\qquad\pm (\tfrac{k}{2}+nc_1+nc_2),\qquad\pm (\tfrac{k-2}{2}+nc_1),\qquad 0.\]
If $x$ is close to $x_0$, then $y$ corresponds to a $p$-stabilization of an automorphic representation which is unramified at $p$ by Proposition \ref{propunratp}. By the continuity of the operators in $\mc{U}_p$, the slopes of $T_x$ are therefore given, as in Theorem \ref{thmgalreps} (c), by
\begin{equation}
\label{eqslopesofT}
\pm(2s_p-(k-1+2nc_1+nc_2)),\qquad\pm(s_p-(\tfrac{k}{2}+nc_1+nc_2)),\qquad\pm(s_p-(\tfrac{k-2}{2}+nc_1)),\qquad 0.
\end{equation}
It is worthwhile to note that these seven numbers are those given by
\[\langle s_p(2\alpha+3\beta)+\beta-\lambda-\rho,\mu^\vee\rangle,\]
where $\lambda$ is the weight of $y$ and $\mu^\vee$ runs over the seven coweights which support the character of the seven dimensional representation of the dual group $G_2^\vee$; the weight $s_p(2\alpha+3\beta)+\beta$ is the slope of $\Pi_F^{(p)}$. We also note that $T_{x_0}$ has Hodge--Tate weights
\[\pm (k-1),\qquad\pm\tfrac{k}{2},\qquad\pm\tfrac{k-2}{2},\qquad 0.\]

Now assume we are in Case 1 above. Then by looking at the Hodge--Tate--Sen weights of the semisimple Galois representation attached to $T$, we see that if $x\in\widetilde\Sigma_c$ is sufficiently close to $x_0$, then we have that $T_{3,x}$ has Hodge--Tate weights
\[\tfrac{k}{2}+nc_1+nc_2\qquad\textrm{and}\qquad -(\tfrac{k-2}{2}+nc_1).\]
Its slopes must be among those in the list \eqref{eqslopesofT}. But if $n$ is sufficiently large, a case-by-case check reveals that this makes it impossible for the Newton polygon and the Hodge polygon for $T_3$ to meet at their endpoints. Therefore we have ruled out Case 1.

A completely similar analysis of $T_2$ rules out both Case 2 and Case 3. Therefore the proof is complete.
\end{proof}

\subsection{Construction of a lattice}
\label{subseclattice}
The goal of this subsection is to prove the following proposition; after this, the main theorem will more or less follow from (albeit perhaps complicated) linear algebra.

\begin{proposition}
\label{propmcLbar}
Assume Conjectures \ref{conjmult} (a) and \ref{conjliftings}. Then there is a $7$-dimensional $\overline\Q_p$-vector space, which we call $\overline{\mc{L}}$, with a basis $v_1,\dotsc,v_7$, a continuous $G_\Q$-action $\rho_{\overline{\mc{L}}}$, and an alternating trilinear form $\langle\cdot,\cdot,\cdot\rangle$ which is preserved by $\rho_{\overline{\mc{L}}}$, such that the following properties are satisfied.
\begin{enumerate}[label=(\alph*)]
\item For $1\leq i,j\leq 7$ and $g\in G_\Q$, let $g_{ij}\in\overline\Q_p$ be the $(i,j)$-coefficient of the matrix by which $\rho_{\overline{\mc{L}}}$ acts on $\overline{\mc{L}}$ in the basis $v_1,\dotsc,v_7$. Then the map $\rho_3$ defined by
\[g\mapsto\pmat{g_{66}& g_{67}\\ g_{76}& g_{77}}\]
is a representation $\rho_3:G_\Q\to GL_2(\overline\Q_p)$ conjugate to $\rho_F(-\tfrac{k}{2})$.
\item Let $d:G_\Q\to\overline\Q_p^\times$ be defined by
\[d(g)=\det\rho_3(g)=g_{66}g_{77}-g_{67}g_{76}.\]
Define two maps $\rho_1,\rho_2$ by
\[\rho_1(g)=\frac{1}{d(g)}\pmat{g_{66}&-g_{67}\\ -g_{76}&g_{77}},\qquad\rho_2(g)=\frac{1}{d(g)}\pmat{g_{66}^2 & 2g_{66}g_{67} & -g_{67}^2\\
g_{66}g_{76} & g_{66}g_{77}+g_{67}g_{76} & -g_{67}g_{77}\\
-g_{76}^2 & -2g_{76}g_{77} & g_{77}^2}.\]
Then $\rho_1:G_\Q\to\GL_2(\overline\Q_p)$ and $\rho_2:G_\Q\to\GL_3(\overline\Q_p)$ are representations conjugate, respectively, to $\rho_F(-\tfrac{k-2}{2})$ and $\Ad(\rho_F)$. Moreover, in the basis $v_1,\dotsc,v_7$, the representation $\rho_{\overline{\mc{L}}}$ has one of the following three forms: We either have
\begin{equation}
\label{eqLbar12}
\rho_{\overline{\mc{L}}}=\pmat{\rho_1 & *_3 & *_2\\
0 & \rho_2 & *_1\\
0 & 0 & \rho_3},
\end{equation}
with $*_1$ and $*_2$ nontrivial, or
\begin{equation}
\label{eqLbar22}
\rho_{\overline{\mc{L}}}=\pmat{\rho_1 & 0 & *_2\\
*_3 & \rho_2 & *_1\\
0 & 0 & \rho_3},
\end{equation}
again with $*_1$ and $*_2$ nontrivial, or
\begin{equation}
\label{eqLbar32}
\rho_{\overline{\mc{L}}}=\pmat{\rho_1 & 0 & *_2\\
0 & \rho_2 & 0\\
0 & 0 & \rho_3},
\end{equation}
with $*_2$ nontrivial.
\item Let $v_1^\vee,\dotsc,v_7^\vee\in\overline{\mc{L}}^\vee$ be the basis dual to $v_1,\dotsc,v_7$. Then there are numbers
\[a_{147},a_{156},a_{237},a_{246},a_{345}\in\overline\Q_p,\]
not all zero, such that
\begin{multline*}
\langle\cdot,\cdot,\cdot\rangle=a_{147}(v_1^\vee\wedge v_4^\vee\wedge v_7^\vee)+a_{156}(v_1^\vee\wedge v_5^\vee\wedge v_6^\vee)+a_{237}(v_2^\vee\wedge v_3^\vee\wedge v_7^\vee)\\
+a_{246}(v_2^\vee\wedge v_4^\vee\wedge v_6^\vee)+a_{345}(v_3^\vee\wedge v_4^\vee\wedge v_5^\vee).
\end{multline*}
Furthermore, if $\rho_{\overline{\mc{L}}}$ has the form \eqref{eqLbar32}, then one of $a_{147},a_{156},a_{237},a_{246}$ is nonzero.
\item For any prime $\ell\nmid Np$, the representation $\rho_{\overline{\mc{L}}}$ is unramified at $\ell$.
\item Let $\ell$ be a prime with $\ell|N$, and assume that if $4|N$ (respectively, if $9|N$) then $\ell\ne 2$ (respectively, $\ell\ne 3$). Then there is an finite index open subgroup $U$ in the inertia group $I_\ell$ at $\ell$ such that, for any of the functions $*_1,*_2,*_3$ from any of the formulas \eqref{eqLbar12}, \eqref{eqLbar22}, \eqref{eqLbar32} above, we have
\[*_1|_U=*_2|_U=*_3|_U=0,\]
identically.
\item Write $b_1,\dotsc,b_7$ for the numbers
\[p^{-(k-1)}\alpha_p^2,\,\,p^{-(k-2)/2}\alpha_p,\,\,p^{-k/2}\alpha_p,\,\,1,\,\,p^{k/2}\alpha_p^{-1},\,\,p^{(k-2)/2}\alpha_p^{-1},\,\,p^{k-1}\alpha_p^{-2},\]
respectively. Then for any $i$ with $1\leq i\leq 7$, we have
\[D_{\crys}(\wedge^i\overline{\mc{L}})^{\phi=\prod_{j=1}^i b_j}\ne 0.\]
\end{enumerate}
\end{proposition}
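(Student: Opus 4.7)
The strategy is to realize $\overline{\mc{L}}$ as the reduction at $y_0$ of a suitable Galois-stable lattice inside the $7$-dimensional Galois representation associated with $T = R_7 \circ \Theta_{\mf{Z}}$. First I would replace $\mf{Z}$ by the normalization of a finite cover so that its local ring $R$ at (the preimage of) $y_0$ is a discrete valuation ring with residue field $\overline{\Q}_p$. By Lemma \ref{lemTsumoftwo}, either $T$ is absolutely irreducible over $\Frac R$ (Case I) or $T = T_1 + T_2$ with $T_{1,y_0} = \tr\Ad(\rho_F)$ and $T_{2,y_0} = \tr\rho_F(-\tfrac{k-2}{2}) + \tr\rho_F(-\tfrac{k}{2})$ (Case II). In either case, standard pseudorepresentation theory produces semisimple Galois representations over $\Frac R$ with the corresponding traces; in Case II, the $T_1$-summand is determined by its trace and so lifts to a representation conjugate to $\Ad(\rho_F)$ globally on $\mf{Z}$.

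The core of the argument is a Ribet--Urban lattice construction. In Case I, pick a $G_\Q$-stable $R$-lattice $\mc{L} \subset \Frac(R)^7$ and enlarge it (by successively adjoining preimages of cyclic $R$-quotients) until the reduction $\overline{\mc{L}}$ admits $\rho_F(-\tfrac{k}{2})$ as a quotient. Since the three constituents $\rho_F(-\tfrac{k-2}{2}), \Ad(\rho_F), \rho_F(-\tfrac{k}{2})$ are pairwise non-isomorphic and irreducible (using that $F$ is not CM, Assumption \ref{assfnotcm}), the socle of the kernel of $\overline{\mc{L}} \twoheadrightarrow \rho_F(-\tfrac{k}{2})$ is isomorphic to exactly one of $\rho_F(-\tfrac{k-2}{2})$ or $\Ad(\rho_F)$, yielding respectively shapes \eqref{eqLbar12} and \eqref{eqLbar22}. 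Nontriviality of $*_1$ follows from maximality of the lattice: if $*_1$ were split, then $\rho_F(-\tfrac{k}{2})$ would split off an $R$-submodule of $\mc{L}$, contradicting absolute irreducibility of $\rho$. Nontriviality of $*_2$ follows by applying the analogous construction to the dual lattice (whose reduction has $\rho_F(-\tfrac{k-2}{2})$ as a quotient by Tate duality). In Case II, apply the same construction to $\rho^{(2)}$ corresponding to $T_2$, yielding a nonsplit extension of $\rho_F(-\tfrac{k-2}{2})$ by $\rho_F(-\tfrac{k}{2})$; then take the direct sum with the $\Ad(\rho_F)$ factor lifted from $T_1$, producing shape \eqref{eqLbar32}. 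The main technical obstacle is ruling out the remaining orderings of the three Jordan--H\"older constituents in Case I; these are excluded by exploiting that $\Theta$ is a $G_2$-pseudocharacter, which forces a self-duality on $\overline{\mc{L}}$ (coming from the invariant $3$-form below) incompatible with any other arrangement.

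For (c), since $\Theta$ is a $G_2$-pseudocharacter (Proposition \ref{propTheta}), the semisimple representation over $\Frac R$ factors through $G_2 \hookrightarrow \GL_7$ and hence preserves a generic alternating $3$-form $\omega$ on $\Frac(R)^7$ (Section \ref{subsecalt3forms}). After rescaling by an element of $\Frac R$, the form $\omega$ takes $R$-integral values on $\mc{L}$ and is primitive there, so its reduction $\langle \cdot,\cdot,\cdot\rangle$ on $\overline{\mc{L}}$ is nonzero. Invariance under $\rho_{\overline{\mc{L}}}$ forces each nonzero term $a_{ijk}\, v_i^\vee \wedge v_j^\vee \wedge v_k^\vee$ to satisfy that the sum of torus weights on $v_i, v_j, v_k$ vanishes; a direct enumeration using the weights $-\alpha-2\beta, -\alpha-\beta, -\beta, 0, \beta, \alpha+\beta, \alpha+2\beta$ assigned to $v_1, \dotsc, v_7$ in the $R_7 \circ i_\beta$ ordering \eqref{eqr7beta} leaves exactly the five triples listed. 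For the refinement in case \eqref{eqLbar32}, write $V = V_1 \oplus V_2$ over $\Frac R$ with $V_1 \cong \Ad(\rho_F)$ and $V_2$ the $4$-dimensional extension; if all four of $a_{147}, a_{156}, a_{237}, a_{246}$ vanished on the reduction, then up to lattice rescaling $\omega$ would reduce to a form supported on $\wedge^3 V_1$ alone, whose $\GL_7$-stabilizer contains $\GL(V_2) \times \SL(V_1)$ and is thus strictly larger than $G_2$, contradicting genericity of $\omega$ on the generic fiber.

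Finally, (d) is immediate from Proposition \ref{propTheta}(b). For (e), combine Propositions \ref{propscvariation}--\ref{propramstvariation}, which restrict the local components at $\ell \mid N$ of the automorphic representations in the family $\Sigma$ in terms of $\pi_F$, with local-global compatibility (Theorem \ref{thmgalreps}(b)): these force the restriction of $\rho$ to a sufficiently small open subgroup $U \subset I_\ell$ to act through a finite unipotent quotient compatible with the block structure, so the off-diagonal functions $*_1, *_2, *_3$ vanish identically on $U$. For (f), apply Kisin's lemma (Lemma \ref{lemkisin}) to $\wedge^i \rho$: by Proposition \ref{propunratp}, sufficiently regular classical points $y \in \Sigma$ near $y_0$ correspond to automorphic representations unramified at $p$, so Theorem \ref{thmgalreps}(c) supplies crystalline Frobenius eigenvalues for $\rho_y$. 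These seven eigenvalues are interpolated by analytic functions on a neighborhood of $y_0$ in $\mf{Z}$ whose specializations at $y_0$ are precisely the numbers $b_1, \dotsc, b_7$, and Kisin's lemma then yields the required nonvanishing of $D_{\crys}(\wedge^i \overline{\mc{L}})^{\phi = \prod_{j=1}^i b_j}$ for each $i$.
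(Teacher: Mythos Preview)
Your overall strategy matches the paper's, but you are missing the single device that makes parts (a)--(c) work simultaneously: the choice of a specific element $g_0\in G_\Q$ such that $\rho_F(-\tfrac{k}{2})(g_0)$ has eigenvalues $\gamma_6,\gamma_7$ for which the seven numbers $\gamma_1=\gamma_7^{-1},\,\gamma_2=\gamma_6^{-1},\,\gamma_3=\gamma_6\gamma_7^{-1},\,\gamma_4=1,\,\gamma_5=\gamma_6^{-1}\gamma_7,\,\gamma_6,\,\gamma_7$ are all distinct (possible since $F$ is not CM). The paper then takes $\mc{L}$ to be the cyclic $\mc{O}(\mf{Z})[G_\Q]$-module generated by a lift $\widetilde w_7$ of a $\gamma_7$-eigenvector of $g_0$ in $\mc{P}_{y_0}$ (in Case~II, one does this for both a $\gamma_5$- and a $\gamma_7$-eigenvector and takes the direct sum). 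The basis $v_1,\dotsc,v_7$ is then the $g_0$-eigenbasis of $\overline{\mc{L}}$, which automatically respects the block filtration because the eigenvalues $\gamma_i$ are grouped according to the constituents $\rho_1,\rho_2,\rho_3$. Your iterative ``enlarge until $\rho_F(-\tfrac{k}{2})$ is a quotient'' would produce \emph{some} lattice, but you never say how to choose the basis so that it is simultaneously adapted to the block structure and to the trilinear form.

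This gap shows up most clearly in your argument for (c). You write that ``invariance under $\rho_{\overline{\mc{L}}}$ forces the sum of torus weights on $v_i,v_j,v_k$ to vanish'', but there is no torus acting on $\overline{\mc{L}}$; the weights of $R_7$ are a feature of $G_2$, not of the Galois module. What the paper does is use $g_0$-invariance: since $g_0$ acts diagonally with eigenvalues $\tilde\gamma_i$ lifting the $\gamma_i$, one gets $\tilde a_{ijk}=\tilde\gamma_i\tilde\gamma_j\tilde\gamma_k\,\tilde a_{ijk}$, so $\tilde a_{ijk}=0$ unless $\gamma_i\gamma_j\gamma_k=1$; the multiplicative relations among the $\gamma_i$ then single out exactly the five triples. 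Your enumeration of weights is the right combinatorics, but it needs $g_0$ to be meaningful.

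Your argument for the refinement in (c) --- that in shape \eqref{eqLbar32} not all of $a_{147},a_{156},a_{237},a_{246}$ vanish --- is incorrect. The reduction of a generic alternating $3$-form over a DVR need not be generic, so ``the stabilizer would be too big, contradicting genericity on the generic fibre'' proves nothing about the special fibre. The paper's fix is different and elementary: in Case~II one has $\mc{L}=\mc{L}_1\oplus\mc{L}_2$ with $v_3,v_4,v_5$ coming from $\mc{L}_1$ and $v_1,v_2,v_6,v_7$ from $\mc{L}_2$. Replacing $\mc{L}_2$ by $c\mc{L}_2$ for suitable $c\in\mc A$ rescales $\tilde a_{345}$ and the other four $\tilde a_{ijk}$ by different powers of $c$, so one can arrange that the minimum valuation among the five is attained by one of $\tilde a_{147},\tilde a_{156},\tilde a_{237},\tilde a_{246}$.

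Finally, your justification that $*_2\ne 0$ ``by applying the analogous construction to the dual lattice'' does not work: that would produce a different $\overline{\mc{L}}$, not a constraint on this one. The paper instead deduces nontriviality of the $*_i$ from $\overline{\mc{L}}$ having $\rho_3$ as its unique irreducible quotient (which follows from $\overline{\mc{L}}$ being generated by the $\gamma_7$-eigenvector, an eigenvalue occurring only in $\rho_3$). Your treatments of (d), (e), (f) are along the right lines, though for (e) the actual mechanism is a comparison of nilpotent orbits: the monodromy operator $N_{\mf{Z}}$ on the family specializes generically to $N_z$, whose orbit is computed from local--global compatibility to equal that of the monodromy of $\rho_{\overline{\mc{L}}}^{\sss}$; since the orbit closure of $N_{y_0}$ is contained in that of $N_{\mf{Z}}$, the extra extensions $*_i$ must vanish on an open subgroup of inertia.
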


\begin{proof}
The proof will proceed in several steps; we will use the pseudorepresentation $T$ and the pseudocharacter $\Theta_{\mf{Z}}$ from the previous subsection to construct a lattice $\mc{L}$ in a $7$-dimensional Galois representation, factoring through $G_2$, over the field of meromorphic functions on an affinoid domain, whose specialization at a certain special point will be $\overline{\mc{L}}$ (whence the notation). We will then pass to the completed local ring $\mc{A}$ at this special point, and we will construct a basis $\tilde{v}_1,\dotsc,\tilde{v}_7$ in $\mc{L}\otimes\mc{A}$ which reduces modulo the maximal ideal to the basis $v_1,\dotsc,v_7$ from the proposition. Then we will construct the form $\langle\cdot,\cdot,\cdot\rangle$ using the fact that $\mc{L}$ factors through $G_2$. Finally, we will proceed to prove each of the points (a)-(f).

\textit{Step 1. Auxiliary constructions: }$\mc{P}$, $\mc{A}$,\textit{ and }$g_0$. Let $\Theta_\mf{Z}$ be the $G_2$-pseudocharacter of $G_\Q$ over $\mc{O}(\mf{Z})$ constructed in the previous subsection. Let $F(\mf{Z})$ be the fraction field of $\mc{O}(\mf{Z})$ and let $\rho_\Theta:G_\Q\to G_2(\overline{F(\mf{Z})})$ be the representation associated with $\Theta_\mf{Z}$ by \cite[Theorem 4.5]{BHKT}. Then $R_7\circ\rho_\Theta$ is the semisimple Galois representation associated with $T$. Since $T$ takes values in $\mc{O}(\mf{Z})$, standard arguments in the theory of pseudorepresentations show that there is a reduced, irreducible, affinoid curve $\tilde{\mf{Z}}$ which is finite over a Zariski open neighborhood of $y_0$ in $\mf{Z}$ and such that $R_7\circ\rho_{\Theta}$ takes values in $GL_7(\mc{O}(\tilde{\mf{Z}}))$. By replacing $\mf{Z}$ with $\tilde{\mf{Z}}$ and replacing $y_0$ with a chosen point above it in $\tilde{\mf{Z}}$, we may assume $\tilde{\mf{Z}}=\mf{Z}$.

We then pass to the normalization of $\mf{Z}$ and therefore may assume $\mc{O}(\mf{Z})$ is Dedekind. The continuity of $T$ then implies that $R_7\circ\rho_\Theta$ preserves a projective $\mc{O}(\mf{Z})$-module $\mc{P}$ of rank $7$ in $F(\mf{Z})^7$.

We let $\mc{A}$ be the completed local ring of $\mc{O}(\mf{Z})$ at the point $y_0$. Then $\mc{A}$ is a discrete valuation ring since $\mc{O}(\mf{Z})$ is Dedekind.

Now choose any element $g_0\in G_\Q$ with $\rho_F(-\tfrac{k}{2})(g_0)$ having eigenvalues $\gamma_6,\gamma_7\in\overline\Q_p$ such that the numbers
\begin{equation}
\label{eqgammai}
\gamma_1=\gamma_7^{-1},\quad\gamma_2=\gamma_6^{-1},\quad\gamma_3=\gamma_6\gamma_7^{-1},\quad\gamma_4=1,\quad\gamma_5=\gamma_7\gamma_6^{-1},\quad\gamma_6,\quad\gamma_7
\end{equation}
are all distinct; this is possible by Assumption \ref{assfnotcm}.

\textit{Step 2. Construction of $\mc{L}$ when $T$ is absolutely irreducible.} Assume first that the pseudorepresentation $T$ is irreducible over any reduced, irreducible, affinoid curve $\tilde{\mf{Z}}$ which is finite over a Zariski open neighborhood of $y_0$ in $\mf{Z}$. Then by Proposition \ref{propTheta} (c) and \eqref{eqr7beta}, we have
\[\mc{P}_{y_0}^{\sss}\cong\rho_F(-\tfrac{k-2}{2})\oplus\Ad(\rho_F)\oplus\rho_F(-\tfrac{k}{2}),\]
where $\mc{P}_{y_0}$ is the specialization of $\mc{P}$ at $y_0$ and $\mc{P}_{y_0}^{\sss}$ is its semisimplification as a Galois representation. Therefore there is a vector $w_7\in\mc{P}_{y_0}$ which is an eigenvector for $g_0$ acting on $\mc{P}_{y_0}$ with eigenvalue $\gamma_7$. Let $\widetilde{w}_7\in\mc{P}$ be any vector mapping to $w_7$ in $\mc{P}_{y_0}$. Then we let
\[\mc{L}=\mc{O}(\mf{Z})[G_\Q]\widetilde{w}_7,\]
which is another projective module over $\mc{O}(\mf{Z})$, and it is of rank $7$ by irreducibility of $T$.

\textit{Step 3. Construction of $\mc{L}$ when $T$ is absolutely reducible.} Now assume there is a reduced, irreducible, affinoid curve which is finite over a Zariski open neighborhood of $y_0$ in $\mf{Z}$ over which $T$ becomes reducible. We may assume, by passing to this curve, that $T$ is reducible over $\mf{Z}$. Then this time we have that there are vectors $w_5,w_7\in\mc{P}_{y_0}$ which are, respectively, eigenvectors for $g_0$ acting on $\mc{P}_{y_0}$ with respective eigenvalues $\gamma_5$ and $\gamma_7$. Let $\widetilde{w}_5,\widetilde{w}_7\in\mc{P}$ be any vectors mapping, respectively, to $w_5,w_7$ in $\mc{P}_{y_0}$. Then we let
\[\mc{L}_1=\mc{O}(\mf{Z})[G_\Q]\widetilde{w}_5,\qquad\mc{L}_2=\mc{O}(\mf{Z})[G_\Q]\widetilde{w}_7,\qquad\mc{L}=\mc{L}_1\oplus\mc{L}_2.\]
Then by Lemma \ref{lemTsumoftwo}, we have that $\mc{L}_1$ and, respectively, $\mc{L}_2$ are projective modules over $\mc{O}(\mf{Z})$ of ranks $3$ and, respectively, $4$, and that $\mc{L}$ is a projective $\mc{O}(\mf{Z})$-submodule of $\mc{P}$ of rank $7$.

In either of the cases of Step 2 or Step 3, we let $\overline{\mc{L}}$ be the specialization of $\mc{L}$ at $y_0$, and $\rho_{\overline{\mc{L}}}$ the associated representation of $G_\Q$.

\textit{Step 4. Construction of the trilinear form }$\langle\cdot,\cdot,\cdot\rangle$\textit{ and the basis }$v_1,\dotsc,v_7$. To construct the basis $v_1,\dotsc,v_7$, we just let $v_i$ for $i=1,\dotsc,7$ be the eigenvector for $g_0$ in $\overline{\mc{L}}$ with eigenvalue $\gamma_i$. Then since the $\gamma_i$'s are distinct, these are unique up to scaling. By Hensel's lemma there are then vectors $\tilde{v}_1,\dotsc,\tilde{v}_7$ in $\mc{L}\otimes_{\mc{O}(\mf{Z})}\mc{A}$, mapping to, respectively, $v_1,\dotsc,v_7$ modulo the maximal ideal $\mf{m}$ of $\mc{A}$, which are eigenvectors for $g_0$ with distinct, respective eigenvalues $\tilde\gamma_1,\dotsc,\tilde\gamma_7$. We then have
\[\tilde\gamma_i\equiv\gamma_i\modulo{\mf{m}},\qquad i=1,\dotsc,7.\]

Now since the representation $\rho_\Theta$ we started with in Step 1 factors through $G_2(F(\mf{Z}))$, it therefore also factors through $G_2(\overline{\Frac(\mf{A})})$. By the results of Section \ref{subsecalt3forms}, it therefore preserves a trilinear form on $\overline{\Frac(\mc{A})}^7$ which we denote by $\langle\cdot,\cdot,\cdot\rangle^{\sim}$. The Galois group $G_\Q$ acts on $\mc{L}\otimes_{\mc{O}(\mf{Z})}\overline{\Frac(\mc{A})}$ through the base change of the representation $\rho_\Theta$ to $\overline{\Frac(\mc{A})}$, and so we may view $\langle\cdot,\cdot,\cdot\rangle^{\sim}$ as a trilinear form on $\mc{L}\otimes_{\mc{O}(\mf{Z})}\Frac(\mc{A})$ by restriction.

Let $\tilde{v}_1^\vee,\dotsc,\tilde{v}_7^\vee$ be the basis of $(\mc{L}\otimes_{\mc{O}(\mf{Z})}\overline{\Frac(\mc{A})})^\vee$ dual to $\tilde{v}_1,\dotsc,\tilde{v}_7$, and write
\[\langle\cdot,\cdot,\cdot\rangle^\sim=\sum_{1\leq i<j<k\leq 7}\tilde{a}_{ijk}\tilde{v}_i^\vee\wedge\tilde{v}_j^\vee\wedge\tilde{v}_k^\vee,\]
for some $\tilde{a}_{ijk}\in\overline{\Frac(\mc{A})}$. Then for any triple $(i,j,k)$ with $1\leq i<j<k\leq 7$, we have
\[\tilde{a}_{ijk}=\langle \tilde{v}_i,\tilde{v}_j,\tilde{v}_k\rangle^\sim=\langle g_0\tilde{v}_i,g_0\tilde{v}_j,g_0\tilde{v}_k\rangle^\sim=\tilde{\gamma}_i\tilde{\gamma}_j\tilde{\gamma}_k\tilde{a}_{ijk}.\]
Thus $\tilde{a}_{ijk}=0$ for all such triples $(i,j,k)$ except possibly
\[(i,j,k)=(1,4,7),\,\,(1,5,6),\,\,(2,4,6),\,\,(2,3,7),\,\,(3,4,5),\]
and $\tilde{a}_{ijk}$ is nonzero for at least one of these triples $(i,j,k)$. By adjoining to it the elements $\tilde{a}_{ijk}$ for these five triples $(i,j,k)$, we may simply assume $\mc{A}$ contains these elements. Furthermore, by scaling $\langle\cdot,\cdot,\cdot\rangle^\sim$, we may assume that $\tilde{a}_{ijk}$ is integral for these five triples $(i,j,k)$, and that $\tilde{a}_{ijk}\in\mc{A}^\times$ for at least one such $(i,j,k)$. If we write $a_{ijk}\in\overline\Q_p$ for the reduction of $\tilde{a}_{ijk}$ modulo $\mf{m}$, then this means
\[a_{ijk}\ne 0\textrm{ for at least one }(i,j,k)=(1,4,7),\,\,(1,5,6),\,\,(2,4,6),\,\,(2,3,7),\,\,(3,4,5).\]
Furthermore, if $T$ is reducible as in Step $3$, then by replacing $\mc{L}_2$ with $c\mc{L}_2$ for a suitable $c\in\mc{A}$, we may assume the valuation of $\tilde{a}_{345}$ is small enough so that we instead have
\[a_{ijk}\ne 0\textrm{ for at least one }(i,j,k)=(1,4,7),\,\,(1,5,6),\,\,(2,4,6),\,\,(2,3,7).\]

Finally, we let $\langle\cdot,\cdot,\cdot\rangle$ be the reduction of $\langle\cdot,\cdot,\cdot\rangle^\sim$ modulo $\mf{m}$.

\textit{Step 5. Proof of (a)-(d).} By construction, $\overline{\mc{L}}$ has unique irreducible quotient $\rho_F(-\tfrac{k}{2})$ unless $T$ reduces as in Step 3, in which case $\overline{\mc{L}}_2$ has unique irreducible quotient $\rho_F(-\tfrac{k}{2})$ instead. Then (a) follows. Then the representations $\rho_1$ and $\rho_2$ of (b) are, respectively, just $\rho_3^\vee$ and $\Ad(\rho_3)$. So after possibly scaling the vectors $v_1,\dotsc,v_5$ separately, (b) follows by construction. The point (c) is true directly by construction, and (d) holds because $\rho_\Theta$ was unramified to begin with.

\textit{Step 6. Proof of (e).} By construction, the affinoid curve $\mf{Z}$ is finite over a linear subspace in the weight space $\mf{X}$ of Section \ref{subsecchardistg2}. By passing to an open neighborhood of $y_0$ in $\mf{Z}$, we may assume that the image of $\mf{Z}$ in $\mf{X}$ is contained in any of the neighborhoods $\mf{U}'$ from Propositions \ref{propscvariation}, \ref{proprampsvariation}, \ref{propunrstvariation}, or \ref{propramstvariation}.

Let us first assume that $\ell$ divides $N$ exactly once, so that $\pi_F$ is an unramified twist of Steinberg at $\ell$. Let us further shrink $\mf{Z}$ by considering an open neighborhood where $\mc{L}$ is free. Let $t:I_\ell\to\Z_p$ be the tame quotient map for $p$. By Grothendieck's theorem on quasiunipotent monodromy, there is a nilpotent operator $N_{\mf{Z}}:\mc{L}\to\mc{L}$ and an open subgroup $U_{\mf{Z}}$ of $I_\ell$ such that for any $g\in U_{\mf{Z}}$, we have
\[\rho_{\mc{L}}(g)=\exp(t(g)N_{\mf{Z}}).\]
Similarly, for any point $z\in\mf{Z}(\overline\Q_p)$ lying over a classical regular weight, or for $z=z_0$, there is a nilpotent operator $N_z$ on the specialization $\mc{L}_z$ of $\mc{L}$ at $z$, and an open subgroup $U_z$ of $I_\ell$ such that for any $g\in U_z$, we have
\[\rho_{\mc{L}_z}(g)=\exp(t(g)N_z),\]
where, of course, $\rho_{\mc{L}_z}$ is the representation of $G_\Q$ on $\mc{L}_z$.

Now the Zariski closure of the $GL_7(\overline{F(\mf{Z})})$-conjugacy class $N_z$ is contained in the that of $N_{\mf{Z}}$, with equality for all but finitely many $z$. Indeed, one can conjugate $N_{\mf{Z}}$ by a matrix $M\in GL_7(\overline{F(\mf{Z})})$ to put it in Jordan form, and then the finitely many exceptional $z$ are contained in the set of points where the entries of $M$ have poles or otherwise where the specialization $M_z$ of $M$ at $z$ is not invertible. Moreover, for any $z$ where these conjugacy classes are equal, we have for any $g\in U_z\cap U_{\mf{Z}}$ that
\[\rho_{\mc{L}_z}(g)=(\rho_{\mc{L}}(g))_z,\]
the subscript $z$ on the right once again signifying specialization at $z$. But $\rho_{\mc{L}_z}^{\sss}$ is the Galois representation attached by Theorem \ref{thmgalreps} to an automorphic representation $\sigma_z$ of $G_2(\A)$ which, by Proposition \ref{propunrstvariation} has component at $\ell$ given by the Langlands quotient of
\[\iota_{P_\alpha(\Q_\ell)}^{G_2(\Q_\ell)}(\pi_{F,\ell}).\]
By Conjecture \ref{conjliftings} (d) and Theorem \ref{thmgalreps} (b), it follows that
\begin{equation}
\label{eqWDLz}
\mr{WD}(\rho_{\mc{L}_z}^{\sss}|_{G_{\Q_\ell}})
\end{equation}
is, up to twist, given by
\[\mr{LL}_\ell(\pi_{F,\ell})^\vee\oplus\Ad(\mr{LL}_\ell(\pi_{F,\ell}))\oplus\mr{LL}_\ell(\pi_{F,\ell}),\]
where $\mr{LL}_\ell$ is the local Langlands correspondence at $\ell$. Therefore the monodromy operator for \eqref{eqWDLz} is visibly conjugate to that of $\rho_{\overline{\mc{L}}}^{\sss}$, and so the Zariski closure of the conjugacy class of $N_z$, and hence also that of $N_{\mf{Z}}$, contains that of the monodromy operator $\rho_{\overline{\mc{L}}}^{\sss}$. If one of the functions $*_1,*_2,*_3$ from the statement of (e) were nontrivial on $U_{z_0}$, this would force the Zariski closure of the conjugacy class of $N_{z_0}$ to be bigger than that of $N_{\mf{Z}}$, which we have already shown is impossible.

This proves (e) when $\ell$ divides $N$ exactly once. Otherwise, if $\ell^2|N$, then $\pi_{F,\ell}$ is either supercuspidal (respectively, a ramified principal series representation, or a ramified twist of the Steinberg representation) and the proof is completely analogous using Proposition \ref{propscvariation} (respectively, \ref{proprampsvariation} or \ref{propramstvariation}) in place of Proposition \ref{propunrstvariation}; just note that these three propositions assume the prime of interest is different from $2$ or $3$, whence the hypothesis in (e). Also, in the case that $\pi_{F,\ell}$ is a ramified principal series, then Proposition \ref{proprampsvariation} shows that the representations $\sigma_z$ obtained analogously to those above may be Langlands quotients of a parabolic induction from $P_\beta(\Q_\ell)$ as  well as from $P_\alpha(\Q_\ell)$. Therefore, in this case, one needs to use Conjecture \ref{conjliftings} (e), as well as (d), to conclude.

\textit{Step 7. Proof of (f).} Like in Step 6, we may assume that the image of $\mf{Z}$ in $\mf{X}$ is contained in the neighborhood $\mf{U}'$ from Proposition \ref{propunratp}. Then (f) is just an application of Kisin's Lemma (Lemma \ref{lemkisin}) and Theorem \ref{thmgalreps} (c).
\end{proof}

\begin{remark}
In the course of proving the main theorem in the next subsection, we will rule out the possibility that \eqref{eqLbar22} and \eqref{eqLbar32} occur, and therefore also ruling out the reducibility of $T$ by Step 3 of the proof above.
\end{remark}

\subsection{Extensions and the symmetric cube Selmer group}
\label{subsecmainthm}
We work throughout this subsection in the setting of Proposition \ref{propmcLbar}. In particular, we have the $\overline{\Q}_p$-vector space $\overline{\mc{L}}$ on which $G_\Q$ acts by a representation $\rho_{\overline{\mc{L}}}$. The space $\overline{\mc{L}}$ has a basis $v_1,\dotsc,v_7$, the representation $\rho_{\overline{\mc{L}}}$ has matrix coefficients $g_{ij}$ in that basis, and $\overline{\mc{L}}$ comes equipped with an alternating trilinear form $\langle\cdot,\cdot,\cdot\rangle$ which is preserved by $\rho_{\overline{\mc{L}}}$ and determined by the numbers $a_{147}$, $a_{156}$, $a_{237}$, $a_{246}$, $a_{345}$ in $\overline{\Q}_p$

The main theorem of this paper will be proved in this subsection, and it will follow from several computations, which we separate into various lemmas below, which mostly involve the matrix coefficients $g_{ij}$ and the numbers $a_{147}$, $a_{156}$, $a_{237}$, $a_{246}$, $a_{345}$, along with some linear algebra coming from $p$-adic Hodge theory. The ultimate goal will be to show that \eqref{eqLbar12} holds, that the function $*_3$ there is nonzero, and that $\rho_{\overline{\mc{L}}}$ factors through $G_2(\overline\Q_p)$. Then $*_3$ will give the desired Selmer class.

\begin{lemma}
One of $a_{147},a_{156},a_{246},a_{237}$ is nonzero.
\end{lemma}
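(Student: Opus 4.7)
The plan is to argue by contradiction. I will suppose that $a_{147}=a_{156}=a_{237}=a_{246}=0$; since Proposition \ref{propmcLbar}(c) says the five scalars $a_{147},a_{156},a_{237},a_{246},a_{345}$ are not all zero, this forces $a_{345}\ne 0$ and therefore
\[\langle\cdot,\cdot,\cdot\rangle=a_{345}\,v_3^\vee\wedge v_4^\vee\wedge v_5^\vee.\]
The same part of Proposition \ref{propmcLbar}(c) directly excludes the shape \eqref{eqLbar32} under this hypothesis (since that part demands one of $a_{147},a_{156},a_{237},a_{246}$ be nonzero), so $\rho_{\overline{\mc{L}}}$ must be of the form \eqref{eqLbar12} or \eqref{eqLbar22}.

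The central computation is to exploit the $G_\Q$-invariance of $\langle\cdot,\cdot,\cdot\rangle$. Writing $g_{ij}$ for the $(i,j)$-entry of $\rho_{\overline{\mc{L}}}(g)$, the block structure in either \eqref{eqLbar12} or \eqref{eqLbar22} implies that the submatrix of rows $3,4,5$ and columns $3,4,5$ is exactly $\rho_2(g)\in\GL_3(\overline\Q_p)$, while the submatrix of rows $3,4,5$ and columns $6,7$ is the cocycle $*_1(g)$. Expanding via $g^* v_r^\vee=\sum_s g_{rs}v_s^\vee$, the coefficient of $v_a^\vee\wedge v_b^\vee\wedge v_c^\vee$ in $g^*\langle\cdot,\cdot,\cdot\rangle$ is $a_{345}$ times the $3\times 3$ minor of $[g_{ij}]$ on rows $3,4,5$ and columns $a,b,c$. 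Invariance then forces this minor to vanish for every $g\in G_\Q$ whenever $\{a,b,c\}\ne\{3,4,5\}$.

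I will apply this to the triples $(3,4,6)$, $(3,5,6)$ and $(4,5,6)$: each vanishing minor says that the first column of $*_1(g)$ lies in $\rho_2(g)\bigl(\mathrm{span}(e_i,e_j)\bigr)$ for one of the three pairs $\{i,j\}\subset\{1,2,3\}$, identifying $V_2$ with $\overline\Q_p^3$ via the basis $v_3,v_4,v_5$. Since $\rho_2(g)$ is invertible and $\mathrm{span}(e_1,e_2)\cap\mathrm{span}(e_1,e_3)\cap\mathrm{span}(e_2,e_3)=0$, the first column of $*_1(g)$ must vanish identically in $g$. The triples $(3,4,7)$, $(3,5,7)$, $(4,5,7)$ kill the second column in the same way. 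Hence $*_1\equiv 0$ as a function on $G_\Q$, and in particular its cohomology class is trivial, contradicting the nontriviality of $*_1$ required in both \eqref{eqLbar12} and \eqref{eqLbar22}.

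The entire argument is essentially linear algebra, the only real bookkeeping being the identification of which columns of the $7\times 7$ matrix belong to which block. Unlike the later lemmas of this subsection, no input from Section \ref{subsecpadichodge} (part (f) of Proposition \ref{propmcLbar} or the vanishing of $H_f^1(\Q,\overline\Q_p(1))$) should be needed here; those inputs enter only when ruling out the remaining nontrivial matricial shapes of $\overline{\mc{L}}$.
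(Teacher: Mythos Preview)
Your proof is correct and follows the same overall contradiction strategy as the paper, but the core computation is organized differently and is in fact somewhat cleaner.

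Both arguments assume $a_{147}=a_{156}=a_{237}=a_{246}=0$, so $a_{345}\ne 0$ and shape \eqref{eqLbar32} is excluded by Proposition~\ref{propmcLbar}(c); the goal is then to force $*_1\equiv 0$, contradicting \eqref{eqLbar12} or \eqref{eqLbar22}. The paper does this by computing quantities like $0=a_{156}(g^{-1})_{13}=\langle v_3,gv_5,gv_6\rangle$ to obtain a $2\times 2$ linear system in two entries of $*_1(g)$ at a time, then invokes the explicit formula for $\rho_2$ in terms of $\rho_3$ from Proposition~\ref{propmcLbar}(b) together with the big-image hypothesis (Assumption~\ref{assfnotcm}) to show the coefficient matrix is generically invertible; three such computations give $g_{46}=g_{56}=g_{47}=g_{57}=g_{36}=g_{37}=0$. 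Your approach instead uses all three vanishing minors on rows $3,4,5$ with a common column in $\{6,7\}$ simultaneously, so the conclusion follows from the mere invertibility of $\rho_2(g)$ and the elementary fact that the three coordinate planes in $\overline\Q_p^3$ have trivial intersection. This avoids both the explicit description of $\rho_2$ and any appeal to big image at this step. The paper's version has the virtue of matching the computational style used repeatedly in the subsequent lemmas, but yours is a genuinely more economical argument for this particular statement.
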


\begin{proof}
We assumed the conclusion of the lemma to be true when \eqref{eqLbar32} holds. Thus we may assume either \eqref{eqLbar12} or \eqref{eqLbar22} holds. Then
\begin{equation}
\label{eqnonzerogs}
\textrm{one of }g_{36},g_{46},g_{56},g_{37},g_{47},g_{57}\textrm{ is nonzero}
\end{equation}
for some $g\in G_\Q$.

Now assume for sake of contradiction that all of $a_{147},a_{156},a_{246},a_{237}$ are zero. Then $a_{345}\ne 0$. The following type of computation will be done here in detail exactly once; many analogous computations will be carried out more quickly in the proof of this lemma and the ones that follow. We have
\begin{align*}
0=a_{156}(g^{-1})_{13} &=\langle g^{-1}v_3,v_5,v_6\rangle\\
&=\langle v_3,gv_5,gv_6\rangle\\
&=a_{345}(g_{45}g_{56}-g_{55}g_{46})+a_{237}(g_{75}g_{26}-g_{25}g_{76})\\
&=a_{345}(g_{45}g_{56}-g_{55}g_{46}),\\
0=a_{246}(g^{-1})_{23} &=\langle g^{-1}v_3,v_4,v_6\rangle\\
&=\langle v_3,gv_4,gv_6\rangle\\
&=a_{345}(g_{44}g_{56}-g_{54}g_{46})+a_{237}(g_{74}g_{26}-g_{24}g_{76})\\
&=a_{345}(g_{44}g_{56}-g_{54}g_{46}).
\end{align*}
We thus get
\[a_{345}\pmat{g_{44}&g_{54}\\ g_{45}&g_{55}}
\pmat{g_{56}\\ -g_{46}}=0\]
Since $a_{345}\ne 0$, the definition of $\rho_2$ in Proposition \ref{propmcLbar} gives
\[\pmat{g_{66}g_{77}+g_{67}g_{76} & -g_{67}g_{77}\\
-2g_{76}g_{77} & g_{77}^2}
\pmat{g_{56}\\ -g_{46}}=0.\]
Now we have
\[\det\pmat{g_{66}g_{77}+g_{67}g_{76} & -g_{67}g_{77}\\
-2g_{76}g_{77} & g_{77}^2}=g_{77}^2(g_{66}g_{77}-g_{67}g_{76})=d(g)g_{77}^2,\]
which is zero only when $g_{77}$ is zero. Since $F$ is not CM and hence $\rho_F$ has big image, $g_{77}=0$ only for $g$ in a measure zero subset of $G_\Q$. For such $g$, we can invert the matrix above and we find that $g_{46}=g_{56}=0$ for all $g$ outside a subset of $G_\Q$ of measure zero. By continuity of $\rho_{\overline{\mc{L}}}$, we then have $g_{46}=g_{56}=0$ for all $g$.

One can then repeat this argument using the quantities
\[a_{147}(g^{-1})_{13},\qquad a_{237}(g^{-1})_{23},\qquad a_{156}(g^{-1})_{15},\qquad a_{237}(g^{-1})_{15}.\]
One similarly obtains vanishing of $g_{47}$ and $g_{57}$ from the computation of the first two of these quantities, and also of $g_{36}$ and $g_{37}$ from the last two. This gives the desired contradiction by \eqref{eqnonzerogs}.
\end{proof}

\begin{lemma}
We have
\[a_{147}=a_{156}=a_{237}=-a_{246}.\]
\end{lemma}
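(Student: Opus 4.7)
The plan is to follow the strategy of the previous lemma: apply the $G_\Q$-invariance of $\langle\cdot,\cdot,\cdot\rangle$ to triples of basis vectors whose indices lie outside the distinguished set of five, expanding both sides in our basis so that the resulting polynomial identity in the matrix entries $g_{ij}$ of $\rho_{\overline{\mc{L}}}(g)$ forces linear relations among the $a_{ijk}$'s. The key point is that, in the shapes \eqref{eqLbar12} or \eqref{eqLbar32}, the matrix entries of $\rho_1(g)$ and $\rho_2(g)$ are explicit rational functions of $a = g_{66}, b = g_{67}, c = g_{76}, d = g_{77}$ via the formulas in part (b), and since $F$ has big image, $(a,b,c,d)$ ranges over a Zariski-dense subset of $GL_2(\overline{\Q}_p)$.

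The two key triples to use are $(1,3,7)$ and $(1,5,7)$. In the shape \eqref{eqLbar12} (or \eqref{eqLbar32}), the subspace $\mathrm{span}\{v_1, v_2\}$ is Galois-invariant, so $g_{ij} = 0$ for $i \ge 3$ and $j \in \{1,2\}$, and the expansion of $\langle v_1, g v_3, g v_7\rangle = 0$ receives contributions only from the four off-block triples $(1,4,7), (1,5,6), (2,3,7), (2,4,6)$. Substituting the explicit expressions for $\rho_1$ and $\rho_2$ from part (b) reduces the identity to
\[
\frac{ac}{\Delta^2}\bigl[ad(a_{147} - a_{237}) - bc(a_{156} + a_{246})\bigr] = 0 \qquad (\Delta = ad - bc),
\]
so equating coefficients of the independent monomials $ad$ and $bc$ gives $a_{147} = a_{237}$ and $a_{156} = -a_{246}$. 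The same computation for $\langle v_1, g v_5, g v_7\rangle = 0$ yields
\[
\frac{bd}{\Delta^2}\bigl[-ad(a_{147} - a_{156}) + bc(a_{237} + a_{246})\bigr] = 0,
\]
whence $a_{147} = a_{156}$ and $a_{237} = -a_{246}$. Combining all four relations gives $a_{147} = a_{156} = a_{237} = -a_{246}$.

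The main obstacle is the remaining shape \eqref{eqLbar22}, where $\mathrm{span}\{v_1, v_2\}$ is no longer invariant: columns $1$ and $2$ now pick up contributions from the cocycle $*_3$ in rows $3$--$5$, and each expansion picks up an additional contribution from the $(3,4,5)$-summand of the form, equal to $a_{345}$ times a $3 \times 3$ determinant whose columns lie in $*_3$, $\rho_2$, and $*_1$. I would handle this by observing that $g_0$ itself satisfies $*_1(g_0) = *_3(g_0) = 0$, since by construction $g_0$ acts diagonally on $\overline{\mc{L}}$ in the basis $v_1,\dotsc,v_7$; then by using the cocycle relations together with a one-parameter variation of $g$ inside $G_\Q$, the off-diagonal determinant can be kept small or made to vary independently of the diagonal data $(a,b,c,d)$, so that the diagonal polynomial identity in $(a,b,c,d)$ can still be isolated and the same coefficient-vanishing argument applied, yielding the desired relations in all three shapes.
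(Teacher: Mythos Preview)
Your displayed identities are correct, but what you actually computed is $\langle gv_1, gv_3, gv_7\rangle=0$, not $\langle v_1, gv_3, gv_7\rangle=0$ (the latter is not zero; it equals $(g^{-1})_{21}a_{237}=g_{76}a_{237}$). With that correction, your argument in shapes \eqref{eqLbar12} and \eqref{eqLbar32} is fine. The real gap is shape \eqref{eqLbar22}: there $gv_1$ picks up $v_3,v_4,v_5$-components from $*_3$, and the extra contribution to $\langle gv_1,gv_3,gv_7\rangle$ is $a_{345}$ times a bilinear expression in the entries of $*_1(g)$ and $*_3(g)$. Your proposed fix---evaluate at $g_0$ and ``vary in a one-parameter family''---is not a proof: $G_\Q$ is profinite, and the cocycles $*_1,*_3$ are functions of $g$ that you have no reason to be able to decouple from the $\rho_3$-data $(a,b,c,d)$.

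The paper avoids this entirely by computing $\langle g^{-1}v_1,v_5,v_6\rangle=\langle v_1,gv_5,gv_6\rangle$ (and two analogues). The point is that in \emph{every} shape, including \eqref{eqLbar22}, the vector $g^{-1}v_1$ has no $v_6,v_7$-component, and $\langle v_i,v_5,v_6\rangle=0$ for all $i\ne 1$; hence the left side is simply $(g^{-1})_{11}a_{156}=g_{77}a_{156}$, with no $*_3$-contamination. On the right side, after using $g_{65}=g_{75}=0$ (valid in all shapes), only $a_{147}g_{45}g_{76}+a_{156}g_{55}g_{66}$ survives---entries of $\rho_2$ and $\rho_3$ only. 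So the identity is shape-independent and gives $a_{147}=a_{156}$ directly. In fact your own triples would work uniformly if handled this way: $\langle v_1,gv_3,gv_7\rangle$ computed directly involves only $\rho_2,\rho_3$-entries, while computed as $\langle g^{-1}v_1,v_3,v_7\rangle$ it equals $(g^{-1})_{21}a_{237}$ in all shapes (since $\langle v_i,v_3,v_7\rangle=0$ for $i\in\{1,3,4,5\}$), yielding $a_{147}=a_{237}=a_{156}$ without any case split.
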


\begin{proof}
It follows from Proposition \ref{propmcLbar} (b) that
\[\pmat{(g^{-1})_{11}&(g^{-1})_{12}\\ (g^{-1})_{21}&(g^{-1})_{22}}=\pmat{g_{77}&g_{67}\\ g_{76}&g_{66}}.\]
Using this and the form $\langle\cdot,\cdot,\cdot\rangle$, one computes
\[a_{156}g_{77}=a_{156}(g_{55}g_{66}-g_{65}g_{56})+a_{147}(g_{45}g_{76}-g_{75}g_{46}).\]
Since $g_{65}=g_{75}=0$, using the definition of $\rho_2$ gives
\[a_{156}g_{77}d(g)=a_{156}g_{77}^2g_{66}-a_{147}g_{77}g_{67}g_{76},\]
or, after rearranging,
\[a_{156}g_{77}g_{67}g_{76}=a_{147}g_{77}g_{67}g_{76}.\]
Since $\rho_F$ has big image, $g_{77}g_{67}g_{76}\ne 0$ for some $g$. Thus $a_{156}=a_{147}$.

Similar computations using $a_{237}g_{66}$ and $a_{156}g_{67}$ respectively give $a_{237}=-a_{246}$ and $a_{156}=a_{237}$, which suffices to complete the proof; details are omitted.
\end{proof}

By the two previous lemmas, by scaling $\langle\cdot,\cdot,\cdot\rangle$, we can (and will) assume
\begin{equation}
\label{eqasarepm1}
a_{147}=a_{156}=a_{237}=1,\qquad a_{246}=-1.
\end{equation}

Assume now that
\[g_{13}=g_{14}=g_{15}=g_{23}=g_{24}=g_{25}=0.\]
Then $\rho_2$ is a subrepresentation of $\overline{\mc{L}}$. The quotient $\overline{\mc{L}}/\rho_2$ is the extension $E$ of the form
\[\pmat{\rho_1&*_2\\ 0&\rho_3}\]
with $*_2$ nontrivial.

\begin{lemma}
\label{lemextqpqp1}
Assume that
\[g_{13}=g_{14}=g_{15}=g_{23}=g_{24}=g_{25}=0\]
for all $g$, so that we have the extension $E$ as above. Then the exterior square $\wedge^2 E$ has a subrepresentation a nontrivial extension of the form
\[\pmat{\chi_{\cyc}& *\\ 0&1}.\]
\end{lemma}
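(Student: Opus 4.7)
The plan is the following. The line $\overline{\Q}_p\cdot(\bar v_1\wedge\bar v_2)\subset\wedge^2 E$ is $\wedge^2\rho_1 = \det\rho_1 = \chi_{\cyc}$ (using $\det\rho_F = \chi_{\cyc}^{k-1}$ and the Tate twist by $-(k-2)$). The middle piece $\rho_1\otimes\rho_3$ of the natural filtration on $\wedge^2 E$ is isomorphic to $\End(\rho_3)$ via the identification $\rho_1\cong\rho_3^\vee$ (both equal $\rho_F(-\tfrac{k-2}{2})$), so it contains a trivial subrepresentation spanned by $\id_{\rho_3}$. Using the explicit formulas for $\rho_1(g)$ and $\rho_3(g)$ from Proposition \ref{propmcLbar}(b), a short verification shows that this invariant is represented by $x_0 = \bar v_1\wedge\bar v_7 + \bar v_2\wedge\bar v_6$ modulo $\wedge^2\rho_1$. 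Let $M\subset\wedge^2 E$ be the preimage of $\overline{\Q}_p\cdot x_0$; then $M$ is a Galois-stable extension $0\to\chi_{\cyc}\to M\to\mbf{1}\to 0$, and the lemma reduces to showing $M$ is non-split.

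Expanding $g\cdot x_0$ in the basis of $\wedge^2 E$ and reading off the coefficient of $\bar v_1\wedge\bar v_2$ yields
\[\alpha(g) = (g_{11}g_{27}-g_{21}g_{17}) + (g_{12}g_{26}-g_{22}g_{16})\in\overline{\Q}_p,\]
a $1$-cocycle whose class in $H^1(\Q,\chi_{\cyc})$ is the extension class of $M$. The key input is the $G_2$-invariance of $\langle\cdot,\cdot,\cdot\rangle$: evaluating the identity $\langle g v_6, g v_7, g v_k\rangle = \langle v_6,v_7,v_k\rangle = 0$ for $k\in\{3,4,5\}$ and using the normalization $a_{147}=a_{156}=a_{237}=-a_{246}=1$ from the two preceding lemmas gives a $3\times 3$ linear system whose coefficient matrix is the transpose of $\rho_2(g)$. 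Invertibility of $\rho_2(g)$ on a Zariski-dense set then forces the three relations
\begin{align*}
&g_{76}g_{27}-g_{26}g_{77} = 0,\qquad g_{66}g_{17}-g_{16}g_{67} = 0,\\
&g_{76}g_{17}-g_{16}g_{77} + g_{26}g_{67}-g_{66}g_{27} = 0.
\end{align*}
The first two let me define $\lambda_1(g),\lambda_2(g)\in\overline{\Q}_p$ by $(g_{16},g_{17}) = \lambda_1(g)(g_{66},g_{67})$ and $(g_{26},g_{27}) = \lambda_2(g)(g_{76},g_{77})$; substituting into the third yields $\lambda_1+\lambda_2=0$, and substituting back into $\alpha$ gives $\alpha(g) = \lambda_2(g)-\lambda_1(g) = -2\lambda_1(g)$. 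The cocycle relation for $c(g) = \pmat{g_{16}&g_{17}\\g_{26}&g_{27}} = \lambda_1(g)\diag(1,-1)\rho_3(g)$ then translates to $\lambda_1(gh) = \lambda_1(g)+\chi_{\cyc}(g)\lambda_1(h)$, so $\lambda_1\in Z^1(\Q,\chi_{\cyc})$.

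Finally, I would show $[\lambda_1]\ne 0$. Assuming for contradiction that $\lambda_1(g) = \beta(\chi_{\cyc}(g)-1)$ for some $\beta\in\overline{\Q}_p$, and setting $B = \diag(\beta,-\beta)\in\hom(\rho_3,\rho_1)$, a direct calculation with the matrices from Proposition \ref{propmcLbar}(b) produces
\[\rho_1(g)B - B\rho_3(g) = \beta(\chi_{\cyc}(g)-1)\pmat{g_{66}&g_{67}\\-g_{76}&-g_{77}} = c(g),\]
so $c$ is a coboundary and $*_2 = 0$, contradicting the hypothesis. Hence $[\alpha] = -2[\lambda_1]\ne 0$ and $M$ is non-split. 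The main obstacle is the bookkeeping in the $G_2$-invariance step: one must carefully track the signs coming from the alternating form so that the three displayed relations emerge correctly, and in particular so that the sign pattern $\diag(1,-1)$—rather than $\diag(1,1)$—appears, placing $c$ into the antisymmetric summand $\wedge^2\rho_1 = \chi_{\cyc}$ of $\rho_1\otimes\rho_1$ that carries the required $\chi_{\cyc}$-extension.
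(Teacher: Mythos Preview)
Your overall strategy is sound and essentially matches the paper's: identify the invariant line spanned by $x_0=\bar v_1\wedge\bar v_7+\bar v_2\wedge\bar v_6$ in the middle graded piece $\rho_1\otimes\rho_3\cong\End(\rho_3)$, compute the resulting extension class $\alpha$, and reduce nontriviality of $[\alpha]$ to nontriviality of $*_2$. The last paragraph, producing $B=\diag(\beta,-\beta)$ and checking $\rho_1(g)B-B\rho_3(g)=c(g)$, is a clean way to finish.

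The gap is in your derivation of the three displayed relations. When you expand $\langle gv_6,gv_7,gv_k\rangle=0$ for $k\in\{3,4,5\}$, the vectors $gv_6,gv_7$ carry components along $v_3,v_4,v_5$ coming from the block $*_1$ (the entries $g_{36},\dotsc,g_{57}$), and these couple to $gv_k\in\mathrm{span}(v_3,v_4,v_5)$ through the $a_{345}$--term of the trilinear form. The full expansion gives, for instance,
\[
g_{76}g_{27}-g_{26}g_{77}\;+\;a_{345}\,(g_{46}g_{57}-g_{56}g_{47})\;=\;0,
\]
not $g_{76}g_{27}-g_{26}g_{77}=0$, and similarly for the other two. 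Under the lemma's hypotheses one may be in case \eqref{eqLbar12} (with $*_3=0$) or case \eqref{eqLbar22} of Proposition~\ref{propmcLbar}, where $*_1$ is asserted nontrivial; and $a_{345}$ is not yet known to vanish (indeed Lemma~\ref{lema345nonzero}, proved later, shows $a_{345}\ne 0$). So the extra terms cannot be discarded, and your definition of $\lambda_1,\lambda_2$ breaks down.

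The paper obtains an equivalent set of relations by a different choice of evaluations: it compares $(g^{-1})_{16},(g^{-1})_{26}$, computed directly from the block structure of $E$, with the values obtained from identities of the shape $\langle g^{-1}v_6,v_5,v_6\rangle=\langle v_6,gv_5,gv_6\rangle$, $\langle g^{-1}v_6,v_4,v_7\rangle=\langle v_6,gv_4,gv_7\rangle$, $\langle g^{-1}v_6,v_3,v_7\rangle=\langle v_6,gv_3,gv_7\rangle$. Because the first slot is fixed at $v_6$, only the triples $(1,5,6)$ and $(2,4,6)$ contribute, so the expressions involve only $\rho_2$--entries times $*_2$--entries and never $a_{345}$ or $*_1$. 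The relations one gets this way are equivalent to yours---each set says precisely that $c(g)=\lambda(g)\,\diag(1,-1)\,\rho_3(g)$ for a single scalar function $\lambda$---and once they are in hand, the remainder of your argument goes through unchanged.
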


\begin{proof}
One can compute,
\begin{equation*}
\pmat{(g^{-1})_{16}\\(g^{-1})_{26}}
=-\frac{1}{d(g)}\pmat{g_{77}^2g_{16}+g_{77}g_{67}g_{26}-g_{77}g_{76}g_{17}-g_{67}g_{76}g_{27}\\
g_{77}g_{76}g_{16}+g_{77}g_{66}g_{26}-g_{76}^2g_{17}-g_{76}g_{66}g_{27}}.
\end{equation*}
Since we have assumed $a_{156}=1$, the usual computations using $\langle\cdot,\cdot,\cdot\rangle$ yield
\begin{equation}
\label{eqginv16}
(g^{-1})_{16}=-\frac{1}{d(g)}(g_{67}g_{77}g_{26}-g_{77}^2g_{16}),
\end{equation}
Hence
\[g_{77}g_{76}g_{17}+g_{67}g_{76}g_{27}=0\]
and it follows (again from $\rho_F$ having big image) that
\begin{equation}
\label{eqgggg1}
g_{77}g_{17}+g_{67}g_{27}=0
\end{equation}

We similarly compute (using $a_{147}=1$ instead) that
\[(g^{-1})_{16}=-\frac{1}{d(g)}((g_{66}g_{77}+g_{76}g_{67})g_{27}+2g_{77}g_{76}g_{17}).\]
Combining this with \eqref{eqginv16} gives
\[g_{76}(g_{77}g_{17}+g_{67}g_{27})+g_{77}(g_{77}g_{16}+g_{76}g_{17}+g_{67}g_{26}+g_{66}g_{27})=0,\]
and by \eqref{eqgggg1}, this becomes
\[g_{77}(g_{77}g_{16}+g_{76}g_{17}+g_{67}g_{26}+g_{66}g_{27})=0.\]
Hence
\begin{equation}
\label{eqgggg2}
g_{77}g_{16}+g_{76}g_{17}+g_{67}g_{26}+g_{66}g_{27}=0.
\end{equation}

Next we compute
\begin{multline*}
-\frac{1}{d(g)}(g_{77}g_{76}g_{16}+g_{77}g_{66}g_{26}-g_{76}^2g_{17}-g_{76}g_{66}g_{27})=a_{237}(g^{-1})_{26}
=\frac{1}{d(g)}(g_{66}g_{76}g_{27}+g_{76}^2g_{17}).
\end{multline*}
Therefore,
\[g_{77}g_{76}g_{16}+g_{77}g_{66}g_{26}=0,\]
and so
\begin{equation}
\label{eqgggg3}
g_{76}g_{16}+g_{66}g_{26}=0.
\end{equation}

With this preparation we can now proceed to prove the lemma. The extension $E$ has basis $v_1,v_2,v_6,v_7$. Therefore, the exterior square $\wedge^2 E$ has the basis
\[v_2\wedge v_1,\,\,(v_7\wedge v_1+v_2\wedge v_6),\,\,v_6\wedge v_1,\,\,(v_7\wedge v_1-v_2\wedge v_6),\,\,v_7\wedge v_2,\,\,v_7\wedge v_6.\]
We now compute part of the matrix of $g$ in this basis. We have
\[g(v_2\wedge v_1)=\frac{1}{d(g)}(g_{77}v_2-g_{67}v_1)\wedge(-g_{76}v_2+g_{66}v_1)=\frac{1}{d(g)}v_2\wedge v_1,\]
and
\begin{align*}
g(v_7\wedge v_1+v_2\wedge v_6)=&\frac{1}{d(g)}(g_{77}v_7+g_{67}v_6+g_{27}v_2+g_{17}v_1)\wedge(-g_{76}v_2+g_{66}v_1)\\
&+\frac{1}{d(g)}(g_{76}v_7+g_{66}v_6+g_{26}v_2+g_{16}v_1)\wedge(-g_{77}v_2+g_{67}v_1)\\
=&\frac{1}{d(g)}(g_{67}g_{26}+g_{77}g_{16})v_2\wedge v_1+(v_7\wedge v_1+v_2\wedge v_6),
\end{align*}
where we used \eqref{eqgggg2}.

This computes the first two columns of $g$ in the basis chosen above; these two columns begin with
\[\pmat{d(g)^{-1} &*\\ 0&1},\]
and the rest of the entries are zero, showing that this extension is a subrepresentation of $\wedge^2 E$. Here the asterisk denotes
\begin{equation}
\label{eqasteriskgggg}
*=d(g)^{-1}(g_{67}g_{26}+g_{77}g_{16}).
\end{equation}
Note that $d(g)=\det\rho_F(-\tfrac{k}{2})(g)=\chi_{\cyc}^{-1}(g)$, so as long as $*$ is nontrivial, this is the desired extension.

So assume for sake of contradiction that $*$ is trivial. Then
\[g_{67}g_{26}+g_{77}g_{16}=0.\]
Combining this with \eqref{eqgggg3} gives
\[\pmat{g_{77}&g_{67}\\ g_{76}&g_{66}}\pmat{g_{16}\\ g_{26}}=0,\]
hence $g_{16}=g_{26}=0$. But then \eqref{eqgggg2} becomes
\[g_{76}g_{17}+g_{66}g_{27}=0.\]
Along with \eqref{eqgggg1}, this gives
\[\pmat{g_{77}&g_{67}\\ g_{76}&g_{66}}\pmat{g_{17}\\ g_{27}}=0.\]
This implies $g_{17}=g_{27}=0$ as well, which implies that $E$ is a trivial extension. This is a contradiction, and the lemma is proved.
\end{proof}

We are now in a position to use $p$-adic Hodge theory to rule out the possibility that $\Ad\rho_F$ is a subrepresentation of $\overline{\mc{L}}$.

\begin{lemma}
\label{lemgijnonzero}
Assume $4\nmid N$ and $9\nmid N$. Then one of
\[g_{13},g_{14},g_{15},g_{23},g_{24},g_{25}\]
is nonzero for some $g\in G_\Q$.
\end{lemma}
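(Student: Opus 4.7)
The plan is to argue by contradiction. Suppose that for every $g \in G_\Q$ the six matrix entries $g_{13},g_{14},g_{15},g_{23},g_{24},g_{25}$ all vanish. Inspecting the three matricial shapes \eqref{eqLbar12}--\eqref{eqLbar32} allowed by Proposition \ref{propmcLbar}(b), this forces the upper-middle $2\times 3$ block of $\rho_{\overline{\mc L}}$ to be zero in every case, so the span of $\{v_3,v_4,v_5\}$ becomes a $G_\Q$-stable subspace carrying $\rho_2\cong\Ad\rho_F$. Consequently, $\rho_2$ is a subrepresentation of $\overline{\mc L}$ and the quotient $E := \overline{\mc L}/\rho_2$ is a nontrivial extension of $\rho_F(-k/2)$ by $\rho_F(-(k-2)/2)$. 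Lemma \ref{lemextqpqp1} then produces, inside $\wedge^2 E\subset\wedge^2\overline{\mc L}$, a nontrivial extension $\tilde V$ of $\overline\Q_p$ by $\overline\Q_p(1)$, giving a nonzero class $c \in H^1(\Q,\overline\Q_p(1))$. The remainder of the argument aims to show that $c$ lies in the Bloch--Kato Selmer group $H^1_f(\Q,\overline\Q_p(1))$, which vanishes by Kummer theory (since $\Z^\times$ is torsion), yielding the desired contradiction.

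Verifying the local Selmer conditions away from $p$ is routine. For $\ell \nmid Np$, unramifiedness of $\tilde V$ follows from Proposition \ref{propmcLbar}(d). For $\ell \mid N$ with $\ell \ne p$, the assumptions $4\nmid N$ and $9\nmid N$ are exactly what allow Proposition \ref{propmcLbar}(e) to apply at every such $\ell$; that proposition gives a finite-index open subgroup $U \subset I_\ell$ on which every entry of $*_2$ vanishes identically. The explicit formula \eqref{eqasteriskgggg} writes the off-diagonal entry of $\tilde V$ as $d(g)^{-1}(g_{67}g_{26}+g_{77}g_{16})$, where $g_{16},g_{26}$ are matrix coefficients of $*_2$; hence this entry vanishes on $U$. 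Because $\chi_{\cyc}$ is unramified at $\ell \ne p$, the restriction of $c$ to $I_\ell$ is a continuous homomorphism $I_\ell \to \overline\Q_p$, and since $\overline\Q_p$ is torsion-free its vanishing on the finite-index subgroup $U$ forces vanishing on all of $I_\ell$.

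The principal technical step, and the main obstacle, is crystallinity at $p$. Here Lemma \ref{lemEisdR} gives the de Rham property of $\tilde V$ essentially for free, applied with $V = \overline\Q_p(1)$ (whose Hodge--Tate weight $-1$ is strictly negative) and $W = \overline\Q_p$: since $\Fil^0 D_{\dR}(W) = D_{\dR}(W)$, the splitting hypothesis is trivially satisfied by taking $D = 0$. To upgrade from de Rham to crystalline, I would invoke Proposition \ref{propmcLbar}(f): the crystalline Frobenius eigenvectors in $D_{\crys}(\wedge^i\overline{\mc L})$ with eigenvalues $\prod_{j=1}^i b_j$ produced by Kisin's lemma match the Newton--Hodge data needed to force the monodromy of $\tilde V$ to be trivial. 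Equivalently, one may simply note that for the representation $\overline\Q_p(1)$ the local Bloch--Kato subspaces coincide, $H^1_g(\Q_p,\overline\Q_p(1)) = H^1_f(\Q_p,\overline\Q_p(1))$ (a short dimension count using that $D_{\crys}(\overline\Q_p(-1))^{\phi=1}=0$), so de Rham already implies crystalline in this very special situation.

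Combining the local verifications then yields $c \in H^1_f(\Q,\overline\Q_p(1)) = 0$, contradicting the nontriviality of $c$ asserted in Lemma \ref{lemextqpqp1}, and completing the proof.
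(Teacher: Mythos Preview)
Your overall strategy matches the paper's: assume the six entries vanish, obtain the nontrivial extension $E'$ of $\overline\Q_p$ by $\overline\Q_p(1)$ inside $\wedge^2 E$ via Lemma~\ref{lemextqpqp1}, check unramifiedness at $\ell\ne p$ using Proposition~\ref{propmcLbar}(d),(e), and then aim for a contradiction with $H^1_f(\Q,\overline\Q_p(1))=0$. The local conditions away from $p$ are handled correctly and essentially as in the paper.

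The gap is at $p$. Your ``equivalently'' shortcut is false: for $V=\overline\Q_p(1)$ one has $H^1_g(\Q_p,V)\ne H^1_f(\Q_p,V)$. Concretely, the Kummer class of $p$ (the Tate-curve extension) is semistable but not crystalline, so $\dim H^1_g/H^1_f=1$. The dimension formula you seem to be invoking is $\dim H^1_g/H^1_f(V)=\dim D_{\crys}(V^\vee(1))^{\phi=1}$; for $V=\overline\Q_p(1)$ the Tate dual is $\overline\Q_p$, and $D_{\crys}(\overline\Q_p)^{\phi=1}$ is one-dimensional, not zero. Your condition $D_{\crys}(\overline\Q_p(-1))^{\phi=1}=0$ instead controls $H^1_f/H^1_e$ and is not what is needed here. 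So knowing $E'$ is de Rham (which is in any case automatic for extensions of $\overline\Q_p$ by $\overline\Q_p(1)$) does not by itself give crystallinity.

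The paper's argument for crystallinity is substantially more involved and is the real content of the lemma. One first uses Proposition~\ref{propvv1st} (itself a delicate filtered $(\phi,N)$-module computation relying on Assumption~\ref{assnodblroot}) to show that the four-dimensional extension $E$ is semistable; write its monodromy as $N_E=\left(\begin{smallmatrix}0&B\\0&0\end{smallmatrix}\right)$ with $B=(b_{ij})\in M_2(\overline\Q_p)$. Then Proposition~\ref{propmcLbar}(f) applied to $\wedge^2\overline{\mc L}$ forces $D_{\crys}(\rho_2\otimes E)$ to contain a specific Frobenius eigenvector, which kills $b_{12}$ and $b_{22}$; the same argument applied to the dual $(\wedge^2\overline{\mc L})^\vee$ kills $b_{11}$. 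With three of the four entries of $B$ shown to vanish, one checks directly that $N$ annihilates the $\phi=1$ eigenspace of $D_{\st}(\wedge^2 E)$, so $E'$ is crystalline. Your one-line gesture toward Proposition~\ref{propmcLbar}(f) is in the right direction, but it does not substitute for this computation, and the alternative you offer does not work.
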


\begin{proof}
Assume for sake of contradiction that
\[g_{13}=g_{14}=g_{15}=g_{23}=g_{24}=g_{25}=0.\]
Then $\rho_2$ is a subrepresentation of $\overline{\mc{L}}$ with quotient given by the extension $E$ from above. By Lemma \ref{lemextqpqp1}, the exterior square $\wedge^2 E$ contains as a subrepresentation a nontrivial extension
\[\pmat{\chi_{\cyc}& *\\ 0&1}.\]
Let us call this extension $E'$. It is unramified at all primes $\ell\nmid Np$ by Proposition \ref{propmcLbar} (d), and it is even unramified at $\ell|N$ by Proposition \ref{propmcLbar} (e) and \eqref{eqasteriskgggg}. We will show now that $E'$ has to also be crystalline at $p$. This will be a contradiction since $E'$ will represent a nontrivial class in the Bloch--Kato Selmer group
\[H_f^1(\Q_p,\overline{\Q}_p(1)),\]
which itself is trivial.

To get started, by Proposition \ref{propvv1st} $E$ is semistable. Write $D=D_{\crys}(\rho_3)$, and let $u_3,u_4\in D$ be eigenvectors for the crystalline Frobenius with respective eigenvalues $p^{k/2}\alpha_p^{-1}$ and $p^{-(k-2)/2}\alpha_p$. Since these numbers are distinct by Assumption \ref{assnodblroot}, the vectors $u_3$ and $u_4$ are linearly independent. Then there is a basis $w_1,w_2,w_3,w_4$ of $D_{\st}(E)$ such that, in this basis, the crystalline Frobenius $\phi_E$ for $D_{\st}(E)$ is given by
\[\phi_E=\diag(p^{(k-2)/2}\alpha_p^{-1},p^{-k/2}\alpha_p,p^{k/2}\alpha_p^{-1},p^{-(k-2)/2}\alpha_p)\]
and the monodromy operator $N_E$ for $D_{\st}(E)$ is given by
\[N_E=\pmat{0&B\\0&0},\]
for some $B\in M_2(\overline\Q_p)$. Write
\[B=\pmat{b_{11}&b_{12}\\ b_{21}&b_{22}}.\]

We claim first that $b_{12}=b_{22}=0$. To show this, we apply Kisin's lemma (or really, Proposition \ref{propmcLbar} (f)) to $\wedge^2\overline{\mc{L}}$, in a manner inspired by \cite{urbanell}. This shows that
\[D_{\crys}(\wedge^2\overline{\mc{L}})^{\phi=p^{-(k-1)}\alpha_p^2\cdot p^{-(k-2)/2}\alpha_p}\ne 0.\]
Since $\rho_2\otimes E\cong\Ad(\rho_F)\otimes E$ is the only subquotient of $\wedge^2\overline{\mc{L}}$ that can contribute an eigenvector for the crystalline Frobenius with this eigenvalue, it follows that
\[D_{\crys}(\rho_2\otimes E)^{\phi=p^{-(k-1)}\alpha_p^2\cdot p^{-(k-2)/2}\alpha_p}\ne 0.\]
Now
\[D_{\st}(\rho_2\otimes E)=D_{\crys}(\rho_2)\otimes D_{\st}(E),\]
because $\rho_2$ is crystalline, and the monodromy operator on this space is given by $1\otimes N_E$. It follows that
\[D_{\st}(\rho_2\otimes E)^{\phi=p^{-(k-1)}\alpha_p^2\cdot p^{-(k-2)/2}\alpha_p}\ne 0.\]
This space is generated by vectors of the form $v'\otimes w_4$ where $v'\in D_{\crys}(\rho_2)$ is a crystalline Frobenius eigenvector with eigenvalue $p^{-(k-1)}\alpha_p^2$ (again we are using Assumption \ref{assnodblroot}). Therefore $D_{\crys}(\rho_2\otimes E)$ must include a vector $v'\otimes w_4$ of this type, which is thus in the kernel of $1\otimes N_E$. Hence $N_E w_4=0$, which implies $b_{12}=b_{22}=0$.

Next we claim $b_{11}=0$. To do this, we apply the same argument, but to the dual $(\wedge^2\overline{L})^\vee$ of $\wedge^2\overline{L}$ (or, equivalently, to $\wedge^5\overline{\mc{L}}$). This gives
\[D_{\crys}((\wedge^2\overline{\mc{L}})^\vee)^{\phi=p^{-(k-1)}\alpha_p^2\cdot p^{-(k-2)/2}\alpha_p}\ne 0.\]
It follows that
\[D_{\crys}(\rho_2\otimes E^\vee)^{\phi=p^{-(k-1)}\alpha_p^2\cdot p^{-(k-2)/2}\alpha_p}\ne 0.\]
We also have
\[D_{\st}(\rho_2\otimes E)=D_{\crys}(\rho_2)\otimes D_{\st}(E)^\vee.\]

Let $w_1^\vee,w_2^\vee,w_3^\vee,w_4^\vee$ be the basis dual to $w_1,w_2,w_3,w_4$. Then the monodromy operator $N_{E^\vee}$ on $D_{\st}(E)^\vee$ is given in this basis by
\[N_{E^\vee}=\pmat{0&0\\-\prescript{t}{}{B}&0}.\]
So by a similar argument as above, it follows that a vector $v'\otimes w_1^\vee$, with $v'$ as above, is in the kernel of $N_{E^\vee}$. Thus the first column of the matrix representing $N_{E^\vee}$ is zero; in particular $b_{11}=0$.

Now we come to the representation $\wedge^2 E$. We claim that
\[D_{\st}(\wedge^2 E)^{\phi=1}=D_{\crys}(\wedge^2 E)^{\phi=1}.\]
From this it will follow that the extension $E'$ is crystalline, and we will be done.

To prove the claim, we note that $D_{\st}(\wedge^2 E)^{\phi=1}$ is the span of $v_1\wedge v_4$ and $v_2\wedge v_3$. But
\[N(v_1\wedge v_4)=(N_E v_1)\wedge v_4+v_1\wedge(N_E v_4)=v_1\wedge(b_{12}v_1+b_{22}v_2)=0,\]
and
\[N(v_2\wedge v_3)=(N_E v_2)\wedge v_3+v_2\wedge(N_E v_4)=v_2\wedge(b_{11}v_1+b_{21}v_2)=b_{21}v_2\wedge v_2=0.\]
This proves the claim, and hence also the lemma.
\end{proof}

\begin{lemma}
\label{lema345nonzero}
We have $a_{345}\ne 0$.
\end{lemma}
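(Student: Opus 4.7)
Suppose for contradiction that $a_{345} = 0$. By the previous lemma we are necessarily in shape \eqref{eqLbar12}, with $*_2$ nontrivial. The idea, as sketched in the introduction, is to extract from the vanishing $a_{345}=0$ a nontrivial class in $H^1(\Q,\overline\Q_p(1))$ that lies in $H_f^1(\Q,\overline\Q_p(1))=0$, giving a contradiction.

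I would exploit the $G_\Q$-invariance of $\omega$ by expanding $\omega(gv_6, gv_7, gv_i) = 0$ for $i = 1, \ldots, 5$ in terms of the matrix entries of $g$. In the expansion, the coefficient of $g_{ri}$ is a polynomial $R_r$ in the entries of $*_1, *_2,$ and $\rho_3$, in which the $(3,4,5)$-tuple would normally contribute to $R_3, R_4, R_5$ but drops out precisely because $a_{345}=0$. The $i=1,2$ equations give $\rho_1(g)^T(R_1,R_2)^T = 0$, and the $i=3,4,5$ equations give $\rho_2(g)^T(R_3,R_4,R_5)^T = 0$; invertibility of $\rho_1(g)^T$ and $\rho_2(g)^T$ then forces $R_1 \equiv \dotsb \equiv R_5 \equiv 0$ identically. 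The identities $R_3=R_5=0$ say that the two rows of the $*_2$-block $P = \left(\begin{smallmatrix} g_{16} & g_{17}\\ g_{26} & g_{27}\end{smallmatrix}\right)$ are proportional to the corresponding rows of $\rho_3$, and then $R_4=0$ fixes the relative sign, yielding a structural identity of the form $P(g) = \lambda(g)\, J\rho_3(g)$ with $J=\mathrm{diag}(1,-1)$ and a function $\lambda:G_\Q\to\overline\Q_p$. Substituting this into the multiplicative identity $P(gh) = \rho_1(g)P(h) + M(g)N(h) + P(g)\rho_3(h)$ and using the relation $\rho_1 = \chi_\cyc\cdot J\rho_3 J$ coming from Proposition \ref{propmcLbar}(b), this collapses to a $\chi_\cyc$-cocycle equation for $\lambda$ up to a correction governed by $M(g)N(h)$; the remaining identities $R_1=R_2=0$ express that correction as a coboundary. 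Thus $\lambda$ defines a class $[\lambda]\in H^1(\Q,\chi_\cyc)$, and $[\lambda]\ne 0$ since otherwise one could modify $P$ by a multiple of $\rho_3$ to kill $*_2$, contradicting the nontriviality of $*_2$ in \eqref{eqLbar12}.

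To conclude I would check the Selmer conditions on $[\lambda]$: unramifiedness at $\ell\ne p$ is immediate from Proposition \ref{propmcLbar}(d),(e), while crystallinity at $p$ follows from Proposition \ref{propmcLbar}(f), applied to $\wedge^2\overline{\mc{L}}$ in the style of Lemma \ref{lemgijnonzero} to transport crystalline periods for $\overline{\mc{L}}$ through the quotient that realizes $[\lambda]$. Then $[\lambda]\in H_f^1(\Q,\overline\Q_p(1))=0$, contradicting its nontriviality. The main obstacle will be the $M(g)N(h)$ error term in the twisted cocycle identity for $*_2$: one must use the vanishing of $R_1, R_2$ to show that this error is a genuine coboundary in $\chi_\cyc$ rather than giving a nontrivial contribution into the complementary $\Sym^2\rho_1$-component of $\rho_1\otimes\rho_1$. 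Once this is done, the descent of crystallinity from $\wedge^2\overline{\mc{L}}$ to $[\lambda]$ is routine but demands careful bookkeeping of Hodge--Tate weights, as in the proof of Lemma \ref{lemgijnonzero}.
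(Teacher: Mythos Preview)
Your approach is substantially different from the paper's and carries a genuine gap. The paper's argument is short and purely algebraic: assuming $a_{345}=0$, it computes quantities like $(g^{-1})_{13}$, $(g^{-1})_{25}$, $(g^{-1})_{15}$, $(g^{-1})_{23}$, $(g^{-1})_{14}$, $(g^{-1})_{24}$ via the trilinear form (using e.g.\ $(g^{-1})_{13}=\langle v_3,gv_5,gv_6\rangle$) and shows, after a brief bootstrap, that all six vanish identically. That forces $g_{13}=g_{14}=g_{15}=g_{23}=g_{24}=g_{25}=0$, contradicting Lemma~\ref{lemgijnonzero}. No Selmer argument is invoked here at all; the $H_f^1(\Q,\overline\Q_p(1))$ trick was already spent in Lemma~\ref{lemgijnonzero}, and the present lemma simply reduces to that one.

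Your first step is actually fine: the identities $R_3=R_4=R_5=0$ really do yield $P(g)=\lambda(g)J\rho_3(g)$. The gap is in the next step. Writing $M=*_3$ and $N=*_1$, the block multiplicativity gives
\[
\bigl(\lambda(gh)-\chi_\cyc(g)\lambda(h)-\lambda(g)\bigr)\,J\rho_3(gh)=M(g)N(h),
\]
so for $\lambda$ to be a $\chi_\cyc$-cocycle modulo coboundaries you need $M(g)N(h)$ to be a scalar multiple of $J\rho_3(gh)$ for \emph{all} $g,h$, and moreover that the resulting scalar $c(g,h)$ is a $2$-coboundary. Your assertion that $R_1=R_2=0$ forces this is not justified: those two relations constrain only the entries of $N=*_1$ against $\rho_3$, and do not touch $M=*_3$ at all. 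In fact the only way this error term collapses is if $M\equiv 0$, i.e.\ $g_{13}=\cdots=g_{25}=0$, which is precisely what the paper proves directly. So your route is circular unless you supply an independent argument that $*_3$ vanishes, and once you have that you are done by Lemma~\ref{lemgijnonzero} without ever needing to build $[\lambda]$.
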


\begin{proof}
Assume on the contrary that $a_{345}=0$. We will get a contradiction to Lemma \ref{lemgijnonzero}. One computes
\[(g^{-1})_{13} =a_{345}(g_{45}g_{56}-g_{55}g_{46})+a_{237}(g_{75}g_{26}-g_{25}g_{76})=-g_{76}g_{25},\]
and
\[(g^{-1})_{25} =a_{345}(g_{33}g_{47}-g_{43}g_{37})+a_{156}(g_{63}g_{17}-g_{13}g_{67})=-g_{67}g_{13},\]
Therefore,
\[(g^{-1})_{13}=-g_{76}g_{25}=g_{76}(g^{-1})_{67}(g^{-1})_{13}=-d(g)^{-1}g_{67}g_{76}(g^{-1})_{13}.\]
Because the image of $\rho_F$ is big, $g_{13}$ is identically zero, and by the equations above $g_{25}$ is also identically zero.

We also compute similarly
\[(g^{-1})_{15}=g_{66}g_{15},\]
Like above, this forces $g_{15}$ to be identically zero. Likewise,
\[(g^{-1})_{23}=-a_{237}g_{77}g_{23},\]
similarly forcing $g_{23}$ to be identically zero.

Finally, we have
\[(g^{-1})_{14}=a_{345}(g_{55}g_{36}-g_{35}g_{56})+a_{147}(g_{75}g_{16}-g_{15}g_{76})+a_{246}(g_{65}g_{26}-g_{25}g_{66})=0,\]
where the vanishing at the end is because $g_{15}=g_{25}=0$. Similarly,
\[a_{237}(g^{-1})_{24} =a_{345}(g_{53}g_{37}-g_{33}g_{57})+a_{147}(g_{73}g_{17}-g_{13}g_{77})+a_{246}(g_{63}g_{27}-g_{23}g_{67})=0.\]
Thus $g_{14}$ and $g_{24}$ are identically zero. This is a contradiction and finishes the proof.
\end{proof}

We now know that $\rho_{\overline{\mc{L}}}$ has the form \eqref{eqLbar12}, where $*_1$, $*_2$ and $*_3$ are all nontrivial, by Lemma \ref{lemgijnonzero}. We also have that $\rho_{\overline{\mc{L}}}$ preserves the alternating trilinear form
\[\langle\cdot,\cdot,\cdot\rangle=v_1^\vee\wedge v_4^\vee\wedge v_7^\vee+v_1^\vee\wedge v_5^\vee\wedge v_6^\vee+v_2^\vee\wedge v_3^\vee\wedge v_7^\vee-v_2^\vee\wedge v_4^\vee\wedge v_6^\vee+a_{345}v_3^\vee\wedge v_4^\vee\wedge v_5^\vee,\]
for some nonzero (by Lemma \ref{lema345nonzero}) $a_{345}\in\overline\Q_p$. From here on we write $a=a_{345}$.

By Lemma \ref{eqfirstgen3form}, the representation $\overline{\mc{L}}$ factors through the $G_2$-subgroup $G_a(\overline\Q_p)$ of $GL_7(\overline\Q_p)$. Because of its shape, by Proposition \ref{propPabeta}, $\overline{\mc{L}}$ factors though the short root parabolic $P_{a,\beta}(\overline\Q_p)$.

For $1\leq i,j\leq 7$, let us now write $g_{ij}'=d(g)g_{ij}=\chi_{\cyc}^{-1}(g)g_{ij}$. Let us also write $E$ for the extension given by
\begin{equation}
\label{eqEmatrix}
E=\frac{1}{d(g)}\pmat{
g_{66}&-g_{67}&g_{13}'&g_{14}'&g_{15}'\\ 
-g_{76}&g_{77}&g_{23}'&g_{24}'&g_{25}'\\
&&g_{66}^2 & 2g_{66}g_{67} & -g_{67}^2\\
&&g_{66}g_{76} & g_{66}g_{77}+g_{67}g_{76} & -g_{67}g_{77}\\
&&-g_{76}^2 & -2g_{76}g_{77} & g_{77}^2}.
\end{equation}
This is the upper left $5$ by $5$ block in $\rho_{\overline{\mc{L}}}$. It is an extension
\[0\to\rho_F(-\tfrac{k-2}{2})\to E\to\Ad(\rho_F)\to 0,\]
and it is a nontrivial extension by Lemma \ref{lemgijnonzero}.

We want to show $E$ is semistable at $p$. Most of our constructions in this paper up until now, including that of $E$, depended on a choice of root $\alpha_p$ of the Hecke polynomial of $F$ at $p$. However, certain choices of $\alpha_p$ may lead to problems when showing $E$ is semistable. But it turns out that if one choice of $\alpha_p$ is problematic, we can switch $\alpha_p$ for the other root $p^{k-1}\alpha_p^{-1}$ and show that this other choice is not problematic. We make this precise in the following lemma.

\begin{lemma}
\label{lemEisst}
Notation as above, there is a choice of $\alpha_p$ such that
\[D_{\crys}(\Sym^2(\rho_F)(-1))^{\phi=p^{3-2k}\alpha_p^2}\cap\Fil^0 D_{\crys}(\Sym^2(\rho_F)(-1))=0.\]
For such a choice of $\alpha_p$, the extension $E$ constructed above is semistable.
\end{lemma}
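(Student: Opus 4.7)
My plan is to establish the two assertions in sequence: the first via weak admissibility of the crystalline module $\Sym^2(\rho_F)(-1)$, and the second by combining Kisin's lemma (Proposition \ref{propmcLbar} (f)) with Lemma \ref{lemEisdR}, after translating the hypothesis from $\Sym^2(\rho_F)(-1)$ to $\Ad(\rho_F)$.

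For the first assertion, let $L\subset D_{\crys}(\Sym^2(\rho_F)(-1))$ denote the one-dimensional $\phi$-eigenspace with eigenvalue $p^{3-2k}\alpha_p^2$, whose Frobenius slope is $3-2k+2s_p$ with $s_p=v_p(\alpha_p)$. If $L\cap\Fil^0$ were nonzero then $L\subset\Fil^0$, and since the only Hodge-Tate weights lying in $\Fil^0$ are $-(k-2)$ and $-(2k-3)$, the Hodge number of $L$ would be at least $k-2>0$; weak admissibility of $\Sym^2(\rho_F)(-1)$ (which is crystalline since $\rho_F$ is) would then force $3-2k+2s_p\geq k-2$, i.e.\ $s_p\geq(3k-4)/2$, which is impossible. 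In fact, using that the two Hecke roots of $F$ at $p$ have $p$-adic valuations summing to $k-1$, at least one of them (take this as $\alpha_p$) has $s_p\leq(k-1)/2$; since $(k-1)/2<(2k-3)/2$ for $k\geq 4$, weak admissibility already excludes $L\subset\Fil^0$ in this case.

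For the second assertion, rewrite the hypothesis on $\Sym^2(\rho_F)(-1)$ as a condition on $\Ad(\rho_F)$. Using $\Sym^2(\rho_F)=\Ad(\rho_F)\otimes\det(\rho_F)=\Ad(\rho_F)(k-1)$, we obtain $\Sym^2(\rho_F)(-1)\cong\Ad(\rho_F)(k-2)$. Under this identification, the eigenvalue $p^{3-2k}\alpha_p^2$ corresponds to the eigenvalue $p^{-(k-1)}\alpha_p^2$ on $D_{\crys}(\Ad(\rho_F))$, and $\Fil^0 D_{\crys}(\Sym^2(\rho_F)(-1))=\Fil^{-(k-2)}D_{\crys}(\Ad(\rho_F))$. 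The hypothesis therefore becomes: the one-dimensional $\phi=p^{-(k-1)}\alpha_p^2$ eigenspace in $D_{\crys}(\Ad(\rho_F))$ is the Hodge-Tate weight $k-1$ line, and in particular is not contained in $\Fil^0 D_{\crys}(\Ad(\rho_F))$. Proposition \ref{propmcLbar} (f) now produces a crystalline period $x_1\in D_{\crys}(\overline{\mc{L}})$ with $\phi x_1=b_1 x_1$, $b_1=p^{-(k-1)}\alpha_p^2$. A valuation check, using $s_p\leq(k-1)/2$ and Assumption \ref{assnodblroot} (and swapping between the two admissible choices of $\alpha_p$ to avoid the borderline case $s_p=k/2$, in which $b_1$ would coincide with $p^{-(k-2)/2}\alpha_p$), shows that $b_1$ is not a Frobenius eigenvalue of $\rho_F(-k/2)$. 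Thus $x_1$ projects to zero in $D_{\crys}(\rho_F(-k/2))$ and so lies in $D_{\crys}(E)$, with image in $D_{\crys}(\Ad(\rho_F))$ spanning the Hodge-Tate weight $k-1$ line.

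With this period in hand, I would apply Lemma \ref{lemEisdR} to the twist $E(k/2)$. The subrepresentation $\rho_F(-\tfrac{k-2}{2})(k/2)=\rho_F(1)$ has strictly negative Hodge-Tate weights $-1,-k$, as required; and the quotient $W':=\Ad(\rho_F)(k/2)$ has Hodge-Tate weights $(k-2)/2,-k/2,-(3k-2)/2$, so $\Fil^0 W'$ is codimension one, complementary precisely to the $(k-2)/2$ piece. The twisted period $x_1(k/2)\in D_{\crys}(E(k/2))$ projects to exactly this piece, so $D=\overline{\Q}_p\cdot x_1(k/2)$ verifies the hypothesis of Lemma \ref{lemEisdR}, and we conclude that $E(k/2)$, and hence $E$, is de Rham.

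The main obstacle is then to upgrade de Rham to semistable, since an extension of crystalline representations can be de Rham without being semistable. I expect this to be handled by producing additional crystalline periods from Proposition \ref{propmcLbar} (f) applied to $\wedge^i\overline{\mc{L}}$ for larger $i$: a careful bookkeeping of Frobenius eigenvalues constrains which subquotient of $\overline{\mc{L}}$ each wedge-product period can meet, and enough of them can be forced into $D_{\crys}(E)$ to show that $\dim D_{\st}(E)$ attains its maximal value $5$. Alternatively, one can invoke the fact that $\overline{\mc{L}}$ is the specialization at $y_0$ of the Galois representation in Urban's $p$-adic family $\mc{L}$ over $\mf{Z}$, which is trianguline at $p$, and then use standard results in $p$-adic Hodge theory converting such a triangulation, combined with the already-established de Rham property of $E$, into semistability at $y_0$.
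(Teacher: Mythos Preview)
Your argument has two genuine gaps, one in each assertion.

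\textbf{First assertion (existence of a good $\alpha_p$).} Your weak-admissibility inequality is misapplied. In the paper's geometric-Frobenius convention, weak admissibility for a sub-filtered-$\phi$-module $L$ reads $t_N^{\mr{geom}}(L)\geq -t_H(L)$, not $t_N^{\mr{geom}}(L)\geq t_H(L)$. Concretely, if $L\subset\Fil^0 D_{\crys}(\Sym^2(\rho_F)(-1))$ then in fact $L=\Fil^{2k-3}$ (since $L$ is spanned by the square of the $\phi=p^{-(k-1)}\alpha_p$ eigenvector of $D_{\crys}(\rho_F)$, and $L\subset\Fil^0$ forces that eigenvector to be the generator $w_1$ of $\Fil^1 D_{\crys}(\rho_F)$, whence $L=\overline\Q_p w_1^2[-1]$). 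Then $t_H(L)=2k-3$ and the correct inequality becomes $3-2k+2s_p\geq -(2k-3)$, i.e.\ $s_p\geq 0$, which is vacuous. So weak admissibility does not exclude the bad intersection for any choice of $\alpha_p$. The paper's argument is different and direct: the bad case means $w_1$ is the $\phi=p^{-(k-1)}\alpha_p$ eigenvector of $D_{\crys}(\rho_F)$; switching $\alpha_p$ to the other root asks instead that $w_1$ be the $\phi=\alpha_p^{-1}$ eigenvector, and both cannot hold since the two eigenspaces are distinct (Assumption~\ref{assnodblroot}) and span the whole $2$-dimensional space.

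\textbf{Second assertion (semistability of $E$).} Your middle steps---applying Proposition~\ref{propmcLbar}(f) to get a crystalline $\phi=b_1$ period, checking that $b_1$ is not a Frobenius eigenvalue of $\rho_F(-k/2)$ or of $\rho_F(-(k-2)/2)$, and then applying Lemma~\ref{lemEisdR} to a twist of $E$---are correct and match the paper (the paper twists by $k-2$ rather than $k/2$, but either works). The gap is in your final paragraph: it is simply \emph{false} that a de Rham extension of crystalline representations can fail to be semistable. By Berger's theorem the extension is potentially semistable over some finite Galois $K/\Q_p$; then $D_{\mr{st},K}$ is exact, inertia $I(K/\Q_p)$ acts $K_0$-linearly and trivially on the sub and quotient, hence unipotently on the extension; but a unipotent action of a finite group on a characteristic-zero vector space is trivial. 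So $E$ is semistable over $\Q_p$. This is exactly the one-line finish the paper uses (``a de Rham extension of crystalline representations is semistable''), and your proposed workarounds via higher wedge periods or triangulations are unnecessary.
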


\begin{proof}
Let us begin with either choice of $\alpha_p$. We apply Kisin's lemma (or really, Proposition \ref{propmcLbar} (f)) to $\overline{\mc{L}}$. This shows that
\[D_{\crys}(\overline{\mc{L}})^{\phi=p^{-(k-1)}\alpha_p^2}\ne 0.\]
Note that
\[p^{-(k-1)}\alpha_p^2\notin\{p^{-(k-2)/2}\alpha_p,p^{k/2}\alpha_p^{-1}\},\]
because otherwise we would have
\[\alpha_p=p^{k/2},\qquad\textrm{or}\qquad\alpha_p^3=p^{(3k-2)/2};\]
We saw that this first option is impossible by Assumption \ref{assnodblroot}, and the second is impossible since $\alpha_p$ is a $p$-Weil number of weight $\tfrac{k-1}{2}$. Since $p^{-(k-2)/2}\alpha_p,p^{k/2}\alpha_p^{-1}$ are the eigenvalues of the crystalline Frobenius for $\rho_F(-\tfrac{k}{2})$, we actually have,
\[D_{\crys}(E)^{\phi=p^{-(k-1)}\alpha_p^2}\ne 0.\]

Now consider the twist $E(k-2)$. This is an extension
\[0\to\rho_F(\tfrac{k-2}{2})\to E(k-2)\to\Sym^2(\rho_F)(-1)\to 0.\]
We wish to apply Lemma \ref{lemEisdR} to $E(k-2)$. The representation $\rho_F(\tfrac{k-2}{2})$ has Hodge--Tate weights which are strictly negative, and $\Fil^0\Sym^2(\rho_F)(-1)$ is $2$-dimensional.

Let $w_1,w_2$ be a basis of $D_{\crys}(\rho_F)$ such that $\Fil^1(D_{\crys}(\rho_F))$ is generated by $w_1$. Write
\[w_{11}=(w_1\otimes w_1)[-1],\qquad w_{12}=(w_1\otimes w_2)[-1],\qquad w_{22}=(w_2\otimes w_2)[-1]\]
for the corresponding basis of
\[D_{\crys}(\Sym^2(\rho_F)(-1))=\Sym^2(D_{\crys}(\rho_F))[-1].\] Then
\[\Fil^0 D_{\crys}(\Sym^2(\rho_F)(-1))\]
is generated by $w_{11}$ and $w_{12}$.

Now we know from above that the image of $D_{\crys}(E(-1))^{\phi=p^{3-2k}\alpha_p^2}$ in
\[D_{\crys}(\Sym^2(\rho_F)(-1))=D_{\mr{dR}}(\Sym^2(\rho_F)(-1))\]
is nontrivial. Call this image $D$. It is equal to
\[D_{\crys}(\Sym^2(\rho_F)(-1))^{\phi=p^{3-2k}\alpha_p^2}.\]
Write $aw_1+bw_2$, for some $a,b\in\overline\Q_p$, for a nonzero element in $D_{\crys}(\rho_F)^{\phi=p^{-(k-1)}\alpha_p}$. Then $D$ is spanned by
\[a^2w_{11}+2abw_{12}+b^2w_{22}.\]

If $D\cap\Fil^0 D_{\crys}(\Sym^2(\rho_F)(-1))$ were nontrivial, then this would force $b=0$, and so $w_1$ would be an eigenvector for the crystalline Frobenius for $D_{\crys}(\rho_F)$ with eigenvalue $p^{-(k-1)}\alpha_p$. If this is the case, then we make the same construction of this extension $E$ but with the roots $\alpha_p$ and $p^{k-1}\alpha_p^{-1}$ switched. Then the above argument shows instead that the other eigenvector, call it $w$, for the crystalline Frobenius in $D_{\crys}(\rho_F)$ would have $w^2[-1]\in D$. But writing $w=a'w_1+b'w_2$ with $a',b'\in\overline\Q_p$, we necessarily have $b'\ne 0$ (for otherwise $w$ would be a multiple of $w_1$, a contradiction to Assumption \ref{assnodblroot}). Then in this case we now have
\[D\cap\Fil^0 D_{\crys}(\Sym^2(\rho_F)(-1))=0.\]
Thus we can apply Lemma \ref{lemEisdR} to show that $E(k-2)$, and hence $E$, is de Rham. But a de Rham extension of crystalline representations is semistable, so we are done.
\end{proof}

We now prove the main theorem of this paper.

\begin{theorem}
\label{thmmainthm}
Fix a prime $p$. Let $F$ be a cuspidal eigenform of level $N$ with $p\nmid N$, and with weight $k\geq 4$ and trivial nebentypus. Let $\rho_F$ be the $p$-adic Galois representation attached to $F$. Assume:
\begin{itemize}
\item $\epsilon(1/2,\pi_F,\Sym^3)=-1$, where $\pi_F$ is the automorphic representation of $GL_2(\A)$ attached to $F$;
\item $F$ is not CM;
\item $4\nmid N$ and $9\nmid N$;
\item The Hecke polynomial of $F$ at $p$ has simple roots.
\end{itemize}
Then under Conjectures \ref{conjmult} (b) and \ref{conjliftings}, the Bloch--Kato Selmer group
\[H_f^1(\Q,\Sym^3(\rho_F)(\tfrac{4-3k}{2}))\]
is nontrivial.
\end{theorem}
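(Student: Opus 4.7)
The plan is to extract the desired Selmer class from the cocycle $*_3$ appearing in \eqref{eqLbar12}, which by the preceding subsections is guaranteed to be nontrivial and to live inside a Galois representation factoring through $G_2\hookrightarrow GL_7$ via $P_{\beta,a}$. My first step is to identify the extension class: the $GL_2$-decomposition $\Ad\otimes\std\cong\Sym^3\otimes\det^{-1}\oplus\std$, combined with $\det(\rho_F)=\chi_{\cyc}^{k-1}$, yields a canonical splitting
\[\ext^1_{G_\Q}(\Ad(\rho_F),\rho_F(-\tfrac{k-2}{2}))\cong H^1(\Q,\Sym^3(\rho_F)(\tfrac{4-3k}{2}))\oplus H^1(\Q,\rho_F(-\tfrac{k-2}{2})),\]
and I would write the class of $*_3$ under this decomposition as $(c_1,c_2)$. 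By Lemma \ref{lemgijnonzero} combined with the fact that we are in case \eqref{eqLbar12}, the total class $(c_1,c_2)$ is nonzero.

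The heart of the argument is to show $c_2=0$, and for this I would exploit the $G_2$-structure. Since $\rho_{\overline{\mc{L}}}$ factors through $P_{\beta,a}(\overline\Q_p)$ by Proposition \ref{propPabeta}, every matrix $\rho_{\overline{\mc{L}}}(g)$ satisfies the two quadratic identities of Lemma \ref{lemmatcoeffreln}. Reading these identities against \eqref{eqLbar12}, in which $g_{11},g_{12},g_{21},g_{22}$ realize $\rho_1=\rho_F(-\tfrac{k-2}{2})$ and $g_{13},g_{14},g_{15},g_{23},g_{24},g_{25}$ realize the cocycle $*_3$, I would check by direct computation that they are exactly the statement that the $GL_2$-equivariant evaluation contraction $\Ad(V)\otimes V\to V$ applied to $*_3$ equals a coboundary coming from $\rho_1$. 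Equivalently, the $\std$-projection of $*_3$ is cohomologically trivial, so $c_2=0$ and hence $c_1\ne 0$.

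It remains to verify that $c_1$ lies in $H_f^1$. Unramifiedness at $\ell\nmid Np$ is Proposition \ref{propmcLbar} (d); for $\ell\mid N$, under the hypotheses $4,9\nmid N$, Proposition \ref{propmcLbar} (e) says $*_3$ vanishes on a finite-index open subgroup of inertia at $\ell$, so the local class is unramified in the Bloch--Kato sense. The subtle step is crystallinity at $p$, and this is the main obstacle. Here I would combine Lemma \ref{lemEisst}, which (after possibly switching the root $\alpha_p$) shows the subextension $E$ of type $0\to\rho_F(-\tfrac{k-2}{2})\to E\to\Ad(\rho_F)\to 0$ is semistable; Proposition \ref{propmcLbar} (f), Kisin's lemma, producing specific crystalline Frobenius eigenvectors in $D_{\st}$ of the exterior powers of $\overline{\mc{L}}$; and the already-established vanishing $c_2=0$, which pins the monodromy $N$ on $D_{\st}(E)$ to land in the $\Sym^3$-summand of $\hom(\Ad(\rho_F),\rho_F(-\tfrac{k-2}{2}))$. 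Matching the Hodge--Tate weights of $\overline{\mc{L}}$ with the crystalline Frobenius slopes of the vectors produced by Kisin's lemma, and invoking admissibility of the resulting filtered $(\phi,N)$-module, should force $N=0$ on the relevant piece and thereby give crystallinity of $c_1$. The genuinely delicate part is arranging this matching consistently with the $\alpha_p$-switch of Lemma \ref{lemEisst}: the same choice of $\alpha_p$ must serve both to establish semistability of $E$ and to annihilate the monodromy on the $\Sym^3$-piece, which is exactly why Assumption \ref{assnodblroot} (simple roots of the Hecke polynomial of $F$ at $p$) is indispensable. Once this is carried out, $c_1$ is a nonzero element of $H_f^1(\Q,\Sym^3(\rho_F)(\tfrac{4-3k}{2}))$ and the theorem follows.
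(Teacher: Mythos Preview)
Your strategy for nontriviality is correct and is essentially a more conceptual reformulation of the paper's approach. The paper works explicitly inside $\wedge^2 E$: it writes down a five-dimensional subrepresentation whose quotient is the $\Sym^3$-piece and whose sub is $\det\rho_1$, and shows that if the resulting extension $E''$ were trivial then the two relations of Lemma~\ref{lemmatcoeffreln} together with $c_1(g)=\cdots=c_4(g)=0$ form an invertible $6\times 6$ linear system forcing all of $g_{13}',\dotsc,g_{25}'$ to vanish, contradicting Lemma~\ref{lemgijnonzero}. Your version inverts the logic: the two relations of Lemma~\ref{lemmatcoeffreln} already say (pointwise in $g$) that the cocycle attached to $*_3$ lands in the $\Sym^3$-summand of $\hom(\Ad\rho_F,\rho_F(-\tfrac{k-2}{2}))$, so $c_2=0$ and hence $c_1\ne 0$. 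Both routes use the same ingredient; yours is cleaner but requires verifying carefully that the two codimension conditions cut out exactly the $\Sym^3$-piece in the paper's chosen basis for $\rho_1,\rho_2$, which is a nontrivial bookkeeping exercise with Proposition~\ref{propmcLbar}(b).

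The gap is in your crystallinity argument. You propose combining Proposition~\ref{propmcLbar}(f) (Kisin's interpolation of crystalline periods) with admissibility of $D_{\st}(E)$ and the constraint that $N$ lies in the $\Sym^3$-summand. But none of these ingredients pins $N$ to zero: admissibility constrains the Hodge filtration against the Newton polygon, and Kisin's lemma produces individual crystalline eigenvectors, neither of which controls the monodromy operator globally. The paper's argument is both different and much simpler. Once $E$ (hence $E''\subset\wedge^2E$ up to twist) is known to be semistable by Lemma~\ref{lemEisst}, one just uses the relation $N\phi=p\phi N$: a nonzero $N$ on $D_{\st}(E'')$ would force some crystalline Frobenius eigenvalue of the $\Sym^3$-piece to equal $p$ times the eigenvalue $1$ of the trivial piece, i.e.\ one of $p^{(3k-2)/2}\alpha_p^{-3},\,p^{k/2}\alpha_p^{-1},\,p^{-(k-2)/2}\alpha_p,\,p^{-(3k-4)/2}\alpha_p^3$ would have to equal $p$. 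A Weil-weight check (using $|\alpha_p|=p^{(k-1)/2}$) rules this out immediately. No further use of Kisin's lemma or admissibility is needed at this stage, and your concern about synchronizing the $\alpha_p$-switch with the monodromy computation is moot: the switch is only used to obtain semistability of $E$, after which the eigenvalue argument is symmetric in the two roots.
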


\begin{proof}
We need to construct a nontrivial class in $H_f^1(\Q,\Sym^3(\rho_F)(\tfrac{4-3k}{2}))$. This is the same as constructing a nontrivial extension $E'$ of $\overline\Q_p$-Galois representations,
\[0\to\Sym^3(\rho_F)(\tfrac{4-3k}{2})\to E'\to\overline\Q_p\to 0\]
which is unramified at all primes $\ell\ne p$ and which is crystalline at $p$. By duality, this is the same as constructing such an extension $E''$,
\[0\to\overline\Q_p\to E''\to (\Sym^3(\rho_F)(\tfrac{2-3k}{2})\to 0.\]
We will construct $E''$ as a twist of a subrepresentation of $\wedge^2 E$, where $E$ is the extension of \eqref{eqEmatrix}, and then verify that it satisfies these ramification and crystallinity properties.

For $i,j\in\{1,2,3,4,5\}$, write
\[v_{ij}=v_i\wedge v_j,\]
where the vectors $v_i$ are as in Proposition \ref{propmcLbar}. Then one can compute the action of $g$ on $\wedge^2 E$ in the basis
\[v_{12},\,\, v_{13},\,\, (v_{23}-v_{14}),\,\, (v_{24}+v_{15}),\,\, v_{25},\,\, (2v_{23}+v_{14}),\,\, (v_{24}-2v_{15}),\,\, v_{34},\,\, v_{35},\,\, v_{45}.\]
The result is that the first five columns begin with the matrix
\begin{equation}
\label{eqmatrixofE''}
\frac{1}{d(g)^2}\pmat{d(g)& c_1(g)& c_2(g)& c_3(g)& c_4(g)\\
& g_{66}^3& -3g_{66}^2g_{67}& -3g_{66}g_{67}^2& g_{67}^3\\
& -g_{66}^2g_{76}& g_{66}^2g_{67}+2g_{66}g_{67}g_{76}& g_{67}^2g_{76}+2g_{66}g_{77}g_{67}& -g_{77}g_{67}^2\\
& -g_{66}g_{76}^2& g_{67}g_{76}^2+2g_{66}g_{77}g_{76}& g_{66}g_{77}^2+2g_{77}g_{67}g_{76}& -g_{77}^2g_{67}\\
& g_{76}^3& -3g_{77}g_{76}^2& -3g_{77}g_{76}^2& g_{77}^3},
\end{equation}
and the rest of the entries of these first five columns are zero;  here we have written
\begin{align*}
c_1(g)&=g_{76}g_{13}'+g_{66}g_{23}',\\
c_2(g)&=-g_{77}g_{13}'-g_{67}g_{23}'-g_{76}g_{14}'-g_{66}g_{24}',\\
c_3(g)&=-g_{77}g_{14}'-g_{67}g_{24}'+g_{76}g_{15}'+g_{66}g_{25}',\\
c_4(g)&=-g_{77}g_{15}'-g_{67}g_{25}'.
\end{align*}
One sees by direct computation that the $4$ by $4$ block in the bottom right of this matrix is the matrix of the representation $\Sym^3(\rho_1\otimes d)$ in the basis
\[v_6^3, v_6^2v_7, -v_6v_7^2, -v_7^3,\]
where $d=\chi_{\cyc}^{-1}$ and $\rho_1$ are as in Proposition \ref{propmcLbar} (b). Since $\rho_1\cong\rho_F(-\tfrac{k}{2})$, twisting by $\chi_{\cyc}$ makes \eqref{eqmatrixofE''} into an extension $E''$ of $\overline\Q_p$ by $\Sym^3(\rho_F)(\tfrac{2-3k}{2})$. It is unramified at all $\ell\nmid Np$ by Proposition \ref{propmcLbar} (d), and it is unramified at $\ell|N$ because by Proposition \ref{propmcLbar}, the functions $c_1,c_2,c_3,c_4$ above vanish on an open subgroup of $I_\ell$ for such $\ell$. If we choose the right root $\alpha_p$ of the Hecke polynomial of $F$ at $p$ as in Lemma \ref{lemEisst}, which we assume we have done, then it is moreover semistable at $p$. We just need to verify that $E''$ is a nontrivial extension, and that it is crystalline at $p$.

To see $E''$ is nontrivial, assume otherwise. Then
\[c_1(g)=c_2(g)=c_3(g)=c_4(g)=0\]
for all $g\in G_\Q$. By Lemma \ref{lemmatcoeffreln}, we also have the relations
\[2g_{77}g_{13}'+2g_{67}g_{23}'-g_{76}g_{14}'-g_{66}g_{24}'=0\]
and
\[g_{77}g_{14}'-g_{67}g_{24}'-2g_{76}g_{15}'+2g_{66}g_{25}'=0.\]
Altogether, this gives the following linear system of relations:
\[\pmat{g_{76}&g_{66}&&&&\\
g_{77}&g_{67}&g_{76}&g_{66}&&\\
2g_{77}&2g_{67}&-g_{76}&-g_{66}&&\\
&&g_{77}&g_{67}&-g_{76}&-g_{66}\\
&&g_{77}&g_{67}&2g_{76}&2g_{66}\\
&&&&g_{67}&g_{77}}\pmat{g_{13}'\\ g_{23}'\\ g_{14}'\\ g_{24}'\\ g_{15}'\\ g_{25}'}=0.\]
A quick Gaussian elimination brings the $6$ by $6$ matrix in this relation to
\[\pmat{g_{76}&g_{66}&&&&\\
g_{77}&g_{67}&&&&\\
&&-3g_{76}&-3g_{66}&&\\
&&g_{77}&g_{67}&&\\
&&&&3g_{76}&3g_{66}\\
&&&&g_{77}&g_{67}}.\]
The determinant of the above matrix is $-9d(g)^3\ne 0$, and so it is invertible, forcing
\[g_{13}'=g_{23}'=g_{14}'=g_{24}'=g_{15}'=g_{25}'=0.\]
This contradicts Lemma \ref{lemgijnonzero}, proving that the extension $E''$ is nontrivial.

It remains to show $E''$ is crystalline. We already know it is semistable. Its crystalline Frobenius eigenvalues are
\[1,\,\,p^{(3k-2)/2}\alpha_p^{-3},\,\,p^{k/2}\alpha_p^{-1},\,\,p^{-(k-2)/2}\alpha_p,\,\,p^{-(3k-4)/2}\alpha_p^3.\]
Let $N$ be the monodromy operator for $E''$. Then the relation $N\phi=p\phi N$ shows that if $N$ is nontrivial, then we must have that one of
\[p^{(3k-2)/2}\alpha_p^{-3},\,\,p^{k/2}\alpha_p^{-1},\,\,p^{-(k-2)/2}\alpha_p,\,\,p^{-(3k-4)/2}\alpha_p^3\]
equals $p$. But this would force either
\[\alpha_p^3\in\{p^{(3k-4)/2},p^{(3k-2)/2}\}\]
or
\[\alpha_p\in\{p^{(k-2)/2},p^{k/2}\}.\]
This first case is impossible since $\alpha_p$ is a $p$-Weil number of weight $(k-1)/2$, and the second case is impossible by the assumption that the Hecke polynomial of $F$ at $p$ has simple roots (see the discussion following Assumption \ref{assnodblroot}). Thus we must have $N=0$ and $E''$ is crystalline, as desired.
\end{proof}

\appendix
\section{Consequences of Arthur's conjectures}
This appendix is devoted to explaining the conjectures used in the main body above. The point of the first five subsections of this appendix is to justify our belief in the Conjectures \ref{conjmult} and \ref{conjliftings} appearing in the last two subsections. Section \ref{subsecartconj} reviews in general the conjectures of Arthur and Section \ref{subsecadjo} reviews the results of Adams--Johnson \cite{adjo}. In Section \ref{secparamsG2} we return to $G_2$ and we make a rough classification of Arthur parameters for $G_2(\R)$. Then we explicitly describe the Arthur packets for a particular family of archimedean parameters of Adams--Johnson type in Section \ref{subsecajpacket}; this result is important for describing the archimedean component of the automorphic representations appearing in Conjecture \ref{conjmult}. Then in Section \ref{subseccohparams} we classify the cohomological Arthur parameters of $G_2(\R)$. This information is used in justifying our belief in Conjecture \ref{conjliftings}.

\subsection{Arthur's conjectures}
\label{subsecartconj}
We now recall Arthur's conjectures essentially as they were formulated in \cite{artconj}. Arthur originally formulated his conjectures in terms of parameters involving the Langlands group, and this is the point of view we take in this subsection. For simplicity, we will work over $\Q$, as this case is what is used in this paper.

Let $G$ be a quasisplit group over $\Q$, and let $L_\Q$ denote the (conjectural) Langlands group of $\Q$, which should be pro-algebraic. Denote by $\L{G}$ the $L$-group of $G$, which is a particular semidirect product $G^\vee(\C)\rtimes G_\Q$. First of all, the conjectures of Langlands predict that certain homomorphisms
\[\phi:L_\Q\to\L{G},\]
considered up to $\L{G}$-conjugacy, should classify the automorphic representations of $G(\A)$; for such a conjugacy class of $\phi$'s, there should be a set $\Pi_\phi$ of automorphic representations of $G(\A)$, and these sets should satisfy certain properties, including that all such sets $\Pi_\phi$ partition the isomorphism classes of automorphic representations. We usually denote the conjugacy class of such a map $\phi$ by the same symbol as the map itself, and call that class a \textit{Langlands parameter}, or $L$\textit{-parameter}. The set $\Pi_\phi$ is called an $L$\textit{-packet}.

Arthur then enhances part of this theory to classify the representations occurring in the discrete spectrum. Let us call a Langlands parameter \textit{tempered} if its image in $\L{G}$ is bounded (equivalently, relatively compact). Then Arthur considers conjugacy classes of homomorphisms of the form
\[\psi:L_\Q\times SL_2(\C)\to\L{G}\]
where $\psi|_{L_\Q}$ is a tempered Langlands parameter and $\psi_{SL_2(\C)}$ is algebraic. Such classes of homomorphisms will be called \textit{Arthur parameters}. Arthur's conjectures will predict how these parameters $\psi$ classify the discrete spectrum of $G$. To state these conjectures, we need to consider the local part of the theory.

Let $v$ be a place of $\Q$. Write $W_{\Q_v}$ for the Weil group of $\Q_v$. If $v$ is finite, we write $W_{\Q_v}'$ for the Weil--Deligne group of $\Q_v$, which is a certain semidirect product $W_{\Q_v}\ltimes\C$. If $v=\infty$, so that $\Q_v=\R$, we write $W_\R'=W_\R$. The Langlands group $L_\Q$ is supposed to receive maps from all the groups $W_v'$. We also have the $L$-group of $G(\Q_v)$ which we denote by $\L{G_v}$; as in the global case, it is a certain semidirect product $\L{G}_v=G^\vee(\C)\rtimes G_{\Q_v}$.

We then define a \textit{local Arthur parameter} for $G(\Q_v)$ to be a conjugacy class of homomorphisms
\[\psi:W_v'\times SL_2(\C)\to\L{G}_v,\]
where $\psi|_{W_v'}$ is a local Langlands parameter which is tempered (which, for the Weil--Deligne group, is a slightly different boundedness condition which we do not explain here) and where $\psi|_{SL_2(\C)}$ is algebraic.

The conjectures of Arthur will attach packets to both local and global Arthur parameters and give a relation between these packets, and also a relation between these packets certain $L$-packets. Given a local Arthur parameter $\psi$ for $G(\Q_v)$, we consider the Langlands parameter $\phi_\psi$ given by
\[\phi_\psi(w)=\psi(w,\sm{\vert w\vert^{1/2}&0\\ 0&\vert w\vert^{-1/2}}),\qquad w\in W_v'.\]

Given a local Arthur parameter $\psi$ for $G(\Q_v)$, let $C_\psi$ be the centralizer of the image of $\psi$ in $G^\vee(\C)$, and let
\[\mc{C}_\psi=C_\psi/C_\psi^\circ Z(G^\vee(\C))\]
where $Z(G^\vee(\C))$ is the center of $G^\vee(\C)$. We define analogously the groups $C_\phi$ and $\mc{C}_\phi$ for a local Langlands parameter $\phi$. Then it is not hard to see that $\mc{C}_\psi$ surjects onto $\mc{C}_{\phi_\psi}$, and hence we have an inclusion of character sets $\widehat{\mc{C}}_{\phi_\psi}\hookrightarrow\widehat{\mc{C}}_\psi$.

In \cite{artconj}, Arthur defines, building off work of Shelstad in the tempered case, a pairing $\langle\cdot,\cdot\rangle$ on $C_\phi\times\Pi_\phi$ for any local Langlands parameter $\phi$. He explains that this pairing defines an injection $\Pi_\phi\hookrightarrow\widehat{\mc{C}}_\phi$ by $\pi\mapsto\langle\cdot,\pi\rangle$.

We now state the following conjecture, which is due to Arthur in \textit{loc. cit.}

\begin{conjecture}
\label{conjart}
\indent \,
\begin{enumerate}[label=(\arabic*)]
\item Let $v$ be a place and $\psi:W_v'\times SL_2(\C)\to\L{G_v}$ be a local Arthur parameter. Then there is a unique set $\Pi_\psi$ of irreducible admissible representations of $G(\Q_v)$, containing the $L$-packet $\Pi_{\phi_\psi}$ of $\phi_\psi$, such that $\pi\in\Pi_\psi$ belongs to $\Pi_{\phi_\psi}$ if and only if $\langle\cdot,\pi\rangle$ is in $\widehat{\mc{C}}_{\phi_\psi}$, and such that $\Pi_\psi$ satisfies the stability and endoscopy character relations given by (ii) and (iii) in \cite[Conjecture 1.3.3]{artconj}.
\item Let now $\psi:L_\Q\times SL_2(\C)\to\L{G}$ be a global Arthur parameter. Let $C_\psi$ and $\mc{C}_\psi$ be defined analogously as in the local case, and assume $C_\psi$ is finite. For each place $v$, let $\psi_v$ be the restriction to $W_v'$ of $\psi$. Let $\Pi_\psi$ be the set of representations of the form
\[\pi=\sideset{}{'}\bigotimes_v \pi_v, \qquad\pi_v\in\Pi_{\psi_v}.\]
Then there is a character $\xi_\psi:\mc{C}_\psi\to\{\pm 1\}$ such that the multiplicity with which such a $\pi$ occurs in $L_{\mr{disc}}^2(G(\Q)\backslash G(\A))$ is
\[\frac{1}{\vert\mc{C}_\psi\vert}\sum_{s\in\mc{C}_\psi}\left(\prod_v\langle s,\pi_v\rangle\right)\xi_\psi(s).\]
Moreover, the global Arthur parameters $\psi$ partition $L_{\mr{disc}}^2(G(\Q)\backslash G(\A))$ in this way.
\end{enumerate}
\end{conjecture}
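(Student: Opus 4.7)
The plan is to follow the broad outline of Arthur's proof for quasi-split classical groups, while acknowledging that a genuine proof of the conjecture in the generality stated above is not currently within reach, primarily because the Langlands group $L_\Q$ is itself conjectural. The first task, therefore, would be to replace the formalism involving $L_\Q\times SL_2(\C)$ by one involving a concrete substitute. Following Arthur's approach for classical groups, the substitute would be a collection of formal unramified sums $\psi=\bigboxplus_i \mu_i\boxtimes\nu_i$ where $\mu_i$ runs over self-dual cuspidal automorphic representations of $GL_{n_i}(\A)$ and $\nu_i$ is an algebraic representation of $SL_2(\C)$, together with compatibility with the $L$-embedding. This reformulation already requires a well-developed theory of functoriality from $G$ to $GL_n$ via representations of $\L{G}$, which in the generality of arbitrary quasi-split $G$ is itself only conjectural.

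The next step would be to carry out the local theory. For the archimedean place, the Adams--Johnson construction reviewed in Section \ref{subsecadjo} gives candidate packets for parameters of sufficiently generic Adams--Johnson type via cohomological induction from $\theta$-stable parabolics; one would then need to show that these packets satisfy the character identities of \cite[Conjecture 1.3.3]{artconj}, which in favorable cases follows from Johnson's computations of characters of derived functor modules together with Shelstad's endoscopic transfer. For nonarchimedean places, the construction of $\Pi_{\psi_v}$ for nontempered $\psi_v$ would proceed either by endoscopic transfer from $GL_n$ (imitating the classical group case, reducing to Harris--Taylor/Henniart via twisted endoscopy) or, where available, via theta correspondence with a smaller group whose packets are already understood. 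The pairing $\langle\cdot,\pi\rangle$ and its extension from $\mc{C}_{\phi_\psi}$ to $\mc{C}_\psi$ would be pinned down by these transfer relations.

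The third step, which is the heart of the argument, is the global multiplicity formula. Here one would use the stable trace formula, whose general stabilization is now available due to the work of Arthur, Ngô, Waldspurger, Chaudouard--Laumon, and others. One compares the discrete part of the spectral side of the trace formula for $G$ with twisted stable trace formulas on the general linear groups $GL_{n_i}$ attached to the formal parameter $\psi$; the multiplicity formula emerges from the spectral decomposition of the stable distributions, together with the local character identities established in step two. The sign character $\xi_\psi$ arises from comparing Arthur's normalized intertwining operators at the archimedean place with the local pairings, and its computation in general requires a careful analysis of Whittaker normalizations and the metaplectic-type cocycles studied by Arthur and Kaletha.

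The hard part will be twofold. First, and most fundamentally, replacing $L_\Q$ by a well-behaved substitute in the generality of an arbitrary quasi-split $G$ presupposes the existence of functorial transfers from $G$ to general linear groups along representations of $\L{G}$, and this is the deepest unknown in the program. Second, even granting such a substitute, the local theory of $A$-packets at the ramified finite places of a general group (as opposed to endoscopic groups of $GL_n$) demands constructions that go well beyond what is currently available; exceptional groups like $G_2$ and $F_4$ will likely require genuinely new inputs, such as exceptional theta correspondences or geometric methods via Adams--Barbasch--Vogan packets, to pin down the packets and verify the endoscopic character relations. Consequently, this proposal should be understood as a long-term program rather than a concrete proof strategy.
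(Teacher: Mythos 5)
This statement is labeled a \emph{Conjecture} in the paper, and the paper does not prove it; it is Arthur's conjecture from \cite{artconj}, which the paper merely states and then uses as a working hypothesis (the theorems in the main body are conditional on consequences of it, namely Conjectures \ref{conjmult} and \ref{conjliftings}). There is therefore no proof in the paper against which to compare your proposal.

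That said, your write-up is a reasonable survey of the known strategy for Arthur's conjectures, and you correctly identify the two fundamental obstacles (the conjectural nature of $L_\Q$ and the absence of a local theory of $A$-packets for general groups beyond the classical and small-rank cases). You also correctly flag at the end that this is a program, not a proof. The one thing worth emphasizing is that, precisely because this statement is open, the paper's actual strategy is not to prove it but to isolate the specific consequences needed (the multiplicity statement for long-root CAP representations of $G_2$ and the lifting to $GL_7$) and to argue in Sections \ref{seccuspmult} and \ref{subsecliftings} how those consequences follow from Conjecture \ref{conjart}. If your goal is to engage with the paper's logic, the correct exercise is to verify those derivations, not to attempt a proof of Arthur's conjecture itself.
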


Several remarks are in order. First, Arthur has refined the formula in (2) in \cite[Section 8]{artunipconj}. We will only need the precise formula for $\xi_\psi$ in a special case, where it has already been computed explicitly by Gan and Gurevich, in Section \ref{seccuspmult}. Therefore we do not explain the precise definition of $\xi_\psi$ here.

Second, the presence of the Langlands group in (2), the global case of this conjecture, may seem to put it out of reach as it is stated. However, in \cite{artclass}, Arthur introduced a substitute for his global parameters which works for classifying the discrete spectrum of the split classical groups; the automorphic representations of $GL_n$ are treated as being understood there and everything else is classified in terms of those.

However, in the first statement (1), which is the local situation, the statement of the conjecture does not depend on the existence of anything hypothetical simply because the Weil--Deligne group is defined. Furthermore, Arthur's substitute for global parameters just mentioned do localize to local parameters in a well-defined way.

In the real case, even more is known. Adams, Barbasch and Vogan constructed in \cite{ABV} packets which satisfy part (1) of the above conjecture. But earlier, Adams and Johnson were able to construct in \cite{adjo} packets which satisfy (1) for specific kinds of real parameters $\psi$. We will discuss their construction in the next section. Moreover, Arancibia, Moeglin and Renard have shown in \cite{AMR} that these constructions coincide when they overlap, and that they coincide with Arthur's real packets from \cite{artclass} when they overlap as well.

\subsection{The construction of Adams and Johnson}
\label{subsecadjo}
We consider in this section the real case of Arthur's conjectures. So fix $G$ a reductive algebraic group defined over $\R$ with complex Lie algebra $\mf{g}$. We identify $G$ with its $\R$-points. Fix a Cartan involution $\theta$ for $G$ and let $K$ be the maximal compact subgroup of $G$ defined by $\theta$. We will assume that $G$ has discrete series, so that there is a maximal torus $T$ for $G$ contained in $K$.

We will consider the $L$-group $\L{G}=G^\vee(\C)\rtimes W_\R$. Write $j$ for the element of the Weil group $W_\R$ of $\R$ such that $j^2=-1$ and $jzj^{-1}=\bar{z}$ for $z\in\C^\times\subset W_\R$.

Let $\mf{q}\subset\mf{g}$ be a $\theta$-stable parabolic subalgebra containing the complexified Lie algebra $\mf{t}$ of $T$. The corresponding $\theta$-stable Levi subgroup of $G(\R)$ is defined to be the stabilizer of $\mf{q}$ under the adjoint action of $G(\R)$. Given such a $\theta$-stable Levi $L\subset G(\R)$ corresponding to a $\theta$ stable parabolic subalgebra $\mf{q}$ containing $\mf{t}$, there is the following natural way to embed $\L{L}$ into $\L{G}$.

Fix an ordering on the roots of the complex Lie algebra $\mf{t}$ of $T$ in $\mf{g}$ which makes $\mf{q}$ standard. Then naturally $T^\vee(\C)\subset L^\vee(\C)\subset G^\vee(\C)$. Let $n_L$ be any element of the derived group of $L^\vee(\C)$ which sends positive roots in $L^\vee(\C)$ for $T^\vee(\C)$ to negative ones, and similarly define $n_G$. Let $\rho_L$ be half the positive roots of $T(\C)$ in $L(\C)$, and similarly for $\rho_G$. Then we define the embedding $\xi_{L}:\L{L}\hookrightarrow\L{G}$ first on $\C^\times\subset W_\R$ as follows. For $z\in\C^\times$, let $\xi_L(z)=\chi(z)\rtimes z$ where $\chi:\C^\times\to T^\vee(\C)$ is the unique homomorphism such that
\begin{equation}
\label{eqnLemb1}
\lambda^\vee(\chi(z))=z^{\langle\rho_G-\rho_L,\lambda^\vee\rangle}\bar{z}^{-\langle\rho_G-\rho_L,\lambda^\vee\rangle},
\end{equation}
for any cocharacter $\lambda^\vee:\C^\times\to T^\vee(\C)$. Then we define
\begin{equation}
\label{eqnLemb2}
\xi_L(j)=n_Gn_L^{-1}\rtimes j.
\end{equation}
We consider this embedding in the following definition.

\begin{definition}
\label{defAJtype}
An Arthur parameter $\psi:W_\R\times SL_2(\C)\to \L{G}$ is said to be \textit{of Adams--Johnson type} if there is a $\theta$-stable Levi subgroup $L$ of $G$ containing the compact maximal torus $T$ such that the three points below are satisfied.
\begin{itemize}
\item The restriction $\psi|_{SL_2(\C)}$ sends $\sm{1&1\\ 0&1}$ to a principal unipotent element of $L^\vee(\C)$.
\item The image of $\psi|_{\C^\times}$ lies in $Z(L^\vee(\C))\rtimes W_\R$, where $Z$ denotes the center, considered as a subgroup of $\L{G}$ via the embedding $\xi_L$ defined above.
\end{itemize}
These two points imply that $\phi_\psi$ has image contained in $\L{L}$ and that it defines a one dimensional representation of $L$. Let $\lambda$ be the restriction of this representation to $T$. Then we also require:
\begin{itemize}
\item $L$ is the stabilizer of a $\theta$-stable parabolic subalgebra $\mf{q}$ with radical $\mf{u}$ such that, for all roots $\gamma$ in $\mf{u}$, we have $\re\langle\lambda+\rho_G,\gamma^\vee\rangle\geq 0$.
\end{itemize}
\end{definition}

Adams and Johnson in \cite{adjo} have constructed packets for any parameter $\psi$ of Adams--Johnson type. Fix such a parameter $\psi$. Let $\mf{q}$ be the $\theta$-stable parabolic subalgebra of $\mf{g}$ such that $\mf{q}=\mf{l}\oplus\mf{u}$ where
\[\mf{l}=\mf{t}\oplus\bigoplus_{\langle\lambda,\gamma^\vee\rangle=0}\mf{g}^\gamma,\qquad\mf{u}=\bigoplus_{\langle\lambda,\gamma^\vee\rangle>0}\mf{g}^\gamma,\]
where the sums are over roots $\gamma$ of $\mf{t}$ in $\mf{g}$ and $\mf{g}^\gamma$ denotes the one dimensional subspace of $\mf{g}$ corresponding to the root $\gamma$.

Adams and Johnson then construct the packet attached to $\psi$ by defining certain Weyl twists of $\pi_L$, perhaps defined on other Levis which are inner forms of $L$, and cohomologically inducing them to $G$. We now review this construction.

Let $W$ be the Weyl group of $\mf{t}$ in $\mf{g}$. For $w\in W$, we can consider the $\theta$-stable parabolic subalgebra $\mf{q}_w$ of $\mf{g}$ given as follows. Let $\Delta(\mf{q},\mf{t})$ be the set of roots of $\mf{t}$ in $\mf{q}$, so that
\[\mf{q}=\bigoplus_{\gamma\in\Delta(\mf{q},\mf{t})}\mf{g}^\gamma\oplus\mf{t}.\]
Then we define $\mf{q}_w$ by
\[\mf{q}_w=\bigoplus_{\gamma\in\Delta(\mf{q},\mf{t})}\mf{g}^{w\gamma}\oplus\mf{t}.\]
Let $L_w$ be the $\theta$-stable Levi corresponding to $\mf{q}_w$.

Adams and Johnson show that there is a one dimensional representation of $L_w$ whose restriction to $T$ is $w\lambda$. Let $\pi_{L_w}$ be any such representation.

For $i\geq 0$, we consider the cohomological induction functors $\mc{R}_{\mf{q}}^i$ from $(\mf{l},L\cap K)$-modules to $(\mf{g},K)$-modules as defined in \cite[Section V.1]{knvo}. These are normalized so that if $Z$ is an $(\mf{l},L\cap K)$-module with infinitesimal character given by $\Lambda\in\mf{t}^\vee$, then $\mc{R}_{\mf{q}}^i(Z)$ has infinitesimal character given by $\Lambda+\rho(\mf{u})$, where $\rho(\mf{u})$ half the sum of the roots of $\mf{t}$ in $\mf{u}$ (see \cite[Corollary 5.25]{knvo}).

For $w\in W$, let
\[S_w=\frac{1}{2}(\dim(K)-\dim(L_w\cap K)).\]
Let
\[A_{\mf{q}_w}(w\lambda)=\mc{R}_{\mf{q}}^{S_w}(\pi_{L_w}).\]
Let $W_L$ be the Weyl group of $\mf{t}$ in $\mf{l}$, and $W_c$ the Weyl group of $\mf{t}$ in the complex Lie algebra of $K$. Adams and Johnson show that if $w,w'\in W$ define the same double coset in
\[W_c\backslash W/W_L,\]
then $A_{\mf{q}_w}(w\lambda)\cong A_{\mf{q}_{w'}}(w'\lambda)$.

\begin{definition}
With $\psi$ a parameter of Adams--Johnson type as above, we define the corresponding Adams--Johnson packet to be
\[\Pi_\psi^{\mr{AJ}}=\sset{A_{\mf{q}_w}(w\lambda)}{w\in W_c\backslash W/W_L}.\]
\end{definition}

As mentioned before, these packets satisfy the conclusion of Arthur's conjecture in the archimedean case. In the next two sections, we will study the Arthur parameters for $G_2(\R)$ whose restriction to $SL_2(\C)$ is nontrivial. Many of these will turn out to be of Adams--Johnson type, and we will be able to compute the corresponding Adams--Johnson packets.

\subsection{Arthur parameters for $G_2(\R)$}
\label{secparamsG2}
We now study the Arthur parameters for $G_2(\R)$. Let $\psi:W_\R\times SL_2(\C)\to \L(G_2(\R))$ be an Arthur parameter which is nontrivial on the $SL_2(\C)$. By the Jacobson--Morozov theorem, the conjugacy class of the homomorphism $\psi|_{SL_2(\C)}$ is determined by the conjugacy class of the unipotent element $\psi\sm{1&1\\ 0&1}$. Such a class is called a unipotent orbit, and there are five such orbits for $G_2$, described as follows. For a root $\gamma$ of the split maximal torus $T$ in $G_2$, let $X_\gamma$ be a root vector in the Lie algebra $\mf{g}_2$ corresponding to $\gamma$. Then:
\begin{itemize}
\item There is the orbit of the identity element, for which we write $\mc{O}_0$.
\item There is the long root orbit, which is that of $\exp(X_\alpha)$. We write $\mc{O}_l$ for this orbit.
\item There is the short root orbit, which is that of $\exp(X_\beta)$. We write $\mc{O}_s$ for this orbit.
\item There is the subregular orbit, which is that of $\exp(X_\alpha+X_{\alpha+3\beta})$. We write $\mc{O}_{sr}$ for this orbit.
\item There is the regular orbit, which is that of $\exp(X_\alpha+X_\beta)$. We write $\mc{O}_r$ for this orbit.
\end{itemize}
The respective dimensions of these orbits are $0$, $6$, $8$, $10$, and $12$. The closure of each contains the previous one.

Let us write $\Psi_{?}(G_2(\R))$, for $?\in\{0,l,s,sr,r\}$, for the set of Arthur parameters $\psi$ such that $\psi|_{SL_2(\C)}$ corresponds to the orbit $\mc{O}_?$. We aim to classify the Arthur parameters $\psi$ for which $\psi|_{SL_2(\C)}$ is nontrivial; that is, we will describe $\Psi_{?}(G_2(\R))$ for $?\ne 0$.

For $\gamma$ a root of $T$, let us write $SL_{2,\gamma}(\C)$ for the $SL_2$-subgroup of $G_2(\C)$ corresponding to $\gamma$. Then if $\gamma$ and $\gamma'$ are orthogonal roots, then $SL_{2,\gamma}(\C)$ and $SL_{2,\gamma'}(\C)$ are mutual centralizers and their inclusions into $G_2(\C)$ induce a map,
\[\iota_{\gamma,\gamma'}:SL_{2,\gamma}(\C)\times SL_{2,\gamma'}(\C)\to G_2(\C)\]
with kernel $\{\pm 1\}$ embedded diagonally.

Let us fix a compact maximal torus $T_c\subset G_2(\R)$ contained in a maximal compact subgroup $K$ of $G_2(\R)$, and let $\theta$ be a Cartan involution giving $K$. Let $\mf{t}_c$ be the complex Lie algebra of $T_c$. We identify the root system of $\mf{t}_c$ in $\mf{g}_2$ with that of $T$ in $G_2$ in such a way that $\beta$ and $2\alpha+3\beta$ are the positive compact roots.

Let $\mf{q}$ be the $\theta$-stable parabolic subalgebra of $\mf{g}_2$ whose Levi factor $\mf{l}$ contains the roots $\pm\beta$, and whose radical $\mf{u}$ contains the positive roots different from $\beta$. Let $L$ be the $\theta$-stable Levi corresponding to $\mf{q}$. It is isomorphic to the unitary group $U(2)$ since the roots $\pm\beta$ are compact. On the dual side, we identify $L^\vee(\C)$ with $GL_2(\C)$.

Given an even integer $k\geq 2$, we let $\psi_{L,k}:W_\R\times SL_2(\C)\to\L{L}$ be the homomorphism given by
\begin{gather*}
\psi_{L,k}(z)=\pmat{(z/\vert z\vert)^{k-4}&0\\ 0&(z/\vert z\vert)^{k-4}}\times z\in L^\vee(\C)\rtimes W_\R,\qquad z\in\C^\times,\\
\psi_{L,k}(A)=A\times 1,\qquad A\in SL_2(\C),\\
\psi_{L,k}(j)=1\times j.
\end{gather*}
Note that this is a homomorphism because $k$ is even, so that $\psi(j)^2=\psi(-1)$. Let $\xi_L:\L{L}\to\L{(G_2(\R))}$ be the embedding considered in \eqref{eqnLemb1} and \eqref{eqnLemb2}, and define
\[\psi_k=\xi_L\circ\psi_{L,k}.\]
Then $\psi_k$ is an Arthur parameter of Adams--Johnson type. As in Definition \ref{defAJtype}, the Langlands parameter $\phi_\psi$, when viewed as having target $\L{L}$, defines a one dimensional representation of $L$ and hence a character $\lambda_k$ of $T_c$. One checks that $\lambda_k=\frac{k-4}{2}(2\alpha+3\beta)$.

\begin{proposition}
\label{propPsil}
Let $\psi\in\Psi_l(G_2(\R))$. Then $\psi$ factors as
\begin{align*}
\psi:W_\R\times SL_2(\C)&\xrightarrow{\phi\times\id_{SL_2(\C)}}W_\R\times SL_{2,\beta}(\C)\times SL_{2,2\alpha+3\beta}(\C)\\
&\xrightarrow{\id_{W_\R}\times\iota_{\beta,2\alpha+3\beta}}W_\R\times G_2(\C)=\L{(G_2(\R))},
\end{align*}
where $\phi:W_\R\to W_\R\times SL_2(\C)$ is a tempered Langlands parameter for $PGL_2(\R)$. If $\psi$ is unipotent, then $\phi|_{\C^\times}$ is trivial and $\phi(j)\in\{\pm 1,\sm{1&0\\ 0&-1}\}$. Otherwise, $\psi=\psi_k$ for some $k\geq 2$ even; in this case, $\phi$ is the Langlands parameter of the discrete series of weight $k$ for $PGL_2(\R)$.

Finally, the component groups are given as follows. If $\psi$ is unipotent, then $\mc{C}_\psi$ is trivial. Otherwise $\mc{C}_\psi$ has two elements.
\end{proposition}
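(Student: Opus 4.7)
The plan is to first conjugate $\psi\vert_{SL_2(\C)}$ into a standard long root $SL_2$ subgroup, then use the centralizer structure to force $\psi\vert_{W_\R}$ into the orthogonal short root $SL_2$. The long root unipotent orbit $\mc{O}_l$ contains $\exp(X_{2\alpha+3\beta})$, and a homomorphism $SL_2(\C) \to G_2(\C)$ hitting a regular unipotent in the $2\alpha+3\beta$ root group is, up to $G_2(\C)$-conjugacy, the inclusion $SL_{2,2\alpha+3\beta}(\C) \hookrightarrow G_2(\C)$. So after conjugating, we may assume $\psi\vert_{SL_2(\C)}$ is this inclusion. Since $\psi\vert_{W_\R}$ commutes with $\psi\vert_{SL_2(\C)}$, its image lies in the centralizer of $SL_{2,2\alpha+3\beta}(\C)$ in $G_2(\C)$; using that $\beta$ is the unique positive root orthogonal to $2\alpha+3\beta$ (direct check: $\langle\beta, 2\alpha+3\beta\rangle = -6 + 6 = 0$), this centralizer is exactly $SL_{2,\beta}(\C)$. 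Together with the commuting inclusion $SL_{2,2\alpha+3\beta}(\C) \hookrightarrow G_2(\C)$, this produces the map $\iota_{\beta,2\alpha+3\beta}$ and the desired factorization, with $\phi : W_\R \to SL_{2,\beta}(\C) \rtimes W_\R$ a Langlands parameter, tempered because $\psi\vert_{W_\R}$ is. Under the identification $SL_{2,\beta}(\C) \cong SL_2(\C) = PGL_2^\vee$, this is a tempered parameter for $PGL_2(\R)$.

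Now I would invoke the well-known classification of tempered $L$-parameters for $PGL_2(\R)$: either $\phi\vert_{\C^\times}$ is nontrivial, in which case $\phi$ is a discrete series parameter attached to an even weight $k \geq 2$ (evenness coming from descent through $Z(SL_2(\C))$ so that $\phi$ really defines a parameter for $PGL_2$), and comparing the definition of $\psi_k$ exhibits $\psi = \psi_k$; or $\phi\vert_{\C^\times}$ is trivial, in which case $\psi$ is unipotent and $\phi$ is determined by $\phi(j)$, which satisfies $\phi(j)^2 = \phi(-1) = 1$. Classifying involutions up to the natural equivalence then yields the three listed representatives.

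For the component groups, I would compute $C_\psi \subset G_2(\C)$ case by case. When $\psi$ is unipotent, $\psi(W_\R \times SL_2(\C))$ sits in $SL_{2,2\alpha+3\beta}(\C)$ together with a single involution $\iota_{\beta,2\alpha+3\beta}(\phi(j),1)$ that is either trivial, central in $SL_{2,2\alpha+3\beta}(\C)$, or shared centrally between the two $SL_2$'s; in each case the centralizer reduces to $SL_{2,\beta}(\C)$, which is connected, giving $\mc{C}_\psi = 1$. For $\psi = \psi_k$, the image of $\phi$ generates the full normalizer $N_{SL_{2,\beta}(\C)}(T^\vee)$, whose centralizer inside $SL_{2,\beta}(\C)$ is the center $\{\pm 1\}$; since the centralizer of $SL_{2,2\alpha+3\beta}(\C)$ in $G_2(\C)$ is already $SL_{2,\beta}(\C)$, we conclude $C_\psi = \{\pm 1\}$, giving $\mc{C}_\psi \cong \Z/2\Z$.

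The main delicate point will be the bookkeeping of the $\{\pm 1\}$-kernel of $\iota_{\beta,2\alpha+3\beta}$: the element $-1$ of $SL_{2,\beta}(\C)$ is identified with $-1 \in SL_{2,2\alpha+3\beta}(\C)$ inside $G_2(\C)$, so one may alter $\phi(j)$ by a central sign at the cost of altering the $SL_2$-component by the same sign. This ambiguity, absent in a purely $SL_2(\C)$-valued picture, is what allows the third conjugacy class representative $\sm{1&0\\ 0&-1}$ to appear alongside $\pm 1$ in the unipotent case, and it must be tracked carefully to ensure the list of conjugacy classes is exhaustive and non-redundant.
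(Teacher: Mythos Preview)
Your approach is essentially the same as the paper's: conjugate $\psi|_{SL_2(\C)}$ onto $SL_{2,2\alpha+3\beta}(\C)$, use that its centralizer is $SL_{2,\beta}(\C)$ to force $\psi|_{W_\R}$ there, and then invoke the classification of tempered $PGL_2(\R)$-parameters. The identification $\psi=\psi_k$ in the non-unipotent case and the component group computation for $\psi_k$ are also handled the same way.

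There is one slip in your unipotent component group argument. Your three-case description of $\phi(j)$ (``trivial, central in $SL_{2,2\alpha+3\beta}(\C)$, or shared centrally between the two $SL_2$'s'') does not actually cover the case $\phi(j)=\sm{1&0\\0&-1}$, and your claim that ``in each case the centralizer reduces to $SL_{2,\beta}(\C)$'' is false for that case: the element $\sm{1&0\\0&-1}$ is regular semisimple in $SL_{2,\beta}(\C)$, so its centralizer there is only the diagonal torus $T_\beta$, not all of $SL_{2,\beta}(\C)$. The paper records exactly this: $C_\psi=SL_{2,\beta}(\C)$ when $\phi(j)=\pm 1$, but $C_\psi=T_\beta$ when $\phi(j)=\sm{1&0\\0&-1}$. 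Fortunately $T_\beta$ is still connected, so $\mc{C}_\psi$ is trivial either way and your conclusion survives; but the argument as written is incorrect and should be fixed.

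Your final paragraph about the $\{\pm 1\}$-kernel is somewhat tangential. The three representatives $1$, $-1$, $\sm{1&0\\0&-1}$ are simply the three $SL_2(\C)$-conjugacy classes of elements squaring to $1$; the kernel plays no role in producing them. Where the kernel matters is only in checking that the three resulting $\psi$'s remain inequivalent as $G_2(\C)$-conjugacy classes, which is immediate since $\psi(j)$ takes three distinct values in $G_2(\C)$.
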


\begin{proof}
Since $\psi\in\Psi_l(G_2(\R))$, by conjugating we may assume $\psi$ identifies $SL_2(\C)$ with $SL_{2,2\alpha+3\beta}(\C)$. Then since $SL_{2,\beta}(\C)$ is the centralizer of $SL_{2,2\alpha+3\beta}(\C)$ in $G_2(\C)$, $\psi|_{W_\R}$ must factor through $SL_{2,\beta}(\C)\times W_\R$. Since $\psi|_{W_\R}$ has bounded image, it therefore defines a tempered Langlands parameter, which we take to be our $\phi$, of $PGL_2(\R)$.

If $\psi$ is unipotent, then $\phi(j)$ is of order 2, and must be either $\pm 1$ or a conjugate of $\sm{1&0\\ 0&-1}$. Otherwise $\phi$ is a discrete series parameter of positive even weight $k$. Assuming this, we will now show $\psi=\psi_k$.

Recall that the discrete series of weight $k$ for $PGL_2(\R)$ has Langlands parameter given by
\[z\mapsto\pmat{(z/\vert z\vert)^{k-1}&0\\ 0&(z/\vert z\vert)^{1-k}}\times z\in SL_2(\C)\times W_\R,\qquad z\in\C^\times,\]
and
\[j\mapsto\pmat{0&-1\\1&0}\times j\in SL_2(\C)\times W_\R.\]
The two equations above therefore define $\phi=\psi|_{W_\R}$ when identifying the $SL_2(\C)$'s with $SL_{2,\beta}(\C)$. Recall also that the Levi $L$ used to define $\psi_k$ is a short root Levi, and therefore we may identify $L^\vee(\C)$ with the long root Levi in $G_2(\C)$ given by
\[T_\beta\times SL_{2,2\alpha+3\beta}(\C)/\{\pm 1\},\]
where $T_\beta$ is the maximal torus in $SL_{2,\beta}(\C)$. That $\psi=\psi_k$ then follows from a straightforward computation using the definitions; one must use \eqref{eqnLemb1} and \eqref{eqnLemb2}, noting that $\rho_G-\rho_L=\frac{3}{2}(2\alpha+3\beta)$ and that we may take
\[n_Gn_L^{-1}=\pmat{0&-1\\ 1&0}\times 1\in SL_{2,\beta}(\C)\times SL_{2,2\alpha+3\beta}(\C).\]

Finally, for the component groups, we note that since $\psi|_{SL_2(\C)}$ has image $SL_{2,2\alpha+3\beta}(\C)$, the centralizer $C_\psi$ is the subgroup of $SL_{2,\beta}(\C)$ which centralizes the image of $\psi|_{W_\R}$. If $\psi$ is unipotent and $\psi(j)$ is central, then $C_\psi=SL_{2,\beta}(\C)$; if $\psi$ is unipotent and $\psi(j)=\sm{1&0\\ 0&-1}$, then $C_{\psi}=T_\beta$; if $\psi=\psi_k$, then $C_\psi=\{\pm 1\}$. The result follows.
\end{proof}

We omit the details, but a completely analogous analysis can be made of $\Psi_s(G_2(\R))$ by switching the roles of long and short roots.

Now we look at the subregular parameters.

\begin{proposition}
\label{propPsisr}
There are two parameters in $\Psi_{sr}(G_2(\R))$ and they are both unipotent. One has component group $S_3$, the symmetric group on three elements, and the other has component group $\Z/2\Z$.
\end{proposition}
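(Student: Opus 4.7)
The plan is to reduce the classification of parameters in $\Psi_{sr}(G_2(\R))$ to the computation of the reductive centralizer of a subregular $SL_2$ in $G_2(\C)$. First, fix $\psi \in \Psi_{sr}(G_2(\R))$. By the Jacobson--Morozov theorem and the $G_2(\C)$-conjugacy of subregular $\mf{sl}_2$-triples, we may assume $\psi|_{SL_2(\C)}$ has image a fixed subregular copy $S \subset G_2(\C)$ associated with the nilpotent $X_\alpha + X_{\alpha+3\beta}$. It is a classical fact about the subregular orbit in $G_2$ (computed, for instance, in Collingwood--McGovern's book on nilpotent orbits) that the component group of the centralizer of a subregular nilpotent in $G_2(\C)$ is the symmetric group $S_3$; since the identity component of the reductive centralizer of $S$ is trivial, we conclude that $Z := Z_{G_2(\C)}(S) \cong S_3$. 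Verifying this identification is the main substantive step of the proof.

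Next, because $\psi|_{W_\R}$ commutes with $\psi|_{SL_2(\C)}$, it must factor through $Z \cong S_3$. The restriction $\psi|_{\C^\times}$ is then a continuous homomorphism from the connected group $\C^\times$ into the discrete group $S_3$, hence trivial; so $\psi$ is unipotent, and the whole parameter is determined by $\psi(j) \in S_3$, which must satisfy $\psi(j)^2 = \psi(-1) = 1$. Thus $\psi(j)$ is either the identity or an involution, i.e., a transposition.

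Now I need to match $G_2(\C)$-conjugacy of $\psi$ with $S_3$-conjugacy of $\psi(j)$. Any element of $G_2(\C)$ intertwining two such parameters must normalize $S$, and the normalizer $N_{G_2(\C)}(S)$ acts on $Z$ by inner automorphisms of $S_3$; since $\mathrm{Out}(S_3)$ is trivial, this action exhausts all automorphisms of $S_3$, so two parameters are $G_2(\C)$-conjugate precisely when $\psi(j)$ and $\psi'(j)$ are $S_3$-conjugate. As all transpositions in $S_3$ form a single conjugacy class, this yields exactly two classes of subregular parameters, one with $\psi(j) = 1$ and one with $\psi(j)$ a transposition.

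Finally, compute the component groups. For $\psi(j) = 1$, the centralizer $C_\psi$ equals $Z = S_3$; for $\psi(j)$ a transposition, $C_\psi = Z_{S_3}(\psi(j)) \cong \Z/2\Z$. Since $G_2$ has trivial center, $Z(G_2^\vee(\C))$ is trivial, and $C_\psi$ is finite in each case, so $C_\psi^\circ$ is trivial and $\mc{C}_\psi = C_\psi$, giving the asserted groups $S_3$ and $\Z/2\Z$. As noted above, the only nontrivial ingredient is the identification of $Z_{G_2(\C)}(S)$ with $S_3$; the remainder of the argument is bookkeeping.
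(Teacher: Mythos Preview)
Your proof is correct and follows essentially the same approach as the paper's: both identify the centralizer of the subregular $SL_2(\C)$ in $G_2(\C)$ as the finite group $S_3$, deduce that $\psi|_{W_\R}$ factors through it (hence is unipotent), and classify by the conjugacy class of $\psi(j)$. The only real differences are in presentation: the paper computes explicitly that the Lie algebra centralizer of the $\mf{sl}_2$-triple is trivial and then exhibits the $S_3$ concretely as the subgroup generated by $w_\beta$ and $\beta^\vee(\zeta_3)$, whereas you cite Collingwood--McGovern for these facts; conversely, your treatment of the conjugacy step via $\mathrm{Out}(S_3)=1$ is more careful than the paper's, which simply remarks that the order-two elements are already conjugate in $S_3$.
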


\begin{proof}
Since $\mc{O}_{sr}$ has dimension $10$, the centralizer of any of its representatives is $4$ dimensional. On the level of Lie algebras, $X_\alpha+X_{\alpha+3\beta}$ is centralized by the four independent nilpotent elements
\[X_{\alpha+\beta},\quad X_{\alpha+2\beta},\quad X_{2\alpha+3\beta},\quad X_\alpha+X_{\alpha+3\beta},\]
which therefore span the centralizer of $X_\alpha+X_{\alpha+3\beta}$. The other nilpotent element in the $\sl_2$-triple containing $X_\alpha+X_{\alpha+3\beta}$ is (up to scalar) $X_{-\alpha}+X_{-\alpha-3\beta}$, which is centralized by
\[X_{-\alpha-\beta},\quad X_{-\alpha-2\beta},\quad X_{-2\alpha-3\beta},\quad X_{-\alpha}+X_{-\alpha-3\beta}.\]
Thus the centralizer of the entire $\sl_2$-triple in $\mf{g}_2$ is trivial.

It follows that the centralizer of the image of the homomorphism $SL_2(\C)\to G_2(\C)$ corresponding to $\mc{O}_{sr}$ is discrete. Now it is a fact that the component group of the centralizer of $X_\alpha+X_{\alpha+3\beta}$ is $S_3$. It is easy to check that the Weyl group element $w_{\beta}$, along with $\beta^\vee(\zeta_3)$ with $\zeta_3$ a third root of unity, centralize this $SL_2(\C)$ and generate a group isomorphic to $S_3$ which is therefore the centralizer of this $SL_2(\C)$ in $G_2(\C)$.

Now let $\psi\in\Psi_{sr}(G_2(\C))$. Then $\psi|_{W_\R}$ must have image contained in this $S_3$ subgroup, which implies $\psi$ is unipotent and $\psi(j)=1$ or $\psi(j)$ is an order two element of this $S_3$ (all of which are conjugate, even in $S_3$ itself). In the former case, $C_\psi=S_3$, and in the latter case, $C_\psi=\{1,\psi(j)\}$.
\end{proof}

Finally, we examine $\Psi_r(G_2(\R))$. The analysis is similar to the above proposition.

\begin{proposition}
\label{propPsir}
There is only one parameter in $\Psi_r(G_2(\R))$, and it is trivial on $W_\R$. Its component group is trivial.
\end{proposition}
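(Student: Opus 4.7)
The plan is to follow the same general strategy as in Propositions \ref{propPsil} and \ref{propPsisr}: show that the image of $\psi|_{SL_2(\C)}$ has trivial centralizer in $G_2(\C)$, forcing $\psi|_{W_\R}$ to be trivial and $C_\psi$ to be a single point. The key input will be the well-known fact that the centralizer of a principal $SL_2$-subgroup of a simple adjoint group equals the center, combined with the fact that $G_2$ is both simply connected and adjoint, so that $Z(G_2(\C))=1$.

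First I would let $\psi\in\Psi_r(G_2(\R))$ and let $S_{\mr{prin}}\subset G_2(\C)$ denote the image of $\psi|_{SL_2(\C)}$, which by hypothesis is a principal (regular) $SL_2$-triple. The goal is to prove that $Z_{G_2(\C)}(S_{\mr{prin}})=\{1\}$. One way is the direct computation analogous to Proposition \ref{propPsisr}: the orbit $\mc{O}_r$ has dimension $12$, so the centralizer in $\mf{g}_2$ of a regular nilpotent $E=X_\alpha+X_\beta$ is two-dimensional, spanned by highest-weight vectors in the two irreducible summands of $\mf{g}_2$ under the adjoint action of $S_{\mr{prin}}$ (which has exponents $1$ and $5$, giving summands of dimensions $3$ and $11$). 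Intersecting with the centralizer of the opposite nilpotent $F$ kills all of this, so the Lie algebra of $Z_{G_2(\C)}(S_{\mr{prin}})$ is zero. This reduces the problem to ruling out finite components.

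Next I would argue that the (finite) centralizer $Z_{G_2(\C)}(S_{\mr{prin}})$ injects into the component group of $Z_{G_2(\C)}(E)$. Since the centralizer of a regular nilpotent element in a simply connected simple group is connected unipotent (by the standard structure theory of regular elements, as in Kostant), this component group is trivial; equivalently, the reductive part of $Z_{G_2(\C)}(E)$ is $Z(G_2(\C))$, which is trivial for $G_2$. Therefore $Z_{G_2(\C)}(S_{\mr{prin}})=\{1\}$.

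Once this is established, the proposition follows easily. Since $\psi|_{W_\R}$ must commute with $\psi|_{SL_2(\C)}$, its image lies in $Z_{G_2(\C)}(S_{\mr{prin}})\rtimes W_\R=\{1\}\rtimes W_\R$, so $\psi|_{W_\R}$ is just the canonical inclusion of $W_\R$ into $\L{(G_2(\R))}$, i.e., $\psi$ is trivial on $W_\R$. Up to $G_2(\C)$-conjugacy the embedding of $SL_2(\C)$ as a principal subgroup is unique, so there is exactly one parameter $\psi\in\Psi_r(G_2(\R))$. Finally, $C_\psi=Z_{G_2(\C)}(S_{\mr{prin}})=\{1\}$, so a fortiori $\mc{C}_\psi=C_\psi/C_\psi^\circ Z(G_2^\vee(\C))$ is trivial. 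The main obstacle is the centralizer computation in the second paragraph, but this is standard and the analogue already appeared, in a harder form, in the proof of Proposition \ref{propPsisr}.
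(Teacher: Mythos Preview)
Your proposal is correct and follows essentially the same approach as the paper: both compute that the Lie-algebra centralizer of the principal $\mathfrak{sl}_2$-triple is zero (via the dimension count and the observation that the centralizers of $E$ and $F$ have trivial intersection), then use triviality of the component group of the centralizer of a regular nilpotent in $G_2(\C)$ to conclude that $Z_{G_2(\C)}(S_{\mathrm{prin}})=\{1\}$, whence $\psi|_{W_\R}$ and $\mathcal{C}_\psi$ are trivial. The only cosmetic difference is that you phrase the $2$-dimensional centralizer of $E$ in terms of exponents and highest-weight vectors, whereas the paper names the explicit root vectors $X_{\alpha+2\beta}$ and $X_{2\alpha+3\beta}$.
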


\begin{proof}
The component group of the centralizer of $X_\alpha+X_\beta$ in $G_2(\C)$ is known to be trivial. The centralizer of this element in $\mf{g}_2$ is $2$ dimensional (because $\mc{O}_r$ has dimension 12) and contains (and is thus spanned by) $X_{\alpha+2\beta}$ and $X_{2\alpha+3\beta}$. Since $X_{-\alpha}+X_{-\beta}$ is centralized by $X_{-\alpha-2\beta}$ and $X_{-2\alpha-3\beta}$, this implies that the centralizer of the regular $SL_2(\C)$ in $G_2(\C)$ is discrete, hence trivial.

Thus if $\psi\in\Psi_r(G_2(\R))$, then $\psi|_{W_\R}$ is trivial, and so is the centralizer of the image of $\psi$. The proposition follows.
\end{proof}

\subsection{Determination of the packet $\Pi_{\psi_k}^{\mr{AJ}}$}
\label{subsecajpacket}
For an even integer $k\geq 2$, let $\psi_k\in\Psi_{l}(G_2(\R))$ be the parameter of Adams--Johnson type defined in the previous section. It gives rise to the character $\lambda_k=\frac{k-4}{2}(2\alpha+3\beta)$, where $2\alpha+3\beta$ is viewed as a long compact root for a compact torus $T_c$. We also let $\mf{q}$ and $L$ be as in the construction of $\psi_k$. Then $L\cong U(2)$.

For the remainder of this section, we let $w\in W$ be the rotation counterclockwise by $\pi/3$ in the root system, so $w(2\alpha+3\beta)=\alpha$ and $w\beta=\alpha+2\beta$. It represents the nontrivial double coset in
\[W_c\backslash W(G_2,T_c)/W_L.\]

By definition, the Adams--Johnson packet for $\psi_k$ is
\[\Pi_{\psi_k}^{\mr{AJ}}=\{A_{\mf{q}}(\lambda_k),A_{\mf{q}_w}(w\lambda_k)\}.\]
We determine these representations when $k\geq 4$. Write $\rho=\rho_{G_2}=3\alpha+5\beta$.

\begin{proposition}
\label{propajqds}
Let $k\geq 4$. Then the representation $A_{\mf{q}}(\lambda_k)$ is the discrete series representation with Harish-Chandra parameter $\frac{k-4}{2}(2\alpha+3\beta)+\rho$. In the terminology of Gan--Gross--Savin \cite{ggs}, this is the quaternionic discrete series of weight $k/2$.
\end{proposition}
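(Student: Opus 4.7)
The plan is to exploit the fact that the Levi $L$ corresponding to $\mf{q}$ is compact: since $\beta$ is a compact root, $L\cong U(2)$ sits inside $K_\infty\cong (SU(2)\times SU(2))/\mu_2$ as the subgroup generated by $T_c$ and the compact short-root $SU(2)$. In this situation, the classical theorem on cohomological induction from a compact Levi in the good range (Vogan--Zuckerman) asserts that $\mc{R}_{\mf{q}}^i(\pi_L)$ vanishes for $i\ne S_1$ and produces in degree $S_1=\tfrac{1}{2}(\dim K_\infty-\dim L)$ an irreducible discrete series representation whose Harish-Chandra parameter is $\lambda_k+\rho$. To apply this, I would verify the good range directly: with $\lambda_k=\tfrac{k-4}{2}(2\alpha+3\beta)$ and $\rho=3\alpha+5\beta$, one has
\[\lambda_k+\rho=(k-1)\alpha+\tfrac{3k-2}{2}\beta,\]
and a short case-by-case check shows $\langle\lambda_k+\rho,\gamma^\vee\rangle>0$ for every $\gamma\in\Delta^+$ whenever $k\ge 4$; for instance, $\langle\lambda_k+\rho,\alpha^\vee\rangle=\tfrac{k-2}{2}$ and $\langle\lambda_k+\rho,(2\alpha+3\beta)^\vee\rangle=k-1$. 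Thus $\lambda_k+\rho$ is regular dominant, and $A_{\mf{q}}(\lambda_k)$ is the discrete series with this Harish-Chandra parameter.

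The remaining step is to identify this discrete series with the quaternionic discrete series of weight $k/2$ from \cite{ggs}. I would do this by computing its minimal $K_\infty$-type via the Vogan--Zuckerman formula, which realizes the minimal $K_\infty$-type as the irreducible representation of $K_\infty$ of highest weight $\lambda_k$ plus twice the sum of the non-compact positive roots lying in $\mf{u}$. These non-compact roots are $\alpha,\alpha+\beta,\alpha+2\beta,\alpha+3\beta$, whose sum is $4\alpha+6\beta=2(2\alpha+3\beta)$, so the minimal $K_\infty$-type has highest weight
\[\lambda_k+2(2\alpha+3\beta)=\tfrac{k}{2}(2\alpha+3\beta).\]
Under the decomposition $K_\infty\cong(SU(2)_{\mr{long}}\times SU(2)_{\mr{short}})/\mu_2$, where the long factor corresponds to the long compact root $2\alpha+3\beta$, this is $\Sym^{k/2}\boxtimes\mathbf{1}$ (descended through the central quotient), which is precisely the minimal $K_\infty$-type of the quaternionic discrete series of weight $k/2$ in the sense of Gan--Gross--Savin.

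The routine parts are the positivity check for the good range and the sum-of-roots computation for the minimal $K_\infty$-type. The one point that needs care is matching conventions with \cite{ggs}: one must confirm that our long-compact $SU(2)$ factor is indeed the ``quaternionic'' factor in their setup, and that their weight parameter $n$ in $\pi_n^H$ corresponds to $k/2$ under this identification (so that their normalization agrees with ours, placing $\pi_{k/2}^H$ in the long-root Weyl chamber of Harish-Chandra parameters). Everything else is a standard application of the compact-Levi cohomological induction theory combined with combinatorics in the $G_2$ root system.
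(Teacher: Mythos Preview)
Your approach is correct and is essentially a streamlined version of what the paper does. The paper proves the identification by an explicit induction-in-stages argument: it invokes the spectral sequence of \cite[Theorem~11.77]{knvo} relating $\mc{R}_{\mf{b}}^{i+j}$ to $\mc{R}_{\mf{q}}^i\circ\mc{R}_{\mf{b}\cap\mf{l}}^j$, uses \cite[Theorem~11.178(a)]{knvo} to identify $\mc{R}_{\mf{b}}^2$ of the appropriate character with the desired discrete series, observes that the inner step $\mc{R}_{\mf{b}\cap\mf{l}}^1$ on the compact Levi $L\cong U(2)$ simply reproduces the character with restriction $\lambda_k$, and then collapses the spectral sequence in the good range to conclude $A_{\mf{q}}(\lambda_k)\cong\mc{R}_{\mf{b}}^2(\cdot)$. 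You instead invoke directly the well-known consequence of that argument---that for compact $L$ in the good range, $A_{\mf{q}}(\lambda)$ is the discrete series with Harish-Chandra parameter $\lambda+\rho$---and your regularity check of $\lambda_k+\rho$ is exactly the good-range verification the paper alludes to. The two routes are logically equivalent; the paper's version is more self-contained, yours is shorter if the reader grants the compact-Levi fact as standard.

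Your minimal $K_\infty$-type computation is a nice addition that the paper does not carry out (it simply asserts the match with \cite{ggs} from the Harish-Chandra parameter). One small slip: the Vogan--Zuckerman formula gives highest weight $\lambda_k+2\rho(\mf{u}\cap\mf{p})$, which is $\lambda_k$ plus the \emph{sum} (not twice the sum) of the noncompact roots in $\mf{u}$; your arithmetic is correct since that sum is indeed $4\alpha+6\beta=2(2\alpha+3\beta)$, but the wording should be fixed. Your caveat about matching the $SU(2)$-labelling and weight normalization with \cite{ggs} is well placed; in their conventions the quaternionic discrete series $\pi_n^H$ has minimal $K$-type $\Sym^{2n}\boxtimes\mathbf{1}$ on the long-root factor, so the highest weight $\tfrac{k}{2}(2\alpha+3\beta)$ corresponds to $n=k/2$ as claimed.
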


\begin{proof}
We will use the spectral sequence of \cite[Theorem 11.77]{knvo}. Let $\mf{b}$ be the standard Borel in $\mf{g}_2$ containing the complex Lie algebra $\mf{t}_c$ of $T_c$, and $\mf{n}$ its radical. Let $\mf{u}$ be the radical of $\mf{q}$ and $\mf{l}$ its Levi factor. Then in our case, this spectral sequence reads
\[\mc{R}^i(\mc{R}^j(Z\otimes\C_{-2\rho(\mf{n}\cap\mf{l})})\otimes\C_{-2\rho(\mf{u})})\Rightarrow\mc{R}^{i+j}(Z\otimes\C_{-2\rho(\mf{n})}),\]
for $(\mf{t}_c,T_c)$-modules $Z$, where the $\mc{R}$'s denote cohomological inductions, the $\rho$'s denote the obvious half sums of roots, and the modules $\C_\mu$ for weights $\mu$ are the obvious $1$ dimensional modules. (See also \cite[(11.73)]{knvo} for the discrepancy that gives rise to these half sums.)

Let $\lambda_k'$ be the character of $T_c$ given by
\[\lambda_k'=\frac{k-4}{2}(2\alpha+3\beta)+(6\alpha+10\beta).\]
We apply the spectral sequence above with $Z=\lambda_k'$. We note that
\[2\rho(\mf{n}\cap\mf{l})=\beta,\qquad 2\rho(\mf{u})=6\alpha+9\beta,\qquad 2\rho(\mf{n})=6\alpha+10\beta.\]

Now on the one hand, by the classification of discrete series via cohomological induction \cite[Theorem 11.178(a)]{knvo},
\[\mc{R}^2(\lambda_k'\otimes\C_{-2\rho(\mf{n})})=\mc{R}^2(\tfrac{k-4}{2}(2\alpha+3\beta))\]
is the discrete series of $G_2(\R)$ sought, with Harish-Chandra parameter $\frac{k-4}{2}(2\alpha+3\beta)+\rho$. On the other hand, by the same theorem,
\[\mc{R}^1(\lambda_k'\otimes\C_{-2\rho(\mf{n}\cap\mf{l})})=\mc{R}^1(\lambda_k'\otimes\C_{-\beta})\]
is the discrete series representation of $L$ with Harish-Chandra parameter $\lambda_k'-\frac{1}{2}\beta$; i.e., it is the character of $L\cong U(2)$ whose restriction to $T_c$ is $\frac{k-4}{2}+(6\alpha+9\beta)$. Thus
\[\mc{R}^1(\lambda_k'\otimes\C_{-2\rho(\mf{n}\cap\mf{l})})\otimes\C_{-2\rho(\mf{u})}\]
is the character of $L$ given by $\lambda_k=\frac{k-4}{2}(2\alpha+3\beta)$. Cohomologically inducing again gives
\[\mc{R}^1(\mc{R}^1(\lambda_k'\otimes\C_{-2\rho(\mf{n}\cap\mf{l})})\otimes\C_{-2\rho(\mf{u})})=A_\mf{q}(\lambda_k)\]
by definition.

Now since all representations considered here have infinitesimal character in the good range (this is where we use $k\geq 4$) these cohomological inductions are concentrated in one degree (see \cite[Theorem 0.50]{knvo}) and the spectral sequence collapses. Thus,
\[\mc{R}^1(\mc{R}^1(\lambda_k'\otimes\C_{-2\rho(\mf{n}\cap\mf{l})})\otimes\C_{-2\rho(\mf{u})})=\mc{R}^2(\lambda_k'\otimes\C_{-2\rho(\mf{n})}),\]
which, by the above computations, proves the proposition.
\end{proof}

Now we consider the other representation $A_{\mf{q}_w}(w\lambda_k)$ in the packet. We note that the $\theta$-stable Levi $L_w$ associated with $\mf{q}_w$ is a $U(1,1)$ because its complex Lie algebra $\mf{l}_w$ contains the root $w\beta=\alpha+2\beta$, which is noncompact. Also, we have $w\lambda_k=\frac{k-4}{2}\alpha$.

It is easy to see by our description of the parameter $\psi_k$ in Proposition \ref{propPsil} that the Langlands parameter $\phi_{\psi_k}$ is that of the Langlands quotient $\mc{L}_\alpha(\pi_k,1/10)$ of the induction of the discrete series $\pi_k$ of weight $k$ from the long root parabolic $P_\alpha$. Since the Adams--Johnson packet of $\psi_k$ should contain the $L$-packet of $\phi_{\psi_k}$, we should have that this Langlands quotient is the remaining member of our packet. However, to my knowledge, no direct proof of this is written in the literature. So we give a direct proof in this case for the sake of completeness.

\begin{proposition}
With the notation as above, we have
\[A_{\mf{q}_w}(w\lambda_k)\cong\mc{L}_\alpha(\pi_k,1/10)\]
if $k\geq 4$.
\end{proposition}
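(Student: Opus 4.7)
My approach is to mirror the proof of Proposition \ref{propajqds}: apply the Knapp--Vogan double-induction spectral sequence \cite[Theorem 11.77]{knvo} with the $\theta$-stable parabolic $\mf{q}_w$ and its compatible Borel $\mf{b}_w = w\mf{b}$ (nilradical $\mf{n}_w = w\mf{n}$) in place of $\mf{q}$ and $\mf{b}$. For a weight $Z$ of $\mf{t}_c$ it reads
\[\mc{R}^i\bigl(\mc{R}^j(Z\otimes\C_{-2\rho(\mf{n}_w\cap\mf{l}_w)})\otimes\C_{-2\rho(\mf{u}_w)}\bigr)\Rightarrow\mc{R}^{i+j}(Z\otimes\C_{-2\rho(\mf{n}_w)}).\]
Since $w\lambda_k=\tfrac{k-4}{2}\alpha$ is orthogonal to the coroot $(\alpha+2\beta)^\vee$, it is fixed by the Weyl group of $\mf{l}_w$ and lifts to the one-dimensional character $\pi_{L_w}$ of $L_w\cong U(1,1)$. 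I will take $Z$ so that the inner cohomological induction to $L_w$ (in the middle degree $S_{L_w}=0$) realizes $\pi_{L_w}$; the outer $\mc{R}^{S_w}=\mc{R}^2$ then produces $A_{\mf{q}_w}(w\lambda_k)$ on the left side of the spectral sequence.

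Assuming the spectral sequence collapses in the relevant total degree --- which I expect by a vanishing analysis on infinitesimal characters entirely analogous to that in the proof of Proposition \ref{propajqds}, using the regularity of $\lambda_k+\rho$ for $k\geq 4$ --- this yields the identification
\[A_{\mf{q}_w}(w\lambda_k) \;\cong\; \mc{R}^2_{\mf{g}_2,K_\infty,\mf{b}_w,T_c}(Z\otimes\C_{-2\rho(\mf{n}_w)}),\]
a standard cohomological induction from the Borel. By \cite[Theorem 11.225]{knvo} this is the unique irreducible Langlands subquotient of a standard representation parabolically induced from the real parabolic associated, via Cayley transform on the noncompact root of $\mf{l}_w$, with $\mf{q}_w$. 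The relevant noncompact root is $\alpha+2\beta$, which is orthogonal to $\alpha$ and generates the split direction of $\mf{a}_{P_\alpha}\subset\mf{m}_\alpha$. Thus Cayley transform produces a Cartan of $G_2(\R)$ whose split part is $\mf{a}_{P_\alpha}$, and the associated real parabolic is $P_\alpha$; accordingly $A_{\mf{q}_w}(w\lambda_k)$ is the Langlands quotient of $\iota_{P_\alpha(\R)}^{G_2(\R)}(\pi'\otimes|\det|^{s})$ for some irreducible $\pi'$ of $GL_2(\R)$ and some $s>0$.

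To pin down $(\pi',s)=(\pi_k,1/2)$ and conclude $A_{\mf{q}_w}(w\lambda_k)\cong \mc{L}_\alpha(\pi_k,1/10)$, I match Langlands parameters on both sides. By construction via the embedding $\xi_L$ in \eqref{eqnLemb1}--\eqref{eqnLemb2}, the Langlands parameter of $A_{\mf{q}_w}(w\lambda_k)$ is $\phi_{\psi_k}$, and as noted in the paragraph preceding the proposition, $\phi_{\psi_k}$ is exactly the Langlands parameter of $\mc{L}_\alpha(\pi_k,1/10)$, using that $\pi_k$ is the discrete series of weight $k$ of $PGL_2(\R)$ described in Proposition \ref{propPsil}. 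Since both representations are irreducible with the same Langlands parameter, they are isomorphic.

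The main obstacle is the detailed bookkeeping of the inner cohomological induction from the singular weight $w\lambda_k$ into $L_w=U(1,1)$ --- in particular, verifying that $\pi_{L_w}$ is produced in degree $0$ (and as a direct summand, not merely a subquotient) and that the higher degree terms in the spectral sequence vanish in the relevant total degree. A secondary delicate point is making rigorous the Cayley-transform identification of $P_0=P_\alpha$ together with the induced data on $M_\alpha$; as a backup, one could avoid this by instead computing the minimal $K$-type of $A_{\mf{q}_w}(w\lambda_k)$ via the Vogan formula $w\lambda_k + 2\rho(\mf{u}_w\cap\mf{p})$ and comparing it with the minimal $K$-type of $\mc{L}_\alpha(\pi_k,1/10)$ obtained by Frobenius reciprocity from $P_\alpha(\R)$, which is a bounded computation in $\mf{t}_c^\vee$.
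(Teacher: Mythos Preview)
Your approach diverges from the paper's at the key structural step, and the divergence introduces a genuine gap.

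The analogy with Proposition~\ref{propajqds} breaks down precisely because $L_w\cong U(1,1)$ is non-compact. In Proposition~\ref{propajqds}, $L\cong U(2)$ is compact, so its one-dimensional character is a discrete series and hence an $\mc{R}^1$ from the $\theta$-stable Borel in the good range; the spectral sequence then collapses cleanly. But the character $\pi_{L_w}$ of $U(1,1)$ restricts to the trivial representation of the derived group $SU(1,1)$, which is a non-tempered Langlands quotient of a principal series, \emph{not} a cohomological induction from the $\theta$-stable Borel in the middle degree. Concretely, for $SU(1,1)$ with noncompact positive root $\gamma$ and $S'=0$, the modules $\mc{R}^0(\C_\nu)$ in (or near) the good range are holomorphic discrete series or limits thereof, with minimal $K$-type $\nu+\gamma\neq 0$; the trivial representation does not arise this way. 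So there is no weight $Z$ for which the inner $\mc{R}^0$ produces $\pi_{L_w}$, and the identification $A_{\mf{q}_w}(w\lambda_k)\cong\mc{R}^2_{\mf{b}_w}(Z\otimes\C_{-2\rho(\mf{n}_w)})$ you need as input to \cite[Theorem~11.225]{knvo} fails. You flag this as ``bookkeeping,'' but it is the substantive obstruction.

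There is also a logical gap in your final step. The claim that ``by construction via the embedding $\xi_L$, the Langlands parameter of $A_{\mf{q}_w}(w\lambda_k)$ is $\phi_{\psi_k}$'' is precisely the content of the proposition, not a consequence of the Adams--Johnson construction. Members of an Arthur packet do not all share the Langlands parameter $\phi_\psi$; witness the other member $A_{\mf{q}}(\lambda_k)$, which is discrete series with a tempered parameter. The paper says exactly this in the paragraph preceding the proposition: the identification is expected but not in the literature, hence the direct proof. (If your Cayley-transform argument had succeeded in producing $A_{\mf{q}_w}(w\lambda_k)\cong\mc{L}_\alpha(\pi',s')$ for \emph{some} $(\pi',s')$, then matching infinitesimal characters---not Langlands parameters---would pin down $(\pi',s')=(\pi_k,1/10)$; but that reduction is moot given the earlier gap.)

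The paper circumvents both issues by using a different and sharper tool: Vogan's transfer theorem \cite[Theorem~6.6.15]{voganbook}, which equates the composite ``real-then-$\theta$-stable'' induction $\mc{R}^{S_w}\bigl(\Ind_{B_w}^{L_w}(\mu\otimes\nu)\bigr)$ with the composite ``$\theta$-stable-then-real'' induction $\Ind_P^{G_2(\R)}\bigl(\mc{R}^1(\mu'\otimes\nu)\bigr)$ for a suitable real parabolic $P$. The point is to work not with the compact torus $T_c$ but with a \emph{non-compact} $\theta$-stable Cartan $H=T_0A$ obtained by Cayley transform through the noncompact root $\alpha+2\beta$; then $\pi_{L_w}$ sits as the unique irreducible subrepresentation of a principal series $\Ind_{B_w}^{L_w}(\mu\otimes 1)$ of $L_w$, and Vogan's theorem identifies the full standard module with an explicit parabolic induction from the long-root real parabolic $P_\alpha$. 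One then reads off $\mc{L}_\alpha(\pi_k,1/10)$ directly. Your minimal-$K$-type backup could in principle give an alternative route, but you would need to carry it out in full, including the Frobenius reciprocity computation on the $P_\alpha$ side.
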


\begin{proof}
The key to this proposition is a theorem in Vogan's book, \cite[Theorem 6.6.15]{voganbook}, which links the composition of ordinary parabolic induction with cohomological induction with the composition in the opposite order. Instead of recalling the theorem in general, we explain what it means in our special case. It requires three types of data as input: We need what Vogan calls $\theta$-stable data, character data, and cuspidal data, which are defined in general in Definitions 6.5.1, 6.6.1, and 6.6.11, respectively, in Vogan's book. Moreover, there is a bijection between these first two kind of data (\cite[Proposition 6.6.2]{voganbook}) and a surjective map from pieces of character data to pieces of cuspidal data (\cite[Proposition 6.6.12]{voganbook}). Pieces of $\theta$-stable data are used to construct cohomological inductions of parabolically induced representations and in our case will be used to realize the representation $A_{\mf{q}_w}(w\lambda_k)$. On the other hand, cuspidal data are used to construct parabolic inductions of discrete series representations and will be used to realize $\mc{L}_\alpha(\pi_k,1/10)$. Then \cite[Theorem 6.6.15]{voganbook} will give that these two constructions coincide. We note that this theorem is stated in terms of Langlands subrepresentations instead of Langlands quotients, so we have to make a few minor adjustments.

To build the $\theta$-stable data we need, we first construct a certain $\theta$-stable maximal torus of $G_2(\R)$. Let $T_0$ be the center of $L_w$. Let $A$ be the $\theta$-stable maximal split torus in the derived group of $L_w$. Then $H=T_0A$ is a maximal torus in $G_2(\R)$. It is neither split nor compact. Let $\mu:T_0\to\C^\times$ be given by
\[\mu=w\lambda_k|_{T_0}=\frac{k-4}{2}\alpha|_{T_0}.\]
Fix a minimal parabolic subgroup $B_w$ in $L_w$ containing $H$, and let $\nu:A\to\C^\times$ be the character given by
\[\nu=\delta_{B_w}^{-1/2}|_{A}.\]
Then the quadruple $(\mf{q}_w,H,\mu,\nu)$ is a piece of $\theta$-stable data in the sense of \cite{voganbook}. We write $\mu\otimes\nu$ for the character of $H$ given by $\mu$ on $T_0$ and by $\nu$ on $A$, and we construct the representation (called a \textit{standard module} for our data) given by
\begin{equation}
\label{eqstdmod1}
\mc{R}^2(\Ind_{B_w}^{L_w}((\mu\otimes\nu)\otimes\delta_{B_w}^{1/2})).
\end{equation}
Of course, in the parabolic induction, the characters $\nu$ and $\delta_{B_w}^{1/2}$ cancel, and the parabolic induction thus becomes
\[\Ind_{B_w}^{L_w}(\mu\otimes 1).\]
By definition of $\mu$, this contains the one dimensional representation $\pi_w$ of $L_w$ given by $w\lambda_k$ as its unique subrepresentation. Since cohomological induction is exact in the good range (again, we use $k\geq 4$ here) we see that $\mc{R}^2(\pi_w)$ is a subrepresentation of \eqref{eqstdmod1}.

Now we construct a piece of character data from $(\mf{q}_w,H,\mu,\nu)$ as in \cite{voganbook}. For us this will be a pair $(H,\Gamma)$ where $\Gamma:H\to\C^\times$ is a character satisfying certain properties. (Actually, Vogan's definition contains also the data of a character of the complex Lie algebra $\mf{h}$ of $H$, but that character is determined from the differential of $\Gamma$.) We set $\Gamma|_A=\nu$, and we let $\Gamma|_{T_0}$ be the product of $\mu$ with the restriction to $T_0$ of the character $\det(\mf{g}_2^{\theta=-1}\cap\mf{u}_w)$, where $\mf{u}_w$ is the radical of $\mf{q}_w$. This latter character is equal to the sum of noncompact roots in $\mf{u}_w$, and is therefore given by
\[\alpha+(\alpha+\beta)-(\alpha-3\beta)=\alpha-2\beta=2\alpha-(\alpha+2\beta).\]
Its restriction to $T_0$ is therefore given by $2\alpha$, and thus
\[\Gamma|_{T_0}=\tfrac{k}{2}\alpha|_{T_0}.\]

From $(H,\Gamma)$ we construct another piece of data, which Vogan calls cuspidal data. Consider the centralizer of $A$ in $G_2(\R)$; this is a Levi subgroup of $G_2(\R)$, and we write $MA$ for its Langlands decomposition. The torus $A$ was a maximal split torus in a short root $SL_2(\R)$, and it follows that $M$ is a long root $SL_2(\R)$ in $G_2(\R)$. Therefore there is a long root parabolic $P=MAN$ in $G_2(\R)$.

A piece of cuspidal data constructed from $(H,\Gamma)$ will consist of the Levi $MA$, along with a character of $A$, which is given by $\Gamma|_A=\nu$, and also a discrete series representation $\pi_0$ of $MA$. This latter representation is given as the cohomological induction of $\mu'\otimes\nu$ from $H$ to $MA$, where $(\mf{q}',H,\mu',\nu)$ is the $\theta$-stable data for $MA$ obtained from the restriction of the character data $(H,\Gamma)$ to $MA$. In this data, the $\theta$-stable parabolic $\mf{q}'$ is the intersection of $\mf{q}_w$ with the complex Lie algebra $\mf{m}\oplus\mf{a}$ of $MA$. It contains the noncompact root $\alpha$ in its radical. The character $\mu'$ is the restriction of $\Gamma$ to $T_0$ multiplied by the inverse of the sum of the noncompact roots in the radical of $\mf{q}'$. Thus it is equal to $\frac{k-2}{2}\alpha|_{T_0}$.

Then \cite[Theorem 6.6.15]{voganbook} asserts that \eqref{eqstdmod1} is isomorphic to
\begin{equation}
\label{eqstdmod2}
\Ind_{P}^{G_2(\R)}(\mc{R}^1(\mu'\otimes\nu)\otimes\delta_P^{1/2}).
\end{equation}
The cohomological induction in this expression is, by \cite[Theorem 11.178(a)]{knvo}, the twist of the discrete series representation of $MA$ of weight $k$ by the character $\det^{-1/2}$. Since $P$ is a long root parabolic, $\det^{-1/2}=\delta_P^{-1/10}|_{MA}$, and we get that \eqref{eqstdmod2}, and also hence \eqref{eqstdmod1}, are isomorphic to the normalized induction
\[\iota_{M_\alpha(\R)}^{G_2(\R)}(\pi_k,-1/10).\]
Since $\pi_k$ is self dual, the unique irreducible subrepresentation of this is, by dualizing, isomorphic to $\mc{L}_\alpha(\pi_k,1/10)$, and this is isomorphic to $\mc{R}^2(\pi_w)$ by above. This is what we wanted to prove.
\end{proof}

We summarize the above results as a theorem.

\begin{theorem}
\label{thmAJpacket}
The Adams--Johnson packet $\Pi_{\psi_k}^{\mr{AJ}}$ consists of the quaternionic discrete series of weight $k/2$, of Harish-Chandra parameter $\frac{k-4}{2}(2\alpha+3\beta)+\rho$, and the Langlands quotient $\mc{L}_\alpha(\pi_k,1/10)$ of the discrete series of weight $k$ from the long root parabolic $P_\alpha(\R)$ of $G_2(\R)$.
\end{theorem}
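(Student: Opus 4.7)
My plan is to prove the theorem by identifying each of the two members of $\Pi_{\psi_k}^{\mr{AJ}}=\{A_{\mf{q}}(\lambda_k),A_{\mf{q}_w}(w\lambda_k)\}$ separately, as the definition in Section \ref{subsecadjo} already tells us the packet consists of exactly these two representations (since $W_c\backslash W/W_L$ has two elements, represented by $1$ and $w$).

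For $A_{\mf{q}}(\lambda_k)$, the key is that $\mf{q}$ has compact Levi $L\cong U(2)$, so cohomological induction from $\mf{q}$ should produce a discrete series. Concretely, I would compare $A_{\mf{q}}(\lambda_k)$ with the discrete series obtained by cohomologically inducing directly from $\mf{t}_c$ to $\mf{g}_2$, using the iterated cohomological induction spectral sequence of \cite[Theorem 11.77]{knvo}. Choosing the auxiliary weight $\lambda_k'=\lambda_k+2\rho(\mf{n})$ and applying the spectral sequence in two stages via $\mf{b}\subset\mf{b}+(\mf{l}\cap\overline{\mf{n}})\subset\mf{q}$, the inner induction produces the discrete series character of $U(2)$ with the right Harish-Chandra parameter, and then the outer induction lands on $A_{\mf{q}}(\lambda_k)$. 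Simultaneously, the direct cohomological induction identifies the target as the discrete series of $G_2(\R)$ with Harish-Chandra parameter $\tfrac{k-4}{2}(2\alpha+3\beta)+\rho$, via \cite[Theorem 11.178]{knvo}. Everything sits in the good range because $k\geq 4$, so the cohomological inductions are concentrated in one degree and the spectral sequence collapses, giving the identification.

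The harder case is $A_{\mf{q}_w}(w\lambda_k)$, because now $L_w\cong U(1,1)$ is noncompact (the root $w\beta=\alpha+2\beta$ is noncompact) and there is no tautological discrete series of $L_w$ to start with. My plan here is to exploit the reciprocity between cohomological induction and ordinary parabolic induction, as formalized in Vogan's book \cite[Theorem 6.6.15]{voganbook}. The strategy is: build a $\theta$-stable datum $(\mf{q}_w,H,\mu,\nu)$, where $H=T_0A$ is the Cartan with $A$ the $\theta$-stable split torus in the derived group of $L_w$, and $\mu,\nu$ are read off from $w\lambda_k$ and the half-sum of the roots in the radical of an appropriate Borel of $L_w$ containing $H$. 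Translate this via \cite[Propositions 6.6.2, 6.6.12]{voganbook} into the corresponding character datum $(H,\Gamma)$ and cuspidal datum $(MA,\pi_0,\nu)$, where $MA$ is the Levi of a long root parabolic $P=MAN$ of $G_2(\R)$ and $\pi_0$ is a discrete series of $MA$. Computing $\Gamma|_{T_0}$ requires the twist by the sum of noncompact roots in the radical $\mf{u}_w$, which I expect to come out to $\tfrac{k}{2}\alpha|_{T_0}$, so that $\pi_0$ becomes the discrete series of $M\cong SL_2(\R)$ of weight $k$.

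With all the data matched up, Vogan's theorem identifies the standard module $\mc{R}^2(\mathrm{Ind}_{B_w}^{L_w}(\mu\otimes\nu\otimes\delta_{B_w}^{1/2}))$ with the parabolically induced standard module $\mathrm{Ind}_P^{G_2(\R)}(\pi_0\otimes\delta_P^{1/2})$, and the unique Langlands subrepresentation on the cohomologically-induced side is $A_{\mf{q}_w}(w\lambda_k)$ (by exactness of cohomological induction in the good range applied to the one-dimensional subrepresentation of $\mathrm{Ind}_{B_w}^{L_w}$). On the parabolic side, $\delta_P^{1/2}|_{MA}$ twisted against $\pi_k$ gives $\iota_{M_\alpha(\R)}^{G_2(\R)}(\pi_k,-1/10)$ once one verifies $\det^{-1/2}=\delta_P^{-1/10}|_{MA}$ for the long root parabolic; dualizing (using self-duality of $\pi_k$) transports the unique subrepresentation to the Langlands quotient $\mc{L}_\alpha(\pi_k,1/10)$. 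The main obstacle, in my view, is the bookkeeping: assembling the three flavors of Vogan data consistently and in particular pinning down $\Gamma|_{T_0}$ correctly, since a miscount of noncompact roots would change the weight of the resulting discrete series and scramble the identification.
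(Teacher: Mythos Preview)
Your proposal is correct and follows essentially the same approach as the paper: the identification of $A_{\mf{q}}(\lambda_k)$ with the quaternionic discrete series via the iterated-induction spectral sequence of \cite[Theorem~11.77]{knvo} and \cite[Theorem~11.178]{knvo}, and the identification of $A_{\mf{q}_w}(w\lambda_k)$ with $\mc{L}_\alpha(\pi_k,1/10)$ via Vogan's $\theta$-stable/character/cuspidal data and \cite[Theorem~6.6.15]{voganbook}, are exactly the two propositions the paper proves and then summarizes as this theorem. Your anticipated value $\Gamma|_{T_0}=\tfrac{k}{2}\alpha|_{T_0}$ and the final dualization step using self-duality of $\pi_k$ also match the paper's computations.
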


\subsection{Cohomological parameters}
\label{subseccohparams}
We would like to describe all Arthur parameters for $G_2(\R)$ whose associated Arthur packets contain a representation with cohomology. This will not be so difficult from what we have set up. However, we need to specify what we mean by ``Arthur packets'' for parameters which are not of Adams--Johnson type.

In \cite{ABV}, Adams, Barbasch and Vogan define Arthur packets very generally for parameters for real groups, and prove that their packets satisfy the conclusion of Arthur's conjecture \cite[Conjecture 1.3.3]{artconj}. These packets are hard to compute in general, but in the case of unipotent parameters, they can be shown to give the unipotent representations constructed by the methods of Barbasch--Vogan \cite{BV}; see \cite[Corollary 27.13]{ABV}. They are also known to coincide with the packets constructed by Adams--Johnson for parameters of Adams--Johnson type; see \cite{aran}. We will write $\Pi_\psi=\Pi_\psi^{\mr{ABV}}$ for the Arthur packet for a real Arthur parameter $\psi$ as constructed by Adams--Barbasch--Vogan.

Let $\psi$ be an Arthur parameter for $G_2(\R)$ which is nontrivial on $SL_2(\C)$. By the results of Section \ref{secparamsG2}, any such parameter is either unipotent or of Adams--Johnson type, and therefore we can compute the representations in the packets $\Pi_\psi$ via the methods of Adams--Johnson or those of Barbasch--Vogan. If, on the other hand, $\psi$ is trivial on $SL_2(\C)$, then $\psi|_{W_\R}=\phi_\psi$. Therefore $\Pi_\psi$ is just the $L$-packet attached to the tempered Langlands parameter $\psi|_{W_\R}$.

\begin{proposition}
\label{propcohparamsG2}
Let $\psi$ be an Arthur parameter for $G_2(\R)$. Assume $\Pi_\psi$ contains a representation with cohomology. Then exactly one of the following holds.
\begin{itemize}
\item We have $\psi\in\Psi_0(G_2(\R))$. In this case $\psi|_{W_\R}$ is the Langlands parameter for a discrete series representation, and $\Pi_\psi$ is the corresponding discrete series $L$-packet.
\item We have $\psi\in\Psi_l(G_2(\R))$. In this case $\psi=\psi_k$  for some even $k\geq 4$ in the notation of Proposition \ref{propPsil}, and the representations in $\Pi_\psi$ are both cohomological for the irreducible representation of $G_2$ of highest weight $\frac{k-4}{2}(2\alpha+3\beta)$; moreover, the $L$-packet element has cohomology exactly in degrees $3$ and $5$, and the other has cohomology exactly in degree $4$.
\item We have $\psi\in\Psi_s(G_2(\R))$ and $\psi$ is obtained in the same way as $\psi_k$ with $k\geq 4$ as in Proposition \ref{propPsil}, except that $SL_{2,\beta}(\C)$ and $SL_{2,2\alpha+3\beta}(\C)$ are switched in the construction. There are two representations in $\Pi_\psi$ and they are both cohomological for the irreducible representation of $G_2$ of highest weight $\frac{k-4}{2}(\alpha+2\beta)$. Moreover, the $L$-packet element has cohomology exactly in degrees $3$ and $5$, and the other has cohomology exactly in degree $4$.
\item We have $\psi\in\Psi_r(G_2(\R))$. Then $\Pi_\psi$ contains only the trivial representation of $G_2(\R)$.
\end{itemize}
\end{proposition}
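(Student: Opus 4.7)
The strategy is to go through each of the five types $\Psi_0,\Psi_l,\Psi_s,\Psi_{sr},\Psi_r$ classified in Section \ref{secparamsG2} and determine when $\Pi_\psi$ can contain a cohomological representation. The key general principle is that a representation of $G_2(\R)$ is cohomological with coefficients in the irreducible finite-dimensional representation of highest weight $\lambda$ only if its infinitesimal character equals $\lambda+\rho$, which must in particular be regular and integral. Since this infinitesimal character equals $d\phi_\psi|_{\C^\times\to T^\vee(\C)}$, which in turn is determined by $\psi|_{SL_2(\C)}$ (through the image of $\mr{diag}(1/2,-1/2)$) together with $\psi|_{W_\R}$, this constraint gives a strong restriction.

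First I would dispose of the easier cases. If $\psi\in\Psi_0$ then $\Pi_\psi$ is a tempered $L$-packet, and a tempered representation is cohomological iff it has regular infinitesimal character, iff it lies in the discrete series; this is the first bullet. If $\psi\in\Psi_r$, Proposition \ref{propPsir} gives $\psi|_{W_\R}$ trivial with $\psi|_{SL_2(\C)}$ principal, so the infinitesimal character is $\rho$; the Adams--Barbasch--Vogan packet (which coincides with the Barbasch--Vogan unipotent packet) is known to consist of the trivial representation alone, which is cohomological for trivial coefficients. If $\psi\in\Psi_{sr}$, both parameters are unipotent by Proposition \ref{propPsisr}, so the infinitesimal character is half the semisimple element of the subregular $\mf{sl}_2$-triple, lying on a wall of the Weyl chamber of $G_2$ (this is visible from the weighted Dynkin diagram of $\mc{O}_{sr}$, which has a zero node); hence no packet element can be cohomological.

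The main work is the case $\psi\in\Psi_l$, with $\Psi_s$ handled symmetrically by swapping long and short roots to produce the third bullet. For $\psi\in\Psi_l$, Proposition \ref{propPsil} leaves two subcases. If $\psi$ is unipotent, its infinitesimal character again comes only from $\psi|_{SL_2(\C)}$, lies on a wall, and rules out cohomology. Otherwise $\psi=\psi_k$ for some even $k\geq 2$, and Theorem \ref{thmAJpacket} describes $\Pi_{\psi_k}^{\mr{AJ}}$ explicitly; both members have infinitesimal character $\lambda_k+\rho$ with $\lambda_k=\tfrac{k-4}{2}(2\alpha+3\beta)$, which is dominant integral exactly when $k\geq 4$. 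The quaternionic discrete series then contributes only in the middle degree $\tfrac{1}{2}(\dim G_2(\R)-\dim K_\infty)=4$. For the Langlands quotient $\mc{L}_\alpha(\pi_k,1/10)\cong A_{\mf{q}_w}(w\lambda_k)$ I would apply Vogan--Zuckerman:
\[
H^p(\mf{g}_2,K_\infty;A_{\mf{q}_w}(w\lambda_k)\otimes E_{\lambda_k})\cong H^{p-R_w}(\mf{l}_w,L_w\cap K_\infty;\C),
\]
where $R_w=\dim(\mf{u}_w\cap\mf{p})$ is the number of noncompact roots in $\mf{u}_w$. Using that $w$ acts on roots as rotation by $-\pi/3$, a direct enumeration yields $\mf{u}_w=\{\alpha,\,\alpha+\beta,\,2\alpha+3\beta,\,-\beta,\,-(\alpha+3\beta)\}$, three of which are noncompact, so $R_w=3$. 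Since $L_w\cong U(1,1)$ has compact dual $U(2)/(U(1)\times U(1))\cong S^2$, the right-hand side is one-dimensional in degrees $0$ and $2$, producing cohomology of $\mc{L}_\alpha(\pi_k,1/10)$ exactly in degrees $3$ and $5$. The identification of $\mc{L}_\alpha(\pi_k,1/10)$ as the $L$-packet element inside $\Pi_{\psi_k}^{\mr{AJ}}$ is immediate from the fact, established in the proof of Theorem \ref{thmAJpacket}, that $\phi_{\psi_k}$ is the Langlands parameter of this Langlands quotient.

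The main obstacle is the cohomology-degree computation for the Langlands quotient: it requires knowing that Adams--Johnson's cohomologically induced realization is genuinely $A_{\mf{q}_w}(w\lambda_k)$ for the specific $\mf{q}_w$, so that Vogan--Zuckerman applies directly, and then a careful count of noncompact roots in the $w$-twisted radical. The remaining cases (tempered packets, the regular and subregular unipotent packets, and the smaller nilpotent orbit packets without Adams--Johnson structure) are settled by the infinitesimal-character argument, which is insensitive to the precise packet construction.
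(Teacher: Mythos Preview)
Your proposal is correct and follows essentially the same case-by-case structure as the paper's proof. The one noteworthy difference is in how you exclude the unipotent parameters in $\Psi_l$, $\Psi_s$, and $\Psi_{sr}$: the paper simply cites Vogan's explicit classification of unipotent representations of $G_2(\R)$ (Theorems 18.3--18.5 of \cite{voganG2}) to observe that all such representations have singular infinitesimal character, whereas you argue directly that the infinitesimal character $h/2$ is singular by noting that the weighted Dynkin diagram of each of these orbits has a zero label. Your argument is more self-contained and avoids the external reference; the paper's is shorter to write down. For the cohomology degrees in the $\Psi_l$ case, the paper just points to Theorem~\ref{thmAJpacket} together with \cite[Theorem 5.5]{VZ}, while you carry out the Vogan--Zuckerman computation explicitly (your count $R_w=3$ is correct, though note that $w$ is rotation by $+\pi/3$, not $-\pi/3$; your list of roots in $\mf{u}_w$ is right regardless). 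For $k=2$ you should say explicitly that $\lambda_2+\rho=\alpha+2\beta$ is singular rather than merely that $\lambda_2$ fails to be dominant, but this is a minor phrasing issue.
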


\begin{proof}
As in Section \ref{secparamsG2}, we classify the Arthur parameters according to their restriction to $SL_2(\C)$. If $\Psi|_{SL_2(\C)}$ is trivial, then $\phi_\psi=\psi|_{W_\R}$, which is tempered by definition, and moreover $\Pi_\psi=\Pi_{\phi_\psi}$. Being cohomological and tempered, $\phi_\psi$ must correspond to a discrete series $L$-packet, which proves the proposition in the case that $\psi\in\Psi_0(G_2(\R))$.

Next, let $\psi\in\Psi_l(G_2(\R))$ be cohomological. We note that $\psi$ cannot be unipotent because the representations corresponding to unipotent parameters were classified by Vogan in \cite[Theorem 18.3]{voganG2}, and they all have irregular infinitesimal character. (Note that Vogan's list contains one extra unipotent representation because he is working with the double cover of $G_2(\R)$ and one of the representations he obtains does not factor through the projection to $G_2(\R)$.)

Thus $\psi=\psi_k$ for some $k\geq 2$ even, as in Proposition \ref{propPsil}. But we cannot have $k=2$ because in this case the Adams--Johnson packets for $\psi_2$ contain only representations with irregular infinitesimal character; they are $A_{\mf{q}}(\lambda)$'s with $\lambda=-(2\alpha+3\beta)$ and therefore have infinitesimal character $\rho-(2\alpha+3\beta)=\alpha+2\beta$.

Thus $\psi=\psi_k$ for $k\geq 4$, and it follows from Theorem \ref{thmAJpacket} that the $L$-packet element of $\Pi_{\psi_k}$ is cohomological in degrees $3$ and $5$, while the other element is cohomological in degree $4$. Alternatively, one can see this directly from the presentation of these representations as $A_{\mf{q}}(\lambda)$'s using \cite[Theorem 5.5]{VZ}. This proves the proposition in case $\psi\in\psi_l(G_2(\R))$.

The case of $\psi\in\Psi_s(G_2(\R))$ is completely analogous; one uses instead \cite[Theorem 18.4]{voganG2} to handle the unipotent representations.

 Next we note that $\psi$ cannot be in $\Psi_{sr}(G_2(\R))$, for then by Proposition \ref{propPsisr}, $\psi$ is unipotent and the unipotent representations for these parameters were classified in \cite[Theorem 18.5]{voganG2}; they all have irregular infinitesimal character.

Finally, by Proposition \ref{propPsir}, there is only one parameter $\psi$ in $\Psi_{r}(G_2(\R))$ and $\psi|_{W_\R}$ is trivial. Moreover, $\Pi_\psi=\Pi_{\phi_\psi}$ because the component groups are trivial. We have that the restriction of $\psi$ to the maximal torus of $SL_2(\C)$ is $(\alpha+2\beta)^\vee+(2\alpha+3\beta)^\vee=3\beta^\vee+5\alpha^\vee$, and therefore $\phi_\psi$ is the parameter corresponding to the Langlands quotient from the Borel of the character $\rho$; that is, it is the trivial representation. Since the trivial representation is cohomological in degrees $0$ and $8$, we are done.
\end{proof}

\subsection{Occurrence of $\mc{L}_\alpha(\pi_F,1/10)$ in the cuspidal spectrum}
\label{seccuspmult}
We begin by making the following conjecture.

\begin{conjecture}
\label{conjmult}
Let $F$ be a cuspidal holomorphic eigenform of weight $k\geq 4$ and trivial nebentypus, $\pi_F$ its associated automorphic representation, and $\Pi_{F,f}=\mc{L}_\alpha(\pi_F,1/10)_f$ the finite part Langlands quotient of $\pi_F\otimes\vert\det\vert^{1/2}$ from the long root parabolic in $G_2$. Moreover, let $v$ be a place at which $\pi_F$ is unramified, and let $\Pi_{F,f}^v$ be the component of $\Pi_{F,f}$ away from $v$. Assume $L(1/2,\pi_F,\Sym^3)=0$. Then the $\Pi_{F,f}^v$-isotypic component of the cuspidal spectrum is given by
\[L_{\cusp}^2(G_2(\Q)\backslash G_2(\A))[\Pi_{F,f}^v]=\Pi_{F,f}\otimes\Pi_\infty,\]
where:
\begin{enumerate}[label=(\alph*)]
\item In the case that $\epsilon(1/2,\pi_F,\Sym^3)=1$, the representation $\Pi_\infty$ is the archimedean component $\mc{L}_\alpha(\pi_F,1/10)_\infty$ of the same Langlands quotient, or,
\item In the case that $\epsilon(1/2,\pi_F,\Sym^3)=-1$, the representation $\Pi_\infty$ is the (quaternionic) discrete series representation of $G_2(\R)$ with Harish-Chandra parameter $\frac{k-4}{2}(2\alpha+3\beta)+\rho$.
\end{enumerate}
\end{conjecture}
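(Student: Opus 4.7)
The plan is to deduce Conjecture \ref{conjmult} from Arthur's conjectures applied to $G_2$, using the construction of the global Arthur parameter $\psi_F$ attached to $F$. Following Gan--Gurevich, we take $\psi_F$ to be the parameter whose restriction to $L_\Q$ factors through the Langlands parameter of $\pi_F$ via the embedding $SL_2 \hookrightarrow M_\alpha^\vee \subset G_2^\vee$ coming from the long root Levi, and whose $SL_2(\C)$-factor is mapped in via the dual $SL_2 \hookrightarrow G_2^\vee$ corresponding to the short root. Concretely, at the archimedean place $\psi_{F,\infty}$ is precisely the Adams--Johnson type parameter $\psi_k$ of Proposition \ref{propPsil}, so its associated local packet can be computed explicitly via Theorem \ref{thmAJpacket}.

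First I would examine the local packets. At any finite place $v$, the results of Gan--Gurevich identify the local Arthur packet of $\psi_{F,v}$ as a singleton (when $\pi_{F,v}$ is unramified) or as a doubleton containing $\mc{L}_\alpha(\pi_F,1/10)_v$ and a second element differing by a twist coming from the nontrivial character of $\mc{C}_{\psi_{F,v}} \cong \Z/2\Z$. At the archimedean place, Theorem \ref{thmAJpacket} gives that $\Pi_{\psi_{F,\infty}} = \{\Pi_\infty^+,\Pi_\infty^-\}$, where $\Pi_\infty^+ = \mc{L}_\alpha(\pi_F,1/10)_\infty$ (which is non-tempered and has $(\mf g_2, K_\infty)$-cohomology in degrees $3$ and $5$ for the coefficient system $E_{\lambda_0}$ by \cite[Theorem 5.5]{VZ}) and $\Pi_\infty^-$ is the quaternionic discrete series of weight $k/2$ (hence discrete series of Harish-Chandra parameter $\tfrac{k-4}{2}(2\alpha+3\beta)+\rho$). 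This already takes care of the description of the two archimedean possibilities that appear in parts (a) and (b) of the conjecture.

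Next I would apply Arthur's multiplicity formula. The global component group $\mc{C}_{\psi_F}$ is $\Z/2\Z$, generated by the image of $-1 \in SL_{2,\beta}^\vee$ (matching the analysis of the centralizer in Proposition \ref{propPsil}), and the pairing $\langle \cdot, \Pi_v\rangle$ of Shelstad--Arthur takes the nontrivial element to $+1$ on $\mc{L}_\alpha(\pi_F,1/10)_v$ and $\Pi_\infty^+$, and to $-1$ on the other member of each local packet. Using Arthur's formula for the global sign character $\xi_{\psi_F}$ as refined in \cite[\S 8]{artunipconj} (or the formulation of Gan--Gibson--Savin in this case), one computes that $\xi_{\psi_F}$ evaluated on the generator of $\mc{C}_{\psi_F}$ equals $\epsilon(1/2,\pi_F,\Sym^3)$, since that is the value of the relevant symplectic root number attached to the Arthur parameter. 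Summing the multiplicity formula over $\mc{C}_{\psi_F}$ and the local packets then shows that, in the $\Pi_{F,f}^v$-isotypic component, exactly one global representation appears with multiplicity one, namely the one whose local parameters all pair trivially against $\xi_{\psi_F}$; this forces the archimedean component to be $\Pi_\infty^{\epsilon(1/2,\pi_F,\Sym^3)}$ and forces the finite components at the ramified places to be $\mc{L}_\alpha(\pi_F,1/10)_v$ rather than the other element of each local packet.

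Finally, I would need to check that the discrete representation produced by the multiplicity formula is actually cuspidal, not residual. This is exactly where $L(1/2,\pi_F,\Sym^3)=0$ enters: by the analysis of residues of Eisenstein series from $P_\alpha$ (see Lemma \ref{lemholoESg2} and the Langlands--Shahidi theory used there), a residual contribution at the point $s=1/10$ requires a pole of the symmetric cube $L$-function at its central point, which is incompatible with $L(1/2,\pi_F,\Sym^3)=0$. One also needs to rule out contributions from other global Arthur parameters $\psi \neq \psi_F$ with the same local components away from $v$; this is where strong multiplicity one arguments (or rather, the injectivity of the map from Arthur parameters to near equivalence classes established in the classical groups literature) come in, adapted to $G_2$ via the lifting to $GL_7$. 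The main obstacle to making all of this rigorous is twofold: first, the full stabilization of the trace formula for $G_2$ (and in particular the existence and properties of the local packets at ramified places, including the local pairing with $\mc{C}_{\psi_v}$) is not yet established, which is why this remains a conjecture; second, the precise computation of the global sign character $\xi_{\psi_F}$ requires knowing the correct normalizations of the local pairings, and matching these to the functional equation sign $\epsilon(1/2,\pi_F,\Sym^3)$ is a delicate point, though the forthcoming work of Baki\'c--Gan on theta correspondence for $U_3 \times G_2$ mentioned in the introduction is expected to handle exactly this.
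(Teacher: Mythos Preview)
Your proposal follows essentially the same route as the paper's own justification: construct the global Arthur parameter $\psi_F$ following Gan--Gurevich, compute the archimedean packet via Theorem~\ref{thmAJpacket}, apply Arthur's multiplicity formula with $\xi_{\psi_F}(\text{generator})=\epsilon(1/2,\pi_F,\Sym^3)$, and use $L(1/2,\pi_F,\Sym^3)=0$ together with Lemma~\ref{lemholoESg2} to rule out the residual spectrum.

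The one place where you diverge from the paper is in ruling out contributions from other global Arthur parameters $\psi\neq\psi_F$ (your final paragraph). You gesture at ``injectivity of the map from Arthur parameters to near equivalence classes\ldots adapted to $G_2$ via the lifting to $GL_7$,'' but this is both vague and risks circularity with Conjecture~\ref{conjliftings}. The paper instead gives a self-contained case analysis internal to $G_2$: it partitions global parameters by the unipotent orbit of $\psi|_{SL_2(\C)}$ into $\Psi_0,\Psi_l,\Psi_s,\Psi_{sr},\Psi_r$, then rules out $\Psi_0$ by nontemperedness of $\phi_{\psi_{F,w}}$ at an unramified place, rules out $\Psi_r$ and $\Psi_{sr}$ by computing $\phi_{\psi_w}(\Frob_w^{-1})$ explicitly (using that the relevant centralizers are trivial or of order $6$), rules out $\Psi_s$ via Proposition~\ref{propdistneareq} (the $L$-function argument distinguishing long-root and short-root inductions), and finally pins down $\psi=\psi_F$ within $\Psi_l$ by strong multiplicity one for $GL_2$. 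This argument is more concrete and does not require any functoriality to $GL_7$.
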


We will show how this conjecture follows from Arthur's multiplicity formula. There are three main assertions from which this conjecture would follow:
\begin{enumerate}[label=(\arabic*)]
\item There is the assertion that the representation $\Pi_{F,f}\otimes\Pi_\infty$ occurs in the \textit{discrete} spectrum, with $\Pi_\infty$ depending on the symmetric cube root number as described, with multiplicity one.
\item There is the assertion that the only occurrence of $\Pi_{F,f}\otimes\Pi_\infty$ in the discrete spectrum is actually \textit{cuspidal}.
\item Finally there is the assertion that $\Pi_{F,f}^v$ cannot occur in the discrete spectrum as the finite part away from $v$ of any other representation beside the one described above.
\end{enumerate}

To justify assertion (1) using Arthur's multiplicity formula, we first note that $\pi_F$ factors through $PGL_2(\A)$ and so should define a tempered Langlands parameter $\phi_F:L_\Q\to SL_2(\C)$. Following Gan and Gurevich \cite{gangurl}, we define a global Arthur parameter $\psi_F:L_\Q\times SL_2(\C)\to\L{G}_2$ as the composition
\begin{align*}
\psi_F:L_\Q\times SL_2(\C)&\xrightarrow{\phi_F\times\id_{SL_2(\C)}}L_\Q\times SL_{2,\beta}(\C)\times SL_{2,2\alpha+3\beta}(\C)\\
&\xrightarrow{\id_{W_\R}\times\iota_{\beta,2\alpha+3\beta}}L_\Q\times G_2(\C)=\L{G_2},
\end{align*}
where the notation is as in Section \ref{secparamsG2}. Note that the local component of $\psi_F$ at $\infty$ is the parameter $\psi_k$ from Section \ref{secparamsG2} by Proposition \ref{propPsil}. The implications of Arthur's multiplicity formula for this parameter $\psi_F$ were explained in \cite[Section 13.4]{gangurl}. We recall this now.

First, at places $w$ where $\pi_{F,w}$ is discrete series, the component group $\mc{C}_{\psi_w}$ has two elements; otherwise it is trivial. Thus we expect the Arthur packets $\Pi_{\psi_w}$ attached to $\psi_w$ at places $w$ where $\pi_{F,w}$ is discrete series to have two elements, and we write $\Pi_{\psi_{F,w}}=\{\pi_w^+,\pi_w^-\}$, where $\pi_w^+$ is the $L$-packet element. Otherwise we have $\Pi_{\psi_{F,w}}=\{\pi_w^+\}$, the singleton containing the $L$-packet element. The $L$-packet elements are visibly the Langlands quotients $\mc{L}_\alpha(\pi_{F,w},1/10)$ from the long root parabolic. Note in particular that for $w=v$, the packet $\Pi_{\psi_{F,v}}$ is the singleton containing $\mc{L}_\alpha(\pi_{F,v},1/10)$.

The global component group $\mc{C}_\psi$ has two elements and the character $\xi_\psi$ of Conjecture \ref{conjart} (2) can be computed to be the character which sends the nontrivial element in $\mc{C}_\psi$ to the root number $\epsilon(1/2,\pi_F,\Sym^3)$. Thus, if $\pi$ is of the form
\[\pi=\sideset{}{'}\bigotimes_w \pi_w, \qquad\pi_w\in\Pi_{\psi_w},\]
and if we let $\epsilon_{\pi_w}=1$ or $-1$ according to whether $\pi_w$ is $\pi_w^+$ or $\pi_w^-$, respectively, then $\pi$ occurs with multiplicity $1$ in $L_{\disc}^2(G_2(\Q)\backslash G_2(\A))$ if $\prod_w\epsilon_{\pi_w}=\epsilon(1/2,\pi_F,\Sym^3)$; otherwise $\pi$ occurs with multiplicity $0$. Since the Arthur packet at $\infty$ coincides with the possibilities for $\Pi_\infty$ described in our conjecture by Theorem \ref{thmAJpacket}, we have justified the assertion (1) above.

To see (2), we note that by Proposition \ref{propneareqalpha} and Lemma \ref{lemholoESg2}, no Eisenstein series with finite part nearly equivalent to $\Pi_{F,f}$ has a pole, since we assumed in our conjecture that $L(1/2,\pi_F,\Sym^3)=0$. Thus $\Pi_{F,f}$ cannot occur as the finite component of a representation in the residual spectrum. Then using the well known decomposition
\[L_{\disc}^2=L_{\cusp}^2\oplus L_{\mr{res}}^2\]
justifies (2).

To understand why (3) should hold, we have to examine other global Arthur parameters $\psi$. More precisely, let $\Pi_\infty'$ be a representation of $G_2(\R)$ and $\Pi_v'$ a representation of $G_2(\Q_v)$ such that $\Pi_{F,f}^v\otimes\Pi_v'\otimes\Pi_\infty'$ occurs in $L_{\cusp}^2(G_2(\Q)\backslash G_2(\A))$, and let $\psi$ be the global Arthur parameter corresponding to this representation. We must show that $\psi=\psi_F$.

To see this, we first break the set of global Arthur parameters for $G_2$ into five subsets, called $\Psi_?(G_{2/\Q})$, $?\in\{0,l,s,sr,r\}$, based on their restrictions to $SL_2(\C)$, just as in Section \ref{secparamsG2}; the definitions of these subsets are just as in that section. We note that the restriction of $\psi$ to $SL_2(\C)$ is the same as the restriction of any of the local components $\psi_w$ to $SL_2(\C)$.

Now if $w\ne v$ is a finite place which is unramified for $\Pi_{F,f}$ and such that the local packet $\Pi_{\psi_w}$ contains only one element, then the $L$-parameter $\phi_{\psi_w}$ is nontempered because it must coincide with the $L$-parameter $\phi_{\psi_{F,w}}$; indeed, if $\ell_w$ is the prime corresponding to $w$ and $s_w$ is the Satake parameter of $\pi_F$ at $w$, then
\[\phi_{\psi_{F,w}}(\Frob_w^{-1})=s_w\times\pmat{\ell_w^{1/2}&0\\0& \ell_w^{-1/2}}\in SL_{2,\beta}(\C)\times SL_{2,2\alpha+3\beta}(\C),\]
whose powers are unbounded. Thus we cannot have $\psi|_{W_w'}=\phi_{\psi_{F,w}}$, which implies $\psi\notin\Psi_0(G_{2/\Q})$.

Now assume for sake of contradiction that $\psi\in\Psi_{r}(G_{2/\Q})$. Then $\psi|_{L_\Q}$ is trivial (because the centralizer of the image of the corresponding homomorphism from $SL_2(\C)$ is trivial, as we saw in Proposition \ref{propPsir}). Moreover, at a finite place $w$ where $\Pi_{F,w}$ is unramified and $\Pi_{\psi_w}$ contains only one element, we have
\[\psi_{\psi_w}(\Frob_w^{-1})=(3\beta^\vee+5\alpha^\vee)(\ell_w^{1/2}),\]
which does not coincide with our expression for $\phi_{\psi_{F,w}}(\Frob_w^{-1})$ above. Thus $\psi\notin\Psi_{r}(G_{2/\Q})$. A similar argument, applied with $\Frob_w^{-6}$, shows also that $\psi\notin\Psi_{sr}(G_{2/\Q})$, since the centralizer of the image of the subregular homomorphism $SL_2(\C)\to G_2(\C)$ is a group of order $6$.

Now if $\psi\in\Psi_{s}(G_{2/\Q})$, then $\psi|_{L_\Q}$ must factor through a long root $SL_2(\C)$. Therefore $\Pi_{F,f}$ would be nearly equivalent to a Langlands quotient of a parabolic induction of a tempered representation from both the long root and short root parabolics. By Proposition \ref{propdistneareq}, this is impossible.

Thus $\psi\in\Psi_l(G_{2/\Q})$, and $\psi|_{L_\Q}$ factors through a short root $SL_2(\C)$. Moreover, by strong multiplicity one for $GL_2$, the parameter $\psi|_{L_\Q}$ corresponds to $\pi_F$, and thus $\psi=\psi_F$, as desired.

This completes our justification of Conjecture \ref{conjmult} using Arthur's multiplicity formula. We would like to remark, however, that this conjecture seems to now be within reach just using theta correspondence, rather than by extending the endoscopy classification of representations to $G_2$. Indeed, according to Wee Teck Gan, using forthcoming work of himself and Baki\'c on theta correspondence for $U(3)\times G_2$, one should be able to complete the work proposed in \cite{gangurl} and prove the multiplicity formula for long root CAP representations of $G_2$. Moreover, by \cite[Theorem 5.2]{hupasa}, this theta correspondence is functorial at the archimedean place and gives the quaternionic discrete series representation that we obtained in Proposition \ref{propajqds}.

\subsection{Liftings to $GL_7$}
\label{subsecliftings}
We make a conjecture in this section about lifting cohomological automorphic representations of $G_2(\A)$ with regular weight to ones for $GL_7$. This conjecture is used in the main body of the paper studying not the Langlands quotient $\mc{L}_\alpha(\pi_F,1/10)$, but rather certain cuspidal members of a $p$-adic deformation of it. Here is the conjecture.

\begin{conjecture}
\label{conjliftings}
Let $\Pi$ be a cuspidal automorphic representation of $G_2(\A)$ with archimedean component $\Pi_\infty$ which is cohomological of regular weight
\[\lambda=c_1(\alpha+2\beta)+c_2(2\alpha+3\beta),\qquad c_1,c_2>0.\]
Then there is a self dual automorphic representation $\widetilde{\Pi}$ of $GL_7(\A)$ with the following properties.
\begin{enumerate}[label=(\alph*)]
\item If $v$ is an unramified place for $\Pi$ with Satake parameter $s_v$, then $v$ is also unramified for $\widetilde{\Pi}$, and $\widetilde{\Pi}_v$ has Satake parameter $R_7(s_v)$, where $R_7$ is the standard $7$ dimensional representation of $G_2$.
\item The representation $\widetilde{\Pi}_\infty$ is cohomological of weight
\[(2c_1+c_2,c_1+c_2,c_1,0,-c_1,-c_1-c_2,-2c_1-c_2).\]
Here this septuple of integers is interpreted as a weight of the diagonal torus of $GL_7$ in the standard way.
\item The representation $\widetilde{\Pi}$ is the isobaric sum of cuspidal automorphic representations from $GL_{n_i}$ for some $n_i$'s with $\sum n_i=7$.
\item Assume $v$ is a finite place such that there is an $s\in\C$ with $\re(s)\geq 0$, and an irreducible admissible representation $\pi_v$ of $GL_2(\Q_v)$ with trivial central character, such that $\Pi_v$ is the Langlands quotient of the unitary parabolic induction
\begin{equation*}
\iota_{P_\alpha(\Q_v)}^{G_2(\Q_v)}(\pi_v\otimes\vert\det_\alpha\vert^s).
\end{equation*}
Then $\widetilde{\Pi}_v$ is the Langlands quotient of the parabolically induced representation
\begin{equation*}
\iota_{P_{2,3,2}(\Q_v)}^{GL_7(\Q_v)}((\pi_v\otimes\vert\det\vert^{s})\boxtimes\Sym^2(\pi_v)\boxtimes(\pi_v^\vee\otimes\vert\det\vert^{-s})).
\end{equation*}
Here, $P_{2,3,2}$ is the standard parabolic subgroup of $GL_7$ with Levi $GL_2\times GL_3\times GL_2$, and $\Sym^2$ denotes the symmetric square lift.
\item Similarly, assume $v$ is a finite place such that there is an $s\in\C$ with $\re(s)\geq 0$, and an irreducible admissible representation $\pi_v$ of $GL_2(\Q_v)$ with trivial central character, such that $\Pi_v$ is the Langlands quotient of the unitary parabolic induction
\begin{equation*}
\iota_{P_\beta(\Q_v)}^{G_2(\Q_v)}(\pi_v\otimes\vert\det_\beta\vert^s).
\end{equation*}
Then $\widetilde{\Pi}_v$ is the Langlands quotient of the parabolically induced representation
\begin{equation*}
\iota_{P_{1,2,1,2,1}(\Q_v)}^{GL_7(\Q_v)}(\vert\cdot\vert^s\boxtimes(\pi_v\otimes\vert\det\vert^{s})\boxtimes 1\boxtimes(\pi_v^\vee\otimes\vert\det\vert^{-s})\boxtimes\vert\cdot\vert^{-s}).
\end{equation*}
Similarly as above, $P_{1,2,1,2,1}$ is the standard parabolic subgroup of $GL_7$ with Levi $GL_1\times GL_2\times GL_1\times GL_2\times GL_1$.
\end{enumerate}
\end{conjecture}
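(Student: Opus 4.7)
The plan is to deduce the existence of the lifting $\widetilde{\Pi}$ conditionally from Arthur's conjectures (Conjecture \ref{conjart}), using the classification of archimedean Arthur parameters for $G_2(\R)$ already assembled in this appendix. Assume $\Pi$ has a global Arthur parameter $\psi:L_\Q\times SL_2(\C)\to{}^L G_2$. The first step is to show that $\psi|_{SL_2(\C)}$ is trivial. This is archimedean in nature: since $\Pi_\infty$ is cohomological of regular weight $\lambda=c_1(\alpha+2\beta)+c_2(2\alpha+3\beta)$ with $c_1,c_2>0$, the local component $\Pi_\infty$ must lie in a packet $\Pi_{\psi_\infty}$ containing a cohomological representation. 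By Proposition \ref{propcohparamsG2}, the possibilities for $\psi_\infty$ are exhausted by the four cases $\Psi_0$, $\Psi_l$, $\Psi_s$, and $\Psi_r$ of $\Psi(G_2(\R))$, and in each nontrivial case the highest weight of any cohomological representation in the packet is either a nonnegative multiple of $2\alpha+3\beta$, a nonnegative multiple of $\alpha+2\beta$, or zero. The regularity assumption $c_1,c_2>0$ rules all of these out except $\psi_\infty\in\Psi_0$, so $\psi_\infty|_{SL_2(\C)}$ is trivial. Since $\psi|_{SL_2(\C)}$ is determined globally by any local restriction, this forces $\psi|_{SL_2(\C)}$ to be trivial.

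Next, having shown that $\psi$ factors as $\psi=\phi\circ\mr{pr}_{L_\Q}$ for a tempered Langlands parameter $\phi:L_\Q\to G_2^\vee(\C)\subset{}^L G_2$, I would compose with the standard representation $R_7:G_2\hookrightarrow GL_7$ to obtain a tempered Langlands parameter $R_7\circ\phi:L_\Q\to GL_7(\C)$. By the classification of the automorphic spectrum of $GL_7$ following Jacquet--Shalika and Moeglin--Waldspurger, there is a unique isobaric automorphic representation $\widetilde{\Pi}=\boxplus_i\sigma_i$ of $GL_7(\A)$, with each $\sigma_i$ a cuspidal automorphic representation of some $GL_{n_i}$ with $\sum n_i=7$, whose Langlands parameter is $R_7\circ\phi$. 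This gives (c) directly, and the self-duality follows from the fact that $R_7$ is self-dual as a representation of $G_2$.

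The remaining properties (a), (b), (d), (e) follow from local functoriality, i.e., from the compatibility between the local Langlands correspondences for $G_2(\Q_v)$ and $GL_7(\Q_v)$ and the map $R_7$. For (a), at an unramified finite place the Satake parameter of $\widetilde{\Pi}_v$ is $R_7(s_v)$ by naturality of the unramified correspondence. For (b), one computes $\phi_\infty|_{\C^\times}$ from the Harish-Chandra parameter $\lambda+\rho$ of $\Pi_\infty$; composing with $R_7$ and reading off the weights using Figure \ref{figr7weights} yields the seven Hodge-Tate-Sen integers listed in (b), which identifies $\widetilde{\Pi}_\infty$ as the cohomological representation of $GL_7(\R)$ of the stated regular weight. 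For (d) and (e), the local parameter $\phi_v$ factors through the dual Levi $M_\alpha^\vee$ or $M_\beta^\vee$ respectively, and composing with $R_7$ and applying the branching rules \eqref{eqr7alpha} and \eqref{eqr7beta} decomposes $R_7\circ\phi_v$ into the Langlands parameter of the parabolically induced representation stated.

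The principal obstacle, of course, is that the entire argument hinges on Arthur's conjectures for $G_2$, which are not known; indeed Arthur's endoscopic classification has been established only for classical groups. As the author notes in the introduction, a more direct and potentially unconditional route would be to exhibit $\widetilde{\Pi}$ as a theta lift of $\Pi$ to $PGSp_6$, showing the lift is nonzero by computing a suitable Fourier coefficient in terms of $L$-values; but matching the Arthur-theoretic data, and verifying the local-global compatibility at ramified places required for (d) and (e), will be the hardest part in either approach.
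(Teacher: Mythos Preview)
Your proposal is correct and follows essentially the same route as the paper's own justification in Section~\ref{subsecliftings}: both use Proposition~\ref{propcohparamsG2} to force $\psi|_{SL_2(\C)}$ trivial from the regularity of $\lambda$, compose with $R_7$ to obtain a tempered parameter for $GL_7$, and then read off (a), (b), (d), (e) from unramified compatibility and the branching formulas \eqref{eqr7alpha}, \eqref{eqr7beta}. The only minor difference is in (c): you invoke the Langlands/Jacquet--Shalika classification to land directly on an isobaric sum, whereas the paper argues via Arthur's multiplicity formula that $\widetilde{\Pi}$ is discrete and then uses Moeglin--Waldspurger plus the infinitesimal character from (b) to deduce it is in fact cuspidal (hence something stronger than (c), though only (c) is stated); the paper also notes, as you do, the alternative theta-lift route via $PGSp_6$ and \cite{CKPSS}.
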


We would like to explain now how this conjecture would follow from Arthur's conjectures. With $\Pi$ as in the hypotheses of the conjecture, let $\psi:L_\Q\times SL_2(\C)\to\L{G}_2$ be the global Arthur parameter corresponding to $\Pi$. Then the Arthur packet $\Pi_{\psi_\infty}$ for the archimedean component $\psi_\infty$ of $\psi$ includes, by hypothesis, a cohomological representation of regular cohomological weight $\lambda$. We classified the archimedean Arthur parameters whose packets contain cohomological representations in Proposition \ref{propcohparamsG2}, and the only such parameters which give representations with regular cohomological weight are the ones which are trivial on $SL_2(\C)$ and which give the discrete series $L$-packet of that weight. It follows that $\psi$ is trivial on $SL_2(\C)$, that $\psi|_{L_\Q}=\phi_\psi$, and that the Arthur packet for $\psi$ equals the $L$-packet of $\phi_\psi$.

Now we define a new global Arthur parameter $\tilde{\psi}$, this time for $GL_7$. Letting $R_7$ be the standard $7$ dimensional representation of $G_2$, we simply set
\[\tilde\psi=(R_7\times\id)\circ\psi:L_\Q\times\SL_2(\C)\to G_2(\C)\times L_\Q\to GL_7(\C)\times L_\Q=\L{GL_7}.\]
Thus the assignment $\psi\mapsto\tilde\psi$ should realize Langlands functoriality from $G_2$ to $GL_7$ for the representation $\Pi$.

Now $\tilde\psi$ is trivial on $SL_2(\C)$ because $\psi$ is, and so the Arthur packet for $\tilde\psi$ equals the $L$-packet of $\tilde\psi|_{L_\Q}=\psi_{\tilde\psi}$. Then $\tilde\psi$ gives a representation occurring in $L_{\disc}^2(GL_7(\Q)\backslash GL_7(\A))$. (From the point of view of Arthur's conjectures, since $\psi$ and all $\psi_v$'s are tempered, the centralizers of their images in either $G_2(\C)$ or $GL_7(\C)$ are Levi subgroups, hence are connected. Arthur's multiplicity formula then guarantees that the representation in the singleton $L$-packet for $\phi_{\tilde\psi}$ occurs in the discrete spectrum, in fact exactly once.) We take $\widetilde\Pi$ to be this representation.

Then the self duality of $\widetilde{\Pi}$ follows from the fact that $\phi_{\tilde\psi}$ factors through $\L{G}_2$ (and hence $\L{SO}_7$). Point (a) of the conjecture follows immediately from this construction as well.

To justify part (b) of the conjecture, we first describe explicitly the parameters $\phi_{\psi_\infty}$ and $\phi_{\tilde{\psi}_\infty}$. Since all the groups involved are split, we view them as having the corresponding dual groups as targets. The parameter $\phi_{\psi_\infty}$ gives the discrete series $L$-packet with infinitesimal character $\lambda+\rho$, and is thus given on $z\in\C^\times\subset W_\R$ by viewing $\lambda+\rho$ as a cocharacter of the dual torus of the maximal torus $T$ of $G_2$, and evaluating that cocharacter on $(z/\vert z\vert)$; identifying the dual group of $G_2$ with $G_2(\C)$ as usual, the element $\phi_{\psi_\infty}(j)$ is then given as any representative in $N_{G_2(\C)}(T(\C))$ of the Weyl element $w_{-1}$ that acts as inversion on $T$ and which squares to $\phi_{\psi_\infty}(-1)$. Thus,
\begin{align*}
\phi_{\psi_\infty}(z)&=(c_1(2\beta^\vee+3\alpha^\vee)+c_2(\beta^\vee+2\alpha^\vee)+3\beta^\vee+5\alpha^\vee)(z/\vert z\vert)\\
&=((3c_1+2c_2+5)\alpha^\vee+(2c_1+c_2+3)\beta^\vee)(z/\vert z\vert),
\end{align*}
for $z\in\C^\times$. Since this is an integral combination of $\alpha^\vee$ and $\beta^\vee$, we see $\phi_{\psi_\infty}(-1)=1$.

Now we consider $\phi_{\tilde{\psi}_\infty}=R_7\circ\phi_{\psi_\infty}$. Since $R_7$ has the seven weights given as in Figure \ref{figr7weights}, one computes that for $z\in\C^\times$, $\phi_{\tilde{\psi}_\infty}(z)$ is given by the evaluation on $(z/\vert z\vert)$ of the coweight of the dual torus in $GL_7(\C)$ given by the weight
\[(2c_1+c_2+3,c_1+c_2+2,c_1+1,0,-c_1-1,-c_1-c_2-2,-2c_1-c_2-3).\]
The element $\phi_{\tilde{\psi}_\infty}(j)$ is given by the Weyl group element $(17)(26)(35)\in S_7$, as in the proof of Proposition \ref{propPabeta}.

The Langlands parameter $\phi_{\tilde{\psi}_\infty}$ therefore factors through a Levi subgroup of $GL_7(\C)$ given by $GL_2(\C)^3\times GL_1(\C)$, and is given by discrete series parameters on the $GL_2$ factors of weights
\[k_1=2(2c_1+c_2+3)+1,\qquad k_2=2(c_1+c_2+2)+1,\qquad k_3=2(c_1+1)+1\]
and by the trivial character on the $GL_1$ factor. Letting $\pi_k$ denote the discrete series of $GL_2(\R)$ of weight $k$, for $k\geq 2$, we therefore have that the Langlands parameter $\phi_{\tilde{\psi}_\infty}$ corresponds to the induced representation
\[\iota_{P_{2,2,2,1}(\R)}^{GL_7(\R)}(\pi_{k_1}\otimes\pi_{k_2}\otimes\pi_{k_3}\otimes 1),\]
where $P_{2,2,2,1}$ is the standard parabolic in $GL_7$ corresponding to the partition $7=2+2+2+1$. A standard computation using \cite[Theorem III.3.3]{BW}, which we omit here, shows that this representation is cohomological with the cohomological weight claimed in part (2) of the conjecture.

We now discuss part (d) of the conjecture; we will return to (c) in a moment. The representation $\pi_v$ of (d) factors through $PGL_2(\A)$ by assumption, and therefore defines a Langlands parameter
\[\phi_v:W_v'\to SL_2(\C).\]
The parameter of $\pi_v\otimes\vert\det\vert^s$ is then given by
\[\phi_{v,s}(w')=\diag(\vert w'\vert^s,\vert w'\vert^s)\phi_v(w')\in GL_2(\C).\]
for $w'\in W_v$, where $\vert\cdot\vert$ is the usual valuation on $W_v'$.

If $\re(s)\geq 0$ and $\Pi_v$ is a Langlands quotient, then $\phi_{\psi_v}$ is given by
\[\iota_\beta\circ\phi_{v,s},\]
where $\iota_\beta$ is the inclusion of the short root $GL_2$ Levi into $G_2(\C)$. In this case, by \eqref{eqr7beta}, the parameter $\phi_{\tilde{\psi}_v}$ is given block-diagonally as
\[\phi_{\tilde{\psi}_v}(w)=\pmat{\phi_{v,s}(w)&0&0\\
0&\Ad^2\phi_{v,s}(w)&0\\
0&0&\prescript{t}{}{\phi}_{v,s}(w)^{-1}},\]
which is the parameter for the Langlands quotient which we claimed was $\widetilde\Pi_v$ in (d). A similar argument justifies (e).

Finally, we note that something much stronger than (c) should be true. By Arthur's conjectures, the representation $\widetilde\Pi$ should be in the discrete spectrum of $GL_7$. By the main result of \cite{MWres}, $\widetilde{\Pi}$ is then either cuspidal or it is the isobaric sum of seven copies of the same character of $GL_1(\A)$. But by (b) of the conjecture, this latter case is impossible, since such a sum would have to have infinitesimal character $(3,2,1,0,-1,-2,-3)$, which is smaller than that of $\widetilde{\Pi}_\infty$. However, we are able to work in the body of this paper with the version of (c) stated in the conjecture which, of course, is weaker than cuspidality. We choose to do this because of how we expect one might go about proving this conjecture, which we discuss now.

First, it follows from a general conjecture of Shahidi that the $L$-packet $\Pi_{\phi_\psi}$ should contain a generic member. If one could prove this, then as in the paper of Harris--Khare--Thorne \cite{HKT}, one could use an exceptional theta correspondence to lift the generic constituent of $\Pi_{\phi_\psi}$ to a generic automorphic representation of $Sp_6(\A)$ and then lift that representation to one of $GL_7(\A)$ using \cite{CKPSS}, as is done in \cite{HKT}. This representation would automatically satisfy points (a) and (b) of the conjecture, by the computations in \textit{loc. cit.} But the results of \cite{CKPSS} are not strong enough to guarantee that the resulting representation is cuspidal. However, they do guarantee that point (c) of our conjecture would hold instead, and this is enough for our purposes.

As for points (d) and (e), this does not seem to be directly amenable to just the methods used in \cite{HKT}. However, there are results in \cite{CKPSS} which would describe local lifts of parabolically induced representations from the group $Sp_6$ to $GL_7$. So one would have to examine explicitly the local exceptional theta correspondence in the case of representations of the types in (d) and (e).
\printbibliography

@article {adjo,
    AUTHOR = {Adams, Jeffrey and Johnson, Joseph F.},
     TITLE = {Endoscopic groups and packets of nontempered representations},
   JOURNAL = {Compositio Math.},
  FJOURNAL = {Compositio Mathematica},
    VOLUME = {64},
      YEAR = {1987},
    NUMBER = {3},
     PAGES = {271--309}
}

@book {ABV,
    AUTHOR = {Adams, Jeffrey and Barbasch, Dan and Vogan, Jr., David A.},
     TITLE = {The {L}anglands classification and irreducible characters for
              real reductive groups},
    SERIES = {Progress in Mathematics},
    VOLUME = {104},
 PUBLISHER = {Birkh\"{a}user Boston, Inc., Boston, MA},
      YEAR = {1992},
}

@article {AMR,
    AUTHOR = {Arancibia, Nicol\'{a}s and M\oe glin, Colette and Renard, David},
     TITLE = {Paquets d'{A}rthur des groupes classiques et unitaires},
   JOURNAL = {Ann. Fac. Sci. Toulouse Math. (6)},
  FJOURNAL = {Annales de la Facult\'{e} des Sciences de Toulouse. Math\'{e}matiques.
              S\'{e}rie 6},
    VOLUME = {27},
      YEAR = {2018},
    NUMBER = {5},
     PAGES = {1023--1105},
}

@misc{aran,
      title={Characteristic cycles, micro local packets and packets with cohomology}, 
      author={Nicolás Arancibia},
      year={2019},
      eprint={1906.09214},
      archivePrefix={arXiv},
      primaryClass={math.RT}
}

@incollection {artconj,
    AUTHOR = {Arthur, James},
     TITLE = {On some problems suggested by the trace formula},
 BOOKTITLE = {Lie group representations, {II} ({C}ollege {P}ark, {M}d.,
              1982/1983)},
    SERIES = {Lecture Notes in Math.},
    VOLUME = {1041},
     PAGES = {1--49},
 PUBLISHER = {Springer, Berlin},
      YEAR = {1984}
}

@incollection {artunipconj,
    AUTHOR = {Arthur, James},
     TITLE = {Unipotent automorphic representations: conjectures},
      NOTE = {Orbites unipotentes et repr\'{e}sentations, II},
   JOURNAL = {Ast\'{e}risque},
  FJOURNAL = {Ast\'{e}risque},
    NUMBER = {171-172},
      YEAR = {1989},
     PAGES = {13--71}
}

@book {artclass,
    AUTHOR = {Arthur, James},
     TITLE = {The endoscopic classification of representations},
    SERIES = {American Mathematical Society Colloquium Publications},
    VOLUME = {61},
      NOTE = {Orthogonal and symplectic groups},
 PUBLISHER = {American Mathematical Society, Providence, RI},
      YEAR = {2013}
}

@article {BV,
    AUTHOR = {Barbasch, Dan and Vogan, Jr., David A.},
     TITLE = {Unipotent representations of complex semisimple groups},
   JOURNAL = {Ann. of Math. (2)},
  FJOURNAL = {Annals of Mathematics. Second Series},
    VOLUME = {121},
      YEAR = {1985},
    NUMBER = {1},
     PAGES = {41--110},
}

@misc{BL,
    title={On the meromorphic continuation of Eisenstein series},
    author={Bernstein, Joseph and Lapid, Erez},
    year={2020},
    eprint={1911.02342},
    archivePrefix={arXiv},
    primaryClass={math.NT}
}

@article {BHKT,
    AUTHOR = {B\"{o}ckle, Gebhard and Harris, Michael and Khare, Chandrashekhar
              and Thorne, Jack A.},
     TITLE = {{$\hat G$}-local systems on smooth projective curves are
              potentially automorphic},
   JOURNAL = {Acta Math.},
  FJOURNAL = {Acta Mathematica},
    VOLUME = {223},
      YEAR = {2019},
    NUMBER = {1},
     PAGES = {1--111}
}

@book {BW,
    AUTHOR = {Borel, A. and Wallach, N.},
     TITLE = {Continuous cohomology, discrete subgroups, and representations
              of reductive groups},
    SERIES = {Mathematical Surveys and Monographs},
    VOLUME = {67},
   EDITION = {Second},
 PUBLISHER = {American Mathematical Society, Providence, RI},
      YEAR = {2000}
}

@article {BKtypes,
    AUTHOR = {Bushnell, Colin J. and Kutzko, Philip C.},
     TITLE = {Smooth representations of reductive {$p$}-adic groups:
              structure theory via types},
   JOURNAL = {Proc. London Math. Soc. (3)},
  FJOURNAL = {Proceedings of the London Mathematical Society. Third Series},
    VOLUME = {77},
      YEAR = {1998},
    NUMBER = {3},
     PAGES = {582--634}
}

@article {caraiani,
    AUTHOR = {Caraiani, Ana},
     TITLE = {Local-global compatibility and the action of monodromy on
              nearby cycles},
   JOURNAL = {Duke Math. J.},
  FJOURNAL = {Duke Mathematical Journal},
    VOLUME = {161},
      YEAR = {2012},
    NUMBER = {12},
     PAGES = {2311--2413},
}

@online{casselman,
  author = {Casselman, William},
  title = {Introduction to the theory of admissible representations of {$p$}-adic reductive groups},
  year = {1995},
  url = {https://personal.math.ubc.ca/~cass/research/pdf/p-adic-book.pdf},
  urldate = {09-24-2021}
}

@article {chen,
    AUTHOR = {Chenevier, Ga\"{e}tan},
     TITLE = {Subgroups of {$\rm Spin(7)$} or {$\rm SO(7)$} with each
              element conjugate to some element of {$\rm G_2$} and
              applications to automorphic forms},
   JOURNAL = {Doc. Math.},
  FJOURNAL = {Documenta Mathematica},
    VOLUME = {24},
      YEAR = {2019},
     PAGES = {95--161},
}

@article {chehar,
    AUTHOR = {Chenevier, Ga\"{e}tan and Harris, Michael},
     TITLE = {Construction of automorphic {G}alois representations, {II}},
   JOURNAL = {Camb. J. Math.},
  FJOURNAL = {Cambridge Journal of Mathematics},
    VOLUME = {1},
      YEAR = {2013},
    NUMBER = {1},
     PAGES = {53--73}
}

@article {CKPSS,
    AUTHOR = {Cogdell, J. W. and Kim, H. H. and Piatetski-Shapiro, I. I. and
              Shahidi, F.},
     TITLE = {Functoriality for the classical groups},
   JOURNAL = {Publ. Math. Inst. Hautes \'{E}tudes Sci.},
  FJOURNAL = {Publications Math\'{e}matiques. Institut de Hautes \'{E}tudes
              Scientifiques},
    NUMBER = {99},
      YEAR = {2004},
     PAGES = {163--233},
}

@article {CH,
    AUTHOR = {Cohen, Arjeh M. and Helminck, Aloysius G.},
     TITLE = {Trilinear alternating forms on a vector space of dimension
              {$7$}},
   JOURNAL = {Comm. Algebra},
  FJOURNAL = {Communications in Algebra},
    VOLUME = {16},
      YEAR = {1988},
    NUMBER = {1},
     PAGES = {1--25}
}

@article {CE,
    AUTHOR = {Coleman, Robert F. and Edixhoven, Bas},
     TITLE = {On the semi-simplicity of the {$U_p$}-operator on modular
              forms},
   JOURNAL = {Math. Ann.},
  FJOURNAL = {Mathematische Annalen},
    VOLUME = {310},
      YEAR = {1998},
    NUMBER = {1},
     PAGES = {119--127}
}

@misc{dalal,
      title={Counting Discrete, Level-$1$, Quaternionic Automorphic Representations on $G_2$}, 
      author={Rahul Dalal},
      year={2021},
      eprint={2106.09313},
      archivePrefix={arXiv},
      primaryClass={math.NT}
}

@article {fintzen,
    AUTHOR = {Fintzen, Jessica},
     TITLE = {Types for tame {$p$}-adic groups},
   JOURNAL = {Ann. of Math. (2)},
  FJOURNAL = {Annals of Mathematics. Second Series},
    VOLUME = {193},
      YEAR = {2021},
    NUMBER = {1},
     PAGES = {303--346}
}

@article {franke,
    AUTHOR = {Franke, Jens},
     TITLE = {Harmonic analysis in weighted {$L_2$}-spaces},
   JOURNAL = {Ann. Sci. \'{E}cole Norm. Sup. (4)},
  FJOURNAL = {Annales Scientifiques de l'\'{E}cole Normale Sup\'{e}rieure. Quatri\`eme
              S\'{e}rie},
    VOLUME = {31},
      YEAR = {1998},
    NUMBER = {2},
     PAGES = {181--279}
}

@article {FS,
    AUTHOR = {Franke, Jens and Schwermer, Joachim},
     TITLE = {A decomposition of spaces of automorphic forms, and the
              {E}isenstein cohomology of arithmetic groups},
   JOURNAL = {Math. Ann.},
  FJOURNAL = {Mathematische Annalen},
    VOLUME = {311},
      YEAR = {1998},
    NUMBER = {4},
     PAGES = {765--790}
}

@article {ggs,
    AUTHOR = {Gan, Wee Teck and Gross, Benedict and Savin, Gordan},
     TITLE = {Fourier coefficients of modular forms on {$G_2$}},
   JOURNAL = {Duke Math. J.},
  FJOURNAL = {Duke Mathematical Journal},
    VOLUME = {115},
      YEAR = {2002},
    NUMBER = {1},
     PAGES = {105--169}
}

@article {gangurl,
    AUTHOR = {Gan, Wee Teck and Gurevich, Nadya},
     TITLE = {C{AP} representations of {$G_2$} and the spin {$L$}-function
              of {${\rm PGSp}_6$}},
   JOURNAL = {Israel J. Math.},
  FJOURNAL = {Israel Journal of Mathematics},
    VOLUME = {170},
      YEAR = {2009},
     PAGES = {1--52}
}

@article {GJ,
    AUTHOR = {Gelbart, Stephen and Jacquet, Herv\'{e}},
     TITLE = {A relation between automorphic representations of {${\rm
              GL}(2)$} and {${\rm GL}(3)$}},
   JOURNAL = {Ann. Sci. \'{E}cole Norm. Sup. (4)},
  FJOURNAL = {Annales Scientifiques de l'\'{E}cole Normale Sup\'{e}rieure. Quatri\`eme
              S\'{e}rie},
    VOLUME = {11},
      YEAR = {1978},
    NUMBER = {4},
     PAGES = {471--542}
}

@article {grbac,
    AUTHOR = {Grbac, Neven},
     TITLE = {The {F}ranke filtration of the spaces of automorphic forms
              supported in a maximal proper parabolic subgroup},
   JOURNAL = {Glas. Mat. Ser. III},
  FJOURNAL = {Glasnik Matemati\v{c}ki. Serija III},
    VOLUME = {47(67)},
      YEAR = {2012},
    NUMBER = {2},
     PAGES = {351--372}
}

@article {GG,
    AUTHOR = {Grbac, Neven and Grobner, Harald},
     TITLE = {The residual {E}isenstein cohomology of {$Sp_4$} over a
              totally real number field},
   JOURNAL = {Trans. Amer. Math. Soc.},
  FJOURNAL = {Transactions of the American Mathematical Society},
    VOLUME = {365},
      YEAR = {2013},
    NUMBER = {10},
     PAGES = {5199--5235}
}

@article {GS,
    AUTHOR = {Grbac, Neven and Schwermer, Joachim},
     TITLE = {On residual cohomology classes attached to relative rank one
              {E}isenstein series for the symplectic group},
   JOURNAL = {Int. Math. Res. Not. IMRN},
  FJOURNAL = {International Mathematics Research Notices. IMRN},
      YEAR = {2011},
    NUMBER = {7},
     PAGES = {1654--1705}
}

@article {grobner,
    AUTHOR = {Grobner, Harald},
     TITLE = {Residues of {E}isenstein series and the automorphic cohomology
              of reductive groups},
   JOURNAL = {Compos. Math.},
  FJOURNAL = {Compositio Mathematica},
    VOLUME = {149},
      YEAR = {2013},
    NUMBER = {7},
     PAGES = {1061--1090}
}

@article {gulotta,
    AUTHOR = {Gulotta, Daniel R.},
     TITLE = {Equidimensional adic eigenvarieties for groups with discrete
              series},
   JOURNAL = {Algebra Number Theory},
  FJOURNAL = {Algebra \& Number Theory},
    VOLUME = {13},
      YEAR = {2019},
    NUMBER = {8},
     PAGES = {1907--1940}
}

@misc{HKT,
      title={A local Langlands parameterization for generic supercuspidal representations of $p$-adic $G_2$}, 
      author={Michael Harris and Chandrashekhar B. Khare and Jack A. Thorne},
      year={2021},
      eprint={1909.05933},
      archivePrefix={arXiv},
      primaryClass={math.NT}
}

@article {hupasa,
    AUTHOR = {Huang, Jing-Song and Pand\v{z}i\'{c}, Pavle and Savin, Gordan},
     TITLE = {New dual pair correspondences},
   JOURNAL = {Duke Math. J.},
  FJOURNAL = {Duke Mathematical Journal},
    VOLUME = {82},
      YEAR = {1996},
    NUMBER = {2},
     PAGES = {447--471}
}

@article {JS,
    AUTHOR = {Jacquet, Herv\'{e} and Shalika, Joseph A.},
     TITLE = {A non-vanishing theorem for zeta functions of {${\rm
              GL}\sb{n}$}},
   JOURNAL = {Invent. Math.},
  FJOURNAL = {Inventiones Mathematicae},
    VOLUME = {38},
      YEAR = {1976/77},
    NUMBER = {1},
     PAGES = {1--16}
}

@article {keys,
    AUTHOR = {Keys, Charles David},
     TITLE = {On the decomposition of reducible principal series
              representations of {$p$}-adic {C}hevalley groups},
   JOURNAL = {Pacific J. Math.},
  FJOURNAL = {Pacific Journal of Mathematics},
    VOLUME = {101},
      YEAR = {1982},
    NUMBER = {2},
     PAGES = {351--388}
}

@article {kisin,
    AUTHOR = {Kisin, Mark},
     TITLE = {Overconvergent modular forms and the {F}ontaine-{M}azur
              conjecture},
   JOURNAL = {Invent. Math.},
  FJOURNAL = {Inventiones Mathematicae},
    VOLUME = {153},
      YEAR = {2003},
    NUMBER = {2},
     PAGES = {373--454}
}

@book {knvo,
    AUTHOR = {Knapp, Anthony W. and Vogan, Jr., David A.},
     TITLE = {Cohomological induction and unitary representations},
    SERIES = {Princeton Mathematical Series},
    VOLUME = {45},
 PUBLISHER = {Princeton University Press, Princeton, NJ},
      YEAR = {1995}
}

@article {laff,
    AUTHOR = {Lafforgue, Vincent},
     TITLE = {Chtoucas pour les groupes r\'{e}ductifs et param\'{e}trisation de
              {L}anglands globale},
   JOURNAL = {J. Amer. Math. Soc.},
  FJOURNAL = {Journal of the American Mathematical Society},
    VOLUME = {31},
      YEAR = {2018},
    NUMBER = {3},
     PAGES = {719--891}
}

@book {Langlands,
    AUTHOR = {Langlands, Robert P.},
     TITLE = {On the functional equations satisfied by {E}isenstein series},
    SERIES = {Lecture Notes in Mathematics, Vol. 544},
 PUBLISHER = {Springer-Verlag, Berlin-New York},
      YEAR = {1976}
}

@article {LS,
    AUTHOR = {Li, Jian-Shu and Schwermer, Joachim},
     TITLE = {On the {E}isenstein cohomology of arithmetic groups},
   JOURNAL = {Duke Math. J.},
  FJOURNAL = {Duke Mathematical Journal},
    VOLUME = {123},
      YEAR = {2004},
    NUMBER = {1},
     PAGES = {141--169}
}

@article {loeimage,
    AUTHOR = {Loeffler, David},
     TITLE = {Images of adelic {G}alois representations for modular forms},
   JOURNAL = {Glasg. Math. J.},
  FJOURNAL = {Glasgow Mathematical Journal},
    VOLUME = {59},
      YEAR = {2017},
    NUMBER = {1},
     PAGES = {11--25}
}

@article {LW,
    AUTHOR = {Loeffler, David and Weinstein, Jared},
     TITLE = {On the computation of local components of a newform},
   JOURNAL = {Math. Comp.},
  FJOURNAL = {Mathematics of Computation},
    VOLUME = {81},
      YEAR = {2012},
    NUMBER = {278},
     PAGES = {1179--1200}
}

@misc{LZSym3,
    title={On the Bloch--Kato conjecture for the symmetric cube},
    author={David Loeffler and Sarah Livia Zerbes},
    year={2020},
    eprint={2005.04786},
    archivePrefix={arXiv},
    primaryClass={math.NT}
}

@article {MWMC,
    AUTHOR = {Mazur, B. and Wiles, A.},
     TITLE = {Class fields of abelian extensions of {${\bf Q}$}},
   JOURNAL = {Invent. Math.},
  FJOURNAL = {Inventiones Mathematicae},
    VOLUME = {76},
      YEAR = {1984},
    NUMBER = {2},
     PAGES = {179--330}
}

@article {MY,
    AUTHOR = {Miyauchi, Michitaka and Yamauchi, Takuya},
     TITLE = {An explicit computation of {$p$}-stabilized vectors},
   JOURNAL = {J. Th\'{e}or. Nombres Bordeaux},
  FJOURNAL = {Journal de Th\'{e}orie des Nombres de Bordeaux},
    VOLUME = {26},
      YEAR = {2014},
    NUMBER = {2},
     PAGES = {531--558}
}

@article {MWres,
    AUTHOR = {M\oe glin, C. and Waldspurger, J.-L.},
     TITLE = {Le spectre r\'{e}siduel de {${\rm GL}(n)$}},
   JOURNAL = {Ann. Sci. \'{E}cole Norm. Sup. (4)},
  FJOURNAL = {Annales Scientifiques de l'\'{E}cole Normale Sup\'{e}rieure. Quatri\`eme
              S\'{e}rie},
    VOLUME = {22},
      YEAR = {1989},
    NUMBER = {4},
     PAGES = {605--674},
}

@book {MW,
    AUTHOR = {M{\oe}glin, C. and Waldspurger, J.-L.},
     TITLE = {Spectral decomposition and {E}isenstein series},
    SERIES = {Cambridge Tracts in Mathematics},
    VOLUME = {113},
      NOTE = {Une paraphrase de l'\'{E}criture [A paraphrase of Scripture]},
 PUBLISHER = {Cambridge University Press, Cambridge},
      YEAR = {1995}
}

@article {momose,
    AUTHOR = {Momose, Fumiyuki},
     TITLE = {On the {$l$}-adic representations attached to modular forms},
   JOURNAL = {J. Fac. Sci. Univ. Tokyo Sect. IA Math.},
  FJOURNAL = {Journal of the Faculty of Science. University of Tokyo.
              Section IA. Mathematics},
    VOLUME = {28},
      YEAR = {1981},
    NUMBER = {1},
     PAGES = {89--109}
}

@article {muic,
    AUTHOR = {Mui\'{c}, Goran},
     TITLE = {The unitary dual of {$p$}-adic {$G_2$}},
   JOURNAL = {Duke Math. J.},
  FJOURNAL = {Duke Mathematical Journal},
    VOLUME = {90},
      YEAR = {1997},
    NUMBER = {3},
     PAGES = {465--493}
}

@phdthesis{thesis,
  author       = {Mundy, Samuel}, 
  title        = {Eisenstein series for {$G_2$} and the symmetric cube Bloch--Kato conjecture},
  school       = {Columbia University},
  year         = {2021},
  url          = {https://doi.org/10.7916/d8-k3ys-vh32}
}

@incollection {PRord,
    AUTHOR = {Perrin-Riou, Bernadette},
     TITLE = {Repr\'{e}sentations {$p$}-adiques ordinaires},
      NOTE = {With an appendix by Luc Illusie,
              P\'{e}riodes $p$-adiques (Bures-sur-Yvette, 1988)},
   JOURNAL = {Ast\'{e}risque},
  FJOURNAL = {Ast\'{e}risque},
    NUMBER = {223},
      YEAR = {1994},
     PAGES = {185--220}
}

@article {ribet,
    AUTHOR = {Ribet, Kenneth A.},
     TITLE = {A modular construction of unramified {$p$}-extensions of
              {$Q(\mu \sb{p})$}},
   JOURNAL = {Invent. Math.},
  FJOURNAL = {Inventiones Mathematicae},
    VOLUME = {34},
      YEAR = {1976},
    NUMBER = {3},
     PAGES = {151--162}
}

@article {shahidiLfcns,
    AUTHOR = {Shahidi, Freydoon},
     TITLE = {On certain {$L$}-functions},
   JOURNAL = {Amer. J. Math.},
  FJOURNAL = {American Journal of Mathematics},
    VOLUME = {103},
      YEAR = {1981},
    NUMBER = {2},
     PAGES = {297--355}
}

@article {shahidisym3,
    AUTHOR = {Shahidi, Freydoon},
     TITLE = {Third symmetric power {$L$}-functions for {${\rm GL}(2)$}},
   JOURNAL = {Compositio Math.},
  FJOURNAL = {Compositio Mathematica},
    VOLUME = {70},
      YEAR = {1989},
    NUMBER = {3},
     PAGES = {245--273},
}

@article {shahidics,
    AUTHOR = {Shahidi, Freydoon},
     TITLE = {A proof of {L}anglands' conjecture on {P}lancherel measures;
              complementary series for {$p$}-adic groups},
   JOURNAL = {Ann. of Math. (2)},
  FJOURNAL = {Annals of Mathematics. Second Series},
    VOLUME = {132},
      YEAR = {1990},
    NUMBER = {2},
     PAGES = {273--330}
}

@article {shin,
    AUTHOR = {Shin, Sug Woo},
     TITLE = {Galois representations arising from some compact {S}himura
              varieties},
   JOURNAL = {Ann. of Math. (2)},
  FJOURNAL = {Annals of Mathematics. Second Series},
    VOLUME = {173},
      YEAR = {2011},
    NUMBER = {3},
     PAGES = {1645--1741}
}

@article {SUgsp4,
    AUTHOR = {Skinner, Christopher and Urban, Eric},
     TITLE = {Sur les d\'{e}formations {$p$}-adiques de certaines
              repr\'{e}sentations automorphes},
   JOURNAL = {J. Inst. Math. Jussieu},
  FJOURNAL = {Journal of the Institute of Mathematics of Jussieu. JIMJ.
              Journal de l'Institut de Math\'{e}matiques de Jussieu},
    VOLUME = {5},
      YEAR = {2006},
    NUMBER = {4},
     PAGES = {629--698}
}

@incollection {SUunitary,
    AUTHOR = {Skinner, Christopher and Urban, Eric},
     TITLE = {Vanishing of {$L$}-functions and ranks of {S}elmer groups},
 BOOKTITLE = {International {C}ongress of {M}athematicians. {V}ol. {II}},
     PAGES = {473--500},
 PUBLISHER = {Eur. Math. Soc., Z\"{u}rich},
      YEAR = {2006}
}

@article {taylor,
    AUTHOR = {Taylor, Richard},
     TITLE = {Galois representations associated to {S}iegel modular forms of
              low weight},
   JOURNAL = {Duke Math. J.},
  FJOURNAL = {Duke Mathematical Journal},
    VOLUME = {63},
      YEAR = {1991},
    NUMBER = {2},
     PAGES = {281--332},
}

@article {urbanev,
    AUTHOR = {Urban, Eric},
     TITLE = {Eigenvarieties for reductive groups},
   JOURNAL = {Ann. of Math. (2)},
  FJOURNAL = {Annals of Mathematics. Second Series},
    VOLUME = {174},
      YEAR = {2011},
    NUMBER = {3},
     PAGES = {1685--1784},
}

@incollection {urbanell,
    AUTHOR = {Urban, Eric},
     TITLE = {On the rank of {S}elmer groups for elliptic curves over {$\Bbb
              Q$}},
 BOOKTITLE = {Automorphic representations and {$L$}-functions},
    SERIES = {Tata Inst. Fundam. Res. Stud. Math.},
    VOLUME = {22},
     PAGES = {651--680},
 PUBLISHER = {Tata Inst. Fund. Res., Mumbai},
      YEAR = {2013}
}

@online{urbanerr,
  author = {Urban, Eric},
  title = {Erratum to ``On the ranks of Selmer groups of elliptic curves over {$\mathbb{Q}$}''},
  year = {2013},
  url = {http://www.math.columbia.edu/~urban/eurp/erratum-tifr.pdf},
  urldate = {02-21-2021}
}

@book {voganbook,
    AUTHOR = {Vogan, Jr., David A.},
     TITLE = {Representations of real reductive {L}ie groups},
    SERIES = {Progress in Mathematics},
    VOLUME = {15},
 PUBLISHER = {Birkh\"{a}user, Boston, Mass.},
      YEAR = {1981}
}

@article {voganG2,
    AUTHOR = {Vogan, Jr., David A.},
     TITLE = {The unitary dual of {$G_2$}},
   JOURNAL = {Invent. Math.},
  FJOURNAL = {Inventiones Mathematicae},
    VOLUME = {116},
      YEAR = {1994},
    NUMBER = {1-3},
     PAGES = {677--791}
}

@article {VZ,
    AUTHOR = {Vogan, Jr., David A. and Zuckerman, Gregg J.},
     TITLE = {Unitary representations with nonzero cohomology},
   JOURNAL = {Compositio Math.},
  FJOURNAL = {Compositio Mathematica},
    VOLUME = {53},
      YEAR = {1984},
    NUMBER = {1},
     PAGES = {51--90}
}

@misc{wang,
    title={Arithmetic level raising on triple product of Shimura curves and Gross-Schoen Diagonal cycles I: Ramified case},
    author={Haining Wang},
    year={2020},
    eprint={2004.00555},
    archivePrefix={arXiv},
    primaryClass={math.NT}
}

@article {wilesmc,
    AUTHOR = {Wiles, A.},
     TITLE = {The {I}wasawa conjecture for totally real fields},
   JOURNAL = {Ann. of Math. (2)},
  FJOURNAL = {Annals of Mathematics. Second Series},
    VOLUME = {131},
      YEAR = {1990},
    NUMBER = {3},
     PAGES = {493--540}
}

@article {zampera,
    AUTHOR = {\v{Z}ampera, Sini\v{s}a},
     TITLE = {The residual spectrum of the group of type {$G_2$}},
   JOURNAL = {J. Math. Pures Appl. (9)},
  FJOURNAL = {Journal de Math\'{e}matiques Pures et Appliqu\'{e}es. Neuvi\`eme S\'{e}rie},
    VOLUME = {76},
      YEAR = {1997},
    NUMBER = {9},
     PAGES = {805--835}
}
\end{document}